\documentclass[10pt,reqno]{amsart}

\usepackage{graphicx}
\usepackage{times}
\usepackage[colorlinks=true,linkcolor=blue,citecolor=blue]{hyperref}%
\usepackage{dsfont}

\usepackage{xcolor}
\usepackage{stmaryrd}
\usepackage{pdfrender,xcolor}
\usepackage{tikz}
\usepackage{pgfplots}
\usetikzlibrary{decorations.pathmorphing}

\newtheorem{theorem}{Theorem}[section]
\newtheorem{lemma}[theorem]{Lemma}
\newtheorem{assump}[theorem]{Assumption}

\newtheorem{prop}[theorem]{Proposition}
\theoremstyle{definition}
\newtheorem{definition}[theorem]{Definition}
\newtheorem{notation}[theorem]{Notation}

\theoremstyle{remark}
\newtheorem{remark}[theorem]{Remark}
\numberwithin{equation}{section}

\usepackage[top=1.2in, bottom=1.2in, left=1.2in, right=1.2in]{geometry}

\usepackage[scr]{rsfso}
\usepackage[english]{babel}
\usepackage[utf8x]{inputenc}
\usepackage{fancyhdr}
\usepackage{amsmath}
\usepackage{amsthm}
\usepackage{amssymb}
\usepackage{comment}
\usepackage{enumitem}
\setlist{leftmargin=*}
\usepackage[integrals]{wasysym}
\usepackage{bm}

\newcommand\nc{\newcommand}
\nc{\on}{\operatorname}
\nc{\E}{\mathbb{E}}
\nc{\R}{\mathbb R}
\nc{\C}{\mathbb C}
\nc{\Q}{\mathbb Q}
\nc{\Z}{\mathbb Z}
\nc{\N}{\mathbb N}
\nc{\F}{\mathbb F}
\nc{\wt}{\widetilde}
\nc{\ol}{\overline}
\nc{\rnc}{\renewcommand}
\nc{\e}{\varepsilon}
\nc{\DMO}{\DeclareMathOperator}
\nc{\grad}{\nabla}
\nc{\fsp}{\fontdimen2\font=2.3pt}
\nc{\fspp}{\fontdimen2\font=2.17pt}
\nc{\abbr}[1]{{\sc\lowercase{#1}}}

\nc{\bphi}{{\boldsymbol{\phi}}}
\nc{\bpsi}{{\boldsymbol{\psi}}}
\nc{\bmu}{\boldsymbol{\mu}}

\nc{\ocolor}{\color}
\nc{\X}{\mathrm{X}}
\nc{\Y}{\mathrm{Y}}
\rnc{\t}{\mathrm{t}}
\nc{\x}{\mathrm{x}}
\nc{\y}{\mathrm{y}}
\nc{\s}{\mathrm{s}}
\nc{\z}{\mathrm{z}}
\nc{\w}{\mathrm{w}}
\rnc{\r}{\mathrm{r}}
\rnc{\a}{\mathrm{a}}
\rnc{\b}{\mathrm{b}}
\rnc{\k}{\mathrm{k}}
\rnc{\u}{\mathrm{u}}
\nc{\n}{\mathrm{n}}
\newcommand{\m}{\mathrm{m}}
\rnc{\L}{\mathrm{L}}
\rnc{\N}{\mathsf{N}}
\nc{\q}{\mathrm{q}}

\rnc{\leq}{\leqslant}
\rnc{\geq}{\geqslant}
\rnc{\d}{\mathrm{d}}
\rnc{\O}{\mathrm{O}}
\rnc{\exp}{\mathbf{Exp}}
\newenvironment{nouppercase}{%
  \renewcommand{\uppercasenonmath}[1]{}}{}
\title{\fsp\Large Half-space KPZ equation from a class of nonlinear SPDEs}

\author{Kevin Yang}

\usepackage{setspace}
\begin{document}
\setstretch{1.0}
\fsp
\raggedbottom
\begin{nouppercase}
\maketitle
\end{nouppercase}
\begin{center}
\today
\end{center}

\begin{abstract}
\fspp We derive the half-space \abbr{KPZ} equation as a continuum limit from a general class of Ginzburg-Landau \abbr{SPDE}s under weak nonlinearity scaling. To our knowledge, this is the first half-space universality result for the \abbr{KPZ} equation beyond integrable models (and their perturbations). 
\end{abstract}

{\hypersetup{linkcolor=blue}
\setcounter{tocdepth}{1}
\tableofcontents}
\allowdisplaybreaks
\section{Introduction}\label{section:intro}
The \emph{half-space Kardar-Parisi-Zhang (\abbr{KPZ}) equation} is a stochastic \abbr{PDE} (\abbr{SPDE}) that describes the evolution of a random interface growing on the substrate {\small$[0,\infty)$}, subject to boundary data which fixes its slope at a ``wall" at {\small$0\in[0,\infty)$}. See Figure \ref{figure:kpz} for a sample interface. Precisely, it is the following \abbr{SPDE} for {\small$(\t,\X)\in[0,\infty)\times[0,\infty)$}:
\begin{align}
\partial_{\t}\mathbf{h}_{\t,\X}&=\alpha\partial_{\X}^{2}\mathbf{h}_{\t,\X}+\beta|\partial_{\X}\mathbf{h}_{\t,\X}|^{2}+\xi_{\t,\X}.\label{eq:kpzIa}
\end{align}
Above, {\small$\alpha>0$} and {\small$\beta\in\R$} are fixed constants; we will assume that {\small$\beta\neq0$} in this paper. The term {\small$\xi$} is the Gaussian space-time white noise. Since \eqref{eq:kpzIa} is defined on the half-space {\small$[0,\infty)$}, to guarantee well-posedness, we require the aforementioned boundary data given below, in which {\small$\wt{\mathbf{A}}\in\R$} is fixed:
\begin{align}
\partial_{\X}\mathbf{h}_{\t,\X}|_{\X=0}&=\wt{\mathbf{A}}\quad \text{for all } \t\in[0,\infty).\label{eq:kpzIb}
\end{align}
Strictly speaking, \eqref{eq:kpzIa}-\eqref{eq:kpzIb} is analytically ill-posed. We will adopt the \emph{Cole-Hopf} solution to \eqref{eq:kpzIa}-\eqref{eq:kpzIb} \cite{BC95,BG97,CS18}. That is, set {\small$\mathbf{h}:=\beta^{-1}\alpha\log\mathbf{Z}$}, where {\small$\mathbf{Z}$} solves the following \emph{half-space stochastic heat equation (\abbr{SHE})}:
\begin{align}
\partial_{\t}\mathbf{Z}_{\t,\X}&=\alpha\partial_{\X}^{2}\mathbf{Z}_{\t,\X}+\alpha^{-1}\beta\mathbf{Z}_{\t,\X}\xi_{\t,\X}. \label{eq:sheIa}
\end{align}
The corresponding boundary condition for \eqref{eq:sheIa} is the following Robin data with a potentially ``renormalized" parameter {\small$\mathbf{A}$} \cite{GH19} (depending on {\small$\wt{\mathbf{A}}$} and potentially other details):
\begin{align}
\partial_{\X}\mathbf{Z}_{\t,\X}|_{\X=0}&=\alpha^{-1}\beta\mathbf{A}\mathbf{Z}_{\t,0} \quad \text{for all } \t\in[0,\infty).\label{eq:sheIb}
\end{align}
A formal calculation by the chain and It\^{o} rules shows that \eqref{eq:kpzIa}-\eqref{eq:kpzIb} is indeed related to \eqref{eq:sheIa}-\eqref{eq:sheIb}. Moreover, as shown in \cite{CS18}, the solution {\small$\mathbf{Z}$} to the  \abbr{SHE} is positive with probability {\small$1$} (assuming its initial data is a non-negative, non-zero Borel measure on {\small$\R$}), so the Cole-Hopf solution is well-defined because {\small$\beta\alpha^{-1}\neq0$} by assumption. 
\begin{figure}[h]
\includegraphics[scale=0.5]{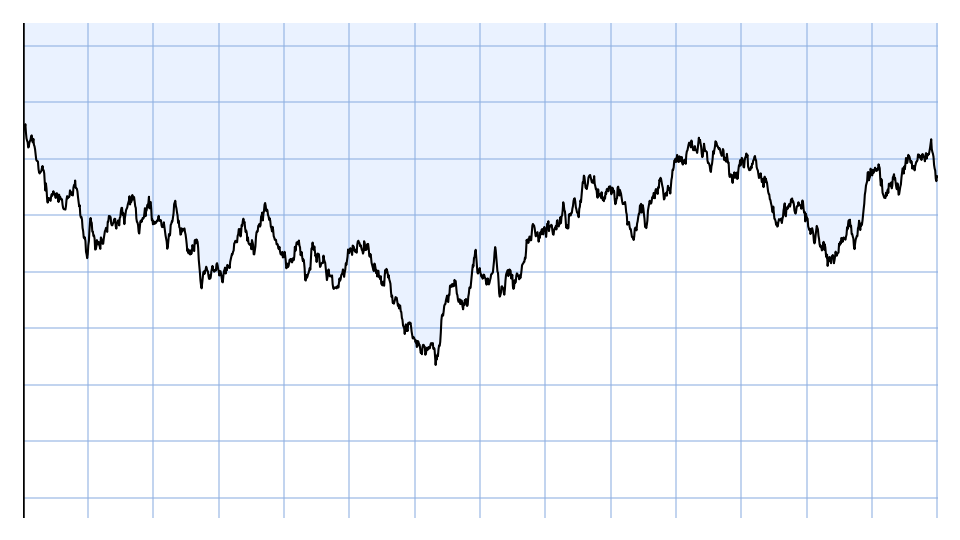}
\caption{Sample plot of a solution to \eqref{eq:kpzIa}-\eqref{eq:kpzIb} at a fixed time as a function in space, with a negative slope at the wall (in black on the left).}
\label{figure:kpz}
\end{figure}

The focus of this paper is the \emph{universality} of the half-space \abbr{KPZ} equation and \abbr{SHE}; that is, the conjecture that it is a universal large-scale limit of many random interface growth models. Our motivation comes from the rich physics displayed by these equations which come specifically from their half-space nature. Indeed, Kardar \cite{K85} predicts a \emph{phase transition} for the binding or unbinding of the half-space \abbr{KPZ} interface to the wall at {\small$0\in\llbracket0,\infty\rrbracket$} as a function of the boundary parameter in \eqref{eq:sheIb}. A realization of this phase transition was rigorously established in \cite{IMS26}. Precisely, \cite{IMS26}  computes a critical value of {\small$\mathbf{A}$} in \eqref{eq:sheIb}, above which Tracy-Widom \abbr{GSE} statistics appear, below which Gaussian statistics arise, and at which one sees Tracy-Widom \abbr{GOE} statistics. Thus, a key motivation for this work is to demonstrate the universality not only of the half-space \abbr{KPZ} equation, but also this phenomena. 

Perhaps the most \emph{fundamental} conjecture on (the ``weak") universality of half-space \abbr{KPZ}, which is motivated by the original universality proposal in \cite{KPZ86} for the full-space \abbr{KPZ} equation, is given as follows. (This is referred to as \emph{Hairer-Quastel universality} \cite{HQ18}.) Take the following class of \emph{Ginzburg-Landau} \abbr{SPDE}s:
\begin{align}
\partial_{\t}\mathbf{j}_{\t,\X}=\partial_{\X}\mathscr{U}'(\partial_{\X}\mathbf{j}_{\t,\X})+\mathbf{F}[\mathscr{U}'(\partial_{\X}\mathbf{j}_{\t,\X})]+\xi_{\t,\X}, \quad (\t,\X)\in[0,\infty)\times[0,\infty).  \label{eq:glIa}
\end{align}
Above, {\small$\mathscr{U},\mathbf{F}:\R\to\R$} are smooth functions; we refer to them as the \emph{potential} and \emph{nonlinearity}, respectively. To illustrate, if {\small$\mathscr{U}(\a)=\a^{2}$} and {\small$\mathbf{F}(\a)=\a^{2}$}, then \eqref{eq:glIa} reduces to \eqref{eq:kpzIa}. In this way, \eqref{eq:glIa} is a generalization of \eqref{eq:kpzIa} (as we will allow a much larger collection of choices of {\small$\mathscr{U},\mathbf{F}$}). Similar to \eqref{eq:kpzIa}, since \eqref{eq:glIa} lives on the half-space, we supplement it with boundary data (for fixed {\small$\wt{\mathbf{A}}\in\R$}) resembling \eqref{eq:kpzIb}:
\begin{align}
\partial_{\X}\mathbf{j}_{\t,\X}|_{\X=0}=\wt{\mathbf{A}} \quad \text{for all } \t\in[0,\infty). \label{eq:glIb} 
\end{align}
The Ginzburg-Landau \abbr{SPDE} \eqref{eq:glIa} is a general equation for modeling a non-Gaussian, time-dependent statistical mechanical system. It was systematized in \cite{HH77} for this purpose and has become a popular model in statistical mechanics due to the flexible functional parameters {\small$\mathscr{U},\mathbf{F}$}, which allow one to model a variety of physical systems using \eqref{eq:glIa}. These equations have also played a central role in the mathematical theory of hydrodynamic limits and related problems \cite{S91}.

The universality conjecture of interest is the following:
\begin{enumerate}
\item Discretize \eqref{eq:glIa}-\eqref{eq:glIb} in space, so that it becomes an \abbr{SDE} (which may be rigorously solved).
\item Take the ``continuum limit" of solutions to the discretized equations as the length-scale of the discretization vanishes. In this limit, we also apply a \emph{weak nonlinearity scaling} to the {\small$\mathbf{F}$}-nonlinearity.
\item Conjecturally \cite{KPZ86}, the limit in step (2) is the solution to \eqref{eq:kpzIa}-\eqref{eq:kpzIb}.
\end{enumerate}
\emph{The main result of this paper is a resolution of the conjecture above for a large class of potentials {\small$\mathscr{U}$}, polynomial nonlinearities {\small$\mathbf{F}$}, and non-equilibrium initial data}. \emph{To our knowledge, this is a first half-space \abbr{KPZ} universality result for a class of models that are not (perturbations of) integrable ones} (which we discuss in Section \ref{subsection:background}). 

As a consequence of these results, we are able to establish the same emergence and phase transition of random matrix statistics from the one-point distribution of \eqref{eq:glIa}-\eqref{eq:glIb}, under a double limit of \eqref{eq:glIa}-\eqref{eq:glIb} to \eqref{eq:kpzIa}-\eqref{eq:kpzIb}, then to long times. \emph{Thus, we deduce that this behavior is not an artifact of integrability and indeed holds more universally}. (We note here that the methods of \cite{IMS26} to show this phase transition for half-space \abbr{KPZ} goes through an integrable discretization.)

Additionally, through our results we also derive \emph{stationary fluctuations}, i.e. the half-space \abbr{KPZ} equation with initial data given by any of its many invariant measures that were constructed in \cite{BC23}. This is despite the fact that we do not yet have anything close to an exhaustive list of invariant measures for \eqref{eq:glIa}-\eqref{eq:glIb}. Physical importance of stationary fluctuations (as dynamics in ``non-equilibrium steady states") is discussed in \cite{S91}.

Finally, we refer to Theorems \ref{theorem:main}, \ref{theorem:mainstat}, and \ref{theorem:mainwedge} for precise statements of our results. 
\subsection{Technical aspects}\label{subsection:introtech}
Broadly speaking, the strategy we use is a generalization to the half-space of the three-step idea that was developed in \cite{Y26PTRF} for the full-space version problem. \emph{Developing such a half-space framework, which can perhaps be used more generally, is a key technical contribution of this work}. 

This three-step schematic is summarized as follows:
\begin{enumerate}
\item Establish an ansatz for the evolution equation satisfied by the exponential of (a space-discretization of) \eqref{eq:glIa}-\eqref{eq:glIb} which has the form of \eqref{eq:sheIa}-\eqref{eq:sheIb} with additional ``fluctuations". (Roughly, ``fluctuations" are spatially local functions of a space-discretized {\small$\partial_{\X}\mathbf{j}$} with vanishing projections onto model-dependent analogues, e.g. non-Gaussian versions, of the classical Wiener chaos. See Definitions \ref{definition:bulkadmissible} and \ref{definition:edgeadmissible}.)
\item Estimate these ``fluctuations" by proving the celebrated \emph{Boltzmann-Gibbs principles}, i.e. sharp \abbr{CLT}-estimates after averaging these fluctuations in space and/or time.
\item Estimate the (stochastic) heat kernel for the equation in (1) via the Boltzmann-Gibbs principles in (2).
\end{enumerate}
Indeed, the benefits of this for the full-space problem in \cite{Y26PTRF} are (at least) two-fold. First, it allows one to access non-equilibrium initial data (which the theory of energy solutions \cite{GJ14,GP18} is currently not equipped to handle). Second, it bypasses the need for the theory of regularity structures \cite{H13}, which, to our knowledge, does not apply to even the full-space version of \eqref{eq:glIa} because its linearization is a parabolic \abbr{PDE} with singular coefficients. See Section \ref{subsection:background} for a further discussion on these other methods.

Our strategy hinges on generalizing every step in the schematic above to the half-space. While a more detailed discussion of the technical aspects of this paper is deferred to Section \ref{section:method}, we emphasize here the particularly nice conceptual aspects of the Boltzmann-Gibbs principles herein for the analysis of ``bulk fluctuations", namely the fluctuations that arise from the bulk dynamics \eqref{eq:glIa}. A natural strategy to show these Boltzmann-Gibbs estimates is to directly implement the strategy from the full-space setting from \cite{Y26PTRF}. However, the analysis in \cite{Y26PTRF} depends crucially on the dynamics at a variety of (sufficiently large) mesoscopic space-time scales, which now differ in a (not small!) neighborhood of the edge.

To address this, we develop a \emph{multi-scale localization-gluing scheme}, in which we construct the dynamics in a mesoscopic neighborhood of the edge by gluing together a collection of full-space dynamics; this allows us to essentially inherit the \abbr{BGP} estimates from the full-space setting in \cite{Y26PTRF} and establish the bulk \abbr{BGP}s. \emph{Remarkably, the key that drives this scheme is what seems to be a first application of Nash's and Davies' parabolic estimates for singular \abbr{SPDE}s}. In particular, the key obstacle to this end is the singular first-order term {\small$\mathbf{F}$} in \eqref{eq:glIa}. \emph{However, and fortunately, we only need Nash-Davies methods for {local} space-time scales, at which {\small$\mathbf{F}$} is ultimately shown to be perturbative}. This indicates the importance of a \emph{sub-criticality} of {\small$\mathbf{F}$}, in parallel with regularity structures.

The conceptual benefits of localization-gluing include the ability to inherit the full-space analysis in \cite{Y26PTRF} as a blackbox. That is, the methods in this paper directly build on the full-space theory, so we avoid having to adapt the (already difficult) work in \cite{Y26PTRF} to domains with boundary. We note that this type of adaptation has been done in \cite{GPS20} for special equilibrium initial data. But even in this special case, the adaptations are quite sophisticated.

Finally, we comment briefly on steps (1) and (3). For step (1), a natural idea is to derive and analyze separately the bulk and edge dynamics of the ``microscopic" Cole-Hopf transform (that is, a space-discretized exponential of \eqref{eq:glIa}-\eqref{eq:glIb}); this is what is done in earlier works on derivations of the \abbr{KPZ} equations with boundary conditions from integrable particle systems and their perturbations \cite{CS18,P19,Y25}. \emph{However, this does not work, as we will need to show a crucial cancellation between the two which ultimately produces a renormalized boundary parameter}. Once we establish steps (1) and (2), the analysis for step (3) is based on a parametrix-type ansatz for the stochastic heat kernel of interest around that for (a perturbation of a space-discretized) \eqref{eq:sheIa}-\eqref{eq:sheIb}. A key contribution here is streamlined version of the parametrix analysis in \cite{Y26PTRF} that reduces the number of topologies in which we must estimate said parametrix, which is important for analyzing the additional ``edge terms" herein.
\subsection{Background}\label{subsection:background}
As mentioned earlier, this paper gives (to our knowledge) a first Hairer-Quastel universality result for half-space \abbr{KPZ}. In general, it provides the first derivation of half-space \abbr{KPZ} from a model which is not a perturbation of a special interacting particle system known as the ``half-space \abbr{ASEP}". In this direction, though, let us mention \cite{CS18,P19,Y25}, which do exactly this (for half-space \abbr{ASEP} and some perturbations).

We also mention the related work \cite{GPS20}, which derives the \abbr{KPZ} equation on a compact interval (with Neumann boundary data) from a version of the \abbr{ASEP} on said interval (i.e. open \abbr{ASEP}). However, key to \cite{GPS20} is the theory of energy solutions for \abbr{KPZ} \cite{GJ14,GP18}, which studies the issue of convergence through martingale problems. While it is a powerful technique, the main limitation is a crucial dependence on equilibrium or invariant measure initial data; since our interest is in a large class of non-equilibrium initial data, it is unclear how the method of energy solutions can be applied herein. (Uniqueness of energy solutions in the half-space is also not yet shown, at least to our knowledge.) We also highlight the works \cite{CS18,P19,Y22PTRF}, which derive the \abbr{KPZ} equation on a compact interval from the open \abbr{ASEP} and its long-range generalizations. 

Next, we mention the long list of works \cite{DGP17,GP16,HQ18,HS17,HX19,KWX24,KZ25,YEJP23,Y25EJP,Y26PTRF} on Hairer-Quastel universality of the \abbr{KPZ} equation (for \abbr{SPDE} models of interface growth) on the torus and the full-space. 

Finally, we mention the work \cite{GH19}, which uses the theory of regularity structures to show well-posedness for the open \abbr{KPZ} equation (on a compact interval). The analysis in \cite{GH19} has a conceptual benefit of building directly on the theory of regularity structures as developed on the torus. 
\subsection{AI disclosure}
\abbr{LLM}s\footnote{Here, \abbr{LLM} refers to Claude, Opus 4.6 and Opus 5.5, the latter of which was granted access to via the Claude-for-scientists program.} were never used to generate, check, or produce any mathematics in this paper. (They were specifically asked not to do so.) We emphasize that this paper is entirely human-written. \abbr{LLM}s were asked to give us (purely non-mathematical) feedback on an earlier version of the introduction, abstract, and disclosure, which led to the rewriting of several sentences (for clarity's sake). An \abbr{LLM} was asked to double check the proof of (1.5) in \cite{FS86} (after we had already checked it by hand); moreover, only the statement of this estimate was used (not its proof). Only after the mathematics and body of this paper were completed, an \abbr{LLM} was consulted about a refinement of the proof of Proposition \ref{prop:nashdavies} in the case where {\small$\boldsymbol{\Gamma}^{\mathrm{full}}$} therein is a probability kernel. (This specific case is \emph{not} relevant, and in fact orthogonal, to the scope of this paper, and the estimate is quite different than that in Proposition \ref{prop:nashdavies}. This was for intellectual curiosity; the \abbr{LLM} was only asked to double check our computations for this special case, not to generate any ideas.) Again after the mathematics and body were completed, \abbr{LLM}s were asked to clarify \cite{GH19} to help our discussion of it be more accurate. (We clarify that the mathematical ideas in \cite{GH19} are \emph{not} relevant for this paper.) This led to a few edits in the presentation of the introduction. Finally, an \abbr{LLM} was used to help generate code for the plot in Figure \ref{figure:kpz}.
%
%
%
\section{Main results}
\subsection{Model}
We start by introducing a space-discretization of \eqref{eq:glIa} with the aforementioned weak-nonlinearity scaling. It will take the form of an infinite-dimensional \abbr{SDE}. (In particular, the model at hand can be interpreted as a system of interacting diffusions). To this end, let us introduce notation to be used throughout the paper.
\begin{itemize}
\item Throughout, we denote elements in {\small$\R^{\Z}$} by {\small$\bphi=(\bphi_{\x})_{\x\in\Z}$}. For any {\small$\x\in\Z$}, we also define {\small$\tau_{\x}:\R^{\Z}\to\R^{\Z}$} to be the shift map, so that for any {\small$\bphi\in\R^{\Z}$} and {\small$\w\in\Z$}, we have {\small$(\tau_{\x}\bphi)_{\w}:=\bphi_{\x+\w}$}. In words, this shift map re-centers the configuration {\small$\bphi$} to be around {\small$\x$}. Note that {\small$\tau_{0}$} is the identity map. (We sometimes consider only projections of {\small$\bphi\in\R^{\Z}$} onto {\small$\R^{\mathbb{I}}$} for some {\small$\mathbb{I}\subseteq\Z$}. We use the same {\small$\bphi$} notation in order to avoid more complicated notation.)
\item We let {\small$\N>0$} denote the (main) scaling parameter. It will always be an integer, and we will be interested in the ``large-{\small$\N$} limit" obtained by sending {\small$\N\to\infty$}.
\item For any {\small$\a,\b\in\R$} such that {\small$\a\leq\b$}, we define {\small$\llbracket\a,\b\rrbracket:=[\a,\b]\cap\Z$}.
\end{itemize}
The space-discretization of \eqref{eq:glIa} will be an \abbr{SDE} for {\small$\t\geq0$} and {\small$\x\in\llbracket0,\infty\rrbracket$}, whose dynamics in the interior {\small$\llbracket1,\infty\rrbracket$} and at the edge {\small$\{0\}$} will be different from each other (to ultimately account for inserting boundary data). For the interior, the dynamics will resemble \eqref{eq:glIa}, whereas at the edge, the \abbr{SDE} describes instead a ``flux" coming from an external ``heat bath" reservoir (or Glauber dynamic) at the edge; see Remark \ref{remark:phi}.

We now introduce the edge and interior dynamics as the following system, which uses notation to be explained afterwards, and in which {\small$\x\in\llbracket1,\infty\rrbracket$} in the first line \eqref{eq:currI} below:
\begin{align}
\d\mathbf{j}^{\N}_{\t,\x}&=\N^{\frac32}\grad^{\mathbf{X}}_{1}\mathscr{U}'[\bphi_{\t,\x}]\d\t+\N\mathbf{F}[\tau_{\x}\bphi_{\t}]+\sqrt{2}\N^{\frac12}\d\mathbf{b}_{\t,\x},\label{eq:currI}\\
\d\mathbf{j}^{\N}_{\t,0}&=\N^{\frac32}\mathscr{U}'[\bphi_{\t,1}]\d\t+\sqrt{2}\N^{\frac12}\d{\mathbf{b}_{\t,0}}+\N\mathbf{F}[\bphi_{\t}]\d\t.\label{eq:currII}
\end{align}
%
\begin{itemize}
\item The smooth function {\small$\mathscr{U}:\R\to\R$} is the ``potential", on which we make assumptions in {\color{black}Assumption \ref{assump:potential}}.
\item For any {\small$\mathfrak{l}\in\Z$}, let {\small$\grad^{\mathbf{X}}_{\mathfrak{l}}$} be the discrete gradient on {\small$\Z$} of length {\small$\mathfrak{l}$}. That is, for {\small$\mathsf{f}:\Z\to\R$}, we set {\small$\grad^{\mathbf{X}}_{\mathfrak{l}}\mathsf{f}_{\x}=\mathsf{f}_{\x+\mathfrak{l}}-\mathsf{f}_{\x}$}.  
\item We have used the standard field-theoretic notation (see \cite{S91}) {\small$\bphi_{\t,\x}:=-\N^{1/2}\grad^{\mathbf{X}}_{-1}\mathbf{j}^{\N}_{\t,\x}=\N^{-1/2}(\mathbf{j}^{\N}_{\t,\x}-\mathbf{j}^{\N}_{\t,\x-1})$} for all {\small$\x\in\llbracket1,\infty\rrbracket$}. It will be convenient to extend {\small$\bphi_{\t,\x}$} to every {\small$\x\in\Z$} via {\small$\mathscr{U}'[\bphi_{\t,\x}]\equiv0$} for all {\small$\x\in\llbracket-\infty,0\rrbracket$}, and to package this data as {\small$\bphi_{\t}:=(\bphi_{\t,\x})_{\x\in\Z}\in\R^{\Z}$}. (This will be well-defined by assumptions on the potential {\small$\mathscr{U}$} to be given in {\color{black}Assumption \ref{assump:potential}}.)
\item The processes {\small$\t\mapsto\mathbf{b}_{\t,\x}$} are independent standard Brownian motions for {\small$\x\in\llbracket0,\infty\rrbracket$}.
\item We now define the nonlinearity {\small$\mathbf{F}$} as follows. For any {\small$\bphi\in\R^{\Z}$}, we set
\begin{align}
\mathbf{F}[\bphi]:=\mathbf{F}_{2}[\bphi]+\mathbf{F}_{\geq3}[\bphi]&:=\tfrac13\beta_{2}(\mathscr{U}'[\bphi_{-1}]\mathscr{U}'[\bphi_{0}]+\mathscr{U}'[\bphi_{0}]\mathscr{U}'[\bphi_{1}]+\mathscr{U}'[\bphi_{1}]\mathscr{U}'[\bphi_{2}])\nonumber\\
&+\sum_{\d\in\llbracket3,\deg\rrbracket}\beta_{\d}\Big(\prod_{\ell=1}^{\d}\mathscr{U}'[\bphi_{\ell}]+\ldots+\prod_{\ell=1}^{\d}\mathscr{U}'[\bphi_{-\ell+1}]\Big).\label{eq:nl}
\end{align}
Above, {\small$\deg\geq2$} is a fixed positive integer, and {\small$\beta_{2},\ldots,\beta_{\deg}\in\R$} are fixed constants, with the assumption that {\small$\beta_{2}\neq0$}, which ultimately corresponds to {\small$\beta\neq0$} in the limit \abbr{SPDE} \eqref{eq:kpzIa}. We clarify that \eqref{eq:nl} is a polynomial in the {\small$\mathscr{U}'[\bphi_{\cdot}]$}-variables, and that there are no constant or linear terms. Indeed, a constant term may be removed from \eqref{eq:currI} by a spatially global shift in {\small$\mathbf{j}^{\N}$}, and a linear term would yield a diverging-speed characteristic that does not make sense to follow in the half-space. We also clarify that when we evaluate {\small$\mathbf{F}[\tau_{\x}\bphi_{\t}]$} for {\small$\x=0$} (or any point close to the edge), some terms in \eqref{eq:nl} will vanish by nature of the full-space extension of {\small$\bphi_{\t}$}.
\end{itemize}
To clarify the structure of \eqref{eq:currI}-\eqref{eq:currII}, we record the \abbr{SDE} satisfied by {\small$\bphi_{\t}$}. By using {\small$\bphi_{\t,\x}=-\N^{1/2}\grad^{\mathbf{X}}_{-1}\mathbf{j}^{\N}_{\t,\x}$} for each {\small$\x\in\llbracket1,\infty\rrbracket$}, and by \eqref{eq:currI}-\eqref{eq:currII}, we arrive at the following system, where {\small$\x\in\llbracket2,\infty\rrbracket$} in the first line \eqref{eq:phiI} below:
\begin{align}
\d\bphi_{\t,\x}&=\N^{2}\Delta\mathscr{U}'[\bphi_{\t,\x}]\d\t+\N^{\frac32}\wt{\mathbf{F}}[\tau_{\x}\bphi_{\t}]-\sqrt{2}\N\grad^{\mathbf{X}}_{-1}\d\mathbf{b}_{\t,\x},\label{eq:phiI}\\
\d\bphi_{\t,1}&=-\N^{2}\mathscr{U}'[\bphi_{\t,1}]\d\t-\sqrt{2}\N^{}\d\mathbf{b}_{\t,0}+\N^{2}\grad^{\mathbf{X}}_{1}\mathscr{U}'[\bphi_{\t,1}]\d\t+\sqrt{2}\N\d\mathbf{b}_{\t,1}+\N^{\frac32}\wt{\mathbf{F}}[\tau_{1}\bphi_{\t}]\d\t.\label{eq:phiII}
\end{align}
Above, we used notation for the discrete Laplacian {\small$\Delta=-\grad^{\mathbf{X}}_{-1}\grad^{\mathbf{X}}_{1}$} on the lattice {\small$\Z$}. We have also introduced the function {\small$\wt{\mathbf{F}}[\bphi]:=\mathbf{F}[\bphi]-\mathbf{F}[\tau_{-1}\bphi]$}; more precisely, by \eqref{eq:nl}, this function is given by the formula
\begin{align}
\wt{\mathbf{F}}[\bphi]=\wt{\mathbf{F}}_{2}[\bphi]+\wt{\mathbf{F}}_{\geq3}[\bphi]&:=\tfrac13\beta_{2}(\mathscr{U}'[\bphi_{2}]\mathscr{U}'[\bphi_{1}]-\mathscr{U}'[\bphi_{-1}]\mathscr{U}'[\bphi_{-2}])\nonumber\\
&+\sum_{\d\in\llbracket3,\deg\rrbracket}\beta_{\d}\Big(\prod_{\ell=1,\ldots,\d}\mathscr{U}'[\bphi_{\ell}]-\prod_{\ell=1,\ldots,\d}\mathscr{U}'[\bphi_{-\ell}]\Big)\label{eq:wtf}
\end{align}
%
\begin{remark}\label{remark:phi}
\fsp The first two terms on the \abbr{RHS} of \eqref{eq:phiII} correspond to a natural Langevin or Glauber (or ``heat bath") dynamic associated to the potential {\small$\mathscr{U}$}. These are exactly what contribute to the ``flux" \eqref{eq:currII}. The last three terms on the \abbr{RHS} of \eqref{eq:phiII}, on the other hand, correspond to ``interior spin" interactions between {\small$\bphi_{\t,1},\bphi_{\t,2}$} (namely, that these two spins are ``exchanging" with each other, or alternatively, that the last three terms in \eqref{eq:phiII} correspond to a [stochastic] conservation law for {\small$\bphi_{\t}$} near the edge).
\end{remark}
We now introduce a family of probability measures with which the coefficients {\small$\alpha,\beta$} in the limiting half-space \abbr{KPZ} equation will ultimately be computed. 
\begin{definition}\label{definition:gc}
\fsp Fix any {\small$\sigma\in\R$}. Consider the product probability measure below for {\small$\bphi\in(\bphi_{\x})_{\x\in\llbracket1,\infty\rrbracket}\in\R^{\llbracket1,\infty\rrbracket}$}, written in terms of a Radon-Nikodym derivative with respect to Lebesgue measure on {\small$\R^{\llbracket1,\infty\rrbracket}$}:
\begin{align}
\mathbb{P}^{\sigma}[\d\bphi]:=\prod_{\x\in\llbracket1,\infty\rrbracket}\boldsymbol{\mathcal{Z}}_{\sigma,\mathscr{U}}^{-1}\cdot\exp(-\mathscr{U}[\bphi_{\x}]+\upsilon_{\sigma}\bphi_{\x})\d\bphi_{\x}.\label{eq:gcI}
\end{align}
The constant {\small$\upsilon_{\sigma}\in\R$} is chosen so that {\small$\E^{\sigma}\bphi_{\x}=\sigma$} for each {\small$\x\in\llbracket1,\infty\rrbracket$}; existence and uniqueness of such a {\small$\upsilon_{\sigma}$} will ultimately follow by our assumptions on the potential {\small$\mathscr{U}$} in {\color{black}Assumption \ref{assump:potential}}. The measure {\small$\mathbb{P}^{\sigma}$} is often referred to as the \emph{grand-canonical ensemble} or \emph{grand-canonical measure} of density {\small$\sigma$}. We also clarify that the {\small$\mathbb{P}^{\sigma}$}-measures extend naturally to {\small$\R^{\Z}$} upon replacing {\small$\llbracket1,\infty\rrbracket$} by {\small$\Z$} in the formula above. (The same is true for any subset {\small$\mathbb{B}\subseteq\Z$}.)
\end{definition}
We clarify that the {\small$\mathbb{P}^{\sigma}$} measures are \emph{not} all invariant measures of \eqref{eq:phiI}-\eqref{eq:phiII}. Indeed, the Langevin component \eqref{eq:phiII} picks out a specific {\small$\sigma\in\R$} (namely, the density for which {\small$\upsilon_{\sigma}=0$}, which we will later assume to be {\small$\sigma=0$} in {\color{black}Assumption \ref{assump:potential}} for the sake of a clearer presentation.) The invariance of {\small$\mathbb{P}^{0}$} for \eqref{eq:phiI}-\eqref{eq:phiII} is obtained in Section \ref{subsection:generator}. However, the entire {family} of {\small$\mathbb{P}^{\sigma}$}-ensembles is also important, because the dynamics \eqref{eq:phiI}-\eqref{eq:phiII} in the bulk resemble those in the full-space, which have all ensembles in Definition \ref{definition:gc} as invariant measures. 

The final object we must introduce is a Cole-Hopf transform of \eqref{eq:currI}-\eqref{eq:currII}. We define
\begin{align}
\mathbf{Z}^{\N}_{\t,\x}:=\exp(\lambda\mathbf{j}^{\N}_{\t,\x}-\lambda\mathscr{R}_{\lambda}\t), \label{eq:ch}
\end{align}
for all {\small$(\t,\x)\in[0,\infty)\times\llbracket0,\infty\rrbracket$}, where the constants {\small$\lambda,\mathscr{R}_{\lambda}$} are defined as follows. First, we set
\begin{align}
\alpha&:=\partial_{\sigma}\E^{\sigma}\mathscr{U}'[\bphi_{1}]|_{\sigma=0},\label{eq:alpha}\\
\beta&:=\tfrac12\partial_{\sigma}^{2}\E^{\sigma}(\mathbf{F}_{2}[\tau_{\mathrm{M}}\bphi])|_{\sigma=0}.\label{eq:beta}
\end{align}
Above, {\small$\mathrm{M}>0$} is a large but fixed integer depending only on {\small$\deg$} from \eqref{eq:nl}. (It does not matter so much what {\small$\mathrm{M}$} is as long as it is large enough such that none of the terms in \eqref{eq:nl}, after shifting {\small$\bphi\mapsto\tau_{\mathrm{M}}\bphi$} therein, trivially vanish, i.e. all of the spatial points therein live in {\small$\llbracket1,\infty\rrbracket$}. This follows by exchangeability of the one-dimensional marginals of \eqref{eq:gcI}. By the same token, we can evaluate at any point in {\small$\llbracket1,\infty\rrbracket$} in \eqref{eq:alpha}.) Now, we set
\begin{align}
\lambda&:=\alpha^{-1}\beta \quad\text{and}\quad \mathscr{R}_{\lambda}:=\tfrac{1}{12}\lambda^{3}\E^{0}(\mathscr{U}'[\bphi_{1}]\bphi_{1}^{3})-\tfrac16\beta_{2}\lambda^{2}. \label{eq:rg}
\end{align}
It turns out that {\small$\alpha>0$} and that {\small$\beta\neq0$} as long as {\small$\beta_{2}\neq0$}; see Appendix A.2 of \cite{Y26PTRF}. As before, the expectation in \eqref{eq:rg} can be evaluated at any spatial point in {\small$\llbracket1,\infty\rrbracket$} without changing the value of {\small$\mathscr{R}_{\lambda}$}.
\subsection{Precise statements of theorems}
Before we present the main results of this paper, we shall first introduce two assumptions on the model at hand. We start with assumptions on {\small$\mathscr{U}$} in \eqref{eq:currI}-\eqref{eq:currII} and \eqref{eq:phiI}-\eqref{eq:phiII}.
\begin{assump}\label{assump:potential}
\fsp We have {\small$\mathscr{U}''[\cdot]\in[\boldsymbol{\Lambda}^{-1},\boldsymbol{\Lambda}]$} for some fixed constant {\small$\boldsymbol{\Lambda}\geq1$}, and {\small$\upsilon_{0}=0$} (see Definition \ref{definition:gc}). 
\end{assump}
In particular, the potential is uniformly convex (so it is within the scope of Bakry-\'{E}mery theory), and it admits a technical {\small$\mathscr{C}^{2}$}-type upper bound (which can certainly be relaxed with some more work). The other assumption on {\small$\upsilon_{0}$} is there only for convenience (so that we do not have to track this additional parameter and replace {\small$\mathscr{U}$} in all of our calculations by {\small$\mathscr{U}[\cdot]-\upsilon_{0}[\cdot]$}).

The second assumption concerns the class of non-equilibrium initial data that we allow in this paper. 
\begin{assump}\label{assump:data}
\fsp Let {\small$\mathfrak{P}$} denote the Radon-Nikodym derivative for the law of the initial data {\small$(\bphi_{0,\x})_{\x\in\llbracket1,\infty\rrbracket}$} with respect to {\small$\mathbb{P}^{0}$} from Definition \ref{definition:gc}. We assume that for some (small) constant {\small$\gamma_{\mathrm{data}}>0$} independent of {\small$\N$}, we have
\begin{align}
\|\mathfrak{P}\|_{\mathrm{L}^{\infty}(\R^{\llbracket1,\infty\rrbracket})}\leq\N^{\gamma_{\mathrm{data}}}. \label{eq:dataI0}
\end{align}
(If {\small$\mathfrak{P}_{\t}$} denotes the density for the law of {\small$\bphi_{\t}$} with respect to {\small$\mathbb{P}^{0}$} at any deterministic time {\small$\t\geq0$}, then since {\small$\mathbb{P}^{0}$} is an invariant measure for {\small$\bphi_{\t}$}, standard convexity implies that \eqref{eq:dataI0} also holds for {\small$\mathfrak{P}_{\t}$} in place of {\small$\mathfrak{P}$}.)
\end{assump}
We clarify that \eqref{eq:dataI0} will only be used for estimates over the local theory for \eqref{eq:currI}-\eqref{eq:currII}. We also clarify that Assumption \ref{assump:data} is \emph{far from perturbative}, since it allows for non-equilibrium initial data. Indeed, the divergence of the \abbr{RHS} of \eqref{eq:dataI0} in the large-{\small$\N$} limit is what allows us to derive the half-space \abbr{KPZ} equation with essentially any deterministic initial data which it can make sense of. Finally, we remark that Assumption \ref{assump:data} is not optimal, and it can perhaps be replaced by the following much more general {\small$\mathrm{L}^{2}$}-estimate (again for {\small$\gamma_{\mathrm{data}}>0$} small):
\begin{align}
\E^{0}|\mathfrak{P}|^{2}\leq\exp(\N^{\gamma_{\mathrm{data}}}). \label{eq:dataI}
\end{align}
Indeed, in the most of our estimates, we will use \eqref{eq:dataI} instead of \eqref{eq:dataI0}. We will only use \eqref{eq:dataI0} for convenience in a couple of instances in Section \ref{section:stochII}, where the more general estimate \eqref{eq:dataI} (in all likelihood) suffices, though only after a much more complicated and technical argument based on prior work \cite{Y25EJP} (that requires only a priori estimates on the \emph{relative entropy} {\small$\E^{0}\mathfrak{P}\log\mathfrak{P}$} that, under the assumption of \eqref{eq:dataI}, is {\small$\lesssim\N^{\gamma_{\mathrm{data}}}$}.) However, in order to avoid an overly technical presentation (without losing the non-equilibrium flavor of this paper), we take \eqref{eq:dataI0} for convenience. (Indeed, \cite{Y25EJP} is very complicated and lengthy. Moreover, this paper is already quite technically involved.) We defer an explanation of how to go from \eqref{eq:dataI0} to \eqref{eq:dataI} in a future manuscript.
\subsubsection{Universality for continuous initial data}
The first result of this paper focuses on ``nice" continuous initial data for the limit half-space \abbr{KPZ} equation. In particular, besides the a priori estimate in \eqref{eq:dataI}, we impose only standard estimates on the Cole-Hopf transform \eqref{eq:ch} as in \cite{CS18,P19}. First, let us introduce important notation for the entire paper. We write {\small$\a=\mathrm{O}(\b)$} if {\small$|\a|\leq\mathrm{C}|\b|$} for some {\small$\mathrm{C}>0$}. We will also write {\small$\a\lesssim\b$} to mean the same thing. Subscripts in the {\small$\mathrm{O}$} and {\small$\lesssim$} notation indicate parameters on which the implied constant {\small$\mathrm{C}$} depends.
\begin{theorem}\label{theorem:main}
\fsp Suppose that in addition to Assumptions \ref{assump:potential} and \ref{assump:data}, we have the following.
\begin{enumerate}
\item For any {\small$p\geq1$} and {\small$\zeta>0$}, there exists {\small$\kappa_{p,\zeta}\geq0$} such that for any {\small$\x,\y\in\llbracket0,\infty\rrbracket$} with {\small$\mathfrak{l}:=\y-\x\lesssim\N$}, we have
\begin{align}
\E|\mathbf{Z}^{\N}_{0,\x}|^{2p}\lesssim_{p}\exp(\tfrac{\kappa_{p,\zeta}|\x|}{\N}) \quad\text{and}\quad \E|\grad^{\mathbf{X}}_{\mathfrak{l}}\mathbf{Z}^{\N}_{0,\x}|^{2p}\lesssim_{p,\zeta}\N^{-p+p\zeta}|\mathfrak{l}|^{p-p\zeta}\exp(\tfrac{\kappa_{p,\zeta}|\x|}{\N}).\label{eq:mainI}
\end{align}
\item The initial data for \eqref{eq:currI}-\eqref{eq:currII} is independent of the Brownian motions {\small$\mathbf{b}$}. Moreover, {\small$\X\mapsto\mathbf{Z}^{\N}_{0,\N\X}$} converges locally uniformly in {\small$[0,\infty)$} to a continuous {\small$\mathbf{Z}_{0,\cdot}$} in probability. (We extend {\small$\X\mapsto\mathbf{Z}^{\N}_{0,\N\X}$} from {\small$\X\in[0,\infty)\cap\N^{-1}\Z$} to all {\small$\X\in[0,\infty)$} by linear interpolation, though this specific choice is unimportant.) 
\end{enumerate}
Then, there exists a coupling between {\small$\mathbf{Z}^{\N}$} and the solution {\small$\mathbf{Z}$} to \eqref{eq:sheIa}-\eqref{eq:sheIb}, with initial data {\small$\mathbf{Z}_{0,\cdot}$} above and with parameter {\small$\mathbf{A}$} determined below, such that {\small$\mathbf{Z}^{\N}_{\t,\N\X}-\mathbf{Z}_{\t,\X}\to0$} locally uniformly in {\small$(\t,\X)\in[0,\infty)\times[0,\infty)$}:
\begin{align}
\mathbf{A}:=-\tfrac12\alpha^{-1}\lambda.\label{eq:mainII}
\end{align}
\end{theorem}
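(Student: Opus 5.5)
The proof follows the three-step schematic of Section \ref{subsection:introtech}. The first step is an Itô computation for the microscopic Cole-Hopf transform \eqref{eq:ch}. In the bulk $\x\geq1$, \eqref{eq:currI} and Itô's formula give
\begin{align*}
\d\mathbf{Z}^{\N}_{\t,\x}=\mathbf{Z}^{\N}_{\t,\x}\Big(\lambda\N^{\frac32}\grad^{\mathbf{X}}_{1}\mathscr{U}'[\bphi_{\t,\x}]+\lambda\N\mathbf{F}[\tau_{\x}\bphi_{\t}]+\lambda^{2}\N-\lambda\mathscr{R}_{\lambda}\Big)\d\t+\sqrt{2}\lambda\N^{\frac12}\mathbf{Z}^{\N}_{\t,\x}\d\mathbf{b}_{\t,\x}.
\end{align*}
I will compare this drift with $\alpha\N^{2}\Delta\mathbf{Z}^{\N}_{\t,\x}$. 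To do so, expand $\exp(\lambda\N^{-1/2}\bphi)-1$ to fourth order and use the Taylor expansion of $\E^{\sigma}$ in $\sigma$. This should leave the ansatz
\[
\d\mathbf{Z}^{\N}=\alpha\N^{2}\Delta\mathbf{Z}^{\N}\d\t+\sqrt2\lambda\N^{1/2}\mathbf{Z}^{\N}\d\mathbf{b}+\N^{\frac12}\mathsf{Fl}^{\mathrm{bulk}}\mathbf{Z}^{\N}\d\t.
\]
Here $\mathsf{Fl}^{\mathrm{bulk}}$ is a sum of bulk-admissible fluctuations in the sense of Definition \ref{definition:bulkadmissible}, possibly after adding total-gradient corrections. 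The choices \eqref{eq:alpha}, \eqref{eq:beta} and \eqref{eq:rg} are exactly what makes the constant and linear-in-chaos parts cancel.

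At $\x=0$ I will not treat \eqref{eq:currII} separately. Instead I will write $\alpha\N^{2}\Delta$ at $0$ using a ghost site $\mathbf{Z}^{\N}_{\t,-1}:=(1-\N^{-1}\lambda\mathbf{A}')\mathbf{Z}^{\N}_{\t,0}$. The bulk equation at $\x=1$ and the edge equation at $\x=0$ then have to be combined. The $\N^{3/2}\mathscr{U}'[\bphi_{\t,1}]$ term in \eqref{eq:currII} is compared with the discrete gradient $\N^{2}\grad^{\mathbf{X}}_{-1}\mathbf{Z}^{\N}_{\t,1}$, and the Itô correction $\lambda^{2}\N$ together with the truncated $\mathbf{F}$ at the edge produce an $\mathrm{O}(\N)$ constant times $\mathbf{Z}^{\N}_{\t,0}$. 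This is the cancellation mentioned in the introduction: after replacing $\mathscr{U}'[\bphi_{1}]$ by its $\mathbb{P}^{0}$-linearization $\alpha\bphi_{1}$ up to edge-admissible fluctuations (Definition \ref{definition:edgeadmissible}), the residual constant should match a Robin condition with $\mathbf{A}=-\frac12\alpha^{-1}\lambda$, as in \eqref{eq:mainII}. What remains is an edge fluctuation of size $\N\cdot\mathsf{Fl}^{\mathrm{edge}}\mathbf{Z}^{\N}_{\t,0}$.

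The second step is the Boltzmann-Gibbs principles. In the bulk I will use the localization-gluing scheme. Away from the edge, at distance $\gg\N^{\epsilon}$ times the mesoscopic scale, the dynamics coincide locally with full-space dynamics, so I can import the full-space BGP estimates of \cite{Y26PTRF} through the Nash-Davies type heat kernel estimate (Proposition \ref{prop:nashdavies}). Near the edge I will glue full-space dynamics together and control the discrepancy with the a priori bound \eqref{eq:dataI}. For the edge fluctuations I will prove a time-averaging estimate at the single site $0$. The key input is a spectral-gap or Kipnis-Varadhan $\mathrm{H}^{-1}$ bound from the Glauber component in \eqref{eq:phiII}: since $\mathsf{Fl}^{\mathrm{edge}}$ has mean zero under $\mathbb{P}^{0}$ with no linear part, its time average over windows of length $\N^{-2+\delta}$ gains enough to beat the $\N$ prefactor against the heat kernel's boundary weight $\N^{-1}$.

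The third step is a parametrix for the discrete Robin heat kernel. I will write $\mathbf{Z}^{\N}$ in Duhamel form with the Robin kernel, and show via the BGPs and moment bounds derived from \eqref{eq:mainI} that the fluctuation terms vanish in $\mathrm{L}^{2p}$ locally uniformly. Tightness in the space of continuous functions then follows from Kolmogorov. Any limit point solves the martingale problem for \eqref{eq:sheIa}-\eqref{eq:sheIb} with $\mathbf{A}$ from \eqref{eq:mainII}. By uniqueness \cite{CS18}, combined with convergence of the initial data and Skorokhod representation, this gives the coupling.

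I expect the main obstacle to be the bulk BGP near the edge, that is, making localization-gluing work with the singular $\N^{3/2}\wt{\mathbf{F}}$ drift. The plan there is to show that $\mathbf{F}$ is perturbative at local scales, using sub-criticality.
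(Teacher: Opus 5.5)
Your three-step outline matches the paper's. However, there are two concrete places where the argument as proposed does not go through.

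\textbf{The edge term is of order $\N^{3/2}$, not $\N$.} Comparing $\lambda\N^{3/2}\mathscr{U}'[\bphi_{\t,1}]\mathbf{Z}^{\N}_{\t,0}$ with $\alpha\N^{2}\Delta_{\mathbf{A}}\mathbf{Z}^{\N}_{\t,0}$ leaves exactly the term $\lambda\N^{3/2}\mathscr{V}'[\bphi_{\t,1}]\mathbf{Z}^{\N}_{\t,0}$ of Lemma \ref{lemma:msheedge}, where $\mathscr{V}'[\bphi_{1}]=\mathscr{U}'[\bphi_{1}]-\alpha\bphi_{1}$. Calling it an edge-admissible fluctuation does not make it $\mathrm{O}(\N)$. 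So your claim that the edge remainder is $\N\cdot\mathsf{Fl}^{\mathrm{edge}}\mathbf{Z}^{\N}_{\t,0}$ is false, and your Kipnis--Varadhan edge bound, which is calibrated to beat an $\N$ prefactor, does not cover this term.

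Nor can the term be averaged away on its own. Gaining the extra $\N^{-1/2}$ would require windows $\gg\N^{-1}$. More seriously, $\mathscr{V}'[\bphi_{1}]$ is correlated with the increments of $\mathbf{Z}^{\N}_{\cdot,0}$, whose drift contains $\lambda\N^{3/2}\mathscr{U}'[\bphi_{1}]$. Over the $\N^{-2}$ relaxation time this gives an a priori correction of order $\N^{3/2}\cdot\N^{3/2}\cdot\N^{-2}=\N$. That is the same order as the Robin term $\alpha\lambda\N\mathbf{A}\mathbf{Z}^{\N}_{\t,0}$ you are trying to identify.

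The missing idea is the Neumann cancellation in Lemma \ref{lemma:sdepre}:
\begin{itemize}
\item The bulk terms $\frac12\lambda\N^{3/2}\grad^{\mathbf{X}}_{\pm1}(\mathscr{V}'\mathbf{Z}^{\N})$ from \eqref{eq:msheII(1)a}, present at every $\x\geq1$, must be summed by parts onto a smooth kernel. In the paper this is the mesoscopic kernel defining $\mathbf{S}^{\N}$ in \eqref{eq:heatIII}, which commutes with $\mathscr{T}_{\N}$.
\item Lemma \ref{lemma:sbp} then produces boundary terms at $\w=0,1$. These cancel the edge $\N^{3/2}$ term up to lower-order errors and $-\frac12\lambda^{2}\N\mathscr{V}'[\bphi_{\t,1}]\bphi_{\t,1}\mathbf{Z}^{\N}_{\t,0}$.
\item That remainder is $\mathrm{O}(\N)$ and mean zero by Lemma \ref{lemma:classifyedge}.
\end{itemize}
Your value $\mathbf{A}=-\frac12\alpha^{-1}\lambda$, read off from the mean of the $\mathrm{O}(\N)$ constant, is correct. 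It is only justified after this cancellation.

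\textbf{Step three cannot conclude as stated.} The Boltzmann--Gibbs bounds available here (Propositions \ref{prop:stochmain1} and \ref{prop:stochmain2}) have a weak form:
\begin{itemize}
\item they are second-moment, time-integrated bounds for fixed $\x$, holding on a high-probability event;
\item their spatial sums grow with the interval length;
\item they give no higher moments.
\end{itemize}
Meanwhile \eqref{eq:mainI} controls moments of $\mathbf{Z}^{\N}$ only at $\t=0$. Since the fluctuating coefficients multiply the unknown $\mathbf{Z}^{\N}$ itself, you cannot conclude that $\int\mathbf{H}^{\N,\mathbf{A}}\cdot\N\mathbf{Av}\cdot\mathbf{Z}^{\N}$ vanishes in $\mathrm{L}^{2p}$. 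Nor can you close a global Gronwall argument on $\llbracket0,\infty\rrbracket$.

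The paper's way around this is the stochastic heat kernel hierarchy:
\begin{itemize}
\item It views $\mathbf{S}^{\N}$ as solving a linear equation with kernel $\mathbf{K}^{\N,\zeta}$, with coefficients cut off at $\N^{1+\zeta}$.
\item It splits off $\mathbf{L}^{\N,\zeta,\sim}$, which contains only the noise, error and time-gradient terms; these are handled by moments under the stopping time \eqref{eq:tap}.
\item It bounds the remaining Duhamel terms pathwise in $\ell^{\infty}_{\x}\ell^{1}_{\y}$, using Cauchy--Schwarz in time against \eqref{eq:kzetaI1}.
\item Chapman--Kolmogorov then gives Proposition \ref{prop:hkzeta}, with loss $\exp(\N^{-\beta_{\star}}\N^{\zeta})$.
\item Lemma \ref{lemma:smallzeta} lowers $\zeta$ until one can compare with the lattice \abbr{SHE} \eqref{eq:qsde}.
\end{itemize}
Your martingale-problem identification could replace that final comparison. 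It presupposes exactly this error control, however.

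Separately, your near-edge gluing step is underspecified. The block adjacent to $0$ admits no full-space comparison and needs its own Kipnis--Varadhan argument via the Glauber spectral gap (Proposition \ref{prop:case2}). The bound \eqref{eq:dataI} serves only for changing measure, not for controlling any discrepancy.
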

\begin{remark}
\fsp The constant {\small$\mathbf{A}$} also appears in \cite{GH19} as a shift for the Neumann-Robin parameters (though in \cite{GH19}, one has {\small$\alpha=1$} since the potential is taken to be {\small$\mathscr{U}(\a)=\frac12\a^{2}$} therein).
\end{remark}
Many examples of initial data satisfying Assumption \ref{assump:data} and \eqref{eq:mainI} may be constructed fairly easily. Indeed, consider any (possibly deterministic) initial data which satisfies \eqref{eq:mainI}. Then, take initial data sampled according to the invariant measure {\small$\mathbb{P}^{0}$} but conditioned to be within {\small$\e(\N)$} of the original (possibly deterministic) data on the interval {\small$\llbracket0,\Lambda(\N)\cdot\N\rrbracket$} for some {\small$\e(\N)\to0$} and {\small$\Lambda(\N)\to\infty$}, both sufficiently slowly. This slightly randomized initial data converges to the original data of interest as {\small$\N\to\infty$} on a domain that is much larger than the macroscopic scale {\small$\N$}. Moreover, it is essentially a consequence of density of the support of Wiener measure that if {\small$\e(\N)\to0$} and {\small$\Lambda(\N)\to\infty$} slowly enough, then the estimate \eqref{eq:dataI} is satisfied. We emphasize the importance that the \abbr{RHS} of \eqref{eq:dataI} diverges in the large-{\small$\N$} limit; if it did not, we would not be able to take {\small$\e(\N)\to0$} or {\small$\Lambda(\N)\to\infty$} at all.

As an illustration of this construction, we will now present a derivation of statistically stationary solutions to the half-space \abbr{KPZ} equation from \eqref{eq:currI}-\eqref{eq:currII}. (Remarkably, this result does \emph{not} use anything about the invariant measures for \eqref{eq:currI}-\eqref{eq:currII} except that it has {\small$\mathbb{P}^{0}$} as an invariant measure.) Before we state the result, for the sake of clear presentation, we first introduce the family of invariant measures for half-space \abbr{KPZ} from \cite{BC23} as follows.
\begin{definition}\label{definition:statkpz}
\fsp Fix any {\small$\mathbf{A}\in\R$} and {\small${v}\leq\min(0,\mathbf{A})$}. Define the process
\begin{align*}
\mathbf{h}^{\mathbf{A},{v}}_{\X}:=\log\Big({\omega}^{-1}\int_{0}^{\X}\mathrm{e}^{{B}^{(1)}_{\s}+{B}^{(2)}_{\X}-{B}^{(2)}_{\s}}+\mathrm{e}^{{B}^{(2)}_{\X}}\d\s\Big), \quad \X\in[0,\infty),
\end{align*}
in which {\small${B}^{(1)},{B}^{(2)}$} are independent standard Brownian motions with constant drifts given by {\small${-v},{v}$}, respectively, and where {\small$\omega$} is an independent random variable with law {\small$\mathrm{Gamma}^{-1}(\mathbf{A}-v)$}. (As in \cite{BC23}, we adopt the convention that {\small$\omega=0$} if {\small$\mathbf{A}=v$}.) We will denote by {\small$\bmu_{\mathbf{A},v}$} the law of the above (as a process in {\small$\X\in[0,\infty)$}).
\end{definition}
\begin{remark}
\fsp We emphasize that {\small$\bmu_{\mathbf{A},v}$} is a stationary measure for \eqref{eq:kpzIa}-\eqref{eq:kpzIb} if {\small$\alpha=\beta=\frac12$} therein, not necessarily in general. (Though, by a standard rescaling of parameters, we can use Definition \ref{definition:statkpz} to find invariant measures for general {\small$\alpha,\beta$}. We do not perform this calculation in this paper, as it is not the main goal.)
\end{remark}
\begin{theorem}\label{theorem:mainstat}
\fsp Take any measure {\small$\bmu_{\mathbf{A},v}$} from Definition \ref{definition:statkpz}. There exists a choice of initial data for \eqref{eq:currI}-\eqref{eq:currII} so that there is a coupling between {\small$\mathbf{Z}^{\N}$} and the solution {\small$\mathbf{Z}$} to \eqref{eq:sheIa}-\eqref{eq:sheIb}, with initial data sampled according to {\small$\bmu_{\mathbf{A},v}$} and boundary parameter \eqref{eq:mainII}, such that {\small$\mathbf{Z}^{\N}_{\t,\N\X}-\mathbf{Z}_{\t,\X}\to0$} locally uniformly in {\small$(\t,\X)\in[0,\infty)\times[0,\infty)$}. (Again, we extend functions defined for {\small$\X\in[0,\infty)\cap\N^{-1}\Z$} to all {\small$\X\in[0,\infty)$} by linear interpolation.)
\end{theorem}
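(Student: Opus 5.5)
Theorem \ref{theorem:mainstat} should follow from Theorem \ref{theorem:main} by constructing suitable initial data, following the recipe described just after Theorem \ref{theorem:main}. The argument therefore reduces to building initial data for \eqref{eq:currI}-\eqref{eq:currII} that satisfies Assumption \ref{assump:data} and the moment bounds \eqref{eq:mainI}, and whose Cole-Hopf transform converges to a sample of $\exp(\mathbf{h})$ with $\mathbf{h}\sim\bmu_{\mathbf{A},v}$, suitably rescaled for general $\alpha,\beta$.

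The steps, in order, are as follows.

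First, sample $\mathbf{h}$ from $\bmu_{\mathbf{A},v}$, independently of the Brownian motions $\mathbf{b}$. Set $\mathbf{Z}_{0,\X}:=\exp(\mathbf{h}_{\X})$ (after the rescaling of parameters that makes it stationary for the given $\alpha,\beta$). This is a continuous, positive random function. Because of the Gamma$^{-1}$ factor and the Brownian drifts, it satisfies moment bounds of the form $\E|\mathbf{Z}_{0,\X}|^{2p}\lesssim\exp(\kappa|\X|)$. Its increments over length $\ell$ are $\ell^{1/2-}$-Hölder in $\mathrm{L}^{2p}$, which are exactly the continuum versions of \eqref{eq:mainI}. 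The conditioning on $\omega$ may require truncating $\omega$ to a large compact range away from $0$; the truncation level is then sent to infinity at the end by a total-variation argument.

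Second, condition on $\mathbf{h}$. Draw $\bphi_{0}$ from $\mathbb{P}^{0}$ conditioned on the event that $\N^{-1/2}\sum_{\w\leq\x}\bphi_{0,\w}$ stays within $\e(\N)$ of $\lambda^{-1}\mathbf{h}_{\x/\N}$ for all $\x\in\llbracket0,\Lambda(\N)\N\rrbracket$, with $\mathbf{j}^{\N}_{0,0}$ chosen accordingly. Beyond $\Lambda(\N)\N$, leave the coordinates unconditioned.

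Third, verify the density bound \eqref{eq:dataI0}. Under $\mathbb{P}^{0}$, the rescaled partial sums form a random walk that approximates Brownian motion with variance $\alpha^{-1}$ per unit. Its probability of lying in an $\e$-tube around a fixed Brownian-regular path on $[0,\Lambda]$ is bounded below by roughly $\exp(-C\Lambda\e^{-2}(1+\text{energy of the path}))$. Here one uses small-ball and Cameron–Martin estimates together with a local CLT, after smoothing the path at scale $\e$.

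This gives two things. The density of the conditioned law equals the inverse of this probability on the event and zero elsewhere, so it is at most $\N^{\gamma_{\mathrm{data}}}$ once $\e(\N)\to0$ and $\Lambda(\N)\to\infty$ slowly enough. The discrete analogue of \eqref{eq:mainI} holds on $\llbracket0,\Lambda\N\rrbracket$ because increments in the tube are controlled by those of $\mathbf{h}$ plus $\e$ at macroscopic scales. At mesoscopic scales $\mathfrak{l}\ll\N$ one needs a local Gaussian fluctuation bound under the conditioned measure, which again follows from the density bound combined with $\mathbb{P}^{0}$ concentration, losing only $\N^{\zeta}$ factors. Beyond $\Lambda\N$, the growth $\exp(\kappa|\x|/\N)$ follows from $\mathbb{P}^{0}$ moment bounds.

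Fourth, handle the fact that the data above is random and not uniformly controlled in $\mathbf{h}$. Apply Theorem \ref{theorem:main} conditionally on $\mathbf{h}$ restricted to a high-probability good event (moments of $\omega$ bounded, Hölder norm bounded), then integrate over $\mathbf{h}$. Letting the good event's probability tend to one yields the coupling, and convergence in probability locally uniformly follows.

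The main obstacle is the third step, specifically the quantitative lower bound on the tube probability for the non-Gaussian walk with exponent polynomial in $\e^{-1},\Lambda$. I would first try an exponential tilt at each macroscopic block to match the local slope of the smoothed path, combined with the uniform-convexity (log-Sobolev) structure from Assumption \ref{assump:potential}. This should give both the needed large-deviation lower bound and the mesoscopic regularity needed for \eqref{eq:mainI}.
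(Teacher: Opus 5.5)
Your plan follows the paper: sample the profile, draw $\bphi_{0}$ from $\mathbb{P}^{0}$ conditioned on a uniform $\e(\N)$-tube around it on $\llbracket0,\Lambda(\N)\N\rrbracket$, bound the density by the inverse tube probability on a good set of profiles, and invoke Theorem \ref{theorem:main}. Two differences are worth noting.

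First, you condition quenched on $\mathbf{h}$ and discard a small bad set $\mathscr{G}_{\N}^{c}$. The profile then has law $\bmu_{\mathbf{A},v}(\cdot\mid\mathscr{G}_{\N})$, which is $\mathrm{o}(1)$-close to $\bmu_{\mathbf{A},v}$ in total variation. The only input is a lower bound on $\mathbb{P}^{0}(\mathscr{E}\mid\mathbf{h})$ uniform over $\mathbf{h}\in\mathscr{G}_{\N}$, which is exactly what the paper extracts from \cite{Y26PTRF}. This is cleaner than the paper's write-up, which conditions the pair $(\bphi_{0},\mathbf{h}^{\mathrm{stat}})$ jointly on $\mathscr{E}$ and uses the annealed bound $\|\mathfrak{P}\|_{\infty}\leq\mathbb{P}(\mathscr{E})^{-1}$. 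Read literally, that reweights the profile's law by $\mathbb{P}(\mathscr{E}\mid\mathbf{h}^{\mathrm{stat}})/\mathbb{P}(\mathscr{E})$, which does not tend to $1$: forcing two independent Brownian-type paths to be $\e$-close lowers the quadratic variation of the common path.

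Second, the tube lower bound is not the obstacle you make it, because no rate is needed. Fix $\e,\Lambda$ and a H\"older ball, which is compact in $\mathscr{C}([0,\Lambda])$. Brownian tube probabilities are positive and lower semicontinuous in the center path, hence bounded below on the ball. Donsker's theorem for the $\mathbb{P}^{0}$-walk, applied to a finite net of centers, transfers this to all large $\N$. A diagonal choice of $\e(\N)\downarrow0$ and $\Lambda(\N),\Gamma(\N)\uparrow\infty$ then gives a bound $\geq\N^{-\gamma_{\mathrm{data}}}$, with no tilting or log-Sobolev input. Also, no rescaling of $\mathbf{h}$ is called for: the theorem asserts no stationarity, and the $\mathbf{A}$ in $\bmu_{\mathbf{A},v}$ is unrelated to \eqref{eq:mainII}. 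Truncating $\omega$ is unnecessary too, since $\omega^{-1}$ is Gamma-distributed and has all moments.

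The step that does not work as written is the mesoscopic part of \eqref{eq:mainI}. For a fixed profile, the density is $\mathbb{P}^{0}(\mathscr{E}\mid\mathbf{h})^{-1}$ on $\mathscr{E}$. Moreover $\mathbb{P}^{0}(\mathscr{E}\mid\mathbf{h})\leq\exp(-c\Lambda\e^{-2})$ for every $\mathbf{h}$, because on each of the $\asymp\Lambda\e^{-2}$ blocks of length $\asymp\N\e^{2}$ the walk leaves a given $\e$-tube with probability bounded below. Pushing $\mathbb{P}^{0}$ sub-Gaussian tails through this density yields at best
\[
\E|\grad^{\mathbf{X}}_{\mathfrak{l}}\mathbf{Z}^{\N}_{0,\x}|^{2p}\lesssim_{p}(\Lambda\e^{-2})^{p}(\mathfrak{l}/\N)^{p}\mathrm{e}^{\kappa|\x|/\N},
\]
which fits \eqref{eq:mainI} only for $\mathfrak{l}\lesssim\N(\e^{2}/\Lambda)^{1/\zeta}$. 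The tube together with the regularity of the target covers only $\mathfrak{l}\gtrsim\N\e^{2/(1-\zeta)}$, where the $\e^{2p}$ error can be absorbed. For $\zeta<\frac12$ the window between these ranges is nonempty for all large $\N$, and \eqref{eq:mainI} is required for every fixed $\zeta>0$ uniformly in $\N$. So ``losing only $\N^{\zeta}$ factors'' is not justified. The paper does not settle this either: it verifies \eqref{eq:mainI} only for the discretized target $\mathbf{Z}^{\mathrm{stat},\N}$, via \eqref{eq:mainstat1}.

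What you need is a local estimate for the conditioned law that does not pay for the density. One route: the tube is an intersection of slabs in the linear functionals $\bphi\mapsto\mathbf{j}^{\N}_{0,\x}$, hence convex, so $\mathbb{P}^{0}(\cdot\mid\mathscr{E})$ stays uniformly log-concave under Assumption \ref{assump:potential}. Bakry--\'Emery and Herbst then give sub-Gaussian fluctuations of $\N^{-1/2}\sum_{\x<\w\leq\x+\mathfrak{l}}\bphi_{0,\w}$ about their conditional means at scale $(\mathfrak{l}/\N)^{1/2}$. What remains, and needs a genuine argument, is that the conditional mean path has $(\frac12-)$-H\"older increments at scales $\lesssim\N\e^{2}$. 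Alternatively, impose the tube only on a grid of spacing $\asymp\N\e^{2}$, so that between grid points the conditioned walk is a bridge with nearly linear mean.
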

Let us briefly comment on an interesting feature of Theorem \ref{theorem:mainstat}. A natural approach for obtaining stationary fluctuations would be to use the theory of energy solutions as developed in \cite{GJ14,GP18} in the full-space (but in the half-space). However, to our knowledge, it is not known how to construct space-discretizations of the invariant measures in Definition \ref{definition:statkpz} that are invariant for \eqref{eq:currI}-\eqref{eq:currII}, let alone how to prove any estimates for them. This obstructs the use of energy solutions; on the other hand, Theorem \ref{theorem:mainstat} (and its proof) tells us that this information is not so important, and that one only needs to understand the simple invariant measure coming from {\small$\mathbb{P}^{0}$}.
\subsubsection{Universality for wedge-type initial data}
We now address a special case of singular initial data that is far from (statistical \emph{and} hydrodynamic) equilibrium. The relevant solution in this case is the \emph{narrow-wedge solution} to \eqref{eq:sheIa}-\eqref{eq:sheIb}. To be precise, this is the continuous, adapted process {\small$\mathbf{Z}^{\mathrm{NW}}:[0,\infty)\times[0,\infty)\to\R$} such that 
\begin{align}
\mathbf{Z}^{\mathrm{NW}}_{\t,\X}=\mathbf{H}^{\mathbf{A}}_{\t,\X,0}+\int_{0}^{\t}\int_{\R}\mathbf{H}^{\mathbf{A}}_{\t-\s,\x,\Y}\lambda\mathbf{Z}^{\mathrm{NW}}_{\s,\Y}\xi_{\s,\Y}\d\Y\d\s,
\end{align}
where the integral on the \abbr{RHS} is in the It\^{o}-Walsh sense, and {\small$\mathbf{H}^{\mathbf{A}}$} is the Robin heat kernel with boundary parameter {\small$\lambda\mathbf{A}$}. (That is, we have {\small$\partial_{\t}\mathbf{H}^{\mathbf{A}}_{\t,\X,\Y}=\alpha\partial_{\X}^{2}\mathbf{H}^{\mathbf{A}}_{\t,\X,\Y}$} with {\small$\partial_{\X}\mathbf{H}^{\mathbf{A}}|_{\X=0}=\lambda\mathbf{A}\mathbf{H}^{\mathbf{A}}$}. The extra factor of {\small$\lambda$} takes into account the presence of {\small$\lambda$} inside the exponential in \eqref{eq:ch}, and it is purely a formal convention.)
\begin{theorem}\label{theorem:mainwedge}
\fsp There exists an initial data for \eqref{eq:currI}-\eqref{eq:currII} such that for some deterministic sequence {\small$\mathcal{R}_{\N}\to\infty$}, there exists a coupling between {\small$\mathbf{Z}^{\N}$} and {\small$\mathbf{Z}^{\mathrm{NW}}$} such that {\small$\mathcal{R}_{\N}\mathbf{Z}^{\N}_{\t,\N\X}-\mathbf{Z}^{\mathrm{NW}}_{\t,\X}\to0$} locally uniformly in {\small$[\tau,\infty)\times[0,\infty)$} for any fixed {\small$\tau>0$}. (Again, we extend functions of {\small$\X\in[0,\infty)\cap\N^{-1}\Z$} to {\small$\X\in[0,\infty)$} by linear interpolation.)
\end{theorem}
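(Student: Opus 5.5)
We will deduce Theorem \ref{theorem:mainwedge} from the machinery behind Theorem \ref{theorem:main}, using an approximation of the narrow wedge at a mesoscopic scale. Concretely, we fix a slowly vanishing scale {\small$\e_{\N}\to0$}. We choose initial data whose Cole-Hopf transform at time {\small$0$} is, up to a normalizing constant, an approximate delta mass of width {\small$\e_{\N}\N$} at the edge. We build it as in the discussion after Theorem \ref{theorem:main}: sample {\small$\bphi_{0}$} from {\small$\mathbb{P}^{0}$} conditioned so that {\small$\mathbf{j}^{\N}_{0,\cdot}$} stays within a small window of the deterministic profile {\small$\lambda^{-1}\log\mathbf{p}_{\e_{\N}}(\N^{-1}\x)$}. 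Here {\small$\mathbf{p}_{\e}$} is a smooth bump of width {\small$\e$} with a steep linear decay of slope {\small$-\e^{-1}$} outside it. The conditioning window is chosen so that \eqref{eq:dataI0} holds. This costs a factor of roughly {\small$\exp(\mathrm{O}(\e_{\N}^{-1}))$}, which is at most {\small$\N^{\gamma_{\mathrm{data}}}$} if {\small$\e_{\N}\to0$} slowly. We then set {\small$\mathcal{R}_{\N}$} to be the normalization turning {\small$\mathbf{Z}^{\N}_{0,\N\cdot}$} into a probability density in {\small$\X$}. Typically {\small$\mathcal{R}_{\N}\approx\e_{\N}$} times a random but concentrated factor, which we make deterministic by the conditioning.

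The key steps are as follows. First, we write the microscopic Cole-Hopf equation in mild form with respect to the discrete Robin kernel with parameter \eqref{eq:mainII}, exactly as in the proof of Theorem \ref{theorem:main}. We then split {\small$\mathcal{R}_{\N}\mathbf{Z}^{\N}_{\t}$} into the free part {\small$\mathcal{R}_{\N}\mathbf{H}^{\N}_{\t}\mathbf{Z}^{\N}_{0}$} and the stochastic and fluctuation parts. Second, we prove moment bounds for {\small$\mathcal{R}_{\N}\mathbf{Z}^{\N}_{\t,\x}$} that are uniform for {\small$\t\geq\tau$}. We also prove bounds near {\small$\t=0$} that are weighted by {\small$\t^{1/2}$}, that is, the standard narrow-wedge weights {\small$\E|\mathcal{R}_{\N}\mathbf{Z}^{\N}_{\t,\x}|^{2p}\lesssim(\t+\e_{\N}^{2})^{-p/2}$}. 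These follow from the same stochastic heat kernel estimates (step (3) of the schematic) applied to an initial datum bounded by {\small$\e_{\N}^{-1}\mathbf{1}_{\x\lesssim\e_{\N}\N}$} plus exponentially small tails. The Boltzmann-Gibbs inputs only need the density bound \eqref{eq:dataI0}, which is uniform in the initial profile. Third, we use the Markov property at a small time {\small$\tau'<\tau$}. The rescaled field at time {\small$\tau'$} satisfies the hypotheses \eqref{eq:mainI} with constants depending on {\small$\tau'$} but not on {\small$\N$}, by the regularizing effect of the kernel. So we can apply Theorem \ref{theorem:main} from time {\small$\tau'$} onward. Fourth, we identify the limit at time {\small$\tau'$}: tightness plus the weighted moment bounds show that any limit point solves the narrow-wedge mild equation. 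Uniqueness of that mild solution, together with {\small$\tau'\to0$}, then gives the claim.

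The main obstacle is the second step, namely short-time control of the fluctuation terms when the initial profile is singular at scale {\small$\e_{\N}\N$}. Theorem \ref{theorem:main} only uses the regularity \eqref{eq:mainI} uniformly. Here the gradient bounds degenerate like {\small$\e_{\N}^{-1}$}, and the edge fluctuations act exactly where the mass concentrates. To handle this, we would first run the Boltzmann-Gibbs estimates with explicit {\small$\t^{-1/2}$} singular weights in the time integrals, and check that they remain integrable. The mesoscopic time-averaging scales are {\small$\N^{-2+\delta}$}, much smaller than {\small$\e_{\N}^{2}$}. The resulting losses can then be absorbed into the weight {\small$(\t+\e_{\N}^{2})^{-1/2}$}. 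If this fails near the edge, we would instead place the wedge at a point {\small$\x_{0}=\e_{\N}^{1/2}\N$} away from the wall. We would then use the Robin kernel to transport it to the boundary, which converges to {\small$\mathbf{H}^{\mathbf{A}}_{\t,\X,0}$} as {\small$\e_{\N}\to0$}.
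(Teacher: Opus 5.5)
There is a genuine gap, located exactly where you placed the main obstacle, and it is more than a technicality. Your Step 2 says the weighted moment bounds for $\mathcal{R}_{\N}\mathbf{Z}^{\N}$ ``follow from the same stochastic heat kernel estimates.'' They do not. The kernel bound the paper proves, \eqref{eq:hkzetaI} (via Lemmas \ref{lemma:kzeta}--\ref{lemma:kzeta2}), is a high-probability bound on the exponentially weighted $\ell^{1}_{\y}$-norm of $\mathbf{K}^{\N,\zeta}_{\s,\t,\x,\cdot}$. Through \eqref{eq:hkzetaI7} it controls $\mathbf{S}^{\N,\zeta}$ only by the \emph{supremum} of the data. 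For your profile that supremum is of order $\e_{\N}^{-1}$ at every time, so the bound yields neither $\t^{-1/2}$ smoothing nor any moment estimate. The Boltzmann--Gibbs bounds (Propositions \ref{prop:stochmain1}, \ref{prop:stochmain2}) are time-integrated second-moment bounds on high-probability events, not moment bounds for $\mathbf{Z}^{\N}$. Step 3 inherits the problem. Restarting Theorem \ref{theorem:main} at $\tau'$ requires \eqref{eq:mainI} for the microscopic field $\mathbf{Z}^{\N}_{\tau',\cdot}$, i.e. moment and microscopic gradient bounds under the full nonlinear dynamics. Nothing in the paper gives these; it only proves high-probability comparisons with $\mathbf{Q}^{\N}$. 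Step 4 is then the whole convergence problem again on an interval containing the singularity, and ``re-run Boltzmann--Gibbs with $\t^{-1/2}$ weights'' is not a mechanism. There are also some minor slips:
\begin{itemize}
\item The natural weight for a $2p$-th moment of a narrow-wedge field is $(\t+\e_{\N}^{2})^{-p}$, not $(\t+\e_{\N}^{2})^{-p/2}$.
\item The averaging scales are $\mathfrak{t}_{\mathbf{Av}}=\N^{-2/3-\mathrm{C}_{0}\delta_{\mathbf{S}}}$ and $\mathfrak{t}_{\partial}=\N^{-3/2}$, not $\N^{-2+\delta}$.
\item With an order-one bump the normalization is $\mathcal{R}_{\N}\asymp\e_{\N}^{-1}\to\infty$, not $\e_{\N}$.
\end{itemize}

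The missing idea is that none of this is needed. Everything in Sections \ref{section:shk}--\ref{section:smallzeta} is phrased through the kernels $\mathbf{K}^{\N,\zeta}$ and $\mathbf{L}^{\N,\zeta,\sim}$. Their stochastic inputs (Lemmas \ref{lemma:phiap}, \ref{lemma:rratio}, \ref{lemma:currtreg} and Propositions \ref{prop:stochmain1}, \ref{prop:stochmain2}) see the initial data only through the density bound \eqref{eq:dataI0}, which is blind to the height profile. The profile enters only through sup-type bounds: Lemma \ref{lemma:databound} in \eqref{eq:smallzetaI}--\eqref{eq:smallzetaII2}, and \eqref{eq:smallzetaII3a}. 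Those are played against errors that save a fixed power of $\N$. So if the approximate delta blows up slower than any power of $\N$, which your slowly vanishing $\e_{\N}$ already ensures, the loss $\sup_{\x}|\mathcal{R}_{\N}\mathbf{Z}^{\N}_{0,\x}|\lesssim\N^{\mathrm{o}(1)}$ is absorbed. The reduction $\mathbf{S}^{\N}\approx\mathbf{S}^{\N,\zeta}\approx\mathbf{W}^{\N}\approx\mathbf{Y}^{\N}\approx\mathbf{Q}^{\N}$ then applies to $\mathcal{R}_{\N}\mathbf{Z}^{\N}$ as is. The edge fluctuations near the mass are already inside the data-independent kernel estimate, so that concern is moot. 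What remains is the narrow-wedge limit of the \emph{linear} lattice half-space SHE \eqref{eq:qsde}, which is classical (Section 6 of \cite{P19}). It needs only:
\begin{itemize}
\item positivity of the data;
\item $\|\mathbf{e}^{\t\mathscr{T}_{\N}}(\mathcal{R}_{\N}\mathbf{Z}^{\N}_{0,\cdot})\|_{\ell^{\infty}}\lesssim\t^{-1/2}$;
\item $\N^{-1}\|\mathcal{R}_{\N}\mathbf{Z}^{\N}_{0,\cdot}\|_{\ell^{1}}\lesssim1$;
\item weak convergence of the rescaled data to $\delta_{0}$.
\end{itemize}
These are exactly the properties the paper extracts from the construction of \cite{Y26PTRF} restricted to $\llbracket0,\infty\rrbracket$. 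Time-singular weights appear only in this linear step, where they are standard. No Markov restart and no weighted Boltzmann--Gibbs estimates are required.
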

The initial data in Theorem \ref{theorem:mainwedge} is ultimately built by taking a smooth approximation to a delta function that blows up slowly as {\small$\N\to\infty$}, and then applying a slight randomization as explained after Theorem \ref{theorem:main}. Moreover, by Theorem 1.7 in \cite{IMS26} and Theorem \ref{theorem:mainwedge}, we obtain a Tracy-Widom \abbr{GOE} or \abbr{GSE} distributional limit (depending on the value of the limiting Neumann parameter \eqref{eq:mainII}) under the double limit given by convergence to {\small$\log\mathbf{Z}^{\mathrm{NW}}_{\t,0}$} then to Tracy-Widom. We emphasize that both \abbr{GOE} and \abbr{GSE} statistics can be obtained from the model \eqref{eq:currI}-\eqref{eq:currII} by varying the coupling constant {\small$\beta_{2}$} in \eqref{eq:nl}.
\begin{remark}\label{remark:horizon}
\fsp For clarity of presentation, we show locally uniform convergence on {\small$[0,1]\times[0,\infty)$} in Theorems \ref{theorem:main}, \ref{theorem:mainstat}, and \ref{theorem:mainwedge} (and, in particular, we restrict to a time-horizon of {\small$1$}). There is nothing special about {\small$1$}, though. It is just a convenient choice that can be replaced by any finite horizon.
\end{remark}
\subsection{Plan for the paper}
In Section \ref{section:method}, we give a guide through the technical aspects of this paper. In Section \ref{section:sde}, we compute an appropriate (ansatz for the) \abbr{SDE} satisfied by the Cole-Hopf transform. In Section \ref{section:shk}, we introduce the framework of stochastic heat kernels to analyze the Cole-Hopf \abbr{SDE}, breaking the proof of the main theorems into two results (Proposition \ref{prop:hkzeta} and Lemma \ref{lemma:smallzeta}). In Section \ref{section:stochI}, we record preliminary (of large-deviations-type) stochastic estimates for some coefficients in the Cole-Hopf \abbr{SDE} (and for regularity of {\small$\mathbf{j}^{\N}$}). In Section \ref{section:stochII}, we record key \emph{Boltzmann-Gibbs} estimates for the most singular coefficients in the Cole-Hopf \abbr{SDE}. In Section \ref{section:hkzeta}, we show Proposition \ref{prop:hkzeta}. In Section \ref{section:smallzeta}, we show Lemma \ref{lemma:smallzeta}. The appendix collects technical, standard results.
\subsection{Notation}\label{subsection:notation}
Below, we present a list of notation to be used throughout the paper; we focus on notation that is used in multiple sections, whereas essentially all other notation is introduced in the section it is used. The reader is invited to come back to this list during the reading of this paper.
\begin{enumerate}
\item We use the standard notation {\small$\a=\mathrm{O}(\b)$} if {\small$|\a|\leq\mathrm{C}|\b|$} for some constant {\small$\mathrm{C}>0$}. If {\small$\mathrm{C}$} depends on any number of parameters, said parameters will be recorded as subscripts in the {\small$\mathrm{O}$}. We will at times use {\small$\a\lesssim\b$} to mean {\small$\a=\mathrm{O}(\b)$}. Moreover, by {\small$\a=\mathrm{o}(\b)$}, we mean that {\small$|\a|/|\b|\to0$} in the large-{\small$\N$} limit. Finally, by {\small$\a\asymp\b$}, we mean both {\small$\a\lesssim\b$} and {\small$\b\lesssim\a$} (with different implied constants).
\item For any {\small$\a,\b\in\R$}, we use {\small$\llbracket\a,\b\rrbracket:=[\a,\b]\cap\Z$} both as an indexing set (for summations) and a spatial domain.
\item We say that an event {\small$\mathscr{E}$} holds with high probability if {\small$\mathbb{P}(\mathscr{E})=1-\mathrm{o}(1)$}.
\item The process {\small$\mathbf{Z}^{\N}$} is the Cole-Hopf transform defined in \eqref{eq:ch}. The notation {\small$\mathbf{S}^{\N}$} is a smoothened version of {\small$\mathbf{Z}^{\N}$} (on a mesoscopic scale), and {\small$\mathbf{R}^{\N}:=\mathbf{Z}^{\N}\cdot(\mathbf{S}^{\N})^{-1}$} is the corresponding ratio; these are introduced in Definition \ref{definition:heat}. Relatedly, the parameter {\small$\delta_{\mathbf{S}}>0$} is a small exponent that determines the smoothing scale for {\small$\mathbf{S}^{\N}$}; this is also introduced in Definition \ref{definition:heat}. Moreover:
\begin{itemize}
\item Any bold English letter with one space and one time input is a modification of {\small$\mathbf{Z}^{\N}$} or {\small$\mathbf{S}^{\N}$}. For instance, {\small$\mathbf{S}^{\N,\zeta}$} is a modification of {\small$\mathbf{S}^{\N}$} obtained by cutting off terms in its evolution equation in space (in a fashion depending on the parameter {\small$\zeta>0$}); see Section \ref{section:shk}.
\item Bold caligraphic letters (such as {\small$\boldsymbol{\mathcal{K}}$} or {\small$\boldsymbol{\mathcal{S}}$} in Proposition \ref{prop:sde} and Lemma \ref{lemma:sdepre}) are smoothing kernels that are supported on a mesoscopic spatial scale.
\item Bold Greek letters are used to package important terms in our analysis. (For example, see the notation in Lemma \ref{lemma:duhamel}.) 
\end{itemize}
\item Any bold English letter with two time-variables and space-variables is a (heat) kernel. For example, {\small$\mathbf{H}^{\N,\mathbf{a}}_{\s,\t,\x,\y}$} is the heat kernel for the diffusively scaled Robin Laplacian on {\small$\llbracket0,\infty\rrbracket$} (with  parameter determined by {\small$\mathbf{a}\in\R$}). Similarly, the kernels {\small$\mathbf{K}^{\N,\zeta}_{\s,\t,\x,\y}$} in Section \ref{section:shk} denote fundamental solutions for the \abbr{SDE} satisfied by {\small$\mathbf{S}^{\N,\zeta}$}-terms. (We will use {\small$\mathbf{H}^{\N}$} to mean {\small$\mathbf{H}^{\N,\mathbf{a}}$} for the special value {\small$\mathbf{a}=\mathbf{A}=\eqref{eq:mainII}$} for convenience.)
\begin{itemize}
\item Relatedly, the Robin Laplacian that defines the heat kernel {\small$\mathbf{H}^{\N,\mathbf{a}}$} is denoted by {\small$\mathscr{T}_{\N,\mathbf{a}}$}; similarly, we use {\small$\mathscr{T}_{\N}$} to mean {\small$\mathscr{T}_{\N,\mathbf{a}}$} for {\small$\mathbf{a}=\mathbf{A}=\eqref{eq:mainII}$}. We let {\small$\t\mapsto\exp(\t\mathscr{T}_{\N,\mathbf{a}})$} be the associated semigroup (with kernel {\small$\mathbf{H}^{\N,\mathbf{a}}$}).
\end{itemize}
\item We let {\small$\boldsymbol{\chi}^{(\zeta)}:\llbracket0,\infty\rrbracket\to\R$} denote some smooth bump function (for any {\small$\zeta>0$}) such that {\small$\boldsymbol{\chi}\equiv1$} on {\small$\llbracket0,\N^{1+\zeta}\rrbracket$} and {\small$\boldsymbol{\chi}\equiv0$} on {\small$\llbracket\mathrm{O}(\N^{1+\zeta}),\infty\rrbracket$}. This is the bump function that we use to cutoff the evolution equation for {\small$\mathbf{S}^{\N}$} in space; see Section \ref{section:shk}. We will distinguish a separate but fixed large parameter {\small$\zeta_{\mathrm{large}}>0$}.
\item We will let fraktur notation (such as {\small$\mathfrak{q},\mathfrak{b}$}) denote one of a class of ``error terms" in our analysis of {\small$\mathbf{Z}^{\N}$}; these are functions of {\small$\bphi\in\R^{\llbracket1,\infty\rrbracket}$}, and the exact type of functions that we will consider to be error terms are classified in Definitions \ref{definition:bulkadmissible} and \ref{definition:edgeadmissible}. (These are ``bulk admissible" and ``edge admissible" terms, respectively.)
\item The space-time averaging operator {\small$\mathbf{Av}^{\mathbf{T},\mathbf{X}}$}, the space-averaging operator {\small$\mathbf{Av}^{\mathbf{X}}$}, and the time-averaging operator {\small$\mathbf{Av}^{\mathbf{T}}$} are introduced in Definition \ref{definition:bulkaverage} and \ref{definition:edgeaverage}. We refer to these environments for the exact space-time, spatial, and time scales on which we average. The related time-gradient {\small$\grad^{\mathbf{T},\mathrm{av}}$} is built in Definition \ref{definition:grad}.
\end{enumerate}

%
%
%
\section{Discussion of the methods}\label{section:method}
The goal for this section is provide a detailed yet still intuitive discussion of the proof.

The general proofs for Theorems \ref{theorem:main}, \ref{theorem:mainstat}, and \ref{theorem:mainwedge} are essentially the same (except with different ``last steps"), so let us focus on Theorem \ref{theorem:main}. In a nutshell, the general strategy consists of the following steps:
\begin{enumerate}
\item Establish an ansatz for the \abbr{SDE} satisfied by the Cole-Hopf transform \eqref{eq:ch}, which has the form of (a spatial-discretization of) \eqref{eq:sheIa}-\eqref{eq:sheIb} with additional terms that are linear in \eqref{eq:ch} with ``fluctuating coefficients". The main ideas here are to glue together an equation in the bulk, which comes from \cite{Y26PTRF}, and one at the edge, which is to be derived in Section \ref{section:sde}.
\item Establish probabilistic \emph{Boltzmann-Gibbs estimates} for the fluctuations in the ansatz above. \emph{This is where the localization scheme via Nash-Davies estimates is implemented.}
\item Use the Boltzmann-Gibbs estimates to estimate the \emph{stochastic heat kernel} associated to the \abbr{SDE} in step (1).
\end{enumerate}
The vast majority of this paper is dedicated to challenges that are specific to the half-space. The (relatively small) rest of this paper is dedicated to presenting a streamlined version of ideas from \cite{Y26PTRF}.

Let us now describe each step above in a detailed yet still intuitive fashion.
\subsection{SDE for the Cole-Hopf transform}\label{subsection:method1}
We start with an evolution equation for the Cole-Hopf transform \eqref{eq:ch}. This is computed by the formula \eqref{eq:ch} in terms of {\small$\mathbf{j}^{\N}$}, the evolution equation \eqref{eq:currI}-\eqref{eq:currII} for {\small$\mathbf{j}^{\N}$}, and the It\^{o} formula, and it produces the following \abbr{SDE}, which is a lattice version of the continuum \abbr{SHE} \eqref{eq:sheIa}-\eqref{eq:sheIb} but with a number of additional terms to be described below:
\begin{align}
\d\mathbf{Z}^{\N}_{\t,\x}\approx\alpha\N^{2}\Delta_{\mathbf{A}}\mathbf{Z}^{\N}_{\t,\x}\d\t+\sqrt{2}\lambda\N^{\frac12}\mathbf{Z}^{\N}_{\t,\x}\d\mathbf{b}_{\t,\x}+\mathfrak{e}_{\mathrm{bulk}}\d\t+\mathfrak{e}_{\mathrm{edge}}\d\t.\label{eq:method1a}
\end{align}
Here, {\small$\Delta_{\mathbf{A}}$} denotes a lattice approximation to the Robin Laplacian with edge parameter {\small$\mathbf{A}$} (namely, the Laplacian with boundary data \eqref{eq:sheIb}). The quantities {\small$\mathfrak{e}_{\mathrm{bulk}}$} and {\small$\mathfrak{e}_{\mathrm{edge}}$} are supported in the bulk and edge, respectively, and up to leading order, they essentially have the form
\begin{align*}
\mathfrak{e}_{\mathrm{bulk}}&\approx\N^{\frac32}\grad^{\mathbf{X}}_{1}((\mathscr{U}'[\bphi_{\t,\x}]-\alpha\bphi_{\t,\x})\mathbf{Z}^{\N}_{\t,\x})+\N\mathfrak{q}_{\x}[\bphi_{\t}]\mathbf{Z}^{\N}_{\t,\x},\\
\mathfrak{e}_{\mathrm{edge}}&\approx\mathbf{1}_{\x=0}\N^{\frac32}(\mathscr{U}'[\bphi_{\t,\x}]-\alpha\bphi_{\t,\x})\mathbf{Z}^{\N}_{\t,\x}+\mathbf{1}_{\x=0}\N\alpha\lambda\mathbf{A}\mathbf{Z}^{\N}_{\t,\x}+\mathbf{1}_{\x\in\llbracket0,1\rrbracket}\N\mathfrak{b}_{\x}[\bphi_{\t}]\mathbf{Z}^{\N}_{\t,\x}.
\end{align*}
The quantity {\small$\mathfrak{q}_{\x}$} encodes, among many other things, the fluctuations of the {\small$\mathbf{F}$}-nonlinearity in \eqref{eq:nl} about a suitable quadratic one, whereas the first gradient term on the \abbr{RHS} of the first line above encodes fluctuations of {\small$\mathscr{U}$} around a Gaussian quadratic potential. The second line has a similar flavor, with an additional constant from the boundary condition in {\small$\Delta_{\mathbf{A}}$}. Thus, we anticipate that {\small$\mathfrak{e}_{\mathrm{bulk}},\mathfrak{e}_{\mathrm{edge}}$} are error terms and have a vanishing contribution (in some topology) in the large-{\small$\N$} limit. 

The key challenge in analyzing the equation \eqref{eq:method1a} is the singular nature of the fluctuations {\small$\mathfrak{e}_{\mathrm{bulk}},\mathfrak{e}_{\mathrm{edge}}$}. In order to handle these singular terms, it will be convenient to introduce a mesoscopic spatial smoothing of {\small$\mathbf{Z}^{\N}$}. That is, consider the semigroup {\small$\exp(\mathfrak{t}\N^{2}\Delta_{\mathbf{A}})$} for some {\small$\mathfrak{t}\sim\N^{-\delta}$} and some {\small$\delta>0$} small, and set {\small$\mathbf{S}^{\N,\sim}:=\exp(\mathfrak{t}\N^{2}\Delta_{\mathbf{A}})\mathbf{Z}^{\N}$}. Since we anticipate {\small$\mathbf{Z}^{\N}\approx$} \abbr{SHE}, and since {\small$\mathfrak{t}\ll1$}, we also anticipate {\small$\mathbf{S}^{\N,\sim}\approx\mathbf{Z}^{\N}$}, because \abbr{SHE} has H\"{o}lder regularity in space with positive exponent \cite{CS18,P19}. (This is circular, but we make it rigorous by directly showing regularity for {\small$\mathbf{Z}^{\N}$}.) On the other hand, {\small$\mathfrak{t}$} is not too small, so we obtain a considerable amount of smoothness. This particular choice of smoothing is important because it commutes with the Laplacian {\small$\Delta_{\mathbf{A}}$} in \eqref{eq:method1a}. Thus, we have
\begin{align*}
\d\mathbf{S}^{\N,\sim}\approx\alpha\N^{2}\Delta_{\mathbf{A}}\mathbf{S}^{\N,\sim}\d\t+\sqrt{2}\lambda\N^{\frac12}\exp(\mathfrak{t}\N^{2}\Delta_{\mathbf{A}})(\mathbf{S}^{\N,\sim}\d\mathbf{b})+\exp(\mathfrak{t}\N^{2}\Delta_{\mathbf{A}})\mathfrak{e}_{\mathrm{bulk}}\d\t+\exp(\mathfrak{t}\N^{2}\Delta_{\mathbf{A}})\mathfrak{e}_{\mathrm{edge}}\d\t.
\end{align*}
(The other factors of {\small$\mathbf{Z}^{\N}$} on the \abbr{RHS} of \eqref{eq:method1a} are replaced by {\small$\mathbf{S}^{\N,\sim}$} because we anticipate the two to be essentially the same. Let us also clarify that the actual smoothing we choose is a spatially-cutoff version of the semigroup {\small$\exp(\mathfrak{t}\N^{2}\Delta_{\mathbf{A}})$} since we will only be able to establish local regularity estimates for {\small$\mathbf{Z}^{\N}$} to justify the approximation of {\small$\mathbf{Z}^{\N}$} by its smoothing.) An explicit advantage of the spatial smoothing is that summation-by-parts allows us to move the gradient in {\small$\mathfrak{e}_{\mathrm{bulk}}$} away from the singular fluctuation and to the smooth {\small$\exp(\mathfrak{t}\N^{2}\Delta_{\mathbf{A}})$}-kernel. \emph{Moreover, it turns out that the resulting boundary term at {\small$0\in\llbracket0,\infty\rrbracket$} cancels with the order {\small$\N^{3/2}$}-term in {\small$\mathfrak{e}_{\mathrm{edge}}$}, up to a term which fluctuates around {\small$\N\alpha\lambda\mathbf{A}$}.} This explains the renormalization of the boundary parameter  \eqref{eq:mainII}.

At this point, we will replace the fluctuations in {\small$\mathfrak{e}_{\mathrm{bulk}},\mathfrak{e}_{\mathrm{edge}}$} with their space-time- and time-averages. Indeed, the Boltzmann-Gibbs estimates for these fluctuations will hold only after performing such averaging. The scales for this averaging will be mesoscopic, and therefore, the errors behind introducing this averaging may ultimately be absorbed by the smoothness of the {\small$\exp(\mathfrak{t}\N^{2}\Delta_{\mathbf{A}})$}-kernels. This produces a formula of the type
\begin{align}
\d{\mathbf{S}}^{\N,\sim}_{\t,\x}&\approx\alpha\N^{2}\Delta_{\mathbf{A}}\mathbf{S}^{\N,\sim}\d\t+\sqrt{2}\lambda\N^{\frac12}\exp(\mathfrak{t}\N^{2}\Delta_{\mathbf{A}})(\mathbf{S}^{\N,\sim}_{\t,\cdot}\d\mathbf{b}_{\t,\cdot})_{\t,\x}\label{eq:method1c}\\
&+\sum_{\w\in\llbracket0,\infty\rrbracket}\boldsymbol{\mathcal{K}}_{\x,\w}\cdot\N\mathbf{Av}^{\mathbf{T},\mathbf{X}}_{\t,\w}\cdot{\mathbf{S}}^{\N,\sim}_{\t,\w}\d\t+\sum_{\w\in\llbracket0,1\rrbracket}\boldsymbol{\mathcal{K}}^{\partial}_{\x,\w}\cdot\N\mathbf{Av}^{\mathbf{T}}_{\t,\w}\cdot{\mathbf{S}}^{\N,\sim}_{\t,\w}\d\t\nonumber
\end{align}
for some nice kernels {\small$\boldsymbol{\mathcal{K}},\boldsymbol{\mathcal{K}}^{\partial}$} which are built out of the {\small$\N^{2}\Delta_{\mathbf{A}}$}-kernel and its gradients, as mentioned above. The {\small$\mathbf{Av}^{\mathbf{T},\mathbf{X}}$} and {\small$\mathbf{Av}^{\mathbf{T}}$} are the aforementioned space-time- and time-averages. This is essentially the \abbr{SDE} that we will work with. The majority of the work in this first step that remains is identifying what it means for a function (like {\small$\mathfrak{q}$} or {\small$\mathfrak{b}$} above) to be \emph{fluctuating}, with a different notion required for the bulk terms (like {\small$\mathfrak{q}$}) and the edge terms (like {\small$\mathfrak{b}$}). What also remains is identifying the scales at which we must perform the space-time- and time-averages, as well as a careful bookkeeping of all the other terms which are omitted via the {\small$\approx$}-notation above.
\subsection{Probabilistic estimates}\label{subsection:method2}
The main goal in this step is to show estimates for the {\small$\mathbf{Av}^{\mathbf{T},\mathbf{X}}$}- and {\small$\mathbf{Av}^{\mathbf{T}}$}-quantities appearing in \eqref{eq:method1c}. Precisely, the goal is to show
\begin{align}
\int_{0}^{\mathrm{T}}\sum_{\x\in\mathbb{I}}|\mathbf{Av}^{\mathbf{T},\mathbf{X}}_{\t,\x}|^{2}\d\t\lesssim\N^{-2-\beta_{\mathrm{CLT}}}|\mathbb{I}| \quad\text{and}\quad \int_{0}^{\mathrm{T}}\sum_{\x\in\llbracket0,1\rrbracket}|\mathbf{Av}^{\mathbf{T}}_{\t,\x}|^{2}\d\t\lesssim\N^{-\beta_{\mathrm{CLT}}}\label{eq:method2a}
\end{align}
for some {\small$\beta_{\mathrm{CLT}}>0$} fixed (reflecting the fact that the estimates above come from \abbr{CLT}-type cancellations), for any interval {\small$\mathbb{I}\subseteq\llbracket0,\infty\rrbracket$}, and for any {\small$\mathrm{T}\lesssim1$}. We clarify that the above estimates are to be shown with high probability given any preliminary choice of interval {\small$\mathbb{I}$}, not for all intervals simultaneously. In particular, the estimates above hold in a very weak topology. We discuss the utility of these estimates for analyzing \eqref{eq:method1c} in Section \ref{subsection:method3}.

We now describe the localization alluded to Section \ref{subsection:introtech}. A concrete representation of the {\small$\mathbf{Av}^{\mathbf{T},\mathbf{X}}$}-term above is given as follows, in which {\small$\mathbb{U}\subseteq[0,\infty)$} is a finite time-interval, in which {\small$\mathbb{I}_{\x}\subseteq\llbracket0,\infty\rrbracket$} contains {\small$\x$}, in which {\small$\boldsymbol{\mathcal{H}}_{\x,\w}$} is another nice kernel, and in which {\small$\mathfrak{q}_{\w}$} is ``fluctuating" (in a sense made precise in the construction of our ansatz \eqref{eq:method1c}, as discussed above) and localized near {\small$\w\in\llbracket0,\infty\rrbracket$}:
\begin{align}
\mathbf{Av}^{\mathbf{T},\mathbf{X}}_{\t,\x}=|\mathbb{U}|^{-1}\int_{\mathbb{U}}\sum_{\w\in\mathbb{I}_{\x}}\boldsymbol{\mathcal{H}}_{\x,\w}\cdot\mathfrak{q}_{\w}[\bphi_{\t+\s}]\d\s.\nonumber
\end{align}
Strictly speaking, we need to include other factors inside the space-time integration above that take into account the dynamics of {\small$\mathbf{Z}^{\N}$} itself, though let us forget about this technical detail for now. We now decompose the \abbr{RHS} of the previous display as follows:
\begin{itemize}
\item Let us decompose {\small$\mathbb{I}_{\x}=\mathbb{I}_{\x,1}\cup\ldots\cup\mathbb{I}_{\x,\mathrm{K}}$} into a finite collection (with {\small$\mathrm{K}\lesssim1$}) of consecutive intervals with sizes {\small$\asymp\N^{\gamma_{1}},\N^{\gamma_{2}},\ldots$}, where {\small$\gamma_{\k+1}=\gamma_{\k}+\e$} for some small {\small$\e>0$}. 
\item For every {\small$\k$}-index, we then decompose {\small$\mathbb{U}=\mathbb{U}_{\k,1}\cup\ldots\cup\mathbb{U}_{\k,\mathrm{L}}$} into a collection of adjacent time-intervals with equal lengths (or time-scales) {\small$\mathfrak{t}_{\k}$}. This lets us rewrite the previous display as 
\begin{align}
\mathbf{Av}^{\mathbf{T},\mathbf{X}}_{\t,\x}=\sum_{\k\in\llbracket1,\mathrm{K}\rrbracket}\mathrm{L}^{-1}\sum_{\ell\in\llbracket1,\mathrm{L}\rrbracket}\Big(|\mathbb{U_{\k,\ell}}|^{-1}\int_{\mathbb{U}_{\k,\ell}}\sum_{\w\in\mathbb{I}_{\x,\k}}\boldsymbol{\mathcal{H}}_{\x,\w}\cdot\mathfrak{q}_{\w}[\bphi_{\t+\s}]\d\s\Big).\label{eq:method2b}
\end{align}
\end{itemize}
The key aspects behind the decomposition \eqref{eq:method2b} are the following:
\begin{enumerate}
\item By construction, {\small$\mathbb{I}_{\x,\k}$} is separated from the edge by {\small$\gtrsim\N^{\gamma_{\k-1}}$}, at least for {\small$\k=2$}. Thus, assuming that we can establish estimates for the spatial propagation speed of \eqref{eq:currI}-\eqref{eq:currII}, then for small enough {\small$\mathfrak{t}_{\k}$}, information at the edge does not propagate into a neighborhood of {\small$\mathbb{I}_{\x,\k}$} with very high probability. This implies that we can compare {\small$\bphi$} to a full-space version and essentially inherit estimates from the full-space problem in \cite{Y26PTRF}.
\item However, we cannot choose {\small$\mathfrak{t}_{\k}$} to be too small, otherwise the Boltzmann-Gibbs cancellations that we obtain for the corresponding summand in \eqref{eq:method2b} will not be strong enough for the purposes of \eqref{eq:method2a}. This is especially relevant for smaller indices {\small$\k$}, where {\small$\gamma_{\k-1}$} is smaller. On the other hand, if {\small$\k$} is small, then the contribution of the corresponding summands in \eqref{eq:method2b} is smaller since {\small$|\mathbb{I}_{\x,\k}|\asymp\N^{\gamma_{\k}}$} is smaller.
\item Ultimately, the competition in point (2) essentially balances (which is \emph{good} for the purposes of proving \eqref{eq:method2a}, since it implies that the cancellation effects leading to \eqref{eq:method2a} do not deteriorate towards the edge).
\end{enumerate}
An edge-specific analysis for {\small$\k=1$} will also be developed separately, though this amounts to a separate (simpler) proof. What remains is the speed-of-propagation estimates for \eqref{eq:currI} in space. The basic principle starts with the full-space version of \eqref{eq:currI}, written as
\begin{align*}
\d\mathbf{j}^{\Z}_{\t,\x}=\N^{\frac32}\grad^{\mathbf{X}}_{1}(\mathscr{U}'[\bphi^{\Z}_{\t,\x}])\d\t+\N\mathbf{F}[\tau_{\x}\bphi^{\Z}_{\t}]\d\t+\sqrt{2}\N^{\frac12}\d\mathbf{b}_{\t,\x},
\end{align*}
where {\small$\bphi^{\Z}$} is the associated gradient field for {\small$\mathbf{j}^{\Z}$}, and the equation above is posed for {\small$(\t,\x)\in[0,\infty)\times\Z$} instead of {\small$[0,\infty)\times\llbracket0,\infty\rrbracket$}. We want to couple {\small$\mathbf{j}^{\N}$} to {\small$\mathbf{j}^{\Z}$}. By \eqref{eq:currI} and the previous display, we know that {\small$\mathbf{g}:=\mathbf{j}^{\Z}-\mathbf{j}^{\N}$} solves the following \abbr{PDE} away from the edge:
\begin{align*}
\partial_{\t}\mathbf{g}_{\t,\x}=-\N^{2}\grad^{\mathbf{X}}_{1}(\mathscr{U}''[\bpsi_{\t,\x}]\grad^{\mathbf{X}}_{-1}\mathbf{g}_{\t,\x})+\mathscr{D}\mathbf{g}_{\t,\x}=:\mathscr{P}\mathbf{g}_{\t,\x}.
\end{align*}
Above, {\small$\mathscr{D}$} is short-hand for some first-order gradient operator of speed {\small$\N^{3/2}$}, and {\small$\bpsi$} is a random field depending on {\small$\mathbf{j}^{\Z},\mathbf{j}^{\N}$} in a complicated way. Indeed, the Brownian motions have cancelled each other, and the previous display is a divergence-form parabolic \abbr{PDE}. Because the \abbr{PDE} above is linear, we can solve for {\small$\mathbf{g}$} via the corresponding heat kernel {\small$\boldsymbol{\Gamma}$}, at which point speed-of-propagation estimates follow by a heat kernel estimate of the form
\begin{align}
|\boldsymbol{\Gamma}_{\s,\t,\x,\y}|\lesssim\mathrm{e}^{\N|\t-\s|}\exp(-\tfrac{\upsilon|\x-\y|}{\N|\t-\s|^{1/2}}).\label{eq:method2c}
\end{align}
Above, {\small$\upsilon>0$} is fixed. Also, the denominator on the \abbr{RHS} in the exponential should be {\small$\N\max(\N^{-1},|\t-\s|^{1/2})$}, but this is a minor detail, as is the fact that the various small powers of {\small$\N$} are missing. The main point is instead that the \abbr{RHS} resembles a diffusive kernel behavior. We clarify that \eqref{eq:method2c} might seem to be a bad estimate (because of the first exponential factor on the \abbr{RHS}). However, we will only need \eqref{eq:method2c} for time-scales of essentially{\small$|\t-\s|\lesssim\N^{-1}$}, for which this exponential factor is harmless. \emph{This reflects the importance of a sub-criticality for the nonlinearity {\small$\mathbf{F}$}} (and the corresponding operator {\small$\mathscr{D}$}), \emph{in that at small scales, it is ultimately perturbative}.

The proof of \eqref{eq:method2c} is based on ideas of Nash and Davies \cite{N58,D89} (see also \cite{FS86} for a more thorough treatments) applied to {\small$\mathscr{P}$}. Although standard applications of these methods treat {\small$\mathscr{P}-\mathscr{D}$}, we can control the contribution of {\small$\mathscr{D}$} to get an estimate like \eqref{eq:method2c}. To this point, the first exponential on the \abbr{RHS} of \eqref{eq:method2c} is of the form {\small$\exp(\N|\t-\s|)$}, not the naive guess of {\small$\exp(\N^{3/2}|\t-\s|)$} (coming from the strength of {\small$\mathscr{D}$} and Gronwall). It is crucial for us to establish \eqref{eq:method2c} as written, not with {\small$\exp(\N^{3/2}|\t-\s|)$}; the fact that we can show \eqref{eq:method2c} as written reflects the importance of Nash's and Davies' methods in this work. Moreover, because this use of Nash-Davies theory is a technical crux of this paper, we develop a detailed analysis to prove \eqref{eq:method2c} which works not only for spatial-discretizations of \eqref{eq:glIa}-\eqref{eq:glIb}, but also continuous regularizations.

Let us complete this subsection by discussing a bit more context around \eqref{eq:method2c} and its proof. A similar estimate was obtained in \cite{Y25EJP} in the case where {\small$\mathbf{F}$} is degree {\small$1$}, in which case {\small$\boldsymbol{\Gamma}$} has the interpretation of a continuous-time random walk on {\small$\Z$}, from which \eqref{eq:method2c} follows from random walk theory. For general degree {\small$\mathbf{F}$} (like in \eqref{eq:nl}), this random walk mapping fails. In particular, {\small$\mathscr{P}$} is not the generator of a random walk, and it fails to even admit any pointwise positivity (e.g. maximum principle) properties. However, Nash-Davies methods do not require these inputs, rather only an {\small$\ell^{2}$}-positivity property satisfied by {\small$\mathscr{P}$}. We mention \cite{MY22}, which uses a similar ``{\small$\ell^{2}$}-positivity" to analyze parabolic \abbr{PDE}s with no maximum principle or probabilistic interpretation (in the completely different context of eigenvectors of random matrices).
\subsection{Stochastic heat kernels}\label{subsection:method3}
We will now use \eqref{eq:method2a} to study \eqref{eq:method1c}. The main (and immediate) challenge is that \eqref{eq:method2a} provides coefficient estimates in only a very weak topology; in fact, it is a local topology, because the first estimate in \eqref{eq:method2a} can be thought of as summing terms of {\small$\lesssim\N^{-2-\beta_{\mathrm{CLT}}}$} a total of {\small$|\mathbb{I}|$}-many times. Therefore, to get global estimates on \eqref{eq:method1c} becomes difficult, e.g. through the methods of weighted spaces as in \cite{HL18}.

The key idea we use to overcome this challenge is to consider the following hierarchy of \abbr{SDE}s parameterized by {\small$\zeta>0$}. Let {\small$\boldsymbol{\chi}^{(\zeta)}$} be a smooth cutoff function that is supported on {\small$\llbracket0,\mathrm{O}(\N^{1+\zeta})\rrbracket$} (i.e. a block that is a factor of {\small$\N^{\zeta}$}-larger than the macroscopic scale). We let {\small$\mathbf{S}^{\N,\sim,\zeta}$} be the solution to
\begin{align}
\d\mathbf{S}^{\N,\sim,\zeta}_{\t,\x}&\approx\alpha\N^{2}\Delta_{\mathbf{A}}\mathbf{S}^{\N,\sim,\zeta}_{\t,\x}\d\t+\sqrt{2}\lambda\N^{\frac12}\exp(\mathfrak{t}\N^{2}\Delta_{\mathbf{A}})(\boldsymbol{\chi}^{(\zeta)}_{\cdot}\mathbf{S}^{\N,\sim,\zeta}_{\t,\cdot}\d\mathbf{b}_{\t,\cdot})\label{eq:method3a}\\
&+\sum_{\w\in\llbracket0,\infty\rrbracket}\boldsymbol{\mathcal{K}}_{\x,\w}\cdot\boldsymbol{\chi}^{(\zeta)}_{\w}\cdot\N\mathbf{Av}^{\mathbf{T},\mathbf{X}}_{\t,\w}\cdot{\mathbf{S}}^{\N,\sim,\zeta}_{\t,\w}\d\t+\sum_{\w\in\llbracket0,1\rrbracket}\boldsymbol{\mathcal{K}}^{\partial}_{\x,\w}\cdot\N\mathbf{Av}^{\mathbf{T}}_{\t,\w}\cdot{\mathbf{S}}^{\N,\sim,\zeta}_{\t,\w}\d\t.\nonumber
\end{align}
In words, the \abbr{SDE} \eqref{eq:method3a} is the same as \eqref{eq:method1c} but with the spatial cutoff via {\small$\boldsymbol{\chi}^{(\zeta)}$}.\footnote{Technically, we use a separate cutoff exponent $\zeta_{\mathrm{large}}$ for the noise term in \eqref{eq:method3a}, but this is a minor detail.} Because the \abbr{RHS} of \eqref{eq:method1c} is local in space with speed {\small$\lesssim\N^{2}$}, it is not hard to prove that {\small$\mathbf{S}^{\N,\sim}\approx\mathbf{S}^{\N,\sim,\zeta}$} on domains of the form {\small$[0,\mathrm{O}(1)]\times\llbracket0,\mathrm{O}(\N)\rrbracket$} (which are exactly the domains of interest in Theorem \ref{theorem:main}), as long as {\small$\zeta>0$} is large. Our goal is then to compare {\small$\mathbf{S}^{\N,\sim,\zeta}$} between different values of {\small$\zeta$}, allowing us to compare {\small$\mathbf{S}^{\N,\sim}\approx\mathbf{S}^{\N,\sim,\zeta}$} for small {\small$\zeta>0$}. In particular, we will reduce the analysis to the ``essentially compact" setting, for which we can use fixed-point arguments to compare \eqref{eq:method3a} to a spatially-discretized half-space \abbr{SHE}. 

To implement the aforementioned comparison between different {\small$\zeta$}, we let {\small$\mathbf{K}^{\N,\sim,\zeta}$} be the fundamental solution (or stochastic heat kernel) associated to the linear equation satisfied by {\small$\mathbf{S}^{\N,\sim,\zeta}$}. Our key estimate is 
\begin{align}
\sup_{0\leq\s\leq\t\lesssim1}\sup_{\x\in\llbracket0,\infty\rrbracket}\sum_{\y\in\llbracket0,\infty\rrbracket}\exp(\tfrac{\kappa|\x-\y|}{\N})\cdot|{\mathbf{K}}^{\N,\sim,\zeta}_{\s,\t,\x,\y}|\lesssim_{\kappa}\N^{\delta}\exp(\N^{-\beta_{\star}}\N^{\zeta}),\label{eq:method3b}
\end{align}
where {\small$\kappa>0$} is any arbitrary constant, where {\small$\delta>0$} is small, and where {\small$\beta_{\star}>0$} is a fixed constant. The factor of {\small$\N^{\zeta}$} comes from the fact the fluctuations in the second line of \eqref{eq:method3a} are supported on length-scales which are {\small$\lesssim\N^{\zeta}$} larger than the macroscopic scale {\small$\N$}. The factor of {\small$\N^{-\beta_{\star}}$} comes from the power-saving for said fluctuations from \eqref{eq:method2a}. The exponential nature of the \abbr{RHS} comes from a Gronwall inequality. The exponential weight on the \abbr{LHS} comes from that of the deterministic {\small$\Delta_{\mathbf{A}}$}-heat kernel. The factor of {\small$\N^{\delta}$} is technical and not so important. 

The estimate \eqref{eq:method3b} implies that information beyond a length {\small$\N^{1+\zeta-\e}$} of the origin contributes an exponentially small amount to values of {\small$\mathbf{S}^{\N,\sim,\zeta}$} on {\small$[0,\mathrm{O}(1)]\times\llbracket0,\mathrm{O}(\N)\rrbracket$}), where {\small$\e$} is small but fixed (and independent of {\small$\zeta>0$}). Thus, {\small$\mathbf{S}^{\N,\sim,\zeta}\approx\mathbf{S}^{\N,\sim,\zeta-\e}$} on {\small$[0,\mathrm{O}(1)]\times\llbracket0,\mathrm{O}(\N)\rrbracket$}, hence the desired comparison.
%
%
%
\section{The main \abbr{SDE}}\label{section:sde}
The goal of this section is an \abbr{SDE} for the evolution of the Cole-Hopf transform \eqref{eq:ch} whose (stochastic) heat kernel is amenable to analysis. We start by writing the final form of this \abbr{SDE} in Proposition \ref{prop:sde}, after which we clarify its structure. Then, we prove Proposition \ref{prop:sde}.

Before we can present the \abbr{SDE} of interest, we will need to introduce preliminary constructions. The first is an analogue of (orthogonal complements of) the Gaussian Wiener chaos for more general potentials {\small$\mathscr{U}$}.
\begin{definition}\label{definition:jet}
\fsp We say that {\small$\mathsf{F}:\R^{\llbracket1,\infty\rrbracket}\to\R^{\llbracket1,\infty\rrbracket}$} is a \emph{local function} if there exists a finite subset {\small$\mathbb{I}\subseteq\llbracket1,\infty\rrbracket$} such that for any {\small$\bphi\in\R^{\llbracket1,\infty\rrbracket}$}, the value {\small$\mathsf{F}[\bphi]$} depends only on {\small$\bphi_{\x}$} for {\small$\x\in\mathbb{I}$}. We refer to the smallest such interval {\small$\mathbb{I}$} as the support of {\small$\mathsf{F}$}. Next, for any {\small$\mathfrak{k}=0,1,2$}, we define {\small$\mathrm{Jet}_{\mathfrak{k}}^{\perp}$} to be the space of local functions whose {\small$\mathfrak{k}$}-jet vanishes in the following sense:
\begin{align*}
\partial_{\sigma}^{\ell}\E^{\sigma}\mathsf{F}[\bphi]|_{\sigma=0}=0 \quad\text{for all} \ \ell=0,\ldots,\mathfrak{k}.
\end{align*}
\end{definition}
Intuitively, functions in {\small$\mathrm{Jet}_{\mathfrak{k}}^{\perp}$} should be thought of as carrying a factor of {\small$\N^{-(\mathfrak{k}+1)/2}$}. Indeed, local equilibrium suggests {\small$\mathsf{F}[\tau_{\x}\bphi_{\t}]\approx\E^{\sigma[\t,\x]}\mathsf{F}$}, where {\small$\sigma[\t,\x]$} is the average of {\small$\bphi_{\t,\w}$} for {\small$\w$} in a neighborhood of {\small$\x$} of radius {\small$\mathrm{o}(\N)$}, so by standard \abbr{CLT}-cancellations, we (intuitively) have {\small$|\sigma[\t,\x]|\lesssim\N^{-1/2}$}. Taylor expansion then yields the heuristic.

We now present two classes of nonlinearities, ones for which a bulk analysis will be developed, and ones for which a new edge analysis is required. Intuitive clarifications for both classes will be given in Remark \ref{remark:admissible}.
\begin{definition}\label{definition:bulkadmissible}
\fsp We say that a local function {\small$\mathfrak{q}:\R^{\llbracket1,\infty\rrbracket}\to\R$} is \emph{bulk admissible with respect to a point {\small$\x\in\llbracket1,\infty\rrbracket$}} if the following collection of conditions are satisfied:
\begin{enumerate}
\item There exists {\small$\mathfrak{k}\in\llbracket0,2\rrbracket$} such that {\small$\mathfrak{q}=\N^{-1+\mathfrak{k}/2}\mathfrak{f}$}, where {\small$\mathfrak{f}\in\mathrm{Jet}_{\mathfrak{k}}^{\perp}$}.
\item Exactly one of the following two conditions is satisfied for any {\small$\bphi\in\R^{\llbracket1,\infty\rrbracket}$} and for some {\small$0<\mathfrak{l}=\mathrm{O}(1)$}:
\begin{itemize}
\item The value {\small$\mathfrak{f}[\bphi]$} depends only on {\small$\bphi_{\w}$} for {\small$\w\in\llbracket\x+1,\x+\mathfrak{l}\rrbracket$};
\item The value {\small$\mathfrak{f}[\bphi]$} depends only on {\small$\bphi_{\w}$} for {\small$\w\in\llbracket\x-\mathfrak{l},\x\rrbracket\cap\llbracket1,\infty\rrbracket$}.
\end{itemize}
(That is, the ``support" of {\small$\mathfrak{f}$} has finite size uniformly in {\small$\N$}, and it is either to the right or to the left of the point {\small$\x\in\llbracket1,\infty\rrbracket$} at hand.) In the former case (with support in {\small$\llbracket\x+1,\infty\rrbracket$}), we say that {\small$\mathfrak{f}$} (and {\small$\mathfrak{q}$}) is \emph{forwards-oriented}. In the other case, we say it is \emph{backwards oriented}.
\item The function {\small$\mathfrak{f}$} admits the following deterministic estimate for any {\small$\bphi\in\R^{\llbracket1,\infty\rrbracket}$} and some fixed {\small$0<\mathrm{C}\lesssim1$}:
\begin{align}
|\mathfrak{f}[\bphi]|+\sup_{\y\in\llbracket1,\infty\rrbracket}|\partial_{\bphi_{\y}}\mathfrak{f}[\bphi]|\lesssim1+\sum_{|\w-\x|\lesssim1}|\bphi_{\w}|^{\mathrm{C}}.\label{eq:bulkadmissibleI}
\end{align}
\end{enumerate}
\end{definition}
\begin{definition}\label{definition:edgeadmissible}
\fsp We say that a function {\small$\mathfrak{b}:\R^{\llbracket1,\infty\rrbracket}$} is \emph{edge admissible} if the following are satisfied:
\begin{enumerate}
\item We have {\small$\E^{0}\mathfrak{b}=0$}.
\item Given any {\small$\bphi\in\R^{\llbracket1,\infty\rrbracket}$}, the value {\small$\mathfrak{b}[\bphi]$} depends only on {\small$\bphi_{\w}$} for {\small$\w\in\llbracket0,\mathrm{O}(1)\rrbracket$}.
\item The function {\small$\mathfrak{b}$} satisfies \eqref{eq:bulkadmissibleI} for {\small$\x=0$}.
\end{enumerate}
\end{definition}
\begin{remark}\label{remark:admissible}
\fsp Let us briefly clarify Definition \ref{definition:bulkadmissible} to start. Condition (1) suggests that {\small$\mathfrak{q}$} is roughly of order {\small$\N^{-3/2}$}, more or less, given the discussion after Definition \ref{definition:jet}. Condition (2) is an important support assumption; it states that the Brownian motions which drive the evolution of {\small$\mathfrak{q}[\bphi_{\t}]$} are basically independent of those which drive that of {\small$\mathbf{j}^{\N}_{\t,\x}$}, so that we ultimately do not have to estimate correlations between these two dynamics. Condition (3) is a technical a priori estimate to control the size of {\small$\mathfrak{q}$}.

Similarly, an edge-admissible function should be thought of as small, although the support assumption (2) in Definition \ref{definition:edgeadmissible} suggests that this is because of the Langevin-Glauber dynamic \eqref{eq:phiII} at the edge, not because of a local smoothing procedure as discussed after Definition \ref{definition:jet}.

We emphasize that despite the heuristic power-counting in {\small$\N$} that has been given, we will \emph{not} actually prove estimates as sharp as what this power-counting suggests, since this is highly technical and not necessary.
\end{remark}
The basic mechanism of cancellations in bulk- and edge-admissible functions comes by taking their averages in space and time. In what follows, we construct these averages and their space-time scales. To this end, we will require some notation for deterministic heat kernels and a slight smoothing of the Cole-Hopf transform \eqref{eq:ch} via said heat kernels (the latter of which we will not use for now, but is anyways convenient to introduce below).
\begin{definition}\label{definition:heat}
\fsp Fix any {\small$\mathbf{a}\in\R$}. We let {\small$\mathbf{H}^{\N,\mathbf{a}}_{\s,\t,\x,\y}$} be the function of {\small$(\s,\t,\x,\y)\in[0,\infty)^{2}\times\llbracket0,\infty\rrbracket^{2}$}, supported on the slice {\small$\s\leq\t$}, which solves the following deterministic \abbr{PDE}:
\begin{align}
\partial_{\t}\mathbf{H}^{\N,\mathbf{a}}_{\s,\t,\x,\y}=\mathscr{T}_{\N,\mathbf{a}}\mathbf{H}^{\N,\mathbf{a}}_{\s,\t,\x,\y}:=(\alpha\N^{2}+\tfrac14\N\lambda)\Delta_{\mathbf{a}}\mathbf{H}^{\N,\mathbf{a}}_{\s,\t,\x,\y} \quad \text{and} \quad \mathbf{H}^{\N,\mathbf{a}}_{\s,\s,\x,\y}=\mathbf{1}_{\x=\y}. \label{eq:heatI}
\end{align}
Above, {\small$\Delta_{\mathbf{a}}$} is the Robin Laplacian on {\small$\llbracket0,\infty\rrbracket$} with parameter {\small$\mathbf{a}$}. In particular, given any {\small$\mathsf{f}:\llbracket0,\infty\rrbracket\to\R$}, we have {\small$\Delta_{\mathbf{a}}\mathsf{f}=\Delta\mathsf{f}$} after extending {\small$\mathsf{f}$} to {\small$\llbracket-1,\infty\rrbracket$} by {\small$\mathsf{f}_{0}-\mathsf{f}_{-1}=\N^{-1}\mathbf{a}\lambda\mathsf{f}_{0}$}; see \eqref{eq:rg} for what {\small$\lambda\in\R$} is. In \eqref{eq:heatI}, the operator {\small$\mathscr{T}_{\N,\mathbf{a}}$} acts on {\small$\x$}. We also let {\small$\t\mapsto\mathbf{e}^{\t\mathscr{T}_{\N,\mathbf{a}}}$} be the associated semigroup. That is, for any {\small$\mathsf{f}:\llbracket0,\infty\rrbracket\to\R$}, we set
\begin{align}
[\mathbf{e}^{\t\mathscr{T}_{\N,\mathbf{a}}}(\mathsf{f})]_{\x}:=[\mathbf{e}^{\t\mathscr{T}_{\N,\mathbf{a}}}(\mathsf{f}_{\cdot})]_{\x}:=\sum_{\w\in\llbracket0,\infty\rrbracket}\mathbf{H}^{\N,\mathbf{a}}_{0,\t,\x,\w}\mathsf{f}_{\w}.\label{eq:heatII}
\end{align}
(We briefly remark here that standard estimates for the heat kernel, like those that are established in Proposition \ref{prop:hkestimates}, show that {\small$\mathbf{e}^{\t\mathscr{T}_{\N,\mathbf{a}}}:\ell^{2}(\llbracket0,\infty\rrbracket)\to\ell^{2}(\llbracket0,\infty\rrbracket)$} is a bounded operator.) Now, with {\small$\mathbf{A}$} from \eqref{eq:mainII}, we set
\begin{align}
\mathbf{S}^{\N}_{\t,\x}:=[\mathbf{e}^{\mathfrak{t}_{\N}\mathscr{T}_{\N,\mathbf{A}}}(\mathbf{1}_{|\x-\cdot|\lesssim(\log\N)^{2}\N^{}\mathfrak{t}_{\N}^{1/2}}\cdot\mathbf{Z}^{\N}_{\t,\cdot})]_{\x} \quad\text{and}\quad \mathbf{R}^{\N}_{\t,\x}:=\mathbf{Z}^{\N}_{\t,\x}\cdot(\mathbf{S}^{\N}_{\t,\x})^{-1}.\label{eq:heatIII}
\end{align}
Above, we introduced the mesoscopic time-scale {\small$\mathfrak{t}_{\N}=\N^{-\delta_{\mathbf{S}}}$} (with corresponding diffusive length-scale {\small$\N\mathfrak{t}_{\N}^{1/2}$}), where {\small$\delta_{\mathbf{S}}>0$} is a small, fixed constant (depending only on the parameters from \eqref{eq:dataI0}-\eqref{eq:dataI}). (We note {\small$\mathbf{S}^{\N}$} and {\small$\mathbf{R}^{\N}$} stand for ``smoothing" and ``ratio", respectively.) It will be convenient to denote the indicator in \eqref{eq:heatIII} by {\small$\mathbf{1}^{(\x)}_{\cdot}$} throughout this paper (for presentation's sake).
\end{definition}
We will later show that the smoothing in \eqref{eq:heatIII} is negligible in the large-{\small$\N$} limit of {\small$\mathbf{Z}^{\N}$}, because it occurs on a mesoscopic scale. Moreover, we use the heat semigroup with parameter {\small$\mathbf{A}$} to perform smoothing in \eqref{eq:heatIII}, since it will commute (and in general be compatible) with the operator {\small$\Delta_{\mathbf{A}}$} (that, in some sense, must show up in the evolution of {\small$\mathbf{Z}^{\N}$}). We also clarify that the indicator in \eqref{eq:heatIII} is almost entirely technical, as the heat kernel therein is exponentially small in {\small$\N$} on the complement of the support of the indicator function, though at the same time, the indicator helps restrict our regularity estimates for {\small$\mathbf{Z}^{\N}$}, which are needed to  obtain {\small$\mathbf{R}^{\N}\approx1$} in a sufficiently strong sense, to a local spatial scale instead of a global one.

We now present convenient notation for local averages of bulk-admissible functions (which we will need to record the desired evolution equation for \eqref{eq:ch}). We explain the constructions below in Remark \ref{remark:bulkaverage}.
\begin{definition}\label{definition:bulkaverage}
\fsp Fix the time-scale {\small$\mathfrak{t}_{\mathbf{Av}}:=\N^{-2/3-\mathrm{C}_{0}\delta_{\mathbf{S}}}$} and the length-scale {\small$\mathfrak{l}_{\mathbf{Av}}:=\N^{1-3\delta_{\mathbf{S}}/2}$}, where {\small$\delta_{\mathbf{S}}>0$} is the constant from Definition \ref{definition:heat}, and {\small$\mathrm{C}_{0}>0$} is a large but fixed constant. We also define {\small$\tau_{\mathbf{Av}}:=\N^{-2}\mathfrak{l}_{\mathbf{Av}}^{2}=\N^{-3\delta_{\mathbf{S}}}$} to be the diffusive time-scale associated to {\small$\mathfrak{l}_{\mathbf{Av}}$}.

Now, consider a collection {\small$\mathscr{Q}:=\{\mathfrak{q}_{\x}\}_{\x\in\llbracket0,\infty\rrbracket}$} of local functions. Let us define the space-time average
\begin{align}
\boldsymbol{\Upsilon}^{\mathbf{T},\mathbf{X},\mathscr{Q}}_{\t,\x}:=\mathfrak{t}_{\mathbf{Av}}^{-1}\int_{0}^{\mathfrak{t}_{\mathbf{Av}}}[\mathbf{e}^{\tau_{\mathbf{Av}}\mathscr{T}_{\N,0}}(\mathbf{1}_{|\x-\cdot|\lesssim(\log\N)^{2}\mathfrak{l}_{\mathbf{Av}}}\cdot\mathfrak{q}_{\cdot}[\bphi_{\t-\s}]\cdot\mathbf{Z}^{\N}_{\t-\s,\cdot})]_{\x}\d\s.\label{eq:bulkaverageI}
\end{align}
In words, we time-average on scale {\small$\mathfrak{t}_{\mathbf{Av}}$} in the backwards direction (to avoid stochastic calculus issues of adaptability), and then we average in space on a length-scale of (essentially) {\small$\mathfrak{l}_{\mathbf{Av}}$} with the {\small$\mathscr{T}_{\N,0}$}-kernel (for reasons to be explained in Remark \ref{remark:bulkaverage}). (In the previous formula, we adopt the convention that the integral excludes the set of times {\small$\s\in[0,\mathfrak{t}_{\mathbf{Av}}]$} for which {\small$\t-\s\leq0$}, though this choice of convention is not so important.)

We will now consider two special cases. First, assume that {\small$\mathfrak{q}_{\x}$} is bulk-admissible with respect to {\small$\x\in\llbracket0,\infty\rrbracket$}.
\begin{enumerate}
\item Suppose that {\small$\mathscr{Q}$} consists of only forwards-oriented functions (see Definition \ref{definition:bulkadmissible}). We set
\begin{align}
\mathbf{Av}^{\mathbf{T},\mathbf{X},\mathscr{Q}}_{\t,\x}:=\boldsymbol{\Upsilon}^{\mathbf{T},\mathbf{X},\mathscr{Q}}_{\t,\x}\cdot(\mathbf{Z}^{\N}_{\t,\x_{-}})^{-1}, \quad \text{where} \ \x_{-}=\max\{0,\x-\lfloor(\log\N)^{3}\mathfrak{l}_{\mathbf{Av}}\rfloor\}.\label{eq:bulkaverageII}
\end{align}
\item Suppose that {\small$\mathscr{Q}$} consists of only backwards-oriented functions (see Definition \ref{definition:bulkadmissible}). We set
\begin{align}
\mathbf{Av}^{\mathbf{T},\mathbf{X},\mathscr{Q}}_{\t,\x}:=\boldsymbol{\Upsilon}^{\mathbf{T},\mathbf{X},\mathscr{Q}}_{\t,\x}\cdot(\mathbf{Z}^{\N}_{\t,\x_{+}})^{-1}, \quad \text{where} \ \x_{+}=\x+\lfloor(\log\N)^{3}\mathfrak{l}_{\mathbf{Av}}\rfloor.\label{eq:bulkaverageIII}
\end{align}
\end{enumerate}
Finally, note that \eqref{eq:bulkaverageI}-\eqref{eq:bulkaverageIII} make sense if we formally replace {\small$\mathfrak{t}_{\mathbf{Av}}$} with {\small$0$}, in which the time-averages go away and we replace {\small$\s$} by {\small$0$} everywhere. In particular, we define the following which only involve a spatial-average:
\begin{align}
\boldsymbol{\Upsilon}^{\mathbf{X},\mathscr{Q}}_{\t,\x}&:=[\mathbf{e}^{\tau_{\mathbf{Av}}\mathscr{T}_{\N,0}}(\mathbf{1}_{|\x-\cdot|\lesssim(\log\N)^{2}\mathfrak{l}_{\mathbf{Av}}}\cdot\mathfrak{q}_{\cdot}[\bphi_{\t}]\cdot\mathbf{Z}^{\N}_{\t,\cdot})]_{\x},\label{eq:bulkaverageIV}\\
\mathbf{Av}^{\mathbf{X},\mathscr{Q}}_{\t,\x}&:=\boldsymbol{\Upsilon}^{\mathbf{X},\mathscr{Q}}_{\t,\x}\cdot(\mathbf{Z}^{\N}_{\t,\x_{\pm}})^{-1}.\label{eq:bulkaverageV}
\end{align}
\end{definition}
\begin{remark}\label{remark:bulkaverage}
\fsp Let us now clarify the constructions in Definition \ref{definition:bulkaverage}. The gadget {\small$\boldsymbol{\Upsilon}^{\mathbf{T},\mathbf{X},\mathscr{Q}}$} performs a space-time average of the entire quantity {\small$\mathfrak{q}_{\cdot}[\bphi_{\cdot}]\mathbf{Z}^{\N}_{\cdot,\cdot}$}. This gadget ultimately appears in the evolution equation for {\small$\mathbf{Z}^{\N}$}, and in order to view this equation as a multiplicative and linear one in {\small$\mathbf{Z}^{\N}$} (similar to \eqref{eq:sheIa}-\eqref{eq:sheIb} itself), we will need to factor out a copy of {\small$\mathbf{Z}^{\N}$} to arrive at the {\small$\mathbf{Av}^{\mathbf{T},\mathbf{X},\mathscr{Q}}$}-coefficient. The detail behind factoring out either {\small$\mathbf{Z}^{\N}_{\t,\x_{+}}$} or {\small$\mathbf{Z}^{\N}_{\t,\x_{-}}$} depending on the orientation of {\small$\mathscr{Q}$}-functions can be explained as follows. 
\begin{itemize}
\item In case (1) above, when we factor out {\small$\mathbf{Z}^{\N}_{\t,\x_{-}}$} from {\small$\boldsymbol{\Upsilon}^{\mathbf{T},\mathbf{X},\mathscr{Q}}_{\t,\x}$}, we end up with a ratio of the form {\small$\mathbf{Z}^{\N}_{\t-\s,\w}\cdot(\mathbf{Z}^{\N}_{\t,\x_{-}})^{-1}$} with {\small$\w\geq\x$}. By \eqref{eq:ch}, this ratio involves {\small$\bphi_{\tau,\y}$}-variables for times {\small$\tau$}, though only for points {\small$\y\leq\w$}. In particular, these extra factors do not ``correlate" with the forwards-oriented {\small$\mathfrak{q}_{\w}$} and do not destroy its ``fluctuating" nature.
\item In case (2), the same holds after reflecting about {\small$\w$}.
\end{itemize}

We now note that the relevance of \eqref{eq:bulkaverageIV}-\eqref{eq:bulkaverageV} comes from the fact that when we introduce averaging of bulk admissible functions, we will first introduce a spatial-average and then a time-average. Finally, the use of {\small$\mathbf{H}^{\N,0}$}-smoothing (as opposed to {\small$\mathbf{H}^{\N,\mathbf{a}}$} for other {\small$\mathbf{a}\in\R$}) is because this kernel is a probability measure on {\small$\llbracket0,\infty\rrbracket$} (with respect to either spatial variable). Thus, the {\small$\mathbf{H}^{\N,0}$}-kernel is actually an \emph{averaging} one, which will be convenient.
\end{remark}
We now present an averaging scheme for edge admissible functions, which we clarify in Remark \ref{remark:edgeaverage}.
\begin{definition}\label{definition:edgeaverage}
\fsp Suppose that {\small$\mathfrak{b}$} is an edge-admissible function. Setting {\small$\mathfrak{t}_{\partial}:=\N^{-3/2}$}, we define
\begin{align}
\boldsymbol{\Upsilon}^{\mathbf{T},\mathfrak{b}}_{\t,\x}:=\mathfrak{t}_{\partial}^{-1}\int_{0}^{\mathfrak{t}_{\partial}}\mathfrak{b}[\bphi_{\t-\s}]\mathbf{Z}^{\N}_{\t-\s,\x}\d\s \quad\text{and}\quad \mathbf{Av}^{\mathbf{T},\mathfrak{b}}_{\t,\x}:=\boldsymbol{\Upsilon}^{\mathbf{T},\mathfrak{b}}_{\t,\x}\cdot(\mathbf{Z}^{\N}_{\t,\x})^{-1}. \label{eq:edgeaverageI}
\end{align}
\end{definition}
\begin{remark}\label{remark:edgeaverage}
\fsp We do not include (and, in fact, will not need) a spatial-average for the analysis of edge-admissible functions. Indeed, {\color{black}as we explained in Section \ref{section:method}}, we only need power-saving estimates for {\small$\mathbf{Av}^{\mathbf{T},\mathfrak{b}}$}-quantities that are of the form {\small$\lesssim\N^{-\rho}$} with a possibly small but still fixed {\small$\rho>0$}. Because of this, there is no issue of {\small$\x_{\pm}$}-points as in Definition \ref{definition:bulkaverage}.  Moreover, since {\small$\mathfrak{t}_{\partial}=\N^{-3/2}$} is small, we can ultimately remove all the {\small$\mathbf{Z}^{\N}$}-factors from \eqref{eq:edgeaverageI} by showing time-regularity for such objects.

We also clarify that the choice of {\small$\mathfrak{t}_{\partial}=\N^{-3/2}$} is technical. On one hand, it is small in {\small$\N$}, which will allow us to ultimately justify a replacement of {\small$\mathfrak{b}[\bphi_{\cdot}]\mathbf{Z}^{\N}$} by a time-average on this scale. On the other hand, it is large enough to establish sufficient cancellations in the time-averages in \eqref{eq:edgeaverageI}. 
\end{remark}
Finally, before we state the \abbr{SDE} for the evolution of \eqref{eq:ch}, we will need notation for a time-gradient operator which encodes the errors in replacing various quantities by their time averages. 
\begin{definition}\label{definition:grad}
\fsp For any time-scale {\small$\mathfrak{t}\geq0$} and function {\small$\mathsf{g}:[0,\infty)\to\R$},  we define the averaged time-gradient
\begin{align}
\grad^{\mathbf{T},\mathrm{av}}_{\mathfrak{t}}\mathsf{g}_{\t}:=\mathbf{1}_{\mathfrak{t}>0}\mathfrak{t}^{-1}\int_{0}^{\mathfrak{t}}(\mathsf{g}_{\t}-\mathsf{g}_{\t-\r})\d\r.\label{eq:gradII}
\end{align}
(In order to make sense of the expression \eqref{eq:gradII}, we again drop all {\small$\r$} such that {\small$\t-\r\leq0$}.)
\end{definition}
We will now present the \abbr{SDE} evolution for \eqref{eq:ch} (or rather, for the slightly regularized version {\small$\mathbf{S}^{\N}$} from \eqref{eq:heatIII}). Before we do so, we briefly clarify its structure as a lattice version of the \abbr{SHE} \eqref{eq:sheIa}-\eqref{eq:sheIb} (with mesoscopic-scale smoothing as in \eqref{eq:heatIII}), plus additional error terms that are either (essentially) deterministically small, or that are small after space-time averaging as in Definitions \ref{definition:bulkaverage} and \ref{definition:edgeaverage}. We give more detailed clarification of Proposition \ref{prop:sde} in Remark \ref{remark:sde}.
\begin{prop}\label{prop:sde}
\fsp First, we recall {\small$\mathfrak{t}_{\N},\Delta_{\mathbf{A}}$}, and {\small$\mathscr{T}_{\N,\mathbf{A}}$} from Definition \ref{definition:heat} (in which {\small$\mathbf{A}$} is the parameter in \eqref{eq:mainII}). Second, recall the notation from \eqref{eq:heatIII}. Third, recall the notation from Definitions \ref{definition:bulkaverage}, \ref{definition:edgeaverage}, and \ref{definition:grad}. We have the following \abbr{SDE} for any {\small$(\t,\x)\in[0,\infty)\times\llbracket0,\infty\rrbracket$}, which uses additional notation to be explained afterwards:
\begin{align}
&\d\mathbf{S}^{\N}_{\t,\x}=\mathscr{T}_{\N}\mathbf{S}^{\N}_{\t,\x}\d\t+[\mathbf{e}^{\mathfrak{t}_{\N}\mathscr{T}_{\N}}(\mathbf{1}^{(\x)}_{\cdot}\sqrt{2}\lambda\N^{\frac12}\mathbf{R}^{\N}_{\t,\cdot}\mathbf{S}^{\N}_{\t,\cdot}\d\mathbf{b}_{\t,\cdot})]_{\x}+\mathrm{Err}[\mathbf{R}^{\N}\mathbf{S}^{\N}]_{\t,\x}\d\t\label{eq:sdeI1}\\
&+\sum_{\ell\in\llbracket1,\mathrm{L}_{1}\rrbracket}\sum_{\w\in\llbracket0,\infty\rrbracket}\boldsymbol{\mathcal{K}}^{(\ell)}_{\x,\w}\cdot\N\cdot\mathbf{Av}^{\mathbf{T},\mathbf{X},\mathscr{Q}_{\ell}}_{\t,\w}\mathbf{R}^{\N}_{\t,\w_{\pm}}\mathbf{S}^{\N}_{\t,\w_{\pm}}\d\t\label{eq:sdeI3}\\
&+\sum_{\ell\in\llbracket1,\mathrm{L}_{1}\rrbracket}\sum_{\w\in\llbracket0,\infty\rrbracket}\boldsymbol{\mathcal{K}}^{(\ell)}_{\x,\w}\cdot\N\cdot\grad^{\mathbf{T},\mathrm{av}}_{\mathfrak{t}_{\mathbf{Av}}}(\mathbf{Av}^{\mathbf{X},\mathscr{Q}_{\ell}}_{\t,\w}\mathbf{R}^{\N}_{\t,\w_{\pm}}\mathbf{S}^{\N}_{\t,\w_{\pm}})\d\t\label{eq:sdeI4}\\
&+\sum_{\ell\in\llbracket1,\mathrm{L}_{2}\rrbracket}\sum_{\w\in\llbracket0,\infty\rrbracket}\boldsymbol{\mathcal{K}}^{\partial,\ell}_{\x,\w}\cdot\N\mathbf{Av}^{\mathbf{T},\mathfrak{b}_{\ell}}_{\t,\w}\mathbf{R}^{\N}_{\t,\w}\mathbf{S}^{\N}_{\t,\w}\d\t+\sum_{\ell\in\llbracket1,\mathrm{L}_{2}\rrbracket}\sum_{\w\in\llbracket0,\infty\rrbracket}\boldsymbol{\mathcal{K}}^{\partial,\ell}_{\x,\w}\cdot\N\grad^{\mathbf{T},\mathrm{av}}_{\mathfrak{t}_{\partial}}(\mathfrak{b}_{\ell}[\bphi_{\t}]\mathbf{R}^{\N}_{\t,\w}\mathbf{S}^{\N}_{\t,\w})\d\t.\label{eq:sdeI5}
\end{align}
%
\begin{enumerate}
\item We recall {\small$\mathbf{1}^{(\x)}_{\cdot}:=\mathbf{1}[|\x-\cdot|\lesssim(\log\N)^{2}\N\mathfrak{t}_{\N}^{1/2}]$} from Definition \ref{definition:heat}. We also recall {\small$\mathscr{T}_{\N,\mathbf{A}}$} from Definition \ref{definition:heat} and set {\small$\mathscr{T}_{\N}:=\mathscr{T}_{\N,\mathbf{A}}$} (since this parameter {\small$\mathbf{A}$} is of special interest).
\item The quantity {\small$\mathrm{Err}[\mathbf{R}^{\N}\mathbf{S}^{\N}]_{\t,\x}$} is the linear integral operator
\begin{align}
\mathrm{Err}[\mathbf{R}^{\N}\mathbf{S}^{\N}]_{\t,\x}&:=\sum_{\w\in\llbracket0,\infty\rrbracket}\mathbf{E}^{\N}_{\t,\x,\w}\cdot\mathbf{R}^{\N}_{\t,\w}\mathbf{S}^{\N}_{\t,\w},\label{eq:sdeII1}
\end{align}
where the kernel {\small$\mathbf{E}^{\N}$} satisfies the deterministic estimate below for any {\small$\kappa>0$} and some {\small$0<\mathrm{C}\lesssim1$}:
\begin{align}
|\mathbf{E}^{\N}_{\t,\x,\w}|\lesssim_{\kappa}\Big(\N^{-\frac65}+\mathbf{1}_{\w\in\llbracket0,1\rrbracket}\N^{-\frac13}\Big)\cdot\exp(-\tfrac{\kappa|\x-\w|}{\N})\cdot\Big\{1+\sum_{|\z-\w|\lesssim1}|\bphi_{\t,\z}|^{\mathrm{C}}\Big\}.\label{eq:sdeII2}
\end{align}
\item The kernel {\small$\boldsymbol{\mathcal{K}}^{(\ell)}$} is deterministic and satisfies the following estimate for any {\small$\kappa>0$} and some {\small$\mathrm{C}\lesssim1$}, where {\small$\delta_{\mathbf{S}}>0$} is the small constant from Definition \ref{definition:heat}:
\begin{align}
|\boldsymbol{\mathcal{K}}^{(\ell)}_{\x,\w}|\lesssim_{\kappa}\N^{-1+\mathrm{C}\delta_{\mathbf{S}}}\exp(-\tfrac{\kappa|\x-\w|}{\N}).\label{eq:sdeIII}
\end{align}
\item The integers {\small$\mathrm{L}_{1},\mathrm{L}_{2}$} are large but {\small$\mathrm{O}(1)$}.
\item For any index {\small$\ell\in\llbracket1,\mathrm{L}_{1}\rrbracket$}, the collection {\small$\mathscr{Q}_{\ell}=(\mathfrak{q}_{\ell,\x})_{\x\in\llbracket0,\infty\rrbracket}$} of functions is such that {\small$\mathfrak{q}_{\ell,\x}$} is bulk-admissible with respect to {\small$\x\in\llbracket0,\infty\rrbracket$} (see Definition \ref{definition:bulkadmissible}). Moreover, either all functions in {\small$\mathscr{Q}_{\ell}$} are forwards-oriented, or they are all backwards oriented. In the former case, for the corresponding sum over {\small$\w$}, we set {\small$\w_{\pm}=\w_{-}$}. In the latter case, we instead set {\small$\w_{\pm}=\w_{+}$}.
\item For every {\small$\ell\in\mathrm{L}_{2}$}, the function {\small$\mathfrak{b}_{\ell}$} is edge-admissible (see Definition \ref{definition:edgeadmissible}).
\item The kernel {\small$\boldsymbol{\mathcal{K}}^{\partial,\ell}_{\x,\w}$} is supported either at {\small$\w=0$} or {\small$\w=1$} (depending on the index {\small$\ell\in\llbracket1,\mathrm{L}_{2}\rrbracket$}). Moreover, we have the following estimate for any {\small$\kappa>0$} and for some {\small$\mathrm{C}=\mathrm{O}(1)$}, again where {\small$\delta_{\mathbf{S}}$} is from Definition \ref{definition:heat}:
\begin{align}
|\boldsymbol{\mathcal{K}}^{\partial,\ell}_{\x,\w}|\lesssim_{\kappa}\N^{-1+\mathrm{C}\delta_{\mathbf{S}}}\exp(-\tfrac{\kappa|\x-\w|}{\N})\mathbf{1}_{\w=0,1}.\label{eq:sdeIV}
\end{align}
\end{enumerate}
\end{prop}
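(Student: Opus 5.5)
The plan is to derive the equation for \(\mathbf{Z}^{\N}\) by It\^{o}'s formula, apply the smoothing operator that defines \(\mathbf{S}^{\N}\), and then reorganize the resulting terms into the averaged form \eqref{eq:sdeI1}--\eqref{eq:sdeI5}. The argument has three stages: a microscopic It\^{o} computation, a smoothing and summation-by-parts step (where the edge cancellation happens), and the introduction of space-time and time averages.

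\textbf{Step 1: microscopic It\^{o} computation.} Apply It\^{o}'s formula to \eqref{eq:ch} using \eqref{eq:currI}--\eqref{eq:currII}. In the bulk \(\x\geq1\) this gives
\begin{align*}
\d\mathbf{Z}^{\N}_{\t,\x}=\mathbf{Z}^{\N}_{\t,\x}\big(\lambda\N^{\frac32}\grad^{\mathbf{X}}_{1}\mathscr{U}'[\bphi_{\t,\x}]+\lambda\N\mathbf{F}[\tau_{\x}\bphi_{\t}]+\lambda^{2}\N-\lambda\mathscr{R}_{\lambda}\big)\d\t+\sqrt2\lambda\N^{\frac12}\mathbf{Z}^{\N}_{\t,\x}\d\mathbf{b}_{\t,\x}.
\end{align*}
I would follow the full-space computation of \cite{Y26PTRF}, which I take as given.
\begin{itemize}
\item Write \(\mathscr{U}'=\alpha\bphi+(\mathscr{U}'-\alpha\bphi)\).
\item Use \(\mathbf{Z}^{\N}_{\t,\x\pm1}=\mathbf{Z}^{\N}_{\t,\x}\exp(\pm\lambda\N^{-1/2}\bphi)\) and Taylor-expand the exponential to third order. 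This recovers \(\alpha\N^{2}\Delta\mathbf{Z}^{\N}\) plus the \(\frac14\N\lambda\Delta\) correction appearing in \(\mathscr{T}_{\N,\mathbf{a}}\).
\item The quadratic parts of \(\mathbf{F}\), the cubic Taylor terms, and \(\mathscr{R}_{\lambda}\) cancel in expectation. The leftover local functions are then projected so as to lie in \(\mathrm{Jet}^{\perp}_{\mathfrak{k}}\) with the correct \(\N\)-weights, i.e. they become bulk admissible.
\item Each such function is split into forwards- and backwards-oriented pieces (possible because \eqref{eq:nl} is a sum of one-sided products). The gradient term \(\N^{3/2}\grad^{\mathbf{X}}_{1}((\mathscr{U}'-\alpha\bphi)\mathbf{Z}^{\N})\) is kept in divergence form.
\item Remainders from the Taylor expansion are \(\mathrm{O}(\N^{-6/5})\) times polynomials in \(\bphi\); these go into \(\mathbf{E}^{\N}\).
\end{itemize}
At \(\x=0\) I would repeat the computation with \eqref{eq:currII}. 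This yields \(\N^{3/2}\mathscr{U}'[\bphi_{1}]\mathbf{Z}^{\N}_{\t,0}\), the nonlinearity \(\N\mathbf{F}[\bphi]\), and the Itô correction. These are compared with \(\alpha\N^{2}\Delta_{\mathbf{A}}\) at \(0\). The terms at \(\x=1\) that involve the extended \(\bphi_{\x\leq0}\) are handled the same way. The order-\(\N\) leftovers at \(\x\in\llbracket0,1\rrbracket\) are centered under \(\mathbb{P}^{0}\) to produce edge-admissible \(\mathfrak{b}_{\ell}\), plus constants.

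\textbf{Step 2: smoothing, summation by parts, and the edge cancellation.} Apply \(\mathbf{e}^{\mathfrak{t}_{\N}\mathscr{T}_{\N}}(\mathbf{1}^{(\x)}\cdot)\). This commutes with \(\mathscr{T}_{\N}\) up to errors from the indicator cutoff; those errors are exponentially small by heat kernel tails and go into \(\mathrm{Err}\). Sum by parts to move \(\grad^{\mathbf{X}}_{1}\) onto \(\mathbf{H}^{\N,\mathbf{A}}\). Since \(\grad\mathbf{H}\lesssim\N^{-1}\mathfrak{t}_{\N}^{-1/2}\), the \(\N^{3/2}\) prefactor becomes \(\N^{1/2+\mathrm{C}\delta_{\mathbf{S}}}\) times a kernel. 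The function \(\mathscr{U}'-\alpha\bphi\) lies only in \(\mathrm{Jet}^{\perp}_{1}\), so to reach the form \(\N\cdot\N^{-1}\mathrm{Jet}_{0}^{\perp}\)-type coefficients I would absorb one \(\N^{-1/2}\) via the rewriting of \(\N^{-1/2}\mathfrak{f}\) as \(\N^{-1+1/2}\mathfrak{f}\) allowed by Definition \ref{definition:bulkadmissible} (\(\mathfrak{k}=1\)).

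Summation by parts leaves a boundary term at \(0\) of order \(\N^{3/2}\mathbf{H}_{\x,0}(\mathscr{U}'[\bphi_1]-\alpha\bphi_1)\mathbf{Z}^{\N}_{0}\). This cancels the \(\N^{3/2}\) edge drift from Step 1 exactly. What remains at order \(\N\) must have \(\mathbb{P}^{0}\)-mean equal to \(\alpha\lambda\mathbf{A}\), with \(\mathbf{A}=-\frac12\alpha^{-1}\lambda\). This mean comes from the Itô term \(\lambda^{2}\N\) together with the Robin extension in \(\Delta_{\mathbf{A}}\). Checking this constant, and checking that the residual \(\N^{-1/3}\) error at \(\w\in\llbracket0,1\rrbracket\) fits within \eqref{eq:sdeII2}, is the \textbf{main obstacle}. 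I would first compute \(\E^{0}\) of the explicit edge drift using integration by parts under \(\mathbb{P}^{0}\), namely \(\E^{0}\mathscr{U}'(\bphi)g(\bphi)=\E^{0}g'(\bphi)\).

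\textbf{Step 3: averaging and kernel bounds.} Replace \(\mathbf{Z}^{\N}=\mathbf{R}^{\N}\mathbf{S}^{\N}\) throughout. For bulk terms, first insert the spatial average \(\mathbf{e}^{\tau_{\mathbf{Av}}\mathscr{T}_{\N,0}}\). Because \(\tau_{\mathbf{Av}}\ll\mathfrak{t}_{\N}\), the cost of this replacement is a kernel difference that is again \(\N^{-1+\mathrm{C}\delta_{\mathbf{S}}}\)-bounded. Then factor out \(\mathbf{Z}^{\N}_{\t,\w_{\pm}}\) to obtain \(\mathbf{Av}^{\mathbf{X},\mathscr{Q}}\). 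Next, write the quantity as its time average plus \(\grad^{\mathbf{T},\mathrm{av}}_{\mathfrak{t}_{\mathbf{Av}}}\) of it; this produces \eqref{eq:sdeI3}--\eqref{eq:sdeI4}. The edge terms are treated identically with \(\mathfrak{t}_{\partial}\), giving \eqref{eq:sdeI5}. The kernel bounds \eqref{eq:sdeIII}--\eqref{eq:sdeIV} follow from the Robin heat kernel estimates in Proposition \ref{prop:hkestimates}, using the exponential weight \(\exp(-\kappa|\x-\w|/\N)\).
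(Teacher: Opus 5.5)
Your Steps 1--2 follow the paper's route: the bulk equation from \cite{Y26PTRF}, separate It\^{o} computations at $\x=0,1$, smoothing, summation by parts, and the $\N^{3/2}$ Neumann cancellation at the edge. The edge constant you single out as the main obstacle is in fact routine. The identities $\E^{0}\mathbf{F}=0$, $\E^{0}\bphi_{1}^{2}=\alpha^{-1}$ and $\E^{0}\mathscr{U}'[\bphi_{1}]\bphi_{1}=1$ show that $\lambda\mathbf{F}+\lambda^{2}-\frac12\alpha\lambda^{2}\bphi_{1}^{2}+\alpha\lambda\mathbf{A}$ has mean $\frac12\lambda^{2}+\alpha\lambda\mathbf{A}=0$, and that $\mathscr{V}'[\bphi_{1}]\bphi_{1}$ is centered (Lemma \ref{lemma:classifyedge}).

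The genuine gap is the spatial averaging in Step 3. Set $\mathscr{X}:=\mathrm{Id}-\mathbf{e}^{\tau_{\mathbf{Av}}\mathscr{T}_{\N,0}}$ and split once: $\N\mathfrak{q}\mathbf{Z}^{\N}=\mathbf{e}^{\tau_{\mathbf{Av}}\mathscr{T}_{\N,0}}(\N\mathfrak{q}\mathbf{Z}^{\N})+\mathscr{X}(\N\mathfrak{q}\mathbf{Z}^{\N})$. This leaves $\sum_{\w}(\mathscr{X}\boldsymbol{\mathcal{S}}^{(\ell)})_{\x,\w}\N\mathfrak{q}_{\ell,\w}\mathbf{Z}^{\N}_{\t,\w}$, which is exactly the un-averaged type of term you started from.
\begin{itemize}
\item Its kernel is smaller than $\boldsymbol{\mathcal{S}}^{(\ell)}$ only by $(\mathfrak{l}_{\mathbf{Av}}/\N^{1-\delta_{\mathbf{S}}})^{2}=\N^{-\delta_{\mathbf{S}}}$.
\item Meanwhile $\N\mathfrak{q}=\N^{\mathfrak{k}/2}\mathfrak{f}$ can be of size $\N$ pointwise.
\item So the term does not fit \eqref{eq:sdeII2}, which needs $\N^{-6/5}$ per site.
\item Nor is it of the form \eqref{eq:sdeI3}--\eqref{eq:sdeI4}, whose coefficients must be the averaged $\mathbf{Av}^{\mathbf{X},\mathscr{Q}_{\ell}}$ and $\mathbf{Av}^{\mathbf{T},\mathbf{X},\mathscr{Q}_{\ell}}$.
\end{itemize}
This is not cosmetic. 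Lemma \ref{lemma:cltx} and Proposition \ref{prop:stochmain1}, which feed Section \ref{section:hkzeta}, apply only to averaged coefficients. Observing that the difference kernel is ``again $\N^{-1+\mathrm{C}\delta_{\mathbf{S}}}$-bounded'' does not place this term anywhere.

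The missing idea (Lemma \ref{lemma:xav}) is to iterate the splitting:
\begin{itemize}
\item Write $\N\mathfrak{q}\mathbf{Z}^{\N}=\sum_{\m\leq\mathrm{M}}\mathscr{X}^{\m}\mathbf{e}^{\tau_{\mathbf{Av}}\mathscr{T}_{\N,0}}(\N\mathfrak{q}\mathbf{Z}^{\N})+\mathscr{X}^{\mathrm{M}+1}(\N\mathfrak{q}\mathbf{Z}^{\N})$.
\item Move every $\mathscr{X}$ onto the kernel using self-adjointness of the Neumann semigroup.
\item Take $\boldsymbol{\mathcal{K}}^{(\ell)}:=\sum_{\m\leq\mathrm{M}}\mathscr{X}^{\m}\boldsymbol{\mathcal{S}}^{(\ell)}$, a truncated Neumann series for the inverse of $\mathbf{e}^{\tau_{\mathbf{Av}}\mathscr{T}_{\N,0}}$.
\end{itemize}
The only un-averaged remainder is then $\sum_{\w}(\mathscr{X}^{\mathrm{M}+1}\boldsymbol{\mathcal{S}}^{(\ell)})_{\x,\w}\N\mathfrak{q}_{\ell,\w}\mathbf{Z}^{\N}_{\t,\w}$. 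Lemma \ref{lemma:kernelestimate} shows $|\mathscr{X}^{\mathrm{M}+1}\boldsymbol{\mathcal{S}}^{(\ell)}_{\x,\w}|\lesssim\N^{-\mathrm{D}}\exp(-\kappa|\x-\w|/\N)$ for $\mathrm{M}$ large. It uses three inputs:
\begin{itemize}
\item the bound $\|\mathscr{X}\mathsf{f}\|_{\infty}\lesssim\tau_{\mathbf{Av}}\|\N^{2}\Delta_{0}\mathsf{f}\|_{\infty}$;
\item the fact that $\Delta_{0}$ commutes with $\mathscr{X}$;
\item the multi-derivative bound \eqref{eq:sdepreI} for the pre-averaging kernels at scale $\N^{1-\delta_{\mathbf{S}}}$.
\end{itemize}
Each factor of $\mathscr{X}$ gains $\N^{-\delta_{\mathbf{S}}}$, so $\mathrm{M}=\mathrm{O}(\mathrm{D}/\delta_{\mathbf{S}})$ suffices. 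This is where the gap $\tau_{\mathbf{Av}}=\N^{-3\delta_{\mathbf{S}}}\ll\mathfrak{t}_{\N}=\N^{-\delta_{\mathbf{S}}}$ is actually used. It is also why \eqref{eq:sdepreI}, not just \eqref{eq:sdeIII}, must be recorded for the kernels coming out of your Step 2.

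A smaller inaccuracy in Step 1: the remainders in the bulk equation of \cite{Y26PTRF} are not pointwise $\mathrm{O}(\N^{-6/5})$. They include $\N^{1/2}\grad^{\mathbf{X}}_{\pm1}(\mathfrak{f}_{i}\mathbf{Z}^{\N})$ and $\N\Delta(\mathfrak{f}_{0}\mathbf{Z}^{\N})$. These become $\mathrm{Err}$ only after summation by parts onto the smoothing kernel, and the resulting edge boundary terms must be tracked; the Laplacian one is what Lemma \ref{lemma:lap} handles.
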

\begin{remark}\label{remark:sde}
\fsp Let us clarify the structure of Proposition \ref{prop:sde}, {\color{black}keeping in mind the discussion from Section \ref{section:method}}.
\begin{itemize}
\item The first line \eqref{eq:sdeI1} is essentially a lattice \abbr{SHE}. Indeed, as shown later, we have {\small$\mathbf{R}^{\N}\approx1$} in a sufficiently strong sense, and the heat kernel smoothing in \eqref{eq:sdeI1} occurs on a mesoscopic scale that vanishes in the large-{\small$\N$} limit. Moreover, the estimate \eqref{eq:sdeII2} suggests that that the last term in \eqref{eq:sdeI1}, provided a priori estimates on {\small$\bphi$}-spins that will eventually be shown, is scaled too small to persist in the large-{\small$\N$} limit. (The meaning of {\small$-6/5$} is not important, just that it is less than {\small$-1$}; similarly, any fixed negative exponent would be okay in place of {\small$-1/3$}.) 
\item Let us explain \eqref{eq:sdeI3}. The {\small$\mathbf{Av}^{\mathbf{T},\mathbf{X},\mathscr{Q}_{\ell}}$}-coefficients {\color{black}were explained in Section \ref{section:method} as what we obtain after replacing bulk-admissible coefficients by their space-time averages} (then factor out {\small$\mathbf{Z}^{\N}=\mathbf{R}^{\N}\mathbf{S}^{\N}$} like in Definition \ref{definition:bulkaverage}). The kernels {\small$\boldsymbol{\mathcal{K}}^{(\ell)}$} are essentially {\small$\mathbf{H}^{\N,\mathbf{A}}$}-kernels at time-scale {\small$\mathfrak{t}_{\N}=\N^{-\delta_{\mathbf{S}}}$} which generate the smoothing in \eqref{eq:heatIII}, but perhaps with decorations. For example, some {\small$\boldsymbol{\mathcal{K}}^{(\ell)}$}-kernels come with a finite number of space-gradients to absorb error terms coming from the replacement of {\small$\mathfrak{q}$}-functions by their space-time averages, {\color{black}as discussed in Section \ref{section:method}}. We emphasize that {\small$\mathbf{H}^{\N,\mathbf{A}}$}-kernels at time-scale {\small$\mathfrak{t}_{\N}=\N^{-\delta_{\mathbf{S}}}$}, as well as their gradients, satisfy the necessary estimate \eqref{eq:sdeIII} by Proposition \ref{prop:hkestimates}. 

Lastly, the functions in {\small$\mathscr{Q}_{\ell}=(\mathfrak{q}_{\ell,\x})_{\x\in\llbracket0,\infty\rrbracket}$} are essentially {\small$\tau$}-spatial-shifts of a local function {\small$\R^{\llbracket1,\infty\rrbracket}\to\R$}, though near the boundary, these shifts do not quite make sense anymore, hence the more abstract dependence of {\small$\mathfrak{q}_{\ell,\x}$} on {\small$\x\in\llbracket0,\infty\rrbracket$}. In any case, this means that if one of the functions in {\small$\mathscr{Q}_{\ell}$} is forwards- (resp. backwards-) oriented, then all of them are forwards- (resp backwards-) oriented.
\item The terms in \eqref{eq:sdeI4} have the same structure as \eqref{eq:sdeI3}, though only with spatial-averages and a time-averaged gradient. Indeed, this line accounts for the cost in replacing the spatial-average by a space-time average.
\item The first term in \eqref{eq:sdeI5} has the same structure as \eqref{eq:sdeI3} but without a space-average, as everything is localized to the edge. The second term in \eqref{eq:sdeI5}, similar to \eqref{eq:sdeI4}, accounts for the cost in replacing an edge-admissible coefficient by a time-average. We clarify that the kernels {\small$\boldsymbol{\mathcal{K}}^{\partial,\ell}$} are also restriction of {\small$\mathbf{H}^{\N,\mathbf{A}}$}-kernels to the edge (perhaps with additional decorations as noted above). \emph{A key aspect of \eqref{eq:sdeI5}, however, is that the scaling is {\small$\N$}, not {\small$\N^{3/2}$}, which is perhaps the natural guess, because the scaling in \eqref{eq:currII} is of order {\small$\N^{3/2}$}.} {\color{black}This reflects (more concretely) the important Neumann cancellation that was discussed in Section \ref{section:method}.}
\end{itemize}
\end{remark}
The remainder of this section is dedicated to the proof of Proposition \ref{prop:sde}. The details of this proof, however, have no bearing on the rest of this paper, so the reader is invited to skip to Section \ref{section:shk} below.
\subsection{Interior dynamics}
By \eqref{eq:ch} and \eqref{eq:currI}, we may directly inherit the bulk dynamics of {\small$\mathbf{Z}^{\N}$} from the equation in the full-space case from \cite{Y26PTRF}. For this, we will need to introduce the necessary notation below.
\begin{notation}\label{notation:mshe}
\fsp With notation to be explained afterwards, we introduce the following ``error term operator":
\begin{align}
\mathfrak{e}[\tau_{\x}{\boldsymbol{\phi}}_{\t}]\mathbf{Z}^{\N}_{\t,\x}:=\mathfrak{e}_{2}[\tau_{\x}{\boldsymbol{\phi}}_{\t}]\mathbf{Z}^{\N}_{\t,\x}+\mathfrak{e}_{1}[\tau_{\x}{\boldsymbol{\phi}}_{\t}]\mathbf{Z}^{N}_{\t,\x}+\mathfrak{e}_{0}[\tau_{\x}{\boldsymbol{\phi}}_{\t}]\mathbf{Z}^{\N}_{\t,\x}+\mathfrak{f}_{\mathrm{total}}[\tau_{\x}{\boldsymbol{\phi}}_{\t}]\mathbf{Z}^{\N}_{\t,\x}+\N\Delta(\mathfrak{f}_{0}[\tau_{\x}\bphi_{\t}]\mathbf{Z}^{\N}_{\t,\x}).\label{eq:msheII}
\end{align}
In what follows, the subscript for the {\small$\mathfrak{e}_{\ell}$}-terms, for {\small$\ell=0,1,2$}, indicates the power of the fluctuation-scaling {\small$\N^{1/2}$}. (It also turns out that the coefficients in {\small$\mathfrak{e}_{\ell}$}-terms belong to {\small$\mathrm{Jet}_{\ell}^{\perp}$} from Definition \ref{definition:jet} and, after checking the other technical conditions in Definition \ref{definition:bulkadmissible}, are bulk admissible.) 
\begin{enumerate}
\item Before we proceed, throughout this construction, we will restrict to {\small$\x\in\llbracket2,\infty\rrbracket$}. Indeed, it is for these points that all functions of {\small$\bphi_{\t,\cdot}$} below depend only on {\small$\bphi_{\t,\w}$} for {\small$\w\in\llbracket1,\infty\rrbracket$}.
\item We first introduce {\small$\mathfrak{e}_{2}$}. Recall {\small$\lambda$} from \eqref{eq:rg}. We define the following renormalization of a quadratic nonlinearity in {\small$\mathscr{U}'$} by a generalization of a degree-{\small$2$} Hermite polynomial to non-Gaussian potentials:
\begin{align}
\mathscr{Q}[{\boldsymbol{\phi}}]:=\tfrac12\beta_{2}\mathscr{U}'[{\boldsymbol{\phi}}_{0}]\mathscr{U}'[{\boldsymbol{\phi}}_{1}]-\tfrac12\lambda(\mathscr{U}'[{\boldsymbol{\phi}}_{0}]{\boldsymbol{\phi}}_{0}-1).\label{eq:quadratic_renorm}
\end{align}
The first term on the \abbr{RHS} of \eqref{eq:msheII} is given as follows, in which {\small$\deg\geq3$} is the degree of {\small$\mathbf{F}$} from \eqref{eq:nl}:
\begin{align}
\mathfrak{e}_{2}[\tau_{\x}{\boldsymbol{\phi}}_{\t}]\mathbf{Z}^{\N}_{\t,\x}&:=\lambda \N\mathscr{Q}[\bphi_{\t,\x+1}]\mathbf{Z}^{\N}_{\t,\x}+\lambda \N\mathscr{Q}[\bphi_{\t,\x-1}]\mathbf{Z}^{\N}_{\t,\x}\label{eq:msheII(2)a}\\
&+\lambda \N\mathbf{F}_{\geq3}[\tau_{\x+2\deg}\bphi_{\t}]\mathbf{Z}^{\N}_{\t,\x}\d\t\label{eq:msheII(2)aa}\\
&-\tfrac16\lambda^{2} \N(\Delta\{\mathscr{U}'[\bphi_{\t,\x+2}]\mathscr{U}'[\bphi_{\t,\x+3}]\}\mathbf{Z}^{\N}_{\t,\x})\d\t.\label{eq:msheII(2)b}
\end{align}
The \abbr{RHS} of \eqref{eq:msheII(2)a} can be seen as a non-Gaussian analogue of how the exponential Cole-Hopf transform for \abbr{KPZ} itself introduces a renormalization by a quadratic Hermite polynomial. The term \eqref{eq:msheII(2)aa} is, up to spatial shift, the contribution of the degree {\small$3$} and higher terms in \eqref{eq:nl}, and \eqref{eq:msheII(2)b} is an error term to account for all the spatial-shifting in \eqref{eq:msheII(2)a}-\eqref{eq:msheII(2)b}.
\item To define the second term on the \abbr{RHS} of \eqref{eq:msheII}, let us introduce the function {\small$\mathscr{V}[{\boldsymbol{\phi}}_{\x}]:=\mathscr{U}[{\boldsymbol{\phi}}_{\x}]-\frac12\alpha{\boldsymbol{\phi}}_{\x}^{2}$}, so that {\small$\mathscr{V}'[\bphi_{\x}]=\mathscr{U}'[\bphi_{\x}]-\alpha\bphi_{\x}$}. We will also need the function
\begin{align}
\mathscr{G}[\bphi]:=\bphi_{2}\mathscr{U}'[\bphi_{2}]\cdot(\mathscr{U}'[\bphi_{1}]-\mathscr{U}'[\bphi_{3}])+\bphi_{2}\cdot(\mathscr{U}'[\bphi_{4}]\mathscr{U}'[\bphi_{5}]-\mathscr{U}'[\bphi_{3}]\mathscr{U}'[\bphi_{4}]).\label{eq:funnygradient}
\end{align}
The second term on the \abbr{RHS} of \eqref{eq:msheII} is now given as follows, where {\small$c_{\ell}\in\R$} are fixed constants:
\begin{align}
\mathfrak{e}_{1}[\tau_{\x}{\boldsymbol{\phi}}_{\t}]\mathbf{Z}^{\N}_{\t,\x}&:=\tfrac12\lambda\N^{\frac32}\grad^{\mathbf{X}}_{1}(\mathscr{V}'[{\boldsymbol{\phi}}_{\t,\x}]\mathbf{Z}^{\N}_{\t,\x})-\tfrac12\lambda\N^{\frac32}\grad^{\mathbf{X}}_{-1}(\mathscr{V}'[{\boldsymbol{\phi}}_{\t,\x+1}]\mathbf{Z}^{N}_{\t,\x})\label{eq:msheII(1)a}\\
&-\tfrac16\lambda^{3} \N^{\frac12}\mathscr{G}[\tau_{\x}\bphi_{\t}]\mathbf{Z}^{\N}_{\t,\x}\d\t+\N^{\frac12}\Big(\mathbf{F}_{\geq3}[\tau_{\x+2\deg}\bphi_{\t}]\cdot\sum_{\ell\in\llbracket1,2\deg\rrbracket}c_{\ell}\bphi_{\t,\x+\ell}\Big)\mathbf{Z}^{\N}_{\t,\x}\d\t\label{eq:msheII(1)c}
\end{align}
The exact form of the constants {\small$c_{\ell}$} is not important, just that they are deterministic and {\small$\mathrm{O}(1)$}. Intuitively, the \abbr{RHS} of \eqref{eq:msheII(1)a} accounts for ``how non-quadratic" the potential {\small$\mathscr{U}$} is. (If {\small$\mathscr{U}$} is quadratic, then {\small$\mathscr{V}\equiv0$}.) The term \eqref{eq:msheII(1)c} is essentially another error term obtained from shifting the {\small$\mathbf{F}$}-nonlinearity from \eqref{eq:nl} in space.
\item Third, we have the following, where {\small$c_{\ell_{1},\ell_{2}}\in\R$} are fixed constants:
\begin{align}
\mathfrak{e}_{0}[\tau_{\x}{\boldsymbol{\phi}}_{\t}]\mathbf{Z}^{\N}_{\t,\x}&:=\tfrac{1}{12}\beta_{2}^{4}\cdot(\mathscr{U}'[{\boldsymbol{\phi}}_{\t,\x+1}]{\boldsymbol{\phi}}_{\t,\x+1}^{3}-\E^{0}\{\mathscr{U}'[{\boldsymbol{\phi}}_{0}]{\boldsymbol{\phi}}_{0}^{3}\})\mathbf{Z}^{\N}_{\t,\x}\label{eq:msheII(0)a}\\
&-\tfrac16\lambda^{4}(\mathscr{G}[\tau_{\x}\bphi_{\t}]\bphi_{\t,\x+1}-1)\mathbf{Z}^{\N}_{\t,\x}\d\t\label{eq:msheII(0)a1}\\
&-\lambda\N\grad^{\mathbf{X}}_{1}(\mathbf{F}_{\geq3}[\tau_{\x+2\deg}\bphi_{\t}]\mathbf{Z}^{\N}_{\t,\x})\d\t\label{eq:msheII(0)a3}\\
&+\tfrac12\lambda^{2}\N\grad^{\mathbf{X}}_{1}([{\boldsymbol{\phi}}_{\t,\x}^{2}-1]\cdot\mathbf{Z}^{\N}_{\t,\x})\label{eq:msheII(0)b}\\
&+\tfrac12\lambda^{2}\N\grad^{\mathbf{X}}_{-1}([{\boldsymbol{\phi}}_{\t,\x+1}^{2}-1]\cdot\mathbf{Z}^{\N}_{\t,\x})\label{eq:msheII(0)c}\\
&+\tfrac13\lambda^{2} \N\grad^{\mathbf{X}}_{1}(\Delta\{\mathscr{U}'[\bphi_{\t,\x+2}]\mathscr{U}'[\bphi_{\t,\x+3}]\}\mathbf{Z}^{\N}_{\t,\x})\label{eq:msheII(0)d}\\
&-\tfrac{1}{12}\lambda^{4}(\Delta\{\mathscr{U}'[\bphi_{\t,\x+2}]\mathscr{U}'[\bphi_{\t,\x+3}]\})\bphi_{\t,\x+1}^{2}\mathbf{Z}^{\N}_{\t,\x}\d\t.\label{eq:msheII(0)e}\\
&-\tfrac{1}{12}\lambda^{4}(\Delta\{\mathscr{U}'[\bphi_{\t,\x+2}]\mathscr{U}'[\bphi_{\t,\x+3}]\})\bphi_{\t,\x+2}^{2}\mathbf{Z}^{\N}_{\t,\x}\d\t.\label{eq:msheII(0)e1}\\
&+\tfrac{1}{12}\lambda^{4}(\grad^{\mathbf{X}}_{1}\{\mathscr{U}'[{\boldsymbol{\phi}}_{\t,\x+1}]{\boldsymbol{\phi}}_{\t,\x+1}^{3}\})\mathbf{Z}^{\N}_{\t,\x}\d\t\label{eq:msheII(0)f}\\
&+\Big(\mathbf{F}_{\geq3}[\tau_{\x+2\deg}\bphi_{\t}]\cdot\sum_{\ell_{1},\ell_{2}\in\llbracket1,2\deg\rrbracket}c_{\ell_{1},\ell_{2}}\bphi_{\t,\x+\ell_{1}}\bphi_{\t,\x+\ell_{2}}\Big)\cdot\mathbf{Z}^{\N}_{\t,\x}\d\t.\label{eq:msheII(0)a2}
\end{align}
All of these terms are essentially error terms obtained in the same way as \eqref{eq:msheII(1)c}.
 \item The first error term is given by
\begin{align}
\mathfrak{f}_{\mathrm{total}}[\tau_{\x}{\boldsymbol{\phi}}_{\t}]\mathbf{Z}^{\N}_{\t,\x}&=\N^{\frac12}\grad^{\mathbf{X}}_{1}\left(\mathfrak{f}_{1}[\tau_{\x}{\boldsymbol{\phi}}_{\t}]\mathbf{Z}^{\N}_{\t,\x}\right)\label{eq:msheIIfa}\\
&+\N^{\frac12}\grad^{\mathbf{X}}_{-1}\left(\mathfrak{f}_{2}[\tau_{\x}{\boldsymbol{\phi}}_{\t}]\mathbf{Z}^{\N}_{\t,\x}\right)\label{eq:msheIIfb}\\
&+\N^{-\frac12}\mathfrak{f}_{3}[\tau_{\x}{\boldsymbol{\phi}}_{\t}]\mathbf{Z}^{\N}_{\t,\x},\label{eq:msheIIfc}
\end{align}
in which the {\small$\mathfrak{f}_{i}:\R^{\Z}\to\R$} satisfy the following a priori estimate (which is identical to the pointwise estimate, but not the regularity estimate, in \eqref{eq:bulkadmissibleI}) for some $0<\mathrm{c},\mathrm{C}=\mathrm{O}(1)$:
\begin{align}
|\mathfrak{f}_{i}[\bphi]|\lesssim1+\sum_{|\w|\leq\mathrm{c}}|\bphi_{\w}|^{\mathrm{C}}.
\label{eq:msheIIff}
\end{align}
The exact form of these functions is similarly not important. Indeed, every term in \eqref{eq:msheIIfa}-\eqref{eq:msheIIfc} is scaled too weakly in {\small$\N$} to have any contribution in the large-{\small$\N$} limit (given that the functions {\small$\mathfrak{f}_{i}$} therein are controlled in size, in some sense, because of \eqref{eq:msheIIff}).
\item The last error {\small$\mathfrak{f}_{0}$} is a function, whose exact form is also not important, which satisfies the estimate \eqref{eq:msheIIff}.
\end{enumerate}
\end{notation}
Before we proceed, observe that each coefficient in Notation \ref{notation:mshe}, with the exception of {\small$\mathfrak{f}_{\mathrm{total}}$} and {\small$\mathfrak{f}_{0}$} possibly, is either forwards-oriented or backwards-oriented (see Definition \ref{definition:bulkaverage}) with respect to {\small$\x\in\llbracket2,\infty\rrbracket$}. Now, with this notation in hand, we may record the evolution equation for {\small$\mathbf{Z}^{\N}$} in {\small$\llbracket2,\infty\rrbracket$}.
\begin{lemma}\label{lemma:msheinterior1}
\fsp Fix any {\small$\x\in\llbracket2,\infty\rrbracket$}. We have the following evolution equation for \eqref{eq:ch}, where we use notation to be explained afterwards:
\begin{align}
\d\mathbf{Z}^{\N}_{\t,\x}=\mathscr{T}_{\N,\mathbf{A}}\mathbf{Z}^{\N}_{\t,\x}\d\t+\sqrt{2}\lambda\N^{\frac12}\mathbf{Z}^{\N}_{\t,\x}\d\mathbf{b}_{\t,\x}+\mathfrak{e}[\tau_{\x}\bphi_{\t}]\mathbf{Z}^{\N}_{\t,\x}\d\t.\label{eq:msheinterior1I}
\end{align}
The Brownian motions {\small$\mathbf{b}$} are the ones from \eqref{eq:currI}, and {\small$\mathscr{T}_{\N}$} is a rescaled discrete Laplacian as in Proposition \ref{prop:sde}. The rightmost term in \eqref{eq:msheinterior1I}, we recall, is given in \eqref{eq:msheII}.
\end{lemma}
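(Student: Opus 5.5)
The plan is to reduce Lemma \ref{lemma:msheinterior1} to the corresponding full-space identity from \cite{Y26PTRF}, rather than redo the It\^{o} computation from scratch. The key observation is locality. For $\x\in\llbracket2,\infty\rrbracket$, the increment $\d\mathbf{j}^{\N}_{\t,\x}$ in \eqref{eq:currI} has the same algebraic form as in the full-space model. It involves $\grad^{\mathbf{X}}_{1}\mathscr{U}'[\bphi_{\t,\x}]$, $\mathbf{F}[\tau_{\x}\bphi_{\t}]$ and $\d\mathbf{b}_{\t,\x}$, and the terms of $\mathbf{F}$ that reach to the left of the edge vanish by the convention $\mathscr{U}'[\bphi_{\t,\w}]\equiv0$ for $\w\leq0$. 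The It\^{o} formula for $\mathbf{Z}^{\N}_{\t,\x}=\exp(\lambda\mathbf{j}^{\N}_{\t,\x}-\lambda\mathscr{R}_{\lambda}\t)$ uses only $\d\mathbf{j}^{\N}_{\t,\x}$ and $\d\langle\mathbf{j}^{\N}_{\t,\x}\rangle=2\N\d\t$. It therefore gives
\begin{align*}
\d\mathbf{Z}^{\N}_{\t,\x}=\lambda\mathbf{Z}^{\N}_{\t,\x}\Big(\N^{\frac32}\grad^{\mathbf{X}}_{1}\mathscr{U}'[\bphi_{\t,\x}]+\N\mathbf{F}[\tau_{\x}\bphi_{\t}]+\lambda\N-\mathscr{R}_{\lambda}\Big)\d\t+\sqrt{2}\lambda\N^{\frac12}\mathbf{Z}^{\N}_{\t,\x}\d\mathbf{b}_{\t,\x}.
\end{align*}
This already identifies the noise term in \eqref{eq:msheinterior1I}. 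What remains is purely algebraic: rewrite the drift as $\mathscr{T}_{\N,\mathbf{A}}\mathbf{Z}^{\N}_{\t,\x}+\mathfrak{e}[\tau_{\x}\bphi_{\t}]\mathbf{Z}^{\N}_{\t,\x}$.

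For this rewriting I would use the discrete Taylor expansion of the full-space paper. Since $\mathbf{Z}^{\N}_{\t,\x\pm1}=\mathbf{Z}^{\N}_{\t,\x}\exp(\pm\lambda\N^{-1/2}\bphi_{\t,\x+(1\pm1)/2})$, each neighbour can be expanded to fourth order in $\N^{-1/2}\bphi$. This writes $\N^{2}\Delta\mathbf{Z}^{\N}_{\t,\x}$ as $\mathbf{Z}^{\N}_{\t,\x}$ times a polynomial in $\bphi_{\t,\x},\bphi_{\t,\x+1}$, plus a remainder of the type $\N^{-1/2}\mathfrak{f}_{3}$ in \eqref{eq:msheIIfc}. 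Matching terms then proceeds as follows.
\begin{itemize}
\item The $\N^{3/2}$ gradient term $\grad^{\mathbf{X}}_{1}\mathscr{U}'$ is split into $\alpha\grad^{\mathbf{X}}_{1}\bphi$, which produces the leading part of $\alpha\N^{2}\Delta\mathbf{Z}^{\N}$, and $\grad^{\mathbf{X}}_{1}\mathscr{V}'$, which after symmetrisation becomes \eqref{eq:msheII(1)a}.
\item The $\N$-order quadratic terms combine into the $\mathscr{Q}$-renormalisation \eqref{eq:msheII(2)a}, with the constant $\lambda\N$ from the It\^{o} correction absorbing the ``$-1$'' in \eqref{eq:quadratic_renorm}. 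The shifts needed to place every factor strictly to one side of $\x$ generate \eqref{eq:msheII(2)aa}--\eqref{eq:msheII(2)b} and the higher-order corrections \eqref{eq:msheII(1)c}, \eqref{eq:msheII(0)a}--\eqref{eq:msheII(0)a2}.
\item The order-one constants are collected and must cancel exactly against $\mathscr{R}_{\lambda}$ from \eqref{eq:rg}.
\item The $\frac14\N\lambda$ correction in $\mathscr{T}_{\N,\mathbf{A}}$ accounts for the next-order Taylor term in the discrete Laplacian.
\end{itemize}

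Since $\x\geq2$, the operator $\Delta_{\mathbf{A}}$ acts as the ordinary $\Delta$ at $\x$, so the Robin parameter plays no role here. Every functional of $\bphi_{\t}$ that appears is supported in $\llbracket1,\infty\rrbracket$, which is precisely why Notation \ref{notation:mshe} restricts to $\llbracket2,\infty\rrbracket$.

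I would therefore present the proof as follows. First, verify support: for $\x\geq2$, the local dynamics coincide with those of a full-space configuration obtained by extending $\bphi$ with $\mathscr{U}'[\bphi_{\w}]=0$ for $\w\leq0$. Then invoke the full-space identity from \cite{Y26PTRF}, which is a purely algebraic, pathwise statement and so transfers verbatim. Where a shifted term such as $\mathbf{F}_{\geq3}[\tau_{\x+2\deg}\bphi]$ is involved, the shift moves it further into the bulk, so the transfer is harmless. The main obstacle I expect is the bookkeeping in the matching step: checking that all constants and quadratic terms reorganise exactly as claimed, in particular the cancellation of the order-one constant against $\mathscr{R}_{\lambda}$ and the coefficient $\frac14\N\lambda$. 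I would handle this by citing the corresponding computation in \cite{Y26PTRF} rather than redoing it, and only checking that no term reaches $\w\leq0$ when $\x\geq2$.
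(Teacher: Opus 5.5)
Your proposal is correct and is essentially the paper's argument: both reduce, via the pathwise relation $\mathbf{j}^{\N}_{\t,\x}-\mathbf{j}^{\N}_{\t,\x-1}=\N^{-1/2}\bphi_{\t,\x}$ and the fact that \eqref{eq:currI} at $\x\geq2$ coincides with the full-space dynamics, to the computation in Lemma 5.1 of \cite{Y26PTRF}; you also handle the edge convention $\mathscr{U}'[\bphi_{\t,\w}]\equiv0$ for $\w\leq0$ more explicitly than the paper does. One small inaccuracy: the $\frac14\N\lambda\Delta$ part of $\mathscr{T}_{\N,\mathbf{A}}$ does not need to match any Taylor term, since $\N\Delta\mathbf{Z}^{\N}_{\t,\x}$ is of order $\N^{1/2}\mathbf{Z}^{\N}_{\t,\x}$ and is simply absorbed into the error terms of \eqref{eq:msheII} (for instance as $\N\Delta(\mathfrak{f}_{0}\mathbf{Z}^{\N})$ with constant $\mathfrak{f}_{0}$).
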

\begin{proof}
We can directly cite Lemma 5.1 in \cite{Y26PTRF} and its proof therein by the following reasons. Given {\small$\x\in\llbracket1,\infty\rrbracket$}, we have the following. First, we have {\small$\mathbf{j}^{\N}_{\t,\x}-\mathbf{j}^{\N}_{\t,\x-1}=\N^{-1/2}\bphi_{\t,\x}$}. Second, the \abbr{SDE} \eqref{eq:currI} for {\small$\t\mapsto\mathbf{j}^{\N}_{\t,\x}$} is the same as the full-space \abbr{SDE} (2.6) in \cite{Y26PTRF} {in a neighborhood of radius {\small$1$} around {\small$\x\in\llbracket2,\infty\rrbracket$}}. These are the only inputs used in the proof of Lemma 5.1 in \cite{Y26PTRF}, so the result follows. (Again, all coefficients in Notation \ref{notation:mshe} depend only on {\small$\bphi_{\t,\x}$} for {\small$\x\in\llbracket1,\infty\rrbracket$}.)
\end{proof}
We now record an \abbr{SDE} for {\small$\mathbf{Z}^{\N}$} at the point {\small$\x=1$}, at which the dynamics \eqref{eq:currI} are still those of the full-space, but at which some of the coefficients (e.g. the final term in \eqref{eq:msheII(2)a}) are not well-defined. (In what follows, we are only concerned with terms modulo {\small$\N^{1/2}$}-scaling, because the terms are localized to a single point in space, and {\small$\N^{1/2}$}-scaling is too weak to persist in the large-{\small$\N$} limit after we integrate against a heat kernel.)
\begin{lemma}\label{lemma:msheinterior2}
\fsp We have the following formula, where {\small$\mathfrak{f}$} satisfies \eqref{eq:msheIIff}:
\begin{align}
\d\mathbf{Z}^{\N}_{\t,1}&=\mathscr{T}_{\N}\mathbf{Z}^{\N}_{\t,1}\d\t+\sqrt{2}\lambda\N^{\frac12}\mathbf{Z}^{\N}_{\t,1}\d\mathbf{b}_{\t,1}\label{eq:msheinterior2Ia}\\
&+\tfrac12\lambda\N^{\frac32}\grad^{\mathbf{X}}_{1}(\mathscr{V}'[\bphi_{\t,1}]\mathbf{Z}^{\N}_{\t,1})\d\t-\tfrac12\lambda\N^{\frac32}\grad^{\mathbf{X}}_{-1}(\mathscr{V}'[\bphi_{\t,2}]\mathbf{Z}^{\N}_{\t,1})\d\t\label{eq:msheinterior2Ib}\\
&+\lambda\N(\mathbf{F}[\tau_{1}\bphi_{\t}]-\tfrac12\lambda(\mathscr{U}'[\bphi_{\t,1}]\bphi_{\t,1}-1)-\tfrac12\lambda(\mathscr{U}'[\bphi_{\t,2}]\bphi_{\t,2}-1))\mathbf{Z}^{\N}_{\t,1}\d\t+\N^{\frac12}\mathfrak{f}[\bphi_{\t}]\mathbf{Z}^{\N}_{\t,1}\d\t.\label{eq:msheinterior2Id}
\end{align}
\end{lemma}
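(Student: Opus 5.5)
The plan is a direct It\^{o} computation at $\x=1$, retracing the full-space derivation of Lemma 5.1 in \cite{Y26PTRF}. The difference is that we do not recombine the $\N$-order terms into the renormalized quadratics $\mathscr{Q}[\bphi_{\t,\x\pm1}]$, since $\mathscr{Q}[\bphi_{\t,0}]$ would require $\bphi_{\t,0}$. By \eqref{eq:ch}, \eqref{eq:currI} at $\x=1$, and It\^{o}'s formula,
\begin{align*}
\d\mathbf{Z}^{\N}_{\t,1}=\lambda\mathbf{Z}^{\N}_{\t,1}\d\mathbf{j}^{\N}_{\t,1}+\N\lambda^{2}\mathbf{Z}^{\N}_{\t,1}\d\t-\lambda\mathscr{R}_{\lambda}\mathbf{Z}^{\N}_{\t,1}\d\t.
\end{align*}
The noise term $\sqrt{2}\lambda\N^{1/2}\mathbf{Z}^{\N}_{\t,1}\d\mathbf{b}_{\t,1}$ appears immediately. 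The $\mathscr{R}_{\lambda}$-term is $\mathrm{O}(1)\mathbf{Z}^{\N}_{\t,1}$, so it is absorbed into $\N^{1/2}\mathfrak{f}\mathbf{Z}^{\N}_{\t,1}$. Similarly, $\N\mathbf{F}[\tau_{1}\bphi_{\t}]$ contributes $\lambda\N\mathbf{F}[\tau_{1}\bphi_{\t}]\mathbf{Z}^{\N}_{\t,1}$ verbatim, which is the first term in \eqref{eq:msheinterior2Id}.

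The main work is the drift $\lambda\N^{3/2}\grad^{\mathbf{X}}_{1}\mathscr{U}'[\bphi_{\t,1}]\mathbf{Z}^{\N}_{\t,1}=\lambda\N^{3/2}(\mathscr{U}'[\bphi_{\t,2}]-\mathscr{U}'[\bphi_{\t,1}])\mathbf{Z}^{\N}_{\t,1}$. We write $\mathscr{U}'=\alpha\bphi+\mathscr{V}'$ and handle the two pieces as follows.
\begin{enumerate}
\item \emph{Linear piece.} Since $\mathbf{Z}^{\N}_{\t,2}=\mathbf{Z}^{\N}_{\t,1}\exp(\lambda\N^{-1/2}\bphi_{\t,2})$ and $\mathbf{Z}^{\N}_{\t,0}=\mathbf{Z}^{\N}_{\t,1}\exp(-\lambda\N^{-1/2}\bphi_{\t,1})$, we Taylor expand the exponentials to third order with remainder. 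This gives
\begin{align*}
\alpha\N^{2}\Delta\mathbf{Z}^{\N}_{\t,1}=\alpha\lambda\N^{3/2}(\bphi_{\t,2}-\bphi_{\t,1})\mathbf{Z}^{\N}_{\t,1}+\tfrac12\alpha\lambda^{2}\N(\bphi_{\t,2}^{2}+\bphi_{\t,1}^{2})\mathbf{Z}^{\N}_{\t,1}+\mathrm{O}(\N^{1/2})\text{-terms}.
\end{align*}
The remainder terms obey \eqref{eq:msheIIff} by the polynomial bound $|\exp(\a)-\text{Taylor}|\lesssim|\a|^{3}\mathrm{e}^{|\a|}$. The exponential factor is not of polynomial type, so instead we keep the remainders as exact expressions in $\bphi_{\t,1},\bphi_{\t,2}$ multiplied by $\mathbf{Z}^{\N}$, exactly as \cite{Y26PTRF} does. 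Note that $\Delta$ here is the free Laplacian with the genuine value $\mathbf{Z}^{\N}_{\t,0}$, so at $\x=1$ it coincides with $\Delta_{\mathbf{A}}$. The $\frac14\N\lambda\Delta$ part of $\mathscr{T}_{\N}$ is order $\N$, up to $\N^{1/2}$ errors, and must be accounted for as well.
\item \emph{Non-Gaussian piece.} We rewrite $\lambda\N^{3/2}(\mathscr{V}'[\bphi_{\t,2}]-\mathscr{V}'[\bphi_{\t,1}])\mathbf{Z}^{\N}_{\t,1}$ as the symmetric gradient combination \eqref{eq:msheinterior2Ib}. Expanding $\grad^{\mathbf{X}}_{\pm1}$ using the same exponential relations produces the desired $\N^{3/2}$ difference, plus $\N$-order corrections $\frac12\lambda^{2}\N(\mathscr{V}'[\bphi_{\t,1}]\bphi_{\t,1}+\mathscr{V}'[\bphi_{\t,2}]\bphi_{\t,2})\mathbf{Z}^{\N}_{\t,1}$, up to sign, plus $\N^{1/2}\mathfrak{f}$ remainders.
\end{enumerate}

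Collecting the $\N$-order terms: the Itô correction $\N\lambda^{2}$, the correction from item (1), and the $\mathscr{V}'$ correction from item (2) combine into $-\frac12\lambda^{2}\N(\mathscr{U}'[\bphi_{\t,1}]\bphi_{\t,1}-1)-\frac12\lambda^{2}\N(\mathscr{U}'[\bphi_{\t,2}]\bphi_{\t,2}-1)$, which matches \eqref{eq:msheinterior2Id}. The $\frac14\N\lambda\Delta$ contribution is absorbed into this match via the value of $\lambda$ in \eqref{eq:rg}. The main obstacle is this bookkeeping of constants: signs, the $-1$ renormalizations, and the $\frac14\N\lambda$ shift in $\mathscr{T}_{\N}$. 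I would settle it by comparing against the full-space identity of Lemma 5.1 in \cite{Y26PTRF}. There, summing the two $\mathscr{Q}$-terms with $\mathbf{F}_{2}$ reproduces exactly these expressions, so the same algebra applies verbatim with the quadratic Hermite pieces left unrecombined.
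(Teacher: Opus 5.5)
Your route is essentially the paper's. Both apply It\^{o}'s formula at $\x=1$, split $\mathscr{U}'=\alpha\bphi+\mathscr{V}'$, expand $\mathbf{Z}^{\N}_{\t,2}$ and $\mathbf{Z}^{\N}_{\t,0}$ around $\mathbf{Z}^{\N}_{\t,1}$, and match the orders $\N^{3/2}$ and $\N$. The paper reaches the symmetric $\mathscr{V}'$-gradient form by averaging two Leibniz-rule rewrites of $\lambda\N^{3/2}\grad^{\mathbf{X}}_{1}\mathscr{U}'[\bphi_{\t,1}]\mathbf{Z}^{\N}_{\t,1}$. It then writes the $\alpha$-part through $\Delta=-\grad^{\mathbf{X}}_{1}\grad^{\mathbf{X}}_{-1}=-\grad^{\mathbf{X}}_{-1}\grad^{\mathbf{X}}_{1}$. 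You instead expand both sides directly in $\mathbf{Z}^{\N}_{\t,1}$; this is the same algebra, run as a verification rather than a derivation. Your collection of order-$\N$ terms is correct: $-\tfrac12\alpha\lambda^{2}\N(\bphi_{\t,1}^{2}+\bphi_{\t,2}^{2})-\tfrac12\lambda^{2}\N(\mathscr{V}'[\bphi_{\t,1}]\bphi_{\t,1}+\mathscr{V}'[\bphi_{\t,2}]\bphi_{\t,2})+\lambda^{2}\N$ equals $-\tfrac12\lambda^{2}\N\sum_{\x=1,2}(\mathscr{U}'[\bphi_{\t,\x}]\bphi_{\t,\x}-1)$.

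The error is in the step you single out as the main obstacle. The $\tfrac14\N\lambda\Delta$ part of $\mathscr{T}_{\N}$ does not contribute at order $\N$, and the value of $\lambda$ in \eqref{eq:rg} plays no role. We have $\Delta\mathbf{Z}^{\N}_{\t,1}=\mathbf{Z}^{\N}_{\t,1}(\mathrm{e}^{\lambda\N^{-1/2}\bphi_{\t,2}}+\mathrm{e}^{-\lambda\N^{-1/2}\bphi_{\t,1}}-2)$, so each lattice gradient of $\mathbf{Z}^{\N}$ costs a factor $\N^{-1/2}$. Hence $\tfrac14\N\lambda\Delta\mathbf{Z}^{\N}_{\t,1}=\tfrac14\lambda^{2}\N^{1/2}(\bphi_{\t,2}-\bphi_{\t,1})\mathbf{Z}^{\N}_{\t,1}$ plus $\mathrm{O}(1)$-scaled terms. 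This goes straight into $\N^{1/2}\mathfrak{f}\,\mathbf{Z}^{\N}_{\t,1}$; it is exactly the paper's remark that $\N\Delta\mathbf{Z}^{\N}_{\t,1}\approx0$. It has to be this way: the order-$\N$ terms above already balance exactly. A genuine order-$\N$ contribution from $\tfrac14\N\lambda\Delta$ would therefore make the identity false, and no choice of $\lambda$ could repair it. With this correction your argument is complete, and the cross-check against the full-space $\mathscr{Q}$-terms becomes unnecessary.
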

\begin{proof}
By the It\^{o} formula with \eqref{eq:ch} and \eqref{eq:currI} at {\small$\x=1$}, we have 
\begin{align}
&\d\mathbf{Z}^{\N}_{\t,1}=\lambda\mathbf{Z}^{\N}_{\t,1}\d\mathbf{j}^{\N}_{\t,1}+\lambda^{2}\N\mathbf{Z}^{\N}_{\t,1}\d\t\nonumber\\
&=\lambda\N^{\frac32}\grad^{\mathbf{X}}_{1}\mathscr{U}'[\bphi_{\t,1}]\mathbf{Z}^{\N}_{\t,1}\d\t+\lambda\N\mathbf{F}[\tau_{1}\bphi_{\t}]\mathbf{Z}^{\N}_{\t,1}\d\t+\lambda^{2}\N\mathbf{Z}^{\N}_{\t,1}\d\t-\lambda^{2}\mathscr{R}_{\lambda}\mathbf{Z}^{\N}_{\t,1}\d\t+\sqrt{2}\lambda\N^{\frac12}\mathbf{Z}^{\N}_{\t,1}\d\mathbf{b}_{\t,1}.\label{eq:msheinterior2I1}
\end{align}
We now recall the discrete Leibniz rule {\small$\grad^{\mathbf{X}}_{\mathfrak{l}}(\mathsf{f}\mathsf{g})_{\x}=\mathsf{f}_{\x}\grad^{\mathbf{X}}_{\mathfrak{l}}\mathsf{g}_{\x}+\mathsf{g}_{\x+\mathfrak{l}}\grad^{\mathbf{X}}\mathsf{f}_{\x}$} for any {\small$\mathfrak{l}\in\Z$} and functions {\small$\mathsf{f},\mathsf{g}:\Z\to\R$}. Combining this for {\small$\mathfrak{l}=1$} with a few elementary manipulations (and {\small$\mathscr{V}'[\bphi_{\x}]=\mathscr{U}'[\bphi_{\x}]-\alpha\bphi_{\x}$}) gives
\begin{align}
\lambda\N^{\frac32}\grad^{\mathbf{X}}_{1}\mathscr{U}'[\bphi_{\t,1}]\mathbf{Z}^{\N}_{\t,1}&=\lambda\N^{\frac32}\grad^{\mathbf{X}}_{1}(\mathscr{U}'[\bphi_{\t,1}]\mathbf{Z}^{\N}_{\t,1})-\lambda\N^{\frac32}\mathscr{U}'[\bphi_{\t,2}]\grad^{\mathbf{X}}_{1}\mathbf{Z}^{\N}_{\t,1}\nonumber\\
&=\alpha\lambda\N^{\frac32}\grad^{\mathbf{X}}_{1}(\bphi_{\t,1}\mathbf{Z}^{\N}_{\t,1})+\lambda\N^{\frac32}\grad^{\mathbf{X}}_{1}(\mathscr{V}'[\bphi_{\t,1}]\mathbf{Z}^{\N}_{\t,1})-\lambda\N^{\frac32}\mathscr{U}'[\bphi_{\t,2}]\grad^{\mathbf{X}}_{1}\mathbf{Z}^{\N}_{\t,1}.\nonumber
\end{align}
On the other hand, we also have {\small$\grad^{\mathbf{X}}_{1}\mathsf{f}_{\x}=-\grad^{\mathbf{X}}_{-1}\mathsf{f}_{\x+1}$}, so we can also write
\begin{align}
\lambda\N^{\frac32}\grad^{\mathbf{X}}_{1}\mathscr{U}'[\bphi_{\t,1}]\mathbf{Z}^{\N}_{\t,1}&=-\lambda\N^{\frac32}\grad^{\mathbf{X}}_{-1}\mathscr{U}'[\bphi_{\t,2}]\mathbf{Z}^{\N}_{\t,1}\nonumber\\
&=-\alpha\lambda\N^{\frac32}\grad^{\mathbf{X}}_{-1}(\bphi_{\t,2}\mathbf{Z}^{\N}_{\t,1})-\lambda\N^{\frac32}\grad^{\mathbf{X}}_{-1}(\mathscr{V}'[\bphi_{\t,2}]\mathbf{Z}^{\N}_{\t,1})+\lambda\N^{\frac32}\mathscr{U}'[\bphi_{\t,1}]\grad^{\mathbf{X}}_{-1}\mathbf{Z}^{\N}_{\t,1},\nonumber
\end{align}
where the second line follows by the same considerations as before. Averaging the previous two displays gives
\begin{align}
\lambda\N^{\frac32}\grad^{\mathbf{X}}_{1}\mathscr{U}'[\bphi_{\t,1}]\mathbf{Z}^{\N}_{\t,1}&=\tfrac12\alpha\lambda\N^{\frac32}\grad^{\mathbf{X}}_{1}(\bphi_{\t,1}\mathbf{Z}^{\N}_{\t,1})-\tfrac12\alpha\lambda\N^{\frac32}\grad^{\mathbf{X}}_{-1}(\bphi_{\t,2}\mathbf{Z}^{\N}_{\t,1})\nonumber\\
&+\tfrac12\lambda\N^{\frac32}\grad^{\mathbf{X}}_{1}(\mathscr{V}'[\bphi_{\t,1}]\mathbf{Z}^{\N}_{\t,1})-\tfrac12\lambda\N^{\frac32}\grad^{\mathbf{X}}_{-1}(\mathscr{V}'[\bphi_{\t,2}]\mathbf{Z}^{\N}_{\t,1})\nonumber\\
&-\tfrac12\lambda\N^{\frac32}\mathscr{U}'[\bphi_{\t,2}]\grad^{\mathbf{X}}_{1}\mathbf{Z}^{\N}_{\t,1}+\tfrac12\lambda\N^{\frac32}\mathscr{U}'[\bphi_{\t,1}]\grad^{\mathbf{X}}_{-1}\mathbf{Z}^{\N}_{\t,1}.\label{eq:msheinterior2I2}
\end{align}
On the other hand, by \eqref{eq:ch}, the rule {\small$\bphi_{\t,\x}=\N^{1/2}(\mathbf{j}^{\N}_{\t,\x}-\mathbf{j}^{\N}_{\t,\x-1})$} for {\small$\x\in\llbracket1,\infty\rrbracket$}, and Taylor expansion, we have, as in (5.27)-(5.30) of \cite{Y26PTRF}, the following formulas, in which {\small$\approx$} indicates (throughout this argument) an identity up to terms of the form {\small$\N^{1/2}\mathfrak{f}$}, where {\small$\mathfrak{f}$} satisfies the estimate \eqref{eq:msheIIff}:
\begin{align*}
\N^{2}\grad^{\mathbf{X}}_{1}\mathbf{Z}^{\N}_{\t,\x}&\approx\lambda\N^{\frac32}\bphi_{\t,\x+1}\mathbf{Z}^{\N}_{\t,\x}+\tfrac12\lambda^{2}\N^{}\bphi_{\t,\x+1}^{2}\mathbf{Z}^{\N}_{\t,\x},\\
\N^{2}\grad^{\mathbf{X}}_{-1}\mathbf{Z}^{\N}_{\t,\x}&\approx-\lambda\N^{\frac32}\bphi_{\t,\x}\mathbf{Z}^{\N}_{\t,\x}+\tfrac12\lambda^{2}\N^{}\bphi_{\t,\x}^{2}\mathbf{Z}^{\N}_{\t,\x}.
\end{align*}
We use the above formulas in two different ways. First, we evaluate the last line in \eqref{eq:msheinterior2I2} as follows:
\begin{align}
-\lambda\N^{\frac32}\mathscr{U}'[\bphi_{\t,2}]\grad^{\mathbf{X}}_{1}\mathbf{Z}^{\N}_{\t,1}+\lambda\N^{\frac32}\mathscr{U}'[\bphi_{\t,1}]\grad^{\mathbf{X}}_{-1}\mathbf{Z}^{\N}_{\t,1}&\approx-\lambda^{2}\N\mathscr{U}'[\bphi_{\t,2}]\bphi_{\t,2}\mathbf{Z}^{\N}_{\t,1}-\lambda^{2}\N\mathscr{U}'[\bphi_{\t,1}]\bphi_{\t,1}\mathbf{Z}^{\N}_{\t,1}\nonumber\\
&\approx-(\lambda^{2}\N\mathscr{U}'[\bphi_{\t,1}]\bphi_{\t,1}+\lambda^{2}\N\mathscr{U}'[\bphi_{\t,2}]\bphi_{\t,2})\mathbf{Z}^{\N}_{\t,1}.\label{eq:msheinterior2I3}
\end{align}
The last line follows by the gradient formulas above. Next, we observe {\small$\mathscr{T}_{\N}\mathbf{Z}^{\N}_{\t,1}\approx\alpha\N^{2}\Delta\mathbf{Z}^{\N}_{\t,1}$}, since the difference between the two is {\small$\propto\N\Delta\mathbf{Z}^{\N}_{\t,1}$}, which has the form {\small$\N^{1/2}\mathfrak{f}$} for some {\small$\mathfrak{f}$} satisfying \eqref{eq:msheIIff} by the gradient formulas. Now, we observe that {\small$\Delta=-\grad^{\mathbf{X}}_{1}\grad^{\mathbf{X}}_{-1}=-\grad^{\mathbf{X}}_{-1}\grad^{\mathbf{X}}_{1}$}. This gives us two ways of computing {\small$\Delta\mathbf{Z}^{\N}$}, both of which are recorded as follows:
\begin{align}
\alpha\N^{2}\Delta\mathbf{Z}^{\N}_{\t,1}&=-\alpha\grad^{\mathbf{X}}_{1}\grad^{\mathbf{X}}_{-1}\mathbf{Z}^{\N}_{\t,1}\approx\grad^{\mathbf{X}}_{1}(\alpha\lambda\N^{\frac32}\bphi_{\t,1}\mathbf{Z}^{\N}_{\t,1})-\grad^{\mathbf{X}}_{1}(\tfrac12\alpha\lambda^{2}\N\bphi_{\t,1}^{2}\mathbf{Z}^{\N}_{\t,1}),\label{eq:msheinterior2I4a}\\
\alpha\N^{2}\Delta\mathbf{Z}^{\N}_{\t,1}&=-\alpha\grad^{\mathbf{X}}_{-1}\grad^{\mathbf{X}}_{1}\mathbf{Z}^{\N}_{\t,1}\approx-\grad^{\mathbf{X}}_{-1}(\alpha\lambda\N^{\frac32}\bphi_{\t,2}\mathbf{Z}^{\N}_{\t,1})-\grad^{\mathbf{X}}_{-1}(\tfrac12\alpha\lambda^{2}\N\bphi_{\t,2}^{2}\mathbf{Z}^{\N}_{\t,1}).\label{eq:msheinterior2I4b}
\end{align}
Moreover, we have {\small$\N\Delta\mathbf{Z}^{\N}_{\t,1}\approx0$}, since one gradient acting on {\small$\mathbf{Z}^{\N}_{\t,1}$} already introduces a power of {\small$\N^{-1/2}$}. We now combine this with \eqref{eq:msheinterior2I2}, \eqref{eq:msheinterior2I3}, and \eqref{eq:msheinterior2I4a}-\eqref{eq:msheinterior2I4b} to get
\begin{align*}
\lambda\N^{\frac32}\grad^{\mathbf{X}}_{1}\mathscr{U}'[\bphi_{\t,1}]\mathbf{Z}^{\N}_{\t,1}&\approx(\alpha\N^{2}+\tfrac14\N\lambda)\Delta\mathbf{Z}^{\N}_{\t,1}+\tfrac14\alpha\lambda^{2}\N\grad^{\mathbf{X}}_{1}(\bphi_{\t,1}^{2}\mathbf{Z}^{\N}_{\t,1})+\tfrac14\alpha\lambda^{2}\N\grad^{\mathbf{X}}_{-1}(\bphi_{\t,2}^{2}\mathbf{Z}^{\N}_{\t,1})\\
&+\tfrac12\lambda\N^{\frac32}\grad^{\mathbf{X}}_{1}(\mathscr{V}'[\bphi_{\t,1}]\mathbf{Z}^{\N}_{\t,1})-\tfrac12\lambda\N^{\frac32}\grad^{\mathbf{X}}_{-1}(\mathscr{V}'[\bphi_{\t,2}]\mathbf{Z}^{\N}_{\t,1})\\
&-\tfrac12\lambda^{2}\N(\mathscr{U}'[\bphi_{\t,1}]\bphi_{\t,1}+\mathscr{U}'[\bphi_{\t,2}]\bphi_{\t,2})\mathbf{Z}^{\N}_{\t,1}.
\end{align*}
We now plug the previous display into \eqref{eq:msheinterior2I1}. When do so, the following happens. First, in the last two terms in the first line above, we move {\small$\mathbf{Z}^{\N}_{\t,1}$} outside the gradients. We can do this because, by the gradient formulas above, the cost is of the form {\small$\N^{1/2}\mathfrak{f}$} for some {\small$\mathfrak{f}$} satisfying \eqref{eq:msheIIff}. Then, we use that the resulting coefficients cancel, i.e. {\small$\grad^{\mathbf{X}}_{1}\bphi_{\t,1}^{2}+\grad^{\mathbf{X}}_{-1}\bphi_{\t,2}^{2}=0$}. In particular, the last two terms in the first line cancel. Next, we combine the last line in the previous display with {\small$\lambda\N\mathbf{F}[\tau_{1}\bphi_{\t}]\mathbf{Z}^{\N}_{\t,1}\d\t+\lambda^{2}\N\mathbf{Z}^{\N}_{\t,1}\d\t$}. This yields the first term in \eqref{eq:msheinterior2Id}. Finally, we use {\small$\mathscr{T}_{\N}=(\alpha\N^{2}+\frac14\N\lambda)\Delta$} when acting at {\small$\x=1$} (there is no boundary data here). Thus, the first term on the \abbr{RHS} above matches the first term on the \abbr{RHS} of \eqref{eq:msheinterior2Ia}. Ultimately, we get \eqref{eq:msheinterior2Ia}-\eqref{eq:msheinterior2Id}, so the proof is done.
\end{proof}
\subsection{Edge dynamics}
We will now record an evolution equation for {\small$\mathbf{Z}^{\N}$} at the edge {\small$\x=0$}. It is here where the boundary condition in the {\small$\Delta_{\mathbf{A}}$}-operator from Definition \ref{definition:heat} plays an important role.
\begin{lemma}\label{lemma:msheedge}
\fsp We have the following \abbr{SDE}, in which {\small$\wt{\mathfrak{f}}$} satisfies the estimate \eqref{eq:msheIIff}:
\begin{align}
\d\mathbf{Z}^{\N}_{\t,0}&=\mathscr{T}_{\N}\mathbf{Z}^{\N}_{\t,0}\d\t+\sqrt{2}\lambda\N^{\frac12}\mathbf{Z}^{\N}_{\t,0}\d\mathbf{b}_{\t,0}+\lambda\N^{\frac32}\mathscr{V}'[\bphi_{\t,1}]\mathbf{Z}^{\N}_{\t,0}\d\t\label{eq:msheedgeI}\\
&+\N(\lambda\mathbf{F}[\bphi_{\t}]+\lambda^{2}-\tfrac12\alpha\lambda^{2}\bphi_{\t,1}^{2}+\alpha\lambda\mathbf{A})\mathbf{Z}^{\N}_{\t,0}\d\t+\N^{\frac12}\wt{\mathfrak{f}}[\bphi_{\t}]\mathbf{Z}^{\N}_{\t,0}\d\t.\nonumber
\end{align}
\end{lemma}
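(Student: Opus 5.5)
The plan is to apply the It\^{o} formula at $\x=0$, exactly as in the proof of Lemma \ref{lemma:msheinterior2}, and then to rewrite the resulting order-$\N^{3/2}$ drift in terms of the Robin Laplacian $\Delta_{\mathbf{A}}$ from Definition \ref{definition:heat}. Throughout, $\approx$ will mean equality up to terms of the form $\N^{1/2}\wt{\mathfrak{f}}[\bphi_{\t}]\mathbf{Z}^{\N}_{\t,0}$, with $\wt{\mathfrak{f}}$ satisfying \eqref{eq:msheIIff}.

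\emph{Step 1: It\^{o} expansion at the edge.} By \eqref{eq:ch} and \eqref{eq:currII}, and since the quadratic variation of $\mathbf{j}^{\N}_{\cdot,0}$ is $2\N\d\t$, we get
\begin{align*}
\d\mathbf{Z}^{\N}_{\t,0}&=\lambda\N^{\frac32}\mathscr{U}'[\bphi_{\t,1}]\mathbf{Z}^{\N}_{\t,0}\d\t+\lambda\N\mathbf{F}[\bphi_{\t}]\mathbf{Z}^{\N}_{\t,0}\d\t+\lambda^{2}\N\mathbf{Z}^{\N}_{\t,0}\d\t\\
&-\lambda\mathscr{R}_{\lambda}\mathbf{Z}^{\N}_{\t,0}\d\t+\sqrt{2}\lambda\N^{\frac12}\mathbf{Z}^{\N}_{\t,0}\d\mathbf{b}_{\t,0}.
\end{align*}
The $\mathscr{R}_{\lambda}$-term is $\mathrm{O}(1)\mathbf{Z}^{\N}_{\t,0}$, so it is absorbed into $\N^{1/2}\wt{\mathfrak{f}}$. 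The noise term and the terms $\lambda\N\mathbf{F}+\lambda^{2}\N$ already appear in \eqref{eq:msheedgeI}. What remains is to treat the $\N^{3/2}$ term. We split it via $\mathscr{U}'=\alpha\bphi+\mathscr{V}'$:
\begin{align*}
\lambda\N^{\frac32}\mathscr{U}'[\bphi_{\t,1}]\mathbf{Z}^{\N}_{\t,0}=\alpha\lambda\N^{\frac32}\bphi_{\t,1}\mathbf{Z}^{\N}_{\t,0}+\lambda\N^{\frac32}\mathscr{V}'[\bphi_{\t,1}]\mathbf{Z}^{\N}_{\t,0}.
\end{align*}
The $\mathscr{V}'$ part appears verbatim in \eqref{eq:msheedgeI}.

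\emph{Step 2: identifying the Robin Laplacian.} By the extension rule in Definition \ref{definition:heat}, for any $\mathsf{f}$ we have $\Delta_{\mathbf{A}}\mathsf{f}_{0}=\grad^{\mathbf{X}}_{1}\mathsf{f}_{0}-\N^{-1}\mathbf{A}\lambda\mathsf{f}_{0}$. Hence
\begin{align*}
\mathscr{T}_{\N}\mathbf{Z}^{\N}_{\t,0}=\alpha\N^{2}\grad^{\mathbf{X}}_{1}\mathbf{Z}^{\N}_{\t,0}-\alpha\lambda\mathbf{A}\N\mathbf{Z}^{\N}_{\t,0}+\tfrac14\N\lambda\Delta_{\mathbf{A}}\mathbf{Z}^{\N}_{\t,0}.
\end{align*}
The last term is $\approx0$: one gradient of $\mathbf{Z}^{\N}$ costs $\N^{-1/2}$ times a polynomial in $\bphi_{\t,1}$, and the Robin correction costs $\N^{-1}$. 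Next, Taylor-expand $\mathbf{Z}^{\N}_{\t,1}/\mathbf{Z}^{\N}_{\t,0}=\exp(\lambda\N^{-1/2}\bphi_{\t,1})$, as in the gradient formulas used in the proof of Lemma \ref{lemma:msheinterior2}. This gives
\begin{align*}
\N^{2}\grad^{\mathbf{X}}_{1}\mathbf{Z}^{\N}_{\t,0}\approx\lambda\N^{\frac32}\bphi_{\t,1}\mathbf{Z}^{\N}_{\t,0}+\tfrac12\lambda^{2}\N\bphi_{\t,1}^{2}\mathbf{Z}^{\N}_{\t,0}.
\end{align*}
The cubic Taylor remainder is $\N^{1/2}\cdot\mathrm{O}(|\bphi_{\t,1}|^{3}\mathrm{e}^{\lambda\N^{-1/2}|\bphi_{\t,1}|})$. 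Combining, we obtain
\begin{align*}
\alpha\lambda\N^{\frac32}\bphi_{\t,1}\mathbf{Z}^{\N}_{\t,0}\approx\mathscr{T}_{\N}\mathbf{Z}^{\N}_{\t,0}+\alpha\lambda\mathbf{A}\N\mathbf{Z}^{\N}_{\t,0}-\tfrac12\alpha\lambda^{2}\N\bphi_{\t,1}^{2}\mathbf{Z}^{\N}_{\t,0}.
\end{align*}
Substituting this into Step 1 produces exactly \eqref{eq:msheedgeI}.

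\emph{Main obstacle.} The only delicate point is confirming that the Taylor remainders genuinely satisfy the polynomial bound \eqref{eq:msheIIff}. The factor $\mathrm{e}^{\lambda\N^{-1/2}\bphi_{\t,1}}$ is not polynomially bounded uniformly in $\bphi$. I would handle this the same way as (5.27)--(5.30) of \cite{Y26PTRF}, which Lemma \ref{lemma:msheinterior2} already invokes. That is, I would either keep the remainder in the form $\N^{1/2}\wt{\mathfrak{f}}\mathbf{Z}^{\N}_{\t,0}$ with $\wt{\mathfrak{f}}$ defined implicitly, or note that at $\x=0$ the same convention as in the interior is used. Everything else is bookkeeping of the signs in the Robin rule $\mathsf{f}_{0}-\mathsf{f}_{-1}=\N^{-1}\mathbf{A}\lambda\mathsf{f}_{0}$, which produces the $+\alpha\lambda\mathbf{A}$ term.
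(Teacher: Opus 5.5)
Your proposal is correct and follows essentially the same route as the paper: It\^{o}'s formula at the edge, the splitting $\mathscr{U}'=\alpha\bphi+\mathscr{V}'$, and the identity $\alpha\N^{2}\Delta_{\mathbf{A}}\mathbf{Z}^{\N}_{\t,0}=\alpha\N^{2}\grad^{\mathbf{X}}_{1}\mathbf{Z}^{\N}_{\t,0}-\alpha\lambda\N\mathbf{A}\mathbf{Z}^{\N}_{\t,0}$ combined with the second-order Taylor expansion of $\exp(\lambda\N^{-1/2}\bphi_{\t,1})$; the $\tfrac14\N\lambda\Delta_{\mathbf{A}}\mathbf{Z}^{\N}_{\t,0}$ and $\mathscr{R}_{\lambda}$ terms are then absorbed as $\N^{1/2}\wt{\mathfrak{f}}$-errors. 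Your caveat that the cubic Taylor remainder carries a factor that is not polynomially bounded is fair; the paper glosses over it in the same way, and it is harmless because these error terms are only used under the a priori bound $|\bphi|\lesssim\N^{\delta_{\mathbf{S}}}$ enforced by $\mathfrak{t}_{\mathrm{ap}}$.
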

\begin{proof}
We use the It\^{o} formula with \eqref{eq:ch} and \eqref{eq:currII} (as in \eqref{eq:msheinterior2I1}) to get
\begin{align}
\d\mathbf{Z}^{\N}_{\t,0}&=\lambda\N^{\frac32}\mathscr{U}'[\bphi_{\t,1}]\mathbf{Z}^{\N}_{\t,0}\d\t+\lambda\N\mathbf{F}[\bphi_{\t}]\mathbf{Z}^{\N}_{\t,0}+\sqrt{2}\lambda\N^{\frac12}\mathbf{Z}^{\N}_{\t,0}\d\mathbf{b}_{\t,0}+\lambda^{2}\N\mathbf{Z}^{\N}_{\t,0}\d\t-\lambda^{2}\mathscr{R}_{\lambda}\mathbf{Z}^{\N}_{\t,0}\d\t.\nonumber
\end{align}
As in the proof of Lemma \ref{lemma:msheinterior2}, we note that {\small$\mathscr{T}_{\N}\mathbf{Z}^{\N}_{\t,0}\approx\alpha\N^{2}\Delta_{\mathbf{A}}\mathbf{Z}^{\N}_{\t,0}$}, where in this proof, {\small$\approx$} means identity up to {\small$\N^{1/2}\wt{\mathfrak{f}}$} for some function {\small$\wt{\mathfrak{f}}$} satisfying \eqref{eq:msheIIff}. Moreover, with the gradient formulas in the proof of Lemma \ref{lemma:msheinterior2}, we may compute {\small$\alpha\N^{2}\Delta_{\mathbf{A}}\mathbf{Z}^{\N}_{\t,0}:=\alpha\N^{2}\grad^{\mathbf{X}}_{1}\mathbf{Z}^{\N}_{\t,0}-\alpha\lambda\N\mathbf{A}\mathbf{Z}^{\N}_{\t,0}$} (see Definition \ref{definition:heat}) as
\begin{align}
\alpha\N^{2}\Delta_{\mathbf{A}}\mathbf{Z}^{\N}_{\t,0}&\approx\alpha\lambda\N^{\frac32}\bphi_{\t,1}\mathbf{Z}^{\N}_{\t,0}+\tfrac12\alpha\lambda^{2}\N\bphi_{\t,1}^{2}\mathbf{Z}^{\N}_{\t,0}-\alpha\lambda\N\mathbf{A}\mathbf{Z}^{\N}_{\t,0}.\nonumber
\end{align}
By the same token, we also have {\small$\N\Delta_{\mathbf{A}}\mathbf{Z}^{\N}_{\t,0}\approx0$}. Combining this and the last two displays gives \eqref{eq:msheedgeI}.
\end{proof}
\subsection{Putting it together}
As an immediate consequence of Lemmas \ref{lemma:msheinterior1}, \ref{lemma:msheinterior2}, and \ref{lemma:msheedge}, we have 
\begin{align*}
\d\mathbf{Z}^{\N}_{\t,\x}&=\mathscr{T}_{\N}\mathbf{Z}^{\N}_{\t,\x}\d\t+\sqrt{2}\lambda\N^{\frac12}\mathbf{Z}^{\N}_{\t,\x}\d\mathbf{b}_{\t,\x}+\mathbf{1}_{\x\in\llbracket2,\infty\rrbracket}\mathfrak{e}[\tau_{\x}\bphi_{\t}]\mathbf{Z}^{\N}_{\t,\x}\d\t\\
&+\mathbf{1}_{\x=1}\cdot\{\tfrac12\lambda\N^{\frac32}\grad^{\mathbf{X}}_{1}(\mathscr{V}'[\bphi_{\t,1}]\mathbf{Z}^{\N}_{\t,1})\d\t-\tfrac12\lambda\N^{\frac32}\grad^{\mathbf{X}}_{-1}(\mathscr{V}'[\bphi_{\t,2}]\mathbf{Z}^{\N}_{\t,1})\d\t\}\\
&+\mathbf{1}_{\x=1}\cdot\{\lambda\N(\mathbf{F}[\tau_{1}\bphi_{\t}]-\tfrac12\lambda(\mathscr{U}'[\bphi_{\t,1}]\bphi_{\t,1}-1)-\tfrac12\lambda(\mathscr{U}'[\bphi_{\t,2}]\bphi_{\t,2}-1))\mathbf{Z}^{\N}_{\t,1}\d\t\}\\
&+\mathbf{1}_{\x=0}\cdot\{\lambda\N^{\frac32}\mathscr{V}'[\bphi_{\t,1}]\mathbf{Z}^{\N}_{\t,0}\d\t\}+\mathbf{1}_{\x=0}\cdot\{\N(\lambda\mathbf{F}[\bphi_{\t}]+\lambda^{2}-\tfrac12\alpha\lambda^{2}\bphi_{\t,1}^{2}+\alpha\lambda\mathbf{A})\mathbf{Z}^{\N}_{\t,0}\d\t\}\\
&+\mathbf{1}_{\x=1}\cdot\{\N^{\frac12}\mathfrak{f}[\bphi_{\t}]\mathbf{Z}^{\N}_{\t,1}\d\t\}+\mathbf{1}_{\x=0}\cdot\{\N^{\frac12}\wt{\mathfrak{f}}[\bphi_{\t}]\mathbf{Z}^{\N}_{\t,0}\d\t\},
\end{align*}
where {\small$\mathfrak{f},\wt{\mathfrak{f}}$} satisfy \eqref{eq:msheIIff}. We now use the above \abbr{SDE} to get an equation for {\small$\d\mathbf{S}^{\N}$}. First, we rewrite \eqref{eq:heatIII} as 
\begin{align*}
\mathbf{S}^{\N}_{\t,\x}=[\mathbf{e}^{\mathfrak{t}_{\N}\mathscr{T}_{\N}}(\mathbf{Z}^{\N}_{\t,\cdot})]_{\x}-[\mathbf{e}^{\mathfrak{t}_{\N}\mathscr{T}_{\N}}((1-\mathbf{1}^{(\x)}_{\cdot})\cdot\mathbf{Z}^{\N}_{\t,\cdot})]_{\x}.
\end{align*}
We now take time-differentials on both sides of the \abbr{SDE} above. In the first term, said differential only hits {\small$\mathbf{Z}^{\N}$}, at which point we obtain the following by plugging in the \abbr{SDE} above for {\small$\d\mathbf{Z}^{\N}$} and rewriting some of the {\small$\mathbf{Z}^{\N}$}-factors as {\small$\mathbf{R}^{\N}\mathbf{S}^{\N}$}:
\begin{align*}
&\d[\mathbf{e}^{\mathfrak{t}_{\N}\mathscr{T}_{\N}}(\mathbf{Z}^{\N}_{\t,\cdot})]_{\x}=[\mathbf{e}^{\mathfrak{t}_{\N}\mathscr{T}_{\N}}(\d\mathbf{Z}^{\N}_{\t,\cdot})]_{\x}\\
&=[\mathbf{e}^{\mathfrak{t}_{\N}\mathscr{T}_{\N}}(\mathscr{T}_{\N}\mathbf{Z}^{\N}_{\t,\cdot})]_{\x}\d\t+[\mathbf{e}^{\mathfrak{t}_{\N}\mathscr{T}_{\N}}(\sqrt{2}\lambda\N^{\frac12}\mathbf{R}^{\N}_{\t,\cdot}\mathbf{S}^{\N}_{\t,\cdot}\d\mathbf{b}_{\t,\cdot})]_{\x}+[\mathbf{e}^{\mathfrak{t}_{\N}\mathscr{T}_{\N}}(\mathbf{1}_{\cdot\in\llbracket2,\infty\rrbracket}\mathfrak{e}[\tau_{\cdot}\bphi_{\t}]\mathbf{Z}^{\N}_{\t,\cdot})]_{\x}\d\t\\
&+[\mathbf{e}^{\mathfrak{t}_{\N}\mathscr{T}_{\N}}(\mathbf{1}_{\cdot=1}\cdot\{\tfrac12\lambda\N^{\frac32}\grad^{\mathbf{X}}_{1}(\mathscr{V}'[\bphi_{\t,1}]\mathbf{Z}^{\N}_{\t,1})-\tfrac12\lambda\N^{\frac32}\grad^{\mathbf{X}}_{-1}(\mathscr{V}'[\bphi_{\t,2}]\mathbf{Z}^{\N}_{\t,1})\})]_{\x}\d\t\\
&+[\mathbf{e}^{\mathfrak{t}_{\N}\mathscr{T}_{\N}}(\mathbf{1}_{\cdot=1}\cdot\{\lambda\N(\mathbf{F}[\tau_{1}\bphi_{\t}]-\tfrac12\lambda(\mathscr{U}'[\bphi_{\t,1}]\bphi_{\t,1}-1)-\tfrac12\lambda(\mathscr{U}'[\bphi_{\t,2}]\bphi_{\t,2}-1))\mathbf{Z}^{\N}_{\t,1})\})]_{\x}\d\t\\
&+[\mathbf{e}^{\mathfrak{t}_{\N}\mathscr{T}_{\N}}(\mathbf{1}_{\cdot=0}\cdot\{\lambda\N^{\frac32}\mathscr{V}'[\bphi_{\t,1}]\mathbf{Z}^{\N}_{\t,0}\})]_{\x}\d\t\\
&+[\mathbf{e}^{\mathfrak{t}_{\N}\mathscr{T}_{\N}}(\mathbf{1}_{\cdot=0}\cdot\{\N(\lambda\mathbf{F}[\bphi_{\t}]+\lambda^{2}-\tfrac12\alpha\lambda^{2}\bphi_{\t,1}^{2}+\alpha\lambda\mathbf{A})\mathbf{Z}^{\N}_{\t,0}\})]_{\x}\d\t\\
&+[\mathbf{e}^{\mathfrak{t}_{\N}\mathscr{T}_{\N}}(\mathbf{1}_{\cdot=0}\cdot\N^{\frac12}\wt{\mathfrak{f}}[\bphi_{\t}]\mathbf{R}^{\N}_{\t,0}\mathbf{S}^{\N}_{\t,0}+\mathbf{1}_{\cdot=1}\cdot\N^{\frac12}\mathfrak{f}[\bphi_{\t}]\mathbf{R}^{\N}_{\t,1}\mathbf{S}^{\N}_{\t,1})]_{\x}\d\t.
\end{align*}
Note that {\small$[\mathbf{e}^{\mathfrak{t}_{\N}\mathscr{T}_{\N}}(\mathscr{T}_{\N}\mathbf{Z}^{\N}_{\t,\cdot})]_{\x}=\mathscr{T}_{\N}[\mathbf{e}^{\mathfrak{t}_{\N}\mathscr{T}_{\N}}(\mathbf{Z}^{\N}_{\t,\cdot})]_{\x}$}, since the {\small$\mathscr{T}_{\N}$}-semigroup commutes with {\small$\mathscr{T}_{\N}$} itself. Therefore, 
\begin{align*}
[\mathbf{e}^{\mathfrak{t}_{\N}\mathscr{T}_{\N}}(\mathscr{T}_{\N}\mathbf{Z}^{\N}_{\t,\cdot})]_{\x}=\mathscr{T}_{\N}\mathbf{S}^{\N}_{\t,\x}+\mathscr{T}_{\N}[\mathbf{e}^{\mathfrak{t}_{\N}\mathscr{T}_{\N}}((1-\mathbf{1}^{(\x)}_{\cdot})\mathbf{Z}^{\N}_{\t,\cdot})]_{\x}.
\end{align*}
Next, let us compute {\small$\d[\mathbf{e}^{\mathfrak{t}_{\N}\mathscr{T}_{\N}}((1-\mathbf{1}^{(\x)}_{\cdot})\cdot\mathbf{Z}^{\N}_{\t,\cdot})]_{\x}=[\mathbf{e}^{\mathfrak{t}_{\N}\mathscr{T}_{\N}}((1-\mathbf{1}^{(\x)}_{\cdot})\cdot\d\mathbf{Z}^{\N}_{\t,\cdot})]_{\x}$}. An application of the It\^{o} formula with \eqref{eq:ch} and \eqref{eq:currI}-\eqref{eq:currII} shows that {\small$\d\mathbf{Z}^{\N}_{\t,\y}=\N^{3/2}\mathfrak{h}_{\y}[\bphi_{\t}]\d\mathbf{Z}^{\N}_{\t,\y}\d\t+\sqrt{2}\lambda\N^{1/2}\mathbf{Z}^{\N}_{\t,\y}\d\mathbf{b}_{\t,\y}$} for any {\small$\y\in\llbracket0,\infty\rrbracket$}, where {\small$\mathfrak{h}_{\y}$} satisfies the estimate \eqref{eq:msheIIff} (but replacing {\small$|\w|\leq\mathrm{c}$} by {\small$|\w-\y|\leq\mathrm{c}$}). Thus, we have 
\begin{align*}
\d[\mathbf{e}^{\mathfrak{t}_{\N}\mathscr{T}_{\N}}((1-\mathbf{1}^{(\x)}_{\cdot})\cdot\mathbf{Z}^{\N}_{\t,\cdot})]_{\x}&=[\mathbf{e}^{\mathfrak{t}_{\N}\mathscr{T}_{\N}}((1-\mathbf{1}^{(\x)}_{\cdot})\cdot\sqrt{2}\lambda\N^{\frac12}\mathbf{Z}^{\N}_{\t,\cdot}\d\mathbf{b}_{\t,\cdot})]_{\x}+\N^{\frac32}[\mathbf{e}^{\mathfrak{t}_{\N}\mathscr{T}_{\N}}((1-\mathbf{1}^{(\x)}_{\cdot})\cdot\mathfrak{h}_{\cdot}[\bphi_{\t}]\mathbf{Z}^{\N}_{\t,\cdot})]_{\x}.
\end{align*}
If we now combine the previous four displays, then we obtain
\begin{align}
\d\mathbf{S}^{\N}_{\t,\x}&=\mathscr{T}_{\N}\mathbf{S}^{\N}_{\t,\x}\d\t+[\mathbf{e}^{\mathfrak{t}_{\N}\mathscr{T}_{\N}}(\mathbf{1}^{(\x)}_{\cdot}\cdot\sqrt{2}\lambda\N^{\frac12}\mathbf{R}^{\N}_{\t,\cdot}\mathbf{S}^{\N}_{\t,\cdot}\d\mathbf{b}_{\t,\cdot})]_{\x}+[\mathbf{e}^{\mathfrak{t}_{\N}\mathscr{T}_{\N}}(\mathbf{1}_{\cdot\in\llbracket2,\infty\rrbracket}\mathfrak{e}[\tau_{\cdot}\bphi_{\t}]\mathbf{Z}^{\N}_{\t,\cdot})]_{\x}\d\t\label{eq:sdebasic1}\\
&+[\mathbf{e}^{\mathfrak{t}_{\N}\mathscr{T}_{\N}}(\mathbf{1}_{\cdot=1}\cdot\{\tfrac12\lambda\N^{\frac32}\grad^{\mathbf{X}}_{1}(\mathscr{V}'[\bphi_{\t,1}]\mathbf{Z}^{\N}_{\t,1})-\tfrac12\lambda\N^{\frac32}\grad^{\mathbf{X}}_{-1}(\mathscr{V}'[\bphi_{\t,2}]\mathbf{Z}^{\N}_{\t,1})\})]_{\x}\d\t\label{eq:sdebasic2}\\
&+[\mathbf{e}^{\mathfrak{t}_{\N}\mathscr{T}_{\N}}(\mathbf{1}_{\cdot=1}\cdot\{\lambda\N(\mathbf{F}[\tau_{1}\bphi_{\t}]-\tfrac12\lambda(\mathscr{U}'[\bphi_{\t,1}]\bphi_{\t,1}-1)-\tfrac12\lambda(\mathscr{U}'[\bphi_{\t,2}]\bphi_{\t,2}-1))\mathbf{Z}^{\N}_{\t,1}\})]_{\x}\d\t\label{eq:sdebasic4}\\
&+[\mathbf{e}^{\mathfrak{t}_{\N}\mathscr{T}_{\N}}(\mathbf{1}_{\cdot=0}\cdot\{\lambda\N^{\frac32}\mathscr{V}'[\bphi_{\t,1}]\mathbf{Z}^{\N}_{\t,0}\})]_{\x}\d\t\label{eq:sdebasic5}\\
&+[\mathbf{e}^{\mathfrak{t}_{\N}\mathscr{T}_{\N}}(\mathbf{1}_{\cdot=0}\cdot\{\N(\lambda\mathbf{F}[\bphi_{\t}]+\lambda^{2}-\tfrac12\alpha\lambda^{2}\bphi_{\t,1}^{2}+\alpha\lambda\mathbf{A})\mathbf{Z}^{\N}_{\t,0}\})]_{\x}\d\t\label{eq:sdebasic6}\\
&+[\mathbf{e}^{\mathfrak{t}_{\N}\mathscr{T}_{\N}}(\mathbf{1}_{\x=0}\cdot\N^{\frac12}\wt{\mathfrak{f}}[\bphi_{\t}]\mathbf{R}^{\N}_{\t,0}\mathbf{S}^{\N}_{\t,0}+\mathbf{1}_{\x=1}\cdot\N^{\frac12}\mathfrak{f}[\bphi_{\t}]\mathbf{R}^{\N}_{\t,1}\mathbf{S}^{\N}_{\t,1})]_{\x}\d\t\label{eq:sdebasic7}\\
&+\mathscr{T}_{\N}[\mathbf{e}^{\mathfrak{t}_{\N}\mathscr{T}_{\N}}((1-\mathbf{1}^{(\x)}_{\cdot})\mathbf{Z}^{\N}_{\t,\cdot})]_{\x}-\N^{\frac32}[\mathbf{e}^{\mathfrak{t}_{\N}\mathscr{T}_{\N}}((1-\mathbf{1}^{(\x)}_{\cdot})\cdot\mathfrak{h}_{\cdot}[\bphi_{\t}]\mathbf{Z}^{\N}_{\t,\cdot})]_{\x}.\label{eq:sdebasic8}
\end{align}
We clarify that the {\small$\mathbf{Z}^{\N}$}-factors that we have not rewritten as {\small$\mathbf{R}^{\N}\mathbf{S}^{\N}$} were left untouched because we will to perform extra gymnastics to obtain the {\small$\mathbf{Av}$}-terms in Definitions \ref{definition:bulkaverage} and \ref{definition:edgeaverage}. We now show that the last two terms, those in \eqref{eq:sdebasic8}, are ultimately error terms because of the factors of {\small$1-\mathbf{1}^{(\x)}_{\cdot}$} which restrict the heat kernels therein beyond their natural diffusive scale.
\begin{lemma}\label{lemma:triv}
\fsp The terms {\small$\mathscr{T}_{\N}[\mathbf{e}^{\mathfrak{t}_{\N}\mathscr{T}_{\N}}((1-\mathbf{1}^{(\x)}_{\cdot})\mathbf{Z}^{\N}_{\t,\cdot})]_{\x}$} and {\small$\N^{\frac32}[\mathbf{e}^{\mathfrak{t}_{\N}\mathscr{T}_{\N}}((1-\mathbf{1}^{(\x)}_{\cdot})\cdot\mathfrak{h}_{\cdot}[\bphi_{\t}]\mathbf{Z}^{\N}_{\t,\cdot})]_{\x}$} are both of the form {\small$\mathrm{Err}[\mathbf{R}^{\N}\mathbf{S}^{\N}]_{\t,\x}$} from \eqref{eq:sdeII1}-\eqref{eq:sdeII2}.
\end{lemma}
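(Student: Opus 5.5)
Both terms will be written directly as linear integral operators acting on $\mathbf{Z}^{\N}_{\t,\cdot}=\mathbf{R}^{\N}_{\t,\cdot}\mathbf{S}^{\N}_{\t,\cdot}$. The kernel will be read off from the heat kernel $\mathbf{H}^{\N,\mathbf{A}}$ restricted to $|\x-\w|\gtrsim(\log\N)^{2}\N\mathfrak{t}_{\N}^{1/2}$. For the first term, we use that $\mathscr{T}_{\N}$ commutes with the semigroup. We then write $\mathscr{T}_{\N}\mathbf{e}^{\mathfrak{t}_{\N}\mathscr{T}_{\N}}$ as the kernel $\partial_{\t}\mathbf{H}^{\N,\mathbf{A}}_{0,\mathfrak{t}_{\N},\x,\w}=\mathscr{T}_{\N}\mathbf{H}^{\N,\mathbf{A}}_{0,\mathfrak{t}_{\N},\x,\w}$, acting in $\x$. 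This gives
\begin{align*}
\mathbf{E}^{\N}_{\t,\x,\w}:=(1-\mathbf{1}^{(\x)}_{\w})\,\mathscr{T}_{\N}\mathbf{H}^{\N,\mathbf{A}}_{0,\mathfrak{t}_{\N},\x,\w}.
\end{align*}
The $\x$-dependence of the cutoff is harmless: for each fixed $\x$ the cutoff is just a deterministic restriction of the kernel in $\w$. For the second term, we take
\begin{align*}
\mathbf{E}^{\N}_{\t,\x,\w}:=-\N^{\frac32}(1-\mathbf{1}^{(\x)}_{\w})\mathbf{H}^{\N,\mathbf{A}}_{0,\mathfrak{t}_{\N},\x,\w}\mathfrak{h}_{\w}[\bphi_{\t}].
\end{align*}
Since $\mathfrak{h}_{\w}$ satisfies \eqref{eq:msheIIff} centered at $\w$, the polynomial factor $1+\sum_{|\z-\w|\lesssim1}|\bphi_{\t,\z}|^{\mathrm{C}}$ in \eqref{eq:sdeII2} is produced automatically.

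The key input is the Gaussian-type off-diagonal estimate for the Robin heat kernel and its time derivative (equivalently its discrete Laplacian), which I take from Proposition \ref{prop:hkestimates}. This estimate should have the form
\begin{align*}
|\mathbf{H}^{\N,\mathbf{A}}_{0,\t,\x,\w}|+\t|\partial_{\t}\mathbf{H}^{\N,\mathbf{A}}_{0,\t,\x,\w}|\lesssim_{\kappa'}\N^{-1}\t^{-1/2}\exp\Big(-\tfrac{\kappa'|\x-\w|}{\N\t^{1/2}}\Big)\,\mathrm{e}^{\mathrm{O}(\t)}
\end{align*}
for any $\kappa'>0$. The Robin parameter only costs a factor $\mathrm{e}^{\mathrm{O}(\t)}$, which is bounded for $\t=\mathfrak{t}_{\N}\leq1$. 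At $\t=\mathfrak{t}_{\N}$ and on the support $|\x-\w|\geq(\log\N)^{2}\N\mathfrak{t}_{\N}^{1/2}$, we split the exponential into two halves. One half is at most $\exp(-\tfrac12\kappa'(\log\N)^{2})$, which is $\mathrm{O}(\N^{-D})$ for every $D$. The other half is at most $\exp(-\tfrac{\kappa'|\x-\w|}{2\N\mathfrak{t}_{\N}^{1/2}})\leq\exp(-\tfrac{\kappa|\x-\w|}{\N})$ once $\kappa'\geq2\kappa$, because $\mathfrak{t}_{\N}\leq1$. The polynomial prefactors ($\N^{2}$ from $\mathscr{T}_{\N}$, $\N^{3/2}$ in the second term, and $\mathfrak{t}_{\N}^{-1}$, $\N^{-1}\mathfrak{t}_{\N}^{-1/2}$ from the kernel bounds) are then absorbed by the superpolynomial factor. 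This yields the bound $\N^{-6/5}\exp(-\tfrac{\kappa|\x-\w|}{\N})$, which is stronger than \eqref{eq:sdeII2}.

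The main obstacle is not the arithmetic but having the off-diagonal bound for $\partial_{\t}\mathbf{H}^{\N,\mathbf{A}}$ near the boundary, uniformly in $\x,\w\in\llbracket0,\infty\rrbracket$, with an arbitrary decay rate $\kappa'$. If Proposition \ref{prop:hkestimates} only gives the bound for $\mathbf{H}$ itself, I would handle $\mathscr{T}_{\N}\mathbf{H}$ as follows. Write $\mathscr{T}_{\N}\mathbf{H}$ as a second difference in $\x$ of $\mathbf{H}$. On the region $|\x-\w|\gtrsim(\log\N)^{2}\N\mathfrak{t}_{\N}^{1/2}$, each of the three terms of the second difference is individually superpolynomially small with the exponential weight. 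This costs only a factor $\mathrm{O}(\N^{2})$, which is absorbed as before. At the edge $\x=0$, the Robin extension adds the term $\N^{-1}\mathbf{A}\lambda\mathbf{H}$, which is smaller still. I therefore expect no genuine difficulty beyond citing the heat kernel estimates.
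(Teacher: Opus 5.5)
Your argument is correct and is essentially the paper's proof. The paper also bounds $\mathscr{T}_{\N}\mathbf{H}^{\N,\mathbf{A}}_{0,\mathfrak{t}_{\N},\x,\w}$ crudely by $\N^{2}$ times the off-diagonal estimate \eqref{eq:hkestimatesI}, splits the exponential into a factor that is superpolynomially small on the support of $1-\mathbf{1}^{(\x)}_{\w}$ and a factor $\exp(-\kappa|\x-\w|/\N)$, and uses the polynomial bound on $\mathfrak{h}_{\w}$ for the second term. One small correction: if $\mathscr{T}_{\N}$ is read as acting on the $\x$-dependent cutoff too (as the derivation of \eqref{eq:sdebasic8} suggests), your stated reason that the cutoff is harmless is not quite accurate. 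Your fallback of bounding each of the three terms of the second difference individually does cover that reading.
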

\begin{proof}
By the off-diagonal decay in Proposition \ref{prop:hkestimates}, we have (recall {\small$\mathfrak{t}_{\N}=\N^{-\delta_{\mathbf{S}}}$} from Definition \ref{definition:heat})
\begin{align*}
|\mathscr{T}_{\N}\mathbf{H}^{\N,\mathbf{A}}_{0,\mathfrak{t}_{\N},\x,\w}|\mathbf{1}_{|\x-\w|\gtrsim(\log\N)^{2}\N\mathfrak{t}_{\N}^{1/2}}&\lesssim_{\kappa}\N^{2}\cdot\N^{-1}\mathfrak{t}_{\N}^{-1/2}\exp(-\tfrac{2\kappa|\x-\w|}{\N\mathfrak{t}_{\N}^{1/2}})\mathbf{1}_{|\x-\w|\gtrsim(\log\N)^{2}\N\mathfrak{t}_{\N}^{1/2}}\\
&\lesssim\N^{1+\frac12\delta_{\mathbf{S}}}\exp(-\tfrac{\kappa|\x-\w|}{\N\mathfrak{t}_{\N}^{1/2}})\cdot\mathbf{1}_{|\x-\w|\gtrsim(\log\N)^{2}\N\mathfrak{t}_{\N}^{1/2}}\cdot\exp(-\tfrac{\kappa|\x-\w|}{\N})\\
&\lesssim_{\kappa,\mathrm{D}}\N^{-\mathrm{D}}\exp(-\tfrac{\kappa|\x-\w|}{\N})
\end{align*}
for any {\small$\kappa,\mathrm{D}>0$}. The same estimate holds without {\small$\mathscr{T}_{\N}$} as well. Combining this with the deterministic bound for {\small$\mathfrak{h}_{\w}$} (mentioned prior to \eqref{eq:sdebasic1}-\eqref{eq:sdebasic8}) completes the proof.
\end{proof}
We will now further manipulate \eqref{eq:sdebasic1}-\eqref{eq:sdebasic8} in the following ways.
\begin{enumerate}
\item We observe that the term in \eqref{eq:sdebasic2} is restricted not just to {\small$\cdot=1$} but to all {\small$\cdot\in\llbracket1,\infty\rrbracket$}, because it also appears in {\small$\mathfrak{e}$} (see \eqref{eq:msheII} and \eqref{eq:msheII(1)a}). In order to analyze this term, we will need to move the gradients in \eqref{eq:sdebasic2} onto the {\small$\mathbf{H}^{\N,\mathbf{A}}$}-heat kernel therein. However, the presence of the edge at {\small$0$} leads to boundary terms when we use summation-by-parts to move the gradient.  It turns out this boundary term cancels the term in \eqref{eq:sdebasic5}, modulo other error terms that are lower order (with scaling factor of {\small$\N$} instead of {\small$\N^{3/2}$}). Similarly, we also develop a summation-by-parts calculation for the other terms in {\small$\mathfrak{e}$} in \eqref{eq:sdebasic1} that carry a discrete gradient as well.
\item We identify the Laplacian term in {\small$\mathfrak{e}$} (see \eqref{eq:msheII}) as a contribution to the {\small$\mathrm{Err}$}-term in \eqref{eq:sdeII1}. Indeed, small-ness of this Laplacian term comes from the fact that it is scaled too weakly in {\small$\N$} to contribute anything.
\item We identify the remaining coefficients as either bulk-admissible or edge-admissible terms. Then, we replace these coefficients by their space-time and time averages to eventually get the form \eqref{eq:sdeI3}-\eqref{eq:sdeI5}.
\end{enumerate}
\subsubsection{Summation-by-parts lemmas}
We embark on making precise step (1) above. Key to this is the following general summation-by-parts calculation. Before we state the lemma, we emphasize that it is basically a discrete-version of integration-by-parts, turning one bulk sum into another with a cost of an edge term.
\begin{lemma}\label{lemma:sbp}
\fsp Consider a (possibly random and time-dependent) function {\small$\mathsf{f}:\llbracket0,\infty\rrbracket\to\R$}. We have the following, where we use notation to be explained after:
\begin{align}
[\mathbf{e}^{\mathfrak{t}_{\N}\mathscr{T}_{\N}}(\mathbf{1}_{\cdot\in\llbracket1,\infty\rrbracket}\cdot\grad^{\mathbf{X}}_{1}(\mathsf{f}_{\cdot}\mathbf{Z}^{\N}_{\t,\cdot}))]_{\x}&=\sum_{\w\in\llbracket2,\infty\rrbracket}\grad^{\mathbf{X}}_{-1}\mathbf{H}^{\N,\mathbf{A}}_{0,\mathfrak{t}_{\N},\x,\w} \cdot\mathsf{f}_{\w}\mathbf{Z}^{\N}_{\t,\w}-\mathbf{H}^{\N,\mathbf{A}}_{0,\mathfrak{t}_{\N},\x,1}\cdot\mathsf{f}_{1}\mathbf{Z}^{\N}_{\t,1}.\nonumber
\end{align}
Similarly, we also have the formula
\begin{align}
[\mathbf{e}^{\mathfrak{t}_{\N}\mathscr{T}_{\N}}(\mathbf{1}_{\cdot\in\llbracket1,\infty\rrbracket}\cdot\grad^{\mathbf{X}}_{-1}(\mathsf{f}_{\cdot+1}\mathbf{Z}^{\N}_{\t,\cdot}))]_{\x}&=\sum_{\w\in\llbracket0,\infty\rrbracket}\grad^{\mathbf{X}}_{1}\mathbf{H}^{\N,\mathbf{A}}_{0,\mathfrak{t}_{\N},\x,\w}\cdot\mathsf{f}_{\w+1}\mathbf{Z}^{\N}_{\t,\w}+\mathbf{H}^{\N,\mathbf{A}}_{0,\mathfrak{t}_{\N},\x,0}\mathsf{f}_{1}\mathbf{Z}^{\N}_{\t,0}.\nonumber
\end{align}
\end{lemma}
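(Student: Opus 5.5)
The plan is to prove both identities by unwinding the definition \eqref{eq:heatII} of the semigroup and then applying discrete summation-by-parts directly. I would write $\mathsf{g}_{\w}:=\mathsf{f}_{\w}\mathbf{Z}^{\N}_{\t,\w}$ in the first identity and $\mathsf{g}_{\w}:=\mathsf{f}_{\w+1}\mathbf{Z}^{\N}_{\t,\w}$ in the second, and abbreviate $\mathbf{H}_{\w}:=\mathbf{H}^{\N,\mathbf{A}}_{0,\mathfrak{t}_{\N},\x,\w}$. Note that $\x$ is a fixed spectator variable and the gradients always act on the second spatial variable $\w$. By \eqref{eq:heatII}, the left-hand side of the first identity is $\sum_{\w\in\llbracket1,\infty\rrbracket}\mathbf{H}_{\w}(\mathsf{g}_{\w+1}-\mathsf{g}_{\w})$.

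\emph{Step 1 (first identity).} Split this sum into its two pieces and shift the index in the first piece by one:
\begin{align*}
\sum_{\w\in\llbracket1,\infty\rrbracket}\mathbf{H}_{\w}\mathsf{g}_{\w+1}-\sum_{\w\in\llbracket1,\infty\rrbracket}\mathbf{H}_{\w}\mathsf{g}_{\w}=\sum_{\w\in\llbracket2,\infty\rrbracket}(\mathbf{H}_{\w-1}-\mathbf{H}_{\w})\mathsf{g}_{\w}-\mathbf{H}_{1}\mathsf{g}_{1}.
\end{align*}
With the convention $\grad^{\mathbf{X}}_{\mathfrak{l}}\mathsf{f}_{\w}=\mathsf{f}_{\w+\mathfrak{l}}-\mathsf{f}_{\w}$, we have $\mathbf{H}_{\w-1}-\mathbf{H}_{\w}=\grad^{\mathbf{X}}_{-1}\mathbf{H}_{\w}$. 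This gives the first formula, including the boundary term $-\mathbf{H}_{1}\mathsf{f}_{1}\mathbf{Z}^{\N}_{\t,1}$.

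\emph{Step 2 (second identity).} Here $\grad^{\mathbf{X}}_{-1}\mathsf{g}_{\w}=\mathsf{g}_{\w-1}-\mathsf{g}_{\w}$, so the left-hand side equals
\begin{align*}
\sum_{\w\in\llbracket1,\infty\rrbracket}\mathbf{H}_{\w}(\mathsf{g}_{\w-1}-\mathsf{g}_{\w})=\sum_{\w\in\llbracket0,\infty\rrbracket}\mathbf{H}_{\w+1}\mathsf{g}_{\w}-\sum_{\w\in\llbracket0,\infty\rrbracket}\mathbf{H}_{\w}\mathsf{g}_{\w}+\mathbf{H}_{0}\mathsf{g}_{0}.
\end{align*}
The first two sums combine to $\sum_{\w}\grad^{\mathbf{X}}_{1}\mathbf{H}_{\w}\cdot\mathsf{f}_{\w+1}\mathbf{Z}^{\N}_{\t,\w}$. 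The leftover term is $\mathbf{H}_{0}\mathsf{f}_{1}\mathbf{Z}^{\N}_{\t,0}$, which is exactly the edge term in the claim.

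\emph{Step 3 (justifying the rearrangements).} The only non-algebraic point, and the one I expect to need care, is that splitting and re-indexing the infinite sums requires absolute convergence. I would use the off-diagonal decay of $\mathbf{H}^{\N,\mathbf{A}}_{0,\mathfrak{t}_{\N},\x,\cdot}$ from Proposition \ref{prop:hkestimates}, which is super-exponential beyond the scale $\N\mathfrak{t}_{\N}^{1/2}$. I would combine it with the (pathwise) at most exponential growth in $\w$ of $\mathsf{f}_{\w}\mathbf{Z}^{\N}_{\t,\w}$ that is implicit in the left-hand side being well-defined. This makes each of the separated series absolutely summable.

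If one only wants a formal identity, it suffices to assume this summability as a standing hypothesis on $\mathsf{f}$. Alternatively, one can first perform the computation with the sums truncated at $\w\leq\mathrm{M}$; this produces an extra boundary term at $\w=\mathrm{M}$, which vanishes as $\mathrm{M}\to\infty$ by the same decay.
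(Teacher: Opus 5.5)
Your computation is correct and is the same summation-by-parts argument as the paper's. The paper only packages the index shift differently: it extends the sum to $\Z$ via the indicator, uses that $\grad^{\mathbf{X}}_{-\mathfrak{l}}$ is the adjoint of $\grad^{\mathbf{X}}_{\mathfrak{l}}$, and applies the discrete Leibniz rule to $\mathbf{H}^{\N,\mathbf{A}}_{0,\mathfrak{t}_{\N},\x,\w}\mathbf{1}_{\w\in\llbracket1,\infty\rrbracket}$ to generate the boundary terms.

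Your Step 3 adds a convergence justification that the paper leaves implicit. However, the vanishing of the truncation boundary term at $\w=\mathrm{M}$ should come from an explicit growth hypothesis on $\mathsf{f}_{\w}\mathbf{Z}^{\N}_{\t,\w}$, which holds in the paper's applications. It does not follow merely from the left-hand side being well-defined.
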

\begin{proof}
On the \abbr{LHS} of the first desired identity, we can extend the sum over {\small$\llbracket0,\infty\rrbracket$} in the {\small$\mathbf{e}^{\mathfrak{t}_{\N}\mathscr{T}_{\N}}$} operator to all of {\small$\Z$} because of the indicator function {\small$\mathbf{1}_{\cdot\in\llbracket1,\infty\rrbracket}$}. Thus, since the adjoint of {\small$\grad^{\mathbf{X}}_{\mathfrak{l}}$} with respect to the flat measure on {\small$\Z$} is equal to {\small$\grad^{\mathbf{X}}_{-\mathfrak{l}}$}, we deduce the following, in which all gradients act on the {\small$\w$}-variable:
\begin{align*}
[\mathbf{e}^{\mathfrak{t}_{\N}\mathscr{T}_{\N}}(\mathbf{1}_{\cdot\in\llbracket1,\infty\rrbracket}\cdot\grad^{\mathbf{X}}_{1}(\mathsf{f}_{\cdot}\mathbf{Z}^{\N}_{\t,\cdot}))]_{\x}=\sum_{\w\in\Z}\grad^{\mathbf{X}}_{-1}(\mathbf{H}^{\N,\mathbf{A}}_{0,\mathfrak{t}_{\N},\x,\w}\mathbf{1}_{\w\in\llbracket1,\infty\rrbracket})\cdot\mathsf{f}_{\w}\mathbf{Z}^{\N}_{\t,\w}.
\end{align*}
We now apply the discrete Leibniz rule from the proof of Lemma \ref{lemma:msheinterior2} to observe that 
\begin{align*}
\grad^{\mathbf{X}}_{-1}(\mathbf{H}^{\N,\mathbf{A}}_{0,\mathfrak{t}_{\N},\x,\w}\mathbf{1}_{\w\in\llbracket1,\infty\rrbracket})&=\mathbf{1}_{\w-1\in\llbracket1,\infty\rrbracket}\grad^{\mathbf{X}}_{-1}\mathbf{H}^{\N,\mathbf{A}}_{0,\mathfrak{t}_{\N},\x,\w}+\mathbf{H}^{\N,\mathbf{A}}_{0,\mathfrak{t}_{\N},\x,\w}\grad^{\mathbf{X}}_{-1}\mathbf{1}_{\w\in\llbracket1,\infty\rrbracket}\\
&=\mathbf{1}_{\w\in\llbracket2,\infty\rrbracket}\grad^{\mathbf{X}}_{-1}\mathbf{H}^{\N,\mathbf{A}}_{0,\mathfrak{t}_{\N},\x,\w}-\mathbf{1}_{\w=1}\mathbf{H}^{\N,\mathbf{A}}_{0,\mathfrak{t}_{\N},\x,\w}.
\end{align*}
Combining the previous two displays gives the first identity. For the second identity, it is enough to use instead
\begin{align*}
[\mathbf{e}^{\mathfrak{t}_{\N}\mathscr{T}_{\N}}(\mathbf{1}_{\cdot\in\llbracket1,\infty\rrbracket}\cdot\grad^{\mathbf{X}}_{-1}(\mathsf{f}_{\cdot+1}\mathbf{Z}^{\N}_{\t,\cdot}))]_{\x}=\sum_{\w\in\Z}\grad^{\mathbf{X}}_{1}(\mathbf{H}^{\N,\mathbf{A}}_{0,\mathfrak{t}_{\N},\x,\w}\mathbf{1}_{\w\in\llbracket1,\infty\rrbracket})\cdot\mathsf{f}_{\w+1}\mathbf{Z}^{\N}_{\t,\w},
\end{align*}
which also follows by the same summation-by-parts considerations, as well as the Leibniz rule identity
\begin{align*}
\grad^{\mathbf{X}}_{1}(\mathbf{H}^{\N,\mathbf{A}}_{0,\mathfrak{t}_{\N},\x,\w}\mathbf{1}_{\w\in\llbracket1,\infty\rrbracket})&=\mathbf{1}_{\w+1\in\llbracket1,\infty\rrbracket}\grad^{\mathbf{X}}_{1}\mathbf{H}^{\N,\mathbf{A}}_{0,\mathfrak{t}_{\N},\x,\w}+\mathbf{H}^{\N,\mathbf{A}}_{0,\mathfrak{t}_{\N},\x,\w}\cdot\grad^{\mathbf{X}}_{1}\mathbf{1}_{\w\in\llbracket1,\infty\rrbracket},
\end{align*}
while noting that {\small$\mathbf{1}_{\w+1\in\llbracket1,\infty\rrbracket}=\mathbf{1}_{\w\in\llbracket0,\infty\rrbracket}$} and {\small$\grad^{\mathbf{X}}_{1}\mathbf{1}_{\w\in\llbracket1,\infty\rrbracket}=\mathbf{1}_{\w=0}$}. This completes the proof.
\end{proof}
\subsubsection{Laplacian term}
The next step, namely controlling the Laplacian term in \eqref{eq:msheII}, is of a similar spirit, but it uses a more subtle cancellation that is reflected by the fact that the {\small$\mathscr{T}_{\N}$}-semigroup commutes with {\small$\mathscr{T}_{\N}$} itself.
\begin{lemma}\label{lemma:lap}
\fsp The quantity {\small$[\mathbf{e}^{\mathfrak{t}_{\N}\mathscr{T}_{\N}}(\mathbf{1}_{\cdot\in\llbracket2,\infty\rrbracket}\cdot\N\Delta\mathfrak{f}_{0}[\tau_{\cdot}\bphi_{\t}]\mathbf{Z}^{\N}_{\t,\cdot})]_{\x}$}, where the function {\small$\mathfrak{f}_{0}$} satisfies the deterministic estimate \eqref{eq:msheIIff}, has the form of {\small$\mathrm{Err}[\mathbf{R}^{\N}\mathbf{S}^{\N}]_{\t,\x}$} from \eqref{eq:sdeII1}.
\end{lemma}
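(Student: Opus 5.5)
The plan is to move both discrete gradients inside $\Delta=-\grad^{\mathbf{X}}_{-1}\grad^{\mathbf{X}}_{1}$ off the product $\mathfrak{f}_{0}[\tau_{\cdot}\bphi_{\t}]\mathbf{Z}^{\N}_{\t,\cdot}$ and onto the heat kernel $\mathbf{H}^{\N,\mathbf{A}}_{0,\mathfrak{t}_{\N},\x,\cdot}$, by summation by parts as in Lemma \ref{lemma:sbp}. The goal is to write the quantity as
\begin{align*}
\N\sum_{\w}\mathbf{E}^{\mathrm{lap}}_{\x,\w}\,\mathfrak{f}_{0}[\tau_{\w}\bphi_{\t}]\mathbf{Z}^{\N}_{\t,\w},
\end{align*}
plus boundary terms supported at $\w\in\llbracket0,2\rrbracket$. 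Since $\mathbf{Z}^{\N}=\mathbf{R}^{\N}\mathbf{S}^{\N}$ and $\mathfrak{f}_{0}$ satisfies \eqref{eq:msheIIff}, this has the form \eqref{eq:sdeII1}. It then suffices to check that the resulting kernel obeys \eqref{eq:sdeII2}. Concretely, I would run the computation of Lemma \ref{lemma:sbp} twice, first for $\grad^{\mathbf{X}}_{-1}$ and then for $\grad^{\mathbf{X}}_{1}$. The cutoff $\mathbf{1}_{\cdot\in\llbracket2,\infty\rrbracket}$ produces Leibniz boundary terms. These have the form $\N\,\mathbf{H}^{\N,\mathbf{A}}_{0,\mathfrak{t}_{\N},\x,\w}$ or $\N\,\grad^{\mathbf{X}}\mathbf{H}^{\N,\mathbf{A}}_{0,\mathfrak{t}_{\N},\x,\w}$ times $\mathfrak{f}_{0}\mathbf{Z}^{\N}$ at $\w\in\llbracket1,3\rrbracket$, with the index shifted as needed.

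\textbf{Bulk term.} The interior part has kernel $\N\Delta_{\w}\mathbf{H}^{\N,\mathbf{A}}_{0,\mathfrak{t}_{\N},\x,\w}$. By symmetry of the Robin heat kernel and the commutation $\mathscr{T}_{\N}\mathbf{e}^{\mathfrak{t}_{\N}\mathscr{T}_{\N}}=\mathbf{e}^{\mathfrak{t}_{\N}\mathscr{T}_{\N}}\mathscr{T}_{\N}$, this equals $\N(\alpha\N^{2}+\tfrac14\N\lambda)^{-1}\partial_{\t}\mathbf{H}^{\N,\mathbf{A}}$ at time $\mathfrak{t}_{\N}$, away from $\w\in\llbracket0,1\rrbracket$ where $\Delta_{\mathbf{A}}$ and $\Delta$ differ. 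The time-derivative estimate of Proposition \ref{prop:hkestimates} gives
\begin{align*}
|\partial_{\t}\mathbf{H}^{\N,\mathbf{A}}_{0,\mathfrak{t}_{\N},\x,\w}|\lesssim_{\kappa}\mathfrak{t}_{\N}^{-1}\N^{-1}\mathfrak{t}_{\N}^{-1/2}\exp(-\tfrac{\kappa|\x-\w|}{\N}).
\end{align*}
This yields the bound
\begin{align*}
\N\cdot\N^{-2}\cdot\N^{-1+\frac32\delta_{\mathbf{S}}}=\N^{-2+\frac32\delta_{\mathbf{S}}}\ll\N^{-6/5},
\end{align*}
with the required exponential decay.

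\textbf{Boundary terms.} The terms from the cutoff, and the discrepancy between $\Delta$ and $\Delta_{\mathbf{A}}$ near the edge, sit at $\w=\mathrm{O}(1)$. Those carrying $\N\grad^{\mathbf{X}}\mathbf{H}$ are bounded by $\N\cdot\N^{-2}\mathfrak{t}_{\N}^{-1}$, which is well below $\N^{-1/3}$. Those carrying $\N\,\mathbf{H}$ alone are the dangerous ones, since $\N\mathbf{H}\sim\N^{\delta_{\mathbf{S}}/2}$. In the summation by parts I intend to keep these paired as $\N(\mathbf{H}_{\x,\w}-\mathbf{H}_{\x,\w\pm1})$. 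A single Leibniz term has this form: $\mathbf{1}_{\w\geq2}$ differenced once gives a point mass, and the second gradient then acts on $\mathbf{H}\cdot\delta$ and produces a difference. Alternatively, I would use the Robin relation $\mathbf{H}_{\x,0}-\mathbf{H}_{\x,-1}=\N^{-1}\mathbf{A}\lambda\mathbf{H}_{\x,0}$, which converts a bare $\N\mathbf{H}$ into $\mathrm{O}(\mathbf{H})\lesssim\N^{-1+\delta_{\mathbf{S}}/2}$. The edge-localized terms are then bounded by $\N^{-1/3}\mathbf{1}_{\w\in\llbracket0,1\rrbracket}$ after reindexing. Terms at $\w=2,3$ can be folded into the $\N^{-6/5}$ bound, since each is $\lesssim\N^{-1+\mathrm{O}(\delta_{\mathbf{S}})}$ times a local factor; I may need to shift $\mathbf{Z}^{\N}_{\t,2}$ to $\mathbf{Z}^{\N}_{\t,1}$, which costs a polynomial factor in $\bphi$ allowed by \eqref{eq:sdeII2}.

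\textbf{Main obstacle.} The main difficulty is the bookkeeping of these edge terms: I must make sure that no bare $\N\,\mathbf{H}^{\N,\mathbf{A}}$ boundary term survives uncancelled, and the $\N^{-1/3}$ edge allowance in \eqref{eq:sdeII2} is what should make this work. If a bare $\N\mathbf{H}$ term does survive, I would first try to kill it with the Robin boundary condition of $\Delta_{\mathbf{A}}$ in Definition \ref{definition:heat}.
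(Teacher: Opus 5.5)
\textbf{There is a genuine gap.} It sits at exactly the step you flag as the main obstacle, and neither of your proposed fixes closes it. Write $g_{\w}:=\mathfrak{f}_{0}[\tau_{\w}\bphi_{\t}]\mathbf{Z}^{\N}_{\t,\w}$ and $\mathbf{H}_{\x,\w}:=\mathbf{H}^{\N,\mathbf{A}}_{0,\mathfrak{t}_{\N},\x,\w}$. Summation by parts gives exactly
\begin{align*}
\sum_{\w\geq2}\mathbf{H}_{\x,\w}\,\N(\Delta g)_{\w}=\N\sum_{\w\geq2}(\Delta\mathbf{H}_{\x,\cdot})_{\w}\,g_{\w}+\N(\mathbf{H}_{\x,2}-\mathbf{H}_{\x,1})\,g_{1}+\N\mathbf{H}_{\x,1}\,(g_{1}-g_{2}).
\end{align*}
The first two terms are fine; the last one is not.

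Your pairing argument puts the difference on the wrong factor. Applying the second gradient to $\mathbf{H}\cdot\delta$ produces a difference of point masses, and tested against $g$ this gives $\mathbf{H}\cdot\grad^{\mathbf{X}}g$, not $g\cdot\grad^{\mathbf{X}}\mathbf{H}$. But $g$ has no microscopic regularity, because $\mathfrak{f}_{0}[\tau_{1}\bphi_{\t}]$ and $\mathfrak{f}_{0}[\tau_{2}\bphi_{\t}]$ are different $\mathrm{O}(1)$ functions of the spins. So $\N\mathbf{H}_{\x,1}(g_{1}-g_{2})$ is generically of order $\N^{\frac12\delta_{\mathbf{S}}}\mathbf{Z}^{\N}_{\t,1}$ for $\x$ within $\N\mathfrak{t}_{\N}^{1/2}$ of the edge. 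That is far above the $\N^{-1/3}$ allowance in \eqref{eq:sdeII2}.

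The Robin relation cannot remove this term either. It only ties the ghost value $\mathbf{H}_{\x,-1}$ to $\mathbf{H}_{\x,0}$, and no such pair occurs here. Separately, your terms at $\w=2,3$ of size $\N^{-1+\mathrm{O}(\delta_{\mathbf{S}})}$ do not fit under $\N^{-6/5}$.

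\textbf{How the paper proceeds.} The paper avoids summation by parts altogether. It extends the input to the sites $0,1$ by solving the $2\times2$ linear system $(\Delta_{\mathbf{A}}\wt{g})_{0}=(\Delta_{\mathbf{A}}\wt{g})_{1}=0$, whose solution is $\wt{g}_{0},\wt{g}_{1}=(1+\mathrm{O}(\N^{-1}))\,g_{2}$. Then $\mathbf{1}_{\cdot\in\llbracket2,\infty\rrbracket}\Delta g=\Delta_{\mathbf{A}}\wt{g}$ on $\llbracket0,\infty\rrbracket$. Because $\mathscr{T}_{\N}$ is a multiple of $\Delta_{\mathbf{A}}$, the semigroup commutes with $\Delta_{\mathbf{A}}$. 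The quantity therefore becomes $\sum_{\w}\N\Delta_{\mathbf{A}}\mathbf{H}_{\x,\w}\,\wt{g}_{\w}$, with no boundary terms at all. Your $\partial_{\t}\mathbf{H}$ bound then controls this kernel by $\N^{-2+\mathrm{O}(\delta_{\mathbf{S}})}\exp(-\kappa|\x-\w|/\N)$ at every site, the edge included.

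Be aware, though, that this works only because $\wt{g}_{1}$ also serves as the site-$1$ value seen by $(\Delta g)_{2}$. If $(\Delta g)_{2}$ is meant with the true $g_{1}$, the discrepancy $\N\mathbf{H}_{\x,2}(g_{1}-\wt{g}_{1})$ is, up to harmless terms, your bare term again. It is not deterministically small for a general $\mathfrak{f}_{0}$ obeying \eqref{eq:msheIIff}. Under that reading, no bookkeeping produces the $\mathrm{Err}$ form, and that piece would have to be accounted for separately, as an order-$\N$ edge-localized term of the type in \eqref{eq:sdeI5}.
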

\begin{proof}
Extend the function {\small$\x\mapsto\mathfrak{f}_{0}[\tau_{\x}\bphi_{\t}]$} from {\small$\x\in\llbracket2,\infty\rrbracket$} to {\small$\x\in\llbracket0,\infty\rrbracket$} by {\small$\Delta_{\mathbf{A}}\mathfrak{f}_{0}[\tau_{\x}\bphi_{\t}]=0$} for {\small$\x=0,1$}. (This can be done since it requires solving a linear system of two equation in two variables, conditioning on the values {\small$\mathfrak{f}_{0}[\tau_{\x}\bphi_{\t}]$}.) Then, we have {\small$[\mathbf{e}^{\mathfrak{t}_{\N}\mathscr{T}_{\N}}(\mathbf{1}_{\cdot\in\llbracket2,\infty\rrbracket}\cdot\N\Delta\mathfrak{f}_{0}[\tau_{\cdot}\bphi_{\t}]\mathbf{Z}^{\N}_{\t,\cdot})]_{\x}=[\mathbf{e}^{\mathfrak{t}_{\N}\mathscr{T}_{\N}}(\N\Delta_{\mathbf{A}}\mathfrak{f}_{0}[\tau_{\cdot}\bphi_{\t}]\mathbf{Z}^{\N}_{\t,\cdot})]_{\x}$}. However, {\small$\mathscr{T}_{\N}$} is a constant multiple of {\small$\Delta_{\mathbf{A}}$}, thus the {\small$\mathscr{T}_{\N}$}-semigroup commutes with {\small$\Delta_{\mathbf{A}}$}, and we get {\small$[\mathbf{e}^{\mathfrak{t}_{\N}\mathscr{T}_{\N}}(\mathbf{1}_{\cdot\in\llbracket2,\infty\rrbracket}\cdot\N\Delta\mathfrak{f}_{0}[\tau_{\cdot}\bphi_{\t}]\mathbf{Z}^{\N}_{\t,\cdot})]_{\x}=\N\Delta_{\mathbf{A}}[\mathbf{e}^{\mathfrak{t}_{\N}\mathscr{T}_{\N}}(\mathfrak{f}_{0}[\tau_{\cdot}\bphi_{\t}]\mathbf{Z}^{\N}_{\t,\cdot})]_{\x}$}. We now expand the last quantity as follows, where {\small$\Delta_{\mathbf{A}}$} acts with respect to {\small$\x$}:
\begin{align}
\N\Delta_{\mathbf{A}}[\mathbf{e}^{\mathfrak{t}_{\N}\mathscr{T}_{\N}}(\mathfrak{f}_{0}[\tau_{\cdot}\bphi_{\t}]\mathbf{Z}^{\N}_{\t,\cdot})]_{\x}=\sum_{\w\in\llbracket0,\infty\rrbracket}\N\Delta_{\mathbf{A}}\mathbf{H}^{\N,\mathbf{A}}_{0,\mathfrak{t}_{\N},\x,\w}\mathfrak{f}_{0}[\tau_{\w}\bphi_{\t}]\mathbf{Z}^{\N}_{\t,\w}.
\end{align}
By the heat kernel estimates in Proposition \ref{prop:hkestimates}, we get {\small$|\N\Delta_{\mathbf{A}}\mathbf{H}^{\N,\mathbf{A}}_{0,\mathfrak{t}_{\N},\x,\w}|\lesssim_{\kappa}\N^{-2}\mathfrak{t}_{\N}^{-3}\exp(-\frac{\kappa|\x-\w|}{\N})$} for all {\small$\kappa,\x,\w$}, at which point we can plug the bound \eqref{eq:msheIIff} for {\small$\mathfrak{f}_{0}$}, use {\small$\mathfrak{t}_{\N}=\N^{-\delta_{\mathbf{S}}}$}, and use {\small$\mathbf{Z}^{\N}=\mathbf{R}^{\N}\mathbf{S}^{\N}$} to finish the proof.
\end{proof}
\subsubsection{Identification of coefficients}
We proceed towards step (3) after \eqref{eq:sdebasic1}-\eqref{eq:sdebasic8}. For this, we require a list of functions that are bulk-admissible and another of edge-admissible ones. We start with the following lemma, which collects a list of functions in jet spaces and is taken directly from Lemma 5.4 of \cite{Y26PTRF}:
\begin{lemma}\label{lemma:classify}
\fsp Recall the notation in \eqref{eq:msheII} to \eqref{eq:msheIIff}. We have the following (where {\small$\mathsf{F}$} is a generic local function):
\begin{enumerate}
\item The function {\small$\mathscr{Q}[\bphi]$} is in {\small$\mathrm{Jet}_{2}^{\perp}$}.
\item The function {\small$\mathscr{V}'[{\boldsymbol{\phi}}_{1}]=\mathscr{U}'[\bphi_{1}]-\alpha\bphi_{1}$} is in {\small$\mathrm{Jet}_{1}^{\perp}$}.
\item The function {\small$\mathscr{U}'[\bphi_{1}]\bphi_{1}-1$} is in {\small$\mathrm{Jet}_{1}^{\perp}$}.
\item The function {\small$\alpha{\boldsymbol{\phi}}_{1}^{2}-1$} is in {\small$\mathrm{Jet}_{0}^{\perp}$}. 
\item The function {\small$\mathscr{U}'[\bphi_{1}]$} is in {\small$\mathrm{Jet}_{0}^{\perp}$}
\item Any function of the form {\small$\mathsf{F}[{\boldsymbol{\phi}}]-\E^{0}\mathsf{F}[{\boldsymbol{\phi}}]$} is in {\small$\mathrm{Jet}_{0}^{\perp}$}.
\item Any function of the form {\small$\mathsf{F}[\boldsymbol{\phi}]-\mathsf{F}[\tau_{\ell}\boldsymbol{\phi}]$} is in {\small$\mathrm{Jet}_{2}^{\perp}$}, where {\small$\ell$} is any integer so that {\small$\mathsf{F}[\tau_{\ell}\bphi]$} has support in {\small$\llbracket1,\infty\rrbracket$}.
\item The function {\small$\mathscr{G}[\bphi]$} is in {\small$\mathrm{Jet}_{1}^{\perp}$}.
\item The function {\small$\mathscr{G}[\bphi]\bphi_{1}-1$} is in {\small$\mathrm{Jet}_{0}^{\perp}$}.
\item The function {\small$\mathbf{F}_{\geq3}[\bphi]$} is in {\small$\mathrm{Jet}_{2}^{\perp}$}.
\item For any {\small$\w,\w'\in\llbracket1,\infty\rrbracket$}, the function {\small$\mathbf{F}_{\geq3}[\bphi]\bphi_{\w}$} is in {\small$\mathrm{Jet}_{1}^{\perp}$}, and {\small$\mathbf{F}_{\geq3}[\bphi]\bphi_{\w}\bphi_{\w'}$} is in {\small$\mathrm{Jet}_{0}^{\perp}$}.
\end{enumerate}
\end{lemma}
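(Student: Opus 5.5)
The plan is to compute every expectation explicitly under the product measure $\mathbb{P}^{\sigma}$ from Definition \ref{definition:gc}, using two identities. The first is the Gaussian-type integration by parts for the one-site marginal: for nice $g$, $\E^{\sigma}(\mathscr{U}'[\bphi_{\x}]g[\bphi_{\x}])=\E^{\sigma}g'[\bphi_{\x}]+\upsilon_{\sigma}\E^{\sigma}g[\bphi_{\x}]$. This holds because the density is $\propto\exp(-\mathscr{U}+\upsilon_{\sigma}\bphi)$ and Assumption \ref{assump:potential} gives enough decay to drop boundary terms. The second is independence of distinct sites. With $g\equiv1$ the first identity gives $\E^{\sigma}\mathscr{U}'[\bphi_{1}]=\upsilon_{\sigma}$. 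With $g(\a)=\a$ it gives $\E^{\sigma}\mathscr{U}'[\bphi_{1}]\bphi_{1}=1+\sigma\upsilon_{\sigma}$.

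From $\E^{\sigma}\mathscr{U}'[\bphi_{1}]=\upsilon_{\sigma}$ we get $\alpha=\partial_{\sigma}\upsilon_{\sigma}|_{\sigma=0}$, and $\upsilon_{0}=0$ by assumption. Differentiating the tilt shows $\partial_{\sigma}\upsilon_{\sigma}=\mathrm{Var}^{\sigma}(\bphi_{1})^{-1}$, so $\alpha\,\E^{0}\bphi_{1}^{2}=1$. The same computation applied to \eqref{eq:beta} gives $\E^{\sigma}\mathbf{F}_{2}[\tau_{\mathrm{M}}\bphi]=\beta_{2}\upsilon_{\sigma}^{2}$. Hence $\beta=\beta_{2}\alpha^{2}$ and $\lambda=\beta_{2}\alpha$. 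Everything else is bookkeeping of Taylor coefficients at $\sigma=0$.

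The items then go as follows.
\begin{enumerate}
\item[(5)] $\E^{\sigma}\mathscr{U}'[\bphi_{1}]=\upsilon_{\sigma}$, which vanishes at $\sigma=0$.
\item[(2)] $\E^{\sigma}\mathscr{V}'[\bphi_{1}]=\upsilon_{\sigma}-\alpha\sigma$ vanishes with its first derivative at $\sigma=0$.
\item[(3)] $\E^{\sigma}(\mathscr{U}'[\bphi_{1}]\bphi_{1}-1)=\sigma\upsilon_{\sigma}$ vanishes to first order.
\item[(4)] $\alpha\E^{0}\bphi_{1}^{2}-1=0$ by the variance identity.
\item[(6)] This is immediate.
\item[(7)] $\E^{\sigma}\mathsf{F}[\tau_{\ell}\bphi]=\E^{\sigma}\mathsf{F}[\bphi]$ for every $\sigma$, since $\mathbb{P}^{\sigma}$ is i.i.d.\ and both supports lie in $\llbracket1,\infty\rrbracket$. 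Hence all derivatives of the difference vanish.
\item[(1)] By independence, $\E^{\sigma}\mathscr{Q}=\frac12\beta_{2}\upsilon_{\sigma}^{2}-\frac12\lambda\sigma\upsilon_{\sigma}$, which vanishes with its first derivative. Its second derivative at $0$ is $\beta_{2}\alpha^{2}-\lambda\alpha=0$ by $\lambda=\beta_{2}\alpha$.
\item[(10)] Each monomial in $\mathbf{F}_{\geq3}$ is a product of $\d\geq3$ factors $\mathscr{U}'$ at distinct sites, so its expectation is $\upsilon_{\sigma}^{\d}=\mathrm{O}(\sigma^{3})$.
\item[(11)] Multiplying by $\bphi_{\w}$ affects at most one factor. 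This produces either $\upsilon_{\sigma}^{\d-1}(1+\sigma\upsilon_{\sigma})$ or $\upsilon_{\sigma}^{\d}\sigma$, which is $\mathrm{O}(\sigma^{2})$. Multiplying by $\bphi_{\w}\bphi_{\w'}$ affects at most two factors, leaving at least $\upsilon_{\sigma}^{\d-2}$, so the expectation is $\mathrm{O}(\sigma)$.
\end{enumerate}

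For (8), independence and the site labels give
\[
\E^{\sigma}\mathscr{G}=\E^{\sigma}(\bphi_{2}\mathscr{U}'[\bphi_{2}])\,(\upsilon_{\sigma}-\upsilon_{\sigma})+\sigma(\upsilon_{\sigma}^{2}-\upsilon_{\sigma}^{2})=0
\]
identically in $\sigma$, which is more than membership in $\mathrm{Jet}_{1}^{\perp}$. For (9), at $\sigma=0$ the first term of $\mathscr{G}[\bphi]\bphi_{1}$ contributes
\[
\E^{0}(\bphi_{2}\mathscr{U}'[\bphi_{2}])\,\big(\E^{0}(\mathscr{U}'[\bphi_{1}]\bphi_{1})-\E^{0}\bphi_{1}\,\E^{0}\mathscr{U}'[\bphi_{3}]\big)=1\cdot(1-0)=1.
\]
The second term contains the independent mean-zero factor $\bphi_{1}$ under $\mathbb{P}^{0}$, so it contributes $0$. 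Hence $\E^{0}(\mathscr{G}[\bphi]\bphi_{1}-1)=0$.

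The only place requiring care is item (1). There the second-order cancellation depends on the exact normalization $\lambda=\alpha^{-1}\beta$ with $\beta$ as in \eqref{eq:beta}. So the main step is to verify correctly that $\partial_{\sigma}^{2}\E^{\sigma}\mathbf{F}_{2}|_{\sigma=0}=2\beta_{2}\alpha^{2}$, using that the three products in $\mathbf{F}_{2}$ each involve two distinct sites. One must also keep track of the same-site term $\mathscr{U}'[\bphi_{0}]\bphi_{0}$, whose second $\sigma$-derivative is $2\alpha$ rather than $\alpha^{2}$. Beyond that, I would justify exchanging $\partial_{\sigma}$ with $\E^{\sigma}$ via the uniform convexity in Assumption \ref{assump:potential}: it makes $\sigma\mapsto\upsilon_{\sigma}$ smooth, gives Gaussian tails, and polynomial growth of the integrands then suffices.
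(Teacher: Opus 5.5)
Your computation is correct in every item. The one-site identity $\E^{\sigma}(\mathscr{U}'[\bphi_{\x}]g[\bphi_{\x}])=\E^{\sigma}g'[\bphi_{\x}]+\upsilon_{\sigma}\E^{\sigma}g[\bphi_{\x}]$ and the variance formula $\partial_{\sigma}\upsilon_{\sigma}=\mathrm{Var}^{\sigma}(\bphi_{1})^{-1}$ give the normalizations $\beta=\beta_{2}\alpha^{2}$ and $\lambda=\beta_{2}\alpha$. These yield exactly the cancellations you claim, including the second-order one $\beta_{2}\alpha^{2}-\lambda\alpha=0$ in item (1).

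The paper does not prove this lemma itself; it imports it directly from Lemma 5.4 of \cite{Y26PTRF}. Your argument is therefore a self-contained substitute for that citation. It uses the same ingredients the paper uses in the proof of Lemma \ref{lemma:classifyedge}, namely the identity \eqref{eq:classifybdry1} and $\E^{0}\bphi_{1}^{2}=\alpha^{-1}$. What it gains over the citation is transparency: it shows that item (1) is exactly where the normalization of $\beta$ in \eqref{eq:beta}, and hence of $\lambda$, is forced, and that item (8) holds identically in $\sigma$, not just to first order.

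One point you should state explicitly. The functions $\mathscr{Q}$ and $\mathbf{F}_{\geq3}$ involve sites $\leq0$, so you are computing under the extension of $\mathbb{P}^{\sigma}$ to $\R^{\Z}$, which Definition \ref{definition:gc} permits. You are not using the half-space convention $\mathscr{U}'[\bphi_{\x}]\equiv0$ for $\x\leq0$; under that convention $\mathscr{Q}[\bphi]$ would collapse to the constant $\tfrac12\lambda$ and item (1) would fail. The full-space reading is the intended one. The paper only evaluates these functions at shifts supported in $\llbracket1,\infty\rrbracket$ (Notation \ref{notation:mshe}, item (1)), and there shift-invariance of the i.i.d.\ measure makes the two readings agree.
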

Lemma \ref{lemma:classify} and direct inspection (of supports and a priori estimates for functions on {\small$\R^{\llbracket1,\infty\rrbracket}$}) imply that the coefficients appearing in the {\small$\mathfrak{e}_{\ell}$}-terms for {\small$\ell=0,1,2$} (see Notation \ref{notation:mshe}) are all bulk-admissible; we will return to this in more detail when it becomes more relevant. We now present a list of functions that are edge-admissible functions, which is new to this paper and does not involve any support conditions.
\begin{lemma}\label{lemma:classifyedge}
\fsp Recall the {Robin parameter {\small$\mathbf{A}=-\frac12\alpha^{-1}\lambda$} from \eqref{eq:mainII}}. We have the following:
\begin{enumerate}
\item The function {\small$\bphi\mapsto\mathbf{F}[\bphi]+\lambda^{2}-\frac12\alpha\lambda^{2}\bphi_{1}^{2}+\alpha\lambda\mathbf{A}$} is edge-admissible.
\item The function {\small$\bphi\mapsto\mathscr{U}'[\bphi_{1}]\bphi_{1}-\alpha\bphi_{1}^{2}=\mathscr{V}'[\bphi_{1}]\bphi_{1}$} is edge-admissible.
\item The function {\small$\bphi\mapsto\mathscr{U}'[\bphi_{1}]-\bphi_{1}$} is edge-admissible.
\end{enumerate}
\end{lemma}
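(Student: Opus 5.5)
The plan is to check the three conditions of Definition \ref{definition:edgeadmissible} for each function. Conditions (2) and (3) are immediate. Each function depends only on $\bphi_{\w}$ for $\w\in\llbracket1,\mathrm{O}(1)\rrbracket$: for item (1), only $\llbracket1,\deg\rrbracket$ contributes, since $\mathscr{U}'[\bphi_{\w}]\equiv0$ for $\w\leq0$ by convention. By Assumption \ref{assump:potential}, $|\mathscr{U}'[a]|\lesssim1+|a|$ and $|\mathscr{U}''|\lesssim1$, because $\mathscr{U}'[0]$ is a fixed constant. So each function is a polynomial-type expression in the $\mathscr{U}'[\bphi_{\w}]$ and $\bphi_{\w}$, and its size and first derivatives are bounded by $1+\sum_{|\w|\lesssim1}|\bphi_{\w}|^{\mathrm{C}}$. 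This is exactly \eqref{eq:bulkadmissibleI} at $\x=0$.

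The real content is condition (1), namely $\E^{0}\mathfrak{b}=0$. I would prove it with two one-site identities under $\mathbb{P}^{\sigma}$, obtained by integrating by parts against the density $\boldsymbol{\mathcal{Z}}^{-1}\exp(-\mathscr{U}[a]+\upsilon_{\sigma}a)$:
\begin{align*}
\E^{\sigma}\mathscr{U}'[\bphi_{1}]=\upsilon_{\sigma},\qquad \E^{\sigma}(\mathscr{U}'[\bphi_{1}]-\upsilon_{\sigma})\bphi_{1}=1.
\end{align*}
Boundary terms vanish by uniform convexity. Since $\upsilon_{0}=0$, these give $\E^{0}\mathscr{U}'[\bphi_{1}]=0$ and $\E^{0}\mathscr{U}'[\bphi_{1}]\bphi_{1}=1$. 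Next I identify $\alpha$. By \eqref{eq:alpha}, $\alpha=\partial_{\sigma}\upsilon_{\sigma}|_{\sigma=0}$. Differentiating the defining relation $\E^{\sigma}\bphi_{1}=\sigma$ in $\sigma$ gives $\partial_{\sigma}\upsilon_{\sigma}\cdot\mathrm{Var}^{\sigma}(\bphi_{1})=1$. Since $\E^{0}\bphi_{1}=0$, this yields $\alpha=(\E^{0}\bphi_{1}^{2})^{-1}$.

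With these identities the three items follow.
\begin{itemize}
\item Item (3): $\E^{0}(\mathscr{U}'[\bphi_{1}]-\bphi_{1})=0-0=0$.
\item Item (2): $\E^{0}(\mathscr{U}'[\bphi_{1}]\bphi_{1}-\alpha\bphi_{1}^{2})=1-\alpha\E^{0}\bphi_{1}^{2}=0$.
\item Item (1): first I show $\E^{0}\mathbf{F}[\bphi]=0$. In \eqref{eq:nl} evaluated at the edge, every term containing some $\mathscr{U}'[\bphi_{\w}]$ with $\w\leq0$ vanishes identically. Every surviving term is a product of $\mathscr{U}'[\bphi_{\w}]$ over distinct sites in $\llbracket1,\deg\rrbracket$; for example, $\tfrac13\beta_{2}\mathscr{U}'[\bphi_{1}]\mathscr{U}'[\bphi_{2}]$ and $\prod_{\ell\leq\d}\mathscr{U}'[\bphi_{\ell}]$. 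Since $\mathbb{P}^{0}$ is a product measure and each factor has mean zero, each such term has mean zero. The remaining constants then give
\begin{align*}
\lambda^{2}-\tfrac12\alpha\lambda^{2}\E^{0}\bphi_{1}^{2}+\alpha\lambda\mathbf{A}=\lambda^{2}-\tfrac12\lambda^{2}-\tfrac12\lambda^{2}=0,
\end{align*}
using $\alpha\E^{0}\bphi_{1}^{2}=1$ and $\alpha\lambda\mathbf{A}=-\tfrac12\lambda^{2}$ from \eqref{eq:mainII}.
\end{itemize}

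The main point to handle carefully is the identification $\alpha=(\E^{0}\bphi_{1}^{2})^{-1}$, which requires differentiating in $\sigma$ under the integral. This is justified because the measures $\mathbb{P}^{\sigma}$ form a smooth exponential family with all moments finite, again by uniform convexity. Item (1) is also where the specific value \eqref{eq:mainII} of $\mathbf{A}$ is forced: this computation is exactly the Robin renormalization described in Section \ref{section:method}.
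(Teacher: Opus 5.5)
Your proof is correct and follows essentially the same route as the paper: the mean-zero property comes from the one-site integration-by-parts identity $\E^{0}(\mathscr{U}'[\bphi_{\x}]\mathsf{F})=\E^{0}(\partial_{\bphi_{\x}}\mathsf{F})$, from independence under the product measure $\mathbb{P}^{0}$ (which kills $\E^{0}\mathbf{F}$), and from the identity $\alpha\E^{0}\bphi_{1}^{2}=1$. The only differences are minor. You derive $\alpha=(\E^{0}\bphi_{1}^{2})^{-1}$ directly by differentiating $\E^{\sigma}\bphi_{1}=\sigma$, whereas the paper cites point (4) of Lemma \ref{lemma:classify}. You also spell out the support and polynomial-bound conditions of edge-admissibility, which the paper leaves implicit.
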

\begin{proof}
To prove point (1), we observe that {\small$\E^{0}\mathbf{F}[\bphi]=0$}. This follows by \eqref{eq:nl}, the property of {\small$\mathbb{P}^{0}$} being a product measure over sites in {\small$\llbracket1,\infty\rrbracket$}, and {\small$\E^{0}\mathscr{U}'[\bphi_{\x}]=0$} as shown above. On the other hand, we get {\small$\E^{0}\bphi_{1}^{2}=\alpha^{-1}$} by point (4) in Lemma \ref{lemma:classify}. This yields point (1).

To show (2), we first note that the following generalization of Gaussian-integration-by-parts to non-quadratic potentials {\small$\mathscr{U}$} holds for any smooth and local function {\small$\mathsf{F}$} and any {\small$\x\in\llbracket1,\infty\rrbracket$}:
\begin{align}
\E^{0}(\mathscr{U}'[\bphi_{\x}]\mathsf{F}[\bphi])=\E^{0}(\partial_{\bphi_{\x}}\mathsf{F}[\bphi]).\label{eq:classifybdry1}
\end{align}
If we plug in {\small$\mathsf{F}[\bphi]=\bphi_{\x}$} and set {\small$\x=1$}, then we obtain {\small$\E^{0}(\mathscr{U}'[\bphi_{\x}]\bphi_{\x})=1$}. Combining this with {\small$\E^{0}\bphi_{1}^{2}=\alpha^{-1}$} (see point (4) in Lemma \ref{lemma:classify}) yields point (2). To prove (3), we use \eqref{eq:classifybdry1} with {\small$\mathsf{F}\equiv1$} and {\small$\x=1$} to get {\small$\E^{0}(\mathscr{U}'[\bphi_{1}])=0$}. Combining this with {\small$\E^{0}\bphi_{1}=0$} (which follows by construction in Definition \ref{definition:gc}), we get point (3). To prove (4), we use again that {\small$\mathbf{F}$} is edge-admissible. Moreover, using \eqref{eq:classifybdry1} with {\small$\mathsf{F}[\bphi_{\x}]=\bphi_{\x}$}, we obtain that {\small$\mathscr{U}'[\bphi_{\x}]\bphi_{\x}-1$} is edge-admissible. This completes the proof.
\end{proof}
The next step is to combine Lemmas \ref{lemma:triv}, \ref{lemma:sbp}, \ref{lemma:lap}, \ref{lemma:classify}, and \ref{lemma:classifyedge} to organize the \abbr{SDE} \eqref{eq:sdebasic1}-\eqref{eq:sdebasic8} into a form that more closely resembles the desired \abbr{SDE} \eqref{eq:sdeI1}-\eqref{eq:sdeI5} (but without all the averaging of terms yet).
\begin{lemma}\label{lemma:sdepre}
\fsp Recall the notation from Proposition \ref{prop:sde}. We have
\begin{align}
&\d\mathbf{S}^{\N}_{\t,\x}=\mathscr{T}_{\N}\mathbf{S}^{\N}_{\t,\x}\d\t+[\mathbf{e}^{\mathfrak{t}_{\N}\mathscr{T}_{\N}}(\sqrt{2}\lambda\N^{\frac12}\mathbf{R}^{\N}_{\t,\cdot}\mathbf{S}^{\N}_{\t,\cdot}\d\mathbf{b}_{\t,\cdot})]_{\x}+\mathrm{Err}[\mathbf{R}^{\N}\mathbf{S}^{\N}]_{\t,\x}\d\t\label{eq:sdepreI1}\\
&+\sum_{\ell\in\llbracket1,\mathrm{L}_{1}\rrbracket}\sum_{\w\in\llbracket0,\infty\rrbracket}\boldsymbol{\mathcal{S}}^{(\ell)}_{\x,\w}\cdot\N\mathfrak{q}_{\ell,\w}[\bphi_{\t}]\mathbf{Z}^{\N}_{\t,\w}+\sum_{\ell\in\llbracket1,\mathrm{L}_{2}\rrbracket}\sum_{\w\in\llbracket0,\infty\rrbracket}\boldsymbol{\mathcal{K}}^{\partial,\ell}_{\x,\w}\cdot\N\mathfrak{b}_{\ell}[\bphi_{\t}]\mathbf{Z}^{\N}_{\t,\w}\d\t.\label{eq:sdepreI3}
\end{align}
The kernels {\small$\boldsymbol{\mathcal{S}}^{(\ell)}$} satisfy the following upgraded version of \eqref{eq:sdeIII} to higher regularity with respect to the close-to-macroscopic length-scale {\small$\N^{1-\delta_{\mathbf{S}}}$}. In particular, it is deterministic and satisfies the following estimate for any {\small$|\mathfrak{l}_{1}|,|\mathfrak{l}_{2}|\lesssim1$}, any {\small$\kappa>0$}, and any non-negative integers {\small$\a,\b\lesssim1$}, in which {\small$\mathrm{C}\lesssim1$} and {\small$\delta_{\mathbf{S}}>0$} is small:
\begin{align}
|(\grad^{\mathbf{X}}_{\mathfrak{l}_{1}})^{\a}(\grad^{\mathbf{X}}_{\mathfrak{l}_{2}})^{\b}\boldsymbol{\mathcal{S}}^{(\ell)}_{\x,\w}|\lesssim\N^{-1-(\mathrm{a}+\mathrm{b})+(\mathrm{a}+\mathrm{b}+\mathrm{C})\delta_{\mathbf{S}}}\exp(-\tfrac{\kappa|\x-\w|}{\N}).\label{eq:sdepreI}
\end{align}
(The first iterated discrete gradient acts on the {\small$\x$}-variable, and the second acts on the {\small$\w$}-variable; we clarify that we will only restrict to {\small$\mathfrak{l}_{1},\mathfrak{l}_{2},\x,\w,\a,\b$} such that the \abbr{LHS} only involves values of {\small$\boldsymbol{\mathcal{S}}^{(\ell)}$} on {\small$\llbracket0,\infty\rrbracket\times\llbracket0,\infty\rrbracket$}.)
\end{lemma}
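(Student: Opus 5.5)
The plan is to start from \eqref{eq:sdebasic1}--\eqref{eq:sdebasic8} and sort every term into one of three groups: the first line of \eqref{eq:sdepreI1}, an $\mathrm{Err}$-term, or a bulk/edge admissible coefficient multiplied by a deterministic kernel.

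\textbf{Step 1: easy error terms.} By Lemma \ref{lemma:triv}, \eqref{eq:sdebasic8} is of $\mathrm{Err}$-type. The noise term with the cutoff $\mathbf{1}^{(\x)}$ differs from the uncut noise term in \eqref{eq:sdepreI1} only by a kernel of size $\N^{-\mathrm{D}}$, so it can be exchanged freely. By Lemma \ref{lemma:lap}, the $\N\Delta\mathfrak{f}_{0}$ part of $\mathfrak{e}$ is also of $\mathrm{Err}$-type.

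The terms in \eqref{eq:sdebasic7} and the $\N^{-1/2}\mathfrak{f}_{3}$ term \eqref{eq:msheIIfc} are bounded directly. They carry pointwise kernel bounds $\N^{1/2}\mathbf{H}^{\N,\mathbf{A}}_{0,\mathfrak{t}_{\N},\x,\w}\mathbf{1}_{\w=0,1}\lesssim \N^{-1/2+\delta_{\mathbf{S}}/2}$ and $\N^{-1/2}\mathbf{H}^{\N,\mathbf{A}}_{0,\mathfrak{t}_{\N},\x,\w}\lesssim\N^{-3/2+\delta_{\mathbf{S}}/2}$, both with exponential off-diagonal decay (Proposition \ref{prop:hkestimates}). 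These fit into \eqref{eq:sdeII2}.

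The gradient terms \eqref{eq:msheIIfa}--\eqref{eq:msheIIfb} are handled with Lemma \ref{lemma:sbp}. Moving the gradient onto $\mathbf{H}^{\N,\mathbf{A}}$ produces a factor $\N^{-1+\delta_{\mathbf{S}}/2}$ in the bulk, giving a total kernel of size $\N^{-3/2+\mathrm{C}\delta_{\mathbf{S}}}\le\N^{-6/5}$. The boundary terms sit at $\w\in\llbracket0,1\rrbracket$ and have size $\N^{-1/2+\delta_{\mathbf{S}}/2}\le\N^{-1/3}$.

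\textbf{Step 2: the $\N^{3/2}$ terms and the Neumann cancellation.} This is the key step. Apply Lemma \ref{lemma:sbp} to $\tfrac12\lambda\N^{3/2}\grad^{\mathbf{X}}_{1}(\mathscr{V}'[\bphi_{\t,\cdot}]\mathbf{Z}^{\N})$ and $-\tfrac12\lambda\N^{3/2}\grad^{\mathbf{X}}_{-1}(\mathscr{V}'[\bphi_{\t,\cdot+1}]\mathbf{Z}^{\N})$, which together range over $\llbracket1,\infty\rrbracket$ by combining \eqref{eq:sdebasic2} with \eqref{eq:msheII(1)a}.

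\emph{Bulk part.} The result has kernel $\tfrac12\lambda\N^{3/2}\grad^{\mathbf{X}}_{\mp1}\mathbf{H}^{\N,\mathbf{A}}$ multiplying $\mathscr{V}'\mathbf{Z}^{\N}$. I rewrite this as $\N\cdot\boldsymbol{\mathcal{S}}\cdot\mathfrak{q}$, where $\mathfrak{q}=\N^{-1/2}\mathscr{V}'\in\N^{-1+1/2}\mathrm{Jet}_{1}^{\perp}$ (Lemma \ref{lemma:classify}(2)) and $\boldsymbol{\mathcal{S}}=\tfrac12\lambda\N\grad\mathbf{H}^{\N,\mathbf{A}}$. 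Then $|\boldsymbol{\mathcal{S}}|\lesssim\N^{-1+\mathrm{C}\delta_{\mathbf{S}}}$, and each further gradient costs $\N^{-1+\delta_{\mathbf{S}}/2}$, which is exactly \eqref{eq:sdepreI}.

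\emph{Boundary part.} The boundary terms are $-\tfrac12\lambda\N^{3/2}\mathbf{H}_{\x,1}\mathscr{V}'[\bphi_{1}]\mathbf{Z}^{\N}_{1}$ and $-\tfrac12\lambda\N^{3/2}\mathbf{H}_{\x,0}\mathscr{V}'[\bphi_{1}]\mathbf{Z}^{\N}_{0}$. Together with \eqref{eq:sdebasic5}, which equals $\lambda\N^{3/2}\mathbf{H}_{\x,0}\mathscr{V}'[\bphi_1]\mathbf{Z}^{\N}_0$, they combine to
\[
\tfrac12\lambda\N^{3/2}\mathscr{V}'[\bphi_{1}]\big(\mathbf{H}_{\x,0}\mathbf{Z}^{\N}_{0}-\mathbf{H}_{\x,1}\mathbf{Z}^{\N}_{1}\big).
\]
Write $\mathbf{H}_{\x,0}-\mathbf{H}_{\x,1}=\mathrm{O}(\N^{-1})\mathbf{H}$, using the Robin condition plus a gradient bound. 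Also expand $\mathbf{Z}^{\N}_{1}-\mathbf{Z}^{\N}_{0}=\lambda\N^{-1/2}\bphi_{1}\mathbf{Z}^{\N}_{0}+\mathrm{O}(\N^{-1})$ by Taylor expansion of \eqref{eq:ch}. This leaves $-\tfrac12\lambda^{2}\N\mathscr{V}'[\bphi_{1}]\bphi_{1}\mathbf{H}_{\x,0}\mathbf{Z}^{\N}_0$, which is $\N$ times the edge-admissible function of Lemma \ref{lemma:classifyedge}(2), plus $\mathrm{Err}$-terms at the edge of size $\N^{1/2}\mathbf{H}\lesssim\N^{-1/3}$.

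The same procedure applies to the other gradient-carrying terms of $\mathfrak{e}$, namely \eqref{eq:msheII(0)a3}--\eqref{eq:msheII(0)d}. These are only order $\N$, so their boundary terms are $\N\mathbf{H}\lesssim \N^{\delta_{\mathbf{S}}/2}$ times functions at $\w=0,1$. I recentre each such function by subtracting its $\E^{0}$-mean, which makes it edge-admissible. The leftover constants are collected together with \eqref{eq:sdebasic4} and \eqref{eq:sdebasic6}.

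\textbf{Step 3: identification and constants.} All remaining non-gradient bulk terms of $\mathfrak{e}_{0},\mathfrak{e}_{1},\mathfrak{e}_{2}$ are bulk-admissible by Lemma \ref{lemma:classify}, with kernel $\boldsymbol{\mathcal{S}}=\mathbf{H}^{\N,\mathbf{A}}$ times $\N^{-1}$ powers; they have the oriented supports already observed after Notation \ref{notation:mshe}. The single-site terms \eqref{eq:sdebasic4} and \eqref{eq:sdebasic6} are recentred to be edge-admissible via Lemma \ref{lemma:classifyedge}(1),(3) and \eqref{eq:classifybdry1}; the kernel is $\N\mathbf{H}_{\x,\w}\mathbf{1}_{\w=0,1}$ divided by $\N$, consistent with \eqref{eq:sdeIV}.

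\emph{Main obstacle.} The order-$\N$ constants produced along the way (from $\lambda^{2}$, $\E^0\bphi_1^2=\alpha^{-1}$, and the boundary terms of Step 2) must sum to exactly zero. Any nonzero remainder would be a non-fluctuating $\N\mathbf{H}_{\x,0}\mathbf{Z}^{\N}_{0}$ term, which would shift the Robin parameter. The choice $\mathbf{A}=-\tfrac12\alpha^{-1}\lambda$ in Lemma \ref{lemma:classifyedge}(1) is designed to make this vanish. I would verify it by computing the $\E^{0}$-mean of the total edge coefficient at $\w=0$ and $\w=1$ separately, after also moving the $\w=1$ constants to $\w=0$ at cost $\mathrm{O}(\N^{-1/2})$, and I expect the bookkeeping there to be the delicate part.
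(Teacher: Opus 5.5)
Your proposal is correct and follows the paper's proof essentially step for step. It uses the same sorting of \eqref{eq:sdebasic1}--\eqref{eq:sdebasic8} into $\mathrm{Err}$, bulk-admissible and edge-admissible pieces, the same summation by parts via Lemma \ref{lemma:sbp}, and the same Neumann cancellation leaving the edge-admissible coefficient $-\tfrac12\lambda^{2}\N\mathscr{V}'[\bphi_{1}]\bphi_{1}$. Your tracking of the residual $\mathbf{H}_{\x,0}-\mathbf{H}_{\x,1}$ is in fact more careful than the paper's ``cancels exactly''; note that this residual is just a second-variable gradient of size $\N^{-2+\delta_{\mathbf{S}}}$ by \eqref{eq:hkestimatesIII}, not a consequence of the Robin condition.

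The ``main obstacle'' you flag is already settled by the lemmas you cite, because each edge coefficient is separately $\E^{0}$-mean-zero:
\begin{itemize}
\item at $\w=0$ by Lemma \ref{lemma:classifyedge}(1), since $\lambda^{2}-\tfrac12\lambda^{2}+\alpha\lambda\mathbf{A}=0$;
\item at $\w=1$ by $\E^{0}\mathbf{F}=0$ and \eqref{eq:classifybdry1};
\item for the summation-by-parts boundary terms of the order-$\N$ gradient terms, because their coefficients are bulk-admissible and hence mean-zero.
\end{itemize}
So no constants survive, and no bookkeeping across sites is needed.
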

Before we start the proof, we make a clarifying point. The kernels {\small$\boldsymbol{\mathcal{S}}^{(\ell)}$} and {\small$\boldsymbol{\mathcal{K}}^{\partial,\ell}$} are ultimately {\small$\mathbf{H}^{\N,\mathbf{A}}$}-kernels at time-scale {\small$\mathfrak{t}_{\N}=\N^{-\delta_{\mathbf{S}}}$}, possibly with a finite number of length-{\small$\mathrm{O}(1)$} discrete gradients. The necessary estimates \eqref{eq:sdeIV} and \eqref{eq:sdepreI} will thus follow by standard heat kernel estimates in Proposition \ref{prop:hkestimates}. (See Remark \ref{remark:sde}.)
\begin{proof}
Take the \abbr{SDE} \eqref{eq:sdebasic1}-\eqref{eq:sdebasic8}. The first two terms on the \abbr{RHS} match to the \abbr{RHS} of \eqref{eq:sdeI1}. By Lemmas \ref{lemma:classify} and \ref{lemma:classifyedge}, the terms in \eqref{eq:sdebasic4} and \eqref{eq:sdebasic6} can be absorbed by the last term in \eqref{eq:sdepreI3} (as the coefficients are of order {\small$\N$}, are edge-admissible as in Definition \ref{definition:edgeadmissible}, and localized to {\small$0,1\in\llbracket0,\infty\rrbracket$}). The terms in \eqref{eq:sdebasic7} may be absorbed by the {\small$\mathrm{Err}$} term in \eqref{eq:sdepreI1}. The terms in \eqref{eq:sdebasic8} are addressed in Lemma \ref{lemma:triv}. Thus, we are left with the last term in \eqref{eq:sdebasic1}, \eqref{eq:sdebasic2}, and \eqref{eq:sdebasic5}. By Notation \ref{notation:mshe}, this can be separated into the following cases. 
\begin{enumerate}
\item Consider the last term in \eqref{eq:sdebasic1}, and extract from that the quantity {\small$\N^{-1/2}\mathfrak{f}_{3}[\tau_{\x}\bphi_{\t}]\mathbf{Z}^{\N}_{\t,\x}$}; see Notation \ref{notation:mshe}. The contribution of this term is {\small$[\mathbf{e}^{\mathfrak{t}_{\N}\mathscr{T}_{\N}}(\mathbf{1}_{\cdot\in\llbracket2,\infty\rrbracket}\N^{-1/2}\mathfrak{f}_{3}[\tau_{\cdot}\bphi_{\t}]\mathbf{R}^{\N}_{\t,\cdot}\mathbf{S}^{\N}_{\t,\cdot})]]_{\x}$}. Because {\small$\mathfrak{f}_{3}$} satisfies the bound in \eqref{eq:msheIIff}, heat kernel estimates in Proposition \ref{prop:hkestimates} show that this contribution {\small$[\mathbf{e}^{\mathfrak{t}_{\N}\mathscr{T}_{\N}}(\mathbf{1}_{\cdot\in\llbracket2,\infty\rrbracket}\N^{-1/2}\mathfrak{f}_{3}[\tau_{\cdot}\bphi_{\t}]\mathbf{R}^{\N}_{\t,\cdot}\mathbf{S}^{\N}_{\t,\cdot})]]_{\x}$} has the form of {\small$\mathrm{Err}[\mathbf{R}^{\N}\mathbf{S}^{\N}]_{\t,\x}$} from \eqref{eq:sdeII1}-\eqref{eq:sdeII2}, due to the factor of {\small$\N^{-1/2}$}.
\item Consider the last term in \eqref{eq:sdebasic1}; extract from that each term in the formula \eqref{eq:msheII} for {\small$\mathfrak{e}$} which is not attached with a gradient. By Lemma \ref{lemma:classify}, every such term has the form {\small$\mathbf{1}_{\x\in\llbracket2,\infty\rrbracket}\N\mathfrak{q}[\tau_{\x}\bphi_{\t}]\mathbf{Z}^{\N}_{\t,\x}$} with a bulk-admissible {\small$\mathfrak{q}$}. So the contribution of each such term, which is {\small$[\mathbf{e}^{\mathfrak{t}_{\N}\mathscr{T}_{\N}}(\mathbf{1}_{\cdot\in\llbracket2,\infty\rrbracket}\N\mathfrak{q}[\tau_{\cdot}\bphi_{\t}]\mathbf{Z}^{\N}_{\t,\cdot})]_{\x}$}, is absorbed by the first term of \eqref{eq:sdepreI3}. (To be precise, we consider the collection {\small$\mathscr{Q}:=(\mathfrak{q}_{\x})_{\x\in\llbracket0,\infty\rrbracket}$} given by {\small$\mathfrak{q}_{\x}[\bphi]:=\mathbf{1}_{\x\in\llbracket2,\infty\rrbracket}\mathfrak{q}[\tau_{\x}\bphi]$}.)
\item Let us now consider again the last term in \eqref{eq:sdebasic1}, and extract from the formula \eqref{eq:msheII} for {\small$\mathfrak{e}$} the \abbr{RHS} of \eqref{eq:msheII(1)a}. We will combine this with \eqref{eq:sdebasic2} and \eqref{eq:sdebasic5}. Thus, the quantity that we must fit into \eqref{eq:sdepreI1}-\eqref{eq:sdepreI3} is
\begin{align*}
\boldsymbol{\Phi}:=[\mathbf{e}^{\mathfrak{t}_{\N}\mathscr{T}_{\N}}(\mathbf{1}_{\cdot\in\llbracket1,\infty\rrbracket}\cdot\tfrac12\lambda\N^{\frac32}\grad^{\mathbf{X}}_{1}(\mathscr{V}'[\bphi_{\t,\cdot}]\mathbf{Z}^{\N}_{\t,\cdot}))]_{\x}&-[\mathbf{e}^{\mathfrak{t}_{\N}\mathscr{T}_{\N}}(\mathbf{1}_{\cdot\in\llbracket1,\infty\rrbracket}\cdot\tfrac12\lambda\N^{\frac32}\grad^{\mathbf{X}}_{-1}(\mathscr{V}'[\bphi_{\t,\cdot+1}]\mathbf{Z}^{\N}_{\t,\cdot}))]_{\x}\\
&+[\mathbf{e}^{\mathfrak{t}_{\N}\mathscr{T}_{\N}}(\mathbf{1}_{\cdot=0}\cdot\{\lambda\N^{\frac32}\mathscr{V}'[\bphi_{\t,1}]\mathbf{Z}^{\N}_{\t,0}\})]_{\x}.
\end{align*}
We use Lemma \ref{lemma:sbp} to rewrite this as 
\begin{align}
\boldsymbol{\Phi}&=\sum_{\w\in\llbracket0,\infty\rrbracket}\N\grad^{\mathbf{X}}_{-1}\mathbf{H}^{\N,\mathbf{A}}_{0,\mathfrak{t}_{\N},\x,\w}\cdot\mathbf{1}_{\w\in\llbracket2,\infty\rrbracket}\tfrac12\lambda\N^{\frac12}\mathscr{V}'[\bphi_{\t,\w}]\mathbf{Z}^{\N}_{\t,\w}\label{eq:sdepreI1000}\\
&-\sum_{\w\in\llbracket0,\infty\rrbracket}\N\grad^{\mathbf{X}}_{1}\mathbf{H}^{\N,\mathbf{A}}_{0,\mathfrak{t}_{\N},\x,\w}\cdot\tfrac12\lambda\N^{\frac12}\mathscr{V}'[\bphi_{\t,\w+1}]\mathbf{Z}^{\N}_{\t,\w}\nonumber\\
&-\tfrac12\lambda\N^{\frac32}\mathbf{H}^{\N,\mathbf{A}}_{0,\mathfrak{t}_{\N},\x,1}\cdot\mathscr{V}'[\bphi_{\t,1}]\mathbf{Z}^{\N}_{\t,1}-\tfrac12\lambda\N^{\frac32}\mathbf{H}^{\N,\mathbf{A}}_{0,\mathfrak{t}_{\N},\x,0}\cdot\mathscr{V}'[\bphi_{\t,1}]\mathbf{Z}^{\N}_{\t,0}+\lambda\N^{\frac32}\mathbf{H}^{\N,\mathbf{A}}_{0,\mathfrak{t}_{\N},\x,0}\cdot\mathscr{V}'[\bphi_{\t,1}]\mathbf{Z}^{\N}_{\t,0}.\nonumber
\end{align}
By Definition \ref{definition:bulkadmissible} and Lemma \ref{lemma:classify}, both of {\small$\mathbf{1}_{\w\in\llbracket2,\infty\rrbracket}\N^{-1/2}\mathscr{V}'[\bphi_{\t,\w}]$} and {\small$\mathbf{1}_{\w\in\llbracket1,\infty\rrbracket}\N^{-1/2}\mathscr{V}'[\bphi_{\t,\w+1}]$} are bulk-admissible (see Notation \ref{notation:mshe} for the definition of the {\small$\mathscr{V}$} function). Therefore, the sums over {\small$\w\in\llbracket0,\infty\rrbracket$} are absorbed into the first term in \eqref{eq:sdepreI3}. Next, we further unfold the last term in the previous line. First, in the first term in the last line, we replace {\small$\mathbf{Z}^{\N}_{\t,1}$} by {\small$\mathbf{Z}^{\N}_{\t,0}$}. That is, we write
\begin{align*}
-\tfrac12\lambda\N^{\frac32}\mathbf{H}^{\N,\mathbf{A}}_{0,\mathfrak{t}_{\N},\x,1}\cdot\mathscr{V}'[\bphi_{\t,1}]\mathbf{Z}^{\N}_{\t,1}&=-\tfrac12\lambda\N^{\frac32}\mathbf{H}^{\N,\mathbf{A}}_{0,\mathfrak{t}_{\N},\x,1}\cdot\mathscr{V}'[\bphi_{\t,1}]\mathbf{Z}^{\N}_{\t,0}+\tfrac12\lambda\N^{\frac32}\mathbf{H}^{\N,\mathbf{A}}_{0,\mathfrak{t}_{\N},\x,1}\cdot\mathscr{V}'[\bphi_{\t,1}]\grad^{\mathbf{X}}_{-1}\mathbf{Z}^{\N}_{\t,1}.
\end{align*}
We plug this display into the last line of \eqref{eq:sdepreI1000}; observe that the first term on the \abbr{RHS} above cancels exactly with the last two terms in \eqref{eq:sdepreI1000}. (This is the Neumann cancellation from Section \ref{section:method}.) Thus, we are left with the last term in the previous display. To this end, we use the gradient formula from the proof of Lemma \ref{lemma:msheinterior2} to show {\small$\tfrac12\lambda\N^{\frac32}\mathbf{H}^{\N,\mathbf{A}}_{0,\mathfrak{t}_{\N},\x,1}\cdot\mathscr{V}'[\bphi_{\t,1}]\grad^{\mathbf{X}}_{-1}\mathbf{Z}^{\N}_{\t,1}=-\mathbf{H}^{\N,\mathbf{A}}_{0,\mathfrak{t}_{\N},\x,1}\cdot\frac12\lambda^{2}\N\mathscr{V}'[\bphi_{\t,1}]\bphi_{\t,1}\mathbf{Z}^{\N}_{\t,0}+\mathbf{H}^{\N,\mathbf{A}}_{0,\mathfrak{t}_{\N},\x,1}\cdot\N^{\frac12}\mathfrak{f}[\bphi_{\t}]$}, in which {\small$\mathfrak{f}[\bphi]$} satisfies the deterministic estimate \eqref{eq:bulkadmissibleI}. By Lemma \ref{lemma:classifyedge}, the first term on the \abbr{RHS} of this identity may be absorbed by the last term in \eqref{eq:sdepreI3}. Moreover, the last term in this identity is absorbed by the {\small$\mathrm{Err}$}-term in \eqref{eq:sdepreI1}.
\item Now take the other gradient terms in the formula \eqref{eq:msheII} for {\small$\mathfrak{e}$}, starting with terms that have the following form of {\small$\mathbf{1}_{\x\in\llbracket2,\infty\rrbracket}\cdot\N\grad^{\mathbf{X}}_{1}(\mathfrak{q}[\tau_{\x}\bphi_{\t}]\mathbf{Z}^{\N}_{\t,\x})$}, where by Lemma \ref{lemma:classify}, we know that {\small$\mathfrak{q}$} is bulk-admissible. For these terms, the contribution is{\small$[\mathbf{e}^{\mathfrak{t}_{\N}\mathscr{T}_{\N}}(\mathbf{1}_{\cdot\in\llbracket2,\infty\rrbracket}\N\grad^{\mathbf{X}}_{1}(\mathfrak{q}[\tau_{\cdot}\bphi_{\t}]\mathbf{Z}^{\N}_{\t,\cdot})]$}, for which we use a summation-by-parts argument as in Lemma \ref{lemma:sbp}. Ultimately, this produces two terms, one of which moves the gradient onto the {\small$\mathbf{H}^{\N,\mathbf{A}}$}-kernel (and can thus be absorbed by the first term in \eqref{eq:sdepreI3}), and a boundary term that can be absorbed by the second term in \eqref{eq:sdepreI3}. We omit these details (which have already been illustrated). 
\item The only other terms that are left to deal with have the form {\small$\mathbf{1}_{\x\in\llbracket2,\infty\rrbracket}\cdot\N^{1/2}\grad^{\mathbf{X}}_{1}(\mathfrak{f}[\tau_{\x}\bphi_{\t}]\mathbf{Z}^{\N}_{\t,\x})$}, where {\small$\mathfrak{f}$} satisfies the a priori estimate \eqref{eq:msheIIff}. For these terms, the same summation-by-parts argument shows that their contributions {\small$[\mathbf{e}^{\mathfrak{t}_{\N}\mathscr{T}_{\N}}(\mathbf{1}_{\cdot\in\llbracket2,\infty\rrbracket}\N^{1/2}\grad^{\mathbf{X}}_{1}(\mathfrak{f}[\tau_{\cdot}\bphi_{\t}]\mathbf{Z}^{\N}_{\t,\cdot})]$} can be absorbed by the error term in \eqref{eq:sdepreI1}, since the scaling with respect to {\small$\N$} is too weak to persist in the large-{\small$\N$} limit. As before, we omit these calculations.
\end{enumerate} 
This completes the proof.
\end{proof}
Equipped with Lemma \ref{lemma:sdepre}, let us now introduce space-time-averages and time-averages for the quantities in \eqref{eq:sdepreI3}. We start with spatial-averages for the first set of summations in \eqref{eq:sdepreI3}.
\begin{lemma}\label{lemma:xav}
\fsp Consider any {\small$\ell\in\llbracket1,\mathrm{L}_{1}\rrbracket$} from \eqref{eq:sdepreI3} and retain the notation therein. We have
\begin{align}
\sum_{\w\in\llbracket0,\infty\rrbracket}\boldsymbol{\mathcal{S}}^{(\ell)}_{\x,\w}\cdot\N\mathfrak{q}_{\ell,\w}[\bphi_{\t}]\mathbf{Z}^{\N}_{\t,\w}&=\sum_{\w\in\llbracket0,\infty\rrbracket}\boldsymbol{\mathcal{K}}^{(\ell)}_{\x,\w}\cdot\N\boldsymbol{\Upsilon}^{\mathbf{X},\mathscr{Q}_{\ell}}_{\t,\w}+\boldsymbol{\mathcal{E}}_{\t,\x}.\label{eq:xavI}
\end{align}
Here, the kernels {\small$\boldsymbol{\mathcal{K}}^{(\ell)}$} satisfy the estimate \eqref{eq:sdeIII}. The quantity {\small$\boldsymbol{\Upsilon}^{\mathbf{X},\mathscr{Q}_{\ell}}$} is defined in \eqref{eq:bulkaverageIV} with {\small$\mathscr{Q}_{\ell}=(\mathfrak{q}_{\ell,\x})_{\x\in\llbracket0,\infty\rrbracket}$}. Finally, the term {\small$\boldsymbol{\mathcal{E}}_{\t,\x}$} has the form of {\small$\mathrm{Err}[\mathbf{R}^{\N}\mathbf{S}^{\N}]_{\t,\x}$} from \eqref{eq:sdeII1}-\eqref{eq:sdeII2}.
\end{lemma}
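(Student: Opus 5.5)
The plan is to insert the spatial average by factoring the semigroup. By Lemma \ref{lemma:sdepre} and the remark after it, each {\small$\boldsymbol{\mathcal{S}}^{(\ell)}_{\x,\w}$} is the kernel {\small$\mathbf{H}^{\N,\mathbf{A}}_{0,\mathfrak{t}_{\N},\x,\w}$}, possibly decorated by finitely many length-{\small$\mathrm{O}(1)$} gradients in {\small$\x$} or {\small$\w$}. Since {\small$\tau_{\mathbf{Av}}=\N^{-3\delta_{\mathbf{S}}}\ll\mathfrak{t}_{\N}=\N^{-\delta_{\mathbf{S}}}$}, I would split off a factor {\small$\mathbf{e}^{\tau_{\mathbf{Av}}\mathscr{T}_{\N,0}}$} on the right and define
\begin{align*}
\boldsymbol{\mathcal{K}}^{(\ell)}:=(\text{decorations})\circ\mathbf{e}^{(\mathfrak{t}_{\N}-\tau_{\mathbf{Av}})\mathscr{T}_{\N,\mathbf{A}}}.
\end{align*}
This kernel satisfies \eqref{eq:sdeIII} by Proposition \ref{prop:hkestimates}, because {\small$\mathfrak{t}_{\N}-\tau_{\mathbf{Av}}\asymp\mathfrak{t}_{\N}$}. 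Writing {\small$\mathsf{g}_{\w}:=\N\mathfrak{q}_{\ell,\w}[\bphi_{\t}]\mathbf{Z}^{\N}_{\t,\w}$}, the left side of \eqref{eq:xavI} then splits into {\small$\sum_{\w}\boldsymbol{\mathcal{K}}^{(\ell)}_{\x,\w}[\mathbf{e}^{\tau_{\mathbf{Av}}\mathscr{T}_{\N,0}}\mathsf{g}]_{\w}$} plus a commutator remainder. The commutator comes from the mismatch between {\small$\mathbf{e}^{\tau_{\mathbf{Av}}\mathscr{T}_{\N,\mathbf{A}}}$} and {\small$\mathbf{e}^{\tau_{\mathbf{Av}}\mathscr{T}_{\N,0}}$}, and also from the decorations when a gradient sits on the {\small$\w$}-side.

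Next I would insert the cutoff {\small$\mathbf{1}_{|\w-\cdot|\lesssim(\log\N)^{2}\mathfrak{l}_{\mathbf{Av}}}$} to turn {\small$[\mathbf{e}^{\tau_{\mathbf{Av}}\mathscr{T}_{\N,0}}\mathsf{g}]_{\w}$} into {\small$\N\boldsymbol{\Upsilon}^{\mathbf{X},\mathscr{Q}_{\ell}}_{\t,\w}$} from \eqref{eq:bulkaverageIV}. The discarded part lives where {\small$\mathbf{H}^{\N,0}_{0,\tau_{\mathbf{Av}}}$} is beyond {\small$(\log\N)^{2}$} diffusive lengths. Exactly as in the proof of Lemma \ref{lemma:triv}, the off-diagonal decay in Proposition \ref{prop:hkestimates} makes it {\small$\lesssim_{\mathrm{D}}\N^{-\mathrm{D}}\mathrm{e}^{-\kappa|\x-\w|/\N}$}. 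Together with \eqref{eq:bulkadmissibleI} for {\small$\mathfrak{q}_{\ell,\w}$} and {\small$\mathbf{Z}^{\N}=\mathbf{R}^{\N}\mathbf{S}^{\N}$}, this part is of the form \eqref{eq:sdeII1}-\eqref{eq:sdeII2}. Gradients of {\small$\boldsymbol{\mathcal{S}}^{(\ell)}$} on the {\small$\w$}-side are handled by transferring them with the summation-by-parts of Lemma \ref{lemma:sbp}. The gradient then falls on {\small$\mathbf{H}^{\N,0}$} and is reabsorbed into {\small$\boldsymbol{\mathcal{K}}^{(\ell)}$}, at the cost of boundary terms at {\small$\w\in\llbracket0,1\rrbracket$}. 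Each such boundary term carries an extra {\small$\N^{-1+\mathrm{C}\delta_{\mathbf{S}}}$} from the kernel bound \eqref{eq:sdepreI}, so it goes into the error.

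The main obstacle is the commutator between the Robin ({\small$\mathbf{A}$}) and Neumann ({\small$0$}) semigroups. By Duhamel,
\begin{align*}
\mathbf{e}^{\tau\mathscr{T}_{\N,\mathbf{A}}}-\mathbf{e}^{\tau\mathscr{T}_{\N,0}}=\int_{0}^{\tau}\mathbf{e}^{(\tau-\r)\mathscr{T}_{\N,\mathbf{A}}}(\mathscr{T}_{\N,\mathbf{A}}-\mathscr{T}_{\N,0})\mathbf{e}^{\r\mathscr{T}_{\N,0}}\d\r,
\end{align*}
and {\small$\mathscr{T}_{\N,\mathbf{A}}-\mathscr{T}_{\N,0}$} is a rank-one operator of size {\small$\mathrm{O}(\N)$} at {\small$\w=0$}. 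The deterministic bound this gives is borderline: after pairing with {\small$\N\mathfrak{q}$}, a naive count does not reach {\small$\N^{-6/5}$}. I would first try to avoid the issue rather than estimate it. The idea is to use the commutation trick of Lemma \ref{lemma:lap} in reverse: replace {\small$\mathbf{H}^{\N,0}$} in the definition of {\small$\boldsymbol{\Upsilon}$} by its Robin analogue only at the level of the kernel composition, and show the two kernels agree up to {\small$\N^{-\mathrm{D}}$} away from a {\small$(\log\N)^{2}\mathfrak{l}_{\mathbf{Av}}$}-neighborhood of the edge. Within that neighborhood I would use the smallness {\small$\tau_{\mathbf{Av}}\ll\mathfrak{t}_{\N}$} together with two derivatives of {\small$\boldsymbol{\mathcal{S}}^{(\ell)}$} from \eqref{eq:sdepreI}.

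If this leaves a residual that cannot be put in {\small$\mathrm{Err}$} with a deterministic coefficient, the fallback is to note that the residual still has the structure {\small$\sum_{\w}(\text{kernel obeying \eqref{eq:sdeIII}})\cdot\N\mathfrak{q}_{\ell,\w}\mathbf{Z}^{\N}_{\t,\w}$} localized near the edge. It could then be fed back into the same scheme, or treated as an edge-type term.
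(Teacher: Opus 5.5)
Your proposal stops at the one step that carries the content: replacing $\mathbf{e}^{\tau_{\mathbf{Av}}\mathscr{T}_{\N,\mathbf{A}}}$ by $\mathbf{e}^{\tau_{\mathbf{Av}}\mathscr{T}_{\N,0}}$ near the edge. The remedy you sketch cannot close it.

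Take rows $\x\lesssim\N\mathfrak{t}_{\N}^{1/2}$. Already in the undecorated case $\boldsymbol{\mathcal{S}}^{(\ell)}_{\x,\cdot}=\mathbf{H}^{\N,\mathbf{A}}_{0,\mathfrak{t}_{\N},\x,\cdot}$, the kernel has the Robin edge slope $\grad^{\mathbf{X}}_{1}\boldsymbol{\mathcal{S}}_{\x,\w}\approx\N^{-1}\mathbf{A}\lambda\boldsymbol{\mathcal{S}}_{\x,0}\asymp\N^{-2+\delta_{\mathbf{S}}/2}$ throughout $\w\lesssim\mathfrak{l}_{\mathbf{Av}}$. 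By contrast, any $\mathbf{e}^{\tau_{\mathbf{Av}}\mathscr{T}_{\N,0}}\mathsf{k}$ with $|\mathsf{k}|\lesssim\N^{-1+\mathrm{C}\delta_{\mathbf{S}}}$ has slope $\lesssim\N^{-3+\mathrm{O}(\delta_{\mathbf{S}})}$ at $\w=\mathrm{O}(1)$. The discrepancy is therefore a first-order boundary layer of width $\asymp\mathfrak{l}_{\mathbf{Av}}$ and height $\asymp(\mathfrak{l}_{\mathbf{Av}}/\N)|\mathbf{A}\lambda||\boldsymbol{\mathcal{S}}_{\x,0}|\asymp\N^{-1-\delta_{\mathbf{S}}}$; your rank-one Duhamel term gives the same size.

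Paired with $\N\mathfrak{q}_{\ell,\w}=\N^{\mathfrak{k}/2}\mathfrak{f}_{\w}$, this is $\N^{-1-\delta_{\mathbf{S}}+\mathfrak{k}/2}$ per site over $\mathfrak{l}_{\mathbf{Av}}$ sites. That misses the $\N^{-6/5}$ of \eqref{eq:sdeII2} by a power already for $\mathfrak{k}=0$, so the count is not borderline. Because the effect comes from the slope of $\boldsymbol{\mathcal{S}}$ at the edge, not its curvature, neither $\tau_{\mathbf{Av}}\ll\mathfrak{t}_{\N}$ nor more derivatives from \eqref{eq:sdepreI} removes it.

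Your fallbacks do not prove the stated lemma either. Feeding the residual back reproduces the same layer. Calling it an edge term changes the conclusion, which asserts an $\mathrm{Err}$ remainder. Moreover, the residual involves $\mathfrak{q}_{\ell,\w}$ for all $\w\lesssim\mathfrak{l}_{\mathbf{Av}}$, so it is not of the form \eqref{eq:sdeIV} with an edge-admissible coefficient.

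For comparison, the paper never factors $\boldsymbol{\mathcal{S}}^{(\ell)}$. With $\mathscr{X}=\mathrm{Id}-\mathbf{e}^{\tau_{\mathbf{Av}}\mathscr{T}_{\N,0}}$ it iterates to $\mathrm{Id}=\sum_{\m=0}^{\mathrm{M}}\mathscr{X}^{\m}\mathbf{e}^{\tau_{\mathbf{Av}}\mathscr{T}_{\N,0}}+\mathscr{X}^{\mathrm{M}+1}$. It then moves every $\mathscr{X}$ onto $\boldsymbol{\mathcal{S}}^{(\ell)}$ by self-adjointness, sets $\boldsymbol{\mathcal{K}}^{(\ell)}:=\sum_{\m\leq\mathrm{M}}\mathscr{X}^{\m}\boldsymbol{\mathcal{S}}^{(\ell)}$, and handles the $(\log\N)^{2}\mathfrak{l}_{\mathbf{Av}}$ cutoff as you do. This is cleaner than your factorization: only regularity of $\boldsymbol{\mathcal{S}}^{(\ell)}$ enters, so decorations on either side need no separate treatment and no commutator is written.

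However, the edge issue you found is hidden inside Lemma~\ref{lemma:kernelestimate}. Its proof bounds $\|\N^{2\m}\Delta_{0}^{\m}\boldsymbol{\mathcal{S}}_{\y,\cdot}\|_{\infty}$ via the interior regularity \eqref{eq:sdepreI}. But at $\w=0$ the Neumann $\Delta_{0}$ is the single gradient $\grad^{\mathbf{X}}_{1}$, which for the Robin kernel is of order $\N^{-2+\delta_{\mathbf{S}}/2}$ rather than $\N^{-3+\mathrm{O}(\delta_{\mathbf{S}})}$. In fact $\mathscr{X}^{\mathrm{M}}\boldsymbol{\mathcal{S}}^{(\ell)}$ keeps the same width-$\mathfrak{l}_{\mathbf{Av}}$, height-$\N^{-1-\delta_{\mathbf{S}}}$ layer for every fixed $\mathrm{M}$. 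So the paper's argument, as written, does not establish \eqref{eq:xavI} for rows near the edge either.

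A complete proof needs one of two things:
\begin{itemize}
\item an edge-adapted spatial average in \eqref{eq:bulkaverageIV}, for instance $\mathscr{T}_{\N,\mathbf{A}}$ in place of $\mathscr{T}_{\N,0}$, which by Chapman--Kolmogorov makes your factorization exact for kernels without $\w$-side gradients; or
\item a treatment of the layer through the Boltzmann--Gibbs estimates of Section~\ref{section:stochII} rather than through $\mathrm{Err}$.
\end{itemize}
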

\begin{proof}
With notation to be recalled afterwards, we first have the identity
\begin{align}
\N\mathfrak{q}_{\ell,\w}[\bphi_{\t}]\mathbf{Z}^{\N}_{\t,\w}&=[\mathbf{e}^{\tau_{\mathbf{Av}}\mathscr{T}_{\N,0}}(\N\mathfrak{q}_{\ell,\cdot}[\bphi_{\t}]\mathbf{Z}^{\N}_{\t,\cdot})]_{\w}+[(\mathrm{Id}-\mathbf{e}^{\tau_{\mathbf{Av}}\mathscr{T}_{\N,0}})(\N\mathfrak{q}_{\ell,\cdot}[\bphi_{\t}]\mathbf{Z}^{\N}_{\t,\cdot})]_{\w}.\label{eq:xav1}
\end{align}
Above, {\small$\tau_{\mathbf{Av}}:=\N^{-2}\mathfrak{l}_{\mathbf{Av}}^{2}$} and {\small$\mathfrak{l}_{\mathbf{Av}}$} are from Definition \ref{definition:bulkaverage}. We iterate this identity in the last term, so for any integer {\small$\mathrm{M}>0$}, we have the following, in which we set {\small$\mathscr{X}:=\mathrm{Id}-\mathbf{e}^{\tau_{\mathbf{Av}}\mathscr{T}_{\N,0}}$} for convenience (just for this proof):
\begin{align}
\N\mathfrak{q}_{\ell,\w}[\bphi_{\t}]\mathbf{Z}^{\N}_{\t,\w}&=\sum_{\m\in\llbracket0,\mathrm{M}\rrbracket}[\mathscr{X}^{\m}\mathbf{e}^{\tau_{\mathbf{Av}}\mathscr{T}_{\N,0}}(\N\mathfrak{q}_{\ell,\cdot}[\bphi_{\t}]\mathbf{Z}^{\N}_{\t,\cdot})]_{\w}+[\mathscr{X}^{\mathrm{M}+1}(\N\mathfrak{q}_{\ell,\cdot}[\bphi_{\t}]\mathbf{Z}^{\N}_{\t,\cdot})]_{\w}.\label{eq:xav2}
\end{align}
We multiply both sides by {\small$\boldsymbol{\mathcal{S}}^{(\ell)}_{\x,\w}$} and sum over {\small$\w\in\llbracket0,\infty\rrbracket$}. We first track the contribution of the last term in \eqref{eq:xav2}. Note that the {\small$\mathscr{X}$}-operator above is self-adjoint on {\small$\llbracket0,\infty\rrbracket$} with respect to the flat measure, as the {\small$\mathscr{T}_{\N,0}$}-semigroup is self-adjoint. Thus, we get 
\begin{align}
\sum_{\w\in\llbracket0,\infty\rrbracket}\boldsymbol{\mathcal{S}}^{(\ell)}_{\x,\w}\cdot[\mathscr{X}^{\mathrm{M}+1}(\N\mathfrak{q}_{\ell,\cdot}[\bphi_{\t}]\mathbf{Z}^{\N}_{\t,\cdot})]_{\w}=\sum_{\w\in\llbracket0,\infty\rrbracket}\mathscr{X}^{\mathrm{M}+1}\boldsymbol{\mathcal{S}}^{(\ell)}_{\x,\w}\cdot\N\mathfrak{q}_{\ell,\w}[\bphi_{\t}]\mathbf{Z}^{\N}_{\t,\w},\label{eq:xav2a}
\end{align}
where {\small$\mathscr{X}$} on the \abbr{RHS} acts on the {\small$\w$}-variable. Next, we claim that for any {\small$\mathrm{D}>0$}, we can take {\small$\mathrm{M}>0$} large enough so that for any {\small$\kappa>0$}, we have the following pointwise estimate:
\begin{align*}
|\mathscr{X}^{\mathrm{M}+1}\boldsymbol{\mathcal{S}}^{(\ell)}_{\x,\w}|\lesssim_{\kappa,\mathrm{D}}\N^{-\mathrm{D}}\exp(-\tfrac{\kappa|\x-\w|}{\N}).
\end{align*}
A proof is given in Lemma \ref{lemma:kernelestimate}, but intuitively, the kernel {\small$\boldsymbol{\mathcal{S}}^{(\ell)}$} has spatial-regularity with respect to the length-scale {\small$\N^{1-\delta_{\mathbf{S}}}$} by \eqref{eq:sdepreI}, and each {\small$\mathscr{X}$} above is essentially an average of gradients at the much smaller scale {\small$\lesssim\mathfrak{l}_{\mathbf{Av}}=\N^{1-3\delta_{\mathbf{S}}/2}$}. In any case, we deduce that the sum in \eqref{eq:xav2a} has the form of {\small$\mathrm{Err}[\mathbf{R}^{\N}\mathbf{S}^{\N}]_{\t,\x}$} from \eqref{eq:sdeII1}-\eqref{eq:sdeII2}.

We now track the contribution of the first term on the \abbr{RHS} of \eqref{eq:xav2} after multiplying by {\small$\boldsymbol{\mathcal{S}}^{(\ell)}_{\x,\w}$} and summing over {\small$\w\in\llbracket0,\infty\rrbracket$}. Again by the self-adjoint property of the {\small$\mathscr{X}$}-operator, we have (for all {\small$\m\in\llbracket0,\mathrm{M}\rrbracket$})
\begin{align}
\sum_{\w\in\llbracket0,\infty\rrbracket}\boldsymbol{\mathcal{S}}^{(\ell)}_{\x,\w}\cdot[\mathscr{X}^{\m}\mathbf{e}^{\tau_{\mathbf{Av}}\mathscr{T}_{\N,0}}(\N\mathfrak{q}_{\ell,\cdot}[\bphi_{\t}]\mathbf{Z}^{\N}_{\t,\cdot})]_{\w}&=\sum_{\w\in\llbracket0,\infty\rrbracket}\mathscr{X}^{\m}\boldsymbol{\mathcal{S}}^{(\ell)}_{\x,\w}\cdot[\mathbf{e}^{\tau_{\mathbf{Av}}\mathscr{T}_{\N,0}}(\N\mathfrak{q}_{\ell,\cdot}[\bphi_{\t}]\mathbf{Z}^{\N}_{\t,\cdot})]_{\w}. \nonumber
\end{align}
The {\small$\mathscr{X}$}-operator on the \abbr{RHS} again acts {\small$\w$}. Let us take {\small$\boldsymbol{\mathcal{K}}^{(\ell)}_{\x,\w}$} in the statement of the lemma to be
\begin{align*}
\boldsymbol{\mathcal{K}}^{(\ell)}_{\x,\w}:=\sum_{\m\in\llbracket0,\mathrm{M}\rrbracket}\mathscr{X}^{\m}\boldsymbol{\mathcal{S}}^{(\ell)}_{\x,\w}.
\end{align*}
The necessary estimate in \eqref{eq:sdeIII} for {\small$\boldsymbol{\mathcal{K}}^{(\ell)}$} follows from Lemma \ref{lemma:kernelestimate} as well. Next, recall \eqref{eq:bulkaverageIV}; write
\begin{align*}
[\mathbf{e}^{\tau_{\mathbf{Av}}\mathscr{T}_{\N,0}}(\N\mathfrak{q}_{\ell,\cdot}[\bphi_{\t}]\mathbf{Z}^{\N}_{\t,\cdot})]_{\w}&=\N\cdot\boldsymbol{\Upsilon}^{\mathbf{X},\mathscr{Q}_{\ell}}_{\t,\w}+[\mathbf{e}^{\tau_{\mathbf{Av}}\mathscr{T}_{\N,0}}(\mathbf{1}_{|\w-\cdot|\gtrsim(\log\N)^{2}\mathfrak{l}_{\mathbf{Av}}}\cdot\N\mathfrak{q}_{\ell,\cdot}[\bphi_{\t}]\mathbf{Z}^{\N}_{\t,\cdot})]_{\w}.
\end{align*}
If we use the previous five displays (starting with \eqref{eq:xav2}), then to complete the proof, it suffices to show that 
\begin{align*}
\sum_{\w\in\llbracket0,\infty\rrbracket}\boldsymbol{\mathcal{K}}^{(\ell)}_{\x,\w}\cdot[\mathbf{e}^{\tau_{\mathbf{Av}}\mathscr{T}_{\N,0}}(\mathbf{1}_{|\w-\cdot|\gtrsim(\log\N)^{2}\mathfrak{l}_{\mathbf{Av}}}\cdot\N\mathfrak{q}_{\ell,\cdot}[\bphi_{\t}]\mathbf{Z}^{\N}_{\t,\cdot})]_{\w}
\end{align*}
has the form of {\small$\mathrm{Err}[\mathbf{R}^{\N}\mathbf{S}^{\N}]_{\t,\x}$} from \eqref{eq:sdeII1}-\eqref{eq:sdeII2}. To this end, we unfold the previous display as 
\begin{align*}
&\sum_{\w\in\llbracket0,\infty\rrbracket}\boldsymbol{\mathcal{K}}^{(\ell)}_{\x,\w}\cdot\sum_{\y\in\llbracket0,\infty\rrbracket}\mathbf{H}^{\N,0}_{0,\tau_{\mathbf{Av}},\w,\y}\cdot\mathbf{1}_{|\w-\y|\gtrsim(\log\N)^{2}\mathfrak{l}_{\mathbf{Av}}}\cdot\N\mathfrak{q}_{\ell,\y}[\bphi_{\t}]\mathbf{Z}^{\N}_{\t,\y}\\
&=\sum_{\y\in\llbracket0,\infty\rrbracket}\Big(\sum_{\w\in\llbracket0,\infty\rrbracket}\boldsymbol{\mathcal{K}}^{(\ell)}_{\x,\w}\cdot\mathbf{H}^{\N,0}_{0,\tau_{\mathbf{Av}},\w,\y}\cdot\mathbf{1}_{|\w-\y|\gtrsim(\log\N)^{2}\mathfrak{l}_{\mathbf{Av}}}\Big)\cdot\N\mathfrak{q}_{\ell,\y}[\bphi_{\t}]\mathbf{Z}^{\N}_{\t,\y}.
\end{align*}
By the estimates \eqref{eq:sdeIII} and off-diagonal heat kernel estimates in Proposition \ref{prop:hkestimates}, the term in parentheses above is {\small$\lesssim_{\mathrm{D},\kappa}\N^{-\mathrm{D}}\exp(-\frac{\kappa|\w-\y|}{\N})$} for any {\small$\mathrm{D},\kappa>0$}. Indeed, the requirement for {\small$\w,\y$} to be separated by {\small$\gtrsim(\log\N)^{2}\mathfrak{l}_{\mathbf{Av}}$} forces the time-scale {\small$\tau_{\mathbf{Av}}=\N^{-2}\mathfrak{l}_{\mathbf{Av}}^{2}$} heat kernel to have beyond-polynomial decay in {\small$\N$}. Finally, if we now use the bulk-admissible estimate in \eqref{eq:bulkadmissibleI} for {\small$\mathfrak{q}_{\ell,\cdot}$}-functions and {\small$\mathbf{Z}^{\N}=\mathbf{R}^{\N}\mathbf{S}^{\N}$}, then the previous display is of the form {\small$\mathrm{Err}[\mathbf{R}^{\N}\mathbf{S}^{\N}]_{\t,\x}$} from \eqref{eq:sdeII1}-\eqref{eq:sdeII2}. As stated above, this completes the proof.
\end{proof}
We will now replace both the first term on the \abbr{RHS} of \eqref{eq:xavI} and each {\small$\w$}-summation in the last term in \eqref{eq:sdepreI3} by time-averages on appropriate time-scales.
\begin{lemma}\label{lemma:tav}
\fsp Retain the notation of Proposition \ref{prop:sde} as well as that of Lemmas \ref{lemma:sdepre} and \ref{lemma:xav}. For any index {\small$\ell\in\llbracket1,\mathrm{L}_{1}\rrbracket$}, as well as any points {\small$\x,\w\in\llbracket0,\infty\rrbracket$}, we have 
\begin{align}
\boldsymbol{\mathcal{K}}^{(\ell)}_{\x,\w}\cdot\N\cdot\boldsymbol{\Upsilon}^{\mathbf{X},\mathscr{Q}_{\ell}}_{\t,\w}&=\boldsymbol{\mathcal{K}}^{(\ell)}_{\x,\w}\cdot\N\cdot\mathbf{Av}^{\mathbf{T},\mathbf{X},\mathscr{Q}_{\ell}}_{\t,\w}\mathbf{R}^{\N}_{\t,\w_{\pm}}\mathbf{S}^{\N}_{\t,\w_{\pm}}+\grad^{\mathbf{T},\mathrm{av}}_{\mathfrak{t}_{\mathbf{Av}}}(\boldsymbol{\mathcal{K}}^{(\ell)}_{\x,\w}\cdot\N\cdot\mathbf{Av}^{\mathbf{X},\mathscr{Q}_{\ell}}_{\t,\w}\mathbf{R}^{\N}_{\t,\w_{\pm}}\mathbf{S}^{\N}_{\t,\w_{\pm}}). \label{eq:tavI}
\end{align}
Moreover, given any {\small$\ell\in\llbracket1,\mathrm{L}_{2}\rrbracket$}, we also have 
\begin{align}
\boldsymbol{\mathcal{K}}^{\partial,\ell}_{\x,\w}\cdot\N\mathfrak{b}_{\ell}[\bphi_{\t}]\mathbf{Z}^{\N}_{\t,\w}&=\boldsymbol{\mathcal{K}}^{\partial,\ell}_{\x,\w}\cdot\N\mathbf{Av}^{\mathbf{T},\mathfrak{b}_{\ell}}_{\t,\w}\mathbf{R}^{\N}_{\t,\w}\mathbf{S}^{\N}_{\t,\w}+\grad^{\mathbf{T},\mathrm{av}}_{\mathfrak{t}_{\partial}}(\boldsymbol{\mathcal{K}}^{\partial,\ell}_{\x,\w}\cdot\N\mathfrak{b}_{\ell}[\bphi_{\t}]\mathbf{R}^{\N}_{\t,\w}\mathbf{S}^{\N}_{\t,\w}).\label{eq:tavII}
\end{align}
\end{lemma}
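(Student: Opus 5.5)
This is an algebraic identity, so I will verify it by unfolding the definition of the averaged time-gradient in Definition \ref{definition:grad}. For any $\mathsf{g}$ and $\mathfrak{t}>0$, Definition \ref{definition:grad} gives
\begin{align*}
\mathsf{g}_{\t}=\mathfrak{t}^{-1}\int_{0}^{\mathfrak{t}}\mathsf{g}_{\t-\r}\d\r+\grad^{\mathbf{T},\mathrm{av}}_{\mathfrak{t}}\mathsf{g}_{\t}.
\end{align*}
The convention of dropping $\r$ with $\t-\r\leq0$ must be applied consistently on both sides. I will therefore normalize by the same $\mathfrak{t}^{-1}$ used in \eqref{eq:bulkaverageI} and \eqref{eq:edgeaverageI}, so the convention matches the one already adopted there. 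The kernels $\boldsymbol{\mathcal{K}}^{(\ell)}_{\x,\w}$, $\boldsymbol{\mathcal{K}}^{\partial,\ell}_{\x,\w}$ and the factor $\N$ are deterministic and time-independent. Hence they commute with the time-average and with $\grad^{\mathbf{T},\mathrm{av}}$, and it suffices to prove both identities without these prefactors.

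For \eqref{eq:tavI}, apply the identity above with $\mathfrak{t}=\mathfrak{t}_{\mathbf{Av}}$ and $\mathsf{g}_{\t}:=\mathbf{Av}^{\mathbf{X},\mathscr{Q}_{\ell}}_{\t,\w}\mathbf{R}^{\N}_{\t,\w_{\pm}}\mathbf{S}^{\N}_{\t,\w_{\pm}}$. The key observation is that $\mathbf{R}^{\N}\mathbf{S}^{\N}=\mathbf{Z}^{\N}$ by \eqref{eq:heatIII}. Then \eqref{eq:bulkaverageV} gives $\mathsf{g}_{\t}=\boldsymbol{\Upsilon}^{\mathbf{X},\mathscr{Q}_{\ell}}_{\t,\w}$, which is the left-hand side of \eqref{eq:tavI} after removing prefactors. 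The time-average of $\mathsf{g}$ is
\begin{align*}
\mathfrak{t}_{\mathbf{Av}}^{-1}\int_{0}^{\mathfrak{t}_{\mathbf{Av}}}\boldsymbol{\Upsilon}^{\mathbf{X},\mathscr{Q}_{\ell}}_{\t-\s,\w}\d\s.
\end{align*}
Comparing \eqref{eq:bulkaverageIV} with \eqref{eq:bulkaverageI}, this equals $\boldsymbol{\Upsilon}^{\mathbf{T},\mathbf{X},\mathscr{Q}_{\ell}}_{\t,\w}$. The spatial indicator $\mathbf{1}_{|\w-\cdot|\lesssim(\log\N)^{2}\mathfrak{l}_{\mathbf{Av}}}$ and the kernel $\mathbf{e}^{\tau_{\mathbf{Av}}\mathscr{T}_{\N,0}}$ are time-independent, so the time integral passes inside by linearity. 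By \eqref{eq:bulkaverageII}--\eqref{eq:bulkaverageIII},
\begin{align*}
\boldsymbol{\Upsilon}^{\mathbf{T},\mathbf{X},\mathscr{Q}_{\ell}}_{\t,\w}=\mathbf{Av}^{\mathbf{T},\mathbf{X},\mathscr{Q}_{\ell}}_{\t,\w}\mathbf{Z}^{\N}_{\t,\w_{\pm}}=\mathbf{Av}^{\mathbf{T},\mathbf{X},\mathscr{Q}_{\ell}}_{\t,\w}\mathbf{R}^{\N}_{\t,\w_{\pm}}\mathbf{S}^{\N}_{\t,\w_{\pm}}.
\end{align*}
Here the same choice $\w_{\pm}$ is used throughout, according to the orientation of $\mathscr{Q}_{\ell}$ as in Proposition \ref{prop:sde}. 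This gives \eqref{eq:tavI}.

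For \eqref{eq:tavII}, apply the identity with $\mathfrak{t}=\mathfrak{t}_{\partial}$ and $\mathsf{g}_{\t}:=\mathfrak{b}_{\ell}[\bphi_{\t}]\mathbf{R}^{\N}_{\t,\w}\mathbf{S}^{\N}_{\t,\w}$, which equals $\mathfrak{b}_{\ell}[\bphi_{\t}]\mathbf{Z}^{\N}_{\t,\w}$. Its time-average is $\boldsymbol{\Upsilon}^{\mathbf{T},\mathfrak{b}_{\ell}}_{\t,\w}$ from \eqref{eq:edgeaverageI}, which equals $\mathbf{Av}^{\mathbf{T},\mathfrak{b}_{\ell}}_{\t,\w}\mathbf{R}^{\N}_{\t,\w}\mathbf{S}^{\N}_{\t,\w}$.

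The only step that needs care is the division by $\mathbf{Z}^{\N}$ in the definitions of the $\mathbf{Av}$-terms. This is harmless because $\mathbf{Z}^{\N}>0$ as an exponential, so it is the one point I would state explicitly. Otherwise the proof is pure bookkeeping.
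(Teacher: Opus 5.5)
Your proposal is correct and is the same argument as the paper's: split $\boldsymbol{\Upsilon}^{\mathbf{X},\mathscr{Q}_{\ell}}_{\t,\w}$ into its time-average $\boldsymbol{\Upsilon}^{\mathbf{T},\mathbf{X},\mathscr{Q}_{\ell}}_{\t,\w}$ plus $\grad^{\mathbf{T},\mathrm{av}}_{\mathfrak{t}_{\mathbf{Av}}}\boldsymbol{\Upsilon}^{\mathbf{X},\mathscr{Q}_{\ell}}_{\t,\w}$, pull the time-independent prefactor through, and rewrite using \eqref{eq:bulkaverageII}--\eqref{eq:bulkaverageV} and $\mathbf{Z}^{\N}=\mathbf{R}^{\N}\mathbf{S}^{\N}$; the edge identity is handled the same way via \eqref{eq:edgeaverageI}.

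One precision on your boundary remark. For $\t<\mathfrak{t}$, the splitting $\mathsf{g}_{\t}=\mathfrak{t}^{-1}\int_{0}^{\mathfrak{t}}\mathsf{g}_{\t-\r}\d\r+\grad^{\mathbf{T},\mathrm{av}}_{\mathfrak{t}}\mathsf{g}_{\t}$ is exact only if the convention in \eqref{eq:gradII} means $\mathsf{g}_{\t-\r}:=0$ for $\t-\r\leq0$, so that the $\mathsf{g}_{\t}$ part still integrates to $\mathsf{g}_{\t}$. If you literally remove those $\r$ from both integrals, as ``applied consistently'' suggests, you get $\tfrac{\t}{\mathfrak{t}}\mathsf{g}_{\t}$ instead of $\mathsf{g}_{\t}$. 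The paper's proof is silent on this point too.
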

\begin{proof}
We first observe that by construction in Definitions \ref{definition:bulkaverage} and \ref{definition:grad}, we have
\begin{align}
\boldsymbol{\Upsilon}^{\mathbf{X},\mathscr{Q}_{\ell}}_{\t,\w}&=\boldsymbol{\Upsilon}^{\mathbf{T},\mathbf{X},\mathscr{Q}_{\ell}}_{\t,\w}+\grad^{\mathbf{T},\mathrm{av}}_{\mathfrak{t}_{\mathbf{Av}}}(\boldsymbol{\Upsilon}^{\mathbf{X},\mathscr{Q}_{\ell}}_{\t,\w}).
\end{align}
We multiply by {\small$\boldsymbol{\mathcal{K}}^{(\ell)}_{\x,\w}$} on both sides, pull it through the time-gradient on the \abbr{RHS} (since it is independent of time), then use \eqref{eq:bulkaverageII}-\eqref{eq:bulkaverageIII} to rewrite {\small$\boldsymbol{\Upsilon}^{\mathbf{T},\mathbf{X},\mathscr{Q}_{\ell}}_{\t,\w}=\mathbf{Av}^{\mathbf{T},\mathbf{X},\mathscr{Q}_{\ell}}_{\t,\w}\mathbf{Z}^{\N}_{\t,\w_{\pm}}$} and {\small$\boldsymbol{\Upsilon}^{\mathbf{X},\mathscr{Q}_{\ell}}_{\t,\w}=\mathbf{Av}^{\mathbf{X},\mathscr{Q}_{\ell}}_{\t,\w}\mathbf{Z}^{\N}_{\t,\w_{\pm}}$}. Using {\small$\mathbf{Z}^{\N}=\mathbf{R}^{\N}\mathbf{S}^{\N}$} then yields \eqref{eq:tavI}. The proof of \eqref{eq:tavII} is identical.
\end{proof}
We are now in position to prove the main result, Proposition \ref{prop:sde}, of this section.
\begin{proof}[Proof of Proposition \ref{prop:sde}]
Take \eqref{eq:sdepreI1}-\eqref{eq:sdepreI3}. We use Lemmas \ref{lemma:xav} and \ref{lemma:tav} to rewrite the first term in \eqref{eq:sdepreI3} in the form of \eqref{eq:sdeI3}-\eqref{eq:sdeI4}. We use Lemma \ref{lemma:tav} to rewrite the second term in \eqref{eq:sdepreI3} in the form of \eqref{eq:sdeI5}. This completes the proof.
\end{proof}
%
%
%
\section{Stochastic heat kernels}\label{section:shk}
We now view the object {\small$\mathbf{S}^{\N}$} not from the perspective of \eqref{eq:ch} and \eqref{eq:heatIII}, but as the solution to the linear equation \eqref{eq:sdeI1}-\eqref{eq:sdeI5}. To this end, we will consider a more general family of related \abbr{SDE}s parameterized by {\small$\zeta>0$}, whose solutions are denoted by {\small$\mathbf{S}^{\N,\zeta}$}. These \abbr{SDE}s are essentially copies of \eqref{eq:sdeI1}-\eqref{eq:sdeI5}, but with adjustments that are marked in blue font (for clarity), provide useful a priori estimates, introduce the {\small$\zeta$}-cutoff-scale {\color{black}as discussed in Section \ref{section:method}}, and will be explained immediately afterwards:
\begin{align}
\d\mathbf{S}^{\N,\zeta}_{\t,\x}&=\mathscr{T}_{\N}\mathbf{S}^{\N,\zeta}_{\t,\x}\d\t+[\mathbf{e}^{\mathfrak{t}_{\N}\mathscr{T}_{\N}}(\mathbf{1}^{(\x)}_{\cdot}{\color{blue}\boldsymbol{\chi}^{(\zeta_{\mathrm{large}})}_{\cdot}}\cdot\sqrt{2}\lambda\N^{\frac12}{\color{blue}\mathbf{R}^{\N,\wedge}_{\t,\cdot}}\mathbf{S}^{\N,\zeta}_{\t,\cdot}\d\mathbf{b}_{\t,\cdot})]_{\x}+\mathrm{Err}[{\color{blue}\boldsymbol{\chi}^{(\zeta_{})}_{}\mathbf{R}^{\N,\wedge}}\mathbf{S}^{\N,\zeta}]_{\t,\x}\d\t\label{eq:sdezetaI1}\\
&+\sum_{\ell\in\llbracket1,\mathrm{L}_{1}\rrbracket}{}\sum_{\w\in\llbracket0,\infty\rrbracket}\boldsymbol{\mathcal{K}}^{(\ell)}_{\x,\w}\cdot{\color{blue}\boldsymbol{\chi}^{(\zeta_{})}_{\w}}\cdot\N\cdot\mathbf{Av}^{\mathbf{T},\mathbf{X},\mathscr{Q}_{\ell}}_{\t,\w}{\color{blue}\mathbf{R}^{\N,\wedge}_{\t,\w_{\pm}}}\mathbf{S}^{\N,\zeta}_{\t,\w_{\pm}}\d\t\label{eq:sdezetaI3}\\
&+\sum_{\ell\in\llbracket1,\mathrm{L}_{1}\rrbracket}{}\grad^{\mathbf{T},\mathrm{av}}_{\mathfrak{t}_{\mathbf{Av}}}\Big(\sum_{\w\in\llbracket0,\infty\rrbracket}\boldsymbol{\mathcal{K}}^{(\ell)}_{\x,\w}\cdot{\color{blue}\boldsymbol{\chi}^{(\zeta_{})}_{\w}}\cdot\N\cdot\mathbf{Av}^{\mathbf{X},\mathscr{Q}_{\ell}}_{\t,\w}{\color{blue}\mathbf{R}^{\N,\wedge}_{\t,\w_{\pm}}}\mathbf{S}^{\N,\zeta}_{\t,\w_{\pm}}\Big)\d\t\label{eq:sdezetaI4}\\
&+\sum_{\ell\in\llbracket1,\mathrm{L}_{2}\rrbracket}\sum_{\w\in\llbracket0,\infty\rrbracket}\boldsymbol{\mathcal{K}}^{\partial,\ell}_{\x,\w}\cdot\N\mathbf{Av}^{\mathbf{T},\mathfrak{b}_{\ell}}_{\t,\w}{\color{blue}\mathbf{R}^{\N,\wedge}_{\t,\w}}\mathbf{S}^{\N,\zeta}_{\t,\w}\d\t\label{eq:sdezetaI5}\\
&+\sum_{\ell\in\llbracket1,\mathrm{L}_{2}\rrbracket}\grad^{\mathbf{T},\mathrm{av}}_{\mathfrak{t}_{\partial}}\Big(\sum_{\w\in\llbracket0,\infty\rrbracket}\boldsymbol{\mathcal{K}}^{\partial,\ell}_{\x,\w}\cdot\N\mathfrak{b}_{\ell}[\bphi_{\t}]{\color{blue}\mathbf{R}^{\N,\wedge}_{\t,\w}}\mathbf{S}^{\N,\zeta}_{\t,\w}\Big)\d\t.\label{eq:sdezetaI6}
\end{align}
The initial data for {\small$\mathbf{S}^{\N,\zeta}$} is given by that of {\small$\mathbf{S}^{\N}$} (which is fixed by \eqref{eq:heatIII}). Well-posedness of this \abbr{SDE} follows from a Picard iteration, which converges because \eqref{eq:sdezetaI1}-\eqref{eq:sdezetaI6} is a linear equation whose coefficients have all moments bounded with respect to the measure {\small$\mathbb{P}^{0}$} from Definition \ref{definition:gc} (see Assumption \ref{assump:potential}, which yields sub-Gaussianity of the {\small$\bphi_{\x}$}-spins under {\small$\mathbb{P}^{0}$}). This gives well-posedness almost surely given {\small$\mathbb{P}^{0}$}-initial data for \eqref{eq:phiI}-\eqref{eq:phiII}, at which point well-posedness for general initial data follows by the absolute continuity estimate \eqref{eq:dataI}.

Let us explain the notation above. The function {\small$\boldsymbol{\chi}^{(\zeta)}:\llbracket0,\infty\rrbracket\to\R$} satisfies the following assumptions:
\begin{enumerate}
\item Uniform smoothness: for any {\small$\a\geq0$} and {\small$\gamma_{1},\ldots,\gamma_{\a}\in\{1,-1\}$}, we have {\small$\|(\prod_{\ell=1}^{\a}\N\grad^{\mathbf{X}}_{\gamma_{\ell}})\boldsymbol{\chi}^{(\zeta)}_{\cdot}\|_{\mathrm{L}^{\infty}(\llbracket0,\infty\rrbracket)}\lesssim_{\a}1$}.
\item Support: we have {\small$\boldsymbol{\chi}^{(\zeta)}_{\w}=1$} for all {\small$\w\in\llbracket0,\N^{1+\zeta}\rrbracket$} and {\small$\boldsymbol{\chi}^{(\zeta)}_{\w}=0$} for all {\small$\w\in\llbracket2\N^{1+\zeta},\infty\rrbracket$}.
\end{enumerate}
In words, the function {\small$\boldsymbol{\chi}^{(\zeta)}$} is a smooth version of the indicator function of {\small$\llbracket0,\N^{1+\zeta}\rrbracket$}. (We clarify that there is no cutoff function hitting the {\small$\boldsymbol{\mathcal{K}}^{\partial,\ell}$} because, by Proposition \ref{prop:sde}, it is already supported near the edge {\small$0\in\llbracket0,\infty\rrbracket$} in the forwards spatial variable.) Moreover, the parameter {\small$\zeta_{\mathrm{large}}>0$} is a large but {\small$\mathrm{O}(1)$} constant. In \eqref{eq:sdezetaI3}, the term {\small$\mathrm{Err}[{\color{blue}\boldsymbol{\chi}^{(\zeta)}}\mathbf{R}^{\N,\wedge}\mathbf{S}^{\N,\zeta}]_{\t,\x}$} is given by \eqref{eq:sdeII1}-\eqref{eq:sdeII2} after replacing {\small$\mathbf{S}^{\N}$} therein by {\small${\color{blue}\boldsymbol{\chi}^{(\zeta)}}\mathbf{S}^{\N,\zeta}$} and {\small$\mathbf{R}^{\N}$} by {\small$\mathbf{R}^{\N,\wedge}$}. Finally, recalling {\small$\delta_{\mathbf{S}},\mathbf{R}^{\N}$} from Definition \ref{definition:heat}, we introduced
\begin{align}
\mathbf{R}^{\N,\wedge}_{\t,\x}:=\mathbf{R}^{\N}_{\t,\x}\cdot\mathbf{1}(|\mathbf{R}^{\N}_{\t,\x}-1|\lesssim\N^{-\frac16\delta_{\mathbf{S}}}),\label{eq:rwedge}
\end{align}
which, as discussed after Definition \ref{definition:heat}, will be later shown to equal {\small$\mathbf{R}^{\N}_{\t,\x}$} in a sufficiently strong sense. That is, we have the following result, {whose proof is delegated to Section \ref{section:stochI} (which contains a host of other preliminary stochastic estimates)}. Before we state the result, however, we declare the following notion of high probability events in this paper.
\begin{definition}\label{definition:hp}
\fsp We say an event {\small$\mathscr{E}$} holds with high probability if {\small$\mathbb{P}(\mathscr{E})=1-\mathrm{o}(1)$}.
\end{definition}
\begin{lemma}\label{lemma:rwedge}
\fsp Fix any {\small$\mathrm{L}>0$}. With high probability, we have {\small$\mathbf{R}^{\N,\wedge}_{\t,\x}=\mathbf{R}^{\N}_{\t,\x}$} for all {\small$\t\in[0,1]$} and {\small$\x\in\llbracket0,\mathrm{L}\N\rrbracket$}.
\end{lemma}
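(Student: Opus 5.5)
We need to show that, with high probability, $|\mathbf{R}^{\N}_{\t,\x}-1|\lesssim\N^{-\frac16\delta_{\mathbf{S}}}$ uniformly over $\t\in[0,1]$ and $\x\in\llbracket0,\mathrm{L}\N\rrbracket$. By \eqref{eq:heatIII},
\begin{align*}
\mathbf{R}^{\N}_{\t,\x}-1=(\mathbf{S}^{\N}_{\t,\x})^{-1}\Big(\mathbf{Z}^{\N}_{\t,\x}-\sum_{\w}\mathbf{H}^{\N,\mathbf{A}}_{0,\mathfrak{t}_{\N},\x,\w}\mathbf{1}^{(\x)}_{\w}\mathbf{Z}^{\N}_{\t,\w}\Big).
\end{align*}
Since $\mathbf{Z}^{\N}=\exp(\lambda\mathbf{j}^{\N}-\lambda\mathscr{R}_{\lambda}\t)$, I would factor out $\mathbf{Z}^{\N}_{\t,\x}$ and write $\mathbf{Z}^{\N}_{\t,\w}/\mathbf{Z}^{\N}_{\t,\x}=\exp(\lambda(\mathbf{j}^{\N}_{\t,\w}-\mathbf{j}^{\N}_{\t,\x}))$. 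This gives
\begin{align*}
\mathbf{R}^{\N}_{\t,\x}=\Big(\sum_{\w}\mathbf{H}^{\N,\mathbf{A}}_{0,\mathfrak{t}_{\N},\x,\w}\mathbf{1}^{(\x)}_{\w}\exp(\lambda(\mathbf{j}^{\N}_{\t,\w}-\mathbf{j}^{\N}_{\t,\x}))\Big)^{-1}.
\end{align*}
The claim therefore reduces to two things: a deterministic statement about the heat kernel, and a spatial H\"older estimate for $\mathbf{j}^{\N}$ at mesoscopic scales. The positivity of $\mathbf{Z}^{\N}$ takes care of the division.

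\textbf{Deterministic step.} Using Proposition \ref{prop:hkestimates}, I would show that $\sum_{\w}\mathbf{H}^{\N,\mathbf{A}}_{0,\mathfrak{t}_{\N},\x,\w}\mathbf{1}^{(\x)}_{\w}=1+\mathrm{O}(\N^{-\delta_{\mathbf{S}}/2})$ uniformly in $\x$. The Robin term leaks mass at rate $\N^{-1}\lambda\mathbf{A}$ per boundary visit, and there are about $\N^{2}\mathfrak{t}_{\N}\cdot(\N\mathfrak{t}_{\N}^{1/2})^{-1}=\N\mathfrak{t}_{\N}^{1/2}$ visits in time $\mathfrak{t}_{\N}$. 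The total mass loss is then $\lesssim\mathfrak{t}_{\N}^{1/2}=\N^{-\delta_{\mathbf{S}}/2}$. The indicator cuts off only a superpolynomially small amount, by the off-diagonal decay used in Lemma \ref{lemma:triv}.

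\textbf{Stochastic step.} Next I need, with high probability,
\begin{align*}
\sup_{\t\in[0,1]}\ \sup_{\x\in\llbracket0,\mathrm{L}\N\rrbracket}\ \sup_{|\w-\x|\lesssim(\log\N)^{2}\N\mathfrak{t}_{\N}^{1/2}}|\mathbf{j}^{\N}_{\t,\w}-\mathbf{j}^{\N}_{\t,\x}|\lesssim\N^{-\frac14\delta_{\mathbf{S}}}.
\end{align*}
The window length is $\ell\lesssim(\log\N)^{2}\N^{1-\delta_{\mathbf{S}}/2}$, and the difference equals $\N^{-1/2}\sum_{\y}\bphi_{\t,\y}$ over a block of that length. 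At a fixed time, I would compare with $\mathbb{P}^{0}$ using Assumption \ref{assump:data}, and its propagation $\mathfrak{P}_{\t}\leq\N^{\gamma_{\mathrm{data}}}$. The spins are independent and sub-Gaussian with mean zero under $\mathbb{P}^{0}$ (Assumption \ref{assump:potential}). Hoeffding then bounds the block sum by $\N^{-1/2}\ell^{1/2}(\log\N)\lesssim\N^{-\delta_{\mathbf{S}}/4}(\log\N)^{2}$, with probability $1-\exp(-c(\log\N)^{2})$. This beats the factor $\N^{\gamma_{\mathrm{data}}}$ as long as $\gamma_{\mathrm{data}}$ is small relative to the log-gain; since the tail is superpolynomial, this holds for any fixed $\gamma_{\mathrm{data}}$. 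A union bound over $\x,\w$ and over a time mesh of spacing $\N^{-100}$ handles countably many points.

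Combining the two steps,
\begin{align*}
\mathbf{R}^{\N}_{\t,\x}^{-1}=(1+\mathrm{O}(\N^{-\delta_{\mathbf{S}}/2}))\cdot\exp(\mathrm{O}(\N^{-\delta_{\mathbf{S}}/4}(\log\N)^{2})),
\end{align*}
so $|\mathbf{R}^{\N}_{\t,\x}-1|\lesssim\N^{-\delta_{\mathbf{S}}/5}\ll\N^{-\delta_{\mathbf{S}}/6}$.

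\textbf{Main obstacle.} The hard part is the time-continuity between mesh points. The drift in \eqref{eq:currI}-\eqref{eq:currII} has size $\N^{3/2}$, and over a time step $\N^{-100}$ both the drift and the Brownian parts change $\mathbf{j}^{\N}_{\t,\w}-\mathbf{j}^{\N}_{\t,\x}$ negligibly. The catch is that this requires a sup-in-time bound on the spins $\bphi$. I would obtain it from the SDE \eqref{eq:phiI}-\eqref{eq:phiII} via a crude Gronwall/BDG moment bound, giving $\sup_{\t\leq1}|\bphi_{\t,\y}|\leq\N^{\mathrm{C}}$ with high probability, again using the density bound \eqref{eq:dataI0}. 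Making this crude polynomial bound rigorous uniformly on $\llbracket0,\mathrm{L}\N+\N\rrbracket$ is where I expect the work to be; polynomial losses are absorbed by the tiny mesh.
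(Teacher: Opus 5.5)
Your proposal is correct and is essentially the paper's argument. Lemma \ref{lemma:rwedge} is deduced from Lemma \ref{lemma:rratio}, which uses the same decomposition of $\mathbf{S}^{\N}_{\t,\x}-\mathbf{Z}^{\N}_{\t,\x}$, the mass estimate $|\mathbf{e}^{\mathfrak{t}_{\N}\mathscr{T}_{\N}}(\mathbf{1})-1|\lesssim\mathfrak{t}_{\N}^{1/2}$ of Lemma \ref{lemma:hknorm} with off-diagonal decay for the indicator, and the block-sum bound of Lemma \ref{lemma:phiap}; the one cosmetic difference is that you use the $\mathrm{L}^{\infty}$ density bound with $\log\N$-scale Hoeffding tails, where the paper uses $\N^{\delta}$-scale tails compatible with the weaker $\mathrm{L}^{2}$ condition \eqref{eq:dataI}.

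The paper dismisses the time-continuity step you flag as a standard short-time argument. Your suggested Gronwall moment bound on $\bphi$ would not close for the polynomially growing drift $\N^{\frac32}\wt{\mathbf{F}}$; instead, use stationarity of $\mathbb{P}^{0}$ (Jensen for the time-integrated drift over a mesh interval, BDG for the noise) followed by the path-space change of measure.
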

In words, the \abbr{SDE} \eqref{eq:sdezetaI1}-\eqref{eq:sdezetaI6} is a spatially cutoff modification of \eqref{eq:sdeI1}-\eqref{eq:sdeI5} with cosmetic adjustments. A key feature about \eqref{eq:sdezetaI1}-\eqref{eq:sdezetaI6} is that it cuts off (in space) the error terms and nonlinearities that we need to control. {\color{black}As discussed in Section \ref{section:method}, our aim is to compare {\small$\mathbf{S}^{\N}$} to {\small$\mathbf{S}^{\N,\zeta}$} for {\small$\zeta>0$} small, starting with a comparison for {\small$\zeta>0$} large.} This first step is the content of the following, whose proof is a standard consequence of the spatial locality, polynomial-speed, and well-posedness of \eqref{eq:sdezetaI1}-\eqref{eq:sdezetaI6}.
\begin{lemma}\label{lemma:largezeta}
\fsp Fix any {\small$\mathrm{L}>0$}. For large enough {\small$0<\zeta,\zeta_{\mathrm{large}}=\mathrm{O}(1)$}, we have the following with high probability:
\begin{align}
\sup_{\t\in[0,1]}\sup_{\x\in\llbracket0,\mathrm{L}\N\rrbracket}|\mathbf{S}^{\N}_{\t,\x}-\mathbf{S}^{\N,\zeta}_{\t,\x}|\lesssim_{\mathrm{L}}\N^{-\gamma_{\mathrm{L}}}, \quad\text{where } \gamma_{\mathrm{L}}>0.\nonumber
\end{align}
\end{lemma}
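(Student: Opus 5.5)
Throughout, I would use Proposition \ref{prop:sde} as the equation for {\small$\mathbf{S}^{\N}$} and \eqref{eq:sdezetaI1}-\eqref{eq:sdezetaI6} for {\small$\mathbf{S}^{\N,\zeta}$}. The plan is to work on the high-probability event of Lemma \ref{lemma:rwedge} with {\small$\mathrm{L}$} replaced by {\small$\N^{\zeta}$}-type large domains. More precisely, I would use the version of Lemma \ref{lemma:rwedge} on {\small$\llbracket0,\N^{1+\zeta}\rrbracket$}; this costs only a union bound, since the proof of Lemma \ref{lemma:rwedge} gives polynomially small failure probabilities per point. On that event, {\small$\mathbf{R}^{\N,\wedge}=\mathbf{R}^{\N}$}, and the only remaining difference between the two equations is the cutoffs {\small$\boldsymbol{\chi}^{(\zeta)}$} and {\small$\boldsymbol{\chi}^{(\zeta_{\mathrm{large}})}$}. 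These are identically {\small$1$} on {\small$\llbracket0,\N^{1+\zeta}\rrbracket$}.

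Set {\small$\mathbf{D}:=\mathbf{S}^{\N}-\mathbf{S}^{\N,\zeta}$}. Subtracting the two equations, {\small$\mathbf{D}$} solves the same linear \abbr{SDE} as {\small$\mathbf{S}^{\N,\zeta}$} (with zero initial data), plus a forcing term. This forcing consists of every term in \eqref{eq:sdeI1}-\eqref{eq:sdeI5} multiplied by {\small$1-\boldsymbol{\chi}$}, evaluated on {\small$\mathbf{R}^{\N}\mathbf{S}^{\N}$}. Each such forcing term at {\small$\x\in\llbracket0,\mathrm{L}\N\rrbracket$} involves kernels {\small$\boldsymbol{\mathcal{K}}^{(\ell)}_{\x,\w}$}, {\small$\mathbf{E}^{\N}_{\t,\x,\w}$}, and the {\small$\mathfrak{t}_{\N}$}-heat kernel, all evaluated at {\small$|\x-\w|\gtrsim\N^{1+\zeta}$}. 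By \eqref{eq:sdeII2}, \eqref{eq:sdeIII}, and Proposition \ref{prop:hkestimates}, these carry the factor {\small$\exp(-\kappa|\x-\w|/\N)$} for arbitrary {\small$\kappa$}, which is {\small$\lesssim\exp(-\kappa\N^{\zeta}/2)$} times an exponential weight. The random factors (the {\small$\bphi$}-polynomials and the {\small$\mathbf{Av}$}-coefficients, which are at most polynomially large in {\small$\N$} by \eqref{eq:bulkadmissibleI}) and {\small$\mathbf{S}^{\N}$} itself only need crude moment bounds. For {\small$\mathbf{S}^{\N}$}, I would use {\small$\E|\mathbf{Z}^{\N}_{\t,\w}|^{2}\lesssim\exp(\mathrm{C}\N^{\mathrm{C}}+\kappa_{0}|\w|/\N)$}. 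This follows from the Itô formula applied to \eqref{eq:currI}-\eqref{eq:currII}: the drift is {\small$\mathrm{O}(\N^{3/2})$} times polynomials of spins, and the spins have moments under {\small$\mathbb{P}^{0}$}, transferred via Assumption \ref{assump:data}. Together with \eqref{eq:mainI} for the initial data, this shows the forcing is {\small$\lesssim\exp(\mathrm{C}\N^{\mathrm{C}}-\kappa\N^{\zeta})$} in {\small$\mathrm{L}^{2}$}, uniformly in time. Taking {\small$\zeta>\mathrm{C}$} makes this super-polynomially small.

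Next I would propagate this smallness through the linear \abbr{SDE}. A Gronwall / Picard argument in a weighted {\small$\ell^{\infty}$}-{\small$\mathrm{L}^{2}(\mathbb{P})$} norm, with weight {\small$\exp(-\kappa|\x|/\N)$}, shows that the solution map amplifies by at most {\small$\exp(\mathrm{C}\N^{\mathrm{C}'})$}. The inputs are the crude speed bound (coefficients of size {\small$\N^{2}$} times polynomially bounded random factors), the truncation {\small$|\mathbf{R}^{\N,\wedge}|\leq2$}, the cutoffs, and the exponential off-diagonal decay of all kernels. Here {\small$\mathrm{C}'$} does not depend on {\small$\zeta$}, because the cutoffs only remove terms. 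Since {\small$\zeta$} is chosen after {\small$\mathrm{C}'$} and {\small$\kappa$} can be taken large, {\small$\E|\mathbf{D}_{\t,\x}|^{2}$} is super-polynomially small at each fixed {\small$(\t,\x)$}. To get the supremum over {\small$\t\in[0,1]$} and {\small$\x\in\llbracket0,\mathrm{L}\N\rrbracket$}, I would take a union bound over an {\small$\N^{-100}$}-time mesh together with a Kolmogorov/BDG time-continuity bound, which costs only polynomial factors. Markov's inequality then gives the bound {\small$\N^{-\gamma_{\mathrm{L}}}$} with high probability.

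The main obstacle is the time-gradient terms \eqref{eq:sdezetaI4} and \eqref{eq:sdezetaI6}, together with the backward time-averages in {\small$\mathbf{Av}^{\mathbf{T},\cdot}$}. These involve past values of {\small$\mathbf{S}^{\N,\zeta}$} over windows of length {\small$\mathfrak{t}_{\mathbf{Av}}$} and {\small$\mathfrak{t}_{\partial}$}, so the equation is not a pure Markov \abbr{SDE}. There is also the factor {\small$(\mathbf{Z}^{\N}_{\t,\w_{\pm}})^{-1}$} in the {\small$\mathbf{Av}$}-coefficients. I would handle the first issue by making the Gronwall step functional: control {\small$\sup_{\s\leq\t}$} of the weighted norm, which bounds delayed terms by the running supremum. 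For the second, I would note that {\small$\mathbf{Av}$}-coefficients are ratios of exponentials of {\small$\mathbf{j}^{\N}$}-increments. These have moments bounded by {\small$\exp(\N^{\mathrm{C}})$} by the same Itô estimate, so they are absorbed into the {\small$\exp(\mathrm{C}\N^{\mathrm{C}'})$} amplification, which the cutoff decay {\small$\exp(-\kappa\N^{\zeta})$} still beats.
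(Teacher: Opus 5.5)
Your architecture is the right one. It is what the paper's one-line justification (spatial locality, polynomial speed, well-posedness) has in mind: on $\llbracket0,\mathrm{L}\N\rrbracket$ the cutoff-induced forcing is suppressed by kernel decay $\exp(-\kappa\N^{\zeta})$, the linear flow amplifies by at most $\exp(\N^{\mathrm{C}'})$ with $\mathrm{C}'$ independent of $\zeta$, and $\zeta$ is taken large. The gap is in the probabilistic inputs you use to run this in $\mathrm{L}^{2}(\mathbb{P})$.

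First, the bound $\E|\mathbf{Z}^{\N}_{\t,\w}|^{2}\lesssim\exp(\mathrm{C}\N^{\mathrm{C}}+\kappa_{0}|\w|/\N)$ does not follow from the It\^{o} formula and moments of the spins. It\^{o} gives $\partial_{\t}\E|\mathbf{Z}^{\N}_{\t,\w}|^{2}=\E[\mathfrak{c}_{\t}|\mathbf{Z}^{\N}_{\t,\w}|^{2}]$, where $\mathfrak{c}_{\t}$ contains $\N^{3/2}\grad^{\mathbf{X}}_{1}\mathscr{U}'[\bphi_{\t,\w}]$ and $\N\mathbf{F}[\tau_{\w}\bphi_{\t}]$. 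These are unbounded and correlated with $\mathbf{Z}^{\N}$, so nothing closes. What you actually need is an exponential moment of $\int_{0}^{\t}\N\mathbf{F}[\tau_{\w}\bphi_{\s}]\d\s$, a degree-$\deg$ polynomial functional of the spins. Fixed-time information such as \eqref{eq:dataI0} cannot supply this: as soon as $\mathbf{F}$ has a nonzero term of degree $\geq3$, already $\E^{0}\exp(c\,\mathbf{F}[\tau_{\w}\bphi])=\infty$ for every $c\neq0$. The same objection applies to your claim that the $\mathbf{Av}$-coefficients have moments $\lesssim\exp(\N^{\mathrm{C}})$, since they contain $(\mathbf{Z}^{\N})^{-1}$ and time-shifted copies of $\mathbf{Z}^{\N}$.

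Second, the weighted $\ell^{\infty}$--$\mathrm{L}^{2}(\mathbb{P})$ Gronwall for $\mathbf{D}$ does not close as stated. The coefficients multiplying $\mathbf{D}$ are the $\bphi$-polynomial in \eqref{eq:sdeII2}, $\mathbf{Av}^{\mathbf{T},\mathbf{X},\mathscr{Q}_{\ell}}$, $\mathbf{Av}^{\mathbf{X},\mathscr{Q}_{\ell}}$, $\mathbf{Av}^{\mathbf{T},\mathfrak{b}_{\ell}}$ and $\mathfrak{b}_{\ell}[\bphi_{\t}]$. These are polynomially bounded in $\N$ only with high probability, not almost surely. So they cannot be pulled out of $\E|\cdot|^{2}$, and H\"{o}lder only produces an unclosed hierarchy of moments. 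Likewise, ``working on the event'' $\mathbf{R}^{\N,\wedge}=\mathbf{R}^{\N}$ has no meaning inside an $\mathrm{L}^{2}(\mathbb{P})$ argument unless that event is encoded by a stopping time.

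The missing ingredient is the paper's a priori truncation. Use a stopping time $\mathfrak{t}_{\mathrm{ap}}$ as in \eqref{eq:tap}, on a window $\llbracket0,\N^{\mathrm{D}}\rrbracket$ with $\mathrm{D}>1+\zeta,1+\zeta_{\mathrm{large}}$, and include the condition $\mathbf{R}^{\N}=\mathbf{R}^{\N,\wedge}$ there. That this condition holds with high probability is already Lemma \ref{lemma:rratio}, so no per-point union bound is needed. Work with $\mathbf{D}$ stopped at $\mathfrak{t}_{\mathrm{ap}}$; this costs nothing with high probability by \eqref{eq:tap2}, exactly as in Lemma \ref{lemma:hkzetasim}. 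Then every coefficient acting on $\mathbf{D}$ is deterministically $\lesssim\N^{\mathrm{C}}$, and your Gronwall/BDG argument (with running suprema for the delayed terms) closes with $\mathrm{C}'$ independent of $\zeta$.

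For the forcing, note that it contains no inverse powers of $\mathbf{Z}^{\N}$. Indeed $\mathbf{Av}^{\mathbf{T},\mathbf{X},\mathscr{Q}}_{\t,\w}\mathbf{R}^{\N}_{\t,\w_{\pm}}\mathbf{S}^{\N}_{\t,\w_{\pm}}=\boldsymbol{\Upsilon}^{\mathbf{T},\mathbf{X},\mathscr{Q}}_{\t,\w}$, and the same holds for \eqref{eq:sdeI4}, the $\mathrm{Err}$-term and the noise. So you only need moments of $\mathbf{1}_{\s\leq\mathfrak{t}_{\mathrm{ap}}}\mathbf{Z}^{\N}_{\s,\w}$. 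Write $\mathbf{j}^{\N}_{\s,\w}=\mathbf{j}^{\N}_{\s,0}+\N^{-1/2}\sum_{\y\in\llbracket1,\w\rrbracket}\bphi_{\s,\y}$.
\begin{itemize}
\item The spatial sum has exponential moments $\lesssim\N^{\gamma_{\mathrm{data}}}\exp(\mathrm{C}_{p}|\w|/\N)$ at each fixed time, by \eqref{eq:dataI0} and sub-Gaussianity under $\mathbb{P}^{0}$.
\item By \eqref{eq:currII} and the spin cap in $\mathfrak{t}_{\mathrm{ap}}$, $|\mathbf{j}^{\N}_{\s\wedge\mathfrak{t}_{\mathrm{ap}},0}-\mathbf{j}^{\N}_{0,0}|\lesssim\N^{3/2+\mathrm{C}\delta_{\mathbf{S}}}+\N^{1/2}\sup_{\r\leq1}|\mathbf{b}_{\r,0}|$.
\item \eqref{eq:mainI} handles $\mathbf{j}^{\N}_{0,0}$.
\end{itemize}
This gives moments of order $\exp(\mathrm{C}\N^{2}+\mathrm{C}|\w|/\N)$. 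The kernel decay $\exp(-\kappa|\x-\w|/\N)$ over distances $\gtrsim\N^{1+\zeta}$ beats this once $\kappa$ and $\zeta$ are large.
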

In order to reduce the parameter {\small$\zeta>0$}, we will proceed by analyzing the heat kernel associated to \eqref{eq:sdezetaI1}-\eqref{eq:sdezetaI6}. In particular, we will let {\small$\mathbf{K}^{\N,\zeta}_{\s,\t,\x,\y}$} be a function of {\small$(\s,\t,\x,\y)\in[0,\infty)^{2}\times\llbracket0,\infty\rrbracket^{2}$}, restricted to {\small$\s\leq\t$}, which satisfies the following \abbr{SDE} in {\small$(\t,\x)$}-variables (so that {\small$(\s,\y)$} are fixed) with initial data {\small$\mathbf{K}^{\N,\zeta}_{\s,\s,\x,\y}=\mathbf{1}_{\x=\y}$}:
\begin{align}
\d\mathbf{K}^{\N,\zeta}_{\s,\t,\x,\y}&=\mathscr{T}_{\N}\mathbf{K}^{\N,\zeta}_{\s,\t,\x,\y}\d\t+[\mathbf{e}^{\mathfrak{t}_{\N}\mathscr{T}_{\N}}(\mathbf{1}^{(\x)}_{\cdot}{\color{blue}\boldsymbol{\chi}^{(\zeta_{\mathrm{large}})}_{\cdot}}\sqrt{2}\lambda\N^{\frac12}{\color{blue}\mathbf{R}^{\N,\wedge}_{\t,\cdot}}\mathbf{K}^{\N,\zeta}_{\s,\t,\cdot,\y}\d\mathbf{b}_{\t,\cdot})]_{\x}+\mathrm{Err}[{\color{blue}\boldsymbol{\chi}^{(\zeta_{})}_{}\mathbf{R}^{\N,\wedge}}\mathbf{K}^{\N,\zeta}_{\s,\cdot,\cdot,\y}]_{\t,\x}\d\t\label{eq:hkzetaI1}\\
&+\sum_{\ell\in\llbracket1,\mathrm{L}_{1}\rrbracket}{}\sum_{\w\in\llbracket0,\infty\rrbracket}\boldsymbol{\mathcal{K}}^{(\ell)}_{\x,\w}\cdot{\color{blue}\boldsymbol{\chi}^{(\zeta_{})}_{\w}}\cdot\N\cdot\mathbf{Av}^{\mathbf{T},\mathbf{X},\mathscr{Q}_{\ell}}_{\t,\w}{\color{blue}\mathbf{R}^{\N,\wedge}_{\t,\w_{\pm}}}\mathbf{K}^{\N,\zeta}_{\s,\t,\w_{\pm},\y}\d\t\label{eq:hkzetaI3}\\
&+\sum_{\ell\in\llbracket1,\mathrm{L}_{1}\rrbracket}{}\grad^{\mathbf{T},\mathrm{av}}_{\mathfrak{t}_{\mathbf{Av}}}\Big(\sum_{\w\in\llbracket0,\infty\rrbracket}\boldsymbol{\mathcal{K}}^{(\ell)}_{\x,\w}\cdot{\color{blue}\boldsymbol{\chi}^{(\zeta_{})}_{\w}}\cdot\N\cdot\mathbf{Av}^{\mathbf{X},\mathscr{Q}_{\ell}}_{\t,\w}{\color{blue}\mathbf{R}^{\N,\wedge}_{\t,\w_{\pm}}}\mathbf{K}^{\N,\zeta}_{\s,\t,\w_{\pm},\y}\Big)\d\t\label{eq:hkzetaI4}\\
&+\sum_{\ell\in\llbracket1,\mathrm{L}_{2}\rrbracket}\sum_{\w\in\llbracket0,\infty\rrbracket}\boldsymbol{\mathcal{K}}^{\partial,\ell}_{\x,\w}\cdot\N\mathbf{Av}^{\mathbf{T},\mathfrak{b}_{\ell}}_{\t,\w}{\color{blue}\mathbf{R}^{\N,\wedge}_{\t,\w}}\mathbf{K}^{\N,\zeta}_{\s,\t,\w,\y}\d\t\label{eq:hkzetaI5}\\
&+\sum_{\ell\in\llbracket1,\mathrm{L}_{2}\rrbracket}\grad^{\mathbf{T},\mathrm{av}}_{\mathfrak{t}_{\partial}}\Big(\sum_{\w\in\llbracket0,\infty\rrbracket}\boldsymbol{\mathcal{K}}^{\partial,\ell}_{\x,\w}\cdot\N\mathfrak{b}_{\ell}[\bphi_{\t}]{\color{blue}\mathbf{R}^{\N,\wedge}_{\t,\w}}\mathbf{K}^{\N,\zeta}_{\s,\t,\w,\y}\Big)\d\t.\label{eq:hkzetaI6}
\end{align}

In particular, the stochastic heat kernel {\small$\mathbf{K}^{\N,\zeta}$} satisfies the same \abbr{SDE} \eqref{eq:sdezetaI1}-\eqref{eq:sdezetaI6} but with additional parameters {\small$(\s,\y)$} and different initial data. A standard calculation for linear evolution equations then gives the formula below for any {\small$(\t,\x)\in[0,\infty)\times\llbracket0,\infty\rrbracket$} (since the initial data of {\small$\mathbf{S}^{\N,\zeta}$} is that of {\small$\mathbf{S}^{\N}$} by construction):
\begin{align}
\mathbf{S}^{\N,\zeta}_{\t,\x}=\sum_{\y\in\llbracket0,\infty\rrbracket}\mathbf{K}^{\N,\zeta}_{0,\t,\x,\y}\mathbf{S}^{\N}_{0,\y}.\label{eq:hkzetaI7}
\end{align}
Note that reducing the parameter {\small$\zeta$} in \eqref{eq:sdezetaI1}-\eqref{eq:sdezetaI6} (or \eqref{eq:hkzetaI1}-\eqref{eq:hkzetaI6}) means showing that the contribution of terms far from the origin is negligible in the large-{\small$\N$} limit. This will be done ultimately via off-diagonal estimates for the {\small$\mathbf{K}^{\N,\zeta}$}-kernel.
\begin{prop}\label{prop:hkzeta}
\fsp There exists {\small$\beta_{\star}>0$} that is independent of all other parameters such that for any finite {\small$\zeta,\kappa,\delta>0$}, we have the following estimate with high probability:
\begin{align}
\sup_{0\leq\s\leq\t\leq1}\sup_{\x\in\llbracket0,\infty\rrbracket}\sum_{\y\in\llbracket0,\infty\rrbracket}\exp(\tfrac{\kappa|\x-\y|}{\N})\cdot|\mathbf{K}^{\N,\zeta}_{\s,\t,\x,\y}|\lesssim_{\kappa,\delta}\exp(\N^{-\beta_{\star}}\N^{\zeta})\N^{\delta}.\label{eq:hkzetaI}
\end{align}
\end{prop}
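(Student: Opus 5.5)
The plan is a parametrix/Duhamel expansion of $\mathbf{K}^{\N,\zeta}$ around the deterministic Robin kernel $\mathbf{H}^{\N,\mathbf{A}}$, combined with a Gronwall-type iteration in a weighted $\ell^{1}$-norm. For fixed $(\s,\y)$, write the mild form of \eqref{eq:hkzetaI1}--\eqref{eq:hkzetaI6}:
\begin{align*}
\mathbf{K}^{\N,\zeta}_{\s,\t,\x,\y}=\mathbf{H}^{\N,\mathbf{A}}_{\s,\t,\x,\y}+\sum_{i}\boldsymbol{\Phi}^{(i)}_{\s,\t,\x,\y}.
\end{align*}
Here each $\boldsymbol{\Phi}^{(i)}$ is the $\mathbf{H}^{\N,\mathbf{A}}$-convolution in $(\r,\w)$ of one term on the right-hand side: the noise, the $\mathrm{Err}$ term, the bulk averages \eqref{eq:hkzetaI3}--\eqref{eq:hkzetaI4}, and the edge averages \eqref{eq:hkzetaI5}--\eqref{eq:hkzetaI6}. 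Define the norm
\begin{align*}
\|\mathbf{K}\|_{\s,\t;\kappa}:=\sup_{\x}\sum_{\y}\exp(\tfrac{\kappa|\x-\y|}{\N})|\mathbf{K}_{\s,\t,\x,\y}|.
\end{align*}
Proposition \ref{prop:hkestimates} gives $\|\mathbf{H}^{\N,\mathbf{A}}\|_{\s,\t;\kappa}\lesssim_{\kappa}1$ on $[0,1]$; the Robin parameter only adds an $\mathrm{O}(1)$ exponential growth. The goal is an inequality of the form
\begin{align*}
\|\mathbf{K}^{\N,\zeta}\|_{\s,\t;\kappa}\lesssim\N^{\delta}+\N^{-\beta_{\star}}\N^{\zeta}\int_{\s}^{\t}\|\mathbf{K}^{\N,\zeta}\|_{\s,\r;\kappa}\,\d\mu(\r),
\end{align*}
holding with high probability, for a finite measure $\mu$ on $[0,1]$. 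Gronwall then gives \eqref{eq:hkzetaI}. Since the noise term is not pathwise controllable in $\ell^{1}$, I would first prove moment bounds for fixed $(\s,\t)$, including the noise via BDG. I would then upgrade to a supremum over a fine mesh of $(\s,\t)$ using the high moments of $\N^{\delta}$-type bounds, and interpolate between mesh points using time-regularity of $\mathbf{K}$.

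The deterministic-type terms should be straightforward. The $\mathrm{Err}$ term contributes $\N^{2}\cdot(\N^{-6/5}+\N^{-1/3}\mathbf{1}_{\w\in\llbracket0,1\rrbracket})$ times polynomial moments of spins. I would handle it using the sub-Gaussian bounds and density Assumption \ref{assump:data}, together with the heat kernel time-integrability $\int\mathbf{H}^{\N,\mathbf{A}}_{\r,\t,\x,0}\d\r\lesssim\N^{-1}$ at the edge, which gives a power saving. The noise term has size $\N^{1/2}$ with quadratic variation against $|\mathbf{H}|^{2}$. Since $\sum_{\w}|\mathbf{H}_{\r,\t,\x,\w}|^{2}\lesssim\N^{-1}|\t-\r|^{-1/2}$, it is integrable and contributes an $\mathrm{O}(1)$-in-$\N$ Gronwall coefficient rather than a small one. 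This is exactly why the bound carries $\N^{\delta}$ and a harmless exponential; I would absorb it through a $p$-th moment Gronwall, since $\mathbf{R}^{\N,\wedge}$ is bounded. For the time-gradient terms \eqref{eq:hkzetaI4} and \eqref{eq:hkzetaI6}, I would move $\grad^{\mathbf{T},\mathrm{av}}$ onto the heat kernel and onto $\mathbf{K}$ by a discrete-time summation by parts. The heat kernel costs $\mathfrak{t}_{\mathbf{Av}}\N^{2}|\t-\r|^{-1}$-type factors, and $\mathbf{K}$ costs its own short-time increments, which are controlled through the equation itself (one more iteration). This yields powers $\N\cdot\mathfrak{t}_{\mathbf{Av}}^{1/2}$ and $\N\mathfrak{t}_{\partial}^{1/2}=\N^{1/4}$ against the $\N^{-1+\mathrm{C}\delta_{\mathbf{S}}}$ kernel prefactors \eqref{eq:sdeIII} and \eqref{eq:sdeIV}, so these are negligible.

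The main obstacle is the averaged terms \eqref{eq:hkzetaI3} and \eqref{eq:hkzetaI5}. Here the coefficients $\N\mathbf{Av}$ are only controlled in the weak local-$\ell^{2}$ sense \eqref{eq:method2a}, i.e.\ by the Boltzmann--Gibbs estimates of Section \ref{section:stochII}, over a fixed interval and with high probability. I would cover $\llbracket0,2\N^{1+\zeta}\rrbracket$ (the support of $\boldsymbol{\chi}^{(\zeta)}$) by $\mathrm{O}(\N^{\zeta})$ blocks of length $\N$, and apply the estimate on each block with a union bound. This is where the factor $\N^{\zeta}$ in \eqref{eq:hkzetaI} enters, and it requires the BGP failure probabilities to be small enough to survive a union bound over $\N^{\zeta}$ blocks. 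Then Cauchy--Schwarz in $(\r,\w)$ splits the term into $(\int\sum|\N\mathbf{Av}|^{2})^{1/2}\lesssim\N^{-\beta_{\mathrm{CLT}}/2}$ per block and $(\int\sum|\N\boldsymbol{\mathcal{K}}|^{2}|\mathbf{K}|^{2})^{1/2}$. The kernel $\boldsymbol{\mathcal{K}}$ is smooth on scale $\N^{1-\delta_{\mathbf{S}}}$ with weight $\exp(-\kappa|\x-\w|/\N)$. I would absorb the weight shift $\w\mapsto\w_{\pm}$, which is at distance $(\log\N)^{3}\mathfrak{l}_{\mathbf{Av}}\ll\N$, into the exponential weight at the cost of $\exp(\mathrm{o}(1))$. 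Bounding $\ell^{2}$ in $\w$ of $\mathbf{K}$ by its weighted $\ell^{1}$ norm times $\sup|\mathbf{K}|$ would be circular, so I would instead iterate: plug in the Duhamel expansion once more, so the quantity to bound is $\mathbf{Av}$ paired against a deterministic kernel composed with $\mathbf{K}$. Summing over blocks with weights then gives the Gronwall coefficient $\N^{-\beta_{\star}}\N^{\zeta}$. The edge term \eqref{eq:hkzetaI5} is handled identically, using the second estimate in \eqref{eq:method2a} and only the two sites $\w=0,1$. If the $\ell^{2}$-in-time structure does not close directly, I would fall back on a time-discretized Gronwall on intervals of length $\N^{-\epsilon}$, chaining $\mathrm{O}(\N^{\epsilon})$ steps.
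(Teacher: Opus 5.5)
There is a genuine gap in the architecture. You want a single Gronwall inequality for $\mathbf{K}^{\N,\zeta}$, obtained from a Duhamel expansion around $\mathbf{H}^{\N,\mathbf{A}}$. That inequality has to absorb two kinds of terms that are controlled in incompatible ways.

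\emph{The noise term} is only accessible through moments (BDG), so it requires $\mathrm{L}^{p}(\mathbb{P})$-control of $\mathbf{K}^{\N,\zeta}$. But $\mathbf{K}^{\N,\zeta}$ contains the contributions of \eqref{eq:hkzetaI3} and \eqref{eq:hkzetaI5}, whose coefficients are controlled only through Propositions~\ref{prop:stochmain1}--\ref{prop:stochmain2}, i.e.\ second moments on an event, in a space-time $\ell^{2}$ sense. Their pointwise a priori bounds (Lemma~\ref{lemma:cltxt?}, e.g.\ $|\N\mathbf{Av}^{\mathbf{T},\mathbf{X},\mathscr{Q}}|\lesssim\N^{1/2+\delta}$) are far too weak for any Gronwall argument.

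\emph{Conversely}, a pathwise Gronwall cannot absorb a stochastic integral. Your step ``moment bounds for fixed $(\s,\t)$, including the noise via BDG'' never says how the averaged terms enter such moment bounds, and with the estimates of Section~\ref{section:stochII} they cannot.

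The missing idea is the two-kernel decomposition of Section~\ref{section:hkzeta}. The auxiliary kernel $\mathbf{L}^{\N,\zeta,\sim}$ of \eqref{eq:lzetasim} keeps only the noise, the $\mathrm{Err}$ term and the two time-gradient terms. These become moment-controllable once the stopping time $\mathfrak{t}_{\mathrm{ap}}$ of \eqref{eq:tap} supplies a priori bounds on spins, $\mathbf{Av}^{\mathbf{X},\mathscr{Q}}$ and $\mathfrak{b}$. Your sketches for these terms are essentially what Lemma~\ref{lemma:lzeta} does, and they yield $\sup_{\x}\sum_{\y}\exp(\tfrac{\kappa|\x-\y|}{\N})|\mathbf{L}^{\N,\zeta,\sim}_{\s,\t,\x,\y}|\lesssim\N^{\delta/2}$ with high probability. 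Only then is $\mathbf{K}^{\N,\zeta,\sim}$ expanded around $\mathbf{L}^{\N,\zeta,\sim}$ as in \eqref{eq:duhamelI1}, with the two averaged terms as the sole sources. That step is entirely deterministic on the event where \eqref{eq:kzetaI1}, \eqref{eq:kzetaI1b} and Lemma~\ref{lemma:lzeta} hold.

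Second, your treatment of the averaged terms does not close as written. The circularity you anticipate disappears if you take the weighted $\ell^{1}_{\y}$-norm inside the $(\r,\w)$-integral first. Then apply Cauchy--Schwarz with respect to the measure $|\mathscr{A}^{(\ell)}_{\r,\t,\x,\w}|\,\d\r$, where $\mathscr{A}^{(\ell)}$ is the composed kernel as in the proof of Lemma~\ref{lemma:kzeta}; the exponential weight is absorbed using $|\w-\w_{\pm}|\lesssim\N$. This splits into two factors:
\begin{itemize}
\item $\sup_{\z}|\mathscr{A}^{(\ell)}_{\r,\t,\x,\z}|\lesssim\N^{-1+\mathrm{C}\delta_{\mathbf{S}}}$ times the space-time sum of $|\N\mathbf{Av}^{\mathbf{T},\mathbf{X},\mathscr{Q}_{\ell}}|^{2}$ over $\llbracket0,\mathrm{C}\N^{1+\zeta}\rrbracket$;
\item $\|\mathscr{A}^{(\ell)}_{\r,\t,\x,\cdot}\|_{\ell^{1}}$ times $\int_{\s}^{\t}\boldsymbol{\Theta}_{\s,\r,\kappa}^{2}\d\r$, with $\boldsymbol{\Theta}$ from \eqref{eq:kzetaI0}.
\end{itemize}
Re-inserting the Duhamel expansion, as you propose, only produces higher-order terms of the same type.

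You also do not need a union bound over $\N^{\zeta}$ blocks, which would require failure probabilities $\mathrm{o}(\N^{-\zeta})$ that are not available. The event in Proposition~\ref{prop:stochmain1} does not depend on $\x$, so one sums the expectation bound over all of $\llbracket0,\mathrm{C}\N^{1+\zeta}\rrbracket$ and applies Markov once; this is \eqref{eq:kzetaI1}. The result is $\boldsymbol{\Theta}_{\s,\t,\kappa}\lesssim\N^{\delta}+\N^{\mathrm{C}\delta_{\mathbf{S}}}\N^{(\zeta-\beta_{\bullet})/2}(\int_{\s}^{\t}\boldsymbol{\Theta}_{\s,\r,\kappa}^{2}\d\r)^{1/2}$. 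The paper closes this on windows $\t-\s\leq\N^{\beta_{0}-\zeta}$ and chains them by Chapman--Kolmogorov (Lemmas~\ref{lemma:kzeta}--\ref{lemma:kzeta2}); a Gronwall in squared form, close to what you intended, would also work at this point.

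Minor: the $\mathrm{Err}$ kernel \eqref{eq:sdeII2} carries no factor $\N^{2}$. With it, your edge count would give no power saving. Without it, $\sup_{\x}\sum_{\w}|\mathbf{E}^{\N}_{\t,\x,\w}|\lesssim\N^{-1/5+\mathrm{C}\delta_{\mathbf{S}}}$ before $\mathfrak{t}_{\mathrm{ap}}$.
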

{\color{black}The proof of Proposition \ref{prop:hkzeta} is deferred to Section \ref{section:hkzeta}, as the argument requires resolving essentially all of the technical challenges and novelties, as discussed in Sections \ref{section:intro} and \ref{section:method}, of this paper.} We emphasize the importance of {\small$\beta_{\star}>0$} being independent of {\small$\zeta>0$}, as discussed below. (We think of {\small$\beta_{\star}>0$} as small but independent of {\small$\N$}.) 

Since {\small$\delta>0$} is small, the {\small$\N^{\delta}$} in \eqref{eq:hkzetaI} is ultimately harmless. (All estimates in this paper will be quantitative in powers of {\small$\N$}, so small powers of {\small$\N$} are negligible for us.) Moreover, if we restrict summands on the \abbr{LHS} of \eqref{eq:hkzetaI} to points {\small$\x\in\llbracket0,\mathrm{O}(\N)\rrbracket$} and {\small$\y\gtrsim\N^{1+\zeta-\beta_{\star}/2}$}, then by \eqref{eq:hkzetaI}, we get {\small$|\mathbf{K}^{\N,\zeta}_{\s,\t,\x,\y}|\lesssim\exp(-\N^{\zeta-\beta_{\star}/2})\exp(\N^{\zeta-\beta_{\star}})\N^{\delta}$}, which is exponentially small in {\small$\N$} unless {\small$\zeta>0$} is small enough so that \eqref{eq:hkzetaI1}-\eqref{eq:hkzetaI6} is sufficiently close to having compactly supported error terms. By using (essentially) this estimate, we will be able to show that all terms in \eqref{eq:hkzetaI1}-\eqref{eq:hkzetaI6} which are supported outside the domain {\small$\llbracket0,\mathrm{O}(\N^{1+\zeta-\beta_{\star}/2})\rrbracket$} ultimately have an exponentially-small-in-{\small$\N$} contribution to {\small$\mathbf{S}^{\N,\zeta}$}. This lets us then modify \eqref{eq:hkzetaI1}-\eqref{eq:hkzetaI6} outside the domain {\small$\llbracket0,\mathrm{O}(\N^{1+\zeta-\beta_{\star}/2})\rrbracket$} and slightly decrease the {\small$\zeta>0$} parameter therein without changing much the value of {\small$\mathbf{S}^{\N,\zeta}$}.

The paragraph above, assuming a rigorous implementation of it, allows us to ultimately lower the {\small$\zeta$} parameter in Lemma \ref{lemma:largezeta} to any small but fixed value. We will then be able to ultimately compare {\small$\mathbf{S}^{\N,\zeta}$} (for small {\small$\zeta>0$}) to the following lattice \abbr{SHE} for {\small$(\t,\x)\in[0,\infty)\times\llbracket0,\infty\rrbracket$} (with initial data given by that of {\small$\mathbf{Z}^{\N}$}):
\begin{align}
\d\mathbf{Q}^{\N}_{\t,\x}=\mathscr{T}_{\N}\mathbf{Q}^{\N}_{\t,\x}+\sqrt{2}\lambda\N^{\frac12}\mathbf{Q}^{\N}_{\t,\x}\d\mathbf{b}_{\t,\x}.\label{eq:qsde}
\end{align}
The reasoning in the previous paragraphs will lead to the following result.
\begin{lemma}\label{lemma:smallzeta}
\fsp Fix any {\small$\mathrm{L}>0$} and any {\small$\zeta_{1}>\zeta_{2}>0$}. With high probability, we have 
\begin{align}
\sup_{\t\in[0,1]}\sup_{\x\in\llbracket0,\mathrm{L}\N\rrbracket}|\mathbf{S}^{\N,\zeta_{1}}_{\t,\x}-\mathbf{S}^{\N,\zeta_{2}}_{\t,\x}|\lesssim_{\mathrm{L}}\N^{-\gamma_{\mathrm{L},\zeta_{1},\zeta_{2}}}, \quad\text{where } \gamma_{\mathrm{L},\zeta_{1},\zeta_{2}}>0.\label{eq:smallzetaI}
\end{align}
Moreover, there exists {\small$\zeta_{\star}>0$} independent of {\small$\N$} such that if {\small$\zeta\in(0,\zeta_{\star})$}, then with high probability, we have
\begin{align}
\sup_{\t\in[0,1]}\max_{\x\in\llbracket0,\mathrm{L}\N\rrbracket}|\mathbf{S}^{\N,\zeta}_{\t,\x}-\mathbf{Q}^{\N}_{\t,\x}|\lesssim_{\mathrm{L}}\N^{-\gamma_{\zeta_{\star},\mathrm{L}}}, \quad\text{where } \gamma_{\zeta_{\star},\mathrm{L}}>0.\label{eq:smallzetaII}
\end{align}
\end{lemma}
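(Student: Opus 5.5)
We prove \eqref{eq:smallzetaI} by a Duhamel comparison built on the off-diagonal bound in Proposition \ref{prop:hkzeta}, and \eqref{eq:smallzetaII} by a fixed-point comparison that uses the power-saving Boltzmann-Gibbs estimates of Section \ref{section:stochII}.

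\textbf{Step 1: telescoping in $\zeta$.} Since $\zeta_{1}-\zeta_{2}$ is fixed and $\beta_{\star}$ does not depend on $\zeta$, it suffices to compare $\zeta$ with $\zeta-\e$ for a fixed small $\e<\beta_{\star}/4$. Finitely many such steps then pass from $\zeta_{1}$ to $\zeta_{2}$. The difference $\mathbf{D}:=\mathbf{S}^{\N,\zeta}-\mathbf{S}^{\N,\zeta-\e}$ has zero initial data, and it solves the $\zeta$-equation \eqref{eq:sdezetaI1}-\eqref{eq:sdezetaI6} with a forcing. This forcing consists of the terms in \eqref{eq:sdezetaI1}-\eqref{eq:sdezetaI4} whose coefficients carry the factor $\boldsymbol{\chi}^{(\zeta)}-\boldsymbol{\chi}^{(\zeta-\e)}$, evaluated at $\mathbf{S}^{\N,\zeta-\e}$. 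The edge terms \eqref{eq:sdezetaI5}-\eqref{eq:sdezetaI6} and the noise cutoff $\zeta_{\mathrm{large}}$ are common to both equations, so they cancel. The forcing is therefore supported in $\w\gtrsim\N^{1+\zeta-\e}$.

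By Duhamel with the kernel $\mathbf{K}^{\N,\zeta}$ we get
\begin{align*}
\mathbf{D}_{\t,\x}=\int_{0}^{\t}\sum_{\y}\mathbf{K}^{\N,\zeta}_{\s,\t,\x,\y}\,\mathrm{Forc}_{\s,\y}\,\d\s .
\end{align*}
The time-gradient terms $\grad^{\mathbf{T},\mathrm{av}}$ have to be handled by first moving the average onto $\mathbf{K}$, or by keeping them inside the forcing. The kernels $\boldsymbol{\mathcal{K}}^{(\ell)}$ and $\mathbf{E}^{\N}$ decay like $\exp(-\kappa|\y-\w|/\N)$, so $\mathrm{Forc}_{\s,\y}$ is effectively localized to $\y\gtrsim\N^{1+\zeta-2\e}$. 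For $\x\leq\mathrm{L}\N$, Proposition \ref{prop:hkzeta} with $\kappa$ large gives
\begin{align*}
|\mathbf{K}^{\N,\zeta}_{\s,\t,\x,\y}|\lesssim\exp\big(-\kappa\N^{\zeta-2\e}+\N^{\zeta-\beta_{\star}}\big)\N^{\delta},
\end{align*}
which is superpolynomially small.

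It remains to bound the forcing polynomially in $\N$, allowing growth like $\exp(\kappa'|\y|/\N)$. Here we use:
\begin{itemize}
\item the weighted moment bounds \eqref{eq:mainI} for $\mathbf{S}^{\N}_{0,\cdot}$;
\item Proposition \ref{prop:hkzeta} for $\zeta-\e$, which controls $\mathbf{S}^{\N,\zeta-\e}$ through \eqref{eq:hkzetaI7};
\item crude deterministic bounds on $\mathbf{Av}$, $\mathbf{R}^{\N,\wedge}$ (at most $2$ by \eqref{eq:rwedge}) and $\N\mathfrak{q}$. For $\mathfrak{q}$ these come from \eqref{eq:bulkadmissibleI}, together with polynomial bounds on the $\bphi$-spins up to distance $\N^{1+\zeta}$, which follow from sub-Gaussianity under $\mathbb{P}^{0}$ and Assumption \ref{assump:data}.
\end{itemize}
A union bound over $\x\in\llbracket0,\mathrm{L}\N\rrbracket$, and over a fine time mesh together with a crude time-continuity bound, yields the supremum.

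\textbf{Step 2: comparison with $\mathbf{Q}^{\N}$ for small $\zeta$.} Write $\mathbf{S}^{\N,\zeta}-\mathbf{Q}^{\N}$ in mild form using the deterministic kernel $\mathbf{H}^{\N,\mathbf{A}}$. The differences to control are:
\begin{enumerate}
\item the smoothing $\mathbf{e}^{\mathfrak{t}_{\N}\mathscr{T}_{\N}}$ and the factor $\mathbf{R}^{\N,\wedge}$ in the noise;
\item the initial data, where $\mathbf{S}^{\N}_{0}$ is compared with $\mathbf{Z}^{\N}_{0}$ using the regularity in \eqref{eq:mainI};
\item the $\mathrm{Err}$ term;
\item the averaged terms \eqref{eq:sdezetaI3}-\eqref{eq:sdezetaI6}.
\end{enumerate}
Items 1-3 are handled by standard $\mathrm{L}^{2p}$ martingale and heat kernel estimates (Proposition \ref{prop:hkestimates}), with Lemma \ref{lemma:rwedge} and the $\N^{-\delta_{\mathbf{S}}/6}$ bound from \eqref{eq:rwedge}. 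For item 4 we use the Boltzmann-Gibbs bounds \eqref{eq:method2a}. On the region $\w\lesssim\N^{1+\zeta}$ they contribute
\begin{align*}
\N\cdot\N^{-1+\mathrm{C}\delta_{\mathbf{S}}}\cdot\big(\N^{-2-\beta_{\mathrm{CLT}}}\N^{1+\zeta}\big)^{1/2}\N^{1/2}\ \lesssim\ \N^{-\beta_{\mathrm{CLT}}/2+\zeta/2+\mathrm{C}\delta_{\mathbf{S}}},
\end{align*}
which is a power saving provided $\zeta<\zeta_{\star}\sim\beta_{\mathrm{CLT}}$ and $\delta_{\mathbf{S}}$ is small. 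This smallness of $\zeta$ is exactly what forces the restriction $\zeta\in(0,\zeta_{\star})$.

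The multiplicative factor $\mathbf{S}^{\N,\zeta}_{\t,\w_{\pm}}$ is controlled by a Gronwall or fixed-point argument on the weighted supremum over $\llbracket0,\mathrm{O}(\N^{1+\zeta})\rrbracket$. Since \eqref{eq:method2a} holds only with high probability for a preset interval, we fix the interval $\llbracket0,2\N^{1+\zeta}\rrbracket$ in advance. The time-gradient terms are estimated by time regularity of $\mathbf{S}^{\N,\zeta}$ on the short scales $\mathfrak{t}_{\mathbf{Av}}$ and $\mathfrak{t}_{\partial}$.

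\textbf{Main obstacle.} The crux is Step 2. The coefficients are controlled only in a space-time $\mathrm{L}^{2}$ sense, so they must be paired with a pathwise supremum bound on $\mathbf{S}^{\N,\zeta}$. We would do this with a stopping-time argument: stop once $\sup|\mathbf{S}^{\N,\zeta}|$ exceeds $\N^{\delta}$ on the relevant domain, apply Cauchy-Schwarz against the heat kernel in $(\s,\w)$, and close the bound, showing the stopping time exceeds $1$ with high probability using Proposition \ref{prop:hkzeta}.
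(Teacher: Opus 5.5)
Your Step 1 is the paper's argument for \eqref{eq:smallzetaI}, with the roles of the two equations swapped. You propagate with $\mathbf{K}^{\N,\zeta}$ and force with $\mathbf{S}^{\N,\zeta-\e}$; the paper propagates with $\mathbf{K}^{\N,\zeta_{2}}$ and forces with $\mathbf{S}^{\N,\zeta_{1}}$. This is immaterial, since Proposition \ref{prop:hkzeta} holds for every $\zeta$. Two small corrections:
\begin{itemize}
\item The forcing is not polynomially bounded. Via \eqref{eq:hkzetaI7}, \eqref{eq:hkzetaI} and \eqref{eq:databoundI} it is only $\lesssim\N^{\mathrm{O}(1)}\exp(\N^{\zeta-\e-\beta_{\star}})\exp(\kappa'|\y|/\N)$. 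The off-diagonal factor still beats this once $\kappa>\kappa'$ and $\e<\beta_{\star}/4$.
\item Of your two options for the time-gradient terms, only keeping them in the forcing is available, bounded by a supremum over past times as in \eqref{eq:smallzetaI2}. No regularity of $\mathbf{K}^{\N,\zeta}_{\s,\t,\x,\y}$ in $\s$ is established.
\end{itemize}

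For \eqref{eq:smallzetaII} your organization differs from the paper's. The paper notes that for $\zeta<\beta_{0}/2$ the time restriction in Lemma \ref{lemma:kzeta} is vacuous, so \eqref{eq:kzetaI8} gives $\boldsymbol{\Theta}_{\s,\t,\kappa}\lesssim\N^{\delta}$ for all $0\leq\s\leq\t\leq1$. Then \eqref{eq:duhamelI1}, \eqref{eq:kzetaI4} and \eqref{eq:kzetaI7} give $\mathbf{K}^{\N,\zeta}\approx\mathbf{L}^{\N,\zeta,\sim}$ in the weighted $\ell^{\infty}_{\x}\ell^{1}_{\y}$ norm. So the Boltzmann-Gibbs terms are removed pathwise at the kernel level. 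Moments are used only afterwards, to compare $\mathbf{W}^{\N}$ with $\mathbf{Y}^{\N}$ and $\mathbf{Y}^{\N}$ with $\mathbf{Q}^{\N}$.

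You instead put everything into one mild formulation around $\mathbf{H}^{\N,\mathbf{A}}$ and merge the pathwise $\mathrm{L}^{2}$ input with the noise moments through a stopping time. That can be made to work, with two adjustments:
\begin{itemize}
\item The stopping time must also contain the adapted quantity $\int_{0}^{\t}\sum_{\w}|\mathbf{Av}^{\mathbf{T},\mathbf{X},\mathscr{Q}_{\ell}}_{\r,\w}|^{2}\d\r$, not only $\sup|\mathbf{S}^{\N,\zeta}|$. The event where \eqref{eq:kzetaI1} holds has probability only $1-\mathrm{o}(1)$ and is not adapted, so you cannot take $p$-th moments on it.
\item Proposition \ref{prop:hkzeta} makes your stopping time exceed $1$ only when $\exp(\N^{\zeta-\beta_{\star}})\lesssim1$. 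So $\zeta_{\star}\leq\beta_{\star}$ is a second constraint, besides $\zeta_{\star}\lesssim\beta_{\mathrm{CLT}}$.
\end{itemize}

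The genuine gap is your treatment of the time-gradient terms \eqref{eq:sdezetaI4} and \eqref{eq:sdezetaI6} ``by time regularity of $\mathbf{S}^{\N,\zeta}$.'' In \eqref{eq:sdezetaI4}, $\grad^{\mathbf{T},\mathrm{av}}_{\mathfrak{t}_{\mathbf{Av}}}$ hits the whole product $\mathsf{g}_{\t}=\sum_{\w}\boldsymbol{\mathcal{K}}^{(\ell)}_{\x,\w}\boldsymbol{\chi}^{(\zeta)}_{\w}\N\mathbf{Av}^{\mathbf{X},\mathscr{Q}_{\ell}}_{\t,\w}\mathbf{R}^{\N,\wedge}_{\t,\w_{\pm}}\mathbf{S}^{\N,\zeta}_{\t,\w_{\pm}}$, which has size up to $\N^{\frac12+\mathrm{C}\delta_{\mathbf{S}}}$. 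Write $\grad^{\mathbf{T},\mathrm{av}}(A\mathbf{S})=\mathbf{S}\,\grad^{\mathbf{T},\mathrm{av}}A+(\text{terms with increments of }\mathbf{S})$. Regularity of $\mathbf{S}^{\N,\zeta}$ says nothing about the first piece, and that piece is as large as $\mathsf{g}$. Indeed, by the identity $\grad^{\mathbf{T},\mathrm{av}}_{\mathfrak{t}_{\mathbf{Av}}}\boldsymbol{\Upsilon}^{\mathbf{X},\mathscr{Q}_{\ell}}=\boldsymbol{\Upsilon}^{\mathbf{X},\mathscr{Q}_{\ell}}-\boldsymbol{\Upsilon}^{\mathbf{T},\mathbf{X},\mathscr{Q}_{\ell}}$ behind Lemma \ref{lemma:tav}, the Boltzmann-Gibbs gain (space-time average far smaller than spatial average) means the coefficient is not regular at scale $\mathfrak{t}_{\mathbf{Av}}$. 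So bounding this term by its size only gives $\N^{\frac12+\mathrm{C}\delta_{\mathbf{S}}}$ after integrating over unit time. Likewise \eqref{eq:sdezetaI6} only gives $\N^{\mathrm{C}\delta_{\mathbf{S}}}$, since $\mathfrak{b}_{\ell}[\bphi_{\t}]$ is rough in time.

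The missing step is to move the averaged time difference onto the deterministic kernel in the mild form (spatial variables suppressed):
\begin{align*}
\int_{0}^{\t}\mathbf{H}^{\N,\mathbf{A}}_{\r,\t}\,\grad^{\mathbf{T},\mathrm{av}}_{\mathfrak{t}}\mathsf{g}_{\r}\,\d\r=\mathfrak{t}^{-1}\int_{0}^{\mathfrak{t}}\int_{0}^{\t-\u}\big(\mathbf{H}^{\N,\mathbf{A}}_{\r,\t}-\mathbf{H}^{\N,\mathbf{A}}_{\r+\u,\t}\big)\mathsf{g}_{\r}\,\d\r\,\d\u+\mathrm{O}\big(\mathfrak{t}\sup|\mathsf{g}|\big).
\end{align*}
Time regularity of $\mathbf{H}^{\N,\mathbf{A}}$ (Proposition \ref{prop:hkestimates} plus its equation) gives a factor $\min(1,\mathfrak{t}|\t-\r|^{-1})$. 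Hence the term is $\lesssim\mathfrak{t}\log\N\sup|\mathsf{g}|$. This is $\N^{\frac12-\frac23+\mathrm{C}\delta_{\mathbf{S}}}\log\N$ for \eqref{eq:sdezetaI4}, and a negative power of $\N$ for \eqref{eq:sdezetaI6} since $\mathfrak{t}_{\partial}=\N^{-3/2}$. This is how the paper handles these terms in \eqref{eq:lzeta3c}-\eqref{eq:lzeta3d}. It requires the pointwise bounds $|\mathbf{Av}^{\mathbf{X},\mathscr{Q}_{\ell}}|\lesssim\N^{-\frac12+\delta_{\mathbf{S}}}$ (Lemma \ref{lemma:cltx}) and $|\mathfrak{b}_{\ell}|\lesssim\N^{\mathrm{C}\delta_{\mathbf{S}}}$. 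These must therefore also go into your stopping time, as in \eqref{eq:tap}.
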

{\color{black}The proof of Lemma \ref{lemma:smallzeta} is deferred to Section \ref{section:smallzeta}, as it will use ingredients in the proof of Proposition \ref{prop:hkzeta}.}
\begin{proof}[Proof of Theorem \ref{theorem:main}]
By the formula {\small$\mathbf{Z}^{\N}=\mathbf{R}^{\N}\mathbf{S}^{\N}$}, the estimate in Lemma \ref{lemma:rwedge}, as well as Lemmas \ref{lemma:largezeta} and \ref{lemma:smallzeta}, it is enough to show the desired convergence in Theorem \ref{theorem:main} but with {\small$\mathbf{Q}^{\N}$} in place of {\small$\mathbf{Z}^{\N}$} therein. This is a standard stability of the half-space \abbr{SHE} under spatial-discretizations; the proof is identical to that of Theorem 2.1 in \cite{BG97} after standard modifications to go from the full-space to the half-space.
\end{proof}
\begin{proof}[Proof of Theorem \ref{theorem:mainstat}]
It suffices to construct an initial data for {\small$\mathbf{Z}^{\N}$} such that the assumptions in Theorem \ref{theorem:main} are satisfied, and {\small$\mathbf{Z}^{\N}_{0,\N\X}\to\mathbf{Z}^{\mathrm{stat}}_{0,\X}$} uniformly in {\small$\X\in\llbracket0,\mathrm{L}\rrbracket$} as {\small$\N\to\infty$} for any fixed {\small$\mathrm{L}>0$}, Here, {\small$\mathbf{Z}^{\mathrm{stat}}_{0,\X}=\exp(\lambda\mathbf{h}^{\mathrm{stat}}_{0,\X})$} with {\small$\X\mapsto\mathbf{h}^{\mathrm{stat}}_{0,\X}$} distributed according to any of the stationary measures of the open \abbr{KPZ} equation \eqref{eq:kpzIa}-\eqref{eq:kpzIb} from Definition \ref{definition:statkpz} with {\small$\mathbf{h}^{\mathrm{stat}}_{0,0}=0$}. To this end, let us first define {\small$\mathbf{Z}^{\mathrm{stat},\N}_{0,\x}:=\mathbf{Z}^{\mathrm{stat}}_{0,\N^{-1}\x}$} for {\small$\x\in\llbracket0,\infty\rrbracket$}. We claim that this function satisfies the estimates in \eqref{eq:mainI}. Since this function is a discretization of {\small$\mathbf{Z}^{\mathrm{stat}}$} at scale {\small$\N^{-1}$}, this would follow from the following estimate for any {\small$p\geq1$}, and for some {\small$\kappa_{p}\geq0$}:
\begin{align}
\sup_{\X\in[0,\infty)}\mathbf{e}^{-\kappa_{p}|\X|}\E|\mathbf{Z}^{\mathrm{stat}}_{0,\X}|^{2p}+\sup_{\substack{\X,\Y\in[0,\infty)\\\X\neq\Y}}(\mathbf{e}^{-\kappa_{p}|\X|}\cdot\mathbf{e}^{-\kappa_{p}|\Y|})\cdot|\X-\Y|^{-p}\E|\mathbf{Z}^{\mathrm{stat}}_{0,\X}-\mathbf{Z}^{\mathrm{stat}}_{0,\Y}|^{2p}\lesssim_{p}1.\label{eq:mainstat1}
\end{align}
This follows from an elementary calculation {\color{black}using the formulas in Definition \ref{definition:statkpz}, moment generating function bounds for the Gaussian distribution, and moments of the inverse-Gamma distribution}; we omit this computation. We observe that \eqref{eq:mainstat1} implies that {\small$\mathbf{Z}^{\mathrm{stat}}_{0,\cdot}$} is locally H\"{o}lder-{\small$1/2$} continuous with probability {\small$1$} by the Kolmogorov criterion, and thus {\small$\mathbf{Z}^{\mathrm{stat},\N}_{0,\N\X}\to\mathbf{Z}^{\mathrm{stat}}_{0,\X}$} locally uniformly in {\small$\X\in[0,\infty)$}.

Now, let us choose our initial data for \eqref{eq:currI}-\eqref{eq:currII} as follows. First, we impose {\small$\mathbf{j}^{\N}_{0,0}=0$}, similar to {\small$\mathbf{h}^{\mathrm{stat}}_{0,0}$}. Then, choose {\small$\mathbf{j}^{\N}_{0,\cdot}$}, as a function on {\small$\llbracket0,\infty\rrbracket$}, such that the gradients {\small$\bphi_{0,\cdot}$} are distributed according to the invariant measure {\small$\mathbb{P}^{0}$} from Definition \ref{definition:gc} independently of {\small$\mathbf{h}^{\mathrm{stat}}_{0,\cdot}$}, but then conditioning on the event
\begin{align}
\mathscr{E}:=\Big\{\max_{\x\in\llbracket0,\Lambda(\N)\cdot\N\rrbracket}|\exp(\lambda\mathbf{j}^{\N}_{0,\x})-\mathbf{Z}^{\mathrm{stat},\N}_{0,\x}|\leq\e(\N)\Big\},\label{eq:mainstat2}
\end{align}
where {\small$\Lambda(\N)\to\infty$} and {\small$\e(\N)\to0$} slowly (at rates to be determined) as {\small$\N\to\infty$}. Observe that this construction is done through the {\small$\bphi_{0,\cdot}$}-variables. Moreover, by the definition of conditional probability, we have
\begin{align}
\|\mathfrak{P}\|_{\mathrm{L}^{\infty}(\R^{\llbracket1,\infty\rrbracket})}\leq\mathbb{P}(\mathscr{E})^{-1}.\nonumber
\end{align}
Thus, by the previous display, it suffices to show that for {\small$\Lambda(\N)\to\infty$} and {\small$\e(\N)\to0$} sufficiently slowly, we have {\small$\mathbb{P}(\mathscr{E})\gtrsim\N^{-\gamma_{\mathrm{data}}}$} for some small but fixed {\small$\gamma_{\mathrm{data}}>0$}. Note that the ``good" event {\small$\mathscr{G}$} on which the following estimate holds has probability {\small$1-\mathrm{o}(1)$} because of the estimates in \eqref{eq:mainstat1} and almost sure positivity of {\small$\mathbf{Z}^{\mathrm{stat}}$}:
\begin{align}
\|\mathbf{h}^{\mathrm{stat}}_{0,\cdot}\|_{\mathscr{C}^{\upsilon}([0,\Lambda(\N)])}\lesssim\Gamma(\N),\label{eq:mainstat3a}
\end{align}
Above, {\small$\upsilon\in(0,1/2)$} is fixed, and {\small$\Gamma(\N)\to\infty$} slowly if {\small$\Lambda(\N)\to\infty$} slowly. Thus, we have {\small$\mathbb{P}(\mathscr{E})\gtrsim\mathbb{P}(\mathscr{E}|\mathscr{G})$}. Now, condition on {\small$\mathbf{h}^{\mathrm{stat}}_{0,\cdot}$} (while conditioning on {\small$\mathscr{G}$}, thus the functional value that {\small$\mathbf{h}^{\mathrm{stat}}_{0,\cdot}$} takes satisfies \eqref{eq:mainstat3a}). The proof of Theorem 2.7 in Section 4.4 of \cite{Y26PTRF} (via support estimates for the Wiener measure) now shows that 
\begin{align}
\mathbb{P}^{0}\Big(\max_{\x\in\llbracket0,\Lambda(\N)\cdot\N\rrbracket}|\mathbf{j}^{\N}_{0,\x}-\mathbf{h}^{\mathrm{stat}}_{0,\N^{-1}\x}|\leq\nu(\N)\Big)\geq\delta(\N),\nonumber
\end{align}
where {\small$\nu(\N),\delta(\N)\to0$} slowly. (We clarify that this argument in \cite{Y26PTRF} requires the initial data that we approximate to be deterministic and admit an estimate like \eqref{eq:mainstat3a} for slowly growing {\small$\Gamma(\N)\to\infty$}. Moreover, we clarify that {\small$\mathbf{j}^{\N}$} above is distributed via {\small$\mathbf{j}^{\N}_{0,0}=0$} and {\small$\bphi_{0,\cdot}\sim\mathbb{P}^{0}$}.) Exponentiating the estimate in the probability above and using \eqref{eq:mainstat3a} gives {\small$\mathbb{P}(\mathscr{E}|\mathscr{G},\mathbf{h}^{\mathrm{stat}}_{0,\cdot})\gtrsim\delta(\N)$} with {\small$\delta(\N)\to0$} slowly, uniformly in whatever functional value {\small$\mathbf{h}^{\mathrm{stat}}_{0,\cdot}$} takes. We now average over all possibly functional values of {\small$\mathbf{h}^{\mathrm{stat}}_{0,\cdot}$} to get {\small$\mathbb{P}(\mathscr{E}|\mathscr{G})\gtrsim\nu(\N)$}. Since {\small$\mathbb{P}(\mathscr{E})\gtrsim\mathbb{P}(\mathscr{E}|\mathscr{G})$}, as we noted earlier, the proof is complete.
\end{proof}
\begin{proof}[Proof of Theorem \ref{theorem:mainwedge}]
The proof of Theorem 2.7 in \cite{Y26PTRF} gives a choice of initial data {\small$\mathbf{Z}^{\N}_{0,\cdot}$} so that Assumption \ref{assump:data} is satisfied, and so that the following properties hold (before we state properties, we remark that the construction gives a choice of {\small$\mathbf{Z}^{\N}_{0,\cdot}$} defined on the full-space {\small$\Z$}, but we can always restrict it to {\small$\llbracket0,\infty\rrbracket$}):
\begin{enumerate}
\item The initial data {\small$\mathbf{Z}^{\N}_{0,\cdot}$} is strictly positive.
\item We have {\small$\|\mathbf{e}^{\t\mathscr{T}_{\N}}(\mathbf{Z}^{\N}_{0,\cdot})\|_{\mathrm{L}^{\infty}(\llbracket0,\infty\rrbracket)}\lesssim\t^{-1/2}$} for any {\small$\t>0$}. (Technically, \cite{Y26PTRF} shows this in the following different setting. First, we replace {\small$\mathscr{T}_{\N}\mapsto\N^{2}\Delta_{\Z}$}, in which {\small$\Delta_{\Z}$} denotes the discrete Laplacian on {\small$\Z$}. Second, we extend {\small$\mathbf{Z}^{\N}_{0,\cdot}$} from {\small$\llbracket0,\infty\rrbracket$} to {\small$\Z$} by reflection. However, the proof of this estimate in \cite{Y26PTRF} only uses that the heat kernel for {\small$\mathbf{e}^{\t\mathscr{T}_{\N}}$} is bounded by {\small$\lesssim\N^{-1}\t^{-1/2}$}, so the estimate also holds in our setting by Proposition \ref{prop:hkestimates}.)
\item We have the estimate {\small$\N^{-1}\|\mathbf{Z}^{\N}_{0,\cdot}\|_{\ell^{1}(\llbracket0,\infty\rrbracket)}\lesssim1$}. This follows because we choose {\small$\mathbf{Z}^{\N}_{0,\cdot}$} to be the restriction of the initial data constructed in the proof of Theorem 2.7 in \cite{Y26PTRF} from {\small$\Z$} to {\small$\llbracket0,\infty\rrbracket$}, and the {\small$\ell^{1}$}-norm cannot increase after restricting the spatial domain.
\item We have convergence {\small$\mathbf{Z}^{\N}_{0,\N\cdot}\to\delta_{0}$} as Borel measures on {\small$[0,\infty)$} as {\small$\N\to\infty$}. This follows since the construction of {\small$\mathbf{Z}^{\N}_{0,\cdot}$} in the proof of Theorem 2.7 in \cite{Y26PTRF} forces {\small$\mathbf{Z}^{\N}_{0,\cdot}$} to be invariant-in-law upon reflection about the origin in {\small$\Z$}, and the constructed initial data therein converges to the deterministic {\small$\delta_{0}$}-measure as {\small$\N\to\infty$}. (Therefore, the restriction of said initial data to the half-space must also converge to {\small$\delta_{0}$}.)
\end{enumerate}
With these ingredients, it is now standard to establish the desired convergence of {\small$\mathbf{Z}^{\N}$} to the narrow-wedge solution of the open \abbr{SHE} (see, for instance, Section 6 of \cite{P19}).
\end{proof}
To summarize, we are now left with proving Proposition \ref{prop:hkzeta} and Lemma \ref{lemma:smallzeta}.
%
%
%
\section{Stochastic estimates I: preliminary estimates}\label{section:stochI}
\subsection{Regularity estimates}
We start with a priori estimates on the solution to \eqref{eq:phiI}-\eqref{eq:phiII}, which can be seen as (and will actually be used for) estimates on gradients of \eqref{eq:currI}-\eqref{eq:currII} in a variety of local and global topologies.
\begin{lemma}\label{lemma:phiap}
\fsp Fix any {\small$\mathrm{D},\delta,\mathrm{T}>0$} independent of {\small$\N$}. Fix also any collection of intervals {\small$\mathbb{I}_{1},\ldots,\mathbb{I}_{\N^{\mathrm{D}}}\subseteq\llbracket1,\infty\rrbracket$} with size {\small$|\mathbb{I}_{\j}|\lesssim\N^{\mathrm{D}}$} for all {\small$\j$}. For some {\small$\mathrm{c}>0$} depending on {\small$\delta>0$}, we have
\begin{align}
\mathbb{P}\Big(\sup_{\t\in[0,\mathrm{T}]}\sup_{\j\in\llbracket1,\N^{\mathrm{D}}\rrbracket}|\mathbb{I}_{\j}|^{-\frac12}\Big|\sum_{\x\in\mathbb{I}_{\j}}\bphi_{\t,\x}\Big|\geq\N^{\delta}\Big)\lesssim_{}\exp(-\N^{\mathrm{c}}).\label{eq:phiapI}
\end{align}
By taking {\small$\mathbb{I}_{\j}=\{\x_{\j}\}$} for distinct points {\small$\x_{1},\ldots,\x_{\N^{\mathrm{D}}}$}, we obtain {\small$|\bphi_{\t,\x}|\lesssim\N^{\delta}$} simultaneously for all {\small$(\t,\x)\in[0,\mathrm{T}]\times\mathbb{I}$} with probability {\small$1-\mathrm{O}(\exp(-\N^{\mathrm{c}}))$}, where {\small$\mathbb{I}\subseteq\llbracket0,\infty\rrbracket$} is any deterministic set of size {\small$\lesssim\N^{\mathrm{D}}$}.
\end{lemma}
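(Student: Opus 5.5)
The plan is to reduce to a fixed-time estimate under the invariant measure, pass to the true initial law by a change of measure, and then take a union bound over a fine time grid with a short-time continuity estimate in between.

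\emph{Fixed time, stationary case.} Under $\mathbb{P}^{0}$ the spins are i.i.d. with density $\propto \exp(-\mathscr{U})$. By Assumption \ref{assump:potential} this law is uniformly log-concave, hence sub-Gaussian with mean zero (recall $\upsilon_{0}=0$). So for any interval $\mathbb{I}$, Hoeffding-type sub-Gaussian concentration gives
\[
\mathbb{P}^{0}\Big(|\mathbb{I}|^{-1/2}\Big|\sum_{\x\in\mathbb{I}}\bphi_{\x}\Big|\geq\N^{\delta}\Big)\lesssim\exp(-c\N^{2\delta}).
\]
Since $\mathbb{P}^{0}$ is invariant for \eqref{eq:phiI}-\eqref{eq:phiII}, this bound holds at every deterministic time $\t$.

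\emph{Transfer to the actual initial law.} Let $\mathfrak{P}_{\t}$ be the density of the law of $\bphi_{\t}$ with respect to $\mathbb{P}^{0}$. For any event $\mathscr{E}$, Cauchy--Schwarz gives $\mathbb{P}(\bphi_{\t}\in\mathscr{E})\leq(\E^{0}\mathfrak{P}_{\t}^{2})^{1/2}\mathbb{P}^{0}(\mathscr{E})^{1/2}$. The $\mathrm{L}^{2}$ norm of the density does not increase in time, by convexity and invariance, so \eqref{eq:dataI} bounds this by $\exp(\tfrac12\N^{\gamma_{\mathrm{data}}})\exp(-\tfrac{c}{2}\N^{2\delta})$. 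This is $\exp(-\N^{\mathrm{c}})$ provided $2\delta>\gamma_{\mathrm{data}}$. That is fine when $\gamma_{\mathrm{data}}$ is small; alternatively one can use \eqref{eq:dataI0} directly, which costs only a polynomial factor $\N^{\gamma_{\mathrm{data}}}$. A union bound over $\j\leq\N^{\mathrm{D}}$ and over times in a grid $\{\t_{k}\}\subset[0,\mathrm{T}]$ of mesh $\N^{-\mathrm{K}}$ (polynomially many points) keeps the bound of the form $\exp(-\N^{\mathrm{c}})$.

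\emph{Between grid points.} This is the main obstacle: the dynamics have speed $\N^{2}$, and the noise $\N\grad^{\mathbf{X}}_{-1}\d\mathbf{b}$ is large. I would avoid pointwise control of $\bphi$ and control block sums instead. Write $\sum_{\x\in\mathbb{I}}\bphi_{\t,\x}$ using \eqref{eq:phiI}-\eqref{eq:phiII}. The Laplacian and $\wt{\mathbf{F}}$ terms telescope, so the block sum is driven only by boundary terms of $\mathbb{I}$: a drift of size $\N^{2}|\mathscr{U}'[\bphi]|+\N^{3/2}|\mathbf{F}|$ at $\mathrm{O}(1)$ sites, plus a martingale $\sqrt2\N(\mathbf{b}_{\cdot,\text{ends}})$ (or the Langevin pieces at the edge). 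On $[\t_{k},\t_{k}+\N^{-\mathrm{K}}]$ the martingale is at most $\N^{1-\mathrm{K}/2+\delta}$ with probability $1-\exp(-\N^{2\delta})$ by Brownian tail bounds.

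The drift needs an a priori sup bound on $\bphi$ at the $\mathrm{O}(1)$ boundary sites over the short interval. For this I would take $\mathrm{K}$ large and bootstrap. On a grid point the singleton version gives $|\bphi|\leq\N^{\delta}$. On the short interval the SDE for each $\bphi_{\x}$ is a finite-range system with polynomially bounded (Lipschitz, since $\mathscr{U}''\leq\boldsymbol{\Lambda}$) drift of speed $\N^{2}$ and polynomial-degree nonlinearity $\N^{3/2}\wt{\mathbf{F}}$. A stopping-time argument then shows $|\bphi|\leq2\N^{\delta}$ persists: before the exit time the drift is $\lesssim\N^{2+\mathrm{C}\delta}$, so over time $\N^{-\mathrm{K}}$ it moves $\bphi$ by $\N^{2+\mathrm{C}\delta-\mathrm{K}}\ll\N^{\delta}$. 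Only polynomially many sites are needed, since $|\mathbb{I}_{\j}|\lesssim\N^{\mathrm{D}}$.

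Combining these, the block sum normalized by $|\mathbb{I}|^{-1/2}\leq1$ changes by $\mathrm{o}(\N^{\delta})$ between grid points. Adjusting $\delta$ by a factor of $2$ yields \eqref{eq:phiapI}, and the singleton corollary is the special case stated.
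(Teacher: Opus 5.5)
Your fixed-time bound, the transfer to the true law, and the grid union bound are fine. For the transfer you can use $\E^{0}\mathfrak{P}_{\t}^{2}\leq\E^{0}\mathfrak{P}_{0}^{2}$, or better \eqref{eq:dataI0}, which also removes the constraint $2\delta>\gamma_{\mathrm{data}}$. This fixed-time change of measure is a valid alternative to the paper's route, which proves the sup-in-time statement for $\mathbb{P}^{0}$-distributed initial data and changes measure on path space only at the end. Your identity $\sum_{\x\in\mathbb{I}}\bphi_{\t,\x}=\N^{1/2}(\mathbf{j}^{\N}_{\t,\max\mathbb{I}}-\mathbf{j}^{\N}_{\t,\min\mathbb{I}-1})$ is also a good way to make concrete the continuity step that the paper just calls standard.

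The gap is the pathwise stopping-time bootstrap for $|\bphi|\leq2\N^{\delta}$ between grid points. The drift of $\bphi_{\y}$ involves $\bphi_{\z}$ for all $|\z-\y|\lesssim\deg$, through $\Delta\mathscr{U}'$ and $\wt{\mathbf{F}}$. So ``before the exit time the drift is $\lesssim\N^{2+\mathrm{C}\delta}$'' needs the exit time to cover those neighbors as well. Their drifts involve their own neighbors, and so on. On any finite set of sites the outermost ones have drift depending on uncontrolled spins, so the bootstrap never closes, and ``only polynomially many sites are needed'' is false for this argument. Nor can you take all of $\llbracket1,\infty\rrbracket$: under $\mathbb{P}^{0}$ the spins are a.s. unbounded there already at time $0$. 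A weighted infinite-volume pathwise bound is not what a stopping time gives you, and it is delicate anyway because $\N^{\frac32}\wt{\mathbf{F}}$ is superlinear.

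You do not need a sup bound at all. The block-sum increment only involves $\int_{\t_{k}}^{\t_{k}+h}|\mathfrak{d}_{\y}[\bphi_{\s}]|\d\s$ at the $\mathrm{O}(1)$ boundary sites, where $\mathfrak{d}_{\y}[\bphi]:=\N^{2}\grad^{\mathbf{X}}_{1}\mathscr{U}'[\bphi_{\y}]+\N^{\frac32}\mathbf{F}[\tau_{\y}\bphi]$. This also covers the edge, via the convention $\mathscr{U}'[\bphi_{0}]=0$.

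Control this integral by moments. By H\"{o}lder, the propagated bound \eqref{eq:dataI0} on $\mathfrak{P}_{\s}$, and sub-Gaussian moments under $\mathbb{P}^{0}$,
$\E(\int_{\t_{k}}^{\t_{k}+h}|\mathfrak{d}_{\y}[\bphi_{\s}]|\d\s)^{p}\leq h^{p}\sup_{\s}\E^{0}(\mathfrak{P}_{\s}|\mathfrak{d}_{\y}|^{p})\leq h^{p}\N^{\gamma_{\mathrm{data}}}\E^{0}|\mathfrak{d}_{\y}|^{p}\lesssim\N^{\gamma_{\mathrm{data}}}(\mathrm{C}h\N^{2}p^{\deg/2})^{p}$.
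Under stationary data, as in the paper's ordering, the density factor is simply absent.

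Now take $h=\N^{-\mathrm{K}}$, $p=\N^{\mathrm{c}}$ and $\mathrm{K}\geq3+\mathrm{c}\deg/2$. Markov then bounds by $\exp(-\N^{\mathrm{c}})$ the probability that this integral exceeds $1$, per site and per grid cell. This survives your polynomial union bound, and with this replacement the rest of your argument goes through.
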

\begin{proof}
Fix an interval {\small$\mathbb{I}\subseteq\llbracket1,\infty\rrbracket$}. For any {\small$\mathrm{K}\geq0$}, we have the following for some {\small$\upsilon>0$} depending only on {\small$\mathscr{U}$}:
\begin{align}
\mathbb{P}^{0}\Big(\Big|\sum_{\x\in\mathbb{I}}\bphi_{\x}\Big|\geq\mathrm{K}|\mathbb{I}|^{\frac12}\Big)\leq\exp(-\upsilon\mathrm{K}^{2}).\label{eq:phiap1}
\end{align}
This follows because under Assumption \ref{assump:potential}, the {\small$\bphi_{\x}$}-variables are i.i.d. and sub-Gaussian, at which point we can use a standard Chernoff estimate (as in \cite{V18}) to obtain sub-Gaussianity of the sum inside the \abbr{LHS} of \eqref{eq:phiap1} with variance parameter {\small$\lesssim|\mathbb{I}|$}. Now, until we mention otherwise, suppose that {\small$\t\mapsto\bphi_{\t}$} is given initial data distributed as {\small$\mathbb{P}^{0}$}. Since {\small$\mathbb{P}^{0}$} is an invariant measure (see Section \ref{subsection:generator}), we deduce from \eqref{eq:phiap1} that
\begin{align}
\sup_{\t\in[0,\mathrm{T}]}\max_{\j\in\llbracket1,\N^{\mathrm{D}}\rrbracket}\mathbb{P}\Big(\Big|\sum_{\x\in\mathbb{I}_{\j}}\bphi_{\t,\x}\Big|\geq\N^{\delta}|\mathbb{I}_{\j}|^{\frac12}\Big)\lesssim\exp(-\upsilon\N^{2\delta}).\nonumber
\end{align}
Since {\small$\delta>0$} is independent of {\small$\N$}, we can perform a union bound to obtain the following for any {\small$\mathrm{C}\lesssim1$}:
\begin{align}
\mathbb{P}\Big(\sup_{\t\in[0,\mathrm{T}]\cap\N^{-\mathrm{C}_{}}\Z}\max_{\j\in\llbracket1,\N^{\mathrm{D}}\rrbracket}|\mathbb{I}_{\j}|^{-\frac12}\Big|\sum_{\x\in\mathbb{I}_{\j}}\bphi_{\t,\x}\Big|\geq\N^{\delta}\Big)\lesssim\N^{\mathrm{C}+\mathrm{D}}\exp(-\upsilon\N^{2\delta})\lesssim\exp(-\upsilon\N^{\delta}).\nonumber
\end{align}
A standard short-time continuity argument allows us to remove the intersection with {\small$\N^{-\mathrm{C}}\Z$} on the \abbr{LHS}, assuming we change the implied constant. (Indeed, the scale {\small$\N^{-\mathrm{C}}$} is much smaller than the speed {\small$\N^{2}$} at which \eqref{eq:phiI}-\eqref{eq:phiII} evolves.) In particular, the desired result \eqref{eq:phiapI} holds assuming that the initial data of {\small$\t\mapsto\bphi_{\t}$} is sampled via {\small$\mathbb{P}^{0}$}:
\begin{align}
\mathbb{P}^{0}\Big(\sup_{\t\in[0,\mathrm{T}]}\sup_{\j\in\llbracket1,\N^{\mathrm{D}}\rrbracket}|\mathbb{I}_{\j}|^{-\frac12}\Big|\sum_{\x\in\mathbb{I}_{\j}}\bphi_{\t,\x}\Big|\geq\N^{\delta}\Big)\lesssim\exp(-\upsilon\N^{\delta}).\label{eq:phiapI1}
\end{align}
To go beyond {\small$\mathbb{P}^{0}$}-initial data, we use the a priori estimate \eqref{eq:dataI}. Indeed, the path-space measure on the \abbr{LHS} is the same as the path-space measure upon conditioning on the initial data, and then sampling the initial data with respect to {\small$\mathbb{P}^{0}$}. Since the path-space dynamics do not depend on the initial data, we can change measure only for the law of the initial data and use \eqref{eq:dataI} to control the cost in this change-of-measure. So, by the Cauchy-Schwarz inequality, we have 
\begin{align*}
\mathbb{P}^{}\Big(\sup_{\t\in[0,\mathrm{T}]}\sup_{\j\in\llbracket1,\N^{\mathrm{D}}\rrbracket}|\mathbb{I}_{\j}|^{-\frac12}\Big|\sum_{\x\in\mathbb{I}_{\j}}\bphi_{\t,\x}\Big|\geq\N^{\delta}\Big)\lesssim\exp(\N^{\gamma_{\mathrm{data}}})\cdot\mathbb{P}^{0}\Big(\sup_{\t\in[0,\mathrm{T}]}\sup_{\j\in\llbracket1,\N^{\mathrm{D}}\rrbracket}|\mathbb{I}_{\j}|^{-\frac12}\Big|\sum_{\x\in\mathbb{I}_{\j}}\bphi_{\t,\x}\Big|\geq\N^{\delta}\Big),
\end{align*}
at which point we we use \eqref{eq:phiapI1} to obtain the desired bound \eqref{eq:phiapI} (since {\small$\gamma_{\mathrm{data}}>0$} is small).
\end{proof}
As a consequence of Lemma \ref{lemma:phiap}, we are able to show Lemma \ref{lemma:rwedge} (and actually a much stronger version). In order to state the following lemma, we first recall {\small$\mathbf{R}^{\N,\wedge}$} from \eqref{eq:rwedge}.
\begin{lemma}\label{lemma:rratio}
\fsp Fix {\small$\mathrm{D}>0$} and {\small$\mathrm{T}>0$} independent of {\small$\N$}. With high probability (see Definition \ref{definition:hp}), we have 
\begin{align}
\sup_{\t\in[0,\mathrm{T}]}\max_{\x\in\llbracket0,\N^{\mathrm{D}}\rrbracket}|\mathbf{R}^{\N}_{\t,\x}-\mathbf{R}^{\N,\wedge}_{\t,\x}|=0.\label{eq:rratioI}
\end{align}
\end{lemma}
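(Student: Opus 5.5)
The plan is to show deterministically, on the high-probability event from Lemma \ref{lemma:phiap}, that {\small$|\mathbf{R}^{\N}_{\t,\x}-1|\lesssim\N^{-\delta_{\mathbf{S}}/5}$} for all {\small$\t\in[0,\mathrm{T}]$} and {\small$\x\in\llbracket0,\N^{\mathrm{D}}\rrbracket$}. This forces the indicator in \eqref{eq:rwedge} to equal {\small$1$}, which is exactly \eqref{eq:rratioI}.

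\emph{Step 1: rewrite the inverse ratio.} By \eqref{eq:heatIII} and \eqref{eq:ch}, the factors {\small$\exp(-\lambda\mathscr{R}_{\lambda}\t)$} cancel, and we get
\begin{align*}
(\mathbf{R}^{\N}_{\t,\x})^{-1}=\sum_{\w\in\llbracket0,\infty\rrbracket}\mathbf{H}^{\N,\mathbf{A}}_{0,\mathfrak{t}_{\N},\x,\w}\mathbf{1}^{(\x)}_{\w}\exp\big(\lambda(\mathbf{j}^{\N}_{\t,\w}-\mathbf{j}^{\N}_{\t,\x})\big).
\end{align*}
For {\small$\w>\x$} we have {\small$\mathbf{j}^{\N}_{\t,\w}-\mathbf{j}^{\N}_{\t,\x}=\N^{-1/2}\sum_{\z\in\llbracket\x+1,\w\rrbracket}\bphi_{\t,\z}$}, and symmetrically for {\small$\w<\x$}. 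Every such interval lies in {\small$\llbracket1,\infty\rrbracket$}, including when {\small$\x=0$}.

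\emph{Step 2: apply Lemma \ref{lemma:phiap}.} Take as the collection {\small$\{\mathbb{I}_{\j}\}$} all intervals between pairs {\small$\x\leq\N^{\mathrm{D}}$} and {\small$|\x-\w|\lesssim(\log\N)^{2}\N\mathfrak{t}_{\N}^{1/2}$}. There are polynomially many such intervals, each of polynomial size, so the lemma applies with exponent {\small$\delta<\delta_{\mathbf{S}}/100$}. Outside an event of probability {\small$\mathrm{O}(\exp(-\N^{\mathrm{c}}))$}, every summand then satisfies
\begin{align*}
|\mathbf{j}^{\N}_{\t,\w}-\mathbf{j}^{\N}_{\t,\x}|\lesssim\N^{-1/2}|\x-\w|^{1/2}\N^{\delta}\lesssim(\log\N)\N^{\delta}\mathfrak{t}_{\N}^{1/4}=(\log\N)\N^{\delta-\delta_{\mathbf{S}}/4}.
\end{align*}
Consequently every exponential in Step 1 equals {\small$1+\mathrm{O}(\N^{-\delta_{\mathbf{S}}/4+2\delta})$} uniformly over the relevant range.

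\emph{Step 3: the mass of the truncated Robin kernel.} It remains to show
\begin{align*}
\sum_{\w}\mathbf{H}^{\N,\mathbf{A}}_{0,\mathfrak{t}_{\N},\x,\w}\mathbf{1}^{(\x)}_{\w}=1+\mathrm{O}(\N^{-\delta_{\mathbf{S}}/4}).
\end{align*}
I expect this to be the main (mild) obstacle, because {\small$\mathbf{H}^{\N,\mathbf{A}}$} is not a probability kernel: the Robin boundary creates or destroys mass.

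\begin{itemize}
\item \emph{Truncation.} Removing the indicator costs a super-polynomially small error, by the off-diagonal bounds in Proposition \ref{prop:hkestimates}, as in Lemma \ref{lemma:triv}.
\item \emph{Total mass.} Differentiate {\small$\mathsf{m}_{\t}:=\sum_{\w}\mathbf{H}^{\N,\mathbf{A}}_{0,\t,\x,\w}$} in {\small$\t$} and use the symmetry of the kernel together with the Robin extension in Definition \ref{definition:heat}. This shows {\small$\partial_{\t}\mathsf{m}_{\t}=\mathrm{O}(\N^{2}\cdot\N^{-1})\,\mathbf{H}^{\N,\mathbf{A}}_{0,\t,\x,0}$}.
\item \emph{Integrating.} Proposition \ref{prop:hkestimates} gives {\small$\mathbf{H}^{\N,\mathbf{A}}_{0,\t,\x,0}\lesssim\N^{-1}\t^{-1/2}$} for {\small$\t\lesssim1$}. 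Hence {\small$\mathsf{m}_{\mathfrak{t}_{\N}}=1+\mathrm{O}(\mathfrak{t}_{\N}^{1/2})=1+\mathrm{O}(\N^{-\delta_{\mathbf{S}}/2})$}, with no Gronwall blow-up since {\small$\mathfrak{t}_{\N}\leq1$}.
\end{itemize}

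If the kernel bound is available only up to a constant factor, I would instead compare {\small$\mathbf{H}^{\N,\mathbf{A}}$} with the Neumann kernel {\small$\mathbf{H}^{\N,0}$}, which has mass exactly {\small$1$}. I would do this via Duhamel: the difference is a boundary-localized term of size {\small$\N\cdot\N^{-1}\int_{0}^{\mathfrak{t}_{\N}}\s^{-1/2}\d\s$}.

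\emph{Conclusion.} Combining Steps 1–3 gives {\small$(\mathbf{R}^{\N}_{\t,\x})^{-1}=1+\mathrm{O}(\N^{-\delta_{\mathbf{S}}/4+2\delta})$}, hence the same bound for {\small$\mathbf{R}^{\N}_{\t,\x}$}. This is well below the threshold {\small$\N^{-\delta_{\mathbf{S}}/6}$} in \eqref{eq:rwedge}, so {\small$\mathbf{R}^{\N,\wedge}=\mathbf{R}^{\N}$} on this event, for all {\small$\t\in[0,\mathrm{T}]$} and {\small$\x\in\llbracket0,\N^{\mathrm{D}}\rrbracket$} simultaneously.
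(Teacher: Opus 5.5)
Your proposal is correct and follows the paper's argument. Writing $(\mathbf{R}^{\N}_{\t,\x})^{-1}=\mathbf{S}^{\N}_{\t,\x}/\mathbf{Z}^{\N}_{\t,\x}$ and subtracting $1$ produces exactly the two pieces of \eqref{eq:rratioI1}: the gradient term controlled by Lemma \ref{lemma:phiap}, and the mass defect of the truncated Robin kernel. The only deviation is that you derive the mass bound \eqref{eq:hknormI} yourself, via $\partial_{\t}\mathsf{m}_{\t}=\mathrm{O}(\N)\,\mathbf{H}^{\N,\mathbf{A}}_{0,\t,\x,0}$ and \eqref{eq:hkestimatesI}, instead of citing Lemma \ref{lemma:hknorm}; this is a valid self-contained substitute.
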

\begin{proof}
Recall that {\small$\mathbf{1}^{(\x)}_{\w}:=\mathbf{1}[|\x-\w|\lesssim(\log\N)^{2}\N\mathfrak{t}_{\N}^{1/2}]$}, and recall {\small$\mathbf{S}^{\N}$}, both from \eqref{eq:heatIII}; by definition, we have 
\begin{align}
\mathbf{S}^{\N}_{\t,\x}-\mathbf{Z}^{\N}_{\t,\x}=\sum_{\w\in\llbracket0,\infty\rrbracket}\mathbf{H}^{\N,\mathbf{A}}_{0,\mathfrak{t}_{\N},\x,\w}\cdot\mathbf{1}^{(\x)}_{\w}\cdot(\mathbf{Z}^{\N}_{\t,\w}-\mathbf{Z}^{\N}_{\t,\x})+\mathbf{Z}^{\N}_{\t,\x}\cdot([\mathbf{e}^{\mathfrak{t}_{\N}\mathscr{T}_{\N}}(\mathbf{1}^{(\x)}_{\cdot})]_{\x}-1).\label{eq:rratioI1}
\end{align}
As shown in the proof of Lemma \ref{lemma:triv}, we have {\small$[\mathbf{e}^{\mathfrak{t}_{\N}\mathscr{T}_{\N}}(\mathbf{1}^{(\x)}_{\cdot})]_{\x}=\mathbf{e}^{\mathfrak{t}_{\N}\mathscr{T}_{\N}}(\mathbf{1})+\mathrm{O}_{\mathrm{D}}(\N^{-\mathrm{D}})$} for any {\small$\mathrm{D}>0$}, where {\small$\mathbf{1}$} denotes the constant function on {\small$\llbracket0,\infty\rrbracket$} that is equal to {\small$1$} everywhere. Thus,
\begin{align}
|\mathbf{Z}^{\N}_{\t,\x}\cdot([\mathbf{e}^{\mathfrak{t}_{\N}\mathscr{T}_{\N}}(\mathbf{1}^{(\x)}_{\cdot})]_{\x}-1)|\lesssim|\mathbf{Z}^{\N}_{\t,\x}|\cdot|[\mathbf{e}^{\mathfrak{t}_{\N}\mathscr{T}_{\N}}(\mathbf{1})]_{\x}-1|+\N^{-\mathrm{D}}|\mathbf{Z}^{\N}_{\t,\x}|\lesssim\N^{-\frac12\delta_{\mathbf{S}}}|\mathbf{Z}^{\N}_{\t,\x}|,\label{eq:rratioI2}
\end{align}
where the last bound follows by Lemma \ref{lemma:hknorm} and {\small$\mathfrak{t}_{\N}=\N^{-\delta_{\mathbf{S}}}$} from Definition \ref{definition:heat}. We now address the first term on the \abbr{RHS} of \eqref{eq:rratioI1}. We recall \eqref{eq:ch} and the rule {\small$\bphi_{\t,\x}=\N^{1/2}(\mathbf{j}^{\N}_{\t,\x}-\mathbf{j}^{\N}_{\t,\x-1})$} for any {\small$\x\in\llbracket1,\infty\rrbracket$}, so
\begin{align*}
|\mathbf{Z}^{\N}_{\t,\w}-\mathbf{Z}^{\N}_{\t,\x}|\lesssim|\mathbf{Z}^{\N}_{\t,\x}|\cdot\Big|\exp\Big(\mathrm{c}_{\x,\w}\lambda\N^{-\frac12}\sum_{\y\in\mathbb{I}_{\x,\w}}\bphi_{\t,\y}\Big)-1\Big|.
\end{align*}
Above, {\small$\mathbb{I}_{\x,\w}$} is the interval with endpoints {\small$\x,\w\in\llbracket1,\infty\rrbracket$}, and {\small$\mathrm{c}_{\x,\w}\in\{\pm1\}$} depending on whether {\small$\x\leq\w$} or {\small$\w\leq\x$}. Consider the event on which the sum over {\small$\mathbb{I}_{\x,\w}$} above is {\small$\lesssim\N^{\delta}|\mathbb{I}_{\x,\w}|^{1/2}$} for all {\small$\x\in\llbracket0,\N^{\mathrm{D}}\rrbracket$} (as in the statement of the lemma) and all {\small$\w\in\llbracket0,\infty\rrbracket$} with {\small$|\x-\w|\lesssim(\log\N)^{2}\N\mathfrak{t}_{\N}^{1/2}$}. Here, {\small$\delta>0$} is small but fixed. There are {\small$\lesssim\N^{\mathrm{O}(1)}$}-many such intervals. Thus by Lemma \ref{lemma:phiap}, such an event holds with high probability. Since {\small$|\mathbb{I}_{\x,\w}|\lesssim(\log\N)^{2}\N\mathfrak{t}_{\N}^{1/2}$}, we deduce that with high probability, the term in the exponential is {\small$\lesssim\N^{-1/2}\N^{\delta}(\log\N)^{2}\N^{1/2}\mathfrak{t}_{\N}^{1/4}\lesssim\N^{\delta}(\log\N)^{2}\N^{-\delta_{\mathbf{S}}/4}$}. Since {\small$\delta>0$} is small, this is {\small$\lesssim\N^{-\delta_{\mathbf{S}}/5}$}. We use and {\small$|\exp(\a)-1|\lesssim\a$} (for {\small$\a\ll1$}) to get
\begin{align*}
\Big|\exp\Big(\mathrm{c}_{\x,\w}\lambda\N^{-\frac12}\sum_{\y\in\mathbb{I}_{\x,\w}}\bphi_{\t,\y}\Big)-1\Big|\lesssim\N^{-\frac15\delta_{\mathbf{S}}}.
\end{align*}
Combining the previous two displays, while noting that {\small$\mathbf{H}^{\N,\mathbf{A}}_{0,\mathfrak{t}_{\N},\x,\w}$} has {\small$\ell^{1}(\llbracket0,\infty\rrbracket)$}-norm of {\small$\lesssim1$} in  {\small$\w$} (see Proposition \ref{prop:hkestimates}), we get the following with high probability simultaneously over all {\small$(\t,\x)$} as in \eqref{eq:rratioI}:
\begin{align*}
\sum_{\w\in\llbracket0,\infty\rrbracket}\mathbf{H}^{\N,\mathbf{A}}_{0,\mathfrak{t}_{\N},\x,\w}\cdot\mathbf{1}^{(\x)}_{\w}\cdot(\mathbf{Z}^{\N}_{\t,\w}-\mathbf{Z}^{\N}_{\t,\x})\lesssim\N^{-\frac15\delta_{\mathbf{S}}}|\mathbf{Z}^{\N}_{\t,\x}|
\end{align*}
Combining this with \eqref{eq:rratioI1} and \eqref{eq:rratioI2} yields {\small$|\mathbf{S}^{\N}_{\t,\x}-\mathbf{Z}^{\N}_{\t,\x}|\lesssim\N^{-\delta_{\mathbf{S}}/5}|\mathbf{Z}^{\N}_{\t,\x}|$}. Dividing both sides of this inequality by {\small$\mathbf{Z}^{\N}_{\t,\x}$} shows with high probability, we have {\small$|\mathbf{R}^{\N}_{\t,\x}-1|\lesssim\N^{-\delta_{\mathbf{S}}/5}$} for all {\small$\t\in[0,\mathrm{T}]$} and {\small$\x\in\llbracket0,\N^{\mathrm{D}}\rrbracket$} simultaneously; see \eqref{eq:heatIII} for {\small$\mathbf{R}^{\N}$}. Since {\small$\mathbf{R}^{\N,\wedge}=\mathbf{R}^{\N}$} when this bound holds (see \eqref{eq:rwedge}), the proof is complete.
\end{proof}
\subsection{\abbr{CLT} estimates for spatial-averaging}
We now provide estimates for the {\small$\mathbf{Av}^{\mathbf{X},\mathscr{Q}}$}-objects in Definition \ref{definition:bulkaverage}. Intuitively, these are space-averages of bulk-admissible functions, which have finite-support properties and vanish under {\small$\E^{0}$}; see Definition \ref{definition:bulkadmissible}. Thus, averaging a large collection of these yields \abbr{CLT}-type cancellations. The following captures this precisely, with a technical hurdle being the fact that all the bulk-admissible functions are multiplied by {\small$\mathbf{Z}^{\N}$}-factors in {\small$\mathbf{Av}^{\mathbf{X},\mathscr{Q}}$}; see \eqref{eq:bulkaverageIV}.
\begin{lemma}\label{lemma:cltx}
\fsp Fix any {\small$\mathrm{D},\delta,\mathrm{T}>0$} independent of {\small$\N$}. Let {\small$\mathscr{Q}=(\mathfrak{q}_{\x})_{\x\in\llbracket0,\infty\rrbracket}$} denote a collection of functions that are bulk-admissible with respect to the corresponding point {\small$\x\in\llbracket0,\infty\rrbracket$}. With high probability, we have 
\begin{align}
\sup_{\t\in[0,\mathrm{T}]}\sup_{\x\in\llbracket0,\N^{\mathrm{D}}\rrbracket}|\mathbf{Av}^{\mathbf{X},\mathscr{Q}}_{\t,\x}|\lesssim\N^{-\frac12+\frac34\delta_{\mathbf{S}}+\delta}.\label{eq:cltxI}
\end{align}
\end{lemma}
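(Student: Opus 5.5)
We first strip out the $\mathbf{Z}^{\N}$-factors and reduce to a CLT bound for a weighted spatial sum of the $\mathfrak{q}$'s. Consider the forwards-oriented case, so $\x_{\pm}=\x_{-}$; the backwards case is the mirror image. By \eqref{eq:bulkaverageIV}-\eqref{eq:bulkaverageV},
\begin{align*}
\mathbf{Av}^{\mathbf{X},\mathscr{Q}}_{\t,\x}=\sum_{\w}\mathbf{H}^{\N,0}_{0,\tau_{\mathbf{Av}},\x,\w}\mathbf{1}_{|\x-\w|\lesssim(\log\N)^{2}\mathfrak{l}_{\mathbf{Av}}}\,\mathfrak{q}_{\w}[\bphi_{\t}]\,\mathbf{Z}^{\N}_{\t,\w}(\mathbf{Z}^{\N}_{\t,\x_{-}})^{-1}.
\end{align*}
The ratio equals $\exp(\lambda\N^{-1/2}\sum_{\y\in\llbracket\x_{-}+1,\w\rrbracket}\bphi_{\t,\y})$. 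We call it $\mathbf{E}_{\x_-,\w}$, and it depends only on spins at sites $\leq\w$. The $\mathfrak{q}_{\w}$ are forwards-oriented, so they depend only on spins in $\llbracket\w+1,\w+\mathfrak{l}\rrbracket$. Hence under $\mathbb{P}^{0}$, conditionally on the spins at sites $\leq\w$, each $\mathfrak{q}_{\w}$ sees fresh independent variables. This is exactly the purpose of the orientation convention in Remark \ref{remark:bulkaverage}.

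As in Lemma \ref{lemma:phiap}, we prove the bound under $\mathbb{P}^{0}$ initial data at a fixed time $\t$. We then take a union bound over a grid $\t\in\N^{-\mathrm{C}}\Z$ and $\x\in\llbracket0,\N^{\mathrm{D}}\rrbracket$, use short-time continuity to cover all times, and transfer to general data through \eqref{eq:dataI} by Cauchy--Schwarz. It therefore suffices to show that, for fixed $(\t,\x)$, the sum exceeds $\N^{-1/2+3\delta_{\mathbf{S}}/4+\delta}$ with probability at most $\exp(-\N^{\mathrm{c}})$ under the invariant measure $\mathbb{P}^{0}$.

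The estimate for fixed $(\t,\x)$ goes in three steps.

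\emph{Step 1 (cutoffs).} By Lemma \ref{lemma:phiap}, with probability $1-\mathrm{O}(\exp(-\N^{\mathrm{c}}))$, every partial sum $\N^{-1/2}\sum_{\y}\bphi_{\y}$ over intervals of length $\lesssim(\log\N)^{3}\mathfrak{l}_{\mathbf{Av}}$ is $\lesssim\N^{\delta}$, and every $|\bphi_{\y}|\lesssim\N^{\delta}$. On this event, $|\mathbf{E}_{\x_-,\w}|\lesssim\exp(\N^{\delta})$. This is too crude to use directly, so we proceed differently. We replace $\mathbf{E}_{\x_-,\w}$ by a truncation $\mathbf{E}^{\wedge}_{\x_-,\w}$, bounded by $\mathrm{O}(1)$ after a suitable choice of $\delta$ (using $\mathfrak{l}_{\mathbf{Av}}\leq\N$). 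We also truncate $\mathfrak{q}_{\w}$ at size $\N^{-1+\mathfrak{k}/2+\mathrm{C}\delta}$ using \eqref{eq:bulkadmissibleI}. The truncations must preserve the support properties.

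\emph{Step 2 (martingale structure).} Order the $\w$'s in increasing order and split them into $\mathrm{O}(\mathfrak{l})$ residue classes mod $2\mathfrak{l}$. Within one class, $\mathbf{E}^{\wedge}_{\x_-,\w}$ and the earlier summands are measurable with respect to spins at sites $\leq\w$, while $\mathfrak{q}_{\w}$ is independent of them. If $\E^{0}\mathfrak{q}_{\w}=0$, the summands therefore form martingale differences. Here $\mathfrak{q}_{\w}=\N^{-1+\mathfrak{k}/2}\mathfrak{f}$ with $\mathfrak{f}\in\mathrm{Jet}^{\perp}_{\mathfrak{k}}$, so $\E^{0}\mathfrak{f}=0$. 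Truncation only introduces an $\exp(-\N^{\mathrm{c}})$ bias in the mean, which we absorb.

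\emph{Step 3 (Azuma).} The weights satisfy $\mathbf{H}^{\N,0}_{0,\tau_{\mathbf{Av}},\x,\w}\lesssim\N^{-1}\tau_{\mathbf{Av}}^{-1/2}=\mathfrak{l}_{\mathbf{Av}}^{-1}$ and are supported on $\lesssim(\log\N)^{2}\mathfrak{l}_{\mathbf{Av}}$ sites (Proposition \ref{prop:hkestimates}). Azuma--Hoeffding then gives, with probability $1-\exp(-\N^{\mathrm{c}})$,
\begin{align*}
|\mathbf{Av}^{\mathbf{X},\mathscr{Q}}_{\t,\x}|\lesssim\N^{\delta}\max_{\w}\|\mathfrak{q}_{\w}\|_{\infty}\cdot\mathfrak{l}_{\mathbf{Av}}^{-1}\cdot\mathfrak{l}_{\mathbf{Av}}^{1/2}\lesssim\N^{\delta}\mathfrak{l}_{\mathbf{Av}}^{-1/2}=\N^{-\frac12+\frac34\delta_{\mathbf{S}}+\delta},
\end{align*}
using $\|\mathfrak{q}\|\lesssim\N^{\mathrm{C}\delta}$, valid since $\mathfrak{k}\leq2$. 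This is exactly \eqref{eq:cltxI}. Near the edge, only the summands with $\w$ large enough for $\mathfrak{q}_{\w}$ to be defined contribute, and the same argument applies.

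The main obstacle is Step 1: controlling the exponential factor $\mathbf{E}_{\x_-,\w}$. Its window has length $(\log\N)^{3}\mathfrak{l}_{\mathbf{Av}}\ll\N$, so the exponent is $\lesssim\N^{\delta}(\log\N)^{3/2}(\mathfrak{l}_{\mathbf{Av}}/\N)^{1/2}\ll1$ on the good event, and hence $\mathbf{E}\asymp1$. The difficulty is that the indicator of the good event destroys the martingale structure. Our first approach is to truncate each partial-sum increment separately in an adapted way, for instance by stopping the walk $\y\mapsto\N^{-1/2}\sum\bphi$ when it exits $[-1,1]$, which keeps adaptedness. We then show via Lemma \ref{lemma:phiap} that stopping occurs only with probability $\exp(-\N^{\mathrm{c}})$.
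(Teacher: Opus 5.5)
Your proposal is correct and follows the paper's proof essentially step for step; the paper treats the mirror-image backwards-oriented case. Both arguments truncate and recenter $\mathfrak{q}_{\w}$, truncate the $\mathbf{Z}^{\N}$-ratio, split into residue classes mod $2\mathfrak{l}$ to get martingale differences under $\mathbb{P}^{0}$, apply Azuma--Hoeffding with weights of size $\mathfrak{l}_{\mathbf{Av}}^{-1}$ on $(\log\N)^{2}\mathfrak{l}_{\mathbf{Av}}$ sites, and finish with a union bound, short-time continuity, and a change of measure. The only place you overcomplicate things is the exponential factor: the paper simply uses $\mathbf{E}_{\x_{-},\w}\mathbf{1}[\mathbf{E}_{\x_{-},\w}\lesssim1]$ (its $\mathbf{G}_{\t,\x,\w}$), which is a function of the same partial sum as $\mathbf{E}_{\x_{-},\w}$ and therefore has exactly the same measurability, so the ``obstacle'' you describe never arises and the stopping-time construction is unnecessary (though it also works).
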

\begin{proof}
Intuitively, recall from Definition \ref{definition:bulkaverage} that {\small$\mathbf{Av}^{\mathbf{X},\mathscr{Q}}$} is averaging on a length-scale {\small$\mathfrak{l}_{\mathbf{Av}}=\N^{1-3\delta_{\mathbf{S}}/2}$} and thus \abbr{CLT} cancellations suggest a power-saving of {\small$\mathfrak{l}_{\mathbf{Av}}^{-1/2}=\N^{-1/2+3\delta_{\mathbf{S}}/4}$}, explaining the \abbr{RHS} of \eqref{eq:cltxI}. The extra power of {\small$\N^{\delta}$} on the \abbr{RHS} of \eqref{eq:cltxI} is to go beyond this natural scale and get a high probability, non-equilibrium estimate (for initial data satisfying \eqref{eq:dataI}). 

Let us now make the above precise. The argument below proceeds by making a sequence of high probability claims. We only make finitely many of these claims, and the intersection of finitely many high probability events holds with high probability itself, so the final conclusions hold with high probability as well.

For the sake of concreteness, let us assume that {\small$\mathfrak{q}_{\w}$} is bulk-admissible with backwards-orientation (see Definition \ref{definition:bulkadmissible} for what this means). For forwards-orientation, the same argument works upon reflection about {\small$\w$}-points below. By \eqref{eq:bulkaverageIV}-\eqref{eq:bulkaverageV}, we can write 
\begin{align}
\mathbf{Av}^{\mathbf{X},\mathscr{Q}}_{\t,\x}=\sum_{\w\in\llbracket0,\infty\rrbracket}{\boldsymbol{\mathcal{H}}}_{\x,\w}\cdot\mathfrak{q}_{\w}[\bphi_{\t}]\cdot\mathbf{Z}^{\N}_{\t,\w}(\mathbf{Z}^{\N}_{\t,\x_{+}})^{-1}, \label{eq:cltxI1}
\end{align}
where {\small${\boldsymbol{\mathcal{H}}}_{\x,\w}:=\mathbf{H}^{\N,0}_{0,\tau_{\mathbf{Av}},\x,\w}\mathbf{1}_{|\x-\w|\lesssim(\log\N)^{2}\mathfrak{l}_{\mathbf{Av}}}$} is supported in a neighborhood of radius {\small$\lesssim(\log\N)^{2}\N^{1-3\delta_{\mathbf{S}}/2}$} around {\small$\x$} and is uniformly {\small$\lesssim\N^{-1+3\delta_{\mathbf{S}}/2}$} in size. This follows by Proposition \ref{prop:hkestimates} and {\small$\tau_{\mathbf{Av}}=\N^{-3\delta_{\mathbf{S}}}$} from Definition \ref{definition:bulkaverage}. We will use these properties freely in what follows. We now make two modifications to \eqref{eq:cltxI1}.
\begin{enumerate}
\item First, we introduce a technical cutoff for {\small$\mathfrak{q}_{\w}$} which introduces a deterministic upper bound on its size (coming from its deterministic polynomial estimate \eqref{eq:bulkadmissibleI} and {\small$\bphi$}-estimates from Lemma \ref{lemma:phiap}). We also arrange this modification to keep the mean-zero property with respect to {\small$\E^{0}$} for {\small$\mathfrak{q}_{\w}$}. This is up to an ultimately negligible error and holds with high probability. We make this precise as follows.

We claim that there is an event of high probability such that for all {\small$(\t,\x)\in[0,\mathrm{T}]\times\llbracket0,\N^{\mathrm{D}}\rrbracket$} and {\small$\w$} for which {\small${\boldsymbol{\mathcal{H}}}_{\x,\w}\neq0$}, we have {\small$\mathfrak{q}_{\w}[\bphi_{\t}]=\mathfrak{q}_{\w}[\bphi_{\t}]\mathbf{1}(|\mathfrak{q}_{\w}[\bphi_{\t}]|\leq\N^{\e})$} for some {\small$\e>0$} small but fixed. Indeed, use Lemma \ref{lemma:phiap} with {\small$\mathbb{I}_{\j}$} corresponding to each {\small$\w$} of interest; this gives {\small$|\bphi_{\t,\w}|\leq\N^{\nu}$} for {\small$\nu>0$} small and all {\small$(\t,\w)$}-pairs at hand, with which we can apply the polynomial estimate \eqref{eq:bulkadmissibleI} for bulk-admissible functions to deduce the claim. By the same token, the bound \eqref{eq:phiapI1} implies that {\small$\mathbb{P}^{0}(|\mathfrak{q}_{\w}[\bphi]|>\N^{\e})\lesssim\exp(-\upsilon\N^{\mu})$} for some {\small$\mu>0$} depending on {\small$\e>0$}. By all of this, we get that {\small$|\E^{0}[\mathfrak{q}_{\w}\mathbf{1}(|\mathfrak{q}_{\w}|\leq\N^{\e})]|\lesssim(\E^{0}|\mathfrak{q}_{\w}|^{2})^{1/2}\mathbb{P}^{0}(|\mathfrak{q}_{\w}|>\N^{\e})^{1/2}\lesssim_{\mathrm{D}}\N^{-\mathrm{D}}$}, for any {\small$\mathrm{D}>0$}. Indeed, if {\small$\mathfrak{q}_{\w}$} is bulk-admissible, then {\small$\E^{0}\mathfrak{q}_{\w}=0$} and {\small$\E^{0}|\mathfrak{q}_{\w}|^{2}\lesssim1$} (see \eqref{eq:bulkadmissibleI} and the sub-Gaussianity of {\small$\bphi_{\y}$}-variables with respect to {\small$\mathbb{P}^{0}$}). Ultimately, we obtain, with high probability simultaneously for all {\small$(\t,\x)$} of interest, the following, in which {\small$\wt{\mathfrak{q}}_{\w}=\mathfrak{q}_{\w}\mathbf{1}(|\mathfrak{q}_{\w}|\leq\N^{\e})-\E^{0}[\mathfrak{q}_{\w}\mathbf{1}(|\mathfrak{q}_{\w}|\leq\N^{\e})]$}:
\begin{align}
\mathbf{Av}^{\mathbf{X},\mathscr{Q}}_{\t,\x}=\sum_{\w\in\llbracket0,\infty\rrbracket}{\boldsymbol{\mathcal{H}}}_{\x,\w}\cdot\wt{\mathfrak{q}}_{\w}[\bphi_{\t}]\cdot\mathbf{Z}^{\N}_{\t,\w}(\mathbf{Z}^{\N}_{\t,\x_{+}})^{-1}+\sum_{\w\in\llbracket0,\infty\rrbracket}{\boldsymbol{\mathcal{H}}}_{\x,\w}\cdot\mathrm{O}_{\mathrm{D}}(\N^{-\mathrm{D}})\cdot\mathbf{Z}^{\N}_{\t,\w}(\mathbf{Z}^{\N}_{\t,\x_{+}})^{-1}.\label{eq:cltxI2}
\end{align}
\item We now use \eqref{eq:ch} to write {\small$\mathbf{Z}^{\N}_{\t,\w}(\mathbf{Z}^{\N}_{\t,\x_{+}})^{-1}=\exp(\lambda(\mathbf{j}^{\N}_{\t,\w}-\mathbf{j}^{\N}_{\t,\x_{+}}))$}. We will now introduce an additional cutoff in this exponential factor (that holds with high probability), essentially by using the feature that {\small$\mathbf{j}^{\N}$} has (close to) macroscopic-scale spatial regularity, and that {\small$|\w-\x_{+}|$} is mesoscopic. Let us be more precise.

By the identity {\small$\bphi_{\t,\y}=\N^{1/2}(\mathbf{j}^{\N}_{\t,\y}-\mathbf{j}^{\N}_{\t,\y-1})$} and by Lemma \ref{lemma:phiap}, we get the existence of a high probability event on which simultaneously for all {\small$\x\in\llbracket0,\N^{\mathrm{D}}\rrbracket$}, all {\small$\w$} such that {\small${\boldsymbol{\mathcal{H}}}_{\x,\w}\neq0$}, and all {\small$\t\in[0,\mathrm{T}]$}, we have the estimate {\small$|\mathbf{j}^{\N}_{\t,\w}-\mathbf{j}^{\N}_{\t,\x_{+}}|\lesssim\N^{-1/2+\delta}|\w-\x_{+}|^{1/2}$}. Now, recall that {\small${\boldsymbol{\mathcal{H}}}_{\x,\w}\neq0$} forces {\small$|\x-\w|\lesssim(\log\N)^{2}\N^{1-3\delta_{\mathbf{S}}/2}$} as mentioned after \eqref{eq:cltxI1}, and {\small$|\x-\x_{+}|\lesssim(\log\N)^{3}\mathfrak{l}_{\mathbf{Av}}\lesssim(\log\N)^{3}\N^{1-3\delta_{\mathbf{S}}/2}$} by construction in Definition \ref{definition:bulkaverage}. Thus, on this event, we have {\small$|\mathbf{j}^{\N}_{\t,\w}-\mathbf{j}^{\N}_{\t,\x_{+}}|\lesssim\N^{\delta-3\delta_{\mathbf{S}}/4}$}, and if {\small$\delta>0$} is small, we may modify \eqref{eq:cltxI2} (without changing it with high probability) to be written as
\begin{align}
\mathbf{Av}^{\mathbf{X},\mathscr{Q}}_{\t,\x}&=\sum_{\w\in\llbracket0,\infty\rrbracket}{\boldsymbol{\mathcal{H}}}_{\x,\w}\cdot\wt{\mathfrak{q}}_{\w}[\bphi_{\t}]\cdot\mathbf{G}_{\t,\x,\w}+\sum_{\w\in\llbracket0,\infty\rrbracket}{\boldsymbol{\mathcal{H}}}_{\x,\w}\cdot\mathrm{O}_{\mathrm{D}}(\N^{-\mathrm{D}})\cdot\mathbf{G}_{\t,\x,\w},\label{eq:cltxI3a}
\end{align}
where
\begin{align}
\mathbf{G}_{\t,\x,\w}:=\exp(\lambda(\mathbf{j}^{\N}_{\t,\w}-\mathbf{j}^{\N}_{\t,\x_{+}}))\cdot\mathbf{1}[\exp(\lambda(\mathbf{j}^{\N}_{\t,\w}-\mathbf{j}^{\N}_{\t,\x_{+}}))\lesssim1].\label{eq:cltxI3b}
\end{align}
\end{enumerate}
By construction, the last term in \eqref{eq:cltxI3a} is {\small$\lesssim_{\mathrm{D}}\N^{-\mathrm{D}}$}. Thus, it remains to analyze the first term on the \abbr{RHS} of \eqref{eq:cltxI3a}. To this end, we summarize two key points about the terms therein.
\begin{enumerate}
\item The kernel {\small${\boldsymbol{\mathcal{H}}}_{\x,\w}$} is deterministic, and {\small$\mathbf{G}_{\t,\x,\w}$} depends only on {\small$\bphi_{\t,\y}$} for {\small$\y\in\llbracket\w+1,\infty\rrbracket$}. 
\item The function {\small$\wt{\mathfrak{q}}_{\w}$}, being a function of only {\small$\mathfrak{q}_{\w}$}, depends only on {\small$\bphi_{\t,\y}$} for {\small$\y\in\llbracket\w-\mathfrak{l},\w\rrbracket$} for {\small$\mathfrak{l}=\mathrm{O}(1)$}, since {\small$\mathfrak{q}_{\w}$} was assumed to be backwards-oriented earlier in this proof.
\end{enumerate}
In view of the previous two points, we now write, with notation explained afterwards,
\begin{align*}
\sum_{\w\in\llbracket0,\infty\rrbracket}{\boldsymbol{\mathcal{H}}}_{\x,\w}\cdot\wt{\mathfrak{q}}_{\w}[\bphi_{\t}]\cdot\mathbf{G}_{\t,\x,\w}=\sum_{\a\in\llbracket1,2\mathfrak{l}\rrbracket}\sum_{\w\in\mathbb{I}_{\a}}{\boldsymbol{\mathcal{H}}}_{\x,\w}\cdot\wt{\mathfrak{q}}_{\w}[\bphi_{\t}]\cdot\mathbf{G}_{\t,\x,\w}.
\end{align*}
In the above, the index {\small$\a\in\llbracket1,2\mathfrak{l}\rrbracket$} parameterizes equivalence classes in {\small$\llbracket0,\infty\rrbracket$} modulo {\small$2\mathfrak{l}$}, and {\small$\w\in\mathbb{I}_{\a}$} ranges over all elements in the corresponding equivalence class. In particular, for any distinct points {\small$\w_{1},\w_{2}\in\mathbb{I}_{\a}$}, the values {\small$\wt{\mathfrak{q}}_{\w_{1}}[\bphi_{\t}],\wt{\mathfrak{q}}_{\w_{2}}[\bphi_{\t}]$} depend on {\small$\bphi_{\t,\y}$}-spins for {\small$\y$}-points in disjoint intervals. Now, arrange the sum over {\small$\w\in\mathbb{I}_{\a}$} so that the {\small$\w$}-indices are in decreasing order; we can do this since {\small${\boldsymbol{\mathcal{H}}}_{\x,\w}\neq0$} only for a finite set of {\small$\w$}-points. Let us also assume for now that {\small$\bphi_{\t}\sim\mathbb{P}^{0}$}, i.e. its initial data is distributed via {\small$\mathbb{P}^{0}$}. For any {\small$\w\in\mathbb{I}_{\a}$}, define the {\small$\sigma$}-algebra {\small$\mathscr{F}_{\w}$} to be generated by {\small$\bphi_{\t,\y}$} for {\small$\y\in\llbracket\w-\mathfrak{l},\infty\rrbracket$}. 

We now claim that in the current setting, the summands {\small${\boldsymbol{\mathcal{H}}}_{\x,\w}\cdot\wt{\mathfrak{q}}_{\w}[\bphi_{\t}]\cdot\mathbf{G}_{\t,\x,\w}$} are martingale increments (with time-parameter {\small$\w\in\mathbb{I}_{\a}$} and with respect to the filtration {\small$\mathscr{F}_{\w}$}). Indeed, we recalled earlier that {\small${\boldsymbol{\mathcal{H}}}_{\x,\w}\cdot\wt{\mathfrak{q}}_{\w}[\bphi_{\t}]\cdot\mathbf{G}_{\t,\x,\w}$} is a function of {\small$\bphi_{\t,\y}$} for {\small$\y\in\llbracket\w-\mathfrak{l},\infty\rrbracket$}, so it is {\small$\mathscr{F}_{\w}$}-measurable. On the other hand, suppose {\small$\w_{1}>\w_{2}$} in the sum over {\small$\mathbb{I}_{\a}$}, so that {\small$\w_{1}$} comes before {\small$\w_{2}$}. We now compute {\small$\E_{\mathscr{F}_{\w_{1}}}({\boldsymbol{\mathcal{H}}}_{\x,\w_{2}}\cdot\wt{\mathfrak{q}}_{\w_{2}}[\bphi_{\t}]\cdot\mathbf{G}_{\t,\x,\w_{2}})$} as follows. By the tower property of conditional expectation, we condition further on {\small$\bphi_{\t,\y}$} for {\small$\y\in\llbracket\w_{2}+1,\infty\rrbracket$}; this is indeed conditioning further, since {\small$\mathscr{F}_{\w_{1}}$} conditions on {\small$\bphi_{\t,\y}$} for {\small$\y\in\llbracket\w_{1}-\mathfrak{l},\infty\rrbracket$}, and by construction, if {\small$\w_{1}>\w_{2}$} are both in {\small$\mathbb{I}_{\a}$}, then {\small$\w_{1}\geq\w_{2}+2\mathfrak{l}$}, which implies {\small$\w_{2}+1\leq\w_{1}-\mathfrak{l}$}. After we do this further conditioning, the factor {\small$\mathbf{G}_{\t,\x,\w_{2}}$} becomes deterministic. as well. However, as {\small$\wt{\mathfrak{q}}_{\w_{2}}[\bphi_{\t}]$} depends only on {\small$\bphi_{\t,\y}$} for {\small$\y\leq\w_{2}$}, and since the one-point marginals of {\small$\mathbb{P}^{0}$} are jointly independent, after all this conditioning, the law of {\small$\wt{\mathfrak{q}}_{\w_{2}}[\bphi_{\t}]$} is still as if {\small$\bphi_{\t}\sim\mathbb{P}^{0}$}. Thus, we have {\small$\E_{\mathscr{F}_{\w_{1}}}({\boldsymbol{\mathcal{H}}}_{\x,\w_{2}}\cdot\wt{\mathfrak{q}}_{\w_{2}}[\bphi_{\t}]\cdot\mathbf{G}_{\t,\x,\w_{2}})=\E_{\mathscr{F}_{\w_{1}}}({\boldsymbol{\mathcal{H}}}_{\x,\w_{2}}\cdot\mathbf{G}_{\t,\x,\w_{2}}\cdot\E\wt{\mathfrak{q}}_{\w_{2}}[\bphi_{\t}])=0$}, where the last identity follows because {\small$\wt{\mathfrak{q}}_{\w}$} is mean-zero for all {\small$\w$} by construction (recall that we assumed {\small$\bphi_{\t}\sim\mathbb{P}^{0}$} for now).

With the martingale property in hand, we also note the deterministic bound {\small$|{\boldsymbol{\mathcal{H}}}_{\x,\w}\cdot\wt{\mathfrak{q}}_{\w}[\bphi_{\t}]\cdot\mathbf{G}_{\t,\x,\w}|\lesssim\N^{\e}{\boldsymbol{\mathcal{H}}}_{\x,\w}$}. Thus, by the Azuma-Hoeffding martingale inequality, we have (for any {\small$\mathrm{K}>0$} and some {\small$\upsilon>0$}) that
\begin{align*}
\mathbb{P}^{0}\Big(\Big(\sum_{\w\in\mathbb{I}_{\a}}{\boldsymbol{\mathcal{H}}}_{\x,\w}\cdot\wt{\mathfrak{q}}_{\w}[\bphi_{\t}]\cdot\mathbf{G}_{\t,\x,\w}\Big)^{2}\geq\mathrm{K}^{2}\N^{2\e}\sum_{\w\in\mathbb{I}_{\a}}{\boldsymbol{\mathcal{H}}}_{\x,\w}^{2}\Big)\lesssim\exp(-\upsilon\mathrm{K}^{2}).
\end{align*}
Recalling estimates for {\small${\boldsymbol{\mathcal{H}}}_{\x,\w}$} from after \eqref{eq:cltxI1} and taking {\small$\mathrm{K}=\N^{\e/2}$} turns the previous display into
\begin{align*}
\mathbb{P}^{0}\Big(\Big(\sum_{\w\in\mathbb{I}_{\a}}{\boldsymbol{\mathcal{H}}}_{\x,\w}\cdot\wt{\mathfrak{q}}_{\w}[\bphi_{\t}]\cdot\mathbf{G}_{\t,\x,\w}\Big)^{2}\geq\N^{3\e}\N^{-1+\frac32\delta_{\mathbf{S}}}(\log\N)^{\mathrm{O}(1)}\Big)\lesssim\exp(-\upsilon\N^{\e})
\end{align*}
We take a union bound over all {\small$\a\in\llbracket1,2\mathfrak{l}\rrbracket$} and {\small$\x\in\llbracket0,\N^{\mathrm{D}}\rrbracket$}. (Recall that {\small$\mathfrak{l}=\mathrm{O}(1)$}.) Then we take a union bound over all {\small$\t\in[0,\mathrm{T}]$} (first in a sub-microscopic discretization of {\small$[0,\mathrm{T}]$}, then extending by short-time continuity as in the proof of Lemma \ref{lemma:phiap}). Ultimately, we obtain
\begin{align*}
\mathbb{P}^{0}\Big(\sup_{\t\in[0,\mathrm{T}]}\max_{\x\in\llbracket0,\N^{\mathrm{D}}\rrbracket}\Big|\sum_{\w\in\llbracket0,\infty\rrbracket}{\boldsymbol{\mathcal{H}}}_{\x,\w}\cdot\wt{\mathfrak{q}}_{\w}[\bphi_{\t}]\cdot\mathbf{G}_{\t,\x,\w}\Big|\geq\N^{\frac32\e}\N^{-\frac12+\frac34\delta_{\mathbf{S}}}(\log\N)^{\mathrm{O}(1)}\Big)\lesssim\exp(-\upsilon\N^{\frac12\e})
\end{align*}
Recall that this was all under the assumption that the initial data of {\small$\t\mapsto\bphi_{\t}$} has law {\small$\mathbb{P}^{0}$}. To change measures to general initial data satisfying \eqref{eq:dataI}, we use the change-of-measure argument in the proof of Lemma \ref{lemma:phiap}:
\begin{align*}
&\mathbb{P}\Big(\sup_{\t\in[0,\mathrm{T}]}\max_{\x\in\llbracket0,\N^{\mathrm{D}}\rrbracket}\Big|\sum_{\w\in\llbracket0,\infty\rrbracket}{\boldsymbol{\mathcal{H}}}_{\x,\w}\cdot\wt{\mathfrak{q}}_{\w}[\bphi_{\t}]\cdot\mathbf{G}_{\t,\x,\w}\Big|\geq\N^{\frac32\e}\N^{-\frac12+\frac34\delta_{\mathbf{S}}}(\log\N)^{\mathrm{O}(1)}\Big)\\
&\lesssim\exp(\N^{\gamma_{\mathrm{data}}})\mathbb{P}\Big(\sup_{\t\in[0,\mathrm{T}]}\max_{\x\in\llbracket0,\N^{\mathrm{D}}\rrbracket}\Big|\sum_{\w\in\llbracket0,\infty\rrbracket}\mathbf{H}_{\x,\w}\cdot\wt{\mathfrak{q}}_{\w}[\bphi_{\t}]\cdot\mathbf{G}_{\t,\x,\w}\Big|\geq\N^{\frac32\e}\N^{-\frac12+\frac34\delta_{\mathbf{S}}}(\log\N)^{\mathrm{O}(1)}\Big)^{\frac12}\lesssim\exp(-\tfrac13\upsilon\N^{\frac12\e}).
\end{align*}
Combining this with \eqref{eq:cltxI3a} (and the fact that the last term therein is {\small$\lesssim_{\mathrm{D}}\N^{-\mathrm{D}}$} deterministically for large {\small$\mathrm{D}>0$}), we deduce that the desired estimate \eqref{eq:cltxI} holds with high probability if we set {\small$\delta=2\e$} therein.
\end{proof}
We will now bootstrap the estimate \eqref{eq:cltxI} to an estimate for {\small$\mathbf{Av}^{\mathbf{T},\mathbf{X},\mathscr{Q}}$}-quantities from Definition \ref{definition:bulkaverage}. To this end, we require a time-regularity estimate for {\small$\mathbf{j}^{\N}$}.
\begin{lemma}\label{lemma:currtreg}
\fsp Fix any {\small$\mathrm{D},\mathrm{T}>0$} independent of {\small$\N$}, and recall from Definition \ref{definition:bulkaverage} that {\small$\mathfrak{t}_{\mathbf{Av}}\leq\N^{-2/3}$}. There exists {\small$\rho>0$} such that with high probability, we have 
\begin{align}
\sup_{\s\in[0,\mathfrak{t}_{\mathbf{Av}}]}\sup_{\t\in[0,\mathrm{T}]}\sup_{\x\in\llbracket0,\N^{\mathrm{D}}\rrbracket}|\mathbf{j}^{\N}_{\t+\s,\x}-\mathbf{j}^{\N}_{\t,\x}|\lesssim\N^{-\rho}.\label{eq:currtregI}
\end{align}
\end{lemma}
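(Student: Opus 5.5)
The plan is to integrate the \abbr{SDE}s \eqref{eq:currI}-\eqref{eq:currII} over {\small$[\t,\t+\s]$} and bound the drift and noise pieces separately, on a discrete grid of times, then extend by short-time continuity.

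\textbf{Drift.} For {\small$\x\in\llbracket1,\infty\rrbracket$}, the drift increment is
\begin{align*}
\int_{\t}^{\t+\s}\big(\N^{\frac32}\grad^{\mathbf{X}}_{1}\mathscr{U}'[\bphi_{\r,\x}]+\N\mathbf{F}[\tau_{\x}\bphi_{\r}]\big)\d\r .
\end{align*}
At {\small$\x=0$} it is the analogous expression from \eqref{eq:currII}. By Lemma \ref{lemma:phiap}, applied to singletons covering {\small$\llbracket0,\N^{\mathrm{D}}+\mathrm{O}(1)\rrbracket$}, we have {\small$|\bphi_{\r,\y}|\lesssim\N^{\delta}$} for all relevant {\small$(\r,\y)$} with high probability. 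Assumption \ref{assump:potential} then gives {\small$|\mathscr{U}'[\bphi]|\lesssim1+|\bphi|$}, and {\small$\mathbf{F}$} is polynomial in these quantities. So the integrand is at most {\small$\N^{3/2+\mathrm{C}\delta}$}. Since {\small$\s\leq\mathfrak{t}_{\mathbf{Av}}\leq\N^{-2/3}$}, the drift increment is at most {\small$\N^{3/2-2/3+\mathrm{C}\delta}=\N^{5/6+\mathrm{C}\delta}$}. This is far too large, so the naive bound fails.

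The fix is to \emph{not} bound the gradient drift pointwise. I will instead exploit the fact that the {\small$\N^{3/2}\grad\mathscr{U}'$} drift combines with the noise into a conservative diffusive evolution of the height function on the diffusive scale. The main obstacle is controlling this {\small$\N^{3/2}$} drift, and my approach is as follows.

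\textbf{Comparison to a smoothed height.} I will compare {\small$\mathbf{j}^{\N}_{\cdot,\x}$} with its spatial average over a window of length {\small$\ell=\N^{a}$} around {\small$\x$} (one-sided near the edge). I bound three pieces.

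First, at both times {\small$\t$} and {\small$\t+\s$}, the difference between {\small$\mathbf{j}^{\N}$} and its {\small$\ell$}-average is at most {\small$\N^{-1/2+\delta}\ell^{1/2}$} with high probability. This follows from Lemma \ref{lemma:phiap} applied to the {\small$\mathrm{O}(\N^{\mathrm{D}+1})$} intervals involved, using {\small$\mathbf{j}_{\x}-\mathbf{j}_{\y}=\N^{-1/2}\sum\bphi$}.

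Second, the {\small$\ell$}-averaged height evolves with the averaged drift. Averaging turns {\small$\N^{3/2}\grad_{1}\mathscr{U}'$} into {\small$\N^{3/2}\ell^{-1}(\mathscr{U}'[\bphi_{\cdot,\x+\ell}]-\mathscr{U}'[\bphi_{\cdot,\x}])$}, which is at most {\small$\N^{3/2+\delta}\ell^{-1}$}. At the edge, the boundary term {\small$\N^{3/2}\mathscr{U}'[\bphi_{\cdot,1}]$} appears in place of one endpoint, with the same bound. The {\small$\N\mathbf{F}$} term averages to at most {\small$\N^{1+\mathrm{C}\delta}$}. Integrating over time {\small$\s\leq\N^{-2/3}$} gives at most {\small$\N^{5/6+\delta}\ell^{-1}+\N^{1/3+\mathrm{C}\delta}$}.

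The {\small$\N\mathbf{F}$} contribution of {\small$\N^{1/3}$} is still too big, so it must also be handled through averaging. Since {\small$\mathbf{F}$} has no linear or constant part, the key input is a time-averaged bound on the {\small$\N\mathbf{F}$} integral. I would try a Kipnis--Varadhan-type estimate under {\small$\mathbb{P}^{0}$}, transferred to general data by the change of measure in the proof of Lemma \ref{lemma:phiap}. Since {\small$\mathbf{F}\in\mathrm{Jet}_{0}^{\perp}$}, that estimate gives a time integral over {\small$[\t,\t+\s]$} of size at most {\small$\N\cdot\s^{1/2}\N^{-1}$} (up to small powers of {\small$\N$}), which is {\small$\N^{-1/3}$}.

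Third, the averaged noise is a Gaussian martingale with variance {\small$\N\s/\ell$}. Sub-Gaussian tails and a union bound give a bound of {\small$\N^{\delta}(\N^{1/3}/\ell)^{1/2}$}.

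\textbf{Choice of window and conclusion.} Take {\small$\ell=\N^{5/6+2\rho'}$}. Then:
\begin{itemize}
\item the smoothing error is {\small$\N^{-1/12+\rho'+\delta}$};
\item the averaged gradient drift is {\small$\N^{-2\rho'+\delta}$};
\item the noise is {\small$\N^{-1/4-\rho'+\delta}$}.
\end{itemize}
With {\small$\rho'$} and {\small$\delta$} small, all three are {\small$\lesssim\N^{-\rho}$}. Taking a union bound over a polynomial time grid and using short-time continuity, as in the proof of Lemma \ref{lemma:phiap}, finishes the proof. The {\small$\mathbf{F}$} time-average estimate is the step I expect to be the hardest.
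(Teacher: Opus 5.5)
Your overall architecture matches the paper's. You compare $\mathbf{j}^{\N}$ with a spatial average over a window, telescope the $\N^{3/2}\grad^{\mathbf{X}}_{1}\mathscr{U}'$ drift inside the window, and bound the averaged noise by Gaussian tails. Your power counting for the smoothing error, the gradient drift and the noise with $\ell=\N^{5/6+2\rho'}$ is fine.

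The gap is in the $\N\mathbf{F}$ term, exactly where you expected trouble. A Kipnis--Varadhan estimate only controls second moments. For each fixed $(\t,\x)$ it gives at best something like $\mathbb{P}(|\int_{\t}^{\t+\s}\N\mathbf{F}[\tau_{\x}\bphi_{\r}]\d\r|\geq\N^{-\rho})\lesssim\N^{-2/3+2\rho}$, and even after spatial averaging the per-point failure probability is only polynomially small. That cannot be union-bounded over the $\N^{\mathrm{D}}$ spatial points (with $\mathrm{D}$ arbitrary) and the polynomially many time blocks required by the uniform statement \eqref{eq:currtregI}. Moreover, ``$\mathbf{F}\in\mathrm{Jet}_{0}^{\perp}$'' does not give an $\mathrm{H}^{-1}$ norm of order $\N^{-1}$ for the conservative bulk dynamics. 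For instance, $\mathscr{U}'[\bphi_{1}]$ is also in $\mathrm{Jet}_{0}^{\perp}$, yet its $\mathrm{H}^{-1}$ norm is unbounded in one dimension, and $\int_{0}^{\s}\N\mathscr{U}'[\bphi_{\r,\x}]\d\r$ is of order one when $\s=\N^{-2/3}$. What matters for $\mathbf{F}$ is the vanishing of its first jet. Turning that into an $\mathrm{H}^{-1}$ bound for a non-Gaussian quadratic term is itself a Boltzmann--Gibbs-type result, not something you can just invoke.

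The missing observation is that you need no time cancellation at all: the window you already introduced gives a static CLT at each fixed time.

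\begin{itemize}
\item For fixed $\r$, $\E^{0}\mathbf{F}[\tau_{\w}\bphi]=0$, and $\mathbf{F}[\tau_{\w}\bphi]$ depends only on $\bphi_{\y}$ with $|\y-\w|\lesssim1$.
\item Truncate at $\N^{\mathrm{C}\e}$ using Lemma \ref{lemma:phiap}, and recentre at a cost of $\mathrm{O}(\N^{-\mathrm{D}})$.
\item Split $\w$ into residue classes. Under $\mathbb{P}^{0}$ the window average $\ell^{-1}\sum_{\w}\N\mathbf{F}[\tau_{\w}\bphi_{\r}]$ is then a finite sum of sums of independent, bounded, mean-zero variables.
\item The Hoeffding/Azuma argument from the proof of Lemma \ref{lemma:cltx} bounds it by $\N^{1+\mathrm{C}\e}\ell^{-1/2}$ with probability $1-\mathrm{O}(\exp(-\N^{\mathrm{c}}))$.
\end{itemize}

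These stretched-exponential tails survive both the union bound over a polynomial grid and the change of measure to general initial data. Integrating over $\s\leq\mathfrak{t}_{\mathbf{Av}}\leq\N^{-2/3}$ gives $\N^{-1/12-\rho'+\mathrm{C}\e}$ for your window, which closes the argument. This is precisely the paper's proof; it uses a smooth averaging kernel at the nearly macroscopic scale $\N^{1-\delta_{\mathbf{S}}}$ instead of your flat window of size $\N^{5/6+2\rho'}$.
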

\begin{proof}
Set {\small$\mathfrak{l}_{\N}=\N^{1-\delta_{\mathbf{S}}}$}, with {\small$\delta_{\mathbf{S}}>0$} the small constant from Definition \ref{definition:heat}. Define the averaging operator 
\begin{align*}
\boldsymbol{\mathcal{G}}[\mathbf{j}^{\N}_{\tau,\cdot}]_{\x}:=\sum_{\w\in\llbracket0,\infty\rrbracket}\boldsymbol{\mathcal{G}}_{\x,\w}\mathbf{j}^{\N}_{\tau,\w},
\end{align*}
where {\small$\boldsymbol{\mathcal{G}}_{\x,\w}$} is a probability measure on {\small$\w\in\llbracket0,\infty\rrbracket$} for all {\small$\x\in\llbracket0,\infty\rrbracket$} that admits the size and regularity estimates {\small$|\boldsymbol{\mathcal{G}}_{\x,\w}|\lesssim\mathfrak{l}_{\N}^{-1}\mathbf{1}[|\x-\w|\lesssim\mathfrak{l}_{\N}]$} and {\small$|\grad^{\mathbf{X}}_{\mathfrak{l}}\boldsymbol{\mathcal{G}}_{\x,\w}|\lesssim\mathfrak{l}_{\N}^{-2}\mathbf{1}[|\x-\w|\lesssim\mathfrak{l}_{\N}]$} for any {\small$|\mathfrak{l}|\lesssim1$}. (The discrete gradient is allowed to act on either spatial variable.) The triangle inequality gives the following, in which we compare each of {\small$\mathbf{j}^{\N}_{\t+\s,\x}$} and {\small$\mathbf{j}^{\N}_{\t,\x}$} to their images under the {\small$\boldsymbol{\mathcal{G}}$}-operator:
\begin{align}
\sup_{\s\in[0,\mathfrak{t}_{\mathbf{Av}}]}\sup_{\t\in[0,\mathrm{T}]}\sup_{\x\in\llbracket0,\N^{\mathrm{D}}\rrbracket}|\mathbf{j}^{\N}_{\t+\s,\x}-\mathbf{j}^{\N}_{\t,\x}|&\lesssim\sup_{\s\in[0,\mathfrak{t}_{\mathbf{Av}}]}\sup_{\t\in[0,\mathrm{T}]}\sup_{\x\in\llbracket0,\N^{\mathrm{D}}\rrbracket}|\boldsymbol{\mathcal{G}}[\mathbf{j}^{\N}_{\t+\s,\cdot}]_{\x}-\boldsymbol{\mathcal{G}}[\mathbf{j}^{\N}_{\t,\cdot}]_{\x}|\label{eq:currtregI1a}\\
&+\sup_{\t\in[0,2\mathrm{T}]}\sup_{\x\in\llbracket0,\N^{\mathrm{D}}\rrbracket}|\boldsymbol{\mathcal{G}}[\mathbf{j}^{\N}_{\t,\cdot}]_{\x}-\mathbf{j}^{\N}_{\t,\x}|. \label{eq:currtregI1b}
\end{align}
Because {\small$\boldsymbol{\mathcal{G}}_{\x,\cdot}$} is a probability measure whose support length is assumed to be {\small$\lesssim\mathfrak{l}_{\N}$}, we can write {\small$|\boldsymbol{\mathcal{G}}[\mathbf{j}^{\N}_{\t,\cdot}]_{\x}-\mathbf{j}^{\N}_{\t,\x}|=|\boldsymbol{\mathcal{G}}[\mathbf{j}^{\N}_{\t,\cdot}-\mathbf{j}^{\N}_{\t,\x}]_{\x}|\lesssim\max_{|\w-\x|\lesssim\mathfrak{l}_{\N}}|\mathbf{j}^{\N}_{\t,\w}-\mathbf{j}^{\N}_{\t,\x}|$}. Since {\small$\mathbf{j}^{\N}_{\t,\w}-\mathbf{j}^{\N}_{\t,\x}=\pm\N^{-1/2}\sum_{\y\in\mathbb{I}_{\x,\w}}\bphi_{\t,\y}$}, in which {\small$\mathbb{I}_{\x,\w}$} is the interval with endpoints given by {\small$\x,\w$} (and thus has size {\small$\lesssim\mathfrak{l}_{\N}$}),  we can use Lemma \ref{lemma:phiap} (as in the proof of \eqref{eq:cltxI3a}-\eqref{eq:cltxI3b}) to obtain that the term in \eqref{eq:currtregI1b} is {\small$\lesssim\N^{\delta}\N^{-1/2}\mathfrak{l}_{\N}^{1/2}\lesssim\N^{\delta}\N^{-\delta_{\mathbf{S}}/2}\lesssim\N^{-\delta_{\mathbf{S}}/3}$} for {\small$\delta>0$} small, with high probability. So, in order to complete the proof, it is enough to show that 
\begin{align}
\sup_{\s\in[0,\mathfrak{t}_{\mathbf{Av}}]}\sup_{\t\in[0,\mathrm{T}]}\sup_{\x\in\llbracket0,\N^{\mathrm{D}}\rrbracket}|\boldsymbol{\mathcal{G}}[\mathbf{j}^{\N}_{\t+\s,\cdot}]_{\x}-\boldsymbol{\mathcal{G}}[\mathbf{j}^{\N}_{\t,\cdot}]_{\x}|\lesssim\N^{-\rho}\label{eq:currtregI2}
\end{align}
with high probability for some {\small$\rho>0$}. To this end, we use \eqref{eq:currI}-\eqref{eq:currII} to compute
\begin{align*}
\d\boldsymbol{\mathcal{G}}[\mathbf{j}^{\N}_{\t,\cdot}]_{\x}&=\boldsymbol{\mathcal{G}}[\mathbf{1}_{\cdot\in\llbracket1,\infty\rrbracket}\N^{\frac32}\grad^{\mathbf{X}}_{1}\mathscr{U}'[\bphi_{\t,\cdot}]]_{\x}\d\t+\boldsymbol{\mathcal{G}}[\N\mathbf{F}[\tau_{\cdot}\bphi_{\t}]]_{\x}\d\t+\boldsymbol{\mathcal{G}}_{\x,0}\cdot\N^{\frac32}\mathscr{U}'[\bphi_{\t,1}]\d\t+\boldsymbol{\mathcal{G}}[\sqrt{2}\N^{\frac12}\d\mathbf{b}_{\t,\cdot}]_{\x}.
\end{align*}
Now, we use summation-by-parts on {\small$\llbracket0,\infty\rrbracket$} as in the proof of Lemma \ref{lemma:sbp} to get
\begin{align*}
\boldsymbol{\mathcal{G}}[\mathbf{1}_{\cdot\in\llbracket1,\infty\rrbracket}\N^{\frac32}\grad^{\mathbf{X}}_{1}\mathscr{U}'[\bphi_{\t,\cdot}]]_{\x}=\sum_{\w\in\llbracket2,\infty\rrbracket}\grad^{\mathbf{X}}_{-1}\boldsymbol{\mathcal{G}}_{\x,\w}\cdot\N^{\frac32}\mathscr{U}'[\bphi_{\t,\w}]-\boldsymbol{\mathcal{G}}_{\x,1}\cdot\N^{\frac32}\mathscr{U}'[\bphi_{\t,1}].
\end{align*}
We clarify that the last term is troublesome on its own because of the highly singular factor of {\small$\N^{3/2}$}. However, the Neumann cancellation discussed in Section \ref{section:method} appears here once again. Indeed, when we add {\small$\boldsymbol{\mathcal{G}}_{\x,0}\cdot\N^{\frac32}\mathscr{U}'[\bphi_{\t,1}]$} to both sides of the previous display, the \abbr{RHS} changes by dropping the last term and changing summation indices from {\small$\w\in\llbracket2,\infty\rrbracket$} to {\small$\w\in\llbracket1,\infty\rrbracket$}. That is, we have 
\begin{align*}
\boldsymbol{\mathcal{G}}[\mathbf{1}_{\cdot\in\llbracket1,\infty\rrbracket}\N^{\frac32}\grad^{\mathbf{X}}_{1}\mathscr{U}'[\bphi_{\t,\cdot}]]_{\x}+\boldsymbol{\mathcal{G}}_{\x,0}\cdot\N^{\frac32}\mathscr{U}'[\bphi_{\t,1}]=\sum_{\w\in\llbracket1,\infty\rrbracket}\grad^{\mathbf{X}}_{-1}\boldsymbol{\mathcal{G}}_{\x,\w}\cdot\N^{\frac32}\mathscr{U}'[\bphi_{\t,\w}].
\end{align*}
Plugging this back into the \abbr{SDE} for {\small$\boldsymbol{\mathcal{G}}[\mathbf{j}^{\N}_{\t,\cdot}]_{\x}$} and integrating in time shows that 
\begin{align}
\sup_{\s\in[0,\mathfrak{t}_{\mathbf{Av}}]}\sup_{\t\in[0,\mathrm{T}]}\sup_{\x\in\llbracket0,\N^{\mathrm{D}}\rrbracket}|\boldsymbol{\mathcal{G}}[\mathbf{j}^{\N}_{\t+\s,\cdot}]_{\x}-\boldsymbol{\mathcal{G}}[\mathbf{j}^{\N}_{\t,\cdot}]_{\x}|&\lesssim\sup_{\s\in[0,\mathfrak{t}_{\mathbf{Av}}]}\sup_{\t\in[0,\mathrm{T}]}\sup_{\x\in\llbracket0,\N^{\mathrm{D}}\rrbracket}(\mathrm{I}_{\s,\t,\x}+\mathrm{II}_{\s,\t,\x}+\mathrm{III}_{\s,\t,\x}),\label{eq:currtregI3a}
\end{align}
where we have introduced the notation
\begin{align}
\mathrm{I}_{\s,\t,\x}&:=\Big|\int_{0}^{\s}\sum_{\w\in\llbracket1,\infty\rrbracket}\grad^{\mathbf{X}}_{-1}\boldsymbol{\mathcal{G}}_{\x,\w}\cdot\N^{\frac32}\mathscr{U}'[\bphi_{\t+\r,\w}]\d\r\Big|,\label{eq:currtregI3b}\\
\mathrm{II}_{\s,\t,\x}&:=\Big|\int_{0}^{\s}\sum_{\w\in\llbracket0,\infty\rrbracket}\boldsymbol{\mathcal{G}}_{\x,\w}\cdot\N\mathbf{F}[\tau_{\w}\bphi_{\t+\r}]\d\r\Big|,\label{eq:currtregI3c}\\
\mathrm{III}_{\s,\t,\x}&:=\Big|\int_{0}^{\s}\sum_{\w\in\llbracket0,\infty\rrbracket}\boldsymbol{\mathcal{G}}_{\x,\w}\cdot\N^{\frac12}\d\mathbf{b}_{\t+\r,\w}\Big|.\label{eq:currtregI3d}
\end{align}
We will now control each term above. By Assumption \ref{assump:potential}, we know that {\small$|\mathscr{U}'[\bphi_{\t+\r,\w}]|\lesssim1+|\bphi_{\t+\r,\w}|$}. Thus, by Lemma \ref{lemma:phiap}, there is an event of high probability such that for all {\small$\t\in[0,\mathrm{T}],\s\in[0,\mathfrak{t}_{\mathbf{Av}}],\r\in[0,\s],\x\in\llbracket0,\N^{\mathrm{D}}\rrbracket$}, and {\small$\w\in\llbracket1,\infty\rrbracket$} for which {\small$\grad^{\mathbf{X}}_{-1}\boldsymbol{\mathcal{G}}_{\x,\w}\neq0$}, we have {\small$|\mathscr{U}'[\bphi_{\t+\r,\w}]|\lesssim\N^{\delta}$} for {\small$\delta>0$} small. We now combine this with the estimate {\small$|\grad^{\mathbf{X}}_{-1}\boldsymbol{\mathcal{G}}_{\x,\w}|\lesssim\mathfrak{l}_{\N}^{-2}\mathbf{1}[|\x-\w|\lesssim\mathfrak{l}_{\N}]$} and {\small$\mathfrak{l}_{\N}=\N^{1-\delta_{\mathbf{S}}}$} to deduce that {\small$|\mathrm{I}_{\s,\t,\x}|\lesssim\mathfrak{t}_{\mathbf{Av}}\mathfrak{l}_{\N}^{-1}\N^{3/2}\lesssim\N^{-1/6+\delta_{\mathbf{S}}}$}, since {\small$\mathfrak{t}_{\mathbf{Av}}\lesssim\N^{-2/3}$}; this bound holds with high probability for all {\small$(\s,\t,\x)\in[0,\mathfrak{t}_{\mathbf{Av}}]\times[0,\mathrm{T}]\times\llbracket0,\N^{\mathrm{D}}\rrbracket$} simultaneously. Let us now control \eqref{eq:currtregI3d}. For fixed {\small$(\s,\t,\x)$}, the term inside the absolute value in \eqref{eq:currtregI3d} is Gaussian with mean {\small$0$} (since the {\small$\boldsymbol{\mathcal{G}}$}-kernel is deterministic) and variance {\small$\lesssim\s\cdot\N\cdot\|\boldsymbol{\mathcal{G}}_{\x,\cdot}\|_{\ell^{2}(\llbracket0,\infty\rrbracket)}^{2}\lesssim\mathfrak{t}_{\mathbf{Av}}\cdot\N\cdot\mathfrak{l}_{\N}^{-1}\lesssim\N^{-2/3+\delta_{\mathbf{S}}}$}. Thus, for any {\small$(\s,\t,\x)$}, we know {\small$|\mathrm{III}_{\s,\t,\x}|\lesssim\N^{-1/2}$} with probability {\small$\lesssim\exp(-\upsilon\N^{\omega})$} for some {\small$\omega,\upsilon>0$} by standard Gaussian concentration inequalities. Taking a union bound over all {\small$\x\in\llbracket0,\N^{\mathrm{D}}\rrbracket$} and then over all {\small$(\s,\t)\in[0,\mathfrak{t}_{\mathbf{Av}}]\times[0,\mathrm{T}]$}, first over a very fine discretization and then extending by short-time continuity as in the proof of Lemma \ref{lemma:phiap}, we then deduce control on {\small$|\mathrm{III}_{\s,\t,\x}|$} with high probability simultaneously for all {\small$(\s,\t,\x)\in[0,\mathfrak{t}_{\mathbf{Av}}]\times[0,\mathrm{T}]\times\llbracket0,\N^{\mathrm{D}}\rrbracket$}. As a conclusion of this paragraph, we deduce (for {\small$\delta_{\mathbf{S}}>0$} small enough)
\begin{align}
\sup_{\s\in[0,\mathfrak{t}_{\mathbf{Av}}]}\sup_{\t\in[0,\mathrm{T}]}\sup_{\x\in\llbracket0,\N^{\mathrm{D}}\rrbracket}(\mathrm{I}_{\s,\t,\x}+\mathrm{III}_{\s,\t,\x})\lesssim\N^{-\frac17}.\label{eq:currtregI4}
\end{align}
We are left with \eqref{eq:currtregI3c}. For this, let us use a similar argument as in the proof of \eqref{eq:cltxI2}. In particular, by \eqref{eq:nl} and {\small$\E^{0}\mathscr{U}'[\bphi_{\x}]=0$} for any {\small$\x\in\llbracket1,\infty\rrbracket$}, we have {\small$\E^{0}\mathbf{F}[\tau_{\w}\bphi]=0$} for {\small$\w\in\llbracket0,\infty\rrbracket$}. (Recall also {\small$\mathscr{U}'[\bphi_{\x}]=0$} by convention if {\small$\x\leq0$}.) Moreover, because {\small$\mathscr{U}'$} is uniformly Lipschitz, the formula \eqref{eq:nl} implies {\small$\mathbf{F}[\tau_{\w}\bphi]\lesssim\N^{\mathrm{C}\e}$} for some {\small$\mathrm{C}\lesssim1$} if {\small$|\bphi_{\y}|\lesssim\N^{\e}$} for all {\small$|\y-\w|\lesssim1$}. Thus, by Lemma \ref{lemma:phiap}, we can replace {\small$\mathbf{F}[\tau_{\w}\bphi_{\t+\r}]$} by {\small$\mathbf{F}[\tau_{\w}\bphi_{\t+\r}]\mathbf{1}(|\mathbf{F}[\tau_{\w}\bphi_{\t+\r}]|\lesssim\N^{\mathrm{C}\e})$} with {\small$\e>0$} small but fixed; this replacement holds with high probability simultaneously for all {\small$(\s,\t,\x,\w)$}-points at hand. Next, we claim that {\small$|\E^{0}\mathbf{F}[\tau_{\w}\bphi]\mathbf{1}(|\mathbf{F}[\tau_{\w}\bphi]|\lesssim\N^{\mathrm{C}\e})|\lesssim_{\mathrm{D}}\N^{-\mathrm{D}}$} for any {\small$\mathrm{D}>0$}. To see this, we use {\small$\E^{0}\mathbf{F}=0$} once again to get {\small$|\E^{0}\mathbf{F}[\tau_{\w}\bphi]\mathbf{1}(|\mathbf{F}[\tau_{\w}\bphi]|\lesssim\N^{\mathrm{C}\e})|=|\E^{0}\mathbf{F}[\tau_{\w}\bphi]\mathbf{1}(|\mathbf{F}[\tau_{\w}\bphi]|\gtrsim\N^{\mathrm{C}\e})|$}, at which point we can use the Cauchy-Schwarz inequality to get {\small$|\E^{0}\mathbf{F}[\tau_{\w}\bphi]\mathbf{1}(|\mathbf{F}[\tau_{\w}\bphi]|\gtrsim\N^{\mathrm{C}\e})|\lesssim(\E^{0}\mathbf{F}[\tau_{\w}\bphi]^{2})^{1/2}\mathbb{P}^{0}(|\mathbf{F}[\tau_{\w}\bphi]|\gtrsim\N^{\mathrm{C}\e})^{1/2}$}. Now recall \eqref{eq:nl} and that {\small$\mathscr{U}'$} is uniformly Lipschitz by Assumption \ref{assump:potential}, so that {\small$\E^{0}\mathbf{F}[\tau_{\w}\bphi]^{2}\lesssim1$}, since {\small$\bphi_{\x}$} are uniformly sub-Gaussian under {\small$\mathbb{P}^{0}$}. The same reasoning gives {\small$\mathbb{P}^{0}(|\mathbf{F}[\tau_{\w}\bphi]|\gtrsim\N^{\mathrm{C}\e})\leq\mathbb{P}^{0}(\max_{|\y-\w|\lesssim1}|\bphi_{\y}|\gtrsim\N^{\mathrm{c}\e})\lesssim_{\mathrm{D}}\N^{-\mathrm{D}}$} for any {\small$\mathrm{D}>0$}, where {\small$\mathrm{c}>0$} is a fixed constant. The claim thus follows. In particular, with high probability over all {\small$(\s,\t,\x)\in[0,\mathfrak{t}_{\mathbf{Av}}]\times[0,\mathrm{T}]\times\llbracket0,\N^{\mathrm{D}}\rrbracket$}, we ultimately deduce that
\begin{align}
\mathrm{II}_{\s,\t,\x}\lesssim_{\mathrm{D}}\Big|\int_{0}^{\s}\sum_{\w\in\llbracket0,\infty\rrbracket}\boldsymbol{\mathcal{G}}_{\x,\w}\cdot\N\mathfrak{a}_{\w}[\bphi_{\t+\r}]\d\r\Big|+\N^{-\mathrm{D}}\label{eq:currtregI5}
\end{align}
for any {\small$\mathrm{D}>0$}, in which {\small$\mathfrak{a}_{\w}:\R^{\llbracket1,\infty\rrbracket}\to\R$} satisfies {\small$\E^{0}\mathfrak{a}_{\w}=0$} and {\small$|\mathfrak{a}_{\w}|\lesssim\N^{\mathrm{C}\e}$} for {\small$\mathrm{C}\lesssim1$} and {\small$\e>0$} small. Moreover, we know that {\small$\mathfrak{a}_{\w}[\bphi]$} depends only on {\small$\bphi_{\y}$} with {\small$|\y-\w|\lesssim1$}, a consequence of locality of the function {\small$\mathbf{F}$} (see \eqref{eq:nl}). By the same martingale argument given in the proof of Lemma \ref{lemma:cltx} to bound the first term on the \abbr{RHS} of \eqref{eq:cltxI3a}, we then deduce that for any {\small$\t\in[0,\mathrm{T}],\s\in[0,\mathfrak{t}_{\mathbf{Av}}],\r\in[0,\s]$}, and {\small$\x\in\llbracket0,\N^{\mathrm{D}}\rrbracket$}, we have
\begin{align*}
\mathbb{P}^{0}\Big(\Big|\sum_{\w\in\llbracket0,\infty\rrbracket}\boldsymbol{\mathcal{G}}_{\x,\w}\cdot\N\mathfrak{a}_{\w}[\bphi_{\t+\r}]\Big|\geq\mathrm{K}\N^{1+\mathrm{C}\e}\|\boldsymbol{\mathcal{G}}_{\x,\cdot}\|_{\ell^{2}(\llbracket0,\infty\rrbracket)}\Big)\lesssim\exp(-\upsilon\mathrm{K}^{2})
\end{align*}
for any {\small$\mathrm{K}>0$} and some {\small$\upsilon>0$}. Take {\small$\mathrm{K}=\N^{\e}$}, so that the \abbr{RHS} of the previous estimate is exponentially small in {\small$\N$}. We take a union bound over all {\small$\x\in\llbracket0,\N^{\mathrm{D}}\rrbracket$}, and all {\small$\t\in[0,\mathrm{T}],\s\in[0,\mathfrak{t}_{\mathbf{Av}}],\r\in[0,\s]$} (first over a fine discretization of these time-intervals and then extending to the full time-interval with another short-time continuity argument). This, along with {\small$\|\boldsymbol{\mathcal{G}}_{\x,\cdot}\|_{\ell^{2}(\llbracket0,\infty\rrbracket)}\lesssim\mathfrak{l}_{\N}^{-1/2}\lesssim\N^{-1/2+\delta_{\mathbf{S}}/2}$}, yields
\begin{align*}
\mathbb{P}^{0}\Big(\sup_{\s\in[0,\mathfrak{t}_{\mathbf{Av}}]}\sup_{\t\in[0,\mathrm{T}]}\sup_{\r\in[0,\s]}\max_{\x\in\llbracket0,\N^{\mathrm{D}}\rrbracket}\Big|\sum_{\w\in\llbracket0,\infty\rrbracket}\boldsymbol{\mathcal{G}}_{\x,\w}\cdot\N\mathfrak{a}_{\w}[\bphi_{\t+\r}]\Big|\gtrsim\N^{1+\mathrm{C}\e+\e}\N^{-\frac12+\frac12\delta_{\mathbf{S}}}\Big)\lesssim\exp(-\upsilon\N^{\frac12\e}).
\end{align*}
By the change-of-measure argument at the end of the proof of Lemma \ref{lemma:phiap}, we deduce that the same holds if we replace {\small$\mathbb{P}^{0}$} on the \abbr{LHS} by {\small$\mathbb{P}$} and {\small$\upsilon$} by {\small$\upsilon/2$} above. If we combine this with \eqref{eq:currtregI5} and recall that {\small$\mathfrak{t}_{\mathbf{Av}}\lesssim\N^{-2/3}$}, then with high probability, we obtain the following for any {\small$\mathrm{D}>0$} if {\small$\e,\delta_{\mathbf{S}}>0$} are small:
\begin{align*}
\sup_{\s\in[0,\mathfrak{t}_{\mathbf{Av}}]}\sup_{\t\in[0,\mathrm{T}]}\sup_{\x\in\llbracket0,\N^{\mathrm{D}}\rrbracket}|\mathrm{II}_{\s,\t,\x}|\lesssim_{\mathrm{D}}\mathfrak{t}_{\mathbf{Av}}\N^{1+\mathrm{C}\e+\e}\N^{-\frac12+\frac12\delta_{\mathbf{S}}}+\N^{-\mathrm{D}}\lesssim\N^{-\frac18}.
\end{align*}
Combining this with \eqref{eq:currtregI4} and \eqref{eq:currtregI3a} yields \eqref{eq:currtregI2}, which finishes the proof (as noted before \eqref{eq:currtregI2}).
\end{proof}
We now upgrade \eqref{eq:cltxI} to an estimate on {\small$\mathbf{Av}^{\mathbf{T},\mathbf{X},\mathscr{Q}}$}-space-time averages. We clarify that the result below does \emph{not} produce additional cancellations in time-averaging; it essentially integrates the estimate \eqref{eq:cltxI} in time.
\begin{lemma}\label{lemma:cltxt?}
\fsp Take the setting of Lemma \ref{lemma:cltx}. The estimate \eqref{eq:cltxI} is true if we replace {\small$\mathbf{Av}^{\mathbf{X},\mathscr{Q}}$} with {\small$\mathbf{Av}^{\mathbf{T},\mathbf{X},\mathscr{Q}}$}. Also, for any edge-admissible {\small$\mathfrak{b}$}, we have (with high probability) that {\small$\max_{\t\in[0,\mathrm{T}]}\max_{\x\in\llbracket0,\mathrm{O}(1)\rrbracket}|\mathbf{Av}^{\mathbf{T},\mathfrak{b}}_{\t,\x}|\lesssim\N^{\delta}$}.
\end{lemma}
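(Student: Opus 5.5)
The plan is to reduce everything to Lemma \ref{lemma:cltx} together with the time-regularity in Lemma \ref{lemma:currtreg}, treating the time-average as a harmless integral of spatial averages. By \eqref{eq:bulkaverageI}--\eqref{eq:bulkaverageIII}, for a backwards-oriented family (the forwards case is symmetric), I would write
\begin{align*}
\mathbf{Av}^{\mathbf{T},\mathbf{X},\mathscr{Q}}_{\t,\x}=\mathfrak{t}_{\mathbf{Av}}^{-1}\int_{0}^{\mathfrak{t}_{\mathbf{Av}}}\mathbf{Av}^{\mathbf{X},\mathscr{Q}}_{\t-\s,\x}\cdot\mathbf{Z}^{\N}_{\t-\s,\x_{+}}(\mathbf{Z}^{\N}_{\t,\x_{+}})^{-1}\d\s,
\end{align*}
which is an exact identity, since $\boldsymbol{\Upsilon}^{\mathbf{X},\mathscr{Q}}_{\t-\s,\x}=\mathbf{Av}^{\mathbf{X},\mathscr{Q}}_{\t-\s,\x}\mathbf{Z}^{\N}_{\t-\s,\x_{+}}$ by \eqref{eq:bulkaverageV}.

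Step one is to apply Lemma \ref{lemma:cltx} with time horizon $\mathrm{T}$. On a single high-probability event it bounds $|\mathbf{Av}^{\mathbf{X},\mathscr{Q}}_{\t-\s,\x}|$ by $\N^{-1/2+3\delta_{\mathbf{S}}/4+\delta}$ uniformly over all $\t-\s\in[0,\mathrm{T}]$ and $\x\in\llbracket0,\N^{\mathrm{D}}\rrbracket$. Step two is to control the ratio. By \eqref{eq:ch},
\[
\mathbf{Z}^{\N}_{\t-\s,\x_{+}}(\mathbf{Z}^{\N}_{\t,\x_{+}})^{-1}=\exp\big(\lambda(\mathbf{j}^{\N}_{\t-\s,\x_{+}}-\mathbf{j}^{\N}_{\t,\x_{+}})+\lambda\mathscr{R}_{\lambda}\s\big),
\]
where $\s\leq\mathfrak{t}_{\mathbf{Av}}\leq\N^{-2/3}$. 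Lemma \ref{lemma:currtreg}, applied with $\mathrm{D}$ slightly enlarged so that it covers $\x_{+}\leq\N^{\mathrm{D}}+\mathrm{O}(\N)$, gives with high probability that this exponent is $\lesssim\N^{-\rho}$ uniformly. Hence the ratio is $\lesssim1$. The convention of dropping times $\t-\s\leq0$ only shortens the integral. Averaging the bound in $\s$ then yields \eqref{eq:cltxI} for $\mathbf{Av}^{\mathbf{T},\mathbf{X},\mathscr{Q}}$. The only subtlety is that the time shift runs backwards, so I would apply Lemma \ref{lemma:currtreg} with base time $\t-\s$ and increment $\s$, which is within its scope.

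For the edge statement, Definition \ref{definition:edgeaverage} gives
\begin{align*}
\mathbf{Av}^{\mathbf{T},\mathfrak{b}}_{\t,\x}=\mathfrak{t}_{\partial}^{-1}\int_{0}^{\mathfrak{t}_{\partial}}\mathfrak{b}[\bphi_{\t-\s}]\,\mathbf{Z}^{\N}_{\t-\s,\x}(\mathbf{Z}^{\N}_{\t,\x})^{-1}\d\s.
\end{align*}
Since $\mathfrak{t}_{\partial}=\N^{-3/2}\leq\mathfrak{t}_{\mathbf{Av}}$, the same application of Lemma \ref{lemma:currtreg} bounds the ratio by $\lesssim1$ uniformly in $\x\in\llbracket0,\mathrm{O}(1)\rrbracket$. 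Condition (3) of Definition \ref{definition:edgeadmissible} gives $|\mathfrak{b}[\bphi]|\lesssim1+\sum_{|\w|\lesssim1}|\bphi_{\w}|^{\mathrm{C}}$. The last assertion of Lemma \ref{lemma:phiap} gives $|\bphi_{\t,\w}|\lesssim\N^{\delta/\mathrm{C}}$ for all $\t\in[0,\mathrm{T}]$ and all $\w$ near the edge, with high probability. Together these bound the integrand by $\N^{\delta}$, which gives the claim with no cancellation needed.

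The only step I expect to need care is the ratio of $\mathbf{Z}^{\N}$ factors. It must be controlled uniformly in $(\t,\s,\x)$ on a single high-probability event, and this is exactly what Lemma \ref{lemma:currtreg} supplies. In particular, its edge Neumann cancellation is what prevents the $\N^{3/2}$ boundary drift from spoiling time-regularity at $\x_{+}$ or $\x$ near $0$. Everything else is a direct intersection of finitely many high-probability events.
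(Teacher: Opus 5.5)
Your proposal is correct and is essentially the paper's proof: write $\mathbf{Av}^{\mathbf{T},\mathbf{X},\mathscr{Q}}_{\t,\x}$ as a time-average of $\mathbf{Av}^{\mathbf{X},\mathscr{Q}}_{\t-\s,\x}\cdot\mathbf{Z}^{\N}_{\t-\s,\x_{\pm}}(\mathbf{Z}^{\N}_{\t,\x_{\pm}})^{-1}$, bound the ratio by Lemma \ref{lemma:currtreg} and $\mathscr{R}_{\lambda}\lesssim1$, then invoke \eqref{eq:cltxI}; the edge case uses the polynomial bound on $\mathfrak{b}$ together with Lemma \ref{lemma:phiap}. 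Your exponent $\lambda(\mathbf{j}^{\N}_{\t-\s,\x_{+}}-\mathbf{j}^{\N}_{\t,\x_{+}})+\lambda\mathscr{R}_{\lambda}\s$ is the right one (the $\beta_{2}$ in the paper's displayed identity is a typo), and enlarging $\mathrm{D}$ so that Lemma \ref{lemma:currtreg} also covers $\x_{+}$ is a harmless extra bit of care that the paper leaves implicit.
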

\begin{proof}
By construction in Definition \ref{definition:bulkaverage} and \eqref{eq:ch}, we have 
\begin{align}
\mathbf{Av}^{\mathbf{T},\mathbf{X},\mathscr{Q}}_{\t,\x}&=\mathfrak{t}_{\mathbf{Av}}^{-1}{\int_{0}^{\mathfrak{t}_{\mathbf{Av}}}}\mathbf{Av}^{\mathbf{X},\mathscr{Q}}_{\t-\s,\x}\cdot\exp(\beta_{2}(\mathbf{j}^{\N}_{\t-\s,\x_{\pm}}-\mathbf{j}^{\N}_{\t,\x_{\pm}})+\lambda\mathscr{R}_{\lambda}\s)\d\s.\nonumber
\end{align}
By Lemma \ref{lemma:currtreg} and {\small$\mathscr{R}_{\lambda}\lesssim1$}, with high probability the exponential on the \abbr{RHS} is {\small$\lesssim1$}. Now use \eqref{eq:cltxI} to obtain the desired estimate on {\small$\mathbf{Av}^{\mathbf{T},\mathbf{X},\mathscr{Q}}$}. For {\small$\mathbf{Av}^{\mathbf{T},\mathfrak{b}}$}, the same argument works, but we replace {\small$\mathbf{Av}^{\mathbf{X},\mathscr{Q}}_{\t-\s}$} in the line above by {\small$\mathfrak{b}[\bphi_{\t-\s}]$}, which is {\small$\lesssim\N^{\mathrm{C}\delta}$} for all {\small$\t-\s$}-values of interest simultaneously with high probability. This follows by the polynomial bound \eqref{eq:bulkadmissibleI} for edge-admissible functions (see Definition \ref{definition:edgeadmissible}) and Lemma \ref{lemma:phiap}.
\end{proof}
\section{Stochastic estimates II: Boltzmann-Gibbs estimates}\label{section:stochII}
We will now establish estimates for the {\small$\mathbf{Av}^{\mathbf{T},\mathbf{X},\mathscr{Q}}$}- and {\small$\mathbf{Av}^{\mathbf{T},\mathfrak{b}}$}-terms from Definitions \ref{definition:bulkaverage} and \ref{definition:edgeaverage}, respectively, and that appear in \eqref{eq:sdeI1}-\eqref{eq:sdeI5}. The nature of these estimates is substantially different and more difficult than not only what was done in Section \ref{section:stochI}. See Section \ref{section:method} for an intuitive discussion of what is done in this section.

Ultimately, the results to be established in this section are as follows. (We state both before we prove either.) We start with an estimate for {\small$\mathbf{Av}^{\mathbf{T},\mathbf{X},\mathscr{Q}}$}-terms from Definition \ref{definition:bulkaverage}.
\begin{prop}\label{prop:stochmain1}
\fsp Fix any {\small$\mathrm{D},\mathrm{T}>0$}, as well as any collection {\small$\mathscr{Q}$} as in Lemma \ref{lemma:cltx}. There is a fixed {\small$\beta_{\mathrm{CLT}}>0$} and a path-space event {\small$\mathscr{E}$} of high probability such that the following estimate holds:
\begin{align}
\sup_{\x\in\llbracket0,\N^{\mathrm{D}}\rrbracket}\int_{0}^{\mathrm{T}}\E(\mathbf{1}_{\mathscr{E}}|\mathbf{Av}^{\mathbf{T},\mathbf{X},\mathscr{Q}}_{\t,\x}|^{2})\d\t\lesssim\N^{-2-\beta_{\mathrm{CLT}}}.\label{eq:stochmain1I}
\end{align}
We clarify that the event {\small$\mathscr{E}$} is independent of the {\small$(\mathrm{T},\t,\x)$}-variables on the \abbr{LHS}.
\end{prop}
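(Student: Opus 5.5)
We will prove \eqref{eq:stochmain1I} by reducing it to the full-space Boltzmann-Gibbs principle of \cite{Y26PTRF} through the multi-scale localization-gluing scheme of Section \ref{section:method}.

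\textbf{Step 1: the good event and reduction to $\mathbf{Z}$-free averages.} First, we fix the good event $\mathscr{E}$. It is the intersection of the high-probability events from Lemmas \ref{lemma:phiap}, \ref{lemma:cltx} and \ref{lemma:currtreg}. On $\mathscr{E}$, the $\mathbf{Z}^{\N}$-ratios $\mathbf{Z}^{\N}_{\t-\s,\w}(\mathbf{Z}^{\N}_{\t,\x_{\pm}})^{-1}$ appearing in $\mathbf{Av}^{\mathbf{T},\mathbf{X},\mathscr{Q}}_{\t,\x}$ are bounded and regular. We then proceed as in the proof of Lemma \ref{lemma:cltx}:
\begin{itemize}
\item truncate $\mathfrak{q}_{\w}$ and the exponential factors;
\item write the ratio as $\exp(\lambda(\mathbf{j}^{\N}_{\t-\s,\w}-\mathbf{j}^{\N}_{\t,\x_{\pm}}))$, which depends only on spins on the side opposite to the orientation of $\mathfrak{q}_{\w}$, plus the time increments of $\mathbf{j}^{\N}$ at $\x_{\pm}$.
\end{itemize}
This reduces the problem to bounding the second moment of a time-averaged, spatially weighted sum of $\mathfrak{q}_{\w}[\bphi_{\t-\s}]$ multiplied by a weight that is ``almost independent'' of $\mathfrak{q}_{\w}$. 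Using the change of measure through \eqref{eq:dataI0}, we pay a factor $\N^{\gamma_{\mathrm{data}}}$ and may work under stationary $\mathbb{P}^{0}$ initial data. There, Cauchy-Schwarz and stationarity reduce \eqref{eq:stochmain1I} to a space-time variance estimate of size $\N^{-2-2\beta_{\mathrm{CLT}}}$.

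\textbf{Step 2: multi-scale decomposition near the edge.} Next, we split the spatial support $\{\w:|\x-\w|\lesssim(\log\N)^{2}\mathfrak{l}_{\mathbf{Av}}\}$ into dyadic-type shells $\mathbb{I}_{\x,\k}$ at distances $\asymp\N^{\gamma_{\k}}$ from the edge, with $\gamma_{\k+1}=\gamma_{\k}+\e$, together with a terminal bulk piece. Correspondingly, we split $[0,\mathfrak{t}_{\mathbf{Av}}]$ into blocks of length $\mathfrak{t}_{\k}\ll\N^{-2}\N^{2\gamma_{\k-1}}$, as in \eqref{eq:method2b}.

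\textbf{Step 3: coupling to full-space dynamics.} On each block we couple $\bphi$ to a full-space dynamic $\bphi^{\Z}$ that agrees with it at the start of the block and is driven by the same Brownian motions away from the edge. The difference $\mathbf{g}=\mathbf{j}^{\Z}-\mathbf{j}^{\N}$ solves the linear divergence-form equation $\partial_{\t}\mathbf{g}=\mathscr{P}\mathbf{g}$. We will invoke the Nash-Davies estimate \eqref{eq:method2c} (Proposition \ref{prop:nashdavies}) to show that, on time scale $\mathfrak{t}_{\k}\lesssim\N^{-1}$, the edge influence is $\exp(-\N^{c})$-small on $\mathbb{I}_{\x,\k}$. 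Inside each shell-block we therefore replace $\bphi$ by $\bphi^{\Z}$ and apply the full-space Boltzmann-Gibbs estimate of \cite{Y26PTRF}. That estimate yields a variance of order $\N^{-2}\cdot|\mathbb{I}_{\x,\k}|^{-1}\mathfrak{t}_{\k}^{-1}\N^{-2}\times$ (jet gain), after weighting by $\boldsymbol{\mathcal{H}}$, whose size is $\lesssim\mathfrak{l}_{\mathbf{Av}}^{-1}$.

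\textbf{Step 4: gluing, the edge shell, and the main obstacle.} Summing over shells, the small support of the near-edge shells compensates the shorter available time-averaging, which gives a uniform power saving $\N^{-\beta_{\mathrm{CLT}}}$. The innermost shell $\k=1$ is handled directly: it contains $\N^{\gamma_{1}}$ sites with $\mathfrak{q}=\mathrm{O}(\N^{-1/2})$ weights of size $\mathfrak{l}_{\mathbf{Av}}^{-1}$, so the trivial bound from Lemma \ref{lemma:cltx}-type estimates suffices once $\gamma_{1}$ is small. The main obstacle is Step 3: controlling the weight factor under the coupling, and verifying that the Nash-Davies bound, with its $\exp(\N|\t-\s|)$ prefactor, is strong enough at the chosen scales $\mathfrak{t}_{\k}$ while those scales still leave enough time-averaging for the Boltzmann-Gibbs gain. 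The first thing to try is to balance $\mathfrak{t}_{\k}=\N^{-2+2\gamma_{\k-1}-\e}$ against $|\mathbb{I}_{\x,\k}|=\N^{\gamma_{\k}}$ and check the exponents shell by shell.
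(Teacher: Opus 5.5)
Your architecture matches the paper's: re-anchoring the $\mathbf{Z}^{\N}$-ratio, a priori cutoffs, shells with $\gamma_{\k+1}=\gamma_{\k}+\e$, coupling each shell-block to the full-space dynamics via Propositions~\ref{prop:speed} and~\ref{prop:nashdavies}, and then invoking the full-space Boltzmann--Gibbs bound. However, two steps do not close as written.

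\textbf{The innermost shell.} Your trivial bound for $\k=1$ assumes $\mathfrak{q}=\mathrm{O}(\N^{-1/2})$ pointwise. Definition~\ref{definition:bulkadmissible} allows $\mathfrak{k}=2$, where $\mathfrak{q}=\mathfrak{f}$ is only $\mathrm{O}(\N^{\e})$, and this is the leading case (e.g.\ $\mathscr{Q}[\bphi]$ from \eqref{eq:quadratic_renorm}, which lies in $\mathrm{Jet}_{2}^{\perp}$).

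With $|\boldsymbol{\mathcal{H}}_{\x,\cdot}|\lesssim\N^{-1+\mathrm{C}\delta_{\mathbf{S}}}$ and $|\mathbb{I}_{\x,1}|\asymp\N^{\gamma_{1}}$, the trivial bound on the squared shell average is $\N^{-2+2\gamma_{1}+\mathrm{O}(\e+\delta_{\mathbf{S}})}$. A spatial martingale bound as in Lemma~\ref{lemma:cltx} only improves this to $\N^{-2+\gamma_{1}+\cdots}$, and neither is $\lesssim\N^{-2-\beta_{\mathrm{CLT}}}$. Coupling to the full space is also unavailable, since $\mathbb{I}_{\x,1}$ may touch $0$.

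What is missing is a time-averaging cancellation produced by the edge dynamics itself; this is Proposition~\ref{prop:case2}. When $\min\mathbb{I}_{\x,1}\leq\N^{\gamma_{1}}$, the paper proceeds as follows.
\begin{enumerate}
\item Take time blocks of length $\N^{-3/2}$, drop $\mathbf{J}^{1,\ell,\x}$ at cost $\N^{-1-\rho/2}$, and change measure to $\mathbb{P}^{0}$.
\item Apply Kipnis--Varadhan to bound the squared block average by $\N^{3/2}\|\mathfrak{f}\|_{\mathrm{H}^{-1}}^{2}$. The $\mathrm{H}^{-1}$-norm is taken with respect to the symmetric part of the generator, whose Dirichlet form contains the edge Langevin term $\N^{2}\E^{0}|\partial_{\bphi_{1}}\mathfrak{g}|^{2}$.
\item This term breaks conservation of $\sum_{\y}\bphi_{\y}$. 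So on an interval $\mathbb{K}\ni1$ of length $\lesssim\N^{\gamma_{1}}$ one has $\E^{0}\mathfrak{g}^{2}\lesssim|\mathbb{K}|^{2}\mathfrak{D}_{\mathbb{K}}(\mathfrak{g})$, by Bakry--\'{E}mery for the one-site marginals plus a Poincar\'{e} inequality pinned at site $1$.
\item Hence any $\mathbb{P}^{0}$-mean-zero $\mathfrak{f}$ supported on $\mathbb{K}$, with no jet condition, satisfies $\|\mathfrak{f}\|_{\mathrm{H}^{-1}}^{2}\lesssim\N^{-2}|\mathbb{K}|^{2}\E^{0}\mathfrak{f}^{2}\lesssim\N^{-4+4\gamma_{1}+\mathrm{O}(\e+\delta_{\mathbf{S}})}$. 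This yields a contribution $\N^{-5/2+4\gamma_{1}+\cdots}$.
\end{enumerate}
Here $\E^{0}\mathfrak{f}=0$ because $\mathfrak{q}_{\w}$ and $\mathbf{G}_{\w}$ depend on disjoint spins. If instead $\min\mathbb{I}_{\x,1}>\N^{\gamma_{1}}$, the shell is handled like $\k=2$.

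\textbf{Time scales beyond $\N^{-1}$.} The obstacle you flag in Step~4 is real, and the balance you propose does not resolve it. On the terminal piece, $\|\boldsymbol{\mathcal{H}}_{\x,\cdot}\|_{\ell^{\infty}}^{2}|\mathbb{I}|\asymp\N^{-1+\mathrm{O}(\delta_{\mathbf{S}})}$. The Boltzmann--Gibbs term $\N^{-2}\mathfrak{t}^{-1}\|\boldsymbol{\mathcal{H}}_{\x,\cdot}\|_{\ell^{\infty}}^{2}|\mathbb{I}|$ therefore beats $\N^{-2}$ only if $\mathfrak{t}\geq\N^{-1+c}$, with $c$ exceeding the $\mathrm{O}(\e+\delta_{\mathbf{S}})$ losses. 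Your stated regime $\mathfrak{t}_{\k}\lesssim\N^{-1}$ gives no saving there.

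The way out is that the propagation length in Definition~\ref{definition:speed} is $\N\mathfrak{t}^{1/2}+\N^{2}\mathfrak{t}^{3/2}$. At distance $d\geq\N^{\e}\N^{2}\mathfrak{t}^{3/2}$ one has $d/(\N\mathfrak{t}^{1/2})\geq\N^{1+\e}\mathfrak{t}$. This absorbs the growth factor $\mathrm{e}^{\mathrm{C}\N^{1+\mathrm{C}\e_{0}}\mathfrak{t}}$ in \eqref{eq:nashII} once $\e_{0}\ll\e$.

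Consequently, blocks of length $\min(\N^{-2+2\gamma_{\k-1}},\N^{-4/3+2\gamma_{\k-1}/3})\N^{-\e}$ are admissible, and these exceed $\N^{-1}$ once $\gamma_{\k-1}>\frac12$. Stop the shells at $\gamma_{\mathrm{K}-1}\geq\frac12+\gamma$ as in Notation~\ref{notation:msblock}, with $\e,\delta_{\mathbf{S}}\ll\gamma$. Then the terminal piece contributes $\N^{-2-2\gamma/3+\mathrm{O}(\e+\delta_{\mathbf{S}})}$. The intermediate shells contribute $\N^{-2-\gamma_{\k}}+\N^{-8/3+\gamma_{\k}/3}$, up to $\N^{\mathrm{O}(\e+\delta_{\mathbf{S}})}$.
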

We clarify that \eqref{eq:stochmain1I} is an estimate for space-time averages with respect to a single space-time scale which is specified in Definition \ref{definition:bulkaverage}. However, in the proof of Proposition \ref{prop:stochmain1}, we will need to decompose this space-time average according to another scale parameter that encodes the distance from the edge {\color{black}as we discussed in Section \ref{section:method}}. The second main result is for the {\small$\mathbf{Av}^{\mathbf{T},\mathfrak{b}}$}-terms from Definition \ref{definition:edgeaverage}.
\begin{prop}\label{prop:stochmain2}
\fsp Fix any {\small$\mathrm{T}>0$}, as well as any edge-admissible function {\small$\mathfrak{b}$} (defined in Definition \ref{definition:edgeadmissible}). There is a fixed constant {\small$\beta_{\partial}>0$} and a path-space event {\small$\mathscr{E}_{\partial}$} of high probability such that
\begin{align}
\max_{\x\in\llbracket0,\mathrm{O}(1)\rrbracket}\int_{0}^{\mathrm{T}}\E(\mathbf{1}_{\mathscr{E}_{\partial}}|\mathbf{Av}^{\mathbf{T},\mathfrak{b}}_{\t,\x}|^{2})\d\t\lesssim\N^{-\beta_{\partial}}.\label{eq:stochmain2I}
\end{align}
We clarify that the event {\small$\mathscr{E}_{\partial}$} is independent of the {\small$(\mathrm{T},\t,\x)$}-variables on the \abbr{LHS}.
\end{prop}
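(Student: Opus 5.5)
We reduce \eqref{eq:stochmain2I} to an equilibrium estimate for a time-average of $\mathfrak{b}[\bphi_{\t}]$ over the window $\mathfrak{t}_{\partial}=\N^{-3/2}$. The tool is a Kipnis--Varadhan-type bound for the edge (Glauber) part of the generator.

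\textbf{Step 1: remove the $\mathbf{Z}^{\N}$-factors.} By \eqref{eq:edgeaverageI},
\begin{align*}
\mathbf{Av}^{\mathbf{T},\mathfrak{b}}_{\t,\x}=\mathfrak{t}_{\partial}^{-1}\int_{0}^{\mathfrak{t}_{\partial}}\mathfrak{b}[\bphi_{\t-\s}]\exp\big(\lambda(\mathbf{j}^{\N}_{\t-\s,\x}-\mathbf{j}^{\N}_{\t,\x})+\lambda\mathscr{R}_{\lambda}\s\big)\d\s .
\end{align*}
Over time-scale $\N^{-3/2}$, the increment $\mathbf{j}^{\N}_{\t-\s,\x}-\mathbf{j}^{\N}_{\t,\x}$ at the edge is controlled from \eqref{eq:currII}. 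The drift contributes $\N^{3/2}\cdot\N^{-3/2}\cdot\N^{\delta}$, and the noise contributes $\N^{1/2}\cdot\N^{-3/4}$. The drift bound is only $\N^{\delta}$, which is not small, so a Taylor expansion of the exponential is unavailable. I would therefore argue as in Lemma \ref{lemma:cltxt?}: on a high-probability event $\mathscr{E}_{\partial}$ on which the spins near the edge are $\lesssim\N^{\delta}$ (Lemma \ref{lemma:phiap}), the exponential weight is bounded.

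The bounded weight alone does not produce cancellation. To get it, expand the weight as $1+\mathrm{O}(\text{increment})$ where that increment is small on a sub-window of length $\N^{-3/2-\e}$. Then split $[0,\mathfrak{t}_{\partial}]$ into sub-windows of length $\N^{-3/2-\e}$ and freeze the exponential factor on each, which costs $\N^{-\e/2+\delta}$. The problem is thus reduced to controlling $\E\big|\mathfrak{t}^{-1}\int_{0}^{\mathfrak{t}}\mathfrak{b}[\bphi_{\t-\s}]\d\s\big|^{2}$ for $\mathfrak{t}$ slightly below $\N^{-3/2}$, with $\mathfrak{b}$ truncated as in Lemma \ref{lemma:cltx}.

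\textbf{Step 2: change of measure to equilibrium.} Using \eqref{eq:dataI0} with the invariance of $\mathbb{P}^{0}$ (cf.\ Assumption \ref{assump:data}), replace the law of $\bphi_{\t-\mathfrak{t}}$ by $\mathbb{P}^{0}$ at a cost $\N^{\gamma_{\mathrm{data}}}$. It then suffices to get a gain $\N^{-\beta}$ with $\beta>\gamma_{\mathrm{data}}$.

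\textbf{Step 3: Kipnis--Varadhan at the edge (main obstacle).} Under stationarity, the standard bound gives
\begin{align*}
\E^{0}\Big|\mathfrak{t}^{-1}\int_{0}^{\mathfrak{t}}\mathfrak{b}[\bphi_{\s}]\d\s\Big|^{2}\lesssim\mathfrak{t}^{-1}\|\mathfrak{b}\|_{-1}^{2},
\end{align*}
where the $\mathrm{H}^{-1}$ norm is taken with respect to the symmetric part of the generator. That symmetric part contains the Glauber term $\N^{2}(\partial_{\bphi_{1}}^{2}-\mathscr{U}'[\bphi_{1}]\partial_{\bphi_{1}})$ from \eqref{eq:phiII}, together with bulk exchanges at rate $\N^{2}$. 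The Glauber dynamics on $\bphi_{1}$ alone already gives $\|\mathfrak{b}\|_{-1}^{2}\lesssim\N^{-2}$ for $\mathfrak{b}$ a mean-zero function of $\bphi_{1}$, by the spectral gap from Assumption \ref{assump:potential}. This covers items (2)--(3) of Lemma \ref{lemma:classifyedge}.

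The difficulty is $\mathfrak{b}$ depending on $\bphi_{\w}$ for $\w\in\llbracket1,\mathrm{O}(1)\rrbracket$, such as $\mathbf{F}$ in item (1). For these I would use the spectral gap of the full generator restricted to the finite box $\llbracket1,\mathrm{M}\rrbracket$: the edge reservoir plus nearest-neighbour exchanges give a gap $\gtrsim\N^{2}/\mathrm{M}^{2}$ on functions of those spins with $\E^{0}=0$, again by Bakry--\'Emery. This requires $\mathfrak{b}$ to have $\mathbb{P}^{0}$-mean zero, which is exactly condition (1) of Definition \ref{definition:edgeadmissible}, and not the vanishing of higher jets. The bound is therefore $\mathfrak{t}^{-1}\N^{-2}\approx\N^{-1/2}$.

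The antisymmetric part (the $\N^{3/2}\wt{\mathbf{F}}$ drift) does not enlarge the variational $\mathrm{H}^{-1}$ bound. The truncation errors from Step 1 are $\N^{-\mathrm{D}}$.

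\textbf{Step 4: conclusion.} Integrate in $\t\in[0,\mathrm{T}]$ and combine the gain $\N^{-1/2+\mathrm{O}(\e+\delta)}$ with the change-of-measure cost $\N^{\gamma_{\mathrm{data}}}$. This gives \eqref{eq:stochmain2I} with $\beta_{\partial}$ slightly less than $1/2-\gamma_{\mathrm{data}}$.
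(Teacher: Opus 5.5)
Steps 2--4 follow the paper's argument. You change measure to $\mathbb{P}^{0}$ at cost $\N^{\gamma_{\mathrm{data}}}$ and apply Kipnis--Varadhan. You then bound $\|\mathfrak{b}\|_{\mathrm{H}^{-1}}^{2}\lesssim\N^{-2}$ using only $\E^{0}\mathfrak{b}=0$ and the gap of the edge Glauber term plus exchanges on an $\mathrm{O}(1)$ box, which yields $\N^{\gamma_{\mathrm{data}}}\N^{-2}\mathfrak{t}_{\partial}^{-1}=\N^{-1/2+\gamma_{\mathrm{data}}}$. The paper gets the same gap via Bakry--\'Emery for the site-wise form plus a discrete Poincar\'e inequality anchored at site $1$, as in the proof of Proposition \ref{prop:case2}.

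The gap is in Step 1. On the event of Lemma \ref{lemma:phiap}, the exponent $\lambda(\mathbf{j}^{\N}_{\t-\s,\x}-\mathbf{j}^{\N}_{\t,\x})$ is only $\mathrm{O}(\N^{\delta})$, as you computed yourself. So the weight $\mathbf{Z}^{\N}_{\t-\s,\x}(\mathbf{Z}^{\N}_{\t,\x})^{-1}$ is only $\exp(\mathrm{O}(\N^{\delta}))$, which is super-polynomial, not $\mathrm{O}(1)$. Your sub-window freezing does not repair this: it controls only the change of the weight within each sub-window. The frozen values $w(\s_{k})$ still have this size, and chaining the $\N^{\e}$ sub-windows just reproduces the $\N^{\delta}$ exponent.

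What you need is Lemma \ref{lemma:currtreg}, which is what the argument of Lemma \ref{lemma:cltxt?} you cite actually relies on. It covers every $\x\in\llbracket0,\N^{\mathrm{D}}\rrbracket$, including the edge, and every shift $\s\leq\mathfrak{t}_{\mathbf{Av}}$, hence every $\s\leq\mathfrak{t}_{\partial}=\N^{-3/2}$. It beats the naive $\N^{3/2}\cdot\mathfrak{t}_{\partial}\cdot\N^{\delta}$ drift bound by comparing $\mathbf{j}^{\N}$ with its spatial average at scale $\N^{1-\delta_{\mathbf{S}}}$. For that average, the edge flux $\N^{3/2}\mathscr{U}'[\bphi_{\t,1}]$ in \eqref{eq:currII} cancels the boundary term produced by summation by parts (the Neumann cancellation).

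The result is $|\mathbf{j}^{\N}_{\t-\s,\x}-\mathbf{j}^{\N}_{\t,\x}|\lesssim\N^{-\rho}$ with high probability, uniformly in $(\t,\x)$. So the weight is $1+\mathrm{O}(\N^{-\rho})$, and the Taylor expansion you said was unavailable is in fact available. Since $|\mathfrak{b}|\lesssim\N^{\e}$ with high probability, the weight can be replaced by $1$ at the cost of a negative power of $\N$, which is exactly the paper's first step. With this input your sub-window decomposition becomes unnecessary, though it is then valid. The final exponent $\beta_{\partial}$ is limited by this removal error as well as by $\tfrac12-\gamma_{\mathrm{data}}$, not by the latter alone.
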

The rest of this section is dedicated to the proofs of Propositions \ref{prop:stochmain1} and \ref{prop:stochmain2}. However, since these arguments have no technical bearing on the rest of the paper (beyond the results in Propositions \ref{prop:stochmain1} and \ref{prop:stochmain2} they produce), the reader is welcome to skip to Section \ref{section:hkzeta}. Also, see Section \ref{subsection:stochmain1} for the proof of Proposition \ref{prop:stochmain1}, and Section \ref{subsection:stochmain2} for the proof of Proposition \ref{prop:stochmain2}.

\emph{Finally, we before we start with proofs, we declare that throughout the rest of this section, whenever we refer to a statement holding with high probability, we mean that the high probability event on which it holds does not depend on {\small$(\t,\x)\in[0,\mathrm{T}]\times\llbracket0,\N^{\mathrm{D}}\rrbracket$}-coordinates in \eqref{eq:stochmain1I}; thus the event at hand has an indicator function that can be absorbed by either {\small$\mathbf{1}_{\mathscr{E}}$} or {\small$\mathbf{1}_{\mathscr{E}_{\partial}}$}, depending on if we are proving Proposition \ref{prop:stochmain1} or \ref{prop:stochmain2}.} We will try to be as clear as possible about this below, but make this declaration transparent here anyway to avoid potential confusions.
\subsection{Argument for Proposition \ref{prop:stochmain1}}\label{subsection:stochmain1}
This subsection is structured to present key ingredients needed to establish Proposition \ref{prop:stochmain1}, whose proofs are then deferred to the end of this subsection. We chose this format to help clarify the architecture of the argument and avoid obstructing it with technical details.

\emph{Throughout the argument below, we will often denote small constants by {\small$\e,\delta,\gamma,\ldots$} and similar notation. We will adopt the convention that {\small$\delta_{\mathbf{S}}>0$} (see Definition \ref{definition:heat}) is the smallest parameter. After that, {\small$\e$} is the ``smallest" parameter, while others are chosen in terms of {\small$\e,\delta_{\mathbf{S}}$}. However, they are all positive, and none of them are {\small$\mathrm{o}(1)$}.} 

We start with preliminary estimates for and modifications of the {\small$\mathbf{Av}^{\mathbf{T},\mathbf{X},\mathscr{Q}}$}-quantity on the \abbr{LHS} of \eqref{eq:stochmain2I}. First, we assume the collection {\small$\mathscr{Q}$} is of backwards-oriented bulk-admissible functions. For forwards-oriented ones, the same argument applies upon reflection. Recall from Definition \ref{definition:bulkaverage} that
\begin{align}
\mathbf{Av}^{\mathbf{T},\mathbf{X},\mathscr{Q}}_{\t,\x}=\mathfrak{t}_{\mathbf{Av}}^{-1}\int_{0}^{\mathfrak{t}_{\mathbf{Av}}}\sum_{\w\in\mathbb{I}_{\x}}\boldsymbol{\mathcal{H}}_{\x,\w}\cdot\mathfrak{q}_{\w}[\bphi_{\t-\s}]\cdot\mathbf{Z}^{\N}_{\t-\s,\w}(\mathbf{Z}^{\N}_{\t,\x_{+}})^{-1}\d\s,\nonumber
\end{align}
Let us now clarify the notation above.
\begin{itemize}
\item Here, the {\small$\mathfrak{q}_{\w}$} are bulk-admissible and backwards-oriented (see Definition \ref{definition:bulkadmissible}).
\item The set {\small$\mathbb{I}_{\x}\subseteq\llbracket0,\infty\rrbracket$} is an interval which contains {\small$\x$} and has size {\small$\lesssim\N^{1-3\delta_{\mathbf{S}}/2}(\log\N)^{\mathrm{O}(1)}$}.
\item The point {\small$\x_{+}$} is strictly to the right of {\small$\mathbb{I}_{\x}$}, and that {\small$|\x_{+}-\w|\lesssim(\log\N)^{3}\N^{1-3\delta_{\mathbf{S}}/2}$} for any {\small$\w\in\mathbb{I}_{\x}$}. 
\item We recall that {\small$\mathfrak{t}_{\mathbf{Av}}=\N^{-2/3-\mathrm{C}_{0}\delta_{\mathbf{S}}}$} for {\small$\mathrm{C}_{0}>0$} large but fixed, again by Definition \ref{definition:bulkaverage}.
\item Lastly, {\small$\boldsymbol{\mathcal{H}}_{\x,\w}$}, as the {\small$\mathbf{H}^{\N,0}$}-kernel at time {\small$\N^{-2}\N^{2-3\delta_{\mathbf{S}}}=\N^{-3\delta_{\mathbf{S}}}$} (see Definition \ref{definition:bulkaverage}), satisfies the pointwise and regularity bounds below for any {\small$\kappa>0$} by Proposition \ref{prop:hkestimates}, where {\small$\a\lesssim1$} is a non-negative integer, where {\small$\mathfrak{l}_{1},\ldots,\mathfrak{l}_{\a}\lesssim1$} are non-negative integers, and where {\small$\mathrm{C}\lesssim1$}:
\begin{align}
\Big|\Big(\prod_{\ell\in\llbracket1,\a\rrbracket}\grad^{\mathbf{X}}_{\mathfrak{l}_{\ell}}\Big)\boldsymbol{\mathcal{H}}_{\x,\w}\Big|\lesssim_{\kappa}\N^{-1-\a}\N^{\mathrm{C}\a\delta_{\mathbf{S}}}\exp(-\tfrac{\kappa|\x-\w|}{\N}).\label{eq:hestimate}
\end{align}
\end{itemize}
To start, let us modify the {\small$(\mathbf{Z}^{\N})^{-1}$}-factor above. First, we write {\small$(\mathbf{Z}^{\N}_{\t,\x_{+}})^{-1}=(\mathbf{Z}^{\N}_{\t-\mathfrak{t}_{\mathbf{Av}},\x_{+}})^{-1}\cdot\mathbf{Z}^{\N}_{\t-\mathfrak{t}_{\mathbf{Av}},\x_{+}}(\mathbf{Z}^{\N}_{\t,\x_{+}})^{-1}$}. Then, we apply the formula \eqref{eq:ch} for {\small$\mathbf{Z}^{\N}$} to write {\small$\mathbf{Z}^{\N}_{\t-\mathfrak{t}_{\mathbf{Av}},\x_{+}}(\mathbf{Z}^{\N}_{\t,\x_{+}})^{-1}=\exp\{\lambda(\mathbf{j}^{\N}_{\t-\mathfrak{t}_{\mathbf{Av}},\x_{+}}-\mathbf{j}^{\N}_{\t,\x_{+}})+\lambda\mathscr{R}_{\lambda}\mathfrak{t}_{\mathbf{Av}}\}$}. By \eqref{eq:rg} (and various a priori estimates on the potential {\small$\mathscr{U}$} from Assumption \ref{assump:potential}), we know that {\small$\lambda\mathscr{R}_{\lambda}\lesssim1$}, and thus {\small$\lambda\mathscr{R}_{\lambda}\mathfrak{t}_{\mathbf{Av}}\lesssim\N^{-\rho}$}. Moreover, by the time-regularity estimate in Lemma \ref{lemma:currtreg} for {\small$\mathbf{j}^{\N}$}, we get {\small$|\lambda(\mathbf{j}^{\N}_{\t-\mathfrak{t}_{\mathbf{Av}},\x_{+}}-\mathbf{j}^{\N}_{\t,\x_{+}})|\lesssim\N^{-\rho}$} with high probability (simultaneously over all {\small$(\t,\x)\in[0,\mathrm{T}]\times\llbracket0,\N^{\mathrm{D}}\rrbracket$} if {\small$\mathrm{T},\mathrm{D}\lesssim1$}). Thus, on this same high probability event, we have {\small$\mathbf{Z}^{\N}_{\t-\mathfrak{t}_{\mathbf{Av}},\x_{+}}(\mathbf{Z}^{\N}_{\t,\x_{+}})^{-1}\lesssim1$}. Ultimately, this yields
\begin{align}
|\mathbf{Av}^{\mathbf{T},\mathbf{X},\mathscr{Q}}_{\t,\x}|&\lesssim\Big|\mathfrak{t}_{\mathbf{Av}}^{-1}\int_{0}^{\mathfrak{t}_{\mathbf{Av}}}\sum_{\w\in\mathbb{I}_{\x}}\boldsymbol{\mathcal{H}}_{\x,\w}\cdot\mathfrak{q}_{\w}[\bphi_{\t-\s}]\cdot\mathbf{Z}^{\N}_{\t-\s,\w}(\mathbf{Z}^{\N}_{\t-\mathfrak{t}_{\mathbf{Av}},\x_{+}})^{-1}\d\s\Big|\nonumber\\
&=\Big|\mathfrak{t}_{\mathbf{Av}}^{-1}\int_{-\mathfrak{t}_{\mathbf{Av}}}^{0}\sum_{\w\in\mathbb{I}_{\x}}\boldsymbol{\mathcal{H}}_{\x,\w}\cdot\mathfrak{q}_{\w}[\bphi_{\t+\s}]\cdot\mathbf{Z}^{\N}_{\t+\s,\w}(\mathbf{Z}^{\N}_{\t-\mathfrak{t}_{\mathbf{Av}},\x_{+}})^{-1}\d\s\Big|,\nonumber
\end{align}
where the second line follows from a change-of-variables in the {\small$\d\s$}-integration (whose role is only to make it so that we look forwards in time). We clarify that, as in Definition \ref{definition:bulkaverage}, we drop all {\small$\s\in[-\mathfrak{t}_{\mathbf{Av}},0]$} for which {\small$\t+\s<0$}, as the processes above are not defined at negative times. \emph{Throughout the rest of this section, we can freely change {\small$\t-\mathfrak{t}_{\mathbf{Av}}$} in the last factor above to any other time in {\small$\t+[-\mathfrak{t}_{\mathbf{Av}},0]$} at the cost of a {\small$\mathrm{O}(1)$} multiplicative factor, justified by the same argument above}. 
\subsubsection{Multi-scale decompositions}
The next step that we will take is to further decompose the summation over {\small$\mathbb{I}_{\x}$} and the integration over {\small$[-\mathfrak{t}_{\mathbf{Av}},0]$} via the construction below. (See Section \ref{subsection:method2} for an intuitive discussion.)
\begin{notation}\label{notation:msblock}
\fsp Write {\small$\mathbb{I}_{\x}=\mathbb{I}_{\x,1}\cup\ldots\cup\mathbb{I}_{\x,\mathrm{K}}$}, where {\small$\mathbb{I}_{\x,1},\ldots,\mathbb{I}_{\x,\mathrm{K}}$} are pairwise disjoint intervals and satisfy:
\begin{enumerate}
\item First, we have {\small$\max\mathbb{I}_{\x,\k}+1=\min\mathbb{I}_{\x,\k+1}$} for any {\small$\k=1,\ldots,\mathrm{K}-1$}. 
\item Second, we have {\small$|\mathbb{I}_{\x,\k}|\asymp\N^{\gamma_{\k}}$}, where {\small$\gamma_{1}>0$} is small but fixed, and {\small$\gamma_{\k+1}=\gamma_{\k}+\e$} for any {\small$\k=1,\ldots,\mathrm{K}-2$} and for some small {\small$\e>0$}. The index {\small$\mathrm{K}$} is determined so that {\small$\gamma_{\mathrm{K}-1}\geq\frac12+\gamma$} for another small but fixed {\small$\gamma>0$}. (We emphasize that we necessarily have {\small$|\mathbb{I}_{\x,\mathrm{K}}|\asymp|\mathbb{I}_{\x}|$}, since {\small$|\mathbb{I}_{\x}|\gg\N^{\gamma_{\k}}$} for any {\small$\k\in\llbracket1,\mathrm{K}-1\rrbracket$}, and moreover {\small$\mathbb{I}_{\x,\mathrm{K}}:=\mathbb{I}_{\x}\setminus(\mathbb{I}_{\x,1}\cup\ldots\cup\mathbb{I}_{\x,\mathrm{K}-1})$}, so that {\small$\mathbb{I}_{\x,\mathrm{K}}$} needs to make up the ``majority" of {\small$\mathbb{I}_{\x}$}.)
\end{enumerate}
Next, for any {\small$\k\in\llbracket1,\mathrm{K}\rrbracket$}, write {\small$[-\mathfrak{t}_{\mathbf{Av}},0]=\mathbb{U}_{\k,1}\cup\ldots\cup\mathbb{U}_{\k,\mathrm{L}}$}, where {\small$\mathbb{U}_{\k,1},\ldots,\mathbb{U}_{\k,\mathrm{L}}$} are pairwise disjoint intervals and satisfy the following conditions (to be explained below):
\begin{enumerate}
\item First, the length {\small$|\mathbb{U}_{\k,\ell}|$} is independent of {\small$\ell\in\llbracket1,\mathrm{L}\rrbracket$}.
\item Second, we have {\small$|\mathbb{U}_{\k,\ell}|\asymp\N^{-2}\N^{2\gamma_{\k-1}}\N^{-\e}+\N^{-4/3}\N^{2\gamma_{\k-1}/3}\N^{-\e}$} for small {\small$\e>0$}, in the case {\small$\k\geq2$}. 
\item If {\small$\k=1$}, then we set {\small$|\mathbb{U}_{\k,\ell}|\asymp\N^{-3/2}$} for each {\small$\ell\in\llbracket1,\mathrm{L}\rrbracket$} if {\small$\min\mathbb{I}_{\x,1}\leq\N^{\gamma_{1}}$}.  On the other hand, if {\small$\min\mathbb{I}_{\x,1}>\N^{\gamma_{1}}$}, then we set {\small$|\mathbb{U}_{1,\ell}|\asymp|\mathbb{U}_{2,\ell}|$} for each {\small$\ell\in\llbracket1,\mathrm{L}\rrbracket$}.
\end{enumerate}
(We clarify that {\small$\mathrm{L}$} could depend on {\small$\k$}, though this dependence will not play any role in the analysis below, so we omit this dependence to avoid unnecessary notational distractions.)
\end{notation}
Intuitively, the decomposition in Notation \ref{notation:msblock} gives a ``multi-scale intermediate region" near the left-edge of {\small$\mathbb{I}_{\x}$}, {\color{black}similar to what is discussed in Section \ref{section:method}}. Moreover, for any {\small$\mathbb{I}_{\x,\k}$}, the associated time intervals {\small$\mathbb{U}_{\k,1},\ldots,\mathbb{U}_{\k,\mathrm{L}}$} are arranged with speed-of-propagation considerations in mind. In particular, as we show later when it becomes more directly relevant for our estimates, it turns out that the {\small$\mathbf{j}^{\N}$}-process has a spatial propagation length-scale of essentially {\small$\N|\mathbb{U}_{\k,\ell}|^{1/2}+\N^{2}|\mathbb{U}_{\k,\ell}|^{3/2}$} on the time-interval {\small$\mathbb{U}_{\k,\ell}$}; indeed, the first term is diffusive propagation, and the second term ultimately comes from the fact that the {\small$\mathbf{F}$}-nonlinearity in \eqref{eq:nl} is a ``singular transport operator". In any case, this length-scale is {\small$\ll\N^{\gamma_{\k-1}}$}, which is also the distance between {\small$\mathbb{I}_{\x,\k}$} and the left-edge of {\small$\mathbb{I}_{\x}$} if {\small$\k\geq2$} (which is the only part of {\small$\mathbb{I}_{\x}$} that can intersect the ``global edge" {\small$0\in\llbracket0,\infty\rrbracket$}). Therefore, analyzing dynamics on the space-time block {\small$\mathbb{U}_{\k,\ell}\times\mathbb{I}_{\x,\k}$} effectively avoids dealing with the global edge if {\small$\k\geq2$}. 

Let us now clarify the case {\small$\k=1$} (and the dichotomy in the size of {\small$\mathbb{U}_{1,\ell}$} sets). In the case of {\small$\min\mathbb{I}_{\x,1}\leq\N^{\gamma_{1}}$}, then {\small$\mathbb{I}_{\x,1}$} is close enough to the edge so that cancellations from space-time averaging come from the edge dynamics, and the time-scale {\small$\N^{-3/2}$} is large enough to obtain enough cancellations. In the case of {\small$\min\mathbb{I}_{\x,1}\geq\N^{\gamma_{1}}$}, then {\small$\mathbb{I}_{\x,1}$} is far enough from the edge that its analysis is essentially the same as for {\small$\k=2$}. In either case, the interval {\small$\mathbb{I}_{\x,1}$} is small enough so that our cancellations can be much weaker.

Equipped with Notation \ref{notation:msblock}, we can now further expand the previous display to get
\begin{align}
|\mathbf{Av}^{\mathbf{T},\mathbf{X},\mathscr{Q}}_{\t,\x}|&\lesssim\sum_{\k\in\llbracket1,\mathrm{K}\rrbracket}\mathrm{L}^{-1}\sum_{\ell\in\llbracket1,\mathrm{L}\rrbracket}\Big|\tfrac{1}{|\mathbb{U}_{\k,\ell}|}\int_{\mathbb{U}_{\k,\ell}}\sum_{\w\in\mathbb{I}_{\x,\k}}\boldsymbol{\mathcal{H}}_{\x,\w}\cdot\mathfrak{q}_{\w}[\bphi_{\t+\s}]\cdot\mathbf{Z}^{\N}_{\t+\s,\w}(\mathbf{Z}^{\N}_{\t-\mathfrak{t}_{\mathbf{Av}},\x_{+}})^{-1}\d\s\Big|.\label{eq:stochmain1I0a}
\end{align}
As before, we replace {\small$\t-\mathfrak{t}_{\mathbf{Av}}$} in the last factor above by {\small$\t+\inf\mathbb{U}_{\k,\ell}$}, as to place this factor at the time-boundary of the domain of integration above. In a similar spirit, we replace {\small$\x_{+}$} above by {\small$\x_{\k}:=\max\mathbb{I}_{\x,\k}$}. The cost in doing this replacement is to multiply by the factor of {\small$\mathbf{Z}^{\N}_{\t+\inf\mathbb{U}_{\k,\ell},\x_{\k}}(\mathbf{Z}^{\N}_{\t+\inf\mathbb{U}_{\k,\ell},\x_{+}})^{-1}=\exp\{\lambda(\mathbf{j}^{\N}_{\t+\inf\mathbb{U}_{\k,\ell},\x_{\k}}-\mathbf{j}^{\N}_{\t+\inf\mathbb{U}_{\k,\ell},\x_{+}})\}$}. By construction, the difference on the \abbr{RHS} is a sum over {\small$\y\in\llbracket\x_{\k}+1,\x_{+}\rrbracket$} of {\small$-\lambda\N^{-1/2}\bphi_{\t+\inf\mathbb{U}_{\k,\ell},\y}$}. By Lemma \ref{lemma:phiap}, this sum is {\small$\lesssim\N^{-1/2+\e}|\x_{\k}-\x_{+}|\lesssim\N^{-1/2+\e}(\log\N)^{3/2}\N^{1/2-3\delta_{\mathbf{S}}/2}\lesssim1$} with high probability, since {\small$|\x_{\k}-\x_{+}|\lesssim(\log\N)^{3}\N^{1-3\delta_{\mathbf{S}}}$} as noted near the beginning of Section \ref{subsection:stochmain1}. (Observe that we are only making statements about times in {\small$[0,\mathrm{T}]$} and points in {\small$\llbracket0,\N^{\mathrm{D}}\rrbracket$}.) (Throughout the rest of this section, we will similarly and freely make similar replacements in the point at which we evaluate the {\small$(\mathbf{Z}^{\N})^{-1}$} factor.) Ultimately, we get
\begin{align}
|\mathbf{Av}^{\mathbf{T},\mathbf{X},\mathscr{Q}}_{\t,\x}|&\lesssim\sum_{\k\in\llbracket1,\mathrm{K}\rrbracket}\mathrm{L}^{-1}\sum_{\ell\in\llbracket1,\mathrm{L}\rrbracket}\Big|\tfrac{1}{|\mathbb{U}_{\k,\ell}|}\int_{\mathbb{U}_{\k,\ell}}\sum_{\w\in\mathbb{I}_{\x,\k}}\boldsymbol{\mathcal{H}}_{\x,\w}\cdot\boldsymbol{\omega}^{\k,\ell}_{\s,\t,\w,\x}\d\s\Big|,\label{eq:stochmain1I1a}
\end{align}
where, for convenience in the analysis below, we have used the notation
\begin{align*}
\boldsymbol{\omega}^{\k,\ell}_{\s,\t,\w,\x}:=\mathfrak{q}_{\w}[\bphi_{\t+\s}]\cdot\mathbf{Z}^{\N}_{\t+\s,\w}(\mathbf{Z}^{\N}_{\t+\inf\mathbb{U}_{\k,\ell},\x_{\k}})^{-1}.
\end{align*}
%
\subsubsection{A priori cutoffs}
We now take \eqref{eq:stochmain1I1a} and modify the {\small$\boldsymbol{\omega}^{\k,\ell}$}-term therein in a way that makes it convenient to work with at a technical level but that also does nothing with high probability. First, we write
\begin{align*}
\boldsymbol{\omega}^{\k,\ell}_{\s,\t,\w,\x}&=\mathfrak{q}_{\w}[\bphi_{\t+\s}]\cdot\mathbf{Z}^{\N}_{\t+\s,\w}(\mathbf{Z}^{\N}_{\t+\s,\x_{\k}})^{-1}\cdot\mathbf{Z}^{\N}_{\t+\s,\x_{\k}}(\mathbf{Z}^{\N}_{\t+\inf\mathbb{U}_{\k,\ell},\x_{\k}})^{-1}\\
&=\mathfrak{q}_{\w}[\bphi_{\t+\s}]\cdot\exp\{\lambda(\mathbf{j}^{\N}_{\t+\s,\w}-\mathbf{j}^{\N}_{\t+\s,\x_{\k}})\}\cdot\exp\{\lambda(\mathbf{j}^{\N}_{\t+\s,\x_{\k}}-\mathbf{j}^{\N}_{\t+\inf\mathbb{U}_{\k,\ell},\x_{\k}})-\lambda\mathscr{R}_{\lambda}(\s-\inf\mathbb{U}_{\k,\ell})\},
\end{align*}
where the second line follows by the formula \eqref{eq:ch} for {\small$\mathbf{Z}^{\N}$}. Now, by the polynomial bound \eqref{eq:bulkadmissibleI} for admissible functions like {\small$\mathfrak{q}_{\w}$}, as well as the high probability bound on {\small$\bphi$}-spins from Lemma \ref{lemma:phiap}, we can bound the first factor in the last line above by {\small$\lesssim\N^{\e}$} for small {\small$\e>0$}. Similarly, the first exponential factor in the last line above is a sum over {\small$\y\in\llbracket\w+1,\x_{\k}\rrbracket$} of {\small$-\lambda\N^{-1/2}\bphi_{\t+\s,\y}$}, which, as explained earlier, is {\small$\lesssim1$} with high probability. Finally, also explained earlier, the last exponential factor is {\small$\exp(\mathrm{O}(\N^{-\rho}))=1+\mathrm{O}(\N^{-\rho})$} for some {\small$\rho>0$} fixed, again with high probability (because of the time-regularity estimate in Lemma \ref{lemma:currtreg}). In particular, with high probability, we can extend \eqref{eq:stochmain1I1a} to
\begin{align}
|\mathbf{Av}^{\mathbf{T},\mathbf{X},\mathscr{Q}}_{\t,\x}|&\lesssim\sum_{\k\in\llbracket1,\mathrm{K}\rrbracket}\mathrm{L}^{-1}\sum_{\ell\in\llbracket1,\mathrm{L}\rrbracket}\Big|\tfrac{1}{|\mathbb{U}_{\k,\ell}|}\int_{\mathbb{U}_{\k,\ell}}\sum_{\w\in\mathbb{I}_{\x,\k}}\boldsymbol{\mathcal{H}}_{\x,\w}\cdot\wt{\boldsymbol{\omega}}^{\k,\ell}_{\s,\t,\w,\x}\d\s\Big|,\label{eq:stochmain1I1b}
\end{align}
where {\small$\wt{\boldsymbol{\omega}}^{\k,\ell}$} is defined below, in which terms on the \abbr{RHS} are given precisely in Notation \ref{notation:cut} below:
\begin{align}
\wt{\boldsymbol{\omega}}^{\k,\ell}_{\s,\t,\w,\x}:=\wt{\mathfrak{q}}_{\w}[\bphi_{\t+\s}]\cdot\mathbf{G}_{\w}[\bphi_{\t+\s}]\cdot\mathbf{J}^{\k,\ell,\x}_{\s}.\label{eq:stochmain1I1bb}
\end{align}
%
\begin{notation}\label{notation:cut}
\fsp Fix {\small$\e>0$} small, and let {\small$\boldsymbol{\eta}(\cdot):\R\to\R$} be a function satisfying the following properties:
\begin{enumerate}
\item We have {\small$\boldsymbol{\eta}(\cdot)\equiv1$} on {\small$[-\N^{\e},\N^{\e}]$} and {\small$\boldsymbol{\eta}(\cdot)\equiv0$} outside {\small$[-2\N^{\e},2\N^{\e}]$}.
\item The function {\small$\boldsymbol{\eta}(\cdot)$} is smooth with all derivatives bounded uniformly and independently of {\small$\N$}.
\end{enumerate}
Let us also introduce another cutoff function {\small$\boldsymbol{\nu}:\R\to\R$} which instead satisfies the following for some {\small$\rho>0$}:
\begin{enumerate}
\item We have {\small$\boldsymbol{\nu}(\cdot)\equiv1$} on {\small$1+[-\N^{-\rho},\N^{-\rho}]$} and {\small$\boldsymbol{\nu}(\cdot)\equiv0$} outside {\small$1+[-2\N^{-\rho},2\N^{-\rho}]$}.
\item The function {\small$\boldsymbol{\nu}(\cdot)$} is smooth with all derivatives bounded uniformly and independently of {\small$\N$}.
\end{enumerate}
With this notation at hand, we define
\begin{align*}
\wt{\mathfrak{q}}_{\w}[\bphi_{\t+\s}]&:={\mathfrak{q}}_{\w}[\bphi_{\t+\s}]\cdot\boldsymbol{\eta}(\mathfrak{q}_{\w}[\bphi_{\t+\s}]),\\
\mathbf{G}_{\w}[\bphi_{\t+\s}]&:=\exp\{\lambda(\mathbf{j}^{\N}_{\t+\s,\w}-\mathbf{j}^{\N}_{\t+\s,\x_{\k}})\}\cdot\boldsymbol{\eta}(\exp\{\lambda(\mathbf{j}^{\N}_{\t+\s,\w}-\mathbf{j}^{\N}_{\t+\s,\x_{\k}})\}),\\
\mathbf{J}^{\k,\ell,\x}_{\s}&:=\exp\{\lambda(\mathbf{j}^{\N}_{\t+\s,\x_{\k}}-\mathbf{j}^{\N}_{\t+\inf\mathbb{U}_{\k,\ell},\x_{\k}})-\lambda\mathscr{R}_{\lambda}(\s-\inf\mathbb{U}_{\k,\ell})\}\\
&\times\boldsymbol{\nu}(\exp\{\lambda(\mathbf{j}^{\N}_{\t+\s,\x_{\k}}-\mathbf{j}^{\N}_{\t+\inf\mathbb{U}_{\k,\ell},\x_{\k}})-\lambda\mathscr{R}_{\lambda}(\s-\inf\mathbb{U}_{\k,\ell})\}).
\end{align*}
\end{notation}
Ultimately, by estimate \eqref{eq:stochmain1I1b} and the Cauchy-Schwarz inequality, we have (with high probability simultaneously over all {\small$(\t,\x)\in[0,\mathrm{O}(1)]\times\llbracket0,\mathrm{O}(\N^{\mathrm{D}})\rrbracket$}) that
\begin{align}
\int_{0}^{\mathrm{T}}|\mathbf{Av}^{\mathbf{T},\mathbf{X},\mathscr{Q}}_{\t,\x}|^{2}\d\t&\lesssim\sum_{\k\in\llbracket1,\mathrm{K}\rrbracket}\mathrm{L}^{-1}\sum_{\ell\in\llbracket1,\mathrm{L}\rrbracket}\int_{0}^{\mathrm{T}}\Big(\tfrac{1}{|\mathbb{U}_{\k,\ell}|}\int_{\mathbb{U}_{\k,\ell}}\sum_{\w\in\mathbb{I}_{\x,\k}}\boldsymbol{\mathcal{H}}_{\x,\w}\cdot\wt{\boldsymbol{\omega}}^{\k,\ell}_{\s,\t,\w,\x}\d\s\Big)^{2}\d\t.\label{eq:stochmain1I1c}
\end{align}
%
\subsubsection{Case I: {\small$\k\geq2$}}
We will first address the terms on the \abbr{RHS} of \eqref{eq:stochmain1I1c} above for which {\small$\k\geq2$}. As we discussed after Notation \ref{notation:msblock}, we will ultimately be able to control said terms by an analysis of the full-space version of the model \eqref{eq:currI}-\eqref{eq:currII} at hand. 
\begin{prop}\label{prop:case1}
\fsp Take any {\small$\k\in\llbracket2,\mathrm{K}\rrbracket$} and {\small$\ell\in\llbracket1,\mathrm{L}\rrbracket$}. We have the following estimate for some {\small$\beta_{\mathrm{CLT}}>0$} fixed, in which we use constructions from Notation \ref{notation:msblock} (for some small {\small$\e>0$}), and in which {\small$\mathscr{E}$} is a high probability event as in the statement of Proposition \ref{prop:stochmain1}:
\begin{align}
\E\Big(\mathbf{1}_{\mathscr{E}}\tfrac{1}{|\mathbb{U}_{\k,\ell}|}\int_{\mathbb{U}_{\k,\ell}}\sum_{\w\in\mathbb{I}_{\x,\k}}\boldsymbol{\mathcal{H}}_{\x,\w}\cdot\wt{\boldsymbol{\omega}}^{\k,\ell}_{\s,\t,\w,\x}\d\s\Big)^{2}&\lesssim_{}\N^{\mathrm{O}(\e)}\N^{-2}|\mathbb{U}_{\k,\ell}|^{-1}\|\boldsymbol{\mathcal{H}}_{\x,\cdot}\|_{\ell^{\infty}(\mathbb{I}_{\x,\k})}^{2}|\mathbb{I}_{\x,\k}|+\N^{-2-\beta_{\mathrm{CLT}}}.\label{eq:case1I}
\end{align}
\end{prop}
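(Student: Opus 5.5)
Since $\k\geq2$, the block $\mathbb{I}_{\x,\k}$ sits at distance $\gtrsim\N^{\gamma_{\k-1}}$ from the left end of $\mathbb{I}_{\x}$, and hence from the edge $0$. The plan is to compare, on the short time window $\t+\mathbb{U}_{\k,\ell}$, the half-space process $\bphi$ with a full-space process $\bphi^{\Z}$ that shares the same Brownian motions near $\mathbb{I}_{\x,\k}$. We start both at time $\t+\inf\mathbb{U}_{\k,\ell}$ from data that agree on $\llbracket1,\infty\rrbracket$, extending to $\Z$ by an independent $\mathbb{P}^{0}$-sample on the negative half-line. Then we run the argument of the full-space Boltzmann--Gibbs estimate of \cite{Y26PTRF} on $\bphi^{\Z}$.

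\textbf{Step 1 (localization).} Set $\mathbf{g}:=\mathbf{j}^{\Z}-\mathbf{j}^{\N}$. Away from the edge, $\mathbf{g}$ solves the linear divergence-form equation $\partial_{\t}\mathbf{g}=\mathscr{P}\mathbf{g}$ from Section \ref{subsection:method2}: the noises cancel, and the coefficients $\mathscr{U}''[\bpsi]$ lie in $[\boldsymbol{\Lambda}^{-1},\boldsymbol{\Lambda}]$ by Assumption \ref{assump:potential}. The first-order part $\mathscr{D}$ comes from $\mathbf{F}$, and its coefficients are $\lesssim\N^{\mathrm{O}(\e)}$ with high probability by Lemma \ref{lemma:phiap}. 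The source of $\mathbf{g}$ is supported near the edge. We would invoke the Nash--Davies estimate (Proposition \ref{prop:nashdavies}, of the form \eqref{eq:method2c}) for the fundamental solution $\boldsymbol{\Gamma}$ of $\mathscr{P}$. Combined with Duhamel, this gives $|\mathbf{g}_{\t+\s,\y}|\lesssim\N^{-\mathrm{D}}$ for all $\s\in\mathbb{U}_{\k,\ell}$ and $\y$ within $\N^{\gamma_{\k-1}-\e/2}$ of $\mathbb{I}_{\x,\k}$. The choice $|\mathbb{U}_{\k,\ell}|\lesssim\N^{-2+2\gamma_{\k-1}-\e}+\N^{-4/3+2\gamma_{\k-1}/3-\e}$ is exactly what makes the diffusive length $\N|\mathbb{U}|^{1/2}$ and the transport length $\N^{2}|\mathbb{U}|^{3/2}$ both $\ll\N^{\gamma_{\k-1}}$. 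It also makes $\N|\mathbb{U}|\lesssim1$ when $\gamma_{\k-1}<\frac12$, so the prefactor $\mathrm{e}^{\N|\t-\s|}$ in \eqref{eq:method2c} is harmless. For larger $\k$ we would work with the transport-dominated scale and accept an $\N^{\mathrm{O}(\e)}$ loss.

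\textbf{Step 2 (transfer).} By construction, $\wt{\boldsymbol{\omega}}^{\k,\ell}$ is a smooth function of $\bphi_{\t+\s}$ and of $\mathbf{j}^{\N}$-increments, all supported in $\mathbb{I}_{\x,\k}$ plus a bounded neighborhood. The smooth cutoffs $\boldsymbol{\eta},\boldsymbol{\nu}$ of Notation \ref{notation:cut} make it uniformly Lipschitz with constant $\N^{\mathrm{O}(\e)}$. So replacing $\bphi$ by $\bphi^{\Z}$ costs $\N^{\mathrm{O}(\e)-\mathrm{D}}$, which is absorbed into $\N^{-2-\beta_{\mathrm{CLT}}}$ on $\mathscr{E}$. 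The event $\mathscr{E}$ is the intersection of the Lemma \ref{lemma:phiap} and Lemma \ref{lemma:currtreg} events with the Nash--Davies event.

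\textbf{Step 3 (full-space estimate).} For the full-space quantity, condition on the data at time $\t+\inf\mathbb{U}_{\k,\ell}$ and change measure to $\mathbb{P}^{0}$ via \eqref{eq:dataI0}; this is affordable only up to $\N^{\gamma_{\mathrm{data}}}\leq\N^{\mathrm{O}(\e)}$. Then apply the Kipnis--Varadhan / $\mathrm{H}^{-1}$ time-average bound as in \cite{Y26PTRF}. The summands $\wt{\mathfrak{q}}_{\w}\mathbf{G}_{\w}$ are backwards-oriented, mean zero against the right-side conditioning, and the remaining factor $\mathbf{J}$ is a time-increment, so the spectral gap of the full-space generator restricted to blocks applies. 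Gaining $|\mathbb{U}|^{-1}\N^{-2}$ from the time average and $|\mathbb{I}_{\x,\k}|\,\|\boldsymbol{\mathcal{H}}\|_{\infty}^{2}$ from spatial orthogonality yields the first term on the right of \eqref{eq:case1I}.

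\textbf{Main obstacle.} The hard step is Step 1. Nash--Davies bounds for an operator with a first-order drift of strength $\N^{3/2}$ naively produce $\exp(\N^{3/2}|\t-\s|)$. One must exploit the $\ell^{2}$-positivity of $\mathscr{P}$ together with a Davies exponential twist to obtain growth only $\mathrm{e}^{\N|\t-\s|}$, with an off-diagonal decay that survives on the short windows $\mathbb{U}_{\k,\ell}$. I would take this from Proposition \ref{prop:nashdavies}, assuming its hypotheses hold on the high-probability event $\mathscr{E}$.
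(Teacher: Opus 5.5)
Your architecture is the paper's. You couple, on the window $\t+\mathbb{U}_{\k,\ell}$, to the full-space dynamics via the Nash--Davies speed-of-propagation estimate (Propositions \ref{prop:nashdavies} and \ref{prop:speed}), transfer the integrand at cost $\mathrm{O}_{\mathrm{D}}(\N^{-\mathrm{D}})$, change measure to $\mathbb{P}^{0}$, and import the full-space Boltzmann--Gibbs estimate of \cite{Y26PTRF}.

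The problem is Step 3: the mechanism you describe would fail if executed. Grand-canonical mean zero (even conditionally on the spins to the right) plus a block spectral gap is the argument of the edge case, Proposition \ref{prop:case2}. It works there only because of the Glauber term $\N^{2}\partial_{\bphi_{1}}^{2}$, which breaks conservation and gives a Poincar\'{e} inequality for every mean-zero function on $\mathbb{K}$. For $\k\geq2$ you have deliberately passed to the full-space dynamics, which conserve $\sum_{\y}\bphi_{\y}$. The block Dirichlet form vanishes on functions of the block sum, so the block $\mathrm{H}^{-1}$ norm of a local function is infinite unless its conditional expectation given the block sum vanishes. A bulk-admissible $\mathfrak{q}_{\w}=\N^{-1+\mathfrak{k}/2}\mathfrak{f}$ with $\mathfrak{f}\in\mathrm{Jet}^{\perp}_{\mathfrak{k}}$ need not satisfy this; take $\alpha\bphi_{1}^{2}-1$, for example.

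What the paper actually imports is Lemma 6.8 of \cite{Y26PTRF}, proved under $\E^{\sigma}\mathfrak{w}_{\w}=0$ for all $\sigma$, together with Proposition 6.6 there. That proposition replaces $\mathfrak{q}_{\w}$ by a finite sum of such fluctuations plus an $\N^{-3+\mathrm{O}(\e)}$ remainder. This is where the $\mathrm{Jet}^{\perp}_{\mathfrak{k}}$ condition and the $\N^{-1+\mathfrak{k}/2}$ normalization of Definition \ref{definition:bulkadmissible} enter. That remainder, not the coupling error, is the dominant source of the additive $\N^{-2-\beta_{\mathrm{CLT}}}$ in \eqref{eq:case1I}; your Step 3 never uses the jet condition, which is the symptom. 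Likewise, $\mathbf{J}^{\k,\ell,\x}_{\s}$ is a time increment of $\mathbf{j}$ at $\x_{\k}$, not a function of the current configuration. So plain Kipnis--Varadhan does not apply to your integrand, and a spectral gap does not fix this; it too is handled inside Lemma 6.8 of \cite{Y26PTRF}.

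Three smaller points.
\begin{itemize}
\item Once $\gamma_{\k-1}$ exceeds $\frac12$ by more than $\mathrm{O}(\e)$ (e.g.\ $\k=\mathrm{K}$), $\N|\mathbb{U}_{\k,\ell}|$ is a positive power of $\N$. The growth factor $\mathrm{e}^{\mathrm{C}\N^{1+\mathrm{C}\e_{0}}|\mathbb{U}_{\k,\ell}|}$ in \eqref{eq:nashII} is then stretched-exponentially large, not an $\N^{\mathrm{O}(\e)}$ loss. It is beaten because the edge distance exceeds $\N^{\e}\N^{2}|\mathbb{U}_{\k,\ell}|^{3/2}$, so the off-diagonal factor is at most $\mathrm{e}^{-\upsilon\N^{1+\e}|\mathbb{U}_{\k,\ell}|}$, which dominates when $\e_{0}\ll\e$. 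This is the role of the $\N^{\e}\N^{2}\mathfrak{t}^{3/2}$ term in Definition \ref{definition:speed}.
\item $\boldsymbol{\eta}(\mathfrak{q}_{\w})$ truncates the value of $\mathfrak{q}_{\w}$, not $\mathbf{D}\mathfrak{q}_{\w}$, so $\wt{\mathfrak{q}}_{\w}$ is not uniformly $\N^{\mathrm{O}(\e)}$-Lipschitz. The paper first inserts $\boldsymbol{\eta}(\|\mathbf{D}\mathfrak{q}_{\w}[\bphi_{\t+\s}]\|_{\infty})$.
\item The coupling event depends on the window and on $\x$, so it cannot be placed in $\mathscr{E}$, which must not depend on $(\t,\x)$. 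Instead, on its complement, use the deterministic $\N^{\mathrm{O}(1)}$ bound on the integrand times the $\mathrm{O}_{\mathrm{D}}(\N^{-\mathrm{D}})$ failure probability, as the paper does.
\end{itemize}
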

Again, we defer the proof of Proposition \ref{prop:case1} to Section \ref{subsection:proofs}. However, let us clarify Proposition \ref{prop:case1} intuitively. The factor {\small$\N^{\mathrm{O}(\e)}$} is entirely technical and can essentially be ignored because {\small$\e>0$} is small and our estimates will be quantitative in powers of {\small$\N$}. The other factors in the first term on the \abbr{RHS} of \eqref{eq:case1I} essentially assert \abbr{CLT}-type cancellations in the space-time integral on the \abbr{LHS} (recall that the speed of \eqref{eq:phiI}-\eqref{eq:phiII} is order {\small$\N^{2}$}, hence faster and sharper cancellations). We clarify that the last term on the \abbr{RHS} of \eqref{eq:case1I} contains contributions of the \abbr{LHS} on the low-probability event where \abbr{CLT} cancellations fail, as well as other ``lower-order corrections".
\subsubsection{Case II: {\small$\k=1$}}
The analysis in this case differs from that of case \abbr{I}. To start, recall the formula for {\small$\wt{\boldsymbol{\omega}}^{1,\ell}$} from right before Notation \ref{notation:cut}. The last term {\small$\mathbf{J}^{1,\ell,\x}$} is, by construction, deterministically equal to {\small$1+\mathrm{O}(\N^{-\rho})$}. So, if we replace this term by {\small$1$}, the cost is an additive one of {\small$\lesssim\|\boldsymbol{\mathcal{H}}_{\x,\cdot}\|_{\ell^{\infty}(\Z)}|\mathbb{I}_{\x,1}|\N^{\mathrm{O}(\e)}\N^{-\rho}\lesssim\N^{-1+\mathrm{C}\delta_{\mathbf{S}}+\e+\gamma_{1}}\N^{-\rho}\lesssim\N^{-1-\rho/2}$}, since {\small$\gamma_{1},\e,\delta_{\mathbf{S}}>0$} are small, while {\small$\rho>0$} from Notation \ref{notation:cut} is fixed. Thus, deterministically, we have
\begin{align}
\tfrac{1}{|\mathbb{U}_{1,\ell}|}\int_{\mathbb{U}_{1,\ell}}\sum_{\w\in\mathbb{I}_{\x,1}}\boldsymbol{\mathcal{H}}_{\x,\w}\cdot\wt{\boldsymbol{\omega}}^{1,\ell}_{\s,\t,\w,\x}\d\s&=\tfrac{1}{|\mathbb{U}_{1,\ell}|}\int_{\mathbb{U}_{1,\ell}}\sum_{\w\in\mathbb{I}_{\x,1}}\boldsymbol{\mathcal{H}}_{\x,\w}\cdot\wt{\mathfrak{q}}_{\w}[\bphi_{\t+\s}]\cdot\mathbf{G}_{\w}[\bphi_{\t+\s}]\d\s+\mathrm{O}(\N^{-1-\frac12\rho}).\label{eq:stochmain1I3}
\end{align}
We again recall the terms above from Notation \ref{notation:cut}. Since {\small$\|\boldsymbol{\mathcal{H}}_{\x,\cdot}\|_{\ell^{\infty}(\Z)}|\mathbb{I}_{\x,1}|\lesssim\N^{-1+\mathrm{C}\delta_{\mathbf{S}}+\gamma_{1}}$} and {\small$|\wt{\mathfrak{q}}|\lesssim\N^{\mathrm{O}(\e)}$}, we only need power-saving from time-integration of the form {\small$\N^{-\rho}$} for some {\small$\rho>0$} fixed. Ultimately, we arrive at:
\begin{prop}\label{prop:case2}
\fsp There exists a fixed constant {\small$\beta_{\mathrm{CLT}}>0$} such that for some high probability event {\small$\mathscr{E}$} as in the statement of Proposition \ref{prop:stochmain1}, we have
\begin{align}
\E\Big(\mathbf{1}_{\mathscr{E}}\tfrac{1}{|\mathbb{U}_{1,\ell}|}\int_{\mathbb{U}_{1,\ell}}\sum_{\w\in\mathbb{I}_{\x,1}}\boldsymbol{\mathcal{H}}_{\x,\w}\cdot\wt{\mathfrak{q}}_{\w}[\bphi_{\t+\s}]\cdot\mathbf{G}_{\w}[\bphi_{\t+\s}]\d\s\Big)^{2}\lesssim\N^{-2-\beta_{\mathrm{CLT}}}.\label{eq:case2I}
\end{align}
\end{prop}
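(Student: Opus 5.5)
The plan is to treat the two regimes in Notation \ref{notation:msblock} separately, following the dichotomy built into the choice of $|\mathbb{U}_{1,\ell}|$. Throughout, the target is a power saving of at least $\N^{-\gamma_{1}-\mathrm{C}\delta_{\mathbf{S}}-\mathrm{O}(\e)-\beta_{\mathrm{CLT}}/2}$ from the time average. The reason this suffices is the deterministic bound $\|\boldsymbol{\mathcal{H}}_{\x,\cdot}\|_{\ell^{\infty}}|\mathbb{I}_{\x,1}|\lesssim\N^{-1+\mathrm{C}\delta_{\mathbf{S}}+\gamma_{1}}$, combined with $|\wt{\mathfrak{q}}_{\w}\mathbf{G}_{\w}|\lesssim\N^{\mathrm{O}(\e)}$, which holds because of the cutoffs in Notation \ref{notation:cut}. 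Since $\gamma_{1},\e,\delta_{\mathbf{S}}$ are small, any fixed saving $\N^{-\rho}$ with $\rho$ independent of these will do.

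\emph{Case $\min\mathbb{I}_{\x,1}>\N^{\gamma_{1}}$.} Here $|\mathbb{U}_{1,\ell}|\asymp|\mathbb{U}_{2,\ell}|$. The block $\mathbb{U}_{1,\ell}\times\mathbb{I}_{\x,1}$ is then separated from the edge by $\gtrsim\N^{\gamma_{1}}$, which is much larger than the propagation length $\N|\mathbb{U}_{1,\ell}|^{1/2}+\N^{2}|\mathbb{U}_{1,\ell}|^{3/2}$. So I would run the argument of Proposition \ref{prop:case1} verbatim: couple to the full-space dynamics via the Nash--Davies speed-of-propagation bound, then inherit the full-space Boltzmann--Gibbs estimate. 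This yields the right-hand side of \eqref{eq:case1I} with $\k=1$. Inserting $|\mathbb{U}_{1,\ell}|^{-1}\asymp\N^{2-2\gamma_{1}+\e}$, $\|\boldsymbol{\mathcal{H}}\|_{\ell^{\infty}}^{2}\lesssim\N^{-2+2\mathrm{C}\delta_{\mathbf{S}}}$ and $|\mathbb{I}_{\x,1}|\asymp\N^{\gamma_{1}}$ gives
\[
\N^{-2-\gamma_{1}+\mathrm{O}(\e)+2\mathrm{C}\delta_{\mathbf{S}}}+\N^{-2-\beta_{\mathrm{CLT}}},
\]
which is acceptable because $\e,\delta_{\mathbf{S}}\ll\gamma_{1}$.

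\emph{Case $\min\mathbb{I}_{\x,1}\leq\N^{\gamma_{1}}$ (near the edge, $|\mathbb{U}_{1,\ell}|\asymp\N^{-3/2}$).} Here I would not use the full space at all. Instead I would use a Kipnis--Varadhan type estimate for the half-space dynamics \eqref{eq:phiI}--\eqref{eq:phiII} itself, which has $\mathbb{P}^{0}$ as an invariant measure.
\begin{itemize}
\item[(i)] \emph{Mean zero.} The integrand $\mathfrak{g}:=\sum_{\w}\boldsymbol{\mathcal{H}}_{\x,\w}\wt{\mathfrak{q}}_{\w}\mathbf{G}_{\w}$ is a local function supported in $\llbracket1,\mathrm{O}(\N^{\gamma_{1}})\rrbracket$. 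By orientation, $\wt{\mathfrak{q}}_{\w}$ depends on sites $\leq\w$ and $\mathbf{G}_{\w}$ on sites $>\w$. Product structure of $\mathbb{P}^{0}$ then gives $|\E^{0}\mathfrak{g}|\lesssim\N^{-\mathrm{D}}$, as in the truncation step of Lemma \ref{lemma:cltx}. So after an $\mathrm{O}(\N^{-\mathrm{D}})$ correction, $\mathfrak{g}$ is exactly mean zero.
\item[(ii)] \emph{Change of measure.} Use Assumption \ref{assump:data} and the remark after \eqref{eq:dataI0}: the law at time $\t+\inf\mathbb{U}_{1,\ell}$ has density $\lesssim\N^{\gamma_{\mathrm{data}}}$ with respect to $\mathbb{P}^{0}$. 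This reduces the expectation to stationary dynamics at the cost of a factor $\N^{\gamma_{\mathrm{data}}}$.
\item[(iii)] \emph{Kipnis--Varadhan.} Apply the Kipnis--Varadhan inequality for the generator $\N^{2}\mathscr{L}$ with symmetric part $\N^{2}\mathscr{S}$. This inequality holds for non-reversible generators, with the $\mathbf{F}$-drift contributing only to the antisymmetric part. It gives
\[
\E^{0}\Big(\tfrac{1}{|\mathbb{U}|}\int_{\mathbb{U}}\mathfrak{g}[\bphi_{\s}]\,\d\s\Big)^{2}\lesssim|\mathbb{U}|^{-1}\N^{-2}\|\mathfrak{g}\|_{-1,\mathscr{S}}^{2}.
\]
\item[(iv)] \emph{Spectral gap.} Bound the $\mathrm{H}^{-1}$ norm by a spectral gap for the dynamics restricted to the box $\llbracket1,\mathrm{O}(\N^{\gamma_{1}})\rrbracket$. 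This restricted dynamics consists of the exchange bonds inside the box together with the edge Glauber term in \eqref{eq:phiII}, and its Dirichlet form is dominated by that of $\mathscr{S}$. Because the edge reservoir breaks conservation, and $\mathscr{U}$ is uniformly convex (Assumption \ref{assump:potential}), I expect a gap $\gtrsim\N^{-2\gamma_{1}}$. This should follow from a Bakry--\'Emery/Brascamp--Lieb argument, or a discrete Poincar\'e inequality with a boundary term, applied to the Langevin-plus-exchange chain.
\item[(v)] \emph{Conclusion.} Steps (iii) and (iv) give $\|\mathfrak{g}\|_{-1}^{2}\lesssim\N^{2\gamma_{1}}\E^{0}\mathfrak{g}^{2}\lesssim\N^{2\gamma_{1}+\mathrm{O}(\e)}\N^{-2+2\mathrm{C}\delta_{\mathbf{S}}+2\gamma_{1}}$. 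Hence the left-hand side of \eqref{eq:case2I} is
\[
\lesssim\N^{\gamma_{\mathrm{data}}}\cdot\N^{3/2}\cdot\N^{-2}\cdot\N^{4\gamma_{1}-2+\mathrm{O}(\e)+2\mathrm{C}\delta_{\mathbf{S}}}=\N^{-2-\frac12+4\gamma_{1}+\gamma_{\mathrm{data}}+\mathrm{O}(\e+\delta_{\mathbf{S}})},
\]
which is $\lesssim\N^{-2-\beta_{\mathrm{CLT}}}$ once $\gamma_{1},\gamma_{\mathrm{data}},\e,\delta_{\mathbf{S}}$ are small.
\end{itemize}
The high-probability event $\mathscr{E}$ is the one on which the cutoffs in Notation \ref{notation:cut} are inactive (Lemmas \ref{lemma:phiap} and \ref{lemma:currtreg}). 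On $\mathscr{E}$ the truncated integrand agrees with the original, and off it we simply bound the truncated quantity, so the indicator costs nothing.

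The main obstacle I expect is step (iv): proving the $\N^{-2\gamma_{1}}$ spectral gap for the edge-localized symmetric dynamics. This requires identifying the symmetric part of the generator computed in Section \ref{subsection:generator} and checking that the Glauber term at site $1$ supplies the missing ``zero mode''. I would first try to compare it directly with a Gaussian (quadratic-potential) chain via uniform convexity. A secondary point to verify is that the $\N^{3/2}$-scale $\mathbf{F}$-drift indeed enters only the antisymmetric part, so that Kipnis--Varadhan applies without loss.
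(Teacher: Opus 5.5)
Your proposal is correct and follows the paper's proof essentially step for step. In the far-from-edge case the paper likewise borrows Proposition \ref{prop:case1} (at $\k=2$, with $|\mathbb{U}_{1,\ell}|=|\mathbb{U}_{2,\ell}|$) and does the same power counting. In the near-edge case it changes measure to $\E^{0}$ at cost $\N^{\gamma_{\mathrm{data}}}$ and applies Kipnis--Varadhan with the half-space symmetric generator (exchange plus the Glauber term at site $1$). It then restricts to a box $\mathbb{K}\ni1$ of size $\lesssim\N^{\gamma_{1}}$ and proves exactly your gap $\gtrsim|\mathbb{K}|^{-2}$, via Bakry--\'Emery for the site-wise Dirichlet form plus a discrete Poincar\'e inequality anchored at site $1$, reaching the same final power counting. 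The only cosmetic difference is that the paper swaps $\wt{\mathfrak{q}}_{\w}$ back to $\mathfrak{q}_{\w}$ on the high-probability event to get exact mean zero, whereas you keep the truncation and absorb an $\mathrm{O}(\N^{-\mathrm{D}})$ mean.
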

\subsubsection{Putting it together}
Propositions \ref{prop:case1} and \ref{prop:case2} reduce Proposition \ref{prop:stochmain1} to some power-counting in the scaling parameter {\small$\N$}. We will now perform this power-counting.
\begin{proof}[Proof of Proposition \ref{prop:stochmain1}]
Consider the estimate \eqref{eq:stochmain1I1c}. By combining this with Propositions \ref{prop:case1} and \ref{prop:case2}, as well as \eqref{eq:stochmain1I3}, we get that for some high probability event {\small$\mathscr{E}$} (independent of {\small$(\t,\x)\in[0,\mathrm{T}]\times\llbracket0,\N^{\mathrm{D}}\rrbracket$}-variables below), the following estimate holds, where {\small$\beta_{\mathrm{CLT}}>0$} is a fixed constant and {\small$\e>0$} is small:
\begin{align*}
\int_{0}^{\mathrm{T}}\E(\mathbf{1}_{\mathscr{E}}|\mathbf{Av}^{\mathbf{T},\mathbf{X},\mathscr{Q}}_{\t,\x}|^{2})\d\t&\lesssim\sup_{\k\in\llbracket2,\mathrm{K}\rrbracket}\sup_{\ell\in\llbracket1,\mathrm{L}\rrbracket}\N^{\e}\N^{-2}|\mathbb{U}_{\k,\ell}|^{-1}\|\boldsymbol{\mathcal{H}}_{\x,\cdot}\|_{\ell^{\infty}(\mathbb{I}_{\x,\k})}^{2}|\mathbb{I}_{\x,\k}|+\N^{-2-\beta_{\mathrm{CLT}}}.
\end{align*}
For the first term on the \abbr{RHS}, we use the global bound {\small$\|\boldsymbol{\mathcal{H}}_{\x,\cdot}\|_{\ell^{\infty}(\llbracket0,\infty\rrbracket)}\lesssim\N^{-1+\mathrm{C}\delta_{\mathbf{S}}}$}, where {\small$\mathrm{C}\lesssim1$} and {\small$\delta_{\mathbf{S}}>0$} is the small constant from Definition \ref{definition:heat}. We then use size estimates for {\small$\mathbb{I}_{\x,\k},\mathbb{U}_{\k,\ell}$}-domains and {\small$\gamma_{\k}$}-exponents from Notation \ref{notation:msblock} to get the following for {\small$\k\in\llbracket2,\mathrm{K}-1\rrbracket$}:
\begin{align*}
&\N^{\e}\N^{-2}|\mathbb{U}_{\k,\ell}|^{-1}\|\boldsymbol{\mathcal{H}}_{\x,\cdot}\|_{\ell^{\infty}(\mathbb{I}_{\x,\k})}^{2}|\mathbb{I}_{\x,\k}|\lesssim\N^{\e}\N^{-2}|\mathbb{U}_{\k,\ell}|^{-1}\N^{-2+\mathrm{O}(\delta_{\mathbf{S}})}|\mathbb{I}_{\x,\k}|\\
&\lesssim\N^{\e+\mathrm{O}(\delta_{\mathbf{S}})}\cdot\N^{-4}\cdot(\N^{2}\N^{-2\gamma_{\k-1}}+\N^{\frac43}\N^{-\frac23\gamma_{\k-1}})\cdot\N^{\gamma_{\k}}\lesssim\N^{\e+\mathrm{O}(\delta_{\mathbf{S}})}\cdot(\N^{-2-\gamma_{\k}}+\N^{-\frac83+\frac13\gamma_{\k}})\lesssim\N^{-2-\beta_{\mathrm{CLT}}}.
\end{align*}
Above, {\small$\beta_{\mathrm{CLT}}>0$} is a possibly updated by still fixed constant. We clarify that the previous display requires that {\small$\k\neq\mathrm{K}$}, since it is only for {\small$\k\neq\mathrm{K}$} do we have {\small$\gamma_{\k}=\gamma_{\k-1}+\e$}. Moreover, the previous display holds because {\small$\e,\delta_{\mathbf{S}}$} are any small constants of our choosing. For the index {\small$\k=\mathrm{K}$}, what changes is that {\small$\N^{\gamma_{\mathrm{K}}}$} is now {\small$\lesssim\N^{\gamma_{\mathrm{K}-1}}\N^{1/2-\gamma}$}, because {\small$\N^{\gamma_{\mathrm{K}}}\lesssim|\mathbb{I}_{\x,\mathrm{K}}|\lesssim|\mathbb{I}_{\x}|\lesssim\N$}, and {\small$\N^{\gamma_{\k-1}}\gtrsim\N^{1/2+\gamma}$} by construction in Notation \ref{notation:msblock}. Moreover, we also have {\small$\N^{\gamma_{\mathrm{K}-1}}\lesssim\N^{1/2+\gamma+\e}$}, again by Notation \ref{notation:msblock}. Thus, we instead have
\begin{align*}
\N^{\e}\N^{-2}|\mathbb{U}_{\mathrm{K},\ell}|^{-1}\|\boldsymbol{\mathcal{H}}_{\x,\cdot}\|_{\ell^{\infty}(\mathbb{I}_{\x,\mathrm{K}})}^{2}|\mathbb{I}_{\x,\mathrm{K}}|&\lesssim\N^{\e}\N^{-2}|\mathbb{U}_{\mathrm{K},\ell}|^{-1}\N^{-2+\mathrm{O}(\delta_{\mathbf{S}})}|\mathbb{I}_{\x,\mathrm{K}}|\\
&\lesssim\N^{\e+\mathrm{O}(\delta_{\mathbf{S}})}\cdot\N^{-4}\cdot(\N^{2}\N^{-2\gamma_{\mathrm{K}-1}}+\N^{\frac43}\N^{-\frac23\gamma_{\mathrm{K}-1}})\cdot\N^{\gamma_{\mathrm{K}}}\\
&\lesssim\N^{-\gamma+\frac12+\e+\mathrm{O}(\delta_{\mathbf{S}})}\cdot(\N^{-2-\gamma_{\mathrm{K}-1}}+\N^{-\frac83+\frac13\gamma_{\mathrm{K}-1}})\lesssim\N^{-2-\beta_{\mathrm{CLT}}},
\end{align*}
where now {\small$\beta_{\mathrm{CLT}}\gtrsim\gamma$} in the previous display. We now combine the previous three displays to deduce the desired estimate \eqref{eq:stochmain1I}.
\end{proof}
\subsection{Wrapping up the proofs}\label{subsection:proofs}
The goal is to now establish Propositions \ref{prop:case1} and \ref{prop:case2}. 
\subsubsection{Speed of propagation}
As discussed earlier in this section, if we restrict the {\small$\mathbf{j}^{\N}$}-process to a spatial block {\small$\mathbb{I}$} and a time-interval {\small$\mathbb{U}$}, then information can only propagate a {\small$\mathbb{U}$}-dependent length from {\small$\mathbb{I}$} (with high probability). To establish this precisely, we will couple {\small$\mathbf{j}^{\N}$} to a full-space version of \eqref{eq:currI}-\eqref{eq:currII}, introduced below.
\begin{definition}\label{definition:speed}
\fsp Fix an interval {\small$\mathbb{I}\subseteq\llbracket0,\infty\rrbracket$} and a time {\small$\mathfrak{t}\geq0$}. Suppose that for some (small) {\small$\e>0$}, we have
\begin{align}
\min\mathbb{I}\geq\N^{\e}+\N^{\e}\N\mathfrak{t}^{\frac12}+\N^{\e}\N^{2}\mathfrak{t}^{\frac32}
\end{align}
(The extra {\small$\N^{\e}+$} is technical.) We define {\small$\mathbf{j}^{\Z}$} as the solution to the following \abbr{SDE} for {\small$(\s,\y)\in[0,\infty)\times\Z$}:
\begin{align*}
\d\mathbf{j}^{\Z}_{\s,\y}=\N^{\frac32}\grad^{\mathbf{X}}_{1}\mathscr{U}'[\bphi^{\Z}_{\s,\y}]\d\s+\N\mathbf{F}[\tau_{\y}\bphi^{\Z}_{\s}]\d\s+\sqrt{2}\N^{\frac12}\d\mathbf{w}_{\s,\y}.
\end{align*}
Here, we used the notation {\small$\bphi^{\Z}=-\N^{1/2}\grad^{\mathbf{X}}_{-1}\mathbf{j}^{\Z}$}. The meaning of {\small$\mathbf{F}[\tau_{\y}\bphi^{\Z}_{\s}]$} is obtained by the formula \eqref{eq:nl}. The {\small$\mathbf{w}$}-terms are independent standard Brownian motions, whose relation to {\small$\mathbf{b}$}-terms in \eqref{eq:currI}-\eqref{eq:currII} will be determined. Initial data of {\small$\mathbf{j}^{\Z}$} at time {\small$\s=0$} is also determined by context.
\end{definition}
We will now state the main speed of propagation result as follows.
\begin{prop}\label{prop:speed}
\fsp Take the context of Definition \ref{definition:speed} and fix any {\small$\mathrm{D}>0$}. Consider a time-interval {\small$\mathbb{U}$} of finite-length {\small$|\mathbb{U}|=\mathfrak{t}\lesssim1$}. Suppose that the initial data {\small$\mathbf{j}^{\Z}$} at time {\small$\s=0$} is given as follows:
\begin{itemize}
\item We have {\small$\mathbf{j}^{\Z}_{0,\y}=\mathbf{j}^{\N}_{\inf\mathbb{U},\y}$} for every {\small$\y\in\llbracket0,\infty\rrbracket$}. This fixes {\small$\bphi^{\Z}_{0,\y}$} for all {\small$\y\in\llbracket1,\infty\rrbracket$}.
\item The law of {\small$(\bphi^{\Z}_{0,\y})_{\y\in\llbracket-\infty,0\rrbracket}$} is distributed via the projection of {\small$\mathbb{P}^{0}$} onto {\small$\R^{\llbracket-\infty,0\rrbracket}$} (independently of {\small$\mathbf{j}^{\N}_{0,\cdot}$}).
\end{itemize}
Also, suppose that {\small$\mathbf{w}_{\s,\y}=\mathbf{b}_{\s+\inf\mathbb{U},\y}$} for all {\small$\y\in\llbracket1,\infty\rrbracket$}. With probability {\small$1-\mathrm{O}_{\mathrm{D}}(\N^{-\mathrm{D}})$}, we have 
\begin{align}
\sup_{\s\in[0,\mathfrak{t}]}\max_{\y\in\mathbb{I}}|\mathbf{j}^{\Z}_{\s,\y}-\mathbf{j}^{\N}_{\s+\inf\mathbb{U},\y}|\lesssim_{\mathrm{D}}\N^{-\mathrm{D}}.\label{eq:speedI}
\end{align}
\end{prop}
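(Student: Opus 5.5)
We compare the two currents through their difference. Set $\mathbf{g}_{\s,\y}:=\mathbf{j}^{\Z}_{\s,\y}-\mathbf{j}^{\N}_{\s+\inf\mathbb{U},\y}$ for $\y\in\llbracket0,\infty\rrbracket$. Because the Brownian motions agree on $\llbracket1,\infty\rrbracket$, they cancel in the equation for $\mathbf{g}$ at every $\y\geq2\deg$, say, where both the half-space and full-space nonlinearities only see spins in $\llbracket1,\infty\rrbracket$. By the mean value theorem we write
\begin{align*}
\mathscr{U}'[\bphi^{\Z}_{\s,\y}]-\mathscr{U}'[\bphi_{\s+\inf\mathbb{U},\y}]=-\N^{\frac12}\mathscr{U}''[\bpsi_{\s,\y}]\grad^{\mathbf{X}}_{-1}\mathbf{g}_{\s,\y},
\end{align*}
and similarly for each monomial of $\mathbf{F}$, which is a polynomial in the $\mathscr{U}'$-variables. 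This shows that away from a neighborhood of the edge, $\mathbf{g}$ solves a linear, random-coefficient equation of the form
\begin{align*}
\partial_{\s}\mathbf{g}=\mathscr{P}\mathbf{g}:=-\N^{2}\grad^{\mathbf{X}}_{1}(\mathscr{U}''[\bpsi]\grad^{\mathbf{X}}_{-1}\mathbf{g})+\mathscr{D}\mathbf{g},
\end{align*}
as in Section \ref{subsection:method2}. Here $\mathscr{D}$ is a finite-range first-order operator of the form $\N^{3/2}\sum_{|\mathfrak{l}|\lesssim1}\mathbf{c}_{\s,\y,\mathfrak{l}}\grad^{\mathbf{X}}_{\mathfrak{l}}$. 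Its coefficients are polynomial in the $\bphi,\bphi^{\Z}$-spins within distance $\mathrm{O}(1)$, multiplied by bounded $\mathscr{U}''$-factors.

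We then localize on a high-probability event. First, Lemma \ref{lemma:phiap}, applied to both $\bphi$ and $\bphi^{\Z}$, gives $|\mathbf{c}|\lesssim\N^{\delta}$ on $[0,\mathfrak{t}]\times\llbracket0,\N^{\mathrm{D}'}\rrbracket$ with probability $1-\mathrm{O}(\exp(-\N^{c}))$. For $\bphi^{\Z}$ this uses that its initial law is absolutely continuous with respect to the full-space $\mathbb{P}^{0}$ with the same density bound, and that $\mathbb{P}^{0}$ on $\R^{\Z}$ is invariant for the full-space dynamics. Second, a crude Gronwall/moment bound shows $|\mathbf{g}|\lesssim\N^{\mathrm{C}}$ on that window; this controls the source terms in the boundary layer $\llbracket0,2\deg\rrbracket$ and at far distances. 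Duhamel's formula with the fundamental solution $\boldsymbol{\Gamma}$ of $\mathscr{P}$ on a large box then expresses $\mathbf{g}_{\s,\y}$, for $\y\in\mathbb{I}$, as a sum of kernel entries $\boldsymbol{\Gamma}_{\r,\s,\y,\z}$ with $\z$ in the boundary layer, times polynomially bounded sources. The initial data contributes nothing, since $\mathbf{g}_{0,\cdot}=0$ on $\llbracket0,\infty\rrbracket$. The mismatch at the outer boundary of the box is handled by taking the box of size $\N^{\mathrm{D}'}$.

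It therefore suffices to prove the off-diagonal bound
\begin{align*}
|\boldsymbol{\Gamma}_{\r,\s,\y,\z}|\lesssim\N^{\mathrm{C}}\exp\Big(\mathrm{C}\N^{1+2\delta}|\s-\r|-\tfrac{\upsilon|\y-\z|}{\N^{1+\delta}\max(\N^{-1},|\s-\r|^{1/2})}\Big).
\end{align*}
We then check that the hypothesis $\min\mathbb{I}\geq\N^{\e}+\N^{\e}\N\mathfrak{t}^{1/2}+\N^{\e}\N^{2}\mathfrak{t}^{3/2}$ makes the right side $\lesssim\exp(-\N^{\e/2})$.

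The main obstacle is proving this estimate via Davies' exponential perturbation combined with Nash's argument. Conjugate by $\mathrm{e}^{\pm\theta\y}$ and compute $\frac{\d}{\d\s}\|\mathrm{e}^{\theta\cdot}\mathbf{g}\|_{\ell^{2}}^{2}$. The divergence part contributes the coercive term $-\N^{2}\Lambda^{-1}\|\grad(\mathrm{e}^{\theta\cdot}\mathbf{g})\|^{2}$ plus $\mathrm{O}(\N^{2}\theta^{2})\|\cdot\|^{2}$. For the drift $\mathscr{D}$ we do not use Gronwall with rate $\N^{3/2}$. Instead we bound $\langle\mathrm{e}^{\theta\cdot}\mathbf{g},\mathrm{e}^{\theta\cdot}\mathscr{D}\mathbf{g}\rangle$ by Cauchy–Schwarz against the gradient term, which gives at most $\N^{3/2+\delta}\|\grad\|\,\|\cdot\|+\N^{3/2+\delta}\theta\|\cdot\|^{2}$. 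Young's inequality absorbs the first piece into the coercive term at a cost of $\N^{1+2\delta}\|\cdot\|^{2}$. This yields the $\ell^{2}\to\ell^{2}$ bound $\exp\big((\mathrm{C}\N^{2}\theta^{2}+\N^{3/2+\delta}\theta+\N^{1+2\delta})|\s-\r|\big)$. Nash's ultracontractivity step, via the discrete Nash inequality, then upgrades this to $\ell^{1}\to\ell^{\infty}$ with a polynomial loss, using the adjoint equation for the other half. Optimizing in $\theta\asymp|\y-\z|/(\N^{2}|\s-\r|)$ gives the diffusive term. The residual $\N^{3/2+\delta}\theta|\s-\r|$ is exactly where the $\N^{2}\mathfrak{t}^{3/2}$ transport scale in the hypothesis enters; for $\theta\lesssim\N^{-1}\mathfrak{t}^{-1/2}$ it is harmless because $\mathfrak{t}\lesssim\N^{-1}$ on the relevant scales.
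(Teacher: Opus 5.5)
Your proposal is correct and is essentially the paper's argument. You linearize $\mathbf{g}=\mathbf{j}^{\Z}-\mathbf{j}^{\N}$ into a divergence-form equation with an $\N^{3/2}$ transport term, control the coefficients and $\mathbf{g}$ on a high-probability event, and write $\mathbf{g}$ on $\mathbb{I}$ via Duhamel from sources near the edge. You then bound the kernel by Davies' exponential tilting, absorbing the drift into the Dirichlet form by Young's inequality; this is exactly Lemma \ref{lemma:nashl2} and Proposition \ref{prop:nashdavies}, with growth rate $\N^{1+\mathrm{O}(\e_{0})}$ rather than $\N^{3/2}$.

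Two small corrections:
\begin{enumerate}
\item On $\Z$ the pointwise kernel bound follows directly from the tilted $\ell^{2}\to\ell^{2}$ estimate, because $\|f\|_{\ell^{\infty}(\Z)}\leq\|f\|_{\ell^{2}(\Z)}$. A Nash $\ell^{1}\to\ell^{2}$ step would instead need $\ell^{1}$-control of the tilted, non-positivity-preserving evolution. That is why the paper iterates from $\ell^{2}$ upward in Davies' fashion rather than going through $\ell^{1}$.
\item With $\theta\asymp(\N\max(\N^{-1},|\s-\r|^{1/2}))^{-1}$, the residual $\N^{3/2+\delta}\theta|\s-\r|\lesssim1+\N^{1+2\delta}|\s-\r|$ is harmless for every $\mathfrak{t}\lesssim1$. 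So your assumption $\mathfrak{t}\lesssim\N^{-1}$ is not needed, and the proposition allows any $\mathfrak{t}\lesssim1$.
\end{enumerate}
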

The linearization of \eqref{eq:currI}-\eqref{eq:currII} is a divergence-form parabolic \abbr{PDE} with singular first-order transport operator (corresponding to the {\small$\mathbf{F}$}-nonlinearity therein). Thus, to establish Proposition \ref{prop:speed}, we will employ the following Nash-Davies-type parabolic estimate for this equation.
\begin{prop}\label{prop:nashdavies}
\fsp Let {\small$\boldsymbol{\Gamma}^{\mathrm{full}}_{\s,\t,\x,\y}$}, as a function of {\small$(\s,\t,\x,\y)\in[0,\infty)^{2}\times\Z^{2}$} restricted to {\small$\s\leq\t$}, satisfy the equation below for {\small$\t\geq\s$} and {\small$\x\in\Z$} (in which {\small$1\lesssim\mathbf{c}\lesssim1$} and {\small$|\mathbf{v}|\lesssim\N^{\e_{0}}$} uniformly for some small {\small$\e_{0}>0$}):
\begin{align}
\partial_{\t}\boldsymbol{\Gamma}^{\mathrm{full}}_{\s,\t,\x,\y}=-\N^{2}\grad^{\mathbf{X}}_{1}(\mathbf{c}_{\t,\x}\grad^{\mathbf{X}}_{-1}\boldsymbol{\Gamma}^{\mathrm{full}}_{\s,\t,\x,\y})+\sum_{|\w|\lesssim1}\N^{\frac32}\mathbf{v}^{\w}_{\t,\x}\grad^{\mathbf{X}}_{\w}\boldsymbol{\Gamma}^{\mathrm{full}}_{\s,\t,\x,\y} \quad\text{and}\quad \boldsymbol{\Gamma}^{\mathrm{full}}_{\s,\s,\x,\y}=\mathbf{1}_{\x=\y}. \label{eq:nashI}
\end{align}
All discrete gradients above act on {\small$\x$}. Now, fix {\small$(\s,\t,\x,\y)\in[0,\infty)^{2}\times\Z^{2}$} such that {\small$\s\leq\t$} and {\small$|\t-\s|\lesssim1$}. Moreover, set {\small$\delta_{\s,\t}:=\max(\N^{-2},|\t-\s|)$}. Then, there exists constants {\small$\mathrm{C},\upsilon>0$} (depending only on bounds for {\small$\mathbf{c},\mathbf{v}$}) so that 
\begin{align}
|\boldsymbol{\Gamma}^{\mathrm{full}}_{\s,\t,\x,\y}|\lesssim\mathrm{e}^{\mathrm{C}\N^{1+\mathrm{C}\e_{0}}|\t-\s|}\cdot\exp(-\tfrac{\upsilon|\x-\y|}{\N\delta_{\s,\t}^{1/2}}).\label{eq:nashII}
\end{align}
\end{prop}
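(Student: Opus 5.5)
We would prove \eqref{eq:nashII} by Davies' exponential perturbation method. The plan is an $\ell^{1}\to\ell^{2}$ and an $\ell^{2}\to\ell^{\infty}$ bound for a twisted evolution, combined through the semigroup (two-parameter evolution) property. Fix a direction $\psi_{\x}=\theta\x$ with $|\theta|\lesssim1$ to be optimized; by symmetry it suffices to treat one sign. Consider the conjugated kernel $\boldsymbol{\Gamma}^{\theta}_{\s,\t,\x,\y}:=\mathrm{e}^{\theta\x/\N^{\prime}}\boldsymbol{\Gamma}^{\mathrm{full}}_{\s,\t,\x,\y}\mathrm{e}^{-\theta\y/\N^{\prime}}$, with length unit $\N^{\prime}=\N\delta_{\s,\t}^{1/2}$, or more conveniently with a weight $\mathrm{e}^{\mu\x}$ with $\mu$ a free parameter. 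It satisfies a divergence-form equation with additional first-order terms: the commutator of $\mathrm{e}^{\mu\x}$ with $\N^{2}\grad^{\mathbf{X}}_{1}\mathbf{c}\grad^{\mathbf{X}}_{-1}$ gives terms of size $\N^{2}|\mathrm{e}^{\pm\mu}-1|$ times gradients, plus a zeroth-order term $\lesssim\N^{2}\mu^{2}$. We aim to show
\[
\|\boldsymbol{\Gamma}^{\mu}_{\s,\t}\|_{\ell^{1}\to\ell^{\infty}}\lesssim \N^{-1}\delta_{\s,\t}^{-1/2}\exp\big(\mathrm{C}(\N^{2}\mu^{2}+\N^{1+\mathrm{C}\e_{0}})|\t-\s|\big).
\]
Choosing $\mu=\upsilon'|\x-\y|/(\N^{2}\delta_{\s,\t})$ capped at $\mu\lesssim\N^{-1}\delta_{\s,\t}^{-1/2}$ then gives the Gaussian/exponential decay $\exp(-\upsilon|\x-\y|/(\N\delta^{1/2}_{\s,\t}))$. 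When $|\t-\s|\leq\N^{-2}$, everything is trivial by bounded-speed Gronwall at scale $\N^{2}$, so it suffices to consider $\delta_{\s,\t}=|\t-\s|$.

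The $\ell^{2}$ energy estimate is the key step. We differentiate $\|\mathsf{f}_{\t}\|_{\ell^{2}}^{2}$ for $\mathsf{f}_{\t}=\boldsymbol{\Gamma}^{\mu}_{\s,\t}\mathsf{f}_{\s}$. The diffusion yields $-2\N^{2}\sum\mathbf{c}|\grad\mathsf{f}|^{2}\leq-2\boldsymbol{\Lambda}^{-1}\N^{2}\|\grad\mathsf{f}\|^{2}$; this is the $\ell^{2}$-positivity replacing any maximum principle. The conjugation commutators are absorbed by Cauchy--Schwarz into half the dissipation plus $\mathrm{C}\N^{2}\mu^{2}\|\mathsf{f}\|^{2}$. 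The crucial point is the transport term $\N^{3/2}\mathbf{v}\grad\mathsf{f}$: naive bounds give growth $\N^{3/2}$. Instead, we bound $|\langle\mathsf{f},\N^{3/2}\mathbf{v}\grad\mathsf{f}\rangle|\leq\N^{3/2+\e_{0}}\|\mathsf{f}\|\|\grad\mathsf{f}\|\leq\tfrac12\boldsymbol{\Lambda}^{-1}\N^{2}\|\grad\mathsf{f}\|^{2}+\mathrm{C}\N^{1+2\e_{0}}\|\mathsf{f}\|^{2}$. This is exactly where the rate $\N^{1+\mathrm{C}\e_{0}}$ in the exponent comes from. Thus $\|\boldsymbol{\Gamma}^{\mu}\|_{\ell^{2}\to\ell^{2}}\leq\exp(\mathrm{C}(\N^{2}\mu^{2}+\N^{1+2\e_{0}})|\t-\s|)$.

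Next comes the Nash step for $\ell^{1}\to\ell^{2}$. We use the discrete Nash inequality $\|\mathsf{f}\|_{2}^{6}\lesssim\|\mathsf{f}\|_{1}^{4}\|\grad\mathsf{f}\|_{2}^{2}$ (for $\|\mathsf{f}\|_2\le\|\mathsf f\|_1$). We run the energy identity keeping a quarter of the dissipation, after multiplying $\mathsf{f}$ by $\mathrm{e}^{-\mathrm{C}'(\N^{2}\mu^{2}+\N^{1+2\e_0})(\t-\s)}$ to kill the growth. We also need to control $\|\mathsf{f}_{\t}\|_{1}$. That control comes from duality: the $\ell^{\infty}$ growth of the adjoint is handled by the same argument, or, more simply, one obtains $\|\mathsf{f}_{\t}\|_{1}$ growth $\lesssim\exp(\ldots)$ by an $\ell^{1}$ estimate of a weighted $\ell^{2}$ quantity as in Fabes--Stroock \cite{FS86}. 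The cleanest route is to follow \cite{FS86}: apply the Nash ODE $\tfrac{\d}{\d\t}\|\mathsf{f}\|_{2}^{2}\leq-c\N^{2}\|\mathsf{f}\|_{2}^{6}/\|\mathsf{f}\|_{1}^{4}$ to get $\|\boldsymbol{\Gamma}^{\mu}\|_{1\to2}\lesssim(\N^{2}|\t-\s|)^{-1/4}\mathrm{e}^{\ldots}$. The adjoint equation (diffusion plus transport of $-\grad^{\ast}(\mathbf{v}\,\cdot)$, still divergence-form with the same bounds, conjugated by $\mathrm{e}^{-\mu\x}$) gives $\|\boldsymbol{\Gamma}^{\mu}\|_{2\to\infty}$ with the same bound. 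Composing at the midpoint time gives the $\ell^{1}\to\ell^{\infty}$ bound $\lesssim(\N^{2}|\t-\s|)^{-1/2}\mathrm{e}^{\mathrm{C}(\N^{2}\mu^{2}+\N^{1+\mathrm{C}\e_0})|\t-\s|}$. This is even better than \eqref{eq:nashII} by the prefactor.

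The main obstacle is the $\ell^{1}$ control needed in the Nash ODE. The $\ell^{1}$ norm is not conserved, since there is no positivity and no conservation under the non-divergence transport. We would try first to bypass it as in \cite{FS86}/Carlen--Kusuoka--Stroock: replace $\|\mathsf{f}\|_{1}$ by a bound obtained from $\ell^{1}$-duality with the weighted $\ell^{\infty}$ estimate, via an iteration in $\ell^{p}$ norms $p=2^{k}$ (Moser/Nash interpolation). Each doubling costs a factor $\exp(\mathrm{C}p\,\N^{1+2\e_0}|\t-\s|)$ from the transport term by the same Cauchy--Schwarz absorption. Only finitely many steps are needed after interpolation, so the exponent stays $\mathrm{C}\N^{1+\mathrm{C}\e_0}|\t-\s|$.
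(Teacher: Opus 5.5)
Your proposal is correct and runs on the same engine as the paper: Davies' exponential conjugation, an $\ell^{2}$ energy estimate in which the $\N^{3/2}$ transport term is absorbed into the $\N^{2}$ dissipation by Cauchy--Schwarz (the source of the rate $\N^{1+2\e_{0}}$, exactly as in \eqref{eq:nashl2I4}), an $\ell^{2p}$ Nash-type bootstrap, and an exponential weight of slope $\asymp(\N\delta_{\s,\t}^{1/2})^{-1}$.

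The differences are in bookkeeping and ambition. You evolve forward in $\x$ and aim for the full $\ell^{1}\to\ell^{\infty}$ bound with prefactor $(\N^{2}|\t-\s|)^{-1/2}$, composing a $1\to2$ bound with an adjoint $2\to\infty$ bound. The paper instead evolves only the adjoint equation in $\y$ (Lemma \ref{lemma:nashl2}), bootstraps rows from $\ell^{2}$ to $\ell^{\infty}$, and then discards the resulting factor $\N^{-1/2}\t^{-1/4}$.

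As you first phrase it, your Nash step is circular. $\ell^{1}$ control of the forward evolution is the same as $\ell^{\infty}$ control of the adjoint, and nothing supplies it: there is no positivity or mass conservation, and Gronwall in $\ell^{1}$ only gives the rate $\N^{3/2+\e_{0}}$ (cf. Remark \ref{remark:nashII}). What breaks the circle is the $\ell^{2p}$ iteration you fall back on, started from $\ell^{2}$ where the energy estimate gives control. That is precisely the paper's route.

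More to the point, the step you call the main obstacle is not needed for \eqref{eq:nashII}, which carries no on-diagonal factor (Remark \ref{remark:nash}). Take $\mathsf{f}_{\s}=\mathbf{1}_{\cdot=\y}$ in your energy estimate and use $|g(\x)|\leq\|g\|_{\ell^{2}(\Z)}$. This gives $|\boldsymbol{\Gamma}^{\mathrm{full}}_{\s,\t,\x,\y}|\leq\mathrm{e}^{-\mu(\x-\y)}\exp(\mathrm{C}(\N^{2}\mu^{2}+\N^{1+2\e_{0}})|\t-\s|)$. Now choose $\mu=\upsilon\,\mathrm{sgn}(\x-\y)/(\N\delta_{\s,\t}^{1/2})$, which satisfies $|\mu|\leq\upsilon$ because $\delta_{\s,\t}\geq\N^{-2}$. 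This yields \eqref{eq:nashII} at once, including the regime $|\t-\s|\leq\N^{-2}$. Likewise, in the paper, Lemma \ref{lemma:nashl2} together with this embedding already suffices.

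Two small points in your energy estimate:
\begin{itemize}
\item Conjugating the transport term also produces a zeroth-order term of size $\N^{3/2+\e_{0}}|\mu|$ (the paper's $\N^{3/2+\e_{0}}\Gamma(\bpsi)$). It is harmless, since $\N^{3/2+\e_{0}}|\mu|\leq\N^{2}\mu^{2}+\N^{1+2\e_{0}}$.
\item The linear weight should be truncated to a bounded one, with bounds uniform in the truncation, to justify the energy identity.
\end{itemize}
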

\begin{remark}
\fsp As noted at the end of Section \ref{subsection:method2}, because Proposition \ref{prop:nashdavies} plays a crucial role in the analysis of this section, we will provide a very detailed analysis which does not use anything special about the lattice in the spirit of universality.
\end{remark}
\begin{remark}\label{remark:nash}
\fsp We point out that the \abbr{RHS} of \eqref{eq:nashII} is ``missing" a factor of {\small$\N^{-1}\delta_{\s,\t}^{-1/2}$}, in that the estimate would perhaps remains true even with include this factor on the \abbr{RHS} above. Showing this seems to be very much within reach after some minor cosmetic adjustments to the proof of \eqref{eq:nashII} given below, though this factor will not be of much use for our purposes. 
\end{remark}
\begin{remark}\label{remark:nashII}
\fsp We also emphasize that the exponent on the \abbr{RHS} in the first factor is {\small$\lesssim\N^{1+\mathrm{O}(\e_{0})}|\t-\s|$}; in particular, it is \emph{not} {\small$\lesssim\N^{3/2+\e_{0}}|\t-\s|$}, which is perhaps a natural guess given the size of the coefficients {\small$\N^{3/2}\mathbf{v}^{\w}$} in \eqref{eq:nashI} and the Gronwall inequality. Thus, it is crucial that we use the gradient structure of the first-order operator in \eqref{eq:nashI}.
\end{remark}
The first step is to handle the case {\small$|\t-\s|\lesssim\N^{-2}$}, where the \abbr{RHS} of the bound \eqref{eq:nashII} has the form {\small$\lesssim\exp[-\upsilon|\x-\y|]$}. In this case, because the first-order operator in \eqref{eq:nashI} has strength {\small$\lesssim\N^{3/2+\e_{0}}\ll\N^{2}$}, a standard Duhamel-type argument (along with the estimate \eqref{eq:nashII} in the case {\small$\mathbf{v}^{\w}\equiv0$}) suffices.
\begin{lemma}\label{lemma:nashst}
\fsp Proposition \ref{prop:nashdavies} holds under the assumption that {\small$|\t-\s|\lesssim\N^{-2}$}.
\end{lemma}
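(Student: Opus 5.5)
The plan is to rescale time to the microscopic scale and then use the fact that, on that scale, the generator in \eqref{eq:nashI} is a \emph{bounded} finite-range operator, even after conjugation by an exponential weight. Once $|\t-\s|\lesssim\N^{-2}$ we have $\delta_{\s,\t}=\N^{-2}$, so the right-hand side of \eqref{eq:nashII} becomes $\mathrm{e}^{\mathrm{C}\N^{1+\mathrm{C}\e_{0}}|\t-\s|}\exp(-\upsilon|\x-\y|)$. The first factor is $\gtrsim1$ (indeed $\asymp1$, since $\N^{1+\mathrm{C}\e_{0}}\N^{-2}\lesssim1$ for small $\e_{0}$). It therefore suffices to show
\begin{align*}
|\boldsymbol{\Gamma}^{\mathrm{full}}_{\s,\t,\x,\y}|\lesssim\exp(-\upsilon|\x-\y|)
\end{align*}
for some fixed $\upsilon>0$, uniformly over $0\leq\t-\s\lesssim\N^{-2}$.

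Fix $(\s,\y)$ and set $\tau:=\N^{2}(\t-\s)\in[0,\mathrm{O}(1)]$, writing $\mathbf{g}_{\tau,\x}:=\boldsymbol{\Gamma}^{\mathrm{full}}_{\s,\s+\N^{-2}\tau,\x,\y}$. By \eqref{eq:nashI},
\begin{align*}
\partial_{\tau}\mathbf{g}_{\tau,\x}=\mathscr{L}_{\tau}\mathbf{g}_{\tau,\x}:=-\grad^{\mathbf{X}}_{1}(\mathbf{c}\grad^{\mathbf{X}}_{-1}\mathbf{g}_{\tau,\cdot})_{\x}+\sum_{|\w|\lesssim1}\N^{-\frac12}\mathbf{v}^{\w}\grad^{\mathbf{X}}_{\w}\mathbf{g}_{\tau,\x},
\end{align*}
with $\mathbf{g}_{0,\x}=\mathbf{1}_{\x=\y}$. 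The key observation is that $|\N^{-1/2}\mathbf{v}^{\w}|\lesssim\N^{\e_{0}-1/2}\lesssim1$. Hence $\mathscr{L}_{\tau}$ is an operator whose matrix entries $\mathscr{L}_{\tau}(\x,\x')$ vanish unless $|\x-\x'|\leq\mathrm{R}$ for a fixed $\mathrm{R}=\mathrm{O}(1)$, and satisfy $|\mathscr{L}_{\tau}(\x,\x')|\lesssim1$ uniformly in $\N$ and $\tau$. In particular the first-order term is now a small perturbation rather than a singular one.

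Next conjugate by the exponential weight: define $\mathbf{G}_{\tau,\x}:=\mathrm{e}^{\upsilon|\x-\y|}\mathbf{g}_{\tau,\x}$ with, say, $\upsilon=1$. Then $\partial_{\tau}\mathbf{G}=\mathscr{L}^{\upsilon}_{\tau}\mathbf{G}$, where
\begin{align*}
\mathscr{L}^{\upsilon}_{\tau}(\x,\x')=\mathrm{e}^{\upsilon|\x-\y|-\upsilon|\x'-\y|}\mathscr{L}_{\tau}(\x,\x').
\end{align*}
By the triangle inequality the prefactor is at most $\mathrm{e}^{\upsilon\mathrm{R}}$ on the support of $\mathscr{L}_{\tau}$. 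So each row of $\mathscr{L}^{\upsilon}_{\tau}$ has $\lesssim1$ entries, each bounded by $\lesssim\mathrm{e}^{\upsilon\mathrm{R}}$, and the operator norm on $\ell^{\infty}(\Z)$ is bounded by a constant $\mathrm{M}$ uniformly in $\N$ and $\tau$. Since $\mathscr{L}^{\upsilon}_{\tau}$ is bounded, the linear equation is well-posed in $\ell^{\infty}$ by Picard iteration. This solution agrees with the (weighted) fundamental solution, because the time-ordered exponential series for $\mathbf{g}$ converges absolutely in weighted $\ell^{\infty}$ and so gives the unique solution with $\mathbf{g}_{0}=\mathbf{1}_{\cdot=\y}$. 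Gronwall then yields
\begin{align*}
\|\mathbf{G}_{\tau}\|_{\ell^{\infty}}\leq\mathrm{e}^{\mathrm{M}\tau}\|\mathbf{G}_{0}\|_{\ell^{\infty}}=\mathrm{e}^{\mathrm{M}\tau}\lesssim1.
\end{align*}
Undoing the weight gives $|\boldsymbol{\Gamma}^{\mathrm{full}}_{\s,\t,\x,\y}|\lesssim\mathrm{e}^{-\upsilon|\x-\y|}$, which is the claim.

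The only point requiring care is the uniqueness/identification just mentioned: one must check that $\boldsymbol{\Gamma}^{\mathrm{full}}$, defined through \eqref{eq:nashI}, is the same object as the Picard solution in the weighted space. I expect this to be the main (mild) obstacle, but it is immediate from boundedness of $\mathscr{L}_{\tau}$. Notably, no appeal to the $\mathbf{v}\equiv0$ Nash estimate is needed in this short-time regime.
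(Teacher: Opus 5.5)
Your proof is correct, but it takes a different route from the paper. The paper writes a Duhamel formula for $\boldsymbol{\Gamma}^{\mathrm{full}}$ around the kernel $\boldsymbol{\Gamma}^{\mathrm{full},\mathbf{c}}$ of the pure divergence-form operator (the case $\mathbf{v}\equiv0$). It imports the bound \eqref{eq:nashII} for $\boldsymbol{\Gamma}^{\mathrm{full},\mathbf{c}}$ from the literature. It then runs Gronwall for $\sup_{\x}\mathrm{e}^{\mu|\x-\y|}|\boldsymbol{\Gamma}^{\mathrm{full}}_{0,\t,\x,\y}|$ with rate $\N^{3/2+\e_{0}}$, so only the first-order term is treated perturbatively.

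You instead observe that, after rescaling time by $\N^{2}$, the entire generator (diffusion included) is a finite-range matrix with $\mathrm{O}(1)$ entries. Conjugating by $\mathrm{e}^{\upsilon|\x-\y|}$ then costs only a factor $\mathrm{e}^{\upsilon\mathrm{R}}$, and Gronwall closes with an $\mathrm{O}(1)$ rate over an $\mathrm{O}(1)$ rescaled time.

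Your version buys two things. It is fully self-contained, since no external estimate for the $\mathbf{v}\equiv0$ kernel is needed. It also gives decay $\mathrm{e}^{-\upsilon|\x-\y|}$ for any fixed $\upsilon$. In the paper, by contrast, the weight $\mu$ must sit below the decay rate of $\boldsymbol{\Gamma}^{\mathrm{full},\mathbf{c}}$ so that $\sum_{\z}\mathrm{e}^{\mu|\x-\z|}|\boldsymbol{\Gamma}^{\mathrm{full},\mathbf{c}}_{\s,\t,\x,\z}|\lesssim1$. The price is that your Gronwall exponent carries the full diffusion rate $\N^{2}|\t-\s|$ rather than only the drift rate $\N^{3/2+\e_{0}}|\t-\s|$. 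That is harmless precisely in the regime $|\t-\s|\lesssim\N^{-2}$ of the lemma.

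Your handling of the identification issue is adequate: Picard iteration in weighted $\ell^{\infty}$, then uniqueness in unweighted $\ell^{\infty}$ because the operator is bounded there too. This is the same implicit uniqueness the paper relies on for its Duhamel formula.

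One cosmetic point: for $|\t-\s|\lesssim\N^{-2}$ you only have $\delta_{\s,\t}\asymp\N^{-2}$, not equality. Since $\N\delta_{\s,\t}^{1/2}\geq1$ always, your reduction to $|\boldsymbol{\Gamma}^{\mathrm{full}}_{\s,\t,\x,\y}|\lesssim\mathrm{e}^{-\upsilon|\x-\y|}$ is unaffected.
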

\begin{proof}
We assume that {\small$\s=0$} without loss of generality, since we can always shift the coefficients in time without changing the validity of the assumed bounds. We also let {\small$\boldsymbol{\Gamma}^{\mathrm{full},\mathbf{c}}_{\s,\t,\x,\y}$} denote the solution to
\begin{align}
\partial_{\t}\boldsymbol{\Gamma}^{\mathrm{full},\mathbf{c}}_{\s,\t,\x,\y}=-\N^{2}\grad^{\mathbf{X}}_{1}(\mathbf{c}_{\t,\x}\grad^{\mathbf{X}}_{-1}\boldsymbol{\Gamma}^{\mathrm{full},\mathbf{c}}_{\s,\t,\x,\y}) \quad\text{and}\quad \boldsymbol{\Gamma}^{\mathrm{full},\mathbf{c}}_{\s,\s,\x,\y}=\mathbf{1}_{\x=\y}, \nonumber
\end{align}
in which all operators act on {\small$\x$} on the \abbr{RHS}, and in which {\small$\t\geq\s$}. By Proposition B.3 in \cite{GOS01}, we know that {\small$\boldsymbol{\Gamma}^{\mathrm{full},\mathbf{c}}$} satisfies the estimate \eqref{eq:nashII}. Moreover, by the Duhamel formula, we also have the formula
\begin{align*}
\boldsymbol{\Gamma}^{\mathrm{full}}_{0,\t,\x,\y}=\boldsymbol{\Gamma}^{\mathrm{full},\mathbf{c}}_{0,\t,\x,\y}+\int_{0}^{\t}\sum_{\z\in\Z}\boldsymbol{\Gamma}^{\mathrm{full},\mathbf{c}}_{\s,\t,\x,\z}\cdot\sum_{|\w|\lesssim1}\N^{\frac32}\mathbf{v}^{\w}_{\s,\z}\grad^{\mathbf{X}}_{\w}\boldsymbol{\Gamma}^{\mathrm{full}}_{0,\s,\z,\y}\d\s.
\end{align*}
We let {\small$\mu>0$} be small but fixed. We also observe that {\small$\exp[\mu|\x-\y|]\lesssim\exp[\mu|\x-\z|]\exp[\mu|(\z+\w)-\y|]$} for any {\small$|\w|\lesssim1$}. Thus, we have
\begin{align*}
\mathrm{e}^{\mu|\x-\y|}|\boldsymbol{\Gamma}^{\mathrm{full}}_{0,\t,\x,\y}|&\lesssim1+\int_{0}^{\t}\sum_{\z\in\Z}\mathrm{e}^{\upsilon|\x-\z|}|\boldsymbol{\Gamma}^{\mathrm{full},\mathbf{c}}_{\s,\t,\x,\z}|\cdot\N^{\frac32+\e_{0}}\|\mathrm{e}^{\mu|\cdot-\y|}\boldsymbol{\Gamma}^{\mathrm{full}}_{0,\s,\cdot,\y}\|_{\ell^{\infty}(\Z)}\d\s\\
&\lesssim1+\int_{0}^{\t}\sum_{\z\in\Z}\mathrm{e}^{-\frac12\mu|\x-\z|}\cdot\N^{\frac32+\e_{0}}\|\mathrm{e}^{\upsilon|\cdot-\y|}\boldsymbol{\Gamma}^{\mathrm{full}}_{0,\s,\cdot,\y}\|_{\ell^{\infty}(\Z)}\d\s\\
&\lesssim1+\int_{0}^{\t}\N^{\frac32+\e_{0}}\|\mathrm{e}^{\upsilon|\cdot-\y|}\boldsymbol{\Gamma}^{\mathrm{full}}_{0,\s,\cdot,\y}\|_{\ell^{\infty}(\Z)}\d\s.
\end{align*}
Observe that the last line of this display does not depend on the {\small$\x$}-variable on the \abbr{LHS} of the first line therein. So, we may replace the first term in the previous display by its supremum over {\small$\x\in\Z$}, at which point we can use the Gronwall inequality to deduce that {\small$\|\exp[\mu|\cdot-\y|]\boldsymbol{\Gamma}^{\mathrm{full}}_{0,\t,\cdot,\y}\|_{\ell^{\infty}(\Z)}\lesssim\exp[\mathrm{O}(\N^{3/2+\e_{0}}\t)]\lesssim1$} since {\small$\t\lesssim\N^{-2}$}.
\end{proof}
We now state a preliminary {\small$\ell^{2}$}-estimate for {\small$\boldsymbol{\Gamma}^{\mathrm{full}}$}. 
\begin{lemma}\label{lemma:nashl2}
\fsp Retain the setting of Proposition \ref{prop:nashdavies}, and define 
\begin{align*}
\boldsymbol{\Gamma}^{\mathrm{full},\bpsi}_{\s,\t,\x,\y}:=\mathrm{e}^{-\bpsi(\x)}\boldsymbol{\Gamma}^{\mathrm{full}}_{\s,\t,\x,\y}\mathrm{e}^{\bpsi(\y)},
\end{align*}
where {\small$\bpsi:\Z\to\R$} is a uniformly bounded function satisfying {\small$\|\grad^{\mathbf{X}}_{1}\bpsi\|_{\ell^{\infty}(\Z)}\lesssim1$}. Then, we have 
\begin{align}
\|\boldsymbol{\Gamma}^{\mathrm{full},\bpsi}_{\s,\t,\x,\cdot}\|_{\ell^{2}(\Z)}^{2}\lesssim\mathrm{e}^{\mathrm{O}(\N^{1+\e_{0}})|\t-\s|}\mathrm{e}^{\mathrm{O}(\N^{2}\Gamma(\bpsi)^{2})|\t-\s|}\mathrm{e}^{\mathrm{O}(\N^{\frac32+\e_{0}}\Gamma(\bpsi))|\t-\s|},\label{eq:nashl2I}
\end{align}
where {\small${\Gamma}(\bpsi)^{2}\lesssim\|\grad^{\mathbf{X}}_{1}\bpsi\|_{\ell^{\infty}(\Z)}^{2}$}.
\end{lemma}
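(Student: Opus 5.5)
The plan is Davies' exponential-perturbation method, run as a pure $\ell^{2}$ energy estimate. I would bound the $\ell^{2}\to\ell^{2}$ operator norm of the twisted propagator $\mathbf{P}^{\bpsi}_{\s,\t}$, whose kernel is $\boldsymbol{\Gamma}^{\mathrm{full},\bpsi}_{\s,\t,\x,\y}$. The row norm $\|\boldsymbol{\Gamma}^{\mathrm{full},\bpsi}_{\s,\t,\x,\cdot}\|_{\ell^{2}(\Z)}$ is the norm of $(\mathbf{P}^{\bpsi}_{\s,\t})^{*}$ applied to $\mathbf{1}_{\cdot=\x}$. Hence it is at most $\|(\mathbf{P}^{\bpsi}_{\s,\t})^{*}\|_{\ell^{2}\to\ell^{2}}=\|\mathbf{P}^{\bpsi}_{\s,\t}\|_{\ell^{2}\to\ell^{2}}$. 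So it suffices to show that the operator norm squared is bounded by the right-hand side of \eqref{eq:nashl2I}.

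\emph{Setup.} Fix finitely supported $\mathsf{f}$. Let $\mathsf{v}_{\t}$ solve \eqref{eq:nashI} in $\x$ with data $\mathrm{e}^{\bpsi}\mathsf{f}$ at time $\s$, and set $\mathsf{u}_{\t}:=\mathrm{e}^{-\bpsi}\mathsf{v}_{\t}=\mathbf{P}^{\bpsi}_{\s,\t}\mathsf{f}$. Since $\bpsi$ is bounded, $\mathsf{u}\in\ell^{2}$ and the computations below are justified. I would differentiate
\begin{align*}
\tfrac{\d}{\d\t}\|\mathsf{u}_{\t}\|_{\ell^{2}}^{2}=2\langle\mathrm{e}^{-2\bpsi}\mathsf{v}_{\t},\partial_{\t}\mathsf{v}_{\t}\rangle
\end{align*}
and treat the two pieces of the generator separately.

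\emph{Second-order term.} Summation by parts on $\Z$ (the adjoint of $\grad^{\mathbf{X}}_{1}$ is $\grad^{\mathbf{X}}_{-1}$, as in Lemma \ref{lemma:sbp} without boundary) turns this term into
\begin{align*}
-2\N^{2}\langle\grad^{\mathbf{X}}_{-1}(\mathrm{e}^{-\bpsi}\mathsf{u}),\,\mathbf{c}\,\grad^{\mathbf{X}}_{-1}(\mathrm{e}^{\bpsi}\mathsf{u})\rangle .
\end{align*}
Expanding with the discrete Leibniz rule gives a leading piece $-2\N^{2}\sum\mathbf{c}\,\mathrm{e}^{\pm(\text{increment of }\bpsi)}|\grad^{\mathbf{X}}_{-1}\mathsf{u}|^{2}$. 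Since $\mathbf{c}\gtrsim1$ and the increments of $\bpsi$ are bounded, this piece is $\leq-\mathrm{c}_{1}\N^{2}\|\grad^{\mathbf{X}}_{-1}\mathsf{u}\|^{2}$. The cross terms are $\lesssim\N^{2}\Gamma(\bpsi)\|\mathsf{u}\|\|\grad^{\mathbf{X}}_{-1}\mathsf{u}\|$, and the remaining terms are $\lesssim\N^{2}\Gamma(\bpsi)^{2}\|\mathsf{u}\|^{2}$. Here I take $\Gamma(\bpsi):=\|\grad^{\mathbf{X}}_{1}\bpsi\|_{\ell^{\infty}}\mathrm{e}^{\|\grad^{\mathbf{X}}_{1}\bpsi\|_{\ell^{\infty}}}$, which controls $|\mathrm{e}^{\pm\grad\bpsi}-1|$ and gives $\Gamma(\bpsi)^{2}\lesssim\|\grad^{\mathbf{X}}_{1}\bpsi\|_{\ell^{\infty}}^{2}$ under the standing bound on $\grad^{\mathbf{X}}_{1}\bpsi$. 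Young's inequality absorbs the cross terms into a quarter of the dissipation, at the cost of $\mathrm{O}(\N^{2}\Gamma(\bpsi)^{2})\|\mathsf{u}\|^{2}$.

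\emph{First-order term.} This term equals
\begin{align*}
2\N^{\frac32}\sum_{\x}\mathbf{v}^{\w}_{\t,\x}\,\mathsf{u}_{\x}\,\mathrm{e}^{-\bpsi(\x)}\grad^{\mathbf{X}}_{\w}(\mathrm{e}^{\bpsi}\mathsf{u})_{\x}.
\end{align*}
I would write $\mathrm{e}^{-\bpsi(\x)}\grad^{\mathbf{X}}_{\w}(\mathrm{e}^{\bpsi}\mathsf{u})_{\x}=\mathrm{e}^{\bpsi(\x+\w)-\bpsi(\x)}\grad^{\mathbf{X}}_{\w}\mathsf{u}_{\x}+(\mathrm{e}^{\bpsi(\x+\w)-\bpsi(\x)}-1)\mathsf{u}_{\x}$. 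The second piece contributes $\mathrm{O}(\N^{3/2+\e_{0}}\Gamma(\bpsi))\|\mathsf{u}\|^{2}$, since $|\w|\lesssim1$. For the first piece, I telescope $\grad^{\mathbf{X}}_{\w}$ into $\mathrm{O}(1)$ unit gradients, so it is bounded by $\N^{3/2+\e_{0}}\|\mathsf{u}\|\|\grad^{\mathbf{X}}_{-1}\mathsf{u}\|$. Young's inequality then gives
\begin{align*}
\N^{\frac32+\e_{0}}\|\mathsf{u}\|\|\grad^{\mathbf{X}}_{-1}\mathsf{u}\|\leq\tfrac14\mathrm{c}_{1}\N^{2}\|\grad^{\mathbf{X}}_{-1}\mathsf{u}\|^{2}+\mathrm{O}(\N^{1+2\e_{0}})\|\mathsf{u}\|^{2}.
\end{align*}
This absorption is the crux. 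It is what yields the rate $\N^{1+\mathrm{O}(\e_{0})}$ instead of $\N^{3/2+\e_{0}}$, as stressed in Remark \ref{remark:nashII}, and it uses only the gradient structure of the first-order term together with the uniform ellipticity of $\mathbf{c}$.

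\emph{Conclusion.} Combining the two pieces, the dissipation is discarded and Gronwall's inequality bounds $\|\mathsf{u}_{\t}\|^{2}$ by
\begin{align*}
\exp\big[\mathrm{O}\big(\N^{1+\mathrm{O}(\e_{0})}+\N^{2}\Gamma(\bpsi)^{2}+\N^{\frac32+\e_{0}}\Gamma(\bpsi)\big)|\t-\s|\big]\,\|\mathsf{f}\|^{2},
\end{align*}
which is \eqref{eq:nashl2I} after renaming $\e_{0}$. The main obstacle is the bookkeeping in the Leibniz expansions. One must check that every term containing a gradient of $\mathsf{u}$ appears at most linearly, with a coefficient of size $\N^{2}\Gamma(\bpsi)$ or $\N^{3/2+\e_{0}}$, so that Young's inequality applies. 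One must also check that the lower bound on $\mathbf{c}$ survives the $\mathrm{e}^{\pm\grad\bpsi}$ twist, which holds because those factors are bounded above and below.
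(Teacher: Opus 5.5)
Your proposal is correct and is essentially the paper's argument: the same Davies-twisted $\ell^{2}$ energy estimate, with dissipation from the ellipticity of $\mathbf{c}$, Leibniz errors costing $\N^{2}\Gamma(\bpsi)^{2}$ and $\N^{\frac32+\e_{0}}\Gamma(\bpsi)$, the key Young absorption of the $\N^{\frac32+\e_{0}}$ transport term into the $\N^{2}$ dissipation (giving the rate $\N^{1+2\e_{0}}$), and then Gronwall. The only difference is cosmetic: you bound the operator norm of the forward twisted propagator and pass to rows by duality, whereas the paper differentiates the row norm directly in the backward variable $\s$ using the adjoint equation \eqref{eq:nashl2I0}, and this produces the identical quadratic form.
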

\begin{proof}
Without loss of generality, we derive \eqref{eq:nashl2I} with {\small$\s=0$}; the assumptions on {\small$\mathbf{c},\mathbf{v}$}-coefficients are preserved under time shifts, so the general case follows as well. (However, we will use the time-variable {\small$\s$} as a parameter in{\small$[0,\t]$}.) It will be convenient to use the notation
\begin{align*}
\mathscr{S}_{\t,\x}:=-\N^{2}\grad^{\mathbf{X}}_{1}(\mathbf{c}_{\t,\x}\grad^{\mathbf{X}}_{-1}) \quad\text{and}\quad \mathscr{D}_{\t,\x}:=\sum_{|\w|\lesssim1}\N^{\frac32}\mathbf{v}^{\w}_{\t,\x}\grad^{\mathbf{X}}_{\w}.
\end{align*}
Lastly, we will require a formula for the evolution {\small$\partial_{-\s}\boldsymbol{\Gamma}^{\mathrm{full}}_{\s,\t,\x,\y}$}, for reasons explained in Remark \ref{remark:nashbackwards}. We note that \eqref{eq:nashI} is an equation in the forwards time-variable and backwards spatial-variable. So, the evolution {\small$\partial_{-\s}\boldsymbol{\Gamma}^{\mathrm{full}}_{\s,\t,\x,\y}$} is given by the adjoint of the operator on the \abbr{RHS} of the \abbr{PDE} in \eqref{eq:nashI} acting instead on the {\small$\y$}-variable; see \cite{FS86}, for instance (it is also the standard duality between Kolmogorov forwards- and backwards-equations). So,
\begin{align}
\partial_{-\s}\boldsymbol{\Gamma}^{\mathrm{full}}_{\s,\t,\x,\y}=\mathscr{S}_{\s,\y}\boldsymbol{\Gamma}^{\mathrm{full}}_{\s,\t,\x,\y}+\mathscr{D}^{\ast}_{\s,\y}\boldsymbol{\Gamma}^{\mathrm{full}}_{\s,\t,\x,\y}.\label{eq:nashl2I0}
\end{align}
The adjoint is taken with respect to Lebesgue measure on {\small$\Z$}, and we note that {\small$\mathscr{S}_{\s,\y}$} is self-adjoint. We also clarify that the need for {\small$\partial_{-\s}$} instead of {\small$\partial_{\s}$} is because we consider backwards-in-time evolution. We record the identity {\small$\partial_{-\s}(\boldsymbol{\Gamma}^{\mathrm{full},\bpsi}_{\s,\t,\x,\y})^{2}=2\boldsymbol{\Gamma}^{\mathrm{full},\bpsi}_{\s,\t,\x,\y}\partial_{-\s}\boldsymbol{\Gamma}^{\mathrm{full},\bpsi}_{\s,\t,\x,\y}$} and use this (along with the \abbr{PDE} above and the formula for {\small$\boldsymbol{\Gamma}^{\mathrm{full},\bpsi}$}):
\begin{align}
\tfrac{\d}{\d(-\s)}\|\boldsymbol{\Gamma}^{\mathrm{full},\bpsi}_{\s,\t,\x,\cdot}\|_{\ell^{2}(\Z)}^{2}=2\sum_{\y\in\Z}\boldsymbol{\Gamma}^{\mathrm{full},\bpsi}_{\s,\t,\x,\y}\cdot\mathrm{e}^{\bpsi(\y)}\mathscr{S}_{\s,\y}\boldsymbol{\Gamma}^{\mathrm{full}}_{\s,\t,\x,\y}\mathrm{e}^{-\bpsi(\x)}+2\sum_{\y\in\Z}\boldsymbol{\Gamma}^{\mathrm{full},\bpsi}_{\s,\t,\x,\y}\cdot\mathrm{e}^{\bpsi(\y)}\mathscr{D}_{\s,\y}^{\ast}\boldsymbol{\Gamma}^{\mathrm{full}}_{\s,\t,\x,\y}\mathrm{e}^{-\bpsi(\x)}.\label{eq:nashl2I1}
\end{align}
As noted in the proof of Proposition B.3 in \cite{GOS01} (and with a heuristic given below), the {\small$\mathscr{S}$} quantity satisfies
\begin{align}
2\sum_{\y\in\Z}\boldsymbol{\Gamma}^{\mathrm{full},\bpsi}_{\s,\t,\x,\y}\cdot\mathrm{e}^{\bpsi(\y)}\mathscr{S}_{\s,\y}\boldsymbol{\Gamma}^{\mathrm{full}}_{\s,\t,\x,\y}\mathrm{e}^{-\bpsi(\x)}\leq-\mathrm{c}_{1}\N^{2}\|\grad^{\mathbf{X}}_{1}\boldsymbol{\Gamma}^{\mathrm{full},\bpsi}_{\s,\t,\x,\cdot}\|_{\ell^{2}(\Z)}^{2}+\mathrm{c}_{2}\N^{2}\Gamma(\bpsi)^{2}\|\boldsymbol{\Gamma}^{\mathrm{full},\bpsi}_{\s,\t,\x,\cdot}\|_{\ell^{2}(\Z)}^{2},\label{eq:nashl2I2}
\end{align}
with {\small$\Gamma(\bpsi)^{2}\lesssim\|\grad^{\mathbf{X}}_{1}\bpsi\|_{\ell^{\infty}(\Z)}^{2}$} as in the lemma. (Throughout this proof, the {\small$\mathrm{c}$}-constants are fixed and bounded away from {\small$0$}.) Intuitively, we can move {\small$\exp[\bpsi(\y)]$} on the \abbr{LHS} of \eqref{eq:nashl2I2} past the {\small$\mathscr{S}_{\s,\y}$}-operator up to terms that have a factor of a gradient acting on {\small$\exp[\bpsi(\y)]$}. However, {\small$|\grad^{\mathbf{X}}_{\pm1}\exp[\bpsi(\y)]|\lesssim{\Gamma}(\bpsi)\exp[\bpsi(\y)]$}, since {\small$\|\grad^{\mathbf{X}}_{1}\bpsi\|_{\ell^{\infty}(\Z)}\lesssim1$}. Thus, terms with a factor of {\small$\grad^{\mathbf{X}}_{\pm1}\exp[\bpsi(\y)]$} have the form {\small$\boldsymbol{\Gamma}^{\mathrm{full},\bpsi}_{\s,\t,\x,\y}\cdot\mathrm{O}(\N\Gamma(\bpsi))\N\grad^{\mathbf{X}}_{-1}\boldsymbol{\Gamma}^{\mathrm{full},\bpsi}_{\s,\t,\x,\y}$} summed over {\small$\y\in\Z$}, which we use Cauchy-Schwarz to estimate from above by {\small$\lesssim\eta\N^{2}\|\grad^{\mathbf{X}}_{1}\boldsymbol{\Gamma}^{\mathrm{full},\bpsi}_{\s,\t,\x,\cdot}\|_{\ell^{2}(\Z)}^{2}+\N^{2}\Gamma(\bpsi)^{2}\|\boldsymbol{\Gamma}^{\mathrm{full}}_{\s,\t,\x,\cdot}\|_{\ell^{2}(\Z)}^{2}$} for small but fixed {\small$\eta>0$}. We are left with {\small$\boldsymbol{\Gamma}^{\mathrm{full},\bpsi}_{\s,\t,\x,\y}\cdot\mathscr{S}_{\s,\y}\boldsymbol{\Gamma}^{\mathrm{full},\bpsi}_{\s,\t,\x,\y}$} summed over {\small$\y\in\Z$}, for which we use the divergence-form structure of {\small$\mathscr{S}_{\s,\y}$} and bound above by {\small$-\mathrm{c}\N^{2}\|\grad^{\mathbf{X}}_{1}\boldsymbol{\Gamma}^{\mathrm{full},\bpsi}_{\s,\t,\x,\cdot}\|_{\ell^{2}(\Z)}^{2}$} for some fixed constant {\small$\mathrm{c}>0$}. Ultimately, this produces \eqref{eq:nashl2I2}. We will now estimate the {\small$\mathscr{D}^{\ast}$}-term. We claim, with explanation given afterwards, that
\begin{align}
2\sum_{\y\in\Z}\boldsymbol{\Gamma}^{\mathrm{full},\bpsi}_{\s,\t,\x,\y}\cdot\mathrm{e}^{\bpsi(\y)}\mathscr{D}_{\s,\y}^{\ast}\boldsymbol{\Gamma}^{\mathrm{full}}_{\s,\t,\x,\y}\mathrm{e}^{-\bpsi(\x)}&=2\sum_{\y\in\Z}\mathscr{D}_{\s,\y}(\boldsymbol{\Gamma}^{\mathrm{full},\bpsi}_{\s,\t,\x,\y}\cdot\mathrm{e}^{\bpsi(\y)})\cdot\boldsymbol{\Gamma}^{\mathrm{full}}_{\s,\t,\x,\y}\mathrm{e}^{-\bpsi(\x)}\label{eq:nashl2I3}\\
&\leq2\sum_{\y\in\Z}\mathscr{D}_{\s,\y}(\boldsymbol{\Gamma}^{\mathrm{full},\bpsi}_{\s,\t,\x,\y})\cdot\boldsymbol{\Gamma}^{\mathrm{full},\bpsi}_{\s,\t,\x,\y}+\mathrm{c}_{3}\N^{\frac32+\e_{0}}\Gamma(\bpsi)\|\boldsymbol{\Gamma}^{\mathrm{full},\bpsi}_{\s,\t,\x,\cdot}\|_{\ell^{2}(\Z)}^{2},\nonumber
\end{align}
The first line follows by the definition of the adjoint. The second line is justified as follows. We use the (discrete) Leibniz rule. This gives {\small$\mathscr{D}_{\s,\y}(\boldsymbol{\Gamma}^{\mathrm{full},\bpsi}_{\s,\t,\x,\y}\exp[\bpsi(\y)])=\mathscr{D}_{\s,\y}\boldsymbol{\Gamma}^{\mathrm{full},\bpsi}_{\s,\t,\x,\y}\cdot\exp[\bpsi(\y)]+\mathscr{Q}$}, where the {\small$\mathscr{Q}$} is explained shortly. For the first term in this identity, we obtain {\small$\mathscr{D}_{\s,\y}\boldsymbol{\Gamma}^{\mathrm{full},\bpsi}_{\s,\t,\x,\y}\cdot\exp[\bpsi(\y)]\cdot\boldsymbol{\Gamma}^{\mathrm{full}}_{\s,\t,\x,\y}\exp[-\bpsi(\x)]=\mathscr{D}_{\s,\y}\boldsymbol{\Gamma}^{\mathrm{full},\bpsi}_{\s,\t,\x,\y}\cdot\boldsymbol{\Gamma}^{\mathrm{full},\bpsi}_{\s,\t,\x,\y}$}. This explains the first term in the second line. Now, the quantity {\small$\mathscr{Q}$} has the following form:
\begin{itemize}
\item The discrete Leibniz rule states that {\small$\grad^{\mathbf{X}}_{\mathfrak{l}}(\mathsf{f}\mathsf{g})_{\x}=\mathsf{f}_{\x}\grad^{\mathbf{X}}_{\mathfrak{l}}\mathsf{g}_{\x}+\mathsf{g}_{\x+\mathfrak{l}}\grad^{\mathbf{X}}_{\mathfrak{l}}\mathsf{f}_{\x}$}. Therefore, we have 
\begin{align*}
\mathscr{Q}=\sum_{|\w|\lesssim1}\N^{3/2}\mathbf{v}^{\w}_{\s,\y}\boldsymbol{\Gamma}^{\mathrm{full},\bpsi}_{\s,\t,\x,\y+\w}\grad^{\mathbf{X}}_{\w}\exp[\bpsi(\y)].
\end{align*}
\item We multiply the display above by {\small$\boldsymbol{\Gamma}^{\mathrm{full}}_{\s,\t,\x,\y}\exp[-\bpsi(\x)]$}. We also use {\small$\grad^{\mathbf{X}}_{\w}\exp[\bpsi(\y)]\lesssim\Gamma(\bpsi)\exp[\bpsi(\y)]$}, since {\small$|\grad^{\mathbf{X}}_{1}\bpsi\|_{\ell^{\infty}(\Z)}\lesssim1$}. Thus, besides the first term in the last line of \eqref{eq:nashl2I3}, the other contribution of the \abbr{RHS} of the first line therein is given by
\begin{align*}
2\sum_{|\w|\lesssim1}\sum_{\y\in\Z}\mathrm{O}(\N^{\frac32+\e_{0}}\Gamma(\bpsi))\boldsymbol{\Gamma}^{\mathrm{full},\bpsi}_{\s,\t,\x,\y+\w}\mathrm{e}^{\bpsi(\y)}\boldsymbol{\Gamma}^{\mathrm{full}}_{\s,\t,\x,\y}\mathrm{e}^{-\bpsi(\x)}.
\end{align*}
We recognize {\small$\exp[\bpsi(\y)]\boldsymbol{\Gamma}^{\mathrm{full}}_{\s,\t,\x,\y}\exp[-\bpsi(\x)]=\boldsymbol{\Gamma}^{\mathrm{full},\bpsi}_{\s,\t,\x,\y}$}. Then, we use the Schwarz inequality with respect to the {\small$\y$}-summation to estimate the previous display by {\small$\lesssim\N^{3/2+\e_{0}}\Gamma(\bpsi)\|\boldsymbol{\Gamma}^{\mathrm{full},\bpsi}_{\s,\t,\x,\cdot+\w}\|_{\ell^{2}(\Z)}\|\boldsymbol{\Gamma}^{\mathrm{full},\bpsi}_{\s,\t,\x,\cdot}\|_{\ell^{2}(\Z)}$}. These last two {\small$\ell^{2}$}-norms are equal. Therefore the second line of \eqref{eq:nashl2I3} follows.
\end{itemize}
Thus, we ultimately have 
\begin{align*}
\tfrac{\d}{\d(-\s)}\|\boldsymbol{\Gamma}^{\mathrm{full},\bpsi}_{\s,\t,\x,\cdot}\|_{\ell^{2}(\Z)}^{2}\leq-\mathrm{c}_{1}\N^{2}\|\grad^{\mathbf{X}}_{1}\boldsymbol{\Gamma}^{\mathrm{full},\bpsi}_{\s,\t,\x,\cdot}\|_{\ell^{2}(\Z)}^{2}+&\mathrm{c}_{4}(\N^{2}\Gamma(\bpsi)^{2}+\N^{\frac32+\e_{0}}\Gamma(\bpsi))\|\boldsymbol{\Gamma}^{\mathrm{full},\bpsi}_{\s,\t,\x,\cdot}\|_{\ell^{2}(\Z)}^{2}\\
&+2\sum_{\y\in\Z}\mathscr{D}_{\s,\y}(\boldsymbol{\Gamma}^{\mathrm{full},\bpsi}_{\s,\t,\x,\y})\cdot\boldsymbol{\Gamma}^{\mathrm{full},\bpsi}_{\s,\t,\x,\y}.
\end{align*}
We note that {\small$\mathscr{D}_{\s,\y}$} is a finite linear combination of discrete gradients (with length-scale {\small$\lesssim1$}) scaled by {\small$\lesssim\N^{3/2+\e_{0}}$}. Thus, for the last term, we use the Schwarz inequality and elementary manipulations to get the following with {\small$\delta>0$} small but fixed:
\begin{align}
2\sum_{\y\in\Z}\mathscr{D}_{\s,\y}(\boldsymbol{\Gamma}^{\mathrm{full},\bpsi}_{\s,\t,\x,\y})\cdot\boldsymbol{\Gamma}^{\mathrm{full},\bpsi}_{\s,\t,\x,\y}&\lesssim\mathrm{C}_{\delta}\N^{1+2\e_{0}}\|\boldsymbol{\Gamma}^{\mathrm{full},\bpsi}_{\s,\t,\x,\cdot}\|_{\ell^{2}(\Z)}^{2}+\delta\N^{2}\sup_{|\w|\lesssim1}\|\grad^{\mathbf{X}}_{\w}\boldsymbol{\Gamma}^{\mathrm{full},\bpsi}_{\s,\t,\x,\cdot}\|_{\ell^{2}(\Z)}^{2}\nonumber\\
&\lesssim\mathrm{C}_{\delta}\N^{1+2\e_{0}}\|\boldsymbol{\Gamma}^{\mathrm{full},\bpsi}_{\s,\t,\x,\cdot}\|_{\ell^{2}(\Z)}^{2}+\delta\N^{2}\|\grad^{\mathbf{X}}_{1}\boldsymbol{\Gamma}^{\mathrm{full},\bpsi}_{\s,\t,\x,\cdot}\|_{\ell^{2}(\Z)}^{2}.\label{eq:nashl2I4}
\end{align}
The previous two displays now yield
\begin{align}
\tfrac{\d}{\d(-\s)}\|\boldsymbol{\Gamma}^{\mathrm{full},\bpsi}_{\s,\t,\x,\cdot}\|_{\ell^{2}(\Z)}^{2}&\leq-\tfrac12\mathrm{c}_{1}\|\grad^{\mathbf{X}}_{1}\boldsymbol{\Gamma}^{\mathrm{full},\bpsi}_{\s,\t,\x,\cdot}\|_{\ell^{2}(\Z)}^{2}+\mathrm{O}(\N^{2}\Gamma(\bpsi)+\N^{\frac32+\e_{0}}\Gamma(\bpsi)+\N^{1+2\e_{0}})\|\boldsymbol{\Gamma}^{\mathrm{full},\bpsi}_{\s,\t,\x,\cdot}\|_{\ell^{2}(\Z)}^{2}\label{eq:nashl2I5}
\end{align}
if we take {\small$\delta>0$} small enough (but still independent of {\small$\N$}). After dropping the first term on the \abbr{RHS}, the Gronwall inequality now gives us the following for any {\small$\t\in[0,\mathfrak{t}]$} after we integrate over {\small$\s\in[0,\t]$} (backwards in time):
\begin{align}
\|\boldsymbol{\Gamma}^{\mathrm{full},\bpsi}_{0,\t,\x,\cdot}\|_{\ell^{2}(\Z)}^{2}\lesssim\mathrm{e}^{\mathrm{C}\t(\N^{2}\Gamma(\bpsi)+\N^{\frac32+\e_{0}}\Gamma(\bpsi)+\N^{1+2\e_{0}})}\|\boldsymbol{\Gamma}^{\mathrm{full},\bpsi}_{\t,\t,\x,\cdot}\|_{\ell^{2}(\Z)}^{2}\lesssim\mathrm{e}^{\mathrm{C}\t(\N^{2}\Gamma(\bpsi)+\N^{\frac32+\e_{0}}\Gamma(\bpsi)+\N^{1+2\e_{0}})},\label{eq:nashl2I6}
\end{align}
We note that the last estimate follows because {\small$\boldsymbol{\Gamma}^{\mathrm{full},\bpsi}_{\t,\t,\x,\y}=\mathbf{1}_{\x=\y}$}, and thus {\small$\|\boldsymbol{\Gamma}^{\mathrm{full},\bpsi}_{\t,\t,\x,\cdot}\|_{\ell^{2}(\Z)}=1$}, by construction. This completes the proof of \eqref{eq:nashl2I}, so we are done. 
\end{proof}
\begin{remark}\label{remark:nashbackwards}
\fsp We clarify the possible awkward nature of appealing to the adjoint equation for \eqref{eq:nashI} instead of using directly \eqref{eq:nashI} itself. It turns out that in generalizing the argument above to the proof of Proposition \ref{prop:nashdavies} given below, it will be much more convenient to work with the adjoint \abbr{PDE}. Indeed, for the proof of Proposition \ref{prop:nashdavies} below, we will control higher {\small$\ell^{2p}$}-norms instead of just {\small$p=1$}, and allowing the {\small$\mathscr{D}$}-operator to hit the correct power of the exponentially titled heat kernel ultimately turns out to be technically convenient.
\end{remark}
We are now ready to prove Proposition \ref{prop:nashdavies}. The main idea, which is due to Davies (see \cite{FS86,D89}) is to bootstrap the {\small$\ell^{2}$}-estimate into an {\small$\ell^{\infty}$}-estimate by means of a generalized energy estimate (from {\small$\ell^{2}$}-norms as in Lemma \ref{lemma:nashl2} to {\small$\ell^{p}$}-norms for {\small$p\geq2$}) and the Nash inequality. (The main benefit of this argument is that it also succeeds in the continuum, not just the lattice \cite{D89}, while being fairly short. It is thus preferred in the spirit of universality.)
\begin{proof}[Proof of Proposition \ref{prop:nashdavies}]
By Lemma \ref{lemma:nashst}, we can assume {\small$\t\gtrsim\N^{-2}$}. Fix {\small$p\geq1$}, and recall the notation in Lemma \ref{lemma:nashl2}. By the \abbr{PDE} \eqref{eq:nashl2I0}, we get the following identity (analogous to \eqref{eq:nashl2I1}):
\begin{align}
&\tfrac{\d}{\d(-\s)}\|\boldsymbol{\Gamma}^{\mathrm{full},\bpsi}_{\s,\t,\x,\cdot}\|_{\ell^{2p}(\Z)}^{2p}=2p\sum_{\y\in\Z}(\boldsymbol{\Gamma}^{\mathrm{full},\bpsi}_{\s,\t,\x,\y})^{2p-1}\partial_{-\s}\boldsymbol{\Gamma}^{\mathrm{full},\bpsi}_{\s,\t,\x,\y}\label{eq:nashII1}\\
&=2p\sum_{\y\in\Z}(\boldsymbol{\Gamma}^{\mathrm{full},\bpsi}_{\s,\t,\x,\y})^{2p-1}\cdot\mathrm{e}^{\bpsi(\y)}\mathscr{S}_{\s,\y}\boldsymbol{\Gamma}^{\mathrm{full}}_{\s,\t,\x,\y}\mathrm{e}^{-\bpsi(\x)}+2p\sum_{\y\in\Z}(\boldsymbol{\Gamma}^{\mathrm{full},\bpsi}_{\s,\t,\x,\y})^{2p-1}\cdot\mathrm{e}^{\bpsi(\y)}\mathscr{D}_{\s,\y}^{\ast}\boldsymbol{\Gamma}^{\mathrm{full}}_{\s,\t,\x,\y}\mathrm{e}^{-\bpsi(\x)}.\nonumber
\end{align}
By Theorem 3.9 in \cite{CKS87}, we can estimate the first term in the second line of \eqref{eq:nashII1} as follows (for which we give a brief heuristic afterwards to explain the powers of {\small$p$} appearing below):
\begin{align}
&\sum_{\y\in\Z}(\boldsymbol{\Gamma}^{\mathrm{full},\bpsi}_{\s,\t,\x,\y})^{2p-1}\cdot\mathrm{e}^{\bpsi(\y)}\mathscr{S}_{\s,\y}\boldsymbol{\Gamma}^{\mathrm{full}}_{\s,\t,\x,\y}\mathrm{e}^{-\bpsi(\x)}\label{eq:nashII2}\\
&=-\N^{2}\sum_{\y\in\Z}\mathbf{c}_{\s,\y}\grad^{\mathbf{X}}_{-1}[\mathrm{e}^{\bpsi(\y)}(\boldsymbol{\Gamma}^{\mathrm{full},\bpsi}_{\s,\t,\x,\y})^{2p-1}]\cdot\grad^{\mathbf{X}}_{-1}\boldsymbol{\Gamma}^{\mathrm{full}}_{\s,\t,\x,\y}\mathrm{e}^{-\bpsi(\x)}\nonumber\\
&\leq-\tfrac{1}{p}\mathrm{c}\N^{2}\|\grad^{\mathbf{X}}_{-1}[(\boldsymbol{\Gamma}^{\mathrm{full},\bpsi}_{\s,\t,\x,\cdot})^{p}]\|_{\ell^{2}(\Z)}^{2}+\mathrm{C}p\N^{2}\Gamma(\bpsi)^{2}\|\boldsymbol{\Gamma}^{\mathrm{full},\bpsi}_{\s,\t,\x,\cdot}\|_{\ell^{2p}(\Z)}^{2p}.\nonumber
\end{align}
This first identity follows from the divergence-form structure of {\small$\mathscr{S}$} (i.e. summation-by-parts, which implies that the adjoint of {\small$\grad^{\mathbf{X}}_{\pm1}$} is equal to {\small$\grad^{\mathbf{X}}_{\mp1}$}). The inequality above follows from Theorem 3.9 in \cite{CKS87}, as mentioned earlier. (Intuitively, we can remove the {\small$\exp[\bpsi(\y)]$}-factor from the first {\small$\grad^{\mathbf{X}}_{-1}$}-operator in the second line above, and then we can move it inside the second {\small$\grad^{\mathbf{X}}_{-1}$}-operator therein. Because we are moving past two gradient operators, the cost is quadratic in {\small$\Gamma(\bpsi)$}, like in the last term above. For the scaling in {\small$p\geq1$}, we note that (a discrete version of) the chain rule suggests that the scaling in the second line of \eqref{eq:nashII2} is of order {\small$p$}. This is true also for the last term in \eqref{eq:nashII2}, and, again by (a discrete version of) the chain rule, it is true for the first term in the last line of \eqref{eq:nashII2} as well. Finally, all terms above are homogeneous in {\small$\boldsymbol{\Gamma}^{\mathrm{full},\bpsi}$} of degree {\small$2p$}, explaining the exponents in the previous display. Again, see Theorem 3.9 in \cite{CKS87} for a detailed proof; we do not reproduce the same argument here.)

Now take the last term in the second line of \eqref{eq:nashII1}. We start with the following generalization of the estimates from \eqref{eq:nashl2I3} and \eqref{eq:nashl2I4} to general {\small$2p\geq2$}, which follows by the same reasoning as those estimates did:
\begin{align}
&\sum_{\y\in\Z}(\boldsymbol{\Gamma}^{\mathrm{full},\bpsi}_{\s,\t,\x,\y})^{2p-1}\cdot\mathrm{e}^{\bpsi(\y)}\mathscr{D}^{\ast}_{\s,\y}\boldsymbol{\Gamma}^{\mathrm{full}}_{\s,\t,\x,\y}\mathrm{e}^{-\bpsi(\x)}=\sum_{\y\in\Z}\mathscr{D}_{\s,\y}[(\boldsymbol{\Gamma}^{\mathrm{full},\bpsi}_{\s,\t,\x,\y})^{2p-1}\cdot\mathrm{e}^{\bpsi(\y)}]\cdot\boldsymbol{\Gamma}^{\mathrm{full}}_{\s,\t,\x,\y}\mathrm{e}^{-\bpsi(\x)}\label{eq:nashII3a}\\
&\leq\sum_{\y\in\Z}\mathscr{D}_{\s,\y}[(\boldsymbol{\Gamma}^{\mathrm{full},\bpsi}_{\s,\t,\x,\y})^{2p-1}]\cdot\boldsymbol{\Gamma}^{\mathrm{full},\bpsi}_{\s,\t,\x,\y}+\mathrm{O}(\N^{\frac32+\e_{0}}\Gamma(\bpsi))\|\boldsymbol{\Gamma}^{\mathrm{full},\bpsi}_{\s,\t,\x,\cdot}\|_{\ell^{2p}(\Z)}^{2p}.\nonumber
\end{align}
We will now estimate the first term in the second line of \eqref{eq:nashII3a} in terms of the {\small$\mathrm{H}^{1}$}-norm in the last line of \eqref{eq:nashII2}. We start by carefully computing as follows (for which we give an explanation afterwards):
\begin{align}
\mathscr{D}_{\s,\y}[(\boldsymbol{\Gamma}^{\mathrm{full},\bpsi}_{\s,\t,\x,\y})^{2p-1}]\cdot\boldsymbol{\Gamma}^{\mathrm{full},\bpsi}_{\s,\t,\x,\y}&=\N^{\frac32}\sum_{|\w|\lesssim1}\mathbf{v}^{\w}_{\s,\y}\grad^{\mathbf{X}}_{\w}[((\boldsymbol{\Gamma}^{\mathrm{full},\bpsi}_{\s,\t,\x,\y})^{p})^{\frac{2p-1}{p}}]\cdot\boldsymbol{\Gamma}^{\mathrm{full},\bpsi}_{\s,\t,\x,\y}\label{eq:nashII3aa}\\
&\leq\N^{\frac32}\sum_{|\w|\lesssim1}|\mathbf{v}^{\w}_{\s,\y}||\grad^{\mathbf{X}}_{\w}[(\boldsymbol{\Gamma}^{\mathrm{full},\bpsi}_{\s,\t,\x,\y})^{p}]|\cdot(|\boldsymbol{\Gamma}^{\mathrm{full},\bpsi}_{\s,\t,\x,\y+\w}|^{\frac{p-1}{p}}+|\boldsymbol{\Gamma}^{\mathrm{full},\bpsi}_{\s,\t,\x,\y}|^{\frac{p-1}{p}})\cdot|\boldsymbol{\Gamma}^{\mathrm{full},\bpsi}_{\s,\t,\x,\y}|.\nonumber\\
&\lesssim\tfrac{1}{p}\eta\N^{2}\sum_{|\w|\lesssim1}|\grad^{\mathbf{X}}_{\w}[(\boldsymbol{\Gamma}^{\mathrm{full},\bpsi}_{\s,\t,\x,\y})^{p}]|^{2}+p\N^{1+\mathrm{O}(\e_{0})}\sum_{|\w|\lesssim1}|\boldsymbol{\Gamma}^{\mathrm{full},\bpsi}_{\s,\t,\x,\y+\w}|^{2p}.\nonumber
\end{align}
The first line is immediate. The second line follows from the inequality {\small$|\a^{q}-\b^{q}|\lesssim|\a-\b|\cdot(|\a|^{q-1}+|\b|^{q-1})$} as long as {\small$1\leq q\lesssim1$} (which is true for {\small$q=(2p-1)/p$} given {\small$p\geq1$}). The third line is true for every {\small$\eta>0$} small but fixed, and it follows by the generalized Young's inequality (to more than {\small$2$} factors). We note that any term of the form {\small$|\boldsymbol{\Gamma}^{\mathrm{full},\bpsi}_{\s,\t,\x,\y}|^{2p}$} is absorbed into the last term above. (As a sanity check, observe that every term in the last line is homogeneous in {\small$\boldsymbol{\Gamma}^{\mathrm{full},\bpsi}$} of degree {\small$2p$}, just like the entire sum in the second line above. Finally, the factors of {\small$\eta\N^{2}$} and {\small$\N^{1+\mathrm{O}(\e_{0})}$} can be understood by, in the second line above, taking a factor of {\small$\eta^{1/2}\N$} from {\small$\N^{3/2}$} therein and putting it into {\small$\grad^{\mathbf{X}}_{\w}[(\boldsymbol{\Gamma}^{\mathrm{full},\bpsi}_{\s,\t,\x,\y})^{p}]|$}.) Let us now sum over {\small$\y\in\Z$} in the previous display. In the last line, we recover the estimate {\small$\eta\N^{2}\sup_{|\w|\lesssim1}\|\grad^{\mathbf{X}}_{\w}[(\boldsymbol{\Gamma}^{\mathrm{full},\bpsi}_{\s,\t,\x,\cdot})^{p}]\|_{\ell^{2}(\Z)}^{2}\lesssim\eta\N^{2}\|\grad^{\mathbf{X}}_{-1}[(\boldsymbol{\Gamma}^{\mathrm{full},\bpsi}_{\s,\t,\x,\cdot})^{p}]\|_{\ell^{2}(\Z)}^{2}$}, since {\small$\grad^{\mathbf{X}}_{\w}$} is a sum of spatially shifted {\small$\grad^{\mathbf{X}}_{-1}$}-operators, and that the spatial shifts disappear after taking the {\small$\ell^{2}(\Z)$}-norm. By a similar token, the last term in the last line above turns into {\small$\lesssim\N^{1+\mathrm{O}(\e_{0})}\|(\boldsymbol{\Gamma}^{\mathrm{full},\bpsi}_{\s,\t,\x,\cdot})^{p}\|_{\ell^{2}(\Z)}^{2}=\N^{1+\mathrm{O}(\e_{0})}\|\boldsymbol{\Gamma}^{\mathrm{full},\bpsi}_{\s,\t,\x,\cdot}\|_{\ell^{2p}(\Z)}^{2p}$} after summing over {\small$\y\in\Z$}. So, if we combine the previous four displays, then we have (for possibly different but still fixed {\small$\mathrm{c},\mathrm{C}>0$})
\begin{align}
\tfrac{\d}{\d(-\s)}\|\boldsymbol{\Gamma}^{\mathrm{full},\bpsi}_{\s,\t,\x,\cdot}\|_{\ell^{2p}(\Z)}^{2p}&\leq-\mathrm{c}\N^{2}\|\grad^{\mathbf{X}}_{-1}[(\boldsymbol{\Gamma}^{\mathrm{full},\bpsi}_{\s,\t,\x,\cdot})^{p}]\|_{\ell^{2}(\Z)}^{2}+\mathrm{C}p^{2}(\N^{2}\Gamma(\bpsi)^{2}+\N^{\frac32+\e_{0}}\Gamma(\bpsi)+\N^{1+\mathrm{O}(\e_{0})})\|\boldsymbol{\Gamma}^{\mathrm{full},\bpsi}_{\s,\t,\x,\cdot}\|_{\ell^{2p}(\Z)}^{2p}.\nonumber
\end{align}
Instead of dropping the first term on the \abbr{RHS} above, we will now use the Nash inequality (see (3.18) in \cite{CKS87}); that is, we have {\small$\|\grad^{\mathbf{X}}_{-1}[(\boldsymbol{\Gamma}^{\mathrm{full},\bpsi}_{\s,\t,\x,\cdot})^{p}]\|_{\ell^{2}(\Z)}\gtrsim\|\boldsymbol{\Gamma}^{\mathrm{full},\bpsi}_{\s,\t,\x,\cdot}\|_{\ell^{2p}(\Z)}^{3p}\|\boldsymbol{\Gamma}^{\mathrm{full},\bpsi}_{\s,\t,\x,\cdot}\|_{\ell^{p}(\Z)}^{-2p}$}. Plugging this into the previous display and following the proof of (3.20) in \cite{CKS87} then yields the inequality
\begin{align*}
\tfrac{\d}{\d(-\s)}\|\boldsymbol{\Gamma}^{\mathrm{full},\bpsi}_{\s,\t,\x,\cdot}\|_{\ell^{2p}(\Z)}\leq-\tfrac{\mathrm{c}}{p}\N^{2}\|\boldsymbol{\Gamma}^{\mathrm{full},\bpsi}_{\s,\t,\x,\cdot}\|_{\ell^{2p}(\Z)}^{1+4p}\|\boldsymbol{\Gamma}^{\mathrm{full},\bpsi}_{\s,\t,\x,\cdot}\|_{\ell^{p}(\Z)}^{-4p}+\mathrm{C}p(\N^{2}\Gamma(\bpsi)^{2}+\N^{\frac32+\e_{0}}\Gamma(\bpsi)+\N^{1+\mathrm{O}(\e_{0})})\|\boldsymbol{\Gamma}^{\mathrm{full},\bpsi}_{\s,\t,\x,\cdot}\|_{\ell^{2p}(\Z)}.
\end{align*}
Observe the differential inequality above controls the {\small$\ell^{2p}(\Z)$}-norm in terms of the {\small$\ell^{p}(\Z)$}-norm, with an additional exponential growth coming from the last term above and the Gronwall inequality. Thus, by (1.5) in \cite{FS86}, we can bootstrap {\small$p\mapsto2p$} and control the {\small$\ell^{\infty}(\Z)$}-norm by the {\small$\ell^{2}(\Z)$}-norm, which is estimated in Lemma \ref{lemma:nashl2}. We get
\begin{align}
\|\boldsymbol{\Gamma}^{\mathrm{full},\bpsi}_{0,\t,\x,\cdot}\|_{\ell^{\infty}(\Z)}&\lesssim\N^{-\frac12}\t^{-\frac14}\exp[\mathrm{O}(\N^{2}\Gamma(\bpsi)^{2}\t+\N^{\frac32+\e_{0}}\Gamma(\bpsi)\t+\N^{1+\mathrm{O}(\e_{0})}\t)]\|\boldsymbol{\Gamma}^{\mathrm{full},\bpsi}_{0,\t,\x,\cdot}\|_{\ell^{2}(\Z)}\nonumber\\
&\lesssim\N^{-\frac12}\t^{-\frac14}\exp[\mathrm{O}(\N^{2}\Gamma(\bpsi)^{2}\t+\N^{\frac32+\e_{0}}\Gamma(\bpsi)\t+\N^{1+\mathrm{O}(\e_{0})}\t)].\label{eq:nashII3b}
\end{align}
From \eqref{eq:nashII3b} and {\small$\t\gtrsim\N^{-2}$}, we obtain (for any {\small$\x,\y\in\Z$}) that the following estimate holds for any {\small$\bpsi:\Z\to\R$} such that {\small$\|\grad^{\mathbf{X}}_{1}\bpsi\|_{\ell^{\infty}(\Z)}\lesssim1$}:
\begin{align*}
|\boldsymbol{\Gamma}^{\mathrm{full}}_{0,\t,\x,\y}|\lesssim\exp[\bpsi(\x)-\bpsi(\y)+\mathrm{O}(\N^{2}\Gamma(\bpsi)^{2}\t+\N^{\frac32+\e_{0}}\Gamma(\bpsi)\t+\N^{1+\mathrm{O}(\e_{0})}\t)]
\end{align*}
We now optimize over the choice of {\small$\bpsi$}. We first choose {\small$\bpsi\equiv0$}, which yields {\small$\|\boldsymbol{\Gamma}^{\mathrm{full}}_{0,\t,\cdot,\cdot}\|_{\ell^{\infty}(\Z\times\Z)}\lesssim\exp[\mathrm{O}(\N^{1+\e_{0}}\t)]$}. This, in particular, implies the desired estimate \eqref{eq:nashII} for {\small$\x=\y$}. On the other hand, take any {\small$\x_{0},\y_{0}\in\Z$} such that {\small$\x_{0}\neq\y_{0}$}, and suppose {\small$|\x_{0}-\y_{0}|=\x_{0}-\y_{0}$} (without loss of generality). We now choose {\small$\bpsi(\x)=\x\cdot(\N\t^{1/2})^{-1}$} and note that {\small$\t\gtrsim\N^{-2}$} implies the necessary assumption {\small$\|\grad^{\mathbf{X}}_{1}\bpsi\|_{\ell^{\infty}(\Z)}\lesssim1$}. In fact, we also have that {\small$\N^{2}\Gamma(\bpsi)^{2}\t\lesssim1$} and {\small$\N^{3/2+\e_{0}}\Gamma(\bpsi)\t\lesssim\N^{1/2+\e_{0}}\t^{1/2}$}, that latter of which is {\small$\lesssim1+\N^{1+\mathrm{O}(\e_{0})}\t$} because of the Schwarz inequality. So the previous display gives
\begin{align}
|\boldsymbol{\Gamma}^{\mathrm{full}}_{0,\t,\x_{0},\y_{0}}|&\lesssim\mathrm{e}^{\mathrm{O}(\N^{1+\mathrm{O}(\e_{0})}\t)}\cdot\exp[-\tfrac{(\x_{0}-\y_{0})}{\N\t^{1/2}}]=\mathrm{e}^{\mathrm{O}(\N^{1+\mathrm{O}(\e_{0})}\t)}\cdot\exp[-\tfrac{|\x_{0}-\y_{0}|}{\N\t^{1/2}}],\label{eq:nashII3c}
\end{align}
Since {\small$\x_{0},\y_{0}\in\Z$} are arbitrary distinct points, the argument is complete.
\end{proof}
\begin{proof}[Proof of Proposition \ref{prop:speed}]
Without loss of generality, assume that {\small$\inf\mathbb{U}=0$}. Indeed, we can shift the argument below in time by shifting the initial data of {\small$\mathbf{j}^{\N}$} in time.

We set {\small$\mathbf{g}:=\mathbf{j}^{\Z}-\mathbf{j}^{\N}$}, so that {\small$\mathbf{g}$} vanishes at the initial time {\small$\s=0$} by construction in Proposition \ref{prop:speed}. We also set {\small$\mathbb{L}:=\llbracket\mathfrak{l},\infty\rrbracket$}, where {\small$\mathfrak{l}=\lfloor\min\mathbb{I}/2\rfloor$}, so that {\small$\mathbb{L}$} lives ``between" {\small$\mathbb{I}$} and {\small$\llbracket0,\infty\rrbracket$}. We observe that on the spatial domain {\small$\mathbb{L}$}, the \abbr{SDE}s for {\small$\mathbf{j}^{\N},\mathbf{j}^{\Z}$} from \eqref{eq:currI} and Definition \ref{definition:speed}, respectively, are the same, since there are no edge dynamics to deal with on {\small$\mathbb{L}$}. Thus, we can obtain an \abbr{SDE} for the restriction of {\small$\mathbf{g}$} to {\small$\mathbb{L}$} by linearizing the \abbr{SDE} \eqref{eq:currI}. To be precise, we use \eqref{eq:currI} and Definition \ref{definition:speed} to get the following for any {\small$(\s,\y)\in[0,\mathfrak{t}]\times\mathbb{L}$}, which we explain after:
\begin{align}
\d\mathbf{g}_{\s,\y}=-\N^{2}\grad^{\mathbf{X}}_{1}(\mathscr{U}''[\boldsymbol{\eta}_{\s,\y}]\grad^{\mathbf{X}}_{-1}\mathbf{g}_{\s,\y})\d\s+\N(\mathbf{F}[\tau_{\y}\bphi^{\Z}_{\s}]-\mathbf{F}[\tau_{\y}\bphi_{\s}])\d\s.\nonumber
\end{align}
Indeed, the Brownian motions in \eqref{eq:currI} and Definition \ref{definition:speed} are coupled and therefore cancel. Moreover, {\small$\boldsymbol{\eta}$} denotes a random field (that may be correlated in a nontrivial way to {\small$\mathbf{j}^{\N},\mathbf{j}^{\Z}$}) that is determined by the mean-value theorem to compute {\small$\mathscr{U}'[\bphi^{\Z}]-\mathscr{U}'[\bphi]=\mathscr{U}''[\boldsymbol{\eta}](\bphi^{\Z}-\bphi)=-\N^{1/2}\mathscr{U}''[\boldsymbol{\eta}]\grad^{\mathbf{X}}_{-1}\mathbf{g}$}. Next, we again use the mean-value theorem to compute the last term on the \abbr{RHS} above. By the formula \eqref{eq:nl} for the {\small$\mathbf{F}$}-nonlinearity, we have
\begin{align*}
\N(\mathbf{F}[\tau_{\y}\bphi^{\Z}_{\s}]-\mathbf{F}[\tau_{\y}\bphi_{\s}])=\sum_{|\w|\lesssim1}\mathfrak{f}_{\w,\y}[\bphi^{\Z}_{\s},\bphi_{\s}]\cdot\N^{\frac32}\grad^{\mathbf{X}}_{\w}\mathbf{g}_{\s,\y},
\end{align*}
where {\small$\mathfrak{f}_{\w,\y}[\bphi^{\Z}_{\s},\bphi_{\s}]$} satisfies the estimate below, which follows by the potential estimates in Assumption \ref{assump:potential}:
\begin{align*}
|\mathfrak{f}_{\w,\y}[\bphi^{\Z}_{\s},\bphi_{\s}]|&\lesssim1+\sum_{|\z|\lesssim1}(|\bphi^{\Z}_{\s,\y+\z}|+|\bphi_{\s,\y+\z}|)^{\mathrm{O}(1)}.
\end{align*}
We clarify that in principle, the \abbr{RHS} of the formula for {\small$\N(\mathbf{F}[\tau_{\y}\bphi^{\Z}_{\s}]-\mathbf{F}[\tau_{\y}\bphi_{\s}])$} should have {\small$\N(\bphi^{\Z}_{\s,\y+\w}-\bphi^{\N}_{\s,\y+\w})$} in place of {\small$\N^{3/2}\grad^{\mathbf{X}}_{\w}\mathbf{g}_{\s,\y}$}. However, we can rewrite {\small$\N(\bphi^{\Z}_{\s,\y+\w}-\bphi^{\N}_{\s,\y+\w})=\N^{3/2}(\mathbf{j}^{\Z}_{\s,\y+\w}-\mathbf{j}^{\N}_{\s,\y+\w})-\N^{3/2}(\mathbf{j}^{\Z}_{\s,\y+\w-1}-\mathbf{j}^{\N}_{\s,\y+\w-1})=\N^{3/2}(\mathbf{g}_{\s,\y+\w}-\mathbf{g}_{\s,\y+\w-1})=\N^{3/2}(\mathbf{g}_{\s,\y+\w}-\mathbf{g}_{\s,\y})-\N^{3/2}(\mathbf{g}_{\s,\y+\w-1}-\mathbf{g}_{\s,\y})$}, and upon rearranging the coefficients {\small$\mathfrak{f}_{\w,\y}$} in the formula for {\small$\N(\mathbf{F}[\tau_{\y}\bphi^{\Z}_{\s}]-\mathbf{F}[\tau_{\y}\bphi_{\s}])$}, we arrive at the form above.

Now, set {\small$\mathbf{k}_{\s,\y}:=\mathbf{g}_{\s,\y}\cdot\mathbf{1}[\y\in\mathbb{K}]$}, where {\small$\mathbb{K}:=\llbracket\mathfrak{k},\infty\rrbracket$} with {\small$\mathfrak{k}=\lfloor\frac23\cdot\min\mathbb{I}\rfloor$} lives between {\small$\mathbb{I},\mathbb{L}$}. Because {\small$\mathbb{K}\subseteq\mathbb{L}$}, the equation above for {\small$\mathbf{g}$} on {\small$[0,\mathfrak{t}]\times\mathbb{L}$} also holds on {\small$[0,\mathfrak{t}]\times\mathbb{K}$}. Moreover, the gradients vanish on {\small$\mathbf{1}[\y\in\mathbb{K}]$} unless {\small$\y$} is within distance {\small$\mathrm{O}(1)$} from the boundary of {\small$\mathbb{K}$}, which we write as {\small$\mathbf{1}[\y\approx\partial\mathbb{K}]=1$} for convenience only. Similarly, we have {\small$\mathbf{g}_{\s,\y+\w}\mathbf{1}[\y\in\mathbb{K}]=\mathbf{k}_{\s,\y+\w}$} unless {\small$\mathbf{1}[\y\approx\partial\mathbb{K}]=1$}. Thus, given any {\small$(\s,\y)\in[0,\mathfrak{t}]\times\Z$} (note that {\small$\mathbf{k}_{\s,\y}$} extends to this domain via extension-by-zero), we have 
\begin{align}
\partial_{\s}\mathbf{k}_{\s,\y}=-\N^{2}\grad^{\mathbf{X}}_{1}(\mathscr{U}''[\boldsymbol{\eta}_{\s,\y}]\grad^{\mathbf{X}}_{-1}\mathbf{k}_{\s,\y})+\N^{\frac32}\sum_{|\w|\lesssim1}\mathfrak{f}_{\w,\y}[\bphi^{\Z}_{\s},\bphi_{\s}]\cdot\grad^{\mathbf{X}}_{\w}\mathbf{k}_{\s,\y}+\N^{2}\mathbf{1}_{\y\approx\partial\mathbb{K}}\mathfrak{f}_{\y}[\bphi^{\Z}_{\s},\bphi_{\s}]\cdot\sup_{|\w|\lesssim1}|\mathbf{g}_{\s,\y+\w}|.\nonumber
\end{align}
Here, the function {\small$\mathfrak{f}_{\y}$} admits the same estimate as the {\small$\mathfrak{f}_{\w,\y}$}-coefficient above. We will now perform the following conditioning, which restricts us to events on which we have ``good estimates".
\begin{itemize}
\item We have {\small$|\bphi_{\s,\u}|\lesssim\N^{\e_{\star}}$} for all {\small$(\s,\u)\in[0,\mathfrak{t}]\times\mathbb{L}$}. This holds with probability {\small$1-\mathrm{O}_{\mathrm{D}}(\N^{-\mathrm{D}})$} (for arbitrarily small but fixed {\small$\e_{\star}>0$} and for any {\small$\mathrm{D}>0$}) because of Lemma \ref{lemma:phiap}.
\item We have {\small$|\bphi^{\Z}_{\s,\u}|\lesssim\N^{\e_{\star}}$} for all {\small$(\s,\u)\in[0,\mathfrak{t}]\times\mathbb{L}$}. This follows for a similar reason, because the initial data of {\small$\bphi^{\Z}$} agrees with that of {\small$\bphi$} and thus has density with respect to {\small$\mathbb{P}^{0}$} which satisfies the estimate \eqref{eq:dataI} as well. Since {\small$\bphi^{\Z}$} also has {\small$\mathbb{P}^{0}$} (or rather its extension to {\small$\R^{\Z}$}) as an invariant measure, the proof of Lemma \ref{lemma:phiap} also applies for the {\small$\bphi^{\Z}$}-process (even if we allow for the intervals to live in {\small$\Z$}). {\color{black}We refer to Section A.3 in \cite{Y26PTRF} for the fact that the extension of {\small$\mathbb{P}^{0}$} onto {\small$\R^{\Z}$} is an invariant measure for the {\small$\bphi^{\Z}$}-process.}
\item We get, from the bullet points above, global estimates  of {\small$\lesssim\N^{\e_{0}}$} on the {\small$\mathfrak{f}_{\w,\y}[\bphi^{\Z}_{\s},\bphi_{\s}]$}-coefficients in the previous \abbr{PDE} for any small but fixed {\small$\e_{0}>0$}. This holds with probability {\small$1-\mathrm{O}_{\mathrm{D}}(\N^{-\mathrm{D}})$} for any {\small$\mathrm{D}$}. Indeed, the function {\small$\mathbf{k}$} and its gradients are supported on {\small$\mathbb{K}\subseteq\mathbb{L}$}, so said coefficients in the \abbr{PDE} above can be restricted to {\small$\y\in\mathbb{L}$}, on which we can apply the previous two bullet points. The same bound holds for {\small$\mathfrak{f}_{\y}[\bphi^{\Z}_{\s},\bphi_{\s}]$}-coefficients.
\item We have {\small$|\mathbf{g}_{\s,\u}|\lesssim\N^{\mathrm{O}(1)}$} for all {\small$(\s,\u)\in[0,\mathfrak{t}]\times\mathbb{L}$} such that {\small$\u\approx\partial\mathbb{K}$}. This holds with probability {\small$1-\mathrm{O}_{\mathrm{D}}(\N^{-\mathrm{D}})$} for any {\small$\mathrm{D}>0$}. This follows because it is true for {\small$\mathbf{j}^{\Z},\mathbf{j}^{\N}$} separately (since these satisfy \abbr{SDE}s whose coefficients are polynomially bounded in {\small$\bphi^{\Z}_{\s,\u},\bphi_{\s,\u}$}-variables for {\small$(\s,\u)\in\mathbb{U}\times\Z$}; see \eqref{eq:currI} and Definition \ref{definition:speed}).
\end{itemize}

We now let {\small$\boldsymbol{\Gamma}^{\mathrm{full}}$} denote the heat kernel from Proposition \ref{prop:nashdavies} with {\small$\mathbf{c}_{\s,\y}=\mathscr{U}''[\boldsymbol{\eta}_{\s,\y}]$} (see Assumption \ref{assump:potential} for the necessary bounds on {\small$\mathscr{U}''$}) and with {\small$\mathbf{v}^{\w}_{\s,\y}=\mathfrak{f}_{\w,\y}[\bphi^{\Z}_{\s},\bphi_{\s}]$}. Using the \abbr{PDE} above for {\small$\mathbf{k}$} and the Duhamel formula, we have the following display:
\begin{align}
\mathbf{k}_{\s,\y}=\N^{\mathrm{O}(1)}\int_{0}^{\s}\sum_{\z\approx\partial\mathbb{K}}\boldsymbol{\Gamma}^{\mathrm{full}}_{\r,\s,\y,\z}.\label{eq:speedI1}
\end{align}
This holds for all {\small$(\s,\y)\in[0,\mathfrak{t}]\times\Z$} simultaneously with probability {\small$1-\mathrm{O}_{\mathrm{D}}(\N^{-\mathrm{D}})$}, where {\small$\mathrm{D}>0$} is a large but fixed constant. We clarify that there is no initial data term above because {\small$\mathbf{k}_{0,\cdot}$} vanishes globally, as {\small$\mathbf{k}_{0,\cdot}$} is a restriction of {\small$\mathbf{g}_{0,\cdot}$} to {\small$\mathbb{K}$}, and {\small$\mathbf{g}=\mathbf{j}^{\Z}-\mathbf{j}^{\N}$} is assumed to have vanishing initial data on {\small$\Z$}.

If we fix {\small$\y\in\mathbb{I}$}, then for any {\small$\z\approx\partial\mathbb{K}$}, we have {\small$|\y-\z|\gtrsim\N^{\e}(1+\N\mathfrak{t}^{1/2}+\N^{2}\mathfrak{t}^{3/2})$}. Thus, by Proposition \ref{prop:nashdavies}, the {\small$\boldsymbol{\Gamma}^{\mathrm{full}}$}-terms on the \abbr{RHS} of \eqref{eq:speedI1} are {\small$\lesssim\exp[\mathrm{O}(\N^{1+\mathrm{O}(\e_{0})}\s)]\exp[-\upsilon\N^{\e}(1+\N\mathfrak{t}^{1/2}+\N^{2}\mathfrak{t}^{3/2})(\N\delta_{\r,\s})^{-1/2}]$} with {\small$\upsilon>0$} and {\small$\delta_{\r,\s}\leq\max(\N^{-2},\s)$} like in Proposition \ref{prop:nashdavies}. This product of exponentials is {\small$\lesssim\exp[-\upsilon'\N^{\e/2}]\lesssim_{\mathrm{D}}\N^{-\mathrm{D}}$} for any {\small$\mathrm{D}>0$} and some {\small$\upsilon'>0$}, if we take {\small$\e_{0}$} much smaller than {\small$\e$} and use {\small$\s\leq\mathfrak{t}$}. The \abbr{RHS} of \eqref{eq:speedI1} sums {\small$\mathrm{O}(1)$}-many of these exponentially small terms, so this finishes the proof.
\end{proof}
\subsubsection{Proofs of Propositions \ref{prop:case1} and \ref{prop:case2}}
We are now prepared to derive the remaining two ingredients towards Proposition \ref{prop:stochmain1}. Let us present the proof of Proposition \ref{prop:case1} first.
\begin{proof}[Proof of Proposition \ref{prop:case1}]
Let us first assume that the initial data for {\small$\t\mapsto\bphi_{\t}$} is sampled via the measure {\small$\mathbb{P}^{0}$}. We remove this assumption at the end of this proof. Recall the notation from \eqref{eq:stochmain1I1bb}. We claim that for any path-space event {\small$\mathscr{E}$} (to be determined shortly), we have the following for any {\small$\mathrm{D}>0$}:
\begin{align*}
&\E\Big(\mathbf{1}_{\mathscr{E}}\tfrac{1}{|\mathbb{U}_{\k,\ell}|}\int_{\mathbb{U}_{\k,\ell}}\sum_{\w\in\mathbb{I}_{\x,\k}}\boldsymbol{\mathcal{H}}_{\x,\w}\cdot\wt{\boldsymbol{\omega}}^{\k,\ell}_{\s,\t,\w,\x}\d\s\Big)^{2}\\
&\lesssim\E\Big(\mathbf{1}_{\mathscr{E}}\tfrac{1}{|\mathbb{U}_{\k,\ell}|}\int_{\mathbb{U}_{\k,\ell}}\sum_{\w\in\mathbb{I}_{\x,\k}}\boldsymbol{\mathcal{H}}_{\x,\w}\cdot\wt{\boldsymbol{\omega}}^{\k,\ell}_{\s,\t,\w,\x}\cdot\boldsymbol{\eta}(\|\mathbf{D}\mathfrak{q}_{\w}[\bphi_{\t+\s}]\|_{\infty})\d\s\Big)^{2}+\mathrm{O}_{\mathrm{D}}(\N^{-\mathrm{D}})
\end{align*}
Here, {\small$\boldsymbol{\eta}(\cdot)$} is from Notation \ref{notation:cut}, {\small$\mathbf{D}\mathfrak{q}_{\w}[\bphi]$} denotes the vector whose entries, parameterized by {\small$\x\in\llbracket1,\infty\rrbracket$}, are given by {\small$\partial_{\bphi_{\x}}\mathfrak{q}[\bphi]$}, where {\small$\|\cdot\|_{\infty}$} is the max-entry norm. Indeed, this display follows since {\small$|\wt{\boldsymbol{\omega}}^{\k,\ell}|\lesssim\N^{\mathrm{O}(1)}$} deterministically by construction in \eqref{eq:stochmain1I1bb} and Notation \ref{notation:cut}, whereas {\small$\boldsymbol{\eta}(\|\mathbf{D}\mathfrak{q}_{\w}[\bphi_{\t+\s}]\|_{\infty})=1$} with probability {\small$1-\mathrm{O}_{\mathrm{D}}(\N^{-\mathrm{D}})$} for any {\small$\mathrm{D}>0$}. (To prove this last claim, the gradient estimate for bulk-admissible functions in \eqref{eq:bulkadmissibleI} yields a polynomial estimate for {\small$\|\mathbf{D}\mathfrak{q}_{\w}[\bphi_{\t+\s}]\|_{\infty}$}. Then, we use the {\small$\bphi_{\t}$}-estimate in Lemma \ref{lemma:phiap} to control this polynomial upper bound. This implies that {\small$\|\mathbf{D}\mathfrak{q}_{\w}[\bphi_{\t+\s}]\|_{\infty}\lesssim\N^{\e_{0}}$} for small {\small$\e_{0}>0$}. Now, recall from Notation \ref{notation:cut} that  {\small$\boldsymbol{\eta}(\cdot)\equiv1$} for {\small$|\cdot|\leq\N^{\e}$} for a small but fixed {\small$\e>0$}.)

Now, recall from Notation \ref{notation:msblock} that {\small$\min\mathbb{I}_{\x,\k}\gtrsim\N^{\gamma_{\k-1}}$} for {\small$\k\in\llbracket2,\mathrm{K}\rrbracket$}. Indeed, it is separated from the left-edge of {\small$\mathbb{I}_{\x}\subseteq\llbracket0,\infty\rrbracket$} by a distance of {\small$\gtrsim\N^{\gamma_{\k-1}}$} according to Notation \ref{notation:msblock} if {\small$\k\in\llbracket2,\mathrm{K}\rrbracket$}. Moreover, we recall {\small$|\mathbb{U}_{\k,\ell}|$} from Notation \ref{notation:msblock}, and we note that {\small$\N^{\e}(\N|\mathbb{U}_{\k,\ell}|^{1/2}+\N^{2}|\mathbb{U}_{\k,\ell}|^{3/2}+1)\gtrsim\N^{\gamma_{\k-1}}$}. This allows us to use Proposition \ref{prop:speed} in the following manner. Recall the notation from Definition \ref{definition:speed}. We claim that 
\begin{align*}
&\E\Big(\mathbf{1}_{\mathscr{E}}\tfrac{1}{|\mathbb{U}_{\k,\ell}|}\int_{\mathbb{U}_{\k,\ell}}\sum_{\w\in\mathbb{I}_{\x,\k}}\boldsymbol{\mathcal{H}}_{\x,\w}\cdot\wt{\boldsymbol{\omega}}^{\k,\ell}_{\s,\t,\w,\x}\cdot\boldsymbol{\eta}(\|\mathbf{D}\mathfrak{q}_{\w}[\bphi_{\t+\s}]\|_{\infty})\d\s\Big)^{2}\\
&\lesssim\E\Big(\mathbf{1}_{\mathscr{E}}\tfrac{1}{|\mathbb{U}_{\k,\ell}|}\int_{0}^{|\mathbb{U}_{\k,\ell}|}\sum_{\w\in\mathbb{I}_{\x,\k}}\boldsymbol{\mathcal{H}}_{\x,\w}\cdot\wt{\boldsymbol{\omega}}^{\k,\ell,\Z}_{\s,\t,\w,\x}\d\s\Big)^{2}+\mathrm{O}_{\mathrm{D}}(\N^{-\mathrm{D}}),
\end{align*}
where, with notation to be explained shortly, we set
\begin{align}
\cdot\wt{\boldsymbol{\omega}}^{\k,\ell,\Z}_{\s,\t,\w,\x}:=\wt{\mathfrak{q}}_{\w}[\bphi^{\Z}_{\s}]\cdot\mathbf{G}_{\w}[\bphi^{\Z}_{\s}]\cdot\mathbf{J}^{\k,\ell,\x,\Z}_{\s}.
\end{align}
The process {\small$\bphi^{\Z}$} was constructed in Definition \ref{definition:speed}, and its initial data {\small$\bphi^{\Z}_{0}\in\R^{\Z}$} is distributed via {\small$\mathbb{P}^{0}$}. (Indeed, its initial data is constructed to agree with that of {\small$\s\mapsto\bphi_{\t+\s}$} restricted to {\small$\llbracket1,\infty\rrbracket$}, whereas its restriction to {\small$\llbracket-\infty,0\rrbracket$} is independently distributed according to the {\small$\R^{\llbracket-\infty,0\rrbracket}$}-marginal of {\small$\mathbb{P}^{0}$}.) The process {\small$\mathbf{J}^{\k,\ell,\x,\Z}$} is given by the formula for {\small$\mathbf{J}^{\k,\ell,\x}_{\s}$} from Notation \ref{notation:cut} but replacing {\small$\mathbf{j}^{\N}_{\t+\cdot,\cdot}$} therein by {\small$\mathbf{j}^{\Z}$}. The estimate above follows since the term {\small$\wt{\boldsymbol{\omega}}^{\k,\ell}_{\s,\t,\w,\x}$} therein is a smooth function of {\small$\mathbf{j}^{\N}_{\t+\s,\z}$} for {\small$\z$} in some neighborhood of {\small$\w$} and {\small$\x_{\k}=\max\mathbb{I}_{\x,\k}$} of radius {\small$\mathrm{O}(1)$}; moreover, the derivatives of this function {\small$\wt{\boldsymbol{\omega}}^{\k,\ell}_{\s,\t,\w,\x}$} are {\small$\lesssim\N^{\mathrm{O}(1)}$} since we have a factor of {\small$\boldsymbol{\eta}(\|\mathbf{D}\mathfrak{q}_{\w}[\bphi_{\t+\s}]\|_{\infty})$} on the \abbr{LHS} above. So, we can replace {\small$\wt{\boldsymbol{\omega}}^{\k,\ell}$} with {\small$\wt{\boldsymbol{\omega}}^{\k,\ell,\Z}$} up to an error of {\small$\mathrm{O}_{\mathrm{D}}(\N^{-\mathrm{D}})$} for any {\small$\mathrm{D}>0$}, again by Proposition \ref{prop:speed}. Finally, by the same reasoning which led to the first estimate in this proof, we can remove the factor of {\small$\boldsymbol{\eta}(\|\mathbf{D}\mathfrak{q}_{\w}[\bphi_{\t+\s}]\|_{\infty})$} at the cost of another {\small$\mathrm{O}_{\mathrm{D}}(\N^{-\mathrm{D}})$} error. 

Now, to complete the proof, we cite Proposition 6.6 in \cite{Y26PTRF}, which produces \abbr{CLT}-cancellations in space-time for the \abbr{RHS} of the previous estimate. That is, we have the following with explanation given after:
\begin{align}
\E\Big(\mathbf{1}_{\mathscr{E}}\tfrac{1}{|\mathbb{U}_{\k,\ell}|}\int_{0}^{|\mathbb{U}_{\k,\ell}|}\sum_{\w\in\mathbb{I}_{\x,\k}}\boldsymbol{\mathcal{H}}_{\x,\w}\cdot\wt{\boldsymbol{\omega}}^{\k,\ell,\Z}_{\s,\t,\w,\x}\d\s\Big)^{2}\lesssim_{}\N^{\mathrm{O}(\e)}\N^{-2}|\mathbb{U}_{\k,\ell}|^{-1}\|\boldsymbol{\mathcal{H}}_{\x,\cdot}\|_{\ell^{\infty}(\mathbb{I}_{\x,\k})}^{2}|\mathbb{I}_{\x,\k}|+\N^{-3+\mathrm{O}(\e)}.\label{eq:star}
\end{align}
A few clarifications are in order here:
\begin{itemize}
\item To fit in the framework of Proposition 6.6 of \cite{Y26PTRF}, we first replace {\small$\wt{\mathfrak{q}}_{\w}$} in {\small$\boldsymbol{\omega}^{\k,\ell,\Z}$} by the bulk-admissible function {\small$\mathfrak{q}_{\w}$}. As explained above, this comes at a cost of {\small$\lesssim_{\mathrm{D}}\N^{-\mathrm{D}}$} for any {\small$\mathrm{D}>0$}. 
\item Similarly, to fit in the framework of Proposition 6.6 of \cite{Y26PTRF}, we view the space-time integral on the \abbr{LHS} above as the average over {\small$(\s,\w)\in[0,|\mathbb{U}_{\k,\ell}|]\times\mathbb{I}_{\x,\k}$} of the quantity {\small$|\mathbb{I}_{\x,\k}|\boldsymbol{\mathcal{H}}_{\x,\w}\cdot\wt{\boldsymbol{\omega}}^{\k,\ell,\Z}_{\s,\t,\w,\x}$}. In particular, we bake the factor of {\small$|\mathbb{I}_{\x,\k}|\boldsymbol{\mathcal{H}}_{\x,\w}$} into {\small$\wt{\mathfrak{q}}_{\w}[\bphi^{\Z}_{\s}]$} in {\small$\wt{\boldsymbol{\omega}}^{\k,\ell,\Z}_{\s,\t,\w,\x}$}; this does not change the property of {\small$\wt{\mathfrak{q}}_{\w}$} being bulk admissible (besides the fact that its size changes by a factor of {\small$|\mathbb{I}_{\x,\k}|\boldsymbol{\mathcal{H}}_{\x,\w}$}).
\item It is shown in Lemma 6.8 of \cite{Y26PTRF} that if {\small$\mathfrak{w}_{\w}$} is a bulk-admissible family which is also backwards-oriented (see Definition \ref{definition:bulkadmissible}), then we have the following for some high probability path-space event {\small$\mathscr{E}$} (like in the statement of Proposition \ref{prop:stochmain1}):
\begin{align*}
\E^{0}\Big(\mathbf{1}_{\mathscr{E}}\tfrac{1}{|\mathbb{U}_{\k,\ell}|}\int_{0}^{|\mathbb{U}_{\k,\ell}|}\tfrac{1}{|\mathbb{I}_{\x,\k}|}\sum_{\w\in\mathbb{I}_{\x,\k}}\mathfrak{w}_{\w}[\bphi^{\Z}_{\s}]\cdot\mathbf{G}_{\w}[\bphi^{\Z}_{\s}]\cdot\mathbf{J}^{\k,\ell,\x,\Z}_{\s}\d\s\Big)^{2}\lesssim\N^{\mathrm{O}(\e)}\N^{-2}|\mathbb{U}_{\k,\ell}|^{-1}|\mathbb{I}_{\x,\k}|^{-1}\sup_{\w\in\mathbb{I}_{\x,\k}}\E^{0}|\mathfrak{w}_{\w}|^{2}+\N^{-3+\mathrm{O}(\e)}.
\end{align*}
Above, {\small$\E^{0}$} denotes the expectation with respect to either the measure {\small$\mathbb{P}^{0}$} itself or with respect to the law of the {\small$\bphi^{\Z},\mathbf{j}^{\Z}$}-processes, assuming {\small$\bphi^{\Z}_{0}\sim\mathbb{P}^{0}$}. (This is shown under an additional ``fluctuation" assumption {\small$\E^{\sigma}\mathfrak{w}_{\w}=0$} for all {\small$\sigma\in\R$}, but the proof of Proposition 6.6 in \cite{Y26PTRF} removes this assumption by replacing {\small$\mathfrak{w}_{\w}$} with a finite sum of fluctuations plus a term which is {\small$\lesssim\N^{-3+\mathrm{O}(\e)}$}.) Finally, to conclude the estimate \eqref{eq:star}, it suffices to choose {\small$\mathfrak{w}_{\w}=\mathfrak{q}_{\w}\cdot|\mathbb{I}_{\x,\k}|\cdot\boldsymbol{\mathcal{H}}_{\x,\w}$} and use {\small$\E^{0}|\mathfrak{q}_{\w}|^{2}\lesssim1$}, the last of which follows by \eqref{eq:bulkadmissibleI}.
\end{itemize}
Combining the three estimates in this argument yields the desired estimate \eqref{eq:case1I} in the case of {\small$\mathbb{P}^{0}$} initial data. For more general data satisfying \eqref{eq:dataI0}, we again change measure at a multiplicative cost of {\small$\N^{\gamma_{\mathrm{data}}}$}. 
\end{proof}
We now prove Proposition \ref{prop:case2}, which requires a specialized analysis at the edge.
\begin{proof}[Proof of Proposition \ref{prop:case2}]
We will consider two separate cases, the first of which corresponds to the case {\small$\min\mathbb{I}_{\x,1}\geq\N^{\gamma_{1}}$} as in Notation \ref{notation:msblock}. In this case, we have {\small$|\mathbb{U}_{1,\ell}|=|\mathbb{U}_{2,\ell}|$}, and we know that the distance between {\small$\mathbb{I}_{\x,1}$} and the edge {\small$0\in\llbracket0,\infty\rrbracket$} is {\small$\gtrsim\N^{\gamma_{1}}$}. This is the same scenario as in Proposition \ref{prop:case1} for {\small$\k=2$} (see Notation \ref{notation:msblock}), except the block {\small$\mathbb{I}_{\x,1}$} on which we sum on the \abbr{LHS} of the desired bound \eqref{eq:case2I} is smaller than {\small$\mathbb{I}_{\x,\k}$} for {\small$\k=2$}. In particular, we can borrow Proposition \ref{prop:case1} for the case {\small$\k=2$} to get the estimate
\begin{align*}
\E\Big(\mathbf{1}_{\mathscr{E}}\tfrac{1}{|\mathbb{U}_{1,\ell}|}\int_{\mathbb{U}_{1,\ell}}\sum_{\w\in\mathbb{I}_{\x,1}}\boldsymbol{\mathcal{H}}_{\x,\w}\cdot\wt{\mathfrak{q}}_{\w}[\bphi_{\t+\s}]\cdot\mathbf{G}_{\w}[\bphi_{\t+\s}]\d\s\Big)^{2}\lesssim\N^{\mathrm{O}(\e)}\N^{-2}|\mathbb{U}_{1,\ell}|^{-1}|\mathbb{I}_{\x,1}|\cdot\|\boldsymbol{\mathcal{H}}_{\x,\cdot}\|_{\ell^{\infty}(\mathbb{I}_{\x,1})}^{2}+\N^{-2-\beta_{\mathrm{CLT}}}.
\end{align*}
(Strictly speaking, Proposition \ref{prop:case1} requires there to be a factor of {\small$\mathbf{J}^{\k,\ell,\x}_{\s}$} inside the space-time integration on the \abbr{LHS}. However, one can always reinsert this factor up to an error of {\small$\N^{-2-\beta_{\mathrm{CLT}}}$} because of \eqref{eq:stochmain1I3}. In any case, the fluctuations which lead to the cancellation estimate on the \abbr{RHS} come from {\small$\wt{\mathfrak{q}}_{\w}$}, not this missing factor of {\small$\mathbf{J}^{\k,\ell,\x}_{\s}$}.) We now use {\small$|\mathbb{I}_{\x,1}|\asymp\N^{\gamma_{1}}$} and {\small$|\mathbb{U}_{1,\ell}|=|\mathbb{U}_{2,\ell}|\gtrsim\N^{-\e}\N^{-2}\N^{2\gamma_{1}}$} from Notation \ref{notation:msblock} to deduce that the first term on the \abbr{RHS} above is {\small$\lesssim\N^{-2}\N^{-\gamma_{1}+\mathrm{O}(\e)+\mathrm{O}(\delta_{\mathbf{S}})}\lesssim\N^{-2-\beta_{\mathrm{CLT}}}$}, since {\small$\e,\delta_{\mathbf{S}}>0$} are small. This provides the desired estimate \eqref{eq:case2I} in the case {\small$\min\mathbb{I}_{\x,1}\geq\N^{\gamma_{1}}$}. So, for the rest of this proof, we will assume that {\small$\min\mathbb{I}_{\x,1}\leq\N^{\gamma_{1}}$}, in which case {\small$|\mathbb{U}_{1,\ell}|\asymp\N^{-3/2}$}. For this, we again replace {\small$\E$} on the \abbr{LHS} of the desired estimate \eqref{eq:case2I} by {\small$\E^{0}$}, namely expectation with respect to the law of the process {\small$\s\mapsto\bphi_{\t+\s}$} with initial data {\small$\bphi_{\t+\inf\mathbb{U}_{1,\ell}}\sim\mathbb{P}^{0}$}. By \eqref{eq:dataI0}, this requires us to only insert a factor of {\small$\N^{\gamma_{\mathrm{data}}}$}, which is absorbed by {\small$\N^{-\beta_{\mathrm{CLT}}}$} on the \abbr{RHS} of \eqref{eq:case2I} anyways since {\small$\gamma_{\mathrm{data}}>0$} is small. Next, let us replace {\small$\wt{\mathfrak{q}}_{\w}$} on the \abbr{LHS} of \eqref{eq:case2I} by {\small$\mathfrak{q}_{\w}$}; by the paragraph before \eqref{eq:stochmain1I1b}, this replacement is justified on some high probability path-space event {\small$\mathscr{E}$} as in the statement of Proposition \ref{prop:stochmain1}.

We now observe that the quantity that we integrate on the \abbr{LHS} of the desired estimate \eqref{eq:case2I} depends only on the process {\small$\s\mapsto\bphi_{\t+\s}$}; it has {\small$\mathbb{P}^{0}$} as a stationary measure, and the symmetric part of its infinitesimal generator is given by the following (as shown in Section \ref{subsection:generator}):
\begin{align*}
\mathscr{L}_{\mathrm{S}}:=-\N^{2}\sum_{\y\in\llbracket1,\infty\rrbracket}\grad^{\mathbf{X}}_{1}\mathscr{U}'[\bphi_{\y}](\partial_{\bphi_{\y+1}}-\partial_{\bphi_{\y}})+\N^{2}\sum_{\y\in\llbracket1,\infty\rrbracket}(\partial_{\bphi_{\y+1}}-\partial_{\bphi_{\y}})^{2}+\N^{2}\partial_{\bphi_{1}}^{2}.
\end{align*}
Thus, by the Kipnis-Varadhan inequality (see Lemma 2.4 of \cite{KLO12}), we have 
\begin{align*}
\E^{0}\Big(\tfrac{1}{|\mathbb{U}_{1,\ell}|}\int_{\mathbb{U}_{1,\ell}}\sum_{\w\in\mathbb{I}_{\x,1}}\boldsymbol{\mathcal{H}}_{\x,\w}\cdot{\mathfrak{q}}_{\w}[\bphi_{\t+\s}]\cdot\mathbf{G}_{\w}[\bphi_{\t+\s}]\d\s\Big)^{2}&\lesssim\tfrac{1}{|\mathbb{U}_{1,\ell}|^{2}}\int_{\mathbb{U}_{1,\ell}}\Big\|\sum_{\w\in\mathbb{I}_{\x,1}}\boldsymbol{\mathcal{H}}_{\x,\w}\cdot{\mathfrak{q}}_{\w}[\bphi]\cdot\mathbf{G}_{\w}[\bphi]\Big\|_{\mathrm{H}^{-1}}^{2}\d\s\\
&\lesssim\tfrac{1}{|\mathbb{U}_{1,\ell}|}\Big\|\sum_{\w\in\mathbb{I}_{\x,1}}\boldsymbol{\mathcal{H}}_{\x,\w}\cdot{\mathfrak{q}}_{\w}[\bphi]\cdot\mathbf{G}_{\w}[\bphi]\Big\|_{\mathrm{H}^{-1}}^{2}=:\tfrac{1}{|\mathbb{U}_{1,\ell}|}\|\mathfrak{f}\|_{\mathrm{H}^{-1}}^{2},
\end{align*}
where the second estimate holds since the integrand on the \abbr{RHS} of the first line is independent of {\small$\s\in\mathbb{U}_{1,\ell}$}, and
\begin{align*}
&\|\mathfrak{f}\|_{\mathrm{H}^{-1}}^{2}:=\sup_{\mathfrak{g}\in\mathscr{C}^{\infty}(\R^{\llbracket1,\infty\rrbracket},\R)}\Big\{2\E^{0}(\mathfrak{f}[\bphi]\mathfrak{g}[\bphi])+\E^{0}(\mathfrak{g}[\bphi]\mathscr{L}_{\mathrm{S}}\mathfrak{g}[\bphi])\Big\}.
\end{align*}
Note that {\small$\E^{0}\mathfrak{f}[\bphi]=0$}, since {\small$\E^{0}{\mathfrak{q}}_{\w}[\bphi]=0$} (recall that it is a bulk-admissible function), since {\small${\mathfrak{q}}_{\w}[\bphi],\mathbf{G}_{\w}[\bphi]$} depend on {\small$\bphi_{\y}$}-spins for disjoint sets of {\small$\y$}-points (see Notation \ref{notation:cut} and recall the backwards-orientation of {\small${\mathfrak{q}}_{\w}$}), and since {\small$\mathbb{P}^{0}$} is a product measure on {\small$\R^{\llbracket1,\infty\rrbracket}$}. Moreover, {\small$\mathscr{L}_{\mathrm{S}}$} vanishes on constant functions. So, we can restrict to functions {\small$\mathfrak{g}$} which satisfy {\small$\E^{0}\mathfrak{g}=0$}. Next, a standard integration-by-parts (see the proof of Lemma 6.8 in \cite{Y26PTRF}) shows that 
\begin{align*}
\E^{0}(\mathfrak{g}[\bphi]\mathscr{L}_{\mathrm{S}}\mathfrak{g}[\bphi])&=-\N^{2}\sum_{\y\in\llbracket1,\infty\rrbracket}\E^{0}(|(\partial_{\bphi_{\y+1}}-\partial_{\bphi_{\y}})\mathfrak{g}[\bphi]|^{2})-\N^{2}\E^{0}(|\partial_{\bphi_{1}}\mathfrak{g}[\bphi]|^{2})\\
&\leq-\N^{2}\sum_{\y,\y+1\in\mathbb{K}}\E^{0}(|(\partial_{\bphi_{\y+1}}-\partial_{\bphi_{\y}})\mathfrak{g}[\bphi]|^{2})-\N^{2}\E^{0}(|\partial_{\bphi_{1}}\mathfrak{g}[\bphi]|^{2}),
\end{align*}
where {\small$\mathbb{K}\subseteq\llbracket1,\infty\rrbracket$} denotes an interval which contains {\small$1,\mathbb{I}_{\x,1}$}, has length {\small$\lesssim\N^{\gamma_{1}}$}, and is such that {\small$\mathfrak{f}[\bphi]$} depends only on {\small$\bphi_{\y}$} for {\small$\y\in\mathbb{K}$}.; such an interval exists because {\small$|\mathbb{I}_{\x,1}|\asymp\N^{\gamma_{1}}$}, because {\small$\min\mathbb{I}_{\x,1}\lesssim\N^{\gamma_{1}}$}, and because {\small$\mathfrak{q}_{\w}[\bphi],\mathbf{G}_{\w}[\bphi]$} depend only on {\small$\bphi_{\y}$} for {\small$\mathrm{dist}(\y,\mathbb{I}_{\x,1})\lesssim1$} if {\small$\w\in\mathbb{I}_{\x,1}$}. (Recall the locality properties of bulk-admissible functions in Definition \ref{definition:bulkadmissible} and the definition of {\small$\mathbf{G}_{\w}$} from Notation \ref{notation:cut}.) Because {\small$\mathfrak{f}[\bphi]$} depends only on {\small$\bphi_{\y}$} for {\small$\y\in\mathbb{K}$}, we have {\small$\E^{0}(\mathfrak{f}\mathfrak{g})=\E^{0}(\mathfrak{f}\E^{0}_{\mathbb{K}}\mathfrak{g})$}, where {\small$\E^{0}_{\mathbb{K}}$} is the expectation conditioning on {\small$\bphi_{\y}$} for each {\small$\y\in\mathbb{K}$}. Moreover, with a standard convexity argument (see the proof of Lemma 3.5 in \cite{DGP17}), the last line of the previous display can only increase if we replace {\small$\mathfrak{g}\mapsto\E^{0}_{\mathbb{K}}\mathfrak{g}$}. Ultimately, we deduce that 
\begin{align*}
\|\mathfrak{f}\|_{\mathrm{H}^{-1}}^{2}&\leq\sup_{\mathfrak{g}\in\mathscr{C}^{\infty}(\R^{\mathbb{K}},\R)}\Big\{2\E^{0}(\mathfrak{f}[\bphi]\mathfrak{g}[\bphi])-\N^{2}\mathfrak{D}_{\mathbb{K}}(\mathfrak{g})\Big\},
\end{align*}
in which {\small$\mathfrak{D}_{\mathbb{K}}(\mathfrak{g})=\sum_{\y,\y+1\i\mathbb{K}}\E^{0}(|(\partial_{\bphi_{\y+1}}-\partial_{\bphi_{\y}})\mathfrak{g}|^{2})+\E^{0}(|\partial_{\bphi_{1}}\mathfrak{g}|^{2})$} is a convenient and standard shorthand for the ``Dirichlet form". We now claim that the following inequality holds for any {\small$\eta>0$} small but fixed:
\begin{align}
|2\E^{0}(\mathfrak{f}[\bphi]\mathfrak{g}[\bphi])|\lesssim\eta\N^{2}\mathfrak{D}_{\mathbb{K}}(\mathfrak{g})+\N^{-2}|\mathbb{K}|^{2}\E^{0}(\mathfrak{f}^{2}).\label{eq:case2I1}
\end{align}
In short, \eqref{eq:case2I1} is a type of Schwarz inequality, and we will show it at the end of this argument. For now, we will take \eqref{eq:case2I1} for granted. Combining the previous two displays and recalling {\small$\mathfrak{f}$} gives
\begin{align*}
\|\mathfrak{f}\|_{\mathrm{H}^{-1}}^{2}\lesssim\N^{-2}|\mathbb{K}|^{2}\E^{0}\Big(\sum_{\w\in\mathbb{I}_{\x,1}}\boldsymbol{\mathcal{H}}_{\x,\w}\cdot{\mathfrak{q}}_{\w}[\bphi]\cdot\mathbf{G}_{\w}[\bphi]\Big)^{2}\lesssim\N^{-2}\N^{4\gamma_{1}}\N^{\mathrm{O}(\e)}\|\boldsymbol{\mathcal{H}}_{\x,\cdot}\|_{\ell^{\infty}(\mathbb{I}_{\x,1})}^{2}\lesssim\N^{-4+4\gamma_{1}+\mathrm{O}(\e)+\mathrm{O}(\delta_{\mathbf{S}})},
\end{align*}
where the last estimate follows because bulk-admissible functions are polynomially bounded and thus have finite second moments, and {\small$|\mathbf{G}_{\w}|\lesssim\N^{\e}$} by construction in Notation \ref{notation:cut}. Ultimately, from this, we obtain
\begin{align*}
\E^{0}\Big(\tfrac{1}{|\mathbb{U}_{1,\ell}|}\int_{\mathbb{U}_{1,\ell}}\sum_{\w\in\mathbb{I}_{\x,1}}\boldsymbol{\mathcal{H}}_{\x,\w}\cdot{\mathfrak{q}}_{\w}[\bphi_{\t+\s}]\cdot\mathbf{G}_{\w}[\bphi_{\t+\s}]\d\s\Big)^{2}&\lesssim\N^{-4+4\gamma_{1}+\mathrm{O}(\e)+\mathrm{O}(\delta_{\mathbf{S}})}|\mathbb{U}_{1,\ell}|^{-1}\\
&\lesssim\N^{-4+4\gamma_{1}+\mathrm{O}(\e)+\mathrm{O}(\delta_{\mathbf{S}})}\N^{\frac32}\lesssim\N^{-2-\beta_{\mathrm{CLT}}}
\end{align*}
This implies the desired estimate \eqref{eq:case2I}, so the proof is complete as soon as we show \eqref{eq:case2I1}. For this, we first use the Schwarz inequality to get {\small$|2\E^{0}(\mathfrak{f}[\bphi]\mathfrak{g}[\bphi])|\lesssim\eta\N^{2}|\mathbb{K}|^{-2}\E^{0}(|\mathfrak{g}|^{2})+\N^{-2}|\mathbb{K}|^{2}\E^{0}(|\mathfrak{f}|^{2}$} for {\small$\eta>0$} small but fixed. Thus, it suffices to show {\small$\E^{0}(|\mathfrak{g}|^{2})\lesssim|\mathbb{K}|^{2}\mathfrak{D}_{\mathbb{K}}(\mathfrak{g})$}. We claim {\small$\E^{0}(|\mathfrak{g}|^{2})\lesssim\wt{\mathfrak{D}}_{\mathbb{K}}(\mathfrak{g})$}, where {\small$\wt{\mathfrak{D}}_{\mathbb{K}}(\mathfrak{g})=\sum_{\y\in\mathbb{K}}\E^{0}(|\partial_{\bphi_{\y}}\mathfrak{g}|^{2})$} is the site-wise Dirichlet form. This follows from standard Bakry-Emery theory. Indeed, the functional {\small$\wt{\mathfrak{D}}_{\mathbb{K}}$} is the Dirichlet form for independent Langevin dynamics on {\small$\R^{\mathbb{K}}$}, for which we have a positive spectral gap. It therefore suffices to show that {\small$\wt{\mathfrak{D}}_{\mathbb{K}}(\mathfrak{g})\lesssim|\mathbb{K}|^{2}\mathfrak{D}_{\mathbb{K}}(\mathfrak{g})$}. This follows from the standard Poincar\'{e} inequality on {\small$\mathbb{K}$}, namely that {\small$\|\mathsf{f}\|_{\ell^{2}(\mathbb{K})}^{2}\lesssim|\mathsf{f}_{1}|^{2}+|\mathbb{K}|^{2}\|\grad^{\mathbf{X}}_{1}\mathsf{f}\|_{\ell^{2}(\mathbb{K})}^{2}$}, applied to {\small$\mathsf{f}_{\y}:=\partial_{\bphi_{\y}}\mathfrak{g}$}. This completes the proof.
\end{proof}
\subsection{Proof of Proposition \ref{prop:stochmain2}}\label{subsection:stochmain2}
The argument is very similar to the proof of Proposition \ref{prop:case2} given immediately above, specifically the latter case therein. Thus, we will inherit the ideas and notation therein.
\begin{proof}[Proof of Proposition \ref{prop:stochmain2}]
As {\small$\mathfrak{b}$} is edge-admissible (see Definition \ref{definition:edgeadmissible}), it satisfies the bound \eqref{eq:bulkadmissibleI}. In particular, by Lemma \ref{lemma:phiap}, the \abbr{RHS} of \eqref{eq:bulkadmissibleI} is {\small$\lesssim\N^{\e}$} for any fixed {\small$\e>0$} with high probability. Moreover, by Lemma \ref{lemma:currtreg}, we know that {\small$\mathbf{Z}^{\N}_{\t-\s,\x}(\mathbf{Z}^{\N}_{\t,\x})^{-1}=1+\mathrm{O}(\N^{-\rho})$} uniformly in {\small$(\t,\x)\in[0,\mathrm{T}]\times\llbracket0,\mathrm{O}(1)\rrbracket$} and {\small$\s\in[0,\mathfrak{t}_{\partial}]:=[0,\N^{-3/2}]$} (see Definition \ref{definition:edgeaverage}) with high probability. Thus, similar to \eqref{eq:stochmain1I3}, for some high probability path-space event {\small$\mathscr{E}$} as in the statement of Proposition \ref{prop:stochmain2}, we have the following estimate for some {\small$\rho>0$}:
\begin{align}
\E(\mathbf{1}_{\mathscr{E}}|\mathbf{Av}^{\mathbf{T},\mathfrak{b}}_{\t,\x}|^{2})&=\E\Big(\mathbf{1}_{\mathscr{E}}\cdot\mathfrak{t}_{\partial}^{-1}\int_{0}^{\mathfrak{t}_{\partial}}\mathfrak{b}[\bphi_{\t-\s}]\cdot\mathbf{Z}^{\N}_{\t-\s,\x}(\mathbf{Z}^{\N}_{\t,\x})^{-1}\d\s\Big)^{2}\lesssim\E\Big(\mathfrak{t}_{\partial}^{-1}\int_{0}^{\mathfrak{t}_{\partial}}\mathfrak{b}[\bphi_{\t-\s}]\d\s\Big)^{2}+\mathrm{O}(\N^{-\rho}).\nonumber
\end{align}
By a standard change-of-variables, we can replace {\small$\mathfrak{b}[\bphi_{\t-\s}]\mapsto\mathfrak{b}[\bphi_{\t-\mathfrak{t}_{\partial}+\s}]$} in the integral on the far \abbr{RHS}. Next, we can change measure from {\small$\E$} to {\small$\E^{0}$} if we give up a factor of {\small$\N^{\gamma_{\mathrm{data}}}$}, where {\small$\E^{0}$} means the expectation with respect to the path-space law of {\small$\bphi_{\t-\mathfrak{t}_{\partial}+\cdot}$} whose initial data is distributed according to {\small$\mathbb{P}^{0}$} from Definition \ref{definition:gc}. Therefore, by the Kipnis-Varadhan inequality (as used in the proof of Proposition \ref{prop:case2} above), we have the estimate
\begin{align*}
\E\Big(\mathfrak{t}_{\partial}^{-1}\int_{0}^{\mathfrak{t}_{\partial}}\mathfrak{b}[\bphi_{\t-\s}]\d\s\Big)^{2}\lesssim\N^{\gamma_{\mathrm{data}}}\mathfrak{t}_{\partial}^{-1}\|\mathfrak{b}\|_{\mathrm{H}^{-1}}^{2}.
\end{align*}
As in the proof of Proposition \ref{prop:case2}, if {\small$\E^{0}\mathfrak{b}=0$} (which is true for the edge-admissible functions in Definition \ref{definition:edgeadmissible}), then we have {\small$\|\mathfrak{b}\|_{\mathrm{H}^{-1}}^{2}\lesssim\N^{-2}|\mathbb{K}|^{2}\E^{0}|\mathfrak{b}|^{2}$}, in which {\small$\mathbb{K}$} is any interval containing both {\small$0$} and all {\small$\y\in\llbracket0,\infty\rrbracket$} such that {\small$\mathfrak{b}[\bphi]$} depends on {\small$\bphi_{\y}$}. Because edge-admissible terms have a local support (see Definition \ref{definition:edgeadmissible}), we know {\small$|\mathbb{K}|\lesssim1$}. Because said functions also satisfy a polynomial estimate \eqref{eq:bulkadmissibleI}, we obtain {\small$\E^{0}|\mathfrak{b}|^{2}\lesssim1$}. Therefore, the \abbr{RHS} of the previous display is {\small$\lesssim\N^{\gamma_{\mathrm{data}}}\N^{-2}\mathfrak{t}_{\partial}^{-1}\lesssim\N^{-1/2+\gamma_{\mathrm{data}}}$}, since {\small$\mathfrak{t}_{\partial}\gtrsim\N^{-3/2}$} (see Definition \ref{definition:edgeaverage}). Combining this with the previous two displays completes the proof.
\end{proof}
%
%
%
\section{Proof of Proposition \ref{prop:hkzeta}}\label{section:hkzeta}
The section is organized as follows. First, we will modify the stochastic heat kernel \abbr{SDE} \eqref{eq:hkzetaI1}-\eqref{eq:hkzetaI6} for {\small$\mathbf{K}^{\N,\zeta}$} in a way that does not change it with high probability but makes it technically more accessible to analyze. Then, we decompose the (modified) {\small$\mathbf{K}^{\N,\zeta}$}-kernel in terms of another stochastic kernel, denoted by {\small$\mathbf{L}^{\N,\zeta,\sim}$}, that solves an \abbr{SDE} that only includes terms in \eqref{eq:hkzetaI1}, \eqref{eq:hkzetaI4}, and \eqref{eq:hkzetaI6}. The motivation here is that these terms are estimated in a very different topology compared to the other terms in \eqref{eq:hkzetaI1}-\eqref{eq:hkzetaI6}. For example, the noise in \eqref{eq:hkzetaI1} is analyzed using high moment estimates (as is the case for the limit \abbr{SHE} \eqref{eq:sheIa}-\eqref{eq:sheIb}). On the other hand, the {\small$\mathbf{Av}^{\mathbf{T},\mathbf{X},\mathscr{Q}}$}-terms in \eqref{eq:hkzetaI3} admit only second moment estimates in a fairly analytically weak sense (see Proposition \ref{prop:stochmain1}).

First, however, before we begin, we introduce the following stopping time that will be part of our analysis by means of providing various a priori estimates:
\begin{align}
\mathfrak{t}_{\mathrm{ap}}:=1&\wedge\inf\Big\{\s\in[0,1]:\max_{\x\in\llbracket0,\N^{2+2\zeta+\zeta_{\mathrm{large}}}\rrbracket}|\bphi_{\s,\x}|\geq\N^{\delta_{\mathbf{S}}}\Big\}\label{eq:tap}\\
&\wedge\inf\Big\{\s\in[0,1]:\max_{\ell\in\llbracket1,\mathrm{L}_{1}\rrbracket}\max_{\x\in\llbracket0,\N^{2+2\zeta+\zeta_{\mathrm{large}}}\rrbracket}|\mathbf{Av}^{\mathbf{X},\mathscr{Q}_{\ell}}_{\s,\x}|\geq\N^{-\frac12+\delta_{\mathbf{S}}}\Big\}\nonumber\\
&\wedge\inf\Big\{\s\in[0,1]:\max_{\ell\in\llbracket1,\mathrm{L}_{1}\rrbracket}\max_{\x\in\llbracket0,\N^{2+2\zeta+\zeta_{\mathrm{large}}}\rrbracket}|\mathbf{Av}^{\mathbf{T},\mathbf{X},\mathscr{Q}_{\ell}}_{\s,\x}|\geq\N^{-\frac12+\delta_{\mathbf{S}}}\Big\}\nonumber\\
&\wedge\inf\Big\{\s\in[0,1]:\max_{\ell\in\llbracket1,\mathrm{L}_{2}\rrbracket}\max_{\x\in\llbracket0,\mathrm{O}(1)\rrbracket}|\mathbf{Av}^{\mathbf{T},\mathfrak{b}_{\ell}}_{\s,\x}|\geq\N^{\mathrm{C}\delta_{\mathbf{S}}}\Big\}\nonumber\\
&\wedge\inf\Big\{\s\in[0,1]:\max_{\x\in\llbracket0,\N^{2+2\zeta+2\zeta_{\mathrm{large}}}\rrbracket}|\mathbf{R}^{\N}_{\s,\x}-\mathbf{R}^{\N,\wedge}_{\s,\x}|\neq0\Big\}.\nonumber
\end{align}
Above, we have {\small$\mathrm{C}\lesssim1$}. The use of {\small$\N^{2+2\zeta+\zeta_{\mathrm{large}}}$} is technical; it is there to guarantee we cover the (spatial support) of {\small$\boldsymbol{\chi}^{(\zeta)}$}. We clarify that {\small$\mathfrak{t}_{\mathrm{ap}}$} is, indeed, a stopping time, because the time-averages in the third and fourth lines are backwards in time and are thus adapted to the natural filtration associated to \eqref{eq:currI}-\eqref{eq:currII}.

Above, recall {\small$\delta_{\mathbf{S}}>0$} as the small constant from Definition \ref{definition:heat}, as well as {\small$\zeta_{\mathrm{large}}=\mathrm{O}(1)$} and {\small$\mathbf{R}^{\N,\wedge}$} from \eqref{eq:rwedge}. We also recall the collections {\small$\mathscr{Q}_{\ell}$} of bulk-admissible functions from Proposition \ref{prop:sde}, used in the stochastic heat kernel \abbr{SDE} \eqref{eq:hkzetaI1}-\eqref{eq:hkzetaI6}. As an immediate consequence of Lemmas \ref{lemma:phiap}, \ref{lemma:rratio}, \ref{lemma:cltx}, and \ref{lemma:cltxt?}, we have
\begin{align}
\mathbb{P}(\mathfrak{t}_{\mathrm{ap}}=1)=1-\mathrm{o}(1).\label{eq:tap2}
\end{align}
%
\subsection{Technical modification}
We let {\small$\mathbf{K}^{\N,\zeta,\sim}_{\s,\t,\x,\y}$} be a function of {\small$(\s,\t,\x,\y)\in[0,\infty)^{2}\times\llbracket0,\infty\rrbracket^{2}$}, restricted to times {\small$\s\leq\t$}, which solves the following modified version of \eqref{eq:hkzetaI1}-\eqref{eq:hkzetaI6} (as an \abbr{SDE} in the {\small$(\t,\x)$}-variables, so that {\small$(\s,\y)$} are treated as fixed), in which the modifications are marked in red font for clarity:
\begin{align}
\d\mathbf{K}^{\N,\zeta,\sim}_{\s,\t,\x,\y}&=\mathscr{T}_{\N}\mathbf{K}^{\N,\zeta,\sim}_{\s,\t,\x,\y}\d\t+[\mathbf{e}^{\mathfrak{t}_{\N}\mathscr{T}_{\N}}(\mathbf{1}^{(\x)}_{\cdot}{\color{blue}\boldsymbol{\chi}^{(\zeta_{\mathrm{large}})}_{\cdot}}\cdot\sqrt{2}\lambda\N^{\frac12}{\color{blue}\mathbf{R}^{\N,\wedge}_{\t,\cdot}}\mathbf{K}^{\N,\zeta,\sim}_{\s,\t,\cdot,\y}\d\mathbf{b}_{\t,\cdot})]_{\x}+{\ocolor{red}\mathbf{1}_{\mathfrak{t}_{\mathrm{ap}}\geq\t}}\mathrm{Err}[{\color{blue}\boldsymbol{\chi}^{(\zeta_{})}_{}\mathbf{R}^{\N,\wedge}}\mathbf{K}^{\N,\zeta,\sim}_{\s,\cdot,\cdot,\y}]_{\t,\x}\d\t\nonumber\\
&+{\ocolor{red}\mathbf{1}_{\mathfrak{t}_{\mathrm{ap}}\geq\t}}\sum_{\ell\in\llbracket1,\mathrm{L}_{1}\rrbracket}{}\sum_{\w\in\llbracket0,\infty\rrbracket}\boldsymbol{\mathcal{K}}^{(\ell)}_{\x,\w}\cdot{\color{blue}\boldsymbol{\chi}^{(\zeta_{})}_{\w}}\cdot\N\cdot\mathbf{Av}^{\mathbf{T},\mathbf{X},\mathscr{Q}_{\ell}}_{\t,\w}{\color{blue}\mathbf{R}^{\N,\wedge}_{\t,\w_{\pm}}}\mathbf{K}^{\N,\zeta,\sim}_{\s,\t,\w_{\pm},\y}\d\t\nonumber\\
&+\sum_{\ell\in\llbracket1,\mathrm{L}_{1}\rrbracket}{}\grad^{\mathbf{T},\mathrm{av}}_{\mathfrak{t}_{\mathbf{Av}}}\Big({\ocolor{red}\mathbf{1}_{\mathfrak{t}_{\mathrm{ap}}\geq\t}}\sum_{\w\in\llbracket0,\infty\rrbracket}\boldsymbol{\mathcal{K}}^{(\ell)}_{\x,\w}\cdot{\color{blue}\boldsymbol{\chi}^{(\zeta_{})}_{\w}}\cdot\N\cdot\mathbf{Av}^{\mathbf{X},\mathscr{Q}_{\ell}}_{\t,\w}{\color{blue}\mathbf{R}^{\N,\wedge}_{\t,\w_{\pm}}}\mathbf{K}^{\N,\zeta,\sim}_{\s,\t,\w_{\pm},\y}\Big)\d\t\nonumber\\
&+{\ocolor{red}\mathbf{1}_{\mathfrak{t}_{\mathrm{ap}}\geq\t}}\sum_{\ell\in\llbracket1,\mathrm{L}_{2}\rrbracket}\sum_{\w\in\llbracket0,\infty\rrbracket}\boldsymbol{\mathcal{K}}^{\partial,\ell}_{\x,\w}\cdot\N\mathbf{Av}^{\mathbf{T},\mathfrak{b}_{\ell}}_{\t,\w}{\color{blue}\mathbf{R}^{\N,\wedge}_{\t,\w}}\mathbf{K}^{\N,\zeta,\sim}_{\s,\t,\w,\y}\d\t\nonumber\\
&+\sum_{\ell\in\llbracket1,\mathrm{L}_{2}\rrbracket}\grad^{\mathbf{T},\mathrm{av}}_{\mathfrak{t}_{\partial}}\Big({\ocolor{red}\mathbf{1}_{\mathfrak{t}_{\mathrm{ap}}\geq\t}}\sum_{\w\in\llbracket0,\infty\rrbracket}\boldsymbol{\mathcal{K}}^{\partial,\ell}_{\x,\w}\cdot\N\mathfrak{b}_{\ell}[\bphi_{\t}]{\color{blue}\mathbf{R}^{\N,\wedge}_{\t,\w}}\mathbf{K}^{\N,\zeta,\sim}_{\s,\t,\w,\y}\Big)\d\t.\label{eq:hkzetasim}
\end{align}
The initial data is also {\small$\mathbf{K}^{\N,\zeta,\sim}_{\s,\s,\x,\y}=\mathbf{1}_{\x=\y}$}. In words, we modify the original stochastic heat kernel \abbr{SDE} \eqref{eq:hkzetaI1}-\eqref{eq:hkzetaI6} by adding the stopping time {\small$\mathfrak{t}_{\mathrm{ap}}$} in front of essentially any term for which we require a priori stochastic estimates. However, by \eqref{eq:tap2}, these modifications are harmless with high probability. Indeed, we have the following.
\begin{lemma}\label{lemma:hkzetasim}
\fsp With high probability, we have {\small$\mathbf{K}^{\N,\zeta}_{\s,\t,\x,\y}=\mathbf{K}^{\N,\zeta,\sim}_{\s,\t,\x,\y}$} for all {\small$0\leq\s\leq\t\leq1$} and {\small$\x,\y\in\llbracket0,\infty\rrbracket$}.
\end{lemma}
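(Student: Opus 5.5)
The plan is to work on the event $\{\mathfrak{t}_{\mathrm{ap}}=1\}$, which by \eqref{eq:tap2} has probability $1-\mathrm{o}(1)$. On this event, for every $\t\in[0,1]$ the indicator $\mathbf{1}_{\mathfrak{t}_{\mathrm{ap}}\geq\t}$ equals $1$. Hence every red factor in \eqref{eq:hkzetasim} equals $1$ at every time $\t\leq1$, including the factors sitting inside the averaged time-gradients $\grad^{\mathbf{T},\mathrm{av}}_{\mathfrak{t}_{\mathbf{Av}}}$ and $\grad^{\mathbf{T},\mathrm{av}}_{\mathfrak{t}_{\partial}}$. These gradients only evaluate their arguments at times $\t-\r\in[0,\t]$ (negative times are dropped by Definition \ref{definition:grad}), and at all such times the indicator is again $1$. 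Therefore, pathwise on $\{\mathfrak{t}_{\mathrm{ap}}=1\}$, the coefficients of the linear system \eqref{eq:hkzetasim} coincide with those of \eqref{eq:hkzetaI1}-\eqref{eq:hkzetaI6} on $[\s,1]$.

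I then argue uniqueness, so that identical equations force identical solutions. Consider the difference $\mathbf{D}:=\mathbf{K}^{\N,\zeta}-\mathbf{K}^{\N,\zeta,\sim}$, stopped at $\mathfrak{t}_{\mathrm{ap}}$. It solves the homogeneous version of the linear SDE with zero initial data at time $\s$, because all coefficients agree up to the stopping time. The zero solution is the unique solution, by the same Picard-iteration well-posedness argument invoked for \eqref{eq:sdezetaI1}-\eqref{eq:sdezetaI6}. The following estimates are deterministic before $\mathfrak{t}_{\mathrm{ap}}$:
\begin{itemize}
\item the coefficients $\mathbf{Av}$, $\mathbf{R}^{\N,\wedge}$ and $\bphi$ are polynomially bounded in $\N$ on the support of the cutoffs $\boldsymbol{\chi}^{(\zeta)}$ and $\boldsymbol{\chi}^{(\zeta_{\mathrm{large}})}$;
\item the kernels $\boldsymbol{\mathcal{K}}^{(\ell)}$ and $\boldsymbol{\mathcal{K}}^{\partial,\ell}$, and the $\mathrm{Err}$ kernel, have exponential off-diagonal decay by \eqref{eq:sdeII2}, \eqref{eq:sdeIII} and \eqref{eq:sdeIV}.
\end{itemize}
With these bounds, a Gronwall argument in a weighted norm such as $\sup_{\x}\E|\mathbf{D}_{\s,\t\wedge\mathfrak{t}_{\mathrm{ap}},\x,\y}|^{2}e^{-\kappa|\x-\y|/\N}$ closes. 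The noise term is handled by the Itô isometry, and $\mathscr{T}_{\N}$ is handled through the Duhamel formula with the heat kernel $\mathbf{H}^{\N,\mathbf{A}}$.

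The only mildly delicate point is the $\grad^{\mathbf{T},\mathrm{av}}$ terms, because they involve past values of $\mathbf{K}$ over a window of length $\mathfrak{t}_{\mathbf{Av}}$ or $\mathfrak{t}_{\partial}$. This makes the equation a delay-type equation rather than an equation in Markov form. I would handle this in the Gronwall step by bounding these terms with the supremum of the weighted norm over the past window. This still closes, since the difference $\mathbf{D}$ vanishes identically on $[\s-\mathfrak{t}_{\mathbf{Av}},\s]$ by the convention that dropped or pre-$\s$ times contribute nothing. We then conclude that $\mathbf{D}\equiv0$ for all $\t\leq\mathfrak{t}_{\mathrm{ap}}$ simultaneously in $(\s,\y)$, and in particular on all of $[0,1]$ on the high-probability event $\{\mathfrak{t}_{\mathrm{ap}}=1\}$.
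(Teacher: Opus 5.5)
Your proposal is correct and follows the paper's argument. Both observe that before the stopping time $\mathfrak{t}_{\mathrm{ap}}$ the linear SDEs \eqref{eq:hkzetaI1}--\eqref{eq:hkzetaI6} and \eqref{eq:hkzetasim} coincide. The difference therefore solves a linear equation with zero initial data, so it vanishes up to $\mathfrak{t}_{\mathrm{ap}}$ by uniqueness, and \eqref{eq:tap2} finishes the proof. The only difference is that you write out the uniqueness step (a weighted Gronwall argument that accounts for the delay structure of the $\grad^{\mathbf{T},\mathrm{av}}$ terms), which the paper leaves to ``Picard iteration.''
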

\begin{proof}
The difference {\small$\mathbf{K}^{\N,\zeta}_{\s,\t,\x,\y}-\mathbf{K}^{\N,\zeta,\sim}_{\s,\t,\x,\y}$}, as a process in {\small$\t\geq\s$}, solves the linear \abbr{SDE} \eqref{eq:hkzetasim} until the stopping time {\small$\mathfrak{t}_{\mathrm{ap}}$} (since both terms in this difference solve this equation until time {\small$\mathfrak{t}_{\mathrm{ap}}$}, and the equation itself is linear). Also, its initial data at {\small$\t=\s$} vanishes identically. Thus, by uniqueness of solutions to the well-posed linear \abbr{SDE} (e.g. via Picard iteration), this difference must vanish until time {\small$\mathfrak{t}_{\mathrm{ap}}$}, so the claim follows by \eqref{eq:tap2}.
\end{proof}
\subsection{Decomposition into auxiliary kernels}
We now expand {\small$\mathbf{K}^{\N,\zeta,\sim}$} around a kernel {\small$\mathbf{L}^{\N,\zeta,\sim}$}; see the beginning of this section for motivation in doing so. Let {\small$\mathbf{L}^{\N,\zeta,\sim}_{\s,\t,\x,\y}$} be a function of {\small$(\s,\t,\x,\y)\in[0,\infty)^{2}\times\llbracket0,\infty\rrbracket^{2}$}, restricted to {\small$\s\leq\t$}, which solves the \abbr{SDE} below in {\small$(\t,\x)$}-variables obtained by taking the first, third, and fifth lines of \eqref{eq:hkzetasim}:
\begin{align}
\d\mathbf{L}^{\N,\zeta,\sim}_{\s,\t,\x,\y}&=\mathscr{T}_{\N}\mathbf{L}^{\N,\zeta,\sim}_{\s,\t,\x,\y}\d\t+[\mathbf{e}^{\mathfrak{t}_{\N}\mathscr{T}_{\N}}(\mathbf{1}^{(\x)}_{\cdot}{\color{blue}\boldsymbol{\chi}^{(\zeta_{\mathrm{large}})}_{\cdot}}\cdot\sqrt{2}\lambda\N^{\frac12}{\color{blue}\mathbf{R}^{\N,\wedge}_{\t,\cdot}}\mathbf{L}^{\N,\zeta,\sim}_{\s,\t,\cdot,\y}\d\mathbf{b}_{\t,\cdot})]_{\x}+{\ocolor{red}\mathbf{1}_{\mathfrak{t}_{\mathrm{ap}}\geq\t}}\mathrm{Err}[{\color{blue}\boldsymbol{\chi}^{(\zeta_{})}_{}\mathbf{R}^{\N,\wedge}}\mathbf{L}^{\N,\zeta,\sim}_{\s,\cdot,\cdot,\y}]_{\t,\x}\d\t\nonumber\\
&+\sum_{\ell\in\llbracket1,\mathrm{L}_{1}\rrbracket}{}\grad^{\mathbf{T},\mathrm{av}}_{\mathfrak{t}_{\mathbf{Av}}}\Big({\ocolor{red}\mathbf{1}_{\mathfrak{t}_{\mathrm{ap}}\geq\t}}\sum_{\w\in\llbracket0,\infty\rrbracket}\boldsymbol{\mathcal{K}}^{(\ell)}_{\x,\w}\cdot{\color{blue}\boldsymbol{\chi}^{(\zeta_{})}_{\w}}\cdot\N\cdot\mathbf{Av}^{\mathbf{X},\mathscr{Q}_{\ell}}_{\t,\w}{\color{blue}\mathbf{R}^{\N,\wedge}_{\t,\w_{\pm}}}\mathbf{L}^{\N,\zeta,\sim}_{\s,\t,\w_{\pm},\y}\Big)\d\t\nonumber\\
&+\sum_{\ell\in\llbracket1,\mathrm{L}_{2}\rrbracket}\grad^{\mathbf{T},\mathrm{av}}_{\mathfrak{t}_{\partial}}\Big({\ocolor{red}\mathbf{1}_{\mathfrak{t}_{\mathrm{ap}}\geq\t}}\sum_{\w\in\llbracket0,\infty\rrbracket}\boldsymbol{\mathcal{K}}^{\partial,\ell}_{\x,\w}\cdot\N\mathfrak{b}_{\ell}[\bphi_{\t}]{\color{blue}\mathbf{R}^{\N,\wedge}_{\t,\w}}\mathbf{L}^{\N,\zeta,\sim}_{\s,\t,\w,\y}\Big)\d\t.\label{eq:lzetasim}
\end{align}
We take the ``fundamental" initial data at {\small$\t=\s$} given by {\small$\mathbf{L}^{\N,\zeta,\sim}_{\s,\s,\x,\y}=\mathbf{1}_{\x=\y}$}.

The following result, which is essentially a Duhamel formula, expands {\small$\mathbf{K}^{\N,\zeta,\sim}$} around {\small$\mathbf{L}^{\N,\zeta,\sim}$}.
\begin{lemma}\label{lemma:duhamel}
\fsp \fsp For any {\small$(\s,\t,\x,\y)\in[0,\infty)^{2}\times\llbracket0,\infty\rrbracket^{2}$} satisfying {\small$\s\leq\t$}, we have
\begin{align}
\mathbf{K}^{\N,\zeta,\sim}_{\s,\t,\x,\y}&=\mathbf{L}^{\N,\zeta,\sim}_{\s,\t,\x,\y}+\int_{\s}^{\t}\sum_{\w\in\llbracket0,\infty\rrbracket}\mathbf{L}^{\N,\zeta,\sim}_{\r,\t,\x,\w}\cdot\boldsymbol{\Psi}_{\r}[\mathbf{K}^{\N,\zeta,\sim}_{\s,\r,\cdot,\y}]_{\w}\d\r+\int_{\s}^{\t}\sum_{\w\in\llbracket0,\infty\rrbracket}\mathbf{L}^{\N,\zeta,\sim}_{\r,\t,\x,\w}\cdot\boldsymbol{\Phi}_{\r}[\mathbf{K}^{\N,\zeta,\sim}_{\s,\r,\cdot,\y}]_{\w}\d\r.\label{eq:duhamelI1}
\end{align}
For the sake of a convenient and clean presentation, we have used the notation
\begin{align}
\boldsymbol{\Psi}_{\r}[\mathbf{K}^{\N,\zeta,\sim}_{\s,\r,\cdot,\y}]_{\w}&:={\ocolor{red}\mathbf{1}_{\mathfrak{t}_{\mathrm{ap}}\geq\r}}\sum_{\ell\in\llbracket1,\mathrm{L}_{1}\rrbracket}{}\sum_{\z\in\llbracket0,\infty\rrbracket}\boldsymbol{\mathcal{K}}^{(\ell)}_{\w,\z}\cdot{\color{blue}\boldsymbol{\chi}^{(\zeta_{})}_{\z}}\cdot\N\cdot\mathbf{Av}^{\mathbf{T},\mathbf{X},\mathscr{Q}_{\ell}}_{\r,\z}{\color{blue}\mathbf{R}^{\N,\wedge}_{\r,\z_{\pm}}}\mathbf{K}^{\N,\zeta,\sim}_{\s,\r,\z_{\pm},\y},\label{eq:duhamelII2}\\
\boldsymbol{\Phi}_{\r}[\mathbf{K}^{\N,\zeta,\sim}_{\s,\r,\cdot,\y}]_{\w}&:={\ocolor{red}\mathbf{1}_{\mathfrak{t}_{\mathrm{ap}}\geq\r}}\sum_{\ell\in\llbracket1,\mathrm{L}_{2}\rrbracket}\sum_{\z\in\llbracket0,\infty\rrbracket}\boldsymbol{\mathcal{K}}^{\partial,\ell}_{\w,\z}\cdot\N\mathbf{Av}^{\mathbf{T},\mathfrak{b}_{\ell}}_{\r,\z}{\color{blue}\mathbf{R}^{\N,\wedge}_{\r,\z}}\mathbf{K}^{\N,\zeta,\sim}_{\s,\r,\z,\y}.\label{eq:duhamelII1}
\end{align}
\end{lemma}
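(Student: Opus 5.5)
The plan is the standard variation-of-constants argument for linear evolution equations, applied to \eqref{eq:hkzetasim} and \eqref{eq:lzetasim}. Write the \abbr{SDE} \eqref{eq:hkzetasim} for {\small$\mathbf{K}^{\N,\zeta,\sim}$} as the \abbr{SDE} \eqref{eq:lzetasim} (with {\small$\mathbf{L}$} replaced by {\small$\mathbf{K}$}) plus the two extra drift terms, which are exactly {\small$\boldsymbol{\Psi}_{\t}[\mathbf{K}^{\N,\zeta,\sim}_{\s,\t,\cdot,\y}]_{\x}\d\t$} and {\small$\boldsymbol{\Phi}_{\t}[\mathbf{K}^{\N,\zeta,\sim}_{\s,\t,\cdot,\y}]_{\x}\d\t$} from \eqref{eq:duhamelII2}-\eqref{eq:duhamelII1}. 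Denote by {\small$\mathbf{D}_{\s,\t,\x,\y}$} the right-hand side of \eqref{eq:duhamelI1}. The goal is to show that {\small$\mathbf{D}$} solves the same linear \abbr{SDE} \eqref{eq:hkzetasim} in {\small$(\t,\x)$} with the same initial data {\small$\mathbf{1}_{\x=\y}$}. Uniqueness for this linear equation, by Picard iteration as in the well-posedness discussion after \eqref{eq:sdezetaI6} and in Lemma \ref{lemma:hkzetasim}, then gives {\small$\mathbf{D}=\mathbf{K}^{\N,\zeta,\sim}$}.

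\emph{Initial data.} At {\small$\t=\s$} both integrals in \eqref{eq:duhamelI1} vanish and {\small$\mathbf{L}^{\N,\zeta,\sim}_{\s,\s,\x,\y}=\mathbf{1}_{\x=\y}$}, so {\small$\mathbf{D}_{\s,\s,\x,\y}=\mathbf{1}_{\x=\y}$}.

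\emph{Evolution.} Fix {\small$\r$} and regard {\small$\mathbf{F}_{\r,\w}:=\boldsymbol{\Psi}_{\r}[\mathbf{K}_{\s,\r,\cdot,\y}]_{\w}+\boldsymbol{\Phi}_{\r}[\mathbf{K}_{\s,\r,\cdot,\y}]_{\w}$} as data injected at time {\small$\r$}. This data is {\small$\mathscr{F}_{\r}$}-measurable, since the averages are backwards in time and {\small$\mathfrak{t}_{\mathrm{ap}}$} is a stopping time. The map {\small$\t\mapsto\sum_{\w}\mathbf{L}_{\r,\t,\x,\w}\mathbf{F}_{\r,\w}$} solves \eqref{eq:lzetasim} on {\small$[\r,\infty)$} with initial value {\small$\mathbf{F}_{\r,\x}$}. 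This follows from linearity of \eqref{eq:lzetasim} and the fact that {\small$\mathbf{L}$} is its fundamental solution, i.e. the superposition \eqref{eq:hkzetaI7}. Differentiating {\small$\int_{\s}^{\t}(\cdots)\d\r$} in {\small$\t$} then gives two contributions: the boundary term {\small$\mathbf{F}_{\t,\x}\d\t$}, which supplies exactly the two missing drifts, and the integral of {\small$\d_{\t}$} of the integrand. The latter produces the right-hand side of \eqref{eq:lzetasim} applied to the integral term. Adding this to the equation for {\small$\mathbf{L}_{\s,\t,\x,\y}$} shows that {\small$\mathbf{D}$} satisfies \eqref{eq:hkzetasim}.

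\emph{Main obstacle.} I expect the only real difficulty to be justifying the interchange of {\small$\d_{\t}$} with {\small$\int_{\s}^{\t}\d\r$}, i.e. a stochastic Fubini theorem. The Itô term {\small$\d\mathbf{b}$} acts on an integrand adapted in {\small$\r\leq\t$}. The {\small$\grad^{\mathbf{T},\mathrm{av}}$} terms in \eqref{eq:lzetasim} look backward in time over windows {\small$\mathfrak{t}_{\mathbf{Av}},\mathfrak{t}_{\partial}$}, so with the convention of dropping times before the initial time, the relevant quantity for {\small$\mathbf{L}_{\r,\cdot}$} must be the solution started at {\small$\r$}. This is consistent with the convention in Definition \ref{definition:grad}. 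Since all coefficients are cut off by {\small$\mathbf{1}_{\mathfrak{t}_{\mathrm{ap}}\geq\t}$}, {\small$\boldsymbol{\chi}^{(\zeta)}$} and {\small$\mathbf{R}^{\N,\wedge}$}, they are bounded by powers of {\small$\N$} and have exponentially decaying kernels. This makes the integrability hypotheses of stochastic Fubini routine, and it also supplies the moment bounds needed for uniqueness. Alternatively, one can avoid Fubini by arguing at the level of Picard iterates, where the identity is a finite algebraic rearrangement, and then passing to the limit.
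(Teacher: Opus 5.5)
Your route is the paper's: its proof just asserts that both sides of \eqref{eq:duhamelI1} solve the same \abbr{SDE} with data $\mathbf{1}_{\x=\y}$ and invokes uniqueness. However, the one point you single out as delicate is resolved the wrong way, and as written the superposition step fails.

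The terms $\grad^{\mathbf{T},\mathrm{av}}_{\mathfrak{t}}$, with $\mathfrak{t}\in\{\mathfrak{t}_{\mathbf{Av}},\mathfrak{t}_{\partial}\}$, in \eqref{eq:lzetasim} are non-local in time. You adopt the convention that each $\mathbf{L}^{\N,\zeta,\sim}_{\r,\cdot}$ drops the part of its window lying before its own start time, as in Definition \ref{definition:grad}. That is, the whole pair $\mathsf{g}_{\t}-\mathsf{g}_{\t-\r'}$ is discarded when $\t-\r'<\r$. Under this convention, a superposition of solutions started at different times is not a solution.

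To see this, write $\mathsf{g}_{\t}=\mathcal{M}_{\t}[u_{\t}]$ for the linear map inside the time-gradient, and set $v^{(\r)}_{\t}:=\sum_{\w}\mathbf{L}^{\N,\zeta,\sim}_{\r,\t,\cdot,\w}\mathbf{F}_{\r,\w}$. Swapping the order of integration over $\{(\r,\r'):0\leq\r'\leq\mathfrak{t},\ \s\leq\r\leq\t-\r'\}$ matches the past parts. The present parts do not match: summing over $\r$ the window drifts of the $v^{(\r)}$ differs from the window drift of $\int_{\s}^{\t}v^{(\r)}_{\t}\d\r$ by
\[
\mathfrak{t}^{-1}\int_{\s}^{\t}\big(\min(\mathfrak{t},\t-\r)-\min(\mathfrak{t},\t-\s)\big)\,\mathcal{M}_{\t}[v^{(\r)}_{\t}]\,\d\r,
\]
which is generically nonzero. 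The scalar toy $\partial_{\t}u=\grad^{\mathbf{T},\mathrm{av}}_{\mathfrak{t}}u+F_{\t}$ makes this concrete. Under your convention its homogeneous solutions are constant, so the right side of the Duhamel formula is $1+\int_{\s}^{\t}F_{\r}\d\r$, whose derivative is $F_{\t}$. The equation instead demands the extra drift $\mathfrak{t}^{-1}\int_{0}^{\min(\mathfrak{t},\t-\s)}\int_{\t-\r'}^{\t}F_{\r}\d\r\d\r'$. So the real content of the lemma is the history convention, not stochastic Fubini; the paper's ``direct inspection'' is silent on this.

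The identity becomes exact if every kernel is extended by zero before its start time, for $\mathbf{K}^{\N,\zeta,\sim}_{\s,\cdot}$ and all $\mathbf{L}^{\N,\zeta,\sim}_{\r,\cdot}$ alike. Then the window keeps full weight on $\mathsf{g}_{\t}$, and only the past values with $\t-\r'<\r$ vanish. For $\s=0$ this agrees with Definition \ref{definition:grad}. Under this convention both window pieces match after the swap above, and stochastic Fubini is needed only for the It\^{o} term, where it is routine. Your Picard-iterate alternative has the same issue, since its algebraic rearrangement is exactly this swap.

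Finally, your $\mathbf{D}$ does not literally solve \eqref{eq:hkzetasim}, because its $\boldsymbol{\Psi},\boldsymbol{\Phi}$ drifts are evaluated at $\mathbf{K}^{\N,\zeta,\sim}$ rather than at $\mathbf{D}$. The clean conclusion is that $\mathbf{D}-\mathbf{K}^{\N,\zeta,\sim}$ solves the homogeneous equation \eqref{eq:lzetasim} with zero data and zero history, and therefore vanishes.
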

\begin{proof}
A direct inspection shows that the \abbr{LHS} and \abbr{RHS} of \eqref{eq:duhamelI1} satisfy the same \abbr{SDE} for {\small$\s\leq\t$} and all {\small$\x,\y\in\llbracket0,\infty\rrbracket$} with the same initial condition of {\small$\mathbf{1}_{\x=\y}$} at {\small$\s=\t$}. (We note that the integrals on the \abbr{RHS} of \eqref{eq:duhamelI1} vanish for {\small$\s=\t$}.) Thus, by uniqueness of solutions to said \abbr{SDE}, \eqref{eq:duhamelI1} follows. 
\end{proof}
\subsection{Estimates}
We now present estimates for the {\small$\mathbf{L}^{\N,\zeta,\sim}$} and {\small$\mathbf{K}^{\N,\zeta,\sim}$}-kernels, in that order. We defer the proofs of these lemmas to the last subsection of this section after we have deduced Proposition \ref{prop:hkzeta}.

We first record the following result concerning the {\small$\mathbf{L}^{\N,\zeta,\sim}$}-kernel; we clarify its statement afterwards.
\begin{lemma}\label{lemma:lzeta}
\fsp Fix any {\small$\kappa\in\R$} and {\small$\delta>0$}. There exists a high probability event on which
\begin{align}
\sup_{0\leq\s\leq\t\leq1}\sup_{\x\in\llbracket0,\infty\rrbracket}\sum_{\y\in\llbracket0,\infty\rrbracket}\exp(\tfrac{2\kappa|\x-\y|}{\N})|\mathbf{H}^{\N,\mathbf{A}}_{\s,\t,\x,\y}-\mathbf{L}^{\N,\zeta,\sim}_{\s,\t,\x,\y}|^{2}\lesssim_{\kappa,\delta}\N^{-1+\delta}.\label{eq:lzetaI}
\end{align}
Above, we used the {\small$\mathbf{H}^{\N,\mathbf{A}}$}-kernel from Definition \ref{definition:heat} (where {\small$\mathbf{A}$} is from \eqref{eq:mainII}). On the same event, we have 
\begin{align}
\sup_{0\leq\s\leq\t\leq1}\sup_{\x\in\llbracket0,\infty\rrbracket}\sum_{\y\in\llbracket0,\infty\rrbracket}\exp(\tfrac{\kappa|\x-\y|}{\N})|\mathbf{L}^{\N,\zeta,\sim}_{\s,\t,\x,\y}|&\lesssim_{\kappa,\delta}\N^{\frac12\delta}.\label{eq:lzetaII}
\end{align}
\end{lemma}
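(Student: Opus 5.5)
The plan is to treat \eqref{eq:lzetasim} as a perturbation of the deterministic Robin heat equation $\partial_{\t}=\mathscr{T}_{\N}$ and write the mild (Duhamel) formulation
\begin{align*}
\mathbf{L}^{\N,\zeta,\sim}_{\s,\t,\x,\y}=\mathbf{H}^{\N,\mathbf{A}}_{\s,\t,\x,\y}+\mathbf{M}_{\s,\t,\x,\y}+\mathbf{E}_{\s,\t,\x,\y}+\mathbf{G}_{\s,\t,\x,\y},
\end{align*}
where $\mathbf{M}$ is the stochastic integral of $\mathbf{H}^{\N,\mathbf{A}}_{\r,\t,\x,\cdot}$ against the smoothed noise in \eqref{eq:lzetasim}, $\mathbf{E}$ is the $\mathrm{Err}$-contribution, and $\mathbf{G}$ collects the two $\grad^{\mathbf{T},\mathrm{av}}$-terms. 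The estimates are carried out in the weighted norm $\|\mathsf{f}\|_{\kappa}^{2}:=\sup_{\x}\sum_{\y}\mathrm{e}^{2\kappa|\x-\y|/\N}|\mathsf{f}_{\x,\y}|^{2}$ for the difference $\mathbf{D}:=\mathbf{L}^{\N,\zeta,\sim}-\mathbf{H}^{\N,\mathbf{A}}$. The goal is to close a Gronwall-type bound of the form $\E\|\mathbf{D}_{\s,\t}\|_{\kappa}^{2p}\lesssim\N^{-p+p\delta}$, and then upgrade it to a bound holding simultaneously for all $\s\le\t$ by discretizing time on an $\N^{-100}$-mesh, using Markov's inequality with large $p$, and a crude short-time continuity estimate. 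Estimate \eqref{eq:lzetaII} then follows from \eqref{eq:lzetaI} by Cauchy--Schwarz, $\sum_{\y}\mathrm{e}^{\kappa|\x-\y|/\N}|\mathbf{D}|\le(\sum_{\y}\mathrm{e}^{2\kappa'|\x-\y|/\N}|\mathbf{D}|^{2})^{1/2}(\sum_{\y}\mathrm{e}^{-2(\kappa'-\kappa)|\x-\y|/\N})^{1/2}\lesssim\N^{-1/2+\delta/2}\N^{1/2}$, together with the bound $\sum_{\y}\mathrm{e}^{\kappa|\x-\y|/\N}\mathbf{H}^{\N,\mathbf{A}}_{\s,\t,\x,\y}\lesssim1$ from Proposition \ref{prop:hkestimates}. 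Because there are no global spatial cutoffs in this kernel equation beyond the $\boldsymbol{\chi}$'s, this is the only reasonable route; note in particular that $\zeta$ plays no role in the constants.

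The noise term $\mathbf{M}$ is handled exactly as for a lattice SHE. By BDG (discrete, in $\ell^{2}$ with weight), its $2p$-th moment is controlled by $\E(\int_{\s}^{\t}\N\sum_{\z}|\sum_{\w}\mathbf{H}_{\r,\t,\x,\w}\mathbf{H}^{\N,\mathbf{A}}_{0,\mathfrak{t}_{\N},\w,\z}|^{2}|\mathbf{L}_{\s,\r,\z,\y}|^{2}\d\r)^{p}$, using $|\mathbf{R}^{\N,\wedge}|\lesssim1$. The semigroup property, together with $|\mathbf{H}_{\r,\t+\mathfrak{t}_{\N}}|\lesssim\N^{-1}(\t-\r+\mathfrak{t}_{\N})^{-1/2}$, gives the factor $\N\cdot\N^{-1}(\t-\r)^{-1/2}$ times $\|\mathbf{L}_{\s,\r}\|^{2}$. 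Writing $\mathbf{L}=\mathbf{H}+\mathbf{D}$ and using $\|\mathbf{H}_{\s,\r}\|_{\kappa}^{2}\lesssim\N^{-1}(\r-\s)^{-1/2}$ yields an $\N^{-1}$ contribution plus an integrable-singularity Gronwall term in $\|\mathbf{D}\|_{\kappa}^{2}$. The $\mathrm{Err}$ term is deterministic given $\mathfrak{t}_{\mathrm{ap}}$: by \eqref{eq:sdeII2}, with $|\bphi|\le\N^{\delta_{\mathbf{S}}}$ before $\mathfrak{t}_{\mathrm{ap}}$, the kernel is at most $\N^{-6/5+\mathrm{O}(\delta_{\mathbf{S}})}\mathrm{e}^{-\kappa|\cdot|/\N}$ in the bulk and $\N^{-1/3+\mathrm{O}(\delta_{\mathbf{S}})}$ at the two edge sites. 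The bulk part maps $\|\cdot\|_{\kappa}$ into itself with norm $\N^{-6/5}\cdot\N=\N^{-1/5}$. For the edge part, I would use $|\mathbf{H}_{\r,\t,\x,0}|\lesssim\N^{-1}(\t-\r)^{-1/2}$ to pick up $\N^{-1/3}$ times a time-integrable factor.

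The main obstacle is the term $\mathbf{G}$ with $\N\grad^{\mathbf{T},\mathrm{av}}$. Here I would not estimate the time gradient directly. Instead I would integrate by parts in time inside Duhamel: since $\grad^{\mathbf{T},\mathrm{av}}_{\mathfrak{t}}\mathsf{g}_{\r}=\mathfrak{t}^{-1}\int_{0}^{\mathfrak{t}}(\mathsf{g}_{\r}-\mathsf{g}_{\r-\u})\d\u$, shifting $\u$ onto the heat kernel transfers it to $\mathfrak{t}^{-1}\int_{0}^{\mathfrak{t}}(\mathbf{H}_{\r,\t}-\mathbf{H}_{\r+\u,\t})\d\u$. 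This produces a time-derivative of $\mathbf{H}$, of size $\mathfrak{t}\cdot\N^{2}\Delta$ on the heat kernel, plus boundary terms on windows of length $\mathfrak{t}$ near $\r=\t$ and $\r=\s$. For the bulk term, the coefficient before $\mathfrak{t}_{\mathrm{ap}}$ is $\N\cdot\N^{-1+\mathrm{C}\delta_{\mathbf{S}}}\cdot\N^{-1/2+\delta_{\mathbf{S}}}$, by \eqref{eq:sdeIII} and the $\mathbf{Av}^{\mathbf{X}}$ cutoff in $\mathfrak{t}_{\mathrm{ap}}$, after absorbing the $\boldsymbol{\mathcal{K}}$ convolution into the smoothing of $\mathbf{H}$. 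Against $\mathfrak{t}_{\mathbf{Av}}\|\partial\mathbf{H}\|\lesssim\N^{-2/3}(\t-\r)^{-1}$, this is not integrable near the diagonal. I would therefore cap the singularity using the boundary-window term directly, which costs $\mathfrak{t}_{\mathbf{Av}}\cdot\N^{-1/2+\mathrm{O}(\delta_{\mathbf{S}})}$ times a $\sup$ of $\|\mathbf{L}\|$. This leaves a net power $\N^{-1/2-2/3}$ times logarithms, well below $\N^{-1/2}$ in $\ell^{2}$.

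The edge time-gradient term is the most delicate. It has $\N\mathfrak{b}$ with $\mathfrak{b}\lesssim\N^{\mathrm{C}\delta_{\mathbf{S}}}$, but $\mathfrak{t}_{\partial}=\N^{-3/2}$, so the same integration by parts gives $\N\cdot\N^{-3/2}\cdot\N^{2}(\t-\r)^{-1}$ against the edge kernel $\N^{-1}$. I expect to need the extra $\N^{-1}$ coming from $\boldsymbol{\mathcal{K}}^{\partial}$ in \eqref{eq:sdeIV}, together with the short-window boundary terms of length $\N^{-3/2}$. This gives about $\N^{-1/2}\log\N$ per unit of $\sup\|\mathbf{L}\|$. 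If this bookkeeping fails, I would fall back to a moment bound using the martingale part of $\mathsf{g}_{\r}-\mathsf{g}_{\r-\u}$, which is small on time scale $\N^{-3/2}$ by Lemma \ref{lemma:currtreg}. With all four pieces in hand, I would close the estimate with a singular Gronwall inequality in $\t$ for $\sup_{\s\le\r\le\t}\|\mathbf{D}_{\s,\r}\|_{\kappa}^{2}$ (the bootstrap needs $\sup$ rather than a pointwise norm because of the window terms), giving \eqref{eq:lzetaI}.
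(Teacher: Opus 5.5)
Your outline follows the paper's proof. You use Duhamel around $\mathbf{H}^{\N,\mathbf{A}}$ with the same four pieces, BDG to produce the kernel $|\t-\r|^{-1/2}$, and the $\mathfrak{t}_{\mathrm{ap}}$-bounds \eqref{eq:tap} for $\mathrm{Err}$, $\mathbf{Av}^{\mathbf{X},\mathscr{Q}_{\ell}}$ and $\mathfrak{b}_{\ell}$. You gain $\mathfrak{t}_{\mathbf{Av}}$, resp.\ $\mathfrak{t}_{\partial}$, by moving the time-gradients onto $\mathbf{H}^{\N,\mathbf{A}}$, close with a singular Gronwall, and get \eqref{eq:lzetaII} from \eqref{eq:lzetaI} by Cauchy--Schwarz. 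The paper first bounds $\mathbf{L}^{\N,\zeta,\sim}$ itself via $\boldsymbol{\Pi}^{\kappa,p}_{\s,\t}$ from \eqref{eq:lzeta1} and then reinserts this bound into the $\boldsymbol{\Lambda}^{(i)}$. Working with $\mathbf{D}$ directly instead is a harmless reorganization. However, your version lacks two features of $\boldsymbol{\Pi}^{\kappa,p}$, and both are needed.

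The first is where the suprema sit. You bootstrap $\E\|\mathbf{D}_{\s,\t}\|_{\kappa}^{2p}$ with $\sup_{\x}$, and later $\sup_{\r}$, inside the expectation. BDG controls the stochastic convolution only for fixed $\x$ and a fixed terminal time, since it is not a martingale in $\t$. Pulling $\sup_{\x}$ inside by a union bound at each step multiplies the Gronwall constant by $\N^{\mathrm{C}/p}$, and the singular Gronwall exponentiates this into a factor $\mathrm{e}^{c\N^{2\mathrm{C}/p}}$. Keep the suprema outside the moments, as in \eqref{eq:lzeta1}, where Minkowski makes the BDG bound close; there the $\mathrm{L}^{p}(\mathbb{P})$-norm even sits inside the $\y$-sum. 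Pass to the pathwise statement only at the end, via Chebyshev, your time mesh, and a union bound over polynomially many $\x$. The remaining $\x$ are handled by the $\boldsymbol{\chi}$-cutoffs and heat kernel decay.

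The second is the diagonal singularity. Your count for the bulk $\grad^{\mathbf{T},\mathrm{av}}_{\mathfrak{t}_{\mathbf{Av}}}$-term is off by a factor $\N$. By \eqref{eq:sdeIII}, $\boldsymbol{\mathcal{K}}^{(\ell)}$ is pointwise $\N^{-1+\mathrm{C}\delta_{\mathbf{S}}}$ but is an averaging kernel up to $\N^{\mathrm{C}\delta_{\mathbf{S}}}$. So the coefficient acting on $\mathbf{L}^{\N,\zeta,\sim}$ has operator size $\N\cdot\N^{-1/2+\mathrm{O}(\delta_{\mathbf{S}})}$. After the time-gradient, the relative factor is therefore $\N^{1/2}\mathfrak{t}_{\mathbf{Av}}\approx\N^{-1/6}$ (the paper's $\N^{-1/8}$), not $\N^{-7/6}$.

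Your window step multiplies this by a supremum of $\|\mathbf{L}^{\N,\zeta,\sim}_{\s,\r}\|_{\kappa}$. That supremum is of order $1$ near $\r=\s$, since $\mathbf{L}^{\N,\zeta,\sim}_{\s,\s,\x,\y}=\mathbf{1}_{\x=\y}$. The result is only $\N^{-1/6}$ in the weighted $\ell^{2}$-norm, far from the required $\N^{-1/2}$.

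You must split $\mathbf{L}^{\N,\zeta,\sim}=\mathbf{H}^{\N,\mathbf{A}}+\mathbf{D}$ in this term too, as you did for the noise. The $\mathbf{D}$-input is then perturbative, costing $\N^{-1/6}$ times the bootstrapped quantity. For the $\mathbf{H}$-input, use $\|\mathbf{H}^{\N,\mathbf{A}}_{\s,\r,\x,\cdot}\|_{\ell^{2}}\lesssim\N^{-1/2}\delta_{\s,\r}^{-1/4}$ and integrate against $\min(1,\u/(\t-\r))$ plus the windows. This gives $\lesssim\N^{\mathrm{C}\delta_{\mathbf{S}}}\mathfrak{t}_{\mathbf{Av}}^{3/4}\log\N=\N^{-1/2-(3\mathrm{C}_{0}/4-\mathrm{C})\delta_{\mathbf{S}}}\log\N$. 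That is acceptable only because $\mathrm{C}_{0}$ in $\mathfrak{t}_{\mathbf{Av}}$ is large. The weight $|\t-\s|^{1/2}$ in $\boldsymbol{\Pi}^{\kappa,p}$ is what does this diagonal bookkeeping in the paper.

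The same split is needed in $\mathrm{Err}$. In \eqref{eq:sdeII2} it is the input site $\w\in\llbracket0,1\rrbracket$ that is localized, not the output. So the edge part has relative size $\N^{-1/3+\mathrm{O}(\delta_{\mathbf{S}})}$, and it is harmless only after integrating $\delta_{\s,\r}^{-1/4}$ in time.

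The edge time-gradient term, by contrast, has ample room. Only two input sites enter, with $|\boldsymbol{\mathcal{K}}^{\partial,\ell}|\lesssim\N^{-1+\mathrm{C}\delta_{\mathbf{S}}}$, so its coefficient is $\N^{\mathrm{O}(\delta_{\mathbf{S}})}$. The gain $\mathfrak{t}_{\partial}\log\N$ then makes it negligible even against an $\mathrm{O}(1)$ supremum.
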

The estimate \eqref{eq:lzetaI} states that {\small$\mathbf{L}^{\N,\zeta,\sim}$} is, in some sense, a perturbation of the {\small$\mathbf{H}^{\N,\mathbf{A}}$}-kernel. Indeed, in view of {Proposition \ref{prop:hkestimates}}, if we remove the {\small$\mathbf{L}^{\N,\zeta,\sim}$}-kernel from the \abbr{LHS} of \eqref{eq:lzetaI}, then (up to the {\small$\delta$} in the exponent) the \abbr{RHS} of \eqref{eq:lzetaI} would require another factor that blows up as {\small$\t\to\s$}. Equipped with Lemma \ref{lemma:lzeta}, the formula \eqref{eq:duhamelI1}, and stochastic estimates from {\color{black}Section \ref{section:stochII}}, we will show the following ``short-time" estimate for the {\small$\mathbf{K}^{\N,\zeta,\sim}$}-kernel.
\begin{lemma}\label{lemma:kzeta}
\fsp Take any {\small$\delta>0$}. There are deterministic constants {\small$\beta_{0},\kappa>0$} (that are independent of {\small$\N$}) along with a high probability event on which the following estimate holds:
\begin{align}
\sup_{\substack{0\leq\s\leq\t\leq1\\\t-\s\leq\N^{\beta_{0}}\N^{-\zeta}}}\sup_{\x\in\llbracket0,\infty\rrbracket}\sum_{\y\in\llbracket0,\infty\rrbracket}\exp(\tfrac{\kappa|\x-\y|}{\N})|\mathbf{K}^{\N,\zeta,\sim}_{\s,\t,\x,\y}|\lesssim_{\delta}\N^{\delta}.\label{eq:kzetaI}
\end{align}
\end{lemma}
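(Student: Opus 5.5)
We will prove \eqref{eq:kzetaI} by iterating the Duhamel formula \eqref{eq:duhamelI1} of Lemma \ref{lemma:duhamel} and closing a Gronwall-type bound on the weighted norm
\begin{align*}
\mathbf{M}_{\s,\t}:=\sup_{\x\in\llbracket0,\infty\rrbracket}\sum_{\y\in\llbracket0,\infty\rrbracket}\exp(\tfrac{\kappa|\x-\y|}{\N})|\mathbf{K}^{\N,\zeta,\sim}_{\s,\t,\x,\y}|.
\end{align*}
We work on the high probability event of Lemma \ref{lemma:lzeta} (taking the same $\kappa$, or twice it where needed). By \eqref{eq:lzetaII}, the first term on the \abbr{RHS} of \eqref{eq:duhamelI1} contributes $\lesssim\N^{\delta/2}$. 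For the two integral terms, we take the weighted $\ell^{1}$-sum in $\y$ and use the triangle inequality $\exp(\kappa|\x-\y|/\N)\leq\exp(\kappa|\x-\w|/\N)\exp(\kappa|\w-\y|/\N)$. Combined with \eqref{eq:lzetaII} applied to $\mathbf{L}^{\N,\zeta,\sim}_{\r,\t,\x,\w}$, this reduces the problem to bounding $\sum_{\y}\exp(\kappa|\w-\y|/\N)|\boldsymbol{\Psi}_{\r}[\mathbf{K}]_{\w}|$ and the analogous $\boldsymbol{\Phi}$-sum, uniformly in $\w$.

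\textbf{Edge term $\boldsymbol{\Phi}$.} The kernels $\boldsymbol{\mathcal{K}}^{\partial,\ell}$ satisfy \eqref{eq:sdeIV}, so they are $\lesssim\N^{-1+\mathrm{C}\delta_{\mathbf{S}}}$ and supported at $\z\in\{0,1\}$. On $\{\mathfrak{t}_{\mathrm{ap}}\geq\r\}$ we have $|\mathbf{Av}^{\mathbf{T},\mathfrak{b}_{\ell}}|\lesssim\N^{\mathrm{C}\delta_{\mathbf{S}}}$ and $|\mathbf{R}^{\N,\wedge}|\lesssim1$. Hence $\boldsymbol{\Phi}$ contributes $\lesssim\N^{\mathrm{C}\delta_{\mathbf{S}}}\int_{\s}^{\t}\mathbf{M}_{\s,\r}\d\r$. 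This is harmless for $\t-\s\lesssim1$ via Gronwall, although a crude bound loses $\exp(\N^{\mathrm{C}\delta_{\mathbf{S}}})$. Where that matters, we would instead use Proposition \ref{prop:stochmain2}: Cauchy--Schwarz in $\r$ puts $\int|\mathbf{Av}^{\mathbf{T},\mathfrak{b}}|^{2}$ in front, which gives $\N^{-\beta_{\partial}}$ on a high probability event after Markov.

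\textbf{Bulk term $\boldsymbol{\Psi}$ (main obstacle).} The a priori bound $|\mathbf{Av}^{\mathbf{T},\mathbf{X},\mathscr{Q}}|\lesssim\N^{-1/2+\delta_{\mathbf{S}}}$ on $\{\mathfrak{t}_{\mathrm{ap}}\geq\r\}$, multiplied by $\N$ and by $|\boldsymbol{\mathcal{K}}^{(\ell)}|\lesssim\N^{-1+\mathrm{C}\delta_{\mathbf{S}}}e^{-\kappa|\cdot|/\N}$, summed over $\z$ in the support $\llbracket0,2\N^{1+\zeta}\rrbracket$ of $\boldsymbol{\chi}^{(\zeta)}$, is far too large pointwise. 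We therefore use Cauchy--Schwarz in $(\r,\z)$:
\begin{align*}
\int_{\s}^{\t}\sum_{\z}|\boldsymbol{\mathcal{K}}^{(\ell)}_{\w,\z}|\boldsymbol{\chi}^{(\zeta)}_{\z}\N|\mathbf{Av}_{\r,\z}||\mathbf{K}_{\s,\r,\z_{\pm},\y}|\d\r\leq\Big(\int_{\s}^{\t}\sum_{\z}\boldsymbol{\chi}^{(\zeta)}_{\z}\N^{2}|\mathbf{Av}_{\r,\z}|^{2}\d\r\Big)^{1/2}\Big(\int_{\s}^{\t}\sum_{\z}|\boldsymbol{\mathcal{K}}^{(\ell)}_{\w,\z}|^{2}|\mathbf{K}_{\s,\r,\z_{\pm},\y}|^{2}\d\r\Big)^{1/2}.
\end{align*}
By Proposition \ref{prop:stochmain1} and Markov's inequality, the first factor is $\lesssim(\N^{-\beta_{\mathrm{CLT}}+\nu}\N^{1+\zeta}(\t-\s))^{1/2}$ with high probability, uniformly over a polynomial net of time intervals, for any small $\nu$. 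The second factor, after summing in $\y$ with the weight, is $\lesssim\N^{-1+\mathrm{C}\delta_{\mathbf{S}}}\sup_{\r}\mathbf{M}_{\s,\r}$ times an $\ell^{2}$-in-$\z$ factor of size $\N^{1/2}$ (coming from the exponential kernel), which requires some care in moving the $\ell^{1}_{\y}$-sum inside. Taking $\t-\s\leq\N^{\beta_{0}-\zeta}$ with $\beta_{0}<\beta_{\mathrm{CLT}}/2$ (and $\delta_{\mathbf{S}}$ small), the total coefficient in front of $\sup_{\r\leq\t}\mathbf{M}_{\s,\r}$ is $\lesssim\N^{-\beta_{\mathrm{CLT}}/4}\leq\frac12$.

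\textbf{Closing.} Combining the three contributions gives
\begin{align*}
\sup_{\r\in[\s,\t]}\mathbf{M}_{\s,\r}\lesssim\N^{\delta/2}+\tfrac12\sup_{\r\in[\s,\t]}\mathbf{M}_{\s,\r},
\end{align*}
which yields \eqref{eq:kzetaI}; finiteness of $\mathbf{M}$, needed to absorb the last term, follows from the Picard well-posedness. The delicate point is the interchange of the $\ell^{1}_{\y}$-norm with the $\ell^{2}_{(\r,\z)}$ Cauchy--Schwarz. If it fails as written, we would instead apply Cauchy--Schwarz separately for each $\y$, bound $|\mathbf{K}_{\s,\r,\z,\y}|$ via an $\ell^{2}$-in-$\z$ analogue of $\mathbf{M}$, and run the bootstrap in that mixed norm, using \eqref{eq:lzetaI} for $\mathbf{L}$ in $\ell^{2}$ and converting back to $\ell^{1}$ at the end with a loss of $\N^{1/2}(\t-\s)^{1/4}$-type factors. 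These losses are absorbed by the $\N^{-\beta_{\mathrm{CLT}}}$ gain once $\beta_{0}$ is chosen small.
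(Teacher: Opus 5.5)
Your overall route is the paper's. You expand $\mathbf{K}^{\N,\zeta,\sim}$ around $\mathbf{L}^{\N,\zeta,\sim}$ via \eqref{eq:duhamelI1} and bootstrap the weighted $\ell^{\infty}_{\x}\ell^{1}_{\y}$-norm (the paper's $\boldsymbol{\Theta}_{\s,\t,\kappa}$). You then make the Duhamel coefficients small with the integrated bounds of Propositions \ref{prop:stochmain1} and \ref{prop:stochmain2} and the window $\t-\s\leq\N^{\beta_{0}-\zeta}$.

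\textbf{The bulk step fails in the order you wrote it.} You apply Cauchy--Schwarz in $(\r,\z)$ at fixed $\y$ and only afterwards sum over $\y$. So you must bound $\sum_{\y}\mathrm{e}^{\kappa|\w-\y|/\N}\|\boldsymbol{\mathcal{K}}^{(\ell)}_{\w,\cdot}\mathbf{K}^{\N,\zeta,\sim}_{\s,\cdot,\cdot_{\pm},\y}\|_{\ell^{2}_{\r,\z}}$ by $\mathbf{M}$, which is the reverse of Minkowski's inequality. This costs a real power of $\N$. $\mathbf{M}$ only controls $\sup_{\z}\sum_{\y}|\mathbf{K}^{\N,\zeta,\sim}_{\s,\r,\z,\y}|$, so an identity-like kernel is admissible. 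For that kernel your quantity is of order $|\t-\s|^{1/2}\|\boldsymbol{\mathcal{K}}^{(\ell)}_{\w,\cdot}\|_{\ell^{1}}$ rather than $|\t-\s|^{1/2}\|\boldsymbol{\mathcal{K}}^{(\ell)}_{\w,\cdot}\|_{\ell^{2}}$. The latter is smaller by $\N^{-1/2}$, up to $\N^{\mathrm{O}(\delta_{\mathbf{S}})}$. A loss of $\N^{1/2}$ cannot be paid for by $\N^{-\beta_{\mathrm{CLT}}/2}$ with $\beta_{\mathrm{CLT}}$ small, and your mixed-norm fallback fails for the same reason, since it explicitly incurs $\N^{1/2}$-type conversion losses.

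\textbf{The fix is to reverse the order, as in \eqref{eq:kzetaI3}.} Only $\mathbf{K}^{\N,\zeta,\sim}$ depends on $\y$. So first split $\mathrm{e}^{\kappa|\x-\y|/\N}\lesssim\mathrm{e}^{\kappa|\x-\z|/\N}\mathrm{e}^{\kappa|\z_{\pm}-\y|/\N}$ and use the triangle inequality to put the weighted $\ell^{1}_{\y}$-norm on $\mathbf{K}^{\N,\zeta,\sim}$ inside the $\r$-integral and $\z$-sum. Only then apply Cauchy--Schwarz in $(\r,\z)$. The paper does this against the weight $\mathrm{e}^{\kappa|\x-\z|/\N}|\mathscr{A}^{(\ell)}_{\r,\t,\x,\z}|$, which is $\mathbf{L}^{\N,\zeta,\sim}$ composed with $\boldsymbol{\mathcal{K}}^{(\ell)}\boldsymbol{\chi}^{(\zeta)}$.
\begin{itemize}
\item The weight's pointwise bound $\N^{-1+\mathrm{C}\delta_{\mathbf{S}}}$ on a support of size $\N^{1+\zeta}$ is paired with $\int\sum\N^{2}|\mathbf{Av}^{\mathbf{T},\mathbf{X},\mathscr{Q}_{\ell}}|^{2}$.
\item Its $\ell^{1}$-bound $\N^{\mathrm{C}\delta_{\mathbf{S}}}$ is paired with $\int_{\s}^{\t}\boldsymbol{\Theta}_{\s,\r,\kappa}^{2}\d\r\leq|\t-\s|\boldsymbol{\Theta}_{\s,\t,\kappa}^{2}$.
\end{itemize}

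\textbf{The factor $|\t-\s|$ must come from the $\mathbf{K}^{\N,\zeta,\sim}$-side, not the $\mathbf{Av}$-side.} You claim Markov's inequality gives $\int_{\s}^{\t}\sum_{\z}\boldsymbol{\chi}^{(\zeta)}_{\z}|\mathbf{Av}^{\mathbf{T},\mathbf{X},\mathscr{Q}_{\ell}}_{\r,\z}|^{2}\d\r\lesssim\N^{-1-\beta_{\mathrm{CLT}}+\nu+\zeta}|\t-\s|$ uniformly over a net of windows, for any small $\nu$. That is not justified. Markov only gives a tail of size $\N^{-\nu}$, so a union bound over the roughly $\N^{\zeta-\beta_{0}}$ windows forces $\nu>\zeta-\beta_{0}$, and the gain disappears. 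The paper instead works on the single global event \eqref{eq:kzetaI1} on $[0,1]$, after which the argument is deterministic and uniform in $(\s,\t,\x)$.

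\textbf{The edge term needs Proposition \ref{prop:stochmain2}.} Your crude Gronwall bound $\mathrm{e}^{\N^{\mathrm{C}\delta_{\mathbf{S}}}|\t-\s|}$ is not $\lesssim\N^{\delta}$ when $\zeta$ is small, which is the regime used in Section \ref{section:smallzeta}. So the route through Proposition \ref{prop:stochmain2}, i.e.\ \eqref{eq:kzetaI1b} with the same order of operations, is required rather than optional.

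With these corrections your argument coincides with the paper's and closes as in \eqref{eq:kzetaI8}.
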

We emphasize the restriction {\small$\t-\s\leq\N^{\beta_{0}}\N^{-\zeta}$} on the \abbr{LHS} of \eqref{eq:kzetaI}; typically, we think of {\small$\beta_{0}>0$} as small and {\small$\zeta>0$} is ``not small", so that this restriction turns \eqref{eq:kzetaI} into short-time estimate. (On the other hand, if {\small$\beta_{0}\geq\zeta$}, then the restriction is redundant since we already consider only {\small$0\leq\s\leq\t\leq1$}.) 

To remove the short-time restriction in Lemma \ref{lemma:kzeta}, we will bootstrap by the Chapman-Kolmogorov equation (or semigroup property) for the {\small$\mathbf{K}^{\N,\zeta,\sim}$}-kernel. This ultimately yields the following estimate.
\begin{lemma}\label{lemma:kzeta2}
\fsp Take the same setting as Lemma \ref{lemma:kzeta}. On the same high probability event therein, we have 
\begin{align}
\sup_{0\leq\s\leq\t\leq1}\sup_{\x\in\llbracket0,\infty\rrbracket}\sum_{\y\in\llbracket0,\infty\rrbracket}\exp(\tfrac{\kappa|\x-\y|}{\N})|\mathbf{K}^{\N,\zeta,\sim}_{\s,\t,\x,\y}|\lesssim_{\delta}\exp(\N^{-\frac12\beta_{0}}\N^{\zeta})\N^{\delta}.\label{eq:kzeta2I}
\end{align}
\end{lemma}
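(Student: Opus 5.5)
We will deduce \eqref{eq:kzeta2I} from the short-time bound \eqref{eq:kzetaI} of Lemma \ref{lemma:kzeta} by iterating the Chapman--Kolmogorov (semigroup) property of $\mathbf{K}^{\N,\zeta,\sim}$. Work on the high-probability event of Lemma \ref{lemma:kzeta}. Set $\mathfrak{h}:=\N^{\beta_{0}}\N^{-\zeta}$ (if $\mathfrak{h}\geq1$ there is nothing to prove), fix $0\leq\s\leq\t\leq1$, and partition $[\s,\t]$ into consecutive intervals $[\r_{i},\r_{i+1}]$, $i=0,\ldots,\mathrm{M}-1$, with $\r_{0}=\s$, $\r_{\mathrm{M}}=\t$, each of length at most $\mathfrak{h}$, where $\mathrm{M}\leq1+\mathfrak{h}^{-1}=1+\N^{\zeta-\beta_{0}}$.

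First, the semigroup identity. Because \eqref{eq:hkzetasim} is linear in $\mathbf{K}^{\N,\zeta,\sim}$ (all coefficients, including the stopping-time indicators, the time-averages, and the $\grad^{\mathbf{T},\mathrm{av}}$-terms, are adapted processes not depending on $(\s,\y)$), for $\s\leq\r\leq\t$ the process $\t\mapsto\sum_{\z}\mathbf{K}^{\N,\zeta,\sim}_{\r,\t,\x,\z}\mathbf{K}^{\N,\zeta,\sim}_{\s,\r,\z,\y}$ solves the same \abbr{SDE} as $\t\mapsto\mathbf{K}^{\N,\zeta,\sim}_{\s,\t,\x,\y}$ for $\t\geq\r$, with the same value at $\t=\r$. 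Uniqueness, as in Lemmas \ref{lemma:hkzetasim} and \ref{lemma:duhamel}, then gives
\begin{align*}
\mathbf{K}^{\N,\zeta,\sim}_{\s,\t,\x,\y}=\sum_{\z\in\llbracket0,\infty\rrbracket}\mathbf{K}^{\N,\zeta,\sim}_{\r,\t,\x,\z}\mathbf{K}^{\N,\zeta,\sim}_{\s,\r,\z,\y}.
\end{align*}
One subtlety should be checked here. The $\grad^{\mathbf{T},\mathrm{av}}$-terms look backwards in time over windows of length $\mathfrak{t}_{\mathbf{Av}}$ and $\mathfrak{t}_{\partial}$, so the "initial time" must be treated as in the construction of $\mathbf{K}^{\N,\zeta,\sim}$ itself. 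We would verify that the definition of the kernel for general starting time $\s$ is compatible with this, so that the identity above holds exactly; it is at least an exact identity for the well-posed linear flow.

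Second, the weighted-$\ell^{1}$ norm is submultiplicative. Write $\|\mathbf{K}\|_{\s,\t}:=\sup_{\x}\sum_{\y}\exp(\kappa|\x-\y|/\N)|\mathbf{K}^{\N,\zeta,\sim}_{\s,\t,\x,\y}|$. Using $\exp(\kappa|\x-\y|/\N)\leq\exp(\kappa|\x-\z|/\N)\exp(\kappa|\z-\y|/\N)$ together with the identity above, we get $\|\mathbf{K}\|_{\s,\t}\leq\|\mathbf{K}\|_{\r,\t}\|\mathbf{K}\|_{\s,\r}$. Iterating over the partition and applying \eqref{eq:kzetaI} to each piece gives
\begin{align*}
\|\mathbf{K}\|_{\s,\t}\leq\prod_{i}\|\mathbf{K}\|_{\r_{i},\r_{i+1}}\leq(\mathrm{C}_{\delta'}\N^{\delta'})^{\mathrm{M}}.
\end{align*}

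The main obstacle is that this bound is too weak. It gives $\exp(\mathrm{M}(\delta'\log\N+\log\mathrm{C}))=\exp(\mathrm{O}(\N^{\zeta-\beta_{0}}\log\N))$, which is larger than the target $\exp(\N^{\zeta-\beta_{0}/2})\N^{\delta}$ only by a logarithmic factor in the exponent, and that factor is absorbed since $\N^{-\beta_{0}}\log\N\ll\N^{-\beta_{0}/2}$. One has to be careful about how $\delta'$ and the constant $\mathrm{C}_{\delta'}$ enter, but taking $\delta'$ fixed, the per-step factor $\mathrm{C}\N^{\delta'}$ contributes $\mathrm{M}\,\mathrm{O}(\log\N)\leq\N^{\zeta-\beta_{0}/2}$ for large $\N$ whenever $\zeta\geq\beta_{0}$. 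If $\zeta<\beta_{0}$, then $\mathrm{M}=1$ and \eqref{eq:kzetaI} already yields the claim with the factor $\N^{\delta}$. The required uniformity over all $\s\leq\t$ is automatic, since the event of Lemma \ref{lemma:kzeta} controls every short window simultaneously.
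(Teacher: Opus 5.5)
Your argument is the paper's proof: Chapman--Kolmogorov for $\mathbf{K}^{\N,\zeta,\sim}$, submultiplicativity of the weighted $\ell^{\infty}_{\x}\ell^{1}_{\y}$ norm, $\lesssim 1+\N^{\zeta-\beta_{0}}$ windows of length $\leq\N^{\beta_{0}-\zeta}$ each costing $\mathrm{C}_{\delta}\N^{\delta}$ by Lemma~\ref{lemma:kzeta}, and absorption of $\exp(\mathrm{O}(\N^{\zeta-\beta_{0}}\log\N))$ into $\exp(\N^{\zeta-\beta_{0}/2})\N^{\delta}$ (there is no real obstacle at that last step). The subtlety you flag is genuine, and the paper passes over it by asserting the identity through the same ``same \abbr{SDE}, same data at $\t=\r$'' reasoning: the $\grad^{\mathbf{T},\mathrm{av}}$-terms act on $\mathbf{K}^{\N,\zeta,\sim}$ itself and give the equation memory, so the composition identity is not exact even for the linear flow (contrary to your fallback); however, the discrepancy is an extra term supported on $[\r,\r+\mathfrak{t}_{\mathbf{Av}}]$ of relative size $\N^{-1/6+\mathrm{O}(\delta_{\mathbf{S}})}$ (by the a priori bounds built into $\mathfrak{t}_{\mathrm{ap}}$), so each window factor only changes by $1+\mathrm{o}(1)$ and the conclusion is unaffected.
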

\begin{proof}[Proof of Proposition \ref{prop:hkzeta}]
Combine Lemmas \ref{lemma:hkzetasim} and \ref{lemma:kzeta}.
\end{proof}
\subsection{Proofs of estimates}
Let us now show Lemmas \ref{lemma:kzeta}, \ref{lemma:kzeta2}, and \ref{lemma:lzeta} in that order. We defer the proof of Lemma \ref{lemma:lzeta} to the end of this section since it is a standard analysis via moments of (perturbations of) the \abbr{SHE}.
\begin{proof}[Proof of Lemma \ref{lemma:kzeta}]
Our estimates will be with respect to the following control parameter (which is exactly the object we want to control in this lemma):
\begin{align}
\boldsymbol{\Theta}_{\s,\t,\kappa}:=\sup_{\r\in[\s,\t]}\|\|\exp(\tfrac{\kappa|\x-\y|}{\N})\mathbf{K}^{\N,\zeta,\sim}_{\s,\r,\x,\y}\|_{\ell^{1}_{\y}(\llbracket0,\infty\rrbracket)}\|_{\ell^{\infty}_{\x}(\llbracket0,\infty\rrbracket)}.\label{eq:kzetaI0}
\end{align}
To be clear, the norm on the \abbr{RHS} is given by first taking the {\small$\ell^{1}(\llbracket0,\infty\rrbracket)$}-norm in the {\small$\y$}-variable, and then taking the {\small$\ell^{\infty}(\llbracket0,\infty\rrbracket$)}-norm in the {\small$\x$}-variable. For convenience, we may omit the domain {\small$\llbracket0,\infty\rrbracket$} from this norm notation. It will also be convenient to introduce the following inner-product, in which {\small$\mathsf{f},\mathsf{g}:\llbracket0,\infty\rrbracket\to\R$}:
\begin{align}
\langle\mathsf{f},\mathsf{g}\rangle:=\sum_{\w\in\llbracket0,\infty\rrbracket}\mathsf{f}_{\w}\mathsf{g}_{\w}.
\end{align}
Before we proceed, we present the following estimate, which we clarify afterwards:
\begin{align}
\int_{0}^{1}\sum_{\x\in\llbracket0,\mathrm{C}\N^{1+\zeta}\rrbracket}\sum_{\ell\in\llbracket1,\mathrm{L}_{1}\rrbracket}|\mathbf{Av}^{\mathbf{T},\mathbf{X},\mathscr{Q}_{\ell}}_{\t,\x}|^{2}\d\t\lesssim\N^{-2-\beta_{\bullet}}\N^{1+\zeta}.\label{eq:kzetaI1}
\end{align}
Above, the constant {\small$\mathrm{C}=\mathrm{O}(1)$} is large, whereas {\small$\beta_{\bullet}>0$} is small (but still independent of {\small$\N$}). Let us also recall {\small$\mathbf{Av}^{\mathbf{T},\mathbf{X},\mathscr{Q}_{\ell}}$} from Definition \ref{definition:bulkaverage}. We claim that \eqref{eq:kzetaI1} holds with high probability. This follows from {\color{black}Proposition \ref{prop:stochmain1}} and the Markov inequality (along with the fact that {\small$\mathrm{L}_{1}=\mathrm{O}(1)$}). Next, we introduce the estimate
\begin{align}
\sup_{\x\in\llbracket0,\mathrm{O}(1)\rrbracket}\int_{0}^{1}\sum_{\ell\in\llbracket1,\mathrm{L}_{2}\rrbracket}|\mathbf{Av}^{\mathbf{T},\mathfrak{b}_{\ell}}_{\t,\x}|^{2}\d\t\lesssim\N^{-\beta_{\bullet}},\label{eq:kzetaI1b}
\end{align}
which holds with high probability for {\small$\beta_{\bullet}>0$} small as well by Proposition \ref{prop:stochmain2} and the Markov inequality (along with {\small$\mathrm{L}_{2}=\mathrm{O}(1)$}). Now, throughout the entire argument below, assume that \eqref{eq:kzetaI1}-\eqref{eq:kzetaI1b} and \eqref{eq:lzetaI}-\eqref{eq:lzetaII} hold. The rest of the proof is deterministic. Since \eqref{eq:kzetaI1} and \eqref{eq:lzetaI}-\eqref{eq:lzetaII} hold with high probability, the final conclusion of this proof will also hold with high probability.

We start with the last term in \eqref{eq:duhamelI1}. Observe that, by \eqref{eq:lzetaII} and \eqref{eq:duhamelII2}, this term has the form
\begin{align}
&\int_{\s}^{\t}\sum_{\w\in\llbracket0,\infty\rrbracket}\mathbf{L}^{\N,\zeta,\sim}_{\r,\t,\x,\w}\cdot\boldsymbol{\Psi}_{\r}[\mathbf{K}^{\N,\zeta,\sim}_{\s,\r,\cdot,\y}]_{\w}\d\r
&=\sum_{\ell\in\llbracket1,\mathrm{L}_{1}\rrbracket}{}\int_{\s}^{\t}\langle\mathscr{A}^{(\ell)}_{\r,\t,\x,\cdot},\N\cdot\mathbf{Av}^{\mathbf{T},\mathbf{X},\mathscr{Q}_{\ell}}_{\r,\cdot}\mathbf{R}^{\N,\wedge}_{\r,\cdot_{\pm}}\mathbf{K}^{\N,\zeta,\sim}_{\s,\r,\cdot_{\pm},\y}\rangle\d\r,\label{eq:kzetaI2}
\end{align}
where the kernel {\small$\mathscr{A}^{(\ell)}$} is given by the formula
\begin{align}
\mathscr{A}^{(\ell)}_{\r,\t,\x,\z}:=\mathbf{1}_{\mathfrak{t}_{\mathrm{ap}}\geq\r}\sum_{\w\in\llbracket0,\infty\rrbracket}\mathbf{L}^{\N,\zeta,\sim}_{\r,\t,\x,\w}\cdot\boldsymbol{\mathcal{K}}^{(\ell)}_{\w,\z}\boldsymbol{\chi}^{(\zeta)}_{\z}.\nonumber
\end{align}
We now claim, with explanation given afterwards, that
\begin{align}
&\int_{\s}^{\t}\|\exp(\tfrac{\kappa|\x-\y|}{\N})\langle\mathscr{A}^{(\ell)}_{\r,\t,\x,\cdot},\N\cdot\mathbf{Av}^{\mathbf{T},\mathbf{X},\mathscr{Q}_{\ell}}_{\r,\cdot}\mathbf{R}^{\N,\wedge}_{\r,\cdot_{\pm}}\mathbf{K}^{\N,\zeta,\sim}_{\s,\r,\cdot_{\pm},\y}\rangle\|_{\ell^{1}_{\y}}\d\r\label{eq:kzetaI3}\\
&\lesssim_{\kappa}\int_{\s}^{\t}\langle\exp(\tfrac{\kappa|\x-\cdot|}{\N})|\mathscr{A}^{(\ell)}_{\r,\t,\x,\cdot}|,\N\cdot|\mathbf{Av}^{\mathbf{T},\mathbf{X},\mathscr{Q}_{\ell}}_{\r,\cdot}|\cdot\|\exp(\tfrac{\kappa|\cdot_{\pm}-\y|}{\N})|\mathbf{K}^{\N,\zeta,\sim}_{\s,\r,\cdot_{\pm},\y}|\|_{\ell^{1}_{\y}}\rangle\d\r\lesssim\mathrm{I}^{\frac12}\mathrm{II}^{\frac12}, \nonumber
\end{align}
where
\begin{align*}
\mathrm{I}&:=\int_{\s}^{\t}\langle\exp(\tfrac{\kappa|\x-\cdot|}{\N})|\mathscr{A}^{(\ell)}_{\r,\t,\x,\cdot}|,\N^{2}|\mathbf{Av}^{\mathbf{T},\mathbf{X},\mathscr{Q}_{\ell}}_{\r,\cdot}|^{2}\rangle\d\r,\\
\mathrm{II}&:=\int_{\s}^{\t}\langle\exp(\tfrac{\kappa|\x-\cdot|}{\N})|\mathscr{A}^{(\ell)}_{\r,\t,\x,\cdot}|,\|\exp(\tfrac{\kappa|\cdot_{\pm}-\y|}{\N})|\mathbf{K}^{\N,\zeta,\sim}_{\s,\r,\cdot_{\pm},\y}|\|_{\ell^{1}_{\y}}^{2}\rangle\d\r.
\end{align*}
For the first step, we use the estimate {\small$\exp(\frac{\kappa|\x-\y|}{\N})\lesssim_{\kappa}\exp(\frac{\kappa|\x-\w|}{\N})\exp(\frac{\kappa|\w_{\pm}-\y|}{\N})$}, which follows by the triangle inequality and the fact that {\small$|\w-\w_{\pm}|\lesssim\N$} for any {\small$\w\in\llbracket0,\infty\rrbracket$} by construction in Definition \ref{definition:bulkaverage}. We also use that {\small$\mathbf{R}^{\N,\wedge}\lesssim1$} (see \eqref{eq:rwedge}), as well as the triangle inequality for the {\small$\ell^{1}_{\y}$}-norm, since the only term which depends on {\small$\y$} above is the {\small$\mathbf{K}^{\N,\zeta,\sim}$}-kernel and its exponential factor. The last bound follows by the Cauchy-Schwarz inequality with respect to both the {\small$\d\r$}-integration and the {\small$\langle\cdot,\cdot\rangle$}-summation, with respect to the weight {\small$\exp(\frac{\kappa|\x-\cdot|}{\N})|\mathscr{A}^{(\ell)}_{\r,\t,\x,\cdot}|$}. In order to control {\small$\mathrm{I},\mathrm{II}$}, we recall the {\small$\mathbf{L}^{\N,\zeta,\sim}$}-estimate from \eqref{eq:lzetaII} and the {\small$\boldsymbol{\mathcal{K}}^{(\ell)}$}-estimate from \eqref{eq:sdeIII}. This gives us two estimates. 
\begin{itemize}
\item First is that {\small$\|\exp(\frac{\kappa|\x-\cdot|}{\N})\mathscr{A}^{(\ell)}_{\r,\t,\x,\cdot}\|_{\ell^{1}(\llbracket0,\infty\rrbracket)}\lesssim_{\kappa}\N^{\mathrm{C}\delta_{\mathbf{S}}}$} for some {\small$\mathrm{C}\lesssim1$}. 
\item Second is the pointwise estimate {\small$\exp(\frac{\kappa|\x-\z|}{\N})|\mathscr{A}^{(\ell)}_{\r,\t,\x,\z}|\lesssim\N^{-1+\mathrm{C}\delta_{\mathbf{S}}}\mathbf{1}_{\z\in\llbracket0,\mathrm{O}(\N^{1+\zeta})\rrbracket}$} for all {\small$\z\in\llbracket0,\infty\rrbracket$}, where the indicator comes from the support of {\small$\boldsymbol{\chi}^{(\zeta)}$}.
\end{itemize}
Using the first of these two estimates, we deduce that {\small$\mathrm{II}\lesssim_{\kappa}\N^{\mathrm{C}\delta_{\mathbf{S}}}|\t-\s|\boldsymbol{\Theta}_{\s,\t,\kappa}^{2}$}. By the second and \eqref{eq:kzetaI1}, we have instead that {\small$\mathrm{I}\lesssim_{\kappa}\N^{-1+\mathrm{C}\delta_{\mathbf{S}}}\N^{2}\N^{-2-\beta_{\bullet}}\N^{1+\zeta}\lesssim\N^{-\beta_{\bullet}+\zeta+\mathrm{C}\delta_{\mathbf{S}}}$}. Plugging this into \eqref{eq:kzetaI3} and \eqref{eq:kzetaI2} gives
\begin{align*}
\Big\|\exp(\tfrac{\kappa|\x-\y|}{\N})\int_{\s}^{\t}\sum_{\w\in\llbracket0,\infty\rrbracket}\mathbf{L}^{\N,\zeta,\sim}_{\r,\t,\x,\w}\cdot\boldsymbol{\Psi}_{\r}[\mathbf{K}^{\N,\zeta,\sim}_{\s,\r,\cdot,\y}]_{\w}\d\r\Big\|_{\ell^{1}_{\y}}\lesssim_{\kappa}\N^{\mathrm{C}\delta_{\mathbf{S}}}\N^{-\frac12\beta_{\bullet}+\frac12\zeta}|\t-\s|^{\frac12}\boldsymbol{\Theta}_{\s,\t,\kappa}\lesssim\N^{\mathrm{C}\delta_{\mathbf{S}}}\N^{-\frac12\beta_{\bullet}+\frac12\beta_{0}}\boldsymbol{\Theta}_{\s,\t,\kappa},
\end{align*}
where the last bound follows by our assumption that {\small$|\t-\s|\leq\N^{\beta_{0}}\N^{-\zeta}$}. The same would hold if we replaced {\small$\t$} on the \abbr{LHS} by any {\small$\tau\in[\s,\t]$}, since the far \abbr{RHS} is monotone non-decreasing in {\small$\t$}, so we can replace {\small$\t\mapsto\tau$} on the \abbr{LHS} and take a supremum over {\small$\tau\in[\s,\t]$}. Similarly, note that the \abbr{RHS} is independent of the {\small$\x$}-variable on the \abbr{LHS}, so we can place a supremum over {\small$\x\in\llbracket0,\infty\rrbracket$} on the \abbr{LHS} as well. Ultimately, if we take {\small$\beta_{0},\delta_{\mathbf{S}}>0$} small, we get
\begin{align}
\sup_{\tau\in[\s,\t]}\Big\|\Big\|\exp(\tfrac{\kappa|\x-\y|}{\N})\cdot\int_{\s}^{\tau}\sum_{\w\in\llbracket0,\infty\rrbracket}\mathbf{L}^{\N,\zeta,\sim}_{\r,\tau,\x,\w}\cdot\boldsymbol{\Psi}_{\r}[\mathbf{K}^{\N,\zeta,\sim}_{\s,\r,\cdot,\y}]_{\w}\d\r\Big\|_{\ell^{1}_{\y}}\Big\|_{\ell^{\infty}_{\x}}\lesssim_{\kappa}\N^{-\beta_{\#}}\boldsymbol{\Theta}_{\s,\t,\kappa}\label{eq:kzetaI4}
\end{align}
for some {\small$\beta_{\#}>0$}. We now study the second term on the \abbr{RHS} of \eqref{eq:duhamelI1}. Similar to \eqref{eq:kzetaI2}, we have 
\begin{align}
\int_{\s}^{\t}\sum_{\w\in\llbracket0,\infty\rrbracket}\mathbf{L}^{\N,\zeta,\sim}_{\r,\t,\x,\w}\cdot\boldsymbol{\Phi}_{\r}[\mathbf{K}^{\N,\zeta,\sim}_{\s,\r,\cdot,\y}]_{\w}\d\r&=\sum_{\ell\in\llbracket1,\mathrm{L}_{2}\rrbracket}\int_{\s}^{\t}\langle\mathscr{B}^{\partial,\ell}_{\r,\t,\x,\cdot},\N\mathbf{Av}^{\mathbf{T},\mathfrak{b}_{\ell}}_{\r,\cdot}\mathbf{R}^{\N,\wedge}_{\r,\cdot}\mathbf{K}^{\N,\zeta,\sim}_{\s,\r,\cdot,\y}\rangle\d\r,\label{eq:kzetaI5}
\end{align}
where the kernel {\small$\mathscr{B}^{\partial,\ell}$} is given by the following formula:
\begin{align}
\mathscr{B}^{\partial,\ell}_{\r,\t,\x,\z}&:=\mathbf{1}_{\mathfrak{t}_{\mathrm{ap}}\geq\r}\sum_{\w\in\llbracket0,\infty\rrbracket}\mathbf{L}^{\N,\zeta,\sim}_{\r,\t,\x,\w}\cdot\boldsymbol{\mathcal{K}}^{\partial,\ell}_{\w,\z}.\nonumber
\end{align}
The same argument which gave \eqref{eq:kzetaI3} also provides the following estimate (if we formally replace {\small$\mathscr{A}^{(\ell)}\mapsto\mathscr{B}^{\partial,\ell}$} and {\small$\mathbf{Av}^{\mathbf{T},\mathbf{X},\mathscr{Q}_{\ell}}\mapsto\mathbf{Av}^{\mathbf{T},\mathfrak{b}_{\ell}}$}):
\begin{align}
&\int_{\s}^{\t}\|\exp(\tfrac{\kappa|\x-\y|}{\N})\langle\mathscr{B}^{\partial,\ell}_{\r,\t,\x,\cdot},\N\mathbf{Av}^{\mathbf{T},\mathfrak{b}_{\ell}}_{\r,\cdot}\mathbf{R}^{\N,\wedge}_{\r,\cdot}\mathbf{K}^{\N,\zeta,\sim}_{\s,\r,\cdot_{\pm},\y}\rangle\|_{\ell^{1}_{\y}}\d\r\label{eq:kzetaI6}\\
&\lesssim_{\kappa}(\int_{\s}^{\t}\langle\exp(\tfrac{\kappa|\x-\cdot|}{\N})|\mathscr{B}^{\partial,\ell}_{\r,\t,\x,\cdot}|,\N^{2}|\mathbf{Av}^{\mathbf{T},\mathfrak{b}_{\ell}}_{\r,\cdot}|^{2}\rangle\d\r)^{\frac12}(\int_{\s}^{\t}\langle\exp(\tfrac{\kappa|\x-\cdot|}{\N})|\mathscr{B}^{\partial,\ell}_{\r,\t,\x,\cdot}|,\|\exp(\tfrac{\kappa|\cdot_{\pm}-\y|}{\N})|\mathbf{K}^{\N,\zeta,\sim}_{\s,\r,\cdot_{\pm},\y}|\|_{\ell^{1}_{\y}}^{2}\rangle\d\r)^{\frac12}.\nonumber
\end{align}
By the estimate \eqref{eq:sdeIV} on {\small$\boldsymbol{\mathcal{K}}^{\partial,\ell}$}, as well as the estimate \eqref{eq:lzetaII} for {\small$\mathbf{L}^{\N,\zeta,\sim}$} that we conditioned on, we similarly have the kernel estimate {\small$\|\exp(\frac{\kappa|\x-\cdot|}{\N})\mathscr{B}^{\partial,\ell}_{\r,\t,\x,\cdot}\|_{\ell^{1}(\llbracket0,\infty\rrbracket)}\lesssim_{\kappa}\N^{\mathrm{C}\delta_{\mathbf{S}}}$} for some {\small$\mathrm{C}\lesssim1$}. On the other hand, since \eqref{eq:sdeIV} has a restriction of the forwards spatial variable to {\small$\{0,1\}\subseteq\llbracket0,\infty\rrbracket$}, we get {\small$\exp(\frac{\kappa|\x-\z|}{\N})|\mathscr{B}^{\partial,\ell}_{\r,\t,\x,\z}|\lesssim\N^{-1+\mathrm{C}\delta_{\mathbf{S}}}\mathbf{1}_{\w\in\llbracket0,1\rrbracket}$} for all {\small$\z\in\llbracket0,\infty\rrbracket$}. Thus, the second factor in the second line of \eqref{eq:kzetaI6} is {\small$\lesssim\N^{\mathrm{C}\delta_{\mathbf{S}}/2}|\t-\s|^{1/2}\boldsymbol{\Theta}_{\s,\t,\kappa}$}, while the first factor therein is {\small$\lesssim\N^{\mathrm{C}\delta_{\mathbf{S}}/2}\N^{-\beta_{\bullet}/2}$} due to \eqref{eq:kzetaI1b}. Thus, the \abbr{LHS} of \eqref{eq:kzetaI6} is {\small$\lesssim\N^{\mathrm{C}\delta_{\mathbf{S}}}\N^{-\beta_{\bullet}/2}\boldsymbol{\Theta}_{\s,\t,\kappa}\lesssim\N^{-\beta_{\bullet}/3}\boldsymbol{\Theta}_{\s,\t,\kappa}$} if {\small$\delta_{\mathbf{S}}>0$} is small, where we have bounded {\small$|\t-\s|\lesssim1$} by assumption. Ultimately, similar to \eqref{eq:kzetaI4}, from this and \eqref{eq:kzetaI5}, we obtain the following estimate (for some possibly different {\small$\beta_{\#}>0$}):
\begin{align}
\sup_{\tau\in[\s,\t]}\Big\|\Big\|\exp(\tfrac{\kappa|\x-\y|}{\N})\cdot\int_{\s}^{\tau}\sum_{\w\in\llbracket0,\infty\rrbracket}\mathbf{L}^{\N,\zeta,\sim}_{\r,\tau,\x,\w}\cdot\boldsymbol{\Phi}_{\r}[\mathbf{K}^{\N,\zeta,\sim}_{\s,\r,\cdot,\y}]_{\w}\d\r\Big\|_{\ell^{1}_{\y}}\Big\|_{\ell^{\infty}_{\x}}\lesssim_{\kappa}\N^{-\beta_{\#}}\boldsymbol{\Theta}_{\s,\t,\kappa}.\label{eq:kzetaI7}
\end{align}
Now, by \eqref{eq:lzetaII}, we obtain that {\small$\sup_{\r\in[\s,\t]}\|\|\exp(\frac{\kappa|\x-\y|}{\N})\mathbf{L}^{\N,\zeta,\sim}_{\s,\r,\x,\y}\|_{\ell^{1}_{\y}}\|_{\ell^{\infty}_{\x}}\lesssim_{\kappa,\delta}\N^{\delta}$} for any {\small$\kappa>0$} and {\small$\delta>0$} small. We combine this with \eqref{eq:kzetaI4}, the previous display \eqref{eq:kzetaI7}, and \eqref{eq:duhamelI1}. This provides
\begin{align}
\boldsymbol{\Theta}_{\s,\t,\kappa}\lesssim_{\kappa}\N^{\delta}+\N^{-\beta_{\#}}\boldsymbol{\Theta}_{\s,\t,\kappa},\label{eq:kzetaI8}
\end{align}
at which point the result follows by moving the last term to the \abbr{LHS} and dividing out by {\small$1-\mathrm{O}(\N^{-\beta_{\#}})\gtrsim1$}.
\end{proof}
\begin{proof}[Proof of Lemma \ref{lemma:kzeta2}]
For any {\small$\s\leq\r\leq\t$}, we have the following Chapman-Kolmogorov equation, which follows since both sides of the identity below solve the same linear \abbr{SDE} for {\small$\t\geq\r$} with the same initial data at {\small$\t=\r$}:
\begin{align*}
\mathbf{K}^{\N,\zeta,\sim}_{\s,\t,\x,\y}=\sum_{\w\in\llbracket0,\infty\rrbracket}\mathbf{K}^{\N,\zeta,\sim}_{\r,\t,\x,\w}\mathbf{K}^{\N,\zeta,\sim}_{\s,\r,\w,\y}.
\end{align*}
At this point, we follow exactly the proof of Lemma 7.5 in \cite{Y26PTRF}. That is, we iterate the above equation to obtain a representation of the \abbr{LHS} in terms of heat kernels whose time-variables are separated by {\small$\leq\N^{\beta_{0}}\N^{-\zeta}$}; this requires {\small$\lesssim1+\N^{-\beta_{0}}\N^{\zeta}$}-many heat kernel copies. Each heat kernel yields {\small$\lesssim_{\delta}\N^{\delta}$} (with a sub-exponential decay in space). So in total, we collect a factor of {\small$\lesssim(\mathrm{C}_{\delta}\N^{\delta})^{1+\N^{-\beta_{0}}\N^{\zeta}}\lesssim\exp(\mathrm{C}\N^{-\beta_{0}}\N^{\zeta}\log\N)\N^{\delta}\lesssim\exp(\N^{-\beta_{0}/2}\N^{\zeta})\N^{\delta}$}.
\end{proof}
\begin{proof}[Proof of Lemma \ref{lemma:lzeta}]
We first claim that \eqref{eq:lzetaII} follows by \eqref{eq:lzetaI}, {\color{black} heat kernel estimates from Proposition \ref{prop:hkestimates}}, and some technical gymnastics. Indeed, we have 
\begin{align*}
&\sum_{\y\in\llbracket0,\infty\rrbracket}\exp(\tfrac{\kappa|\x-\y|}{\N})|\mathbf{L}^{\N,\zeta,\sim}_{\s,\t,\x,\y}|\leq\sum_{\y\in\llbracket0,\infty\rrbracket}\exp(\tfrac{\kappa|\x-\y|}{\N})|\mathbf{H}^{\N,\mathbf{A}}_{\s,\t,\x,\y}|+\sum_{\y\in\llbracket0,\infty\rrbracket}\exp(\tfrac{\kappa|\x-\y|}{\N})|\mathbf{H}^{\N,\mathbf{A}}_{\s,\t,\x,\y}-\mathbf{L}^{\N,\zeta,\sim}_{\s,\t,\x,\y}|\\
&\lesssim_{\kappa}1+\Big(\sum_{\y\in\llbracket0,\infty\rrbracket}\exp(\tfrac{-\kappa|\x-\y|}{\N})\Big)^{\frac12}\Big(\sum_{\y\in\llbracket0,\infty\rrbracket}\exp(\tfrac{2\kappa|\x-\y|}{\N})|\mathbf{H}^{\N,\mathbf{A}}_{\s,\t,\x,\y}-\mathbf{L}^{\N,\zeta,\sim}_{\s,\t,\x,\y}|^{2}\Big)^{\frac12}\lesssim_{\kappa}1+\N^{\frac12}\N^{-\frac12+\frac12\delta}\lesssim\N^{\frac12\delta},
\end{align*}
where we used the Cauchy-Schwarz inequality to obtain the second line above. Thus, we are left with \eqref{eq:lzetaI}. For this, we introduce the following control parameter, in which {\small$\kappa,p\geq1$} are large but fixed, in which {\small$0\leq\s\leq\t\leq1$}, and in which {\small$\|\cdot\|_{\mathrm{L}^{p}(\mathbb{P})}:=(\E|\cdot|^{p})^{1/p}$} means {\small$p$}-th root of the {\small$p$}-th moment:
\begin{align}
\boldsymbol{\Pi}^{\kappa,p}_{\s,\t}:=|\t-\s|^{\frac12}\sup_{\x\in\llbracket0,\infty\rrbracket}\sum_{\y\in\llbracket0,\infty\rrbracket}\exp(\tfrac{2\kappa|\x-\y|}{\N})\||\mathbf{L}^{\N,\zeta,\sim}_{\s,\t,\x,\y}|^{2}\|_{\mathrm{L}^{p}(\mathbb{P})}.\label{eq:lzeta1}
\end{align}
By the Duhamel formula and \eqref{eq:lzetasim}, we have 
\begin{align}
\mathbf{L}^{\N,\zeta,\sim}_{\s,\t,\x,\y}&=\mathbf{H}^{\N,\mathbf{A}}_{\s,\t,\x,\y}+\boldsymbol{\Lambda}^{(1)}_{\s,\t,\x,\y}+\boldsymbol{\Lambda}^{(2)}_{\s,\t,\x,\y}+\boldsymbol{\Lambda}^{(3)}_{\s,\t,\x,\y}+\boldsymbol{\Lambda}^{(4)}_{\s,\t,\x,\y},\label{eq:lzeta2a}
\end{align}
where
\begin{align*}
\boldsymbol{\Lambda}^{(1)}_{\s,\t,\x,\y}&:=\int_{\s}^{\t}\sum_{\w\in\llbracket0,\infty\rrbracket}\mathbf{H}^{\N,\mathbf{A}}_{\r,\t,\x,\w}\cdot[\mathbf{e}^{\mathfrak{t}_{\N}\mathscr{T}_{\N}}(\mathbf{1}^{(\w)}_{\cdot}\boldsymbol{\chi}^{(\zeta_{\mathrm{large}})}_{\cdot}\cdot\sqrt{2}\lambda\N^{\frac12}\mathbf{R}^{\N,\wedge}_{\r,\cdot}\mathbf{L}^{\N,\zeta,\sim}_{\s,\r,\cdot,\y}\d\mathbf{b}_{\r,\cdot})]_{\w}\d\r\\
\boldsymbol{\Lambda}^{(2)}_{\s,\t,\x,\y}&:=\int_{\s}^{\t}\sum_{\w\in\llbracket0,\infty\rrbracket}\mathbf{H}^{\N,\mathbf{A}}_{\r,\t,\x,\w}\cdot\mathbf{1}_{\mathfrak{t}_{\mathrm{ap}}\geq\r}\mathrm{Err}[\boldsymbol{\chi}^{(\zeta)}\mathbf{R}^{\N,\wedge}\mathbf{L}^{\N,\zeta,\sim}_{\s,\cdot,\cdot,\y}]_{\r,\w}\d\r\\
\boldsymbol{\Lambda}^{(3)}_{\s,\t,\x,\y}&:=\int_{\s}^{\t}\sum_{\w\in\llbracket0,\infty\rrbracket}\mathbf{H}^{\N,\mathbf{A}}_{\r,\t,\x,\w}\cdot\sum_{\ell\in\llbracket1,\mathrm{L}_{1}\rrbracket}{}\grad^{\mathbf{T},\mathrm{av}}_{\mathfrak{t}_{\mathbf{Av}}}\Big({\mathbf{1}_{\mathfrak{t}_{\mathrm{ap}}\geq\r}}\sum_{\z\in\llbracket0,\infty\rrbracket}\boldsymbol{\mathcal{K}}^{(\ell)}_{\w,\z}\cdot{\boldsymbol{\chi}^{(\zeta_{})}_{\z}}\cdot\N\cdot\mathbf{Av}^{\mathbf{X},\mathscr{Q}_{\ell}}_{\r,\z}{\mathbf{R}^{\N,\wedge}_{\t,\z_{\pm}}}\mathbf{L}^{\N,\zeta,\sim}_{\s,\r,\z_{\pm},\y}\Big)\d\r\\
\boldsymbol{\Lambda}^{(4)}_{\s,\t,\x,\y}&:=\int_{\s}^{\t}\sum_{\w\in\llbracket0,\infty\rrbracket}\mathbf{H}^{\N,\mathbf{A}}_{\r,\t,\x,\w}\cdot\sum_{\ell\in\llbracket1,\mathrm{L}_{2}\rrbracket}\grad^{\mathbf{T},\mathrm{av}}_{\mathfrak{t}_{\partial}}\Big({\mathbf{1}_{\mathfrak{t}_{\mathrm{ap}}\geq\r}}\sum_{\z\in\llbracket0,\infty\rrbracket}\boldsymbol{\mathcal{K}}^{\partial,\ell}_{\w,\z}\cdot\N\mathfrak{b}_{\ell}[\bphi_{\r}]{\mathbf{R}^{\N,\wedge}_{\r,\z}}\mathbf{L}^{\N,\zeta,\sim}_{\s,\r,\z,\y}\Big)\d\r.
\end{align*}
Our goal is to control each term in \eqref{eq:lzeta2a}. We claim, with justification given after, that for some {\small$\delta>0$}, we have
\begin{align}
|\t-\s|^{\frac12}\sup_{\x\in\llbracket0,\infty\rrbracket}\sum_{\y\in\llbracket0,\infty\rrbracket}\exp(\tfrac{2\kappa|\x-\y|}{\N})\||\mathbf{H}^{\N,\mathbf{A}}_{\s,\t,\x,\y}|^{2}\|_{\mathrm{L}^{p}(\mathbb{P})}&\lesssim_{\kappa,p}\N^{-1},\label{eq:lzeta3a}\\
|\t-\s|^{\frac12}\sup_{\x\in\llbracket0,\infty\rrbracket}\sum_{\y\in\llbracket0,\infty\rrbracket}\exp(\tfrac{2\kappa|\x-\y|}{\N})\||\boldsymbol{\Lambda}^{(1)}_{\s,\t,\x,\y}|^{2}\|_{\mathrm{L}^{p}(\mathbb{P})}&\lesssim_{\kappa,p}|\t-\s|^{\frac12}\int_{\s}^{\t}|\t-\r|^{-\frac12}|\r-\s|^{-\frac12}\boldsymbol{\Pi}^{\kappa,p}_{\s,\r}\d\r,\label{eq:lzeta3b}\\
|\t-\s|^{\frac12}\sup_{\x\in\llbracket0,\infty\rrbracket}\sum_{\y\in\llbracket0,\infty\rrbracket}\exp(\tfrac{2\kappa|\x-\y|}{\N})\||\boldsymbol{\Lambda}^{(3)}_{\s,\t,\x,\y}|^{2}\|_{\mathrm{L}^{p}(\mathbb{P})}&\lesssim_{\kappa,p}\N^{-\delta}\sup_{\r\in[\s,\t]}\boldsymbol{\Pi}^{\kappa,p}_{\s,\r},\label{eq:lzeta3c}\\
|\t-\s|^{\frac12}\sup_{\x\in\llbracket0,\infty\rrbracket}\sum_{\y\in\llbracket0,\infty\rrbracket}\exp(\tfrac{2\kappa|\x-\y|}{\N})\||\boldsymbol{\Lambda}^{(4)}_{\s,\t,\x,\y}|^{2}\|_{\mathrm{L}^{p}(\mathbb{P})}&\lesssim_{\kappa,p}\N^{-\delta}\sup_{\r\in[\s,\t]}\boldsymbol{\Pi}^{\kappa,p}_{\s,\r}.\label{eq:lzeta3d}
\end{align}
The first estimate \eqref{eq:lzeta3a} follows from standard heat kernel estimates as in Proposition \ref{prop:hkestimates}. The second estimate \eqref{eq:lzeta3b} follows by the proof of (7.37) in \cite{Y26PTRF}, since all that is used therein is a collection of standard pointwise heat kernel estimates for the full-space analogue of {\small$\mathbf{H}^{\N,\mathbf{A}}$}, which also hold for {\small$\mathbf{H}^{\N,\mathbf{A}}$}-kernels as well by Proposition \ref{prop:hkestimates}. (In a nutshell, the \abbr{BDG} inequality allows us to control moments of {\small$\boldsymbol{\Lambda}^{(1)}$} in terms of its quadratic variation. The factor of {\small$|\t-\s|^{-1/2}$} on the \abbr{RHS} of \eqref{eq:lzeta3b} comes from squaring the {\small$\mathbf{H}^{\N,\mathbf{A}}_{\r,\t,\x,\w}$}-term in {\small$\boldsymbol{\Lambda}^{(1)}$}, for which we have estimates from Proposition \ref{prop:hkestimates} that also cancel the {\small$\N^{1/2}$}-factor in {\small$\boldsymbol{\Lambda}^{(1)}$} as well. The factor {\small$|\r-\s|^{-1/2}$} in \eqref{eq:lzeta3b} is there because the {\small$\boldsymbol{\Pi}^{\kappa,p}_{\s,\r}$}-term carries with it a factor of {\small$|\r-\s|^{1/2}$}; see \eqref{eq:lzeta1}.) The estimate \eqref{eq:lzeta3c} follows by the proof of (7.48) in \cite{Y26PTRF}. (In short, because of the stopping time {\small$\mathfrak{t}_{\mathrm{ap}}$}, we have an a priori estimate on {\small$\mathbf{Av}^{\mathbf{X},\mathscr{Q}_{\ell}}$} in {\small$\boldsymbol{\Lambda}^{(3)}$} of {\small$\lesssim\N^{-1/2+\mathrm{C}\delta_{\mathbf{S}}}$}; see \eqref{eq:tap}. Moreover, we also gain a factor of {\small$\mathfrak{t}_{\mathbf{Av}}\lesssim\N^{-2/3}$} [see Definition \ref{definition:bulkaverage}] because of the time-gradient and time-regularity of {\small$\mathbf{H}^{\N,\mathbf{A}}$} from Proposition \ref{prop:hkestimates}. Therefore, the total power-counting in {\small$\boldsymbol{\Lambda}^{(3)}$} is {\small$\lesssim\N^{-1/2-2/3+\mathrm{C}\delta_{\mathbf{S}}}\N\lesssim\N^{-1/8}$}. Finally, the {\small$\mathbf{H}^{\N,\mathbf{A}}$}- and {\small$\boldsymbol{\mathcal{K}}^{(\ell)}$}-kernels are averaging kernels in space up to a factor of {\small$\N^{\mathrm{C}\delta_{\mathbf{S}}}$} for {\small$\mathrm{C}\lesssim1$} because of \eqref{eq:sdeIII} and Proposition \ref{prop:hkestimates}.) Finally, \eqref{eq:lzeta3d} follows by the same reasoning as what gives \eqref{eq:lzeta3c}. Indeed, because of the a priori estimates from {\small$\mathfrak{t}_{\mathrm{ap}}$} (again, see \eqref{eq:tap}), the {\small$\mathfrak{b}_{\ell}$}-term is {\small$\lesssim\N^{\mathrm{C}\delta_{\mathbf{S}}}$} for some {\small$\mathrm{C}\lesssim1$}. We also gain a factor of {\small$\mathfrak{t}_{\partial}\lesssim\N^{-3/2}$} (see Definition \ref{definition:edgeaverage}) from the time-gradient, so the power-counting in {\small$\boldsymbol{\Lambda}^{(4)}$} is ultimately {\small$\lesssim\N^{1+\mathrm{C}\delta_{\mathbf{S}}}\N^{-3/2}\lesssim\N^{-1/6}$}. (Thus, power-counting is even ``better" in {\small$\boldsymbol{\Lambda}^{(4)}$} than {\small$\boldsymbol{\Lambda}^{(3)}$}.) Finally, the {\small$\mathbf{H}^{\N,\mathbf{A}}$} and {\small$\boldsymbol{\mathcal{K}}^{\partial,\ell}$} kernels are averaging kernels in space up to a factor of {\small$\N^{\mathrm{C}\delta_{\mathbf{S}}}$} for {\small$\mathrm{C}\lesssim1$} because of \eqref{eq:sdeIV} and Proposition \ref{prop:hkestimates}.

We are left with {\small$\boldsymbol{\Lambda}^{(2)}$}. Take \eqref{eq:sdeII1}-\eqref{eq:sdeII2}, but replace {\small$\mathbf{R}^{\N}$} by {\small$\mathbf{R}^{\N,\wedge}$}, and replace {\small$\mathbf{S}^{\N}$} by {\small$\boldsymbol{\chi}^{(\zeta)}\mathbf{L}^{\N,\zeta,\sim}_{\s,\r,\cdot,\y}$}. This gives
\begin{align*}
\mathbf{1}_{\mathfrak{t}_{\mathrm{ap}}\geq\r}|\mathrm{Err}[\boldsymbol{\chi}^{(\zeta)}\mathbf{R}^{\N,\wedge}\mathbf{L}^{\N,\zeta,\sim}_{\s,\cdot,\cdot,\y}]_{\r,\w}|\lesssim_{\omega}\sum_{\z\in\llbracket0,\infty\rrbracket}(\N^{-\frac65}+\N^{-\frac13}\mathbf{1}_{\z\in\llbracket0,1\rrbracket})\cdot\exp(\tfrac{-\omega|\w-\z|}{\N})\cdot\N^{\mathrm{C}\delta_{\mathbf{S}}}\cdot|\mathbf{L}^{\N,\zeta,\sim}_{\s,\r,\z,\y}|,
\end{align*}
where {\small$\omega>0$} is arbitrary, and where the {\small$\bphi$}-dependent factors from \eqref{eq:sdeII2} are replaced by {\small$\lesssim\N^{\mathrm{C}\delta_{\mathbf{S}}}$} because of the {\small$\mathfrak{t}_{\mathrm{ap}}$}-stopping time above (see \eqref{eq:tap}). From this and the proof of (7.38) in \cite{Y26PTRF}, we deduce that \eqref{eq:lzeta3d} holds for {\small$\boldsymbol{\Lambda}^{(2)}$} in place of {\small$\boldsymbol{\Lambda}^{(4)}$}. (Indeed, in the previous display, the effective power-counting is {\small$\lesssim\N^{-1/5+\mathrm{C}\delta_{\mathbf{S}}}+\N^{-1/3+\mathrm{C}\delta_{\mathbf{S}}}\lesssim\N^{-1/6}$} since {\small$\mathrm{C}\lesssim1$} and {\small$\delta_{\mathbf{S}}>0$} is small.) Combining this estimate for {\small$\boldsymbol{\Lambda}^{(2)}$} with \eqref{eq:lzeta2a} and \eqref{eq:lzeta3a}-\eqref{eq:lzeta3d} yields
\begin{align}
\boldsymbol{\Pi}^{\kappa,p}_{\s,\t}\lesssim_{\kappa,p}\N^{-1}+|\t-\s|^{\frac12}\int_{\s}^{\t}|\t-\r|^{-\frac12}|\r-\s|^{-\frac12}\boldsymbol{\Pi}^{\kappa,p}_{\s,\r}\d\r+\N^{-\delta}\sup_{\r\in[\s,\t]}\boldsymbol{\Pi}^{\kappa,p}_{\s,\r}.\nonumber
\end{align}
By the Gronwall inequality, from the previous display, we then obtain (for some {\small$\mathrm{C}_{\kappa,p}\lesssim_{\kappa,p}1$})
\begin{align*}
\boldsymbol{\Pi}^{\kappa,p}_{\s,\t}\lesssim\sup_{\tau\in[\s,\t]}\exp(\mathrm{C}_{\kappa,p}|\tau-\s|^{\frac12}\int_{\s}^{\tau}|\tau-\r|^{-\frac12}|\r-\s|^{-\frac12}\d\r)\cdot(\N^{-1}+\N^{-\delta}\sup_{\r\in[\s,\t]}\boldsymbol{\Pi}^{\kappa,p}_{\s,\r})\lesssim_{\kappa,p}\N^{-1}+\N^{-\delta}\sup_{\r\in[\s,\t]}\boldsymbol{\Pi}^{\kappa,p}_{\s,\r},
\end{align*}
where the last bound follows from an elementary integral estimate (split the integration domain {\small$[\s,\tau]$} in half, and on each half, control one of {\small$|\tau-\r|^{-1/2}|\r-\s|^{-1/2}$} uniformly and integrate the other). Because the far \abbr{RHS} of the previous estimate is monotone non-decreasing in {\small$\t\geq\s$}, we obtain from this an estimate on {\small$\sup_{\r\in[\s,\t]}\boldsymbol{\Pi}^{\kappa,p}_{\s,\r}$}:
\begin{align*}
\sup_{\r\in[\s,\t]}\boldsymbol{\Pi}^{\kappa,p}_{\s,\r}\lesssim_{\kappa,p}\N^{-1}+\N^{-\delta}\sup_{\r\in[\s,\t]}\boldsymbol{\Pi}^{\kappa,p}_{\s,\r}.
\end{align*}
From the above, we get {\small$\sup_{\r\in[\s,\t]}\boldsymbol{\Pi}^{\kappa,p}_{\s,\r}\lesssim_{\kappa,p}\N^{-1}$}. To show \eqref{eq:lzetaI} (and thus the lemma), we will need control not on {\small$\mathbf{L}^{\N,\zeta,\sim}$}, but rather {\small$\mathbf{L}^{\N,\zeta,\sim}-\mathbf{H}^{\N,\mathbf{A}}$}. To this end, we again use \eqref{eq:lzeta2a} along with \eqref{eq:lzeta3b}-\eqref{eq:lzeta3d} (and the extension of \eqref{eq:lzeta3d} to {\small$\boldsymbol{\Lambda}^{(2)}$}) to deduce that the \abbr{LHS} of \eqref{eq:lzetaI} is
\begin{align*}
\lesssim_{\kappa,p}\sup_{0\leq\s\leq\t\leq1}\Big(|\t-\s|^{\frac12}\int_{\s}^{\t}|\t-\r|^{-\frac12}|\r-\s|^{-\frac12}\boldsymbol{\Pi}^{\kappa,p}_{\s,\r}\d\r+\N^{-\delta}\sup_{\r\in[\s,\t]}\boldsymbol{\Pi}^{\kappa,p}_{\s,\r}\Big)\lesssim_{\kappa,p}\N^{-1}.
\end{align*}
This completes the proof of \eqref{eq:lzetaI}, so we are done.
\end{proof}
%
%
%
\section{Proof of Lemma \ref{lemma:smallzeta}}\label{section:smallzeta}
\subsection{Proof of \eqref{eq:smallzetaI}}
It suffices to assume that {\small$\zeta_{2}=\zeta_{1}-\e$} for some small but fixed {\small$\e>0$}. Indeed, for any other {\small$\zeta_{1},\zeta_{2}$}, consider any sequence {\small$\wt{\zeta}_{1},\ldots,\wt{\zeta}_{\k}$}, with {\small$\k\lesssim1$}, such that {\small$\wt{\zeta}_{1}=\zeta_{1}$}, that {\small$\wt{\zeta}_{\k}=\zeta_{2}$}, and {\small$\wt{\zeta}_{\i+1}=\wt{\zeta}_{\i}-\e$}. We would then have \eqref{eq:smallzetaI} with {\small$\zeta_{1},\zeta_{2}$} replaced by {\small$\wt{\zeta}_{\i},\wt{\zeta}_{\i+1}$} for each {\small$\i=1,\ldots,\k-1$}, at which point \eqref{eq:smallzetaI} as written follows by the triangle inequality (and that the intersection of finitely many high probability events also holds with high probability).

Thus, suppose that {\small$\zeta_{2}=\zeta_{1}-\e$}, and consider the difference {\small$\mathbf{D}^{\N}:=\mathbf{S}^{\N,\zeta_{1}}-\mathbf{S}^{\N,\zeta_{2}}$}; recall our assumption that {\small$\zeta_{1}>\zeta_{2}$}. By \eqref{eq:sdezetaI1}-\eqref{eq:sdezetaI6}, we know {\small$\mathbf{S}^{\N,\zeta_{1}}$} and {\small$\mathbf{S}^{\N,\zeta_{2}}$} satisfy the same linear \abbr{SDE}, with the same initial data, except the \abbr{SDE} for {\small$\mathbf{S}^{\N,\zeta_{1}}$} admits additional terms that are supported in space on {\small$\llbracket\mathrm{c}\N^{1+\zeta_{2}},\mathrm{C}\N^{1+\zeta_{1}}\rrbracket$} for some {\small$0<\mathrm{c},\mathrm{C}\lesssim1$}. Therefore, we can write {\small$\mathbf{D}^{\N}$} in terms of the stochastic heat kernel {\small$\mathbf{K}^{\N,\zeta_{2}}$} for the {\small$\mathbf{S}^{\N,\zeta_{2}}$} \abbr{SDE} integrated against these additional terms (which are linear in {\small$\mathbf{S}^{\N,\zeta_{1}}$} and supported on {\small$\llbracket\mathrm{c}\N^{1+\zeta_{2}},\mathrm{C}\N^{1+\zeta_{1}}\rrbracket$}). To be precise, we have 
\begin{align}
\mathbf{D}^{\N}_{\t,\x}=\int_{0}^{\t}\sum_{\w\in\llbracket\mathrm{c}\N^{1+\zeta_{2}},\mathrm{C}\N^{1+\zeta_{1}}\rrbracket}\mathbf{K}^{\N,\zeta_{2}}_{\s,\t,\x,\w}\cdot\mathscr{G}[\mathbf{S}^{\N,\zeta_{1}}]_{\s,\w}\d\s,\label{eq:smallzetaI1}
\end{align}
where {\small$\mathscr{G}[\mathbf{S}^{\N,\zeta_{1}}]$} is a linear operator in {\small$\mathbf{S}^{\N,\zeta_{1}}$} which satisfies the following deterministic estimate:
\begin{align}
\mathbf{1}_{\mathfrak{t}_{\mathrm{ap}}\geq\s}|\mathscr{G}[\mathbf{S}^{\N,\zeta_{1}}]_{\s,\w}|&\lesssim\N^{\mathrm{O}(1)}\sup_{\r\in[0,\s]}\sup_{|\y-\w|\lesssim\N}|\mathbf{S}^{\N,\zeta_{1}}_{\r,\y}|.\label{eq:smallzetaI2}
\end{align}
We recall {\small$\mathfrak{t}_{\mathrm{ap}}$} from \eqref{eq:tap} (in which we take {\small$\zeta=\zeta_{1}$}). The estimate above comes from the bound \eqref{eq:sdeII1}-\eqref{eq:sdeII2} on the {\small$\mathrm{Err}$}-operator in \eqref{eq:sdezetaI1} and the a priori inputs from \eqref{eq:tap} to control the other coefficients in \eqref{eq:sdezetaI3}-\eqref{eq:sdezetaI6}. Now, we use the representation \eqref{eq:hkzetaI7} for {\small$\mathbf{S}^{\N,\zeta_{1}}$}, as well as the {\small$\mathbf{K}^{\N,\zeta_{1}}$}-heat kernel estimate \eqref{eq:hkzetaI} and the high probability initial data estimate \eqref{eq:databoundI}. This gives, for some large but fixed {\small$0<\kappa\lesssim1$} and for some {\small$\beta_{\star}>0$}, the bound
\begin{align}
\exp(-\tfrac{\kappa|\y|}{\N})|\mathbf{S}^{\N,\zeta_{1}}_{\r,\y}|&\lesssim\sum_{\w\in\llbracket0,\infty\rrbracket}\exp(-\tfrac{\kappa|\y|}{\N})|\mathbf{K}^{\N,\zeta_{1}}_{0,\r,\y,\w}|\cdot|\mathbf{S}^{\N,\zeta_{1}}_{0,\y}|\lesssim\sum_{\w\in\llbracket0,\infty\rrbracket}\exp(-\tfrac{\kappa|\y|}{\N})\exp(\tfrac{\kappa|\w|}{\N})|\mathbf{K}^{\N,\zeta_{1}}_{0,\r,\y,\w}|\nonumber\\
&\lesssim\sum_{\w\in\llbracket0,\infty\rrbracket}\exp(\tfrac{\kappa|\y-\w|}{\N})|\mathbf{K}^{\N,\zeta_{1}}_{0,\r,\y,\w}|\lesssim\exp(\N^{-\beta_{\star}}\N^{\zeta_{1}})\N^{\delta}
\end{align}
with high probability simultaneously over all {\small$\r\in[0,1]$} and {\small$\y\in\llbracket0,\infty\rrbracket$}. Let us now combine \eqref{eq:smallzetaI1} and \eqref{eq:smallzetaI2} with the previous display, while noting that by \eqref{eq:tap2}, we can insert {\small$\mathbf{1}_{\mathfrak{t}_{\mathrm{ap}}\geq\s}$} inside the integral in \eqref{eq:smallzetaI1} for free with high probability. This provides, for {\small$\x\in\llbracket0,\mathrm{O}(\N)\rrbracket$}, the bound
\begin{align*}
\exp(-\tfrac{\kappa|\x|}{\N})|\mathbf{D}^{\N}_{\t,\x}|&\lesssim\exp(\N^{-\beta_{\star}}\N^{\zeta_{1}})\N^{\mathrm{O}(1)}\int_{0}^{\t}\sum_{\w\in\llbracket\mathrm{c}\N^{1+\zeta_{2}},\mathrm{C}\N^{1+\zeta_{1}}\rrbracket}\exp(-\tfrac{\kappa|\x|}{\N})|\mathbf{K}^{\N,\zeta_{2}}_{\s,\t,\x,\w}|\cdot\exp(\tfrac{\kappa|\w|}{\N})\d\s\\
&\lesssim\exp(\N^{-\beta_{\star}}\N^{\zeta_{1}})\N^{\mathrm{O}(1)}\int_{0}^{\t}\sum_{\w\in\llbracket\mathrm{c}\N^{1+\zeta_{2}},\mathrm{C}\N^{1+\zeta_{1}}\rrbracket}\exp(\tfrac{\kappa|\x-\w|}{\N})|\mathbf{K}^{\N,\zeta_{2}}_{\s,\t,\x,\w}|\d\s.
\end{align*}
In the last line, write {\small$\exp(\frac{\kappa|\x-\w|}{\N})=\exp(-\frac{\kappa|\x-\w|}{\N})\exp(\frac{2\kappa|\x-\w|}{\N})$} and bound the first factor by {\small$\lesssim\exp(-\upsilon\N^{\zeta_{2}})$} for some {\small$\upsilon\gtrsim1$}, because {\small$\x\in\llbracket0,\mathrm{O}(\N)\rrbracket$} and {\small$\w\gtrsim\N^{1+\zeta_{2}}$}. Then, use the heat kernel estimate \eqref{eq:hkzetaI} with {\small$2\kappa$} instead of {\small$\kappa>0$} therein. Ultimately, we establish that {\small$\exp(-\frac{\kappa|\x|}{\N})|\mathbf{D}^{\N}_{\t,\x}|\lesssim\exp(\N^{-\beta_{\star}}\N^{\zeta_{1}})\N^{\mathrm{O}(1)}\exp(-\upsilon\N^{\zeta_{2}})$}, which is exponentially small in {\small$\N$} if we take {\small$\zeta_{2}=\zeta_{1}-\e$} with small enough but fixed {\small$\e>0$} depending on {\small$\beta_{\star}$}. We clarify that this estimate holds with high probability simultaneously over all {\small$\t\in[0,1]$} and {\small$\x\in\llbracket0,\mathrm{O}(\N)\rrbracket$} given the global nature of the inputs \eqref{eq:hkzetaI} and \eqref{eq:databoundI} used to establish it. Finally, because {\small$\kappa\lesssim1$} and {\small$\x\in\llbracket0,\mathrm{O}(\N)\rrbracket$}, we know that {\small$\exp(-\frac{\kappa|\x|}{\N})\gtrsim1$}. Thus, the desired estimate \eqref{eq:smallzetaI} follows.
\subsection{Proof of \eqref{eq:smallzetaII}}
Suppose that, in the language of Lemma \ref{lemma:kzeta}, we know {\small$\zeta\in(0,\beta_{0}/2)$}. Then, the constraint in Lemma \ref{lemma:kzeta} on times {\small$\s,\t$} is only that {\small$0\leq\s\leq\t\leq1$}. In particular, we can use \eqref{eq:kzetaI8} in the proof of Lemma \ref{lemma:kzeta} to obtain {\small$\boldsymbol{\Theta}_{\s,\t,\kappa}\lesssim\N^{\delta}$} for any {\small$0\leq\s\leq\t\leq1$} simultaneously with high probability, where {\small$\boldsymbol{\Theta}_{\s,\t,\kappa}$} is defined in \eqref{eq:kzetaI0}. With this, we now claim that, recalling {\small$\mathbf{K}^{\N,\zeta,\sim}$} and {\small$\mathbf{L}^{\N,\zeta,\sim}$} from \eqref{eq:hkzetasim} and \eqref{eq:lzetasim}, respectively, we have
\begin{align}
\sup_{0\leq\s\leq\t\leq1}\sup_{\x\in\llbracket0,\infty\rrbracket}\sum_{\y\in\llbracket0,\infty\rrbracket}\exp(\tfrac{\kappa|\x-\y|}{\N})|\mathbf{K}^{\N,\zeta,\sim}_{\s,\t,\x,\y}-\mathbf{L}^{\N,\zeta,\sim}_{\s,\t,\x,\y}|\lesssim_{\kappa}\N^{-\beta_{1}}\label{eq:smallzetaII1}
\end{align}
with high probability for some fixed {\small$\beta_{1}>0$} and any fixed {\small$\kappa>0$}. This follows from the Duhamel formula \eqref{eq:duhamelI1}, \eqref{eq:kzetaI7}, and the estimate {\small$\boldsymbol{\Theta}_{\s,\t,\kappa}\lesssim\N^{\delta}$} for small {\small$\delta>0$} from \eqref{eq:kzetaI8}. Moreover, by Lemma \ref{lemma:hkzetasim}, the same holds for {\small$\mathbf{K}^{\N,\zeta}$} in place of {\small$\mathbf{K}^{\N,\zeta,\sim}$}. So, if we define the function
\begin{align*}
\mathbf{W}^{\N}_{\t,\x}=\sum_{\y\in\llbracket0,\infty\rrbracket}\mathbf{L}^{\N,\zeta,\sim}_{0,\t,\x,\y}\mathbf{S}^{\N}_{0,\y},
\end{align*}
then by \eqref{eq:smallzetaII1} and the initial data estimate \eqref{eq:databoundI}, we have the following with high probability:
\begin{align}
\sup_{\t\in[0,1]}\sup_{\x\in\llbracket0,\mathrm{O}(\N)\rrbracket}|\mathbf{S}^{\N,\zeta}_{\t,\x}-\mathbf{W}^{\N}_{\t,\x}|\lesssim\N^{-\frac12\beta_{1}}.\label{eq:smallzetaII2}
\end{align}
Now, suppose that {\small$\mathbf{Y}^{\N}$} is the solution to the \abbr{SDE} below, obtained by taking \eqref{eq:sdezetaI1}-\eqref{eq:sdezetaI6} and including only the first two terms on the \abbr{RHS} of the first line (i.e. the \abbr{SHE}-type terms), and with with initial data {\small$\mathbf{Y}^{\N}_{0,\cdot}=\mathbf{S}^{\N}_{0,\cdot}$}:
\begin{align}
\d\mathbf{Y}^{\N}_{\t,\x}=\mathscr{T}_{\N}\mathbf{Y}^{\N}_{\t,\x}\d\t+[\mathbf{e}^{\mathfrak{t}_{\N}\mathscr{T}_{\N}}(\mathbf{1}^{(\x)}_{\cdot}\boldsymbol{\chi}^{(\zeta_{\mathrm{large}})}_{\cdot}\cdot\sqrt{2}\lambda\N^{\frac12}\mathbf{R}^{\N,\wedge}_{\t,\cdot}\mathbf{Y}^{\N}_{\t,\cdot}\d\mathbf{b}_{\t,\cdot}]_{\x}.\label{eq:smallzetaII3}
\end{align}
Sine \eqref{eq:smallzetaII3} is a lattice version of the \abbr{SHE} \eqref{eq:sheIa}-\eqref{eq:sheIb}, with additional averaging and bounded coefficients, standard methods (e.g. those in Section 3 of \cite{P19}) combined with heat kernel estimates for {\small$\mathbf{H}^{\N,\mathbf{A}}$} from Proposition \ref{prop:hkestimates} give the following (analogous to \eqref{eq:mainI}) for any {\small$p\geq1$} and {\small$\kappa_{p}>0$} large enough:
\begin{align}
\sup_{\t\in[0,1]}\sup_{\x\in\llbracket0,\infty\rrbracket}\exp(-\tfrac{\kappa_{p}|\x|}{\N})\cdot\E|\mathbf{Y}^{\N}_{\t,\x}|^{p}\lesssim_{p}1.\label{eq:smallzetaII3a}
\end{align}
We will now control {\small$\mathbf{W}^{\N}-\mathbf{Y}^{\N}$}. Note that {\small$\mathbf{W}^{\N}$}, by construction, satisfies the \abbr{SDE} \eqref{eq:lzetasim} after replacing each copy of {\small$\mathbf{L}^{\N,\zeta,\sim}_{\s.\cdot,\cdot,\y}$} by {\small$\mathbf{W}^{\N}$}, and with initial data {\small$\mathbf{S}^{\N}_{0,\cdot}$}. So, {\small$\mathbf{F}^{\N}:=\mathbf{W}^{\N}-\mathbf{Y}^{\N}$} solves the \abbr{SDE} below with zero initial data:
\begin{align}
\d\mathbf{F}^{\N}_{\t,\x}=\mathscr{T}_{\N}\mathbf{F}^{\N}_{\t,\x}\d\t&+[\mathbf{e}^{\mathfrak{t}_{\N}\mathscr{T}_{\N}}(\mathbf{1}^{(\x)}_{\cdot}{\boldsymbol{\chi}^{(\zeta_{\mathrm{large}})}_{\cdot}}\cdot\sqrt{2}\lambda\N^{\frac12}{\mathbf{R}^{\N,\wedge}_{\t,\cdot}}\mathbf{F}^{\N}_{\t,\cdot}\d\mathbf{b}_{\t,\cdot})]_{\x}+{\mathbf{1}_{\mathfrak{t}_{\mathrm{ap}}\geq\t}}\mathrm{Err}[{\boldsymbol{\chi}^{(\zeta_{})}_{}\mathbf{R}^{\N,\wedge}}\mathbf{W}^{\N}]_{\t,\x}\d\t\nonumber\\
&+\sum_{\ell\in\llbracket1,\mathrm{L}_{1}\rrbracket}{}\grad^{\mathbf{T},\mathrm{av}}_{\mathfrak{t}_{\mathbf{Av}}}\Big({\mathbf{1}_{\mathfrak{t}_{\mathrm{ap}}\geq\t}}\sum_{\w\in\llbracket0,\infty\rrbracket}\boldsymbol{\mathcal{K}}^{(\ell)}_{\x,\w}\cdot{\boldsymbol{\chi}^{(\zeta_{})}_{\w}}\cdot\N\cdot\mathbf{Av}^{\mathbf{X},\mathscr{Q}_{\ell}}_{\t,\w}{\mathbf{R}^{\N,\wedge}_{\t,\w_{\pm}}}\mathbf{W}^{\N}_{\t,\w_{\pm}}\Big)\d\t\nonumber\\
&+\sum_{\ell\in\llbracket1,\mathrm{L}_{2}\rrbracket}\grad^{\mathbf{T},\mathrm{av}}_{\mathfrak{t}_{\partial}}\Big({\mathbf{1}_{\mathfrak{t}_{\mathrm{ap}}\geq\t}}\sum_{\w\in\llbracket0,\infty\rrbracket}\boldsymbol{\mathcal{K}}^{\partial,\ell}_{\x,\w}\cdot\N\mathfrak{b}_{\ell}[\bphi_{\t}]{\mathbf{R}^{\N,\wedge}_{\t,\w}}\mathbf{W}^{\N}_{\t,\w}\Big)\d\t.\label{eq:smallzetaII4}
\end{align}
By the Duhamel formula, we now obtain from \eqref{eq:smallzetaII4} the following analogue of \eqref{eq:lzeta2a}:
\begin{align}
\mathbf{F}^{\N}_{\t,\x}&=\boldsymbol{\Lambda}^{(1)}_{\t,\x}+\ldots+\boldsymbol{\Lambda}^{(4)}_{\t,\x}.\label{eq:smallzetaII5}
\end{align}
Above, we have introduced the notation
\begin{align*}
\boldsymbol{\Lambda}^{(1)}_{\t,\x}&:=\int_{0}^{\t}\sum_{\w\in\llbracket0,\infty\rrbracket}\mathbf{H}^{\N,\mathbf{A}}_{\r,\t,\x,\w}\cdot[\mathbf{e}^{\mathfrak{t}_{\N}\mathscr{T}_{\N}}(\mathbf{1}^{(\w)}_{\cdot}\boldsymbol{\chi}^{(\zeta_{\mathrm{large}})}_{\cdot}\cdot\sqrt{2}\lambda\N^{\frac12}\mathbf{R}^{\N,\wedge}_{\r,\cdot}\mathbf{F}^{\N}_{\r,\cdot}\d\mathbf{b}_{\r,\cdot}]_{\w},\\
\boldsymbol{\Lambda}^{(2)}_{\t,\x}&:=\int_{0}^{\t}\sum_{\w\in\llbracket0,\infty\rrbracket}\mathbf{H}^{\N,\mathbf{A}}_{\r,\t,\x,\w}\cdot\mathbf{1}_{\mathfrak{t}_{\mathrm{ap}}\geq\r}\mathrm{Err}[\boldsymbol{\chi}^{(\zeta)}\mathbf{R}^{\N,\wedge}\mathbf{W}^{\N}]_{\r,\w}\d\r,\\
\boldsymbol{\Lambda}^{(3)}_{\t,\x}&:=\int_{0}^{\t}\sum_{\w\in\llbracket0,\infty\rrbracket}\mathbf{H}^{\N,\mathbf{A}}_{\r,\t,\x,\w}\cdot\sum_{\ell\in\llbracket1,\mathrm{L}_{1}\rrbracket}{}\grad^{\mathbf{T},\mathrm{av}}_{\mathfrak{t}_{\mathbf{Av}}}\Big({\mathbf{1}_{\mathfrak{t}_{\mathrm{ap}}\geq\r}}\sum_{\z\in\llbracket0,\infty\rrbracket}\boldsymbol{\mathcal{K}}^{(\ell)}_{\w,\z}\cdot{\boldsymbol{\chi}^{(\zeta_{})}_{\z}}\cdot\N\cdot\mathbf{Av}^{\mathbf{X},\mathscr{Q}_{\ell}}_{\r,\z}{\mathbf{R}^{\N,\wedge}_{\t,\z_{\pm}}}\mathbf{W}^{\N}_{\r,\z_{\pm}}\Big)\d\r,\\
\boldsymbol{\Lambda}^{(4)}_{\t,\x}&:=\int_{0}^{\t}\sum_{\w\in\llbracket0,\infty\rrbracket}\mathbf{H}^{\N,\mathbf{A}}_{\r,\t,\x,\w}\cdot\sum_{\ell\in\llbracket1,\mathrm{L}_{2}\rrbracket}\grad^{\mathbf{T},\mathrm{av}}_{\mathfrak{t}_{\partial}}\Big({\mathbf{1}_{\mathfrak{t}_{\mathrm{ap}}\geq\r}}\sum_{\z\in\llbracket0,\infty\rrbracket}\boldsymbol{\mathcal{K}}^{\partial,\ell}_{\w,\z}\cdot\N\mathfrak{b}_{\ell}[\bphi_{\r}]{\mathbf{R}^{\N,\wedge}_{\r,\z}}\mathbf{W}^{\N}_{\r,\z}\Big)\d\r.
\end{align*}
In order to analyze this expansion, we appeal to the control parameter below for {\small$\kappa,p\geq1$} large but fixed:
\begin{align*}
\boldsymbol{\Xi}^{\kappa,p,\mathbf{F}}_{\t}:=\sup_{\x\in\llbracket0,\infty\rrbracket}\exp(-\tfrac{2\kappa|\x|}{\N})\|\mathbf{F}^{\N}_{\t,\x}\|_{\mathrm{L}^{p}(\mathbb{P})}^{2}:=\sup_{\x\in\llbracket0,\infty\rrbracket}\exp(-\tfrac{2\kappa|\x|}{\N})(\E|\mathbf{F}^{\N}_{\t,\x}|^{p})^{\frac{2}{p}}.
\end{align*}
Let us define {\small$\boldsymbol{\Xi}^{\kappa,p,\mathbf{W}}$} similarly by replacing {\small$\mathbf{F}\mapsto\mathbf{W}$} everywhere. Compared to the control parameter in \eqref{eq:lzeta1}, there is no singular time-dependent factor here. Moreover, one can regard both {\small$\mathbf{F}^{\N},\mathbf{W}^{\N}$} as kernels trivially by the extensions {\small$\mathbf{F}^{\N}_{\s,\t,\x,\y}=\mathbf{F}^{\N}_{\t-\s,\x}\mathbf{1}_{\x=\y}$} and {\small$\mathbf{W}^{\N}_{\s,\t,\x,\y}=\mathbf{W}^{\N}_{\t-\s,\x}\mathbf{1}_{\x=\y}$} for any {\small$0\leq\s\leq\t$}. Thus, the bounds \eqref{eq:lzeta3b}-\eqref{eq:lzeta3d}, which hold also for {\small$\boldsymbol{\Lambda}^{(2)}$} therein, extend to the following for some {\small$\delta>0$} fixed:
\begin{align*}
\exp(-\tfrac{2\kappa|\x|}{\N})\|\boldsymbol{\Lambda}^{(1)}_{\t,\x}\|_{\mathrm{L}^{p}(\mathbb{P})}^{2}+\ldots+\exp(-\tfrac{2\kappa|\x|}{\N})\|\boldsymbol{\Lambda}^{(4)}_{\t,\x}\|_{\mathrm{L}^{p}(\mathbb{P})}^{2}\lesssim_{\kappa,p}\int_{0}^{\t}|\t-\r|^{-\frac12}\boldsymbol{\Xi}^{\kappa,p,\mathbf{F}}_{\r}\d\r+\N^{-\delta}\sup_{\r\in[0,\t]}\boldsymbol{\Xi}^{\kappa,p,\mathbf{W}}_{\r}.
\end{align*}
The \abbr{RHS} of the display above does not depend on {\small$\x$} on the \abbr{LHS}, so we can place a supremum over {\small$\x\in\llbracket0,\infty\rrbracket$} on the \abbr{LHS}. Since {\small$\exp(-\frac{2\kappa|\x|}{\N})\|\mathbf{F}^{\N}_{\t,\x}\|_{\mathrm{L}^{p}(\mathbb{P})}^{2}$} is controlled by the \abbr{LHS} of the previous display, we obtain the estimate 
\begin{align*}
\boldsymbol{\Xi}^{\kappa,p,\mathbf{F}}_{\t}\lesssim_{\kappa,p}\int_{0}^{\t}|\t-\r|^{-\frac12}\boldsymbol{\Xi}^{\kappa,p,\mathbf{F}}_{\r}\d\r+\N^{-\delta}\sup_{\r\in[0,\t]}\boldsymbol{\Xi}^{\kappa,p,\mathbf{W}}_{\r}.
\end{align*}
Thus, by the Gronwall inequality as in the proof of Lemma \ref{lemma:lzeta}, we obtain {\small$\boldsymbol{\Xi}^{\kappa,p,\mathbf{F}}_{\t}\lesssim_{\kappa,p}\N^{-\delta}\sup_{\r\in[0,\t]}\boldsymbol{\Xi}^{\kappa,p,\mathbf{W}}_{\r}$} for all {\small$\t\in[0,1]$}. Moreover, by \eqref{eq:smallzetaII3a}, we get that {\small$\boldsymbol{\Xi}^{\kappa,p,\mathbf{W}}_{\r}\lesssim\boldsymbol{\Xi}^{\kappa,p,\mathbf{F}}_{\r}+1$} for all {\small$\r\in[0,1]$} for any {\small$p\geq1$} and {\small$\kappa=\kappa_{p}>0$} large enough. Thus, we obtain {\small$\boldsymbol{\Xi}^{\kappa,p,\mathbf{F}}_{\t}\lesssim_{p,\kappa}\N^{-\delta}$} for such {\small$p,\kappa\geq1$}. By the Chebyshev inequality, we then deduce
\begin{align*}
\sup_{\t\in[0,1]}\sup_{\x\in\llbracket0,\mathrm{O}(\N)\rrbracket}\mathbb{P}(|\mathbf{F}^{\N}_{\t,\x}|\geq\N^{-\frac16\delta})\lesssim\N^{\frac16p\delta}\E|\mathbf{F}^{\N}_{\t,\x}|^{p}\lesssim\N^{\frac16p\delta}\N^{-\frac12p\delta}\lesssim\N^{-\frac13p\delta}.
\end{align*}
For any {\small$\mathrm{D}>0$} large but fixed, we can take {\small$p\geq1$} large enough (but still fixed) so that the far \abbr{RHS} is {\small$\lesssim_{\mathrm{D}}\N^{-\mathrm{D}}$}. If we take a union bound over all {\small$\x\in\llbracket0,\mathrm{O}(\N)\rrbracket$} and all {\small$\t\in[0,1]$}, first over a very fine discretization and then to all of {\small$[0,1]$} by a standard short-time continuity argument as in the proof of Lemma \ref{lemma:phiap}, then we obtain {\small$|\mathbf{F}^{\N}_{\t,\x}|\lesssim\N^{-\delta/6}$} simultaneously over all {\small$(\t,\x)\in[0,1]\times\llbracket0,\mathrm{O}(\N)\rrbracket$} with high probability. By combining this and \eqref{eq:smallzetaII2}, we deduce that to show the desired estimate \eqref{eq:smallzetaII}, it suffices to show that with high probability, we have {\small$|\mathbf{Y}^{\N}_{\t,\x}-\mathbf{Q}^{\N}_{\t,\x}|\lesssim\N^{-\gamma}$} uniformly for all {\small$(\t,\x)\in[0,1]\times\llbracket0,\mathrm{L}\N\rrbracket$}, for some {\small$\gamma>0$}, and with {\small$\mathrm{L}\lesssim1$} as in the statement of the lemma. This, however, is a consequence of standard stochastic analysis for \abbr{SHE}-type equations in dimension {\small$1+1$}. Indeed, it follows by standard methods (which give \eqref{eq:smallzetaII3a}) that {\small$\mathbf{Y}^{\N}$} has spatial H\"{o}lder regularity at length-scales of order {\small$\N$}. Moreover, the indicator {\small$\mathbf{1}^{(\x)}$} in \eqref{eq:smallzetaII3} can be ignored up to an exponentially small error, because it is supported where the {\small$\mathbf{H}^{\N,\mathbf{A}}$}-heat kernel therein is exponentially small in {\small$\N$}. Also, the {\small$\mathbf{e}^{\mathfrak{t}_{\N}\mathscr{T}_{\N}}$}-operator in \eqref{eq:smallzetaII3} is smoothing at a mesoscopic scale (of {\small$\ll\N$}, as {\small$\mathfrak{t}_{\N}\ll1$} as in Definition \ref{definition:heat}). Thus, we can also remove this operator from \eqref{eq:smallzetaII3} up to an error which vanishes in the large-{\small$\N$} limit (uniformly on {\small$[0,1]\times\llbracket0,\mathrm{O}(\N)\rrbracket$}). Finally, the function {\small$\boldsymbol{\chi}^{(\zeta_{\mathrm{large}})}$} in \eqref{eq:smallzetaII3} is identically equal to {\small$1$} on {\small$\llbracket0,\N^{1+\zeta}\rrbracket$}, whose length-scale is much larger than the spatial domain {\small$\llbracket0,\mathrm{O}(\N)\rrbracket$} of interest. Thus, up to another error that vanishes in the large-{\small$\N$} limit, we may remove this {\small$\boldsymbol{\chi}^{(\zeta_{\mathrm{large}})}$}-function in \eqref{eq:smallzetaII3}. Lastly, by construction in \eqref{eq:rwedge}, we have {\small$\mathbf{R}^{\N,\wedge}=1+\mathrm{o}(1)$} uniformly in space-time, so we can also remove this factor in \eqref{eq:smallzetaII3} up to another vanishing error. The \abbr{SDE} we end up with is that \eqref{eq:qsde} for {\small$\mathbf{Q}^{\N}$}, and the initial data between these two differ by another mesoscopic-scale {\small$\mathbf{H}^{\N,\mathbf{A}}$}-smoothing operator whose contribution vanishes in the large-{\small$\N$} limit due to spatial regularity of the initial data for {\small$\mathbf{Q}^{\N}$} (which we assumed in \eqref{eq:mainI}). Ultimately, we obtain {\small$|\mathbf{Y}^{\N}_{\t,\x}-\mathbf{Q}^{\N}_{\t,\x}|\lesssim\N^{-\gamma}$} uniformly for all {\small$(\t,\x)\in[0,1]\times\llbracket0,\mathrm{L}\N\rrbracket$}, for some {\small$\gamma>0$}, and with {\small$\mathrm{L}\lesssim1$} as in the statement of the lemma, all with high probability. As noted in this paragraph, this completes the proof. \qed
\appendix
\section{Auxiliary estimates and properties}
\subsection{Heat kernel estimates}
We now record a list of purely analytic estimates that are used throughout.
\begin{prop}\label{prop:hkestimates}
\fsp Recall the notation in Definition \ref{definition:heat}, and for any {\small$0\leq\s\leq\t$}, we set {\small$\delta_{\s,\t}:=\max(|\t-\s|,\N^{-2})$}. Throughout, fix any {\small$\kappa\in\R$}, any {\small$0\leq\s\leq\t\lesssim1$}, and any {\small$\x,\y\in\llbracket0,\infty\rrbracket$}. First, we have
\begin{align}
|\mathbf{H}^{\N,\mathbf{a}}_{\s,\t,\x,\y}|\lesssim_{\kappa,\mathbf{a}}\N^{-1}\delta_{\s,\t}^{-\frac12}\exp(-\tfrac{\kappa|\x-\y|}{\N\delta_{\s,\t}^{1/2}}).\label{eq:hkestimatesI}
\end{align}
In particular, we also have the following estimates for all {\small$\kappa>0$}, all {\small$0\leq\s\leq\t$}, and all {\small$\x\in\llbracket0,\infty\rrbracket$}:
\begin{align}
\sum_{\y\in\llbracket0,\infty\rrbracket}\exp(\tfrac{\kappa|\x-\y|}{\N})|\mathbf{H}^{\N,\mathbf{a}}_{\s,\t,\x,\y}|\lesssim_{\kappa,\mathbf{a}}1 \quad\text{and}\quad \sum_{\y\in\llbracket0,\infty\rrbracket}\exp(\tfrac{\kappa|\x-\y|}{\N})|\mathbf{H}^{\N,\mathbf{a}}_{\s,\t,\x,\y}|^{2}\lesssim_{\kappa,\mathbf{a}}\N^{-1}\delta_{\s,\t}^{-\frac12}.\label{eq:hkestimatesII}
\end{align}
We move to regularity estimates. Fix any tuples {\small$\vec{\a}=(\a_{1},\ldots,\a_{\n})$} and {\small$\vec{\ell}=(\ell_{1},\ldots,\ell_{\m})$} of integers, any {\small$0\leq\s\leq\t\lesssim1$}, and any {\small$\x,\y\in\llbracket0,\infty\rrbracket$} such that the following conditions are all satisfied:
\begin{enumerate}
\item We have {\small$\n,\m\lesssim1$} and {\small$\N|\t-\s|^{1/2}\gtrsim1$}.
\item We have {\small$|\a_{1}|,\ldots,|\a_{\n}|,|\ell_{1}|,\ldots,|\ell_{\m}|\lesssim\max(\N|\t-\s|^{1/2},1)$}.
\end{enumerate}
Then, for any {\small$\kappa>0$}, we have the following joint regularity estimate at length-scale {\small$\N\delta_{\s,\t}^{1/2}$}:
\begin{align}
|\grad^{\mathbf{X}}_{\vec{\a}}\grad^{\mathbf{X}}_{\vec{\ell}}\mathbf{H}^{\N,\mathbf{a}}_{\s,\t,\x,\y}|\lesssim_{\kappa,\mathbf{a}}\N^{-1}\delta_{\s,\t}^{-\frac12}\exp(-\tfrac{\kappa|\x-\y|}{\N\delta_{\s,\t}^{1/2}})\cdot\prod_{\i\in\llbracket1,\n\rrbracket}\N^{-1}\delta_{\s,\t}^{-\frac12}|\a_{\i}|\cdot\prod_{\j\in\llbracket1,\m\rrbracket}\N^{-1}\delta_{\s,\t}^{-\frac12}|\ell_{\j}|. \label{eq:hkestimatesIII}
\end{align}
Above, {\small$\grad^{\mathbf{X}}_{\vec{\a}}:=\grad^{\mathbf{X}}_{\a_{1}}\ldots\grad^{\mathbf{X}}_{\a_{\n}}$} acts on {\small$\x$}, and {\small$\grad^{\mathbf{X}}_{\vec{\ell}}$} (which is defined similarly) acts on {\small$\y$}. (We adopt the convention that we only restrict to choices of parameters for which the \abbr{LHS} depends only on {\small$\mathbf{H}^{\N,\mathbf{a}}_{\s,\t,\cdot,\cdot}$}-values on {\small$\llbracket0,\infty\rrbracket\times\llbracket0,\infty\rrbracket$}.) 

In particular, under the same assumptions as in \eqref{eq:hkestimatesIII}, we have 
\begin{align}
\sum_{\y\in\llbracket0,\infty\rrbracket}\exp(\tfrac{\kappa|\x-\y|}{\N})|\grad^{\mathbf{X}}_{\vec{\a}}\grad^{\mathbf{X}}_{\vec{\ell}}\mathbf{H}^{\N,\mathbf{a}}_{\s,\t,\x,\y}|\lesssim_{\kappa,\mathbf{a}}\prod_{\i\in\llbracket1,\n\rrbracket}\N^{-1}\delta_{\s,\t}^{-\frac12}|\a_{\i}|\cdot\prod_{\j\in\llbracket1,\m\rrbracket}\N^{-1}\delta_{\s,\t}^{-\frac12}|\ell_{\j}|.\label{eq:hkestimatesIV}
\end{align}
Finally, we also have the Chapman-Kolmogorov equation below for any {\small$0\leq\s\leq\r\leq\t$} and any {\small$\x,\y\in\llbracket0,\infty\rrbracket$}:
\begin{align}
\mathbf{H}^{\N,\mathbf{a}}_{\s,\t,\x,\y}=\sum_{\w\in\llbracket0,\infty\rrbracket}\mathbf{H}^{\N,\mathbf{a}}_{\r,\t,\x,\w}\mathbf{H}^{\N,\mathbf{a}}_{\s,\r,\w,\y}.\label{eq:hkestimatesV}
\end{align}
\end{prop}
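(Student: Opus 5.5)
The plan is to write the Robin heat kernel explicitly via a reflection-type representation in terms of the full-line discrete heat kernel, and then transfer the standard full-line estimates. Concretely, $\mathscr{T}_{\N,\mathbf{a}}=(\alpha\N^{2}+\tfrac14\N\lambda)\Delta_{\mathbf{a}}$, so after a deterministic time change $\t\mapsto(\alpha\N^{2}+\tfrac14\N\lambda)\t$ (which is $\asymp\N^{2}\t$ and harmless) it suffices to study the semigroup of the discrete Robin Laplacian $\Delta_{\mathbf{a}}$ on $\llbracket0,\infty\rrbracket$ with boundary relation $\mathsf{f}_{-1}=(1-\N^{-1}\mathbf{a}\lambda)\mathsf{f}_{0}$. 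For $\mathbf{a}=0$ this is the Neumann case, and $\mathbf{H}^{\N,0}_{\s,\t,\x,\y}=\mathbf{p}_{\t-\s}(\x-\y)+\mathbf{p}_{\t-\s}(\x+\y+1)$, where $\mathbf{p}$ is the continuous-time simple random walk kernel on $\Z$. For general $\mathbf{a}$, I would use the standard image-charge formula for Robin conditions. Set $\theta:=1-\N^{-1}\mathbf{a}\lambda$. One looks for the solution as $\mathbf{p}(\x-\y)+\mathbf{p}(\x+\y+1)+\sum_{\k\geq0}c_{\k}\,\mathbf{p}(\x+\y+2+\k)$, where the coefficients $c_{\k}$ are chosen so that the boundary identity holds. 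This gives $c_{\k}=-(1-\theta)\theta^{\k}\cdot\mathrm{O}(1)$, which is a geometric series with ratio $\theta=1-\mathrm{O}(\N^{-1})$ and total mass $\mathrm{O}(1)$ uniformly in $\N$, since $\sum_{\k}(1-\theta)\theta^{\k}\asymp1$. Equivalently, one can use a Feynman--Kac representation: $\mathbf{H}^{\N,\mathbf{a}}=\E[\exp(\mathbf{a}\lambda\N^{-1}\cdot\N^{2}\,\mathrm{Loc}_{0})\,\mathbf{1}_{\text{end at }\y}]$, where $\mathrm{Loc}_{0}$ is the local time at $0$ of the reflected walk, whose exponential moments are bounded on times $\lesssim1$.

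From this representation, \eqref{eq:hkestimatesI} follows from the Gaussian/exponential bound $\mathbf{p}_{\t}(\z)\lesssim\N^{-1}\delta^{-1/2}\exp(-\kappa|\z|/\N\delta^{1/2})$ (with time rescaled), which holds for all $\kappa$ via the usual Chernoff bound on the Poisson-jump walk. This needs the observation that $|\x+\y+1+\k|\geq|\x-\y|$ for the image terms, together with the summability of the $c_{\k}$. For $\mathbf{a}$ of the wrong sign the weights are positive and geometrically growing in the Feynman--Kac picture, but the total $\exp$-moment of local time is finite on $[0,1]$ since local time at $0$ over time $\N^{2}\t$ is $\lesssim\N\t^{1/2}$, times $\N^{-1}$. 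The two summed bounds in \eqref{eq:hkestimatesII} then follow by direct summation: the weight $\exp(\kappa|\x-\y|/\N)$ is absorbed by the decay $\exp(-2\kappa|\x-\y|/\N\delta^{1/2})$ since $\delta\lesssim1$, and the square contributes an extra $\N^{-1}\delta^{-1/2}$.

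For the regularity bound \eqref{eq:hkestimatesIII}, I would differentiate the image representation term by term. Gradients in $\x$ or $\y$ of length $\a$ hit $\mathbf{p}$ and cost $|\a|\N^{-1}\delta^{-1/2}$ by the standard full-line discrete gradient estimates, which are proven via Fourier representation or the discrete heat equation and the semigroup property, provided $|\a|\lesssim\N\delta^{1/2}$ so the exponential factor shifts by at most a constant. The image terms depend on $\x+\y$, so gradients act on them identically. \eqref{eq:hkestimatesIV} then follows by summation as before, and \eqref{eq:hkestimatesV} is simply the semigroup property of the time-homogeneous generator $\mathscr{T}_{\N,\mathbf{a}}$.

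The main obstacle is the Robin correction: making the image-series coefficients (or the Feynman--Kac local-time weight) uniformly controlled in $\N$, including for positive $\mathbf{a}\lambda$, and checking that differentiating them does not lose factors. I would first try the Feynman--Kac route for the pointwise bounds. For the gradients, I would fall back on the explicit geometric-series image formula, which is convergent since $|\theta|<1$ for large $\N$ (or, when $\theta>1$, on a time-truncated Duhamel expansion around the Neumann kernel, where each boundary interaction costs $\N^{-1}$ against a boundary-local kernel integrated in time, yielding a convergent series on $[0,1]$).
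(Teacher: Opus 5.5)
Your approach is essentially the paper's. The paper cites Proposition 3.1 of \cite{P19}, which derives \eqref{eq:hkestimatesI} from full-line kernel bounds \cite{DT16} via a modified method-of-images formula for the Robin kernel. It then states that the same manipulations applied to the full-line gradient bounds give \eqref{eq:hkestimatesIII}, and obtains \eqref{eq:hkestimatesII}, \eqref{eq:hkestimatesIV}, \eqref{eq:hkestimatesV} by summation and the semigroup property, just as you do. Two details need correcting, though neither breaks the argument.

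First, the image coefficients in your ansatz are wrong. Set $\theta=1-\N^{-1}\mathbf{a}\lambda$ and $T=(\alpha\N^{2}+\tfrac14\N\lambda)(\t-\s)$. The extension $\mathsf{g}$ of $\mathbf{1}_{\cdot=\y}$ to $\Z$ for which $\x\mapsto\mathsf{g}_{\x-1}-\theta\mathsf{g}_{\x}$ is odd yields
\begin{align*}
\mathbf{H}^{\N,\mathbf{a}}_{\s,\t,\x,\y}=\mathbf{p}_{T}(\x-\y)+\theta\,\mathbf{p}_{T}(\x+\y+1)+(\theta^{2}-1)\sum_{\k\geq0}\theta^{\k}\,\mathbf{p}_{T}(\x+\y+2+\k).
\end{align*}
So the first image must carry weight $\theta$. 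With weight $1$, as in your ansatz, the boundary relation fails already at leading order $(1-\theta)\mathbf{p}_{T}(\y)$ as $T\to0$.

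Second, your claim that $|\theta|<1$ is false precisely for the parameter the paper uses. There $\mathbf{A}\lambda=-\tfrac12\alpha^{-1}\lambda^{2}<0$, so $\theta>1$. Your Feynman--Kac weight also has the wrong sign, so this is exactly the growing case. Even so, no Duhamel or Feynman--Kac fallback is needed. Since $\t-\s\lesssim1$ forces $\delta_{\s,\t}\lesssim1$, the growth $\theta^{\k}\leq\exp(\mathrm{O}(\k/\N))$ is absorbed by the decay $\exp(-\kappa'\k/(\N\delta_{\s,\t}^{1/2}))$ of $\mathbf{p}_{T}$ once $\kappa'$ is large. Combined with $|\theta^{2}-1|\lesssim\N^{-1}$, the correction sum is then $\lesssim\N^{-1}\exp(-\kappa(\x+\y)/(\N\delta_{\s,\t}^{1/2}))$ for either sign of $\mathbf{a}\lambda$. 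The same term-by-term bound handles the gradients in \eqref{eq:hkestimatesIII}.
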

\begin{proof}
We show \eqref{eq:hkestimatesI}-\eqref{eq:hkestimatesII}. The first of these is Proposition 3.1 in \cite{P19}. The second \eqref{eq:hkestimatesII} follows by the former \eqref{eq:hkestimatesI} (for a larger choice of {\small$\kappa>0$}). We clarify that the argument in \cite{P19} to establish \eqref{eq:hkestimatesI} is based on the fact that said estimate is true for the full-line heat kernel (see \cite{DT16}) plus some gymnastics based on a modified ``method of images" formula for the half-space heat kernel in terms of the full-space one. In particular, because the estimate \eqref{eq:hkestimatesIII} holds for the full-space heat kernel by a repeated application of the calculations in \cite{DT16}, the same gymnastics also yield \eqref{eq:hkestimatesIII} itself; since these gymnastics are elementary, we omit them. Next, the bound \eqref{eq:hkestimatesIV} follows from \eqref{eq:hkestimatesIII}, in the same way that \eqref{eq:hkestimatesII} follows from \eqref{eq:hkestimatesI}. Lastly, the identity \eqref{eq:hkestimatesV} follows since both sides solve the same linear evolution equation in {\small$(\t,\x)$} with the same ``initial" data at time {\small$\t=\r$}.
\end{proof}
\begin{lemma}\label{lemma:hknorm}
\fsp Recall the notation in Definition \ref{definition:heat}. Fix any {\small$0\leq\s\leq\t\lesssim1$}. We have that {\small$\mathbf{e}^{\t\mathscr{T}_{\N,\mathbf{a}}}:\ell^{2}(\llbracket0,\infty\rrbracket)\to\ell^{2}(\llbracket0,\infty\rrbracket)$} is a bounded operator whose norm is {\small$\lesssim_{\mathbf{a}}1$}. Moreover, if {\small$\mathbf{1}$} is the constant function on {\small$\llbracket0,\infty\rrbracket$} (whose value is equal to {\small$1$} everywhere in space), then we have 
\begin{align}
|\mathbf{e}^{\t\mathscr{T}_{\N,\mathbf{a}}}(\mathbf{1})-1|\lesssim_{\mathbf{a}}\t^{\frac12}.\label{eq:hknormI}
\end{align}
\end{lemma}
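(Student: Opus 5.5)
The plan is to treat both claims through the Chapman--Kolmogorov structure of the semigroup together with the sign and mass properties of the Robin heat kernel. For the $\ell^{2}$ boundedness, we would use Schur's test: $\mathbf{H}^{\N,\mathbf{a}}_{0,\t,\x,\y}$ is symmetric in $(\x,\y)$, since $\Delta_{\mathbf{a}}$ is self-adjoint on $\ell^{2}(\llbracket0,\infty\rrbracket)$ with respect to counting measure. The boundary extension $\mathsf{f}_{-1}=(1-\N^{-1}\mathbf{a}\lambda)\mathsf{f}_{0}$ only modifies the diagonal entry at $0$. Hence it suffices to bound $\sup_{\x}\sum_{\y}|\mathbf{H}^{\N,\mathbf{a}}_{0,\t,\x,\y}|$, and this is the case $\kappa=0$ of \eqref{eq:hkestimatesII}, which gives a bound $\lesssim_{\mathbf{a}}1$ uniformly over $\t\lesssim1$.

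For \eqref{eq:hknormI}, write $u_{\t}:=\mathbf{e}^{\t\mathscr{T}_{\N,\mathbf{a}}}(\mathbf{1})$. We then check that $\Delta_{\mathbf{a}}\mathbf{1}$ vanishes away from the edge. At $\x=0$ it equals $\mathsf{f}_{-1}-\mathsf{f}_{0}=-\N^{-1}\mathbf{a}\lambda$, because $\Delta_{\mathbf{a}}\mathsf{f}_{0}=\mathsf{f}_{1}-2\mathsf{f}_{0}+\mathsf{f}_{-1}$. Duhamel then gives
\begin{align*}
u_{\t,\x}-1=\int_{0}^{\t}[\mathbf{e}^{\r\mathscr{T}_{\N,\mathbf{a}}}\mathscr{T}_{\N,\mathbf{a}}\mathbf{1}]_{\x}\d\r=-(\alpha\N^{2}+\tfrac14\N\lambda)\N^{-1}\mathbf{a}\lambda\int_{0}^{\t}\mathbf{H}^{\N,\mathbf{a}}_{0,\r,\x,0}\d\r.
\end{align*}
Next we insert the pointwise bound \eqref{eq:hkestimatesI}, $|\mathbf{H}^{\N,\mathbf{a}}_{0,\r,\x,0}|\lesssim\N^{-1}\delta_{0,\r}^{-1/2}$, which yields
\begin{align*}
|u_{\t,\x}-1|\lesssim_{\mathbf{a}}\N\int_{0}^{\t}\N^{-1}\max(\r,\N^{-2})^{-\frac12}\d\r\lesssim\N^{-1}+\t^{\frac12}.
\end{align*}
For $\t\geq\N^{-2}$ this is $\lesssim\t^{1/2}$, as desired.

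The step I expect to be the main obstacle is the regime $\t<\N^{-2}$, where the Duhamel bound only gives $\lesssim\N\t$. That is still $\le \N^{-1}$, but it is not $\lesssim\t^{1/2}$ when $\t\ll\N^{-2}$. For such times we should instead use the crude bound $|u_{\t}-1|\lesssim\N^{2}\t\,\|\Delta_{\mathbf{a}}\mathbf{1}\|_{\infty}\lesssim\N\t$ and compare it with $\t^{1/2}$. We have $\N\t\leq\t^{1/2}$ exactly when $\t\leq\N^{-2}$, so the two regimes patch together. Hence \eqref{eq:hknormI} holds for all $\t$, uniformly in $\x$. The implied constants depend only on $\mathbf{a}$ and on $\alpha,\lambda$, as required.
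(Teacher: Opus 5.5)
Your proposal is correct.

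For the $\ell^{2}$ bound, Schur's test with the symmetry of the kernel is essentially the paper's argument. The paper gets $\ell^{1}\to\ell^{1}$ and $\ell^{\infty}\to\ell^{\infty}$ bounds from \eqref{eq:hkestimatesI}--\eqref{eq:hkestimatesII} with $\kappa=0$ and then interpolates.

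For \eqref{eq:hknormI} your route genuinely differs. The paper simply cites Proposition 3.7 of \cite{P19}. You instead derive it from the Duhamel identity $u_{\t,\x}-1=-(\alpha\N^{2}+\tfrac14\N\lambda)\N^{-1}\mathbf{a}\lambda\int_{0}^{\t}\mathbf{H}^{\N,\mathbf{a}}_{0,\r,\x,0}\d\r$. This uses only two facts: $\mathscr{T}_{\N,\mathbf{a}}\mathbf{1}$ is supported at the edge with size $\asymp\N$, and the on-diagonal bound in \eqref{eq:hkestimatesI} holds. Your version makes the lemma self-contained given Proposition \ref{prop:hkestimates}. It also shows where the $\t^{1/2}$ comes from: an $\mathrm{O}(\N)$ boundary flux integrated against the edge value $\N^{-1}\max(\r,\N^{-2})^{-1/2}$ of the kernel. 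If you keep the exponential factor from \eqref{eq:hkestimatesI}, you additionally see that the deviation from $1$ is localized within distance $\N\t^{1/2}$ of the edge.

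One correction: the obstacle you flag for $\t<\N^{-2}$ does not exist. Your Duhamel bound gives $\N\t$ there, and $\N\t\leq\t^{1/2}$ exactly when $\t\leq\N^{-2}$. You note this at the end, which contradicts your earlier sentence, so no separate crude estimate is needed. In fact no case split is needed at all: since $\max(\r,\N^{-2})^{-1/2}\leq\r^{-1/2}$, you get $\int_{0}^{\t}\max(\r,\N^{-2})^{-1/2}\d\r\leq 2\t^{1/2}$ for every $\t$.
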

\begin{proof}
We start with the operator norm estimate. Recall the heat {\small$\mathscr{T}_{\N,\mathbf{a}}$}-semigroup from Definition \ref{definition:heat}. From this and \eqref{eq:hkestimatesI}-\eqref{eq:hkestimatesII} with {\small$\kappa=0$}, we get {\small$\|\mathbf{e}^{\t\mathscr{T}_{\N,\mathbf{a}}}\|_{\ell^{p}(\llbracket0,\infty\rrbracket)\to\ell^{p}(\llbracket0,\infty\rrbracket)}\lesssim_{\mathbf{a}}1$} for {\small$p=1,\infty$}. The operator norm estimate of interest now follows by interpolation. For the estimate \eqref{eq:hknormI}, see Proposition 3.7 in \cite{P19} (the time-parameter therein has not yet been scaled by {\small$\N^{2}$}, which is {\small$\e^{-2}$} therein). This completes the proof of the lemma.
\end{proof}
\begin{lemma}\label{lemma:kernelestimate}
\fsp Let us assume that {\small$\boldsymbol{\mathcal{S}}_{\cdot,\cdot}:\llbracket0,\infty\rrbracket^{2}\to\R$} satisfies \eqref{eq:sdepreI}. Recall also {\small$\mathfrak{l}_{\mathbf{Av}}=\N^{1-3\delta_{\mathbf{S}}/2}$} from Definition \ref{definition:bulkaverage}, where {\small$\delta_{\mathbf{S}}>0$} is small but fixed, and the operator {\small$\mathscr{X}:=\mathrm{Id}-\exp(\tau_{\mathbf{Av}}\mathscr{T}_{\N,0})$} in the proof of Lemma \ref{lemma:xav}. For any positive integer {\small$0\leq\m\lesssim1$}, any {\small$\kappa>0$}, and for some {\small$\mathrm{C}\lesssim1$}, we have the following for each {\small$\y,\w\in\llbracket0,\infty\rrbracket$}:
\begin{align}
|\mathscr{X}^{\m}\boldsymbol{\mathcal{S}}_{\y,\w}|&\lesssim_{\m,\kappa}\N^{-1+\mathrm{C}\delta_{\mathbf{S}}}\exp(-\tfrac{\kappa|\w-\y|}{\N}).\label{eq:kernelestimateI}
\end{align}
Moreover, for any {\small$\mathrm{D}>0$}, there exists {\small$\mathrm{M}=\mathrm{O}(1)$} large enough such that 
\begin{align}
|\mathscr{X}^{\mathrm{M}}\boldsymbol{\mathcal{S}}_{\y,\w}|&\lesssim_{\m,\kappa}\N^{-\mathrm{D}}\exp(-\tfrac{\kappa|\w-\y|}{\N}).\label{eq:kernelestimateII}
\end{align}
\end{lemma}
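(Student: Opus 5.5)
The idea is to treat $\mathscr{X}=\mathrm{Id}-\exp(\tau_{\mathbf{Av}}\mathscr{T}_{\N,0})$ as a nonnegative averaging difference operator acting on the $\w$-variable. We would use that $\mathbf{H}^{\N,0}_{0,\tau_{\mathbf{Av}},\w,\cdot}$ is a probability kernel, which is Neumann and mass-preserving. We would also use that it is symmetric with effective range $\mathfrak{l}_{\mathbf{Av}}(\log\N)$, by \eqref{eq:hkestimatesI}. Hence each application of $\mathscr{X}$ either gains a factor $\mathfrak{l}_{\mathbf{Av}}\cdot\N^{-1+\delta_{\mathbf{S}}}\lesssim\N^{-\delta_{\mathbf{S}}/2}(\log\N)^{\mathrm{O}(1)}$, by trading a difference for a derivative of $\boldsymbol{\mathcal{S}}$ at scale $\N^{1-\delta_{\mathbf{S}}}$, or at worst costs $\mathrm{O}(1)$.

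For \eqref{eq:kernelestimateI} we need no gain at all. We write $\mathscr{X}^{\m}\boldsymbol{\mathcal{S}}_{\y,\w}=\sum_{j\le\m}\binom{\m}{j}(-1)^{j}\sum_{\z}\mathbf{H}^{\N,0}_{0,j\tau_{\mathbf{Av}},\w,\z}\boldsymbol{\mathcal{S}}_{\y,\z}$, using the semigroup property \eqref{eq:hkestimatesV}. We then bound each term by \eqref{eq:sdepreI} with $\a=\b=0$, together with the exponential moment bound \eqref{eq:hkestimatesII}. This handles the weight, since $e^{-\kappa|\y-\z|/\N}\le e^{-\kappa|\y-\w|/\N}e^{\kappa|\w-\z|/\N}$.

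For \eqref{eq:kernelestimateII}, we fix $\y$ and set $g(\z)=\boldsymbol{\mathcal{S}}_{\y,\z}$. First we Taylor-expand $g$ at $\w$ in the discrete sense, to order $\mathrm{K}$, along the displacement $\z-\w$. This uses the bounds \eqref{eq:sdepreI} on iterated gradients $(\grad^{\mathbf{X}}_{1})^{\b}g$, which are of size $\N^{-1-\b+(\b+\mathrm{C})\delta_{\mathbf{S}}}e^{-\kappa|\y-\z|/\N}$. We convert long gradients into sums of unit gradients, as in the regularity estimates. The polynomial part of degree $<\mathrm{K}$ is then acted on by $\mathscr{X}$. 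Away from the edge, $\mathscr{X}$ kills constants and, by symmetry of the kernel, lowers polynomial degree. Writing $\mathscr{X}=-\int_{0}^{\tau_{\mathbf{Av}}}\mathscr{T}_{\N,0}e^{r\mathscr{T}_{\N,0}}dr$, each $\mathscr{X}$ is $\tau_{\mathbf{Av}}\N^{2}$ times a discrete Laplacian averaged at scale $\mathfrak{l}_{\mathbf{Av}}$. So iterating $\mathscr{X}^{\mathrm{M}}$ on $g$ and commuting the Laplacians onto $g$ produces $(\tau_{\mathbf{Av}}\N^{2})^{\mathrm{M}}\Delta^{\mathrm{M}}g$. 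This is bounded by $\mathfrak{l}_{\mathbf{Av}}^{2\mathrm{M}}\N^{-1-2\mathrm{M}+(2\mathrm{M}+\mathrm{C})\delta_{\mathbf{S}}}\lesssim\N^{-1-\mathrm{M}\delta_{\mathbf{S}}+\mathrm{C}\delta_{\mathbf{S}}}$, averaged against probability kernels. Choosing $\mathrm{M}\gtrsim\mathrm{D}/\delta_{\mathbf{S}}$ gives $\N^{-\mathrm{D}}$. The weight is handled exactly as in the first part.

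The main obstacle is the edge. There, $\mathscr{T}_{\N,0}$ is the Neumann Laplacian, so $\Delta_{0}$ applied to $g$ involves the boundary extension $g_{-1}=g_{0}$ rather than the true function. Moving $\mathscr{T}_{\N,0}$ onto $g$ is a self-adjoint manipulation, so we really need $\mathscr{T}_{\N,0}^{\mathrm{M}}g$ to be small. This requires $g$ to satisfy approximately the Neumann compatibility conditions at $\z=0$, namely that the odd discrete derivatives at $0$ be small. The estimate \eqref{eq:sdepreI} gives smallness of all derivatives uniformly, including the odd ones, at size $\N^{-1-\b+\mathrm{O}(\delta_{\mathbf{S}})}$. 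So I would first try to show that the boundary corrections in $\Delta_{0}^{\mathrm{M}}g$ are also bounded by gradients of $g$ of total order $2\mathrm{M}$ and the same scale. This works because the Neumann reflection $g_{-1}-g_{0}=0$ only replaces a first difference by zero, so it does not create large terms. Failing that, we would decompose $g$ with a smooth cutoff into a bulk part and a part within distance $\mathfrak{l}_{\mathbf{Av}}(\log\N)^{2}$ of $0$, and treat the latter through the even reflection of $g$, whose gradients still obey \eqref{eq:sdepreI} up to constants.
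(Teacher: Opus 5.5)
Your argument for \eqref{eq:kernelestimateI} is correct and matches the paper's: binomial expansion with the semigroup property, \eqref{eq:sdepreI} with no gradients, and the exponential moments \eqref{eq:hkestimatesII} of the probability kernel $\mathbf{H}^{\N,0}$. Your bulk mechanism for \eqref{eq:kernelestimateII} is also the paper's. You write $\mathscr{X}=-\int_{0}^{\tau_{\mathbf{Av}}}\mathscr{T}_{\N,0}\mathrm{e}^{\r\mathscr{T}_{\N,0}}\,\mathrm{d}\r$, commute the Laplacians onto $g:=\boldsymbol{\mathcal{S}}_{\y,\cdot}$, and gain $\N^{2}\tau_{\mathbf{Av}}\cdot\N^{-2+2\delta_{\mathbf{S}}}=\N^{-\delta_{\mathbf{S}}}$ per factor.

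The gap is at the edge, and neither of your remedies works. For $\mathbf{a}=0$ the extension is $g_{-1}=g_{0}$, so $(\Delta_{0}g)_{0}=g_{1}-g_{0}$, and for example $(\Delta_{0}^{2}g)_{0}=(g_{2}-2g_{1}+g_{0})-(g_{1}-g_{0})$. The boundary rows of $\Delta_{0}^{\m}$ therefore contain the \emph{first} difference $g_{1}-g_{0}$ with a nonzero coefficient. Setting $g_{-1}-g_{0}=0$ is exactly what creates a large term. In a smooth continuation, $g_{1}-g_{0}$ and $g_{-1}-g_{0}$ cancel to leading order; dropping one leaves the other uncancelled. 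Neumann compatibility of the order you need would require $g_{1}-g_{0}$ (and the higher odd differences at $0$) to be as small as a $2\mathrm{M}$-th order difference, $\N^{-1-2\mathrm{M}+\mathrm{O}(\delta_{\mathbf{S}})}$. By contrast, \eqref{eq:sdepreI} only gives $\mathrm{O}(\N^{-2+\mathrm{O}(\delta_{\mathbf{S}})})$. For the same reason, the even reflection $\wt{g}$ of $g$ about $-\tfrac{1}{2}$ has a kink: $\wt{g}_{1}-2\wt{g}_{0}+\wt{g}_{-1}=g_{1}-g_{0}$. So $\wt{g}$ already violates \eqref{eq:sdepreI} at $\b=2$.

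This cannot be fixed, because \eqref{eq:kernelestimateII} is false near $\w=0$ under \eqref{eq:sdepreI} alone. Let $\mathfrak{l}_{\N}:=\N^{1-\delta_{\mathbf{S}}}$, let $\psi$ be a smooth compactly supported bump equal to $1$ on $[-1,1]$, and set $\boldsymbol{\mathcal{S}}_{\y,\w}:=\N^{-1}\psi((\y-\w)/\mathfrak{l}_{\N})\mathrm{e}^{-\w/\mathfrak{l}_{\N}}$, which satisfies \eqref{eq:sdepreI}. Take $g:=\boldsymbol{\mathcal{S}}_{0,\cdot}$. Since $\mathscr{T}_{\N,0}$ is Neumann, $\mathscr{X}^{\mathrm{M}}g$ is the restriction of $(\mathrm{Id}-P)^{\mathrm{M}}\wt{g}$, where $P$ is the full-line semigroup at time $\tau_{\mathbf{Av}}$. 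Near the edge, $\wt{g}_{\w}=\N^{-1}\mathrm{e}^{(1/2-|\w+1/2|)/\mathfrak{l}_{\N}}$. This is a smooth function minus $(1+\mathrm{o}(1))\N^{-1}\mathfrak{l}_{\N}^{-1}|\w+\tfrac{1}{2}|$, up to $\mathrm{O}(\N^{-1}\mathfrak{l}_{\N}^{-3}|\w+\tfrac{1}{2}|^{3})$. The smooth part contributes $\mathrm{O}(\N^{-1}(\mathfrak{l}_{\mathbf{Av}}/\mathfrak{l}_{\N})^{2\mathrm{M}})$. For the kink, $P^{j}|\cdot+\tfrac{1}{2}|$ at $0$ equals $(c+\mathrm{o}(1))\sqrt{j}\,\mathfrak{l}_{\mathbf{Av}}+\mathrm{O}(1)$ with $c>0$. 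So the kink contributes $-c\,\N^{-1}\mathfrak{l}_{\N}^{-1}\mathfrak{l}_{\mathbf{Av}}\sum_{j}\binom{\mathrm{M}}{j}(-1)^{j}\sqrt{j}+\mathrm{o}(\N^{-1}\mathfrak{l}_{\N}^{-1}\mathfrak{l}_{\mathbf{Av}})$. The sum is nonzero: $\sum_{j}\binom{\mathrm{M}}{j}(-1)^{j}\sqrt{j}=-\frac{1}{2\sqrt{\pi}}\int_{0}^{\infty}(1-\mathrm{e}^{-t})^{\mathrm{M}}t^{-3/2}\,\mathrm{d}t$. Hence $|\mathscr{X}^{\mathrm{M}}\boldsymbol{\mathcal{S}}_{0,0}|\asymp_{\mathrm{M}}\N^{-1-\delta_{\mathbf{S}}/2}$ for every $\mathrm{M}$.

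The paper's proof overlooks the same point. Its bound $\|\N^{2\m}\Delta_{0}^{\m}\boldsymbol{\mathcal{S}}_{\y,\cdot}\|_{\mathrm{L}^{\infty}}\lesssim\N^{-1+(2\m+\mathrm{C})\delta_{\mathbf{S}}}$, taken from \eqref{eq:sdepreI}, treats the boundary rows of $\Delta_{0}$ as second differences. Yet already $\N^{2}(\boldsymbol{\mathcal{S}}_{\y,1}-\boldsymbol{\mathcal{S}}_{\y,0})$ is only $\mathrm{O}(\N^{\mathrm{O}(\delta_{\mathbf{S}})})$.

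What does hold is the following. First, \eqref{eq:kernelestimateII} holds for $\w\geq\mathrm{C}(\log\N)^{2}\mathfrak{l}_{\mathbf{Av}}$: there the walk reaches the edge only with super-polynomially small probability, and your local Taylor argument applies. Second, it holds for all $\w$ under an added compatibility hypothesis such as $\|(\N^{2}\Delta_{0})^{\m}\boldsymbol{\mathcal{S}}_{\y,\cdot}\|_{\mathrm{L}^{\infty}}\lesssim\N^{-1+(2\m+\mathrm{C})\delta_{\mathbf{S}}}$ for $\m\leq\mathrm{M}$. Kernels built from the Robin kernel $\mathbf{H}^{\N,\mathbf{A}}$ ($\mathbf{A}\neq0$) and its gradients generally violate this hypothesis. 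The resulting boundary layer of width $\asymp\mathfrak{l}_{\mathbf{Av}}$ would therefore need separate treatment where the lemma is used, in Lemma \ref{lemma:xav}.
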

\begin{proof}
First, \eqref{eq:kernelestimateI} follows since it is true for {\small$\m=0$}, and the operator has an integral kernel which is an averaging kernel with exponential decay at macroscopic scale {\small$\N$} by Proposition \ref{prop:hkestimates}. Now, we prove \eqref{eq:kernelestimateII}. We claim that it suffices to prove it for {\small$\kappa=0$}. Indeed, we have
\begin{align*}
|\mathscr{X}^{\mathrm{M}}\boldsymbol{\mathcal{S}}_{\y,\w}|&=|\mathscr{X}^{\mathrm{M}}\boldsymbol{\mathcal{S}}_{\y,\w}|^{\frac12}|\mathscr{X}^{\mathrm{M}}\boldsymbol{\mathcal{S}}_{\y,\w}|^{\frac12}.
\end{align*}
For the first factor on the \abbr{RHS}, use \eqref{eq:kernelestimateI} for {\small$\m=\mathrm{M}$} and {\small$2\kappa$} instead of {\small$\kappa$}, and for the second term, we use \eqref{eq:kernelestimateII} for {\small$\kappa=0$} (and for {\small$2\mathrm{D}$} instead of {\small$\mathrm{D}$}). This would yield \eqref{eq:kernelestimateII} for the {\small$\kappa,\mathrm{D}$} of interest. Thus, restrict to {\small$\kappa=0$}.

By construction in Definition \ref{definition:heat}, we have {\small$\partial_{\t}[\mathbf{e}^{\t\mathscr{T}_{\N,0}}(\mathsf{f}_{\cdot})]_{\x}=(\N^{2}+\frac14\N\lambda)\Delta_{0}[\mathbf{e}^{\t\mathscr{T}_{\N,0}}(\mathsf{f}_{\cdot})]_{\x}$} for {\small$\mathsf{f}:\llbracket0,\infty\rrbracket\to\R$}. So, since the {\small$\mathscr{T}_{\N,0}$}-semigroup commutes with its generator {\small$\mathscr{T}_{\N}=\mathscr{T}_{\N,0}=\mathrm{O}(\N^{2})\Delta_{0}$}, for any {\small$\t\geq0$}, we have
\begin{align*}
|\mathscr{X}\mathsf{f}_{\x}|=|[\mathbf{e}^{\t\mathscr{T}_{\N,0}}(\mathsf{f}_{\cdot})]_{\x}-\mathsf{f}_{\x}|\lesssim\int_{0}^{\t}|\N^{2}\Delta_{0}[\mathbf{e}^{\s\mathscr{T}_{\N,0}}(\mathsf{f}_{\cdot})]_{\x}|\d\s\lesssim\t\sup_{\s\in[0,\t]}|[\mathbf{e}^{\s\mathscr{T}_{\N,0}}(\N^{2}\Delta_{0}\mathsf{f}_{\cdot})]_{\x}|.
\end{align*}
Next, we take the supremum over {\small$\x\in\llbracket0,\infty\rrbracket$} on both sides. Then, we plug in {\small$\t=\N^{-2}\mathfrak{l}_{\mathbf{Av}}^{2}$} with {\small$\mathfrak{l}_{\mathbf{Av}}=\N^{1-3\delta_{\mathbf{S}}/2}$} from Definition \ref{definition:bulkaverage}. This gives
\begin{align*}
\|\mathscr{X}\mathsf{f}\|_{\mathrm{L}^{\infty}(\llbracket0,\infty\rrbracket)}&\lesssim\N^{-2}\mathfrak{l}_{\mathbf{Av}}^{2}\sup_{\t\in[0,\N^{-2}\mathfrak{l}_{\mathbf{Av}}^{2}]}\|\mathbf{e}^{\t\mathscr{T}_{\N,0}}(\N^{2}\Delta_{0}\mathsf{f}_{\cdot})\|_{\mathrm{L}^{\infty}(\llbracket0,\infty\rrbracket)}\\
&\lesssim\N^{-2}\mathfrak{l}_{\mathbf{Av}}^{2}\|\N^{2}\Delta_{0}\mathsf{f}\|_{\mathrm{L}^{\infty}(\llbracket0,\infty\rrbracket)},
\end{align*}
where the last bound follows because {\small$\mathbf{e}^{\t\mathscr{T}_{\N,0}}$} is locally uniformly (in {\small$\t\geq0$}) bounded as an operator {\small$\mathrm{L}^{\infty}(\llbracket0,\infty\rrbracket)$}, a consequence of Proposition \ref{prop:hkestimates}. We iterate the previous estimate a total of {\small$\m\geq0$}-many times, and we plug in {\small$\mathsf{f}_{\cdot}=\boldsymbol{\mathcal{S}}_{\y,\cdot}$}. Then, we use the regularity estimate in \eqref{eq:sdepreI}, which says that {\small$\boldsymbol{\mathcal{S}}_{\y,\cdot}$} is an averaging kernel and thus has size {\small$\lesssim\N^{-1+\mathrm{C}\delta_{\mathbf{S}}}$} and regularity at scale {\small$\gtrsim\N^{1-\delta_{\mathbf{S}}}$}. Thus, we obtain (again since {\small$\mathfrak{l}_{\mathbf{Av}}=\N^{1-3\delta_{\mathbf{S}}/2}$})
\begin{align}
\|\mathscr{X}^{\m}\boldsymbol{\mathcal{S}}_{\y,\cdot}\|_{\mathrm{L}^{\infty}(\llbracket0,\infty\rrbracket)}\lesssim\N^{-2\m}\mathfrak{l}_{\mathbf{Av}}^{2\m}\|\N^{2\m}(\Delta_{0})^{\m}\boldsymbol{\mathcal{S}}_{\y,\cdot}\|_{\mathrm{L}^{\infty}(\llbracket0,\infty\rrbracket)}\lesssim\N^{2\m-3\m\delta_{\mathbf{S}}}\N^{-1-2\m+2\m\delta_{\mathbf{S}}+\mathrm{C}\delta_{\mathbf{S}}}\lesssim\N^{-\m\delta_{\mathbf{S}}+\mathrm{C}\delta_{\mathbf{S}}}\nonumber
\end{align}
for any {\small$0\leq\m\lesssim1$}. (We clarify the importance that {\small$\Delta_{0}$} commutes with the {\small$\mathscr{X}$}-operator.) For any {\small$\mathrm{D}>0$}, choose {\small$\m\lesssim1$} large enough so that {\small$-\m\delta_{\mathbf{S}}+\mathrm{C}\delta_{\mathbf{S}}\leq-\mathrm{D}$}. This yields \eqref{eq:kernelestimateII} for {\small$\kappa=0$}, which completes the proof.
\end{proof}
\subsection{Infinitesimal generators and invariant measures}\label{subsection:generator}
The infinitesimal generator of \eqref{eq:phiI}-\eqref{eq:phiII} is given by 
\begin{align}
\mathscr{L}:=\mathscr{L}_{\mathrm{S}}+\mathscr{L}_{\mathrm{A}}+\mathscr{L}_{\mathrm{G}},
\end{align}
in which {\small$\mathscr{L}_{\mathrm{S}},\mathscr{L}_{\mathrm{A}},\mathscr{L}_{\mathrm{G}}$} act on smooth local functions {\small$\R^{\llbracket1,\infty\rrbracket}\to\R$}, where ``local" refers to functions that depend only on a finite number of coordinates of any input {\small$\bphi\in\R^{\llbracket1,\infty\rrbracket}$}. These operators are given by
\begin{align}
\mathscr{L}_{\mathrm{S}}&:=\N^{2}\sum_{\x\in\llbracket1,\infty\rrbracket}\Delta_{\mathrm{Neu}}\mathscr{U}'[\bphi_{\x}]\cdot\partial_{\bphi_{\x}}+\N^{2}\sum_{\x\in\llbracket1,\infty\rrbracket}(\grad^{\mathbf{X}}_{1}\partial_{\bphi_{\x}})^{2}\\
\mathscr{L}_{\mathrm{A}}&:=\N^{\frac32}\sum_{\x\in\llbracket1,\infty\rrbracket}\wt{\mathbf{F}}[\tau_{\x}\bphi]\partial_{\bphi_{\x}} \quad\text{and}\quad \mathscr{L}_{\mathrm{G}}:=-\N^{2}\mathscr{U}'[\bphi_{1}]\partial_{\bphi_{1}}+\N^{2}\partial_{\bphi_{1}}^{2}.
\end{align}
Above, {\small$\Delta_{\mathrm{Neu}}=-\grad^{\mathbf{X}}_{-1}\grad^{\mathbf{X}}_{1}$} is the Neumann Laplacian on {\small$\llbracket1,\infty\rrbracket$}, which acts on functions {\small$\mathsf{f}:\llbracket1,\infty\rrbracket$} by {\small$\Delta_{\mathrm{Neu}}\mathsf{f}_{\x}=\mathsf{f}_{\x+1}+\mathsf{f}_{\x-1}-2\mathsf{f}_{\x}$} with the convention {\small$\mathsf{f}_{0}=\mathsf{f}_{1}$}. A standard summation-by-parts for the Neumann Laplacian shows
\begin{align*}
\mathscr{L}_{\mathrm{S}}&=-\N^{2}\sum_{\x\in\llbracket1,\infty\rrbracket}\grad^{\mathbf{X}}_{1}\mathscr{U}'[\bphi_{\x}]\grad^{\mathbf{X}}_{1}\partial_{\bphi_{\x}}+\N^{2}\sum_{\x\in\llbracket1,\infty\rrbracket}(\grad^{\mathbf{X}}_{1}\partial_{\bphi_{\x}})^{2}.
\end{align*}
A standard integration-by-parts calculation (like in \cite{DGP17}, for instance) shows that {\small$\E^{0}(\mathfrak{f}\mathscr{L}_{\mathrm{S}}\mathfrak{g})=\E^{0}(\mathfrak{g}\mathscr{L}_{\mathrm{S}}\mathfrak{f})$} for any smooth and local functions {\small$\mathfrak{f},\mathfrak{g}:\R^{\Z}\to\R$}. On the other hand, for the same class of test functions {\small$\mathfrak{f},\mathfrak{g}$}, we also have (see \cite{DGP17}, again) {\small$\E^{0}(\mathfrak{f}\mathscr{L}_{\mathrm{A}}\mathfrak{g})=-\E^{0}(\mathfrak{g}\mathscr{L}_{\mathrm{A}}\mathfrak{f})$}. Finally, another standard integration-by-parts calculation shows that {\small$\E^{0}(\mathfrak{f}\mathscr{L}_{\mathrm{G}}\mathfrak{g})=\E^{0}(\mathfrak{g}\mathscr{L}_{\mathrm{G}}\mathfrak{f})$} for the same class of {\small$\mathfrak{f},\mathfrak{g}$}; indeed, {\small$\mathscr{L}_{\mathrm{G}}$} is the generator for the Langevin flow at {\small$0\in\llbracket0,\infty\rrbracket$} for the marginal of the product measure {\small$\mathbb{P}^{0}$} at this point. Putting this altogether shows that {\small$\mathbb{P}^{0}$} is invariant for {\small$\mathscr{L}$}.
\subsection{An initial data estimate}
Let us now present an estimate on the initial data that holds with high probability uniformly on {\small$\llbracket0,\infty\rrbracket$}. The proof of the lemma below is identical to that of Lemma A.2 in \cite{Y26PTRF} (as it uses only the initial data moment estimates in \eqref{eq:mainI}, as well as the heat kernel estimate \eqref{eq:hkestimatesI}). 
\begin{lemma}\label{lemma:databound}
\fsp Fix any {\small$\delta>0$}. If {\small$\kappa>0$} is large enough, then with high probability, we have for all {\small$\x\in\llbracket0,\infty\rrbracket$} that 
\begin{align}
\exp(-\tfrac{\kappa|\x|}{\N})|\mathbf{Z}^{\N}_{0,\x}|+\exp(-\tfrac{\kappa|\x|}{\N})|\mathbf{S}^{\N}_{0,\x}|\lesssim_{\kappa,\delta}\N^{\delta}.\label{eq:databoundI}
\end{align}
\end{lemma}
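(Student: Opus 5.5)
The plan is to follow the proof of Lemma A.2 in \cite{Y26PTRF}: first obtain pointwise moment bounds, then upgrade to a uniform-in-space statement by a weighted union bound. I treat $\mathbf{Z}^{\N}_{0,\cdot}$ first and deduce the bound for $\mathbf{S}^{\N}_{0,\cdot}$ from it.

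\textbf{Step 1: a uniform bound for $\mathbf{Z}^{\N}_{0,\cdot}$.} Fix $p\geq1$ large, depending on $\delta$. The first estimate in \eqref{eq:mainI} gives $\E|\mathbf{Z}^{\N}_{0,\x}|^{2p}\lesssim_{p}\exp(\kappa_{p}|\x|/\N)$. Choose $\kappa\geq\kappa_{p}+1$. By Markov's inequality,
\[
\mathbb{P}\big(\exp(-\tfrac{\kappa|\x|}{\N})|\mathbf{Z}^{\N}_{0,\x}|\geq\N^{\delta}\big)\lesssim_{p}\N^{-2p\delta}\exp(-\tfrac{(2p\kappa-\kappa_{p})|\x|}{\N}).
\]
Summing over $\x\in\llbracket0,\infty\rrbracket$, the geometric series contributes a factor $\lesssim\N$, so the total failure probability is $\lesssim\N^{1-2p\delta}$. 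This is $\mathrm{o}(1)$ once $p>1/(2\delta)$. Hence, with high probability, $\exp(-\kappa|\x|/\N)|\mathbf{Z}^{\N}_{0,\x}|\lesssim\N^{\delta}$ holds simultaneously for all $\x$.

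\textbf{Step 2: transfer to $\mathbf{S}^{\N}_{0,\cdot}$.} Recall from \eqref{eq:heatIII} that
\[
\mathbf{S}^{\N}_{0,\x}=\sum_{\w}\mathbf{H}^{\N,\mathbf{A}}_{0,\mathfrak{t}_{\N},\x,\w}\mathbf{1}^{(\x)}_{\w}\mathbf{Z}^{\N}_{0,\w}.
\]
On the event of Step 1, we have $|\mathbf{Z}^{\N}_{0,\w}|\lesssim\N^{\delta}\exp(\kappa|\w|/\N)$. Using $|\w|\leq|\x|+|\x-\w|$, this gives
\[
\exp(-\tfrac{\kappa|\x|}{\N})|\mathbf{S}^{\N}_{0,\x}|\lesssim\N^{\delta}\sum_{\w}\exp(\tfrac{\kappa|\x-\w|}{\N})|\mathbf{H}^{\N,\mathbf{A}}_{0,\mathfrak{t}_{\N},\x,\w}|\lesssim_{\kappa}\N^{\delta},
\]
where the last bound is \eqref{eq:hkestimatesII} (a consequence of \eqref{eq:hkestimatesI}). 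The indicator $\mathbf{1}^{(\x)}$ only helps here, since it restricts the sum. Adding the bounds from the two steps yields \eqref{eq:databoundI}.

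The argument is routine. The only point needing care is that summing over the infinite domain requires exponential decay, and this is why $\kappa$ must exceed $\kappa_{p}$ with $p$ fixed first: the factor $\exp(-(2p\kappa-\kappa_{p})|\x|/\N)$ then makes the union bound cost only a factor $\N$, which the $\N^{-2p\delta}$ from Markov's inequality absorbs.
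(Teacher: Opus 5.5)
Your proposal is correct and follows essentially the same route as the paper, which simply defers to Lemma A.2 of \cite{Y26PTRF}. That argument likewise bounds $\mathbf{Z}^{\N}_{0,\cdot}$ uniformly using the moment estimate in \eqref{eq:mainI}, Markov's inequality and an exponentially weighted union bound over $\x$, then passes to $\mathbf{S}^{\N}_{0,\cdot}$ via the weighted $\ell^{1}$ heat kernel bound \eqref{eq:hkestimatesII}, which follows from \eqref{eq:hkestimatesI}.
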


%
%
%
%

%
\end{document}